\newcommand{\DefOrd}[2][**]{\DoIndex#1{#2}\textbf{#2}}
\newcommand{\emDefOrd}[2][**]{\DoIndex#1{#2}\emph{#2}}
\newcommand{\DoIndex}{%
   \@ifstar{\@ifstar\index\indexAndGobble}\DoIndexX
}    
\newcommand{\indexAndGobble}[2]{\index{#1}}
\newcommand{\DoIndexX}[1]{%
   \def\search@key{#1}%
   \@ifstar{\joinIndexArgs{}}{\joinIndexArgs\@gobble}%
}
\newcommand{\joinIndexArgs}[2]{\index{\search@key @#2}#1}
\let\gobblepage=\@firstoftwo
\theoremstyle{plain}
\providecommand*{\theoremincountername}{section}
\newtheorem{theorem}{Theorem}[\theoremincountername]
\newtheorem{lemma}[theorem]{Lemma}
\newtheorem{corollary}[theorem]{Corollary}
\theoremstyle{definition}
\newtheorem{definition}[theorem]{Definition}
\newtheorem{construction}[theorem]{Construction}
\newtheorem{example}[theorem]{Example}
\theoremstyle{remark}
\newtheorem*{remark}{Remark}
\numberwithin{equation}{section}
\DeclareRobustCommand\SMC{%
  \ifx\@currsize\normalsize\small\else
   \ifx\@currsize\small\footnotesize\else
    \ifx\@currsize\footnotesize\scriptsize\else
     \ifx\@currsize\large\normalsize\else
      \ifx\@currsize\Large\large\else
       \ifx\@currsize\LARGE\Large\else
        \ifx\@currsize\scriptsize\tiny\else
         \ifx\@currsize\tiny\tiny\else
          \ifx\@currsize\huge\LARGE\else
           \ifx\@currsize\Huge\huge\else
            \small\SMC@unknown@warning
 \fi\fi\fi\fi\fi\fi\fi\fi\fi\fi
}
\newcommand{\SMC@unknown@warning}{\TBWarning{\string\SMC: unrecognised
    text font size command -- using \string\small}}
\DeclareTextFontCommand{\smaller}{\SMC}
\DeclareTextFontCommand{\smalltexttt}{\SMC\ttfamily}
\newcommand{\PROP}{\texorpdfstring{\smaller{PROP}}{PROP}}
\newcommand{\mc}{\mathcal}
\newcommand{\vek}{\mathbf}
\newcommand{\cross}[2]{{}^{#1}\mathsf{X}^{#2}}
\newcommand{\same}[1]{\mathsf{I}^{#1}}
\newcommand*{\feedback}[1]{{\uparrow}^{#1}}
\newcommand*{\rightfeedback}[1]{\right\uparrow^{#1}}
\newcommand*{\join}[1][]{%
   \vphantom{\stackrel{#1}{\Join}}%
   \mathbin{\smash{\stackrel{#1}{\Join}}}%
}
\newcommand{\mOp}{\mu} 
       \font\tenmsb=msbm10
       \font\sevenmsb=msbm7
       \font\fivemsb=msbm5
       \mathchardef\varkappa="0F7B
       \def\eval{{\fam=0 eval}}
\newcommand*{\Norm}[2][\@gobble]{\left\|#1. #2 \right\|}
\newcommand*{\norm}[2][\@gobble]{\left|#1. #2 \right|}
\newcommand*{\setOf}[3][\@gobble]{%
   \left\{ \, #2 \,\,\vrule\relax#1.\,\@displayfalse\, #3 \, \right\}%
}
\newcommand*{\fuse}[4][\big]{%
   \mathopen{#1[} {}_{#2}^{#4} \,#1\vert\, {#3} \mathclose{#1]}%
}
\newcommand*{\Nwfuse}[4][\big]{%
   \Nw\mathopen{#1(} {}_{#2}^{#4} \,#1\vert\, {#3} \mathclose{#1)}%
}
\newcommand{\restr}[2]{#1\rvert_{#2}}
\newcommand{\card}{\norm}
\newcommand{\psm}[1]{%
  \left(%
    \begin{smallmatrix}%
      #1%
    \end{smallmatrix}%
  \right)%
}
\newcommand{\transpose}[1]{#1^{\mathrm{T}}}
\newcommand{\setmap}[1]{\wp(#1)}
\newcommand{\setinv}[1]{\overline{\wp}(#1)}
\DeclareMathOperator{\setim}{im}
\newcommand{\Epil}{\quad\Longleftrightarrow\quad}
\newcommand{\Fpil}{\longrightarrow}
\newcommand{\Ipil}{\quad\Longrightarrow\quad}
\newcommand{\B}{\mathbb{B}}
\newcommand{\C}{\mathbb{C}}
\newcommand{\N}{\mathbb{N}}
\newcommand{\Q}{\mathbb{Q}}
\newcommand{\R}{\mathbb{R}}
\newcommand{\Z}{\mathbb{Z}}
\newcommand{\Zp}{\Z_+}
\newcommand{\ve}{\varepsilon}
\newcommand*{\pin}[1]{
   \mathchoice{%
      \mathrel{\mathrm{in}}%
   }{%
      \mathrel{\mathrm{in}}%
   }{%
      \mathop{\mathrm{in}}%
   }{%
      \mathop{\mathrm{in}}%
   }#1%
}
\DeclareMathOperator{\eval}{eval}
\DeclareMathOperator{\tr}{tr}
\DeclareMathOperator{\Tr}{Trf}
\DeclareMathOperator{\Span}{Span}
\DeclareMathOperator{\Ker}{Ker}
\newcommand{\Hopf}{\mathcal{H}\mkern-2mu\mathit{opf}}
\newcommand{\HomPROP}{\mathcal{H}\mkern-2mu\mathit{om}}
\newcommand{\Baff}{\mathrm{Baff}}
\newcommand{\Nw}{\mathrm{Nw}}
\newcommand{\Nwt}{\widetilde{%
   \vphantom{\mathchoice{N}{\vrule width0pt height1.3ex}{N}{N}}%
   \smash{\Nw}%
}}
\newcommand{\Red}{\mathrm{Red}}
\newcommand{\Irr}{\mathrm{Irr}}
\newcommand{\DSM}{\mathrm{DSM}}
\newcommand{\DIS}{\mathrm{DIS}}
\newcommand{\canmake}{\curvearrowright} 
\providecommand{\PROP}{PROP}
\providecommand{\PROPs}{\PROP s}
\providecommand*{\DefOrd}[2][]{\textbf{#2}}
\providecommand*{\emDefOrd}[2][]{\emph{#2}}
\providecommand*{\Dash}{%
   \nobreak\hspace{0.166667em}\textemdash\hspace{0.166667em}%
}
\providecommand*{\Ldash}{%
   \hspace{0.166667em}\textemdash\nobreak\hspace{0.166667em}%
}
\providecommand*{\Rdash}{\Dash}
\newcommand*{\parenthetic}[1]{\/\textup{(#1)}}
\providecommand{\textprime}{\('\)}
\providecommand*{\texorpdfstring}[2]{#1}
\begin{document}

\title{Network Rewriting I: The Foundation}
\author{Lars Hellstr\"om\footnote{
  Institutionen f\"or matematik och matematisk statistik,
  Ume\r{a} University,
  901\,87 Ume\r{a}, SWEDEN.
  E-mail: \texttt{lars.hellstrom}(at)\texttt{residenset.net}
}}
\maketitle

\begin{abstract}
  A theory is developed which uses ``networks'' (directed acyclic 
  graphs with some extra structure) as a formalism for expressions in 
  multilinear algebra. It is shown that this formalism is valid for 
  arbitrary \PROPs~(short for `PROducts and Permutations category'), 
  and conversely that the \PROP\ axioms are implicit in the concept 
  of evaluating a network. Ordinary terms and operads constitute the 
  special case that the graph underlying the network is a rooted tree.
  
  Furthermore a rewriting theory for networks is developed. Included 
  in this is a subexpression concept for which is given both 
  algebraic and effective graph-theoretical characterisations, a 
  construction of reduction maps from rewriting systems, and an 
  analysis of the obstructions to confluence that can occur. Several 
  Diamond Lemmas for this rewriting theory are given.
  
  In addition there is much supporting material on various related 
  subjects. In particular there is a toolbox for the construction of 
  custom orders on the free \PROP, so that an order can be tailored 
  to suit a specific rewriting system. Other subjects treated are the 
  abstract index notation in a general \PROP\ context and the use of 
  feedbacks (sometimes called ``traces'') in \PROPs.
%
\end{abstract}

\section{Introduction}

The `algebra generated by [certain elements] which 
satisfy [some relations]' is a staple concept in 
mathematics and perhaps \emph{the} key concept in universal algebra. 
Like most fruitful notions in mathematics, it can be defined from a 
wide variety of perspectives, including the classically algebraic 
(quotient ring by ideal), the abstract categorical (free object in 
suitable category), the universal algebraic (quotient, set of terms 
by congruence relation), and the rewriting (terms transformed according 
to rules) perspectives. This text will mainly focus on the latter two, 
but the reader who so wishes should have no problem to view the results 
from any of these perspectives.


An idea that is common to the universal algebraic and rewriting 
perspectives is that of the \emph{term}, or (less technically) the 
``general expression'', which is viewed and studied as a mathematical 
object in its own right. Most formalisations of this concept equips 
it with an underlying tree structure, where the results from 
subexpressions become arguments to the function in the root node 
for the expression as a whole; for example in `$f\bigl( 
g(x), 2, h(x,y) \bigr)$' the subexpressions `$g(x)$', `$2$', and 
`$h(x,y)$' serve as arguments of the function $f$ which constitutes 
the root node of the expression tree. Syntactic tree structures, with 
context-free languages and BNF grammars as their perhaps most 
iconic exponents, are so prevalent within computer science that it is 
hard to imagine how things could be otherwise. And yet, there are 
algebraic structures for which trees \emph{simply are not general 
enough!}

Examples of such structures have begun to accumulate over the last 
couple of decades\Ldash two that should definitely be mentioned are 
quantum logical circuits and Hopf algebras\Rdash but in order to see 
what it is about them that cannot be captured in terms of tree-like 
expressions, one really has to sit down and work through the gory 
details of some examples. In many cases the crux of the 
matter that makes tree-like expressions insufficient is that the 
tensor product $U \otimes V$ of two vector spaces $U$ and $V$ is not 
the same thing as the cartesian product $U \times V$.\footnote{
  This may seem as a trivial observation\Ldash they are two different 
  products of vector spaces, so of course they ought to be 
  different\Rdash but the catch is that when they serve as the domain 
  of a multilinear map they are not so different: $\hom(U 
  \otimes\nobreak V, W)$ is pretty much by definition isomorphic to 
  $\hom(U \times\nobreak V, W)$. Hence when elements are paired only 
  as a preparation for feeding them into a function, it need not be 
  clear which concept of pairing that is the most natural.
} To illustrate the differences, one may consider creating pairs of 
vectors in the standard basis 
$\{\vek{e}_1,\vek{e}_2,\vek{e}_3,\vek{e}_4\}$ of $\R^4$. A cartesian 
way of pairing two vectors would be to put them as separate columns 
of a matrix, whereas the tensor way of pairing two column vectors 
$\vek{u}$ and $\vek{v}$ would be to form the matrix-valued product 
$\vek{u} \transpose{\vek{v}}$\Dash hence the pairings of $\vek{e}_1$ 
and $\vek{e}_2$ come out as
\[
  \vek{e}_1 \times \vek{e}_2 =
  \bigl[ \vek{e}_1, \vek{e}_2 \bigr] = 
  \begin{pmatrix} 1 & 0 \\ 0 & 1 \\ 0 & 0 \\ 0 & 0 \end{pmatrix}
  \quad\text{and}\quad
  \vek{e}_1 \otimes \vek{e}_2 =
  \vek{e}_1 \transpose{\vek{e}_2} =
  \begin{pmatrix}
    0 & 1 & 0 & 0 \\
    0 & 0 & 0 & 0 \\
    0 & 0 & 0 & 0 \\
    0 & 0 & 0 & 0 
  \end{pmatrix}
\]
respectively. These two modes of pairing behave very differently 
with respect to superposition. In the cartesian case
\begin{multline*}
  \bigl[ \vek{e}_1, \vek{e}_2 \bigr] + 
  \bigl[ \vek{e}_3, \vek{e}_4 \bigr]
  = \\ =
  \begin{pmatrix} 1 & 0 \\ 0 & 1 \\ 0 & 0 \\ 0 & 0 \end{pmatrix} +
  \begin{pmatrix} 0 & 0 \\ 0 & 0 \\ 1 & 0 \\ 0 & 1 \end{pmatrix}
  =
  \begin{pmatrix} 1 & 0 \\ 0 & 1 \\ 1 & 0 \\ 0 & 1 \end{pmatrix}
  =
  \begin{pmatrix} 1 & 0 \\ 0 & 0 \\ 0 & 0 \\ 0 & 1 \end{pmatrix} +
  \begin{pmatrix} 0 & 0 \\ 0 & 1 \\ 1 & 0 \\ 0 & 0 \end{pmatrix}
  = \\ =
  \bigl[ \vek{e}_1, \vek{e}_4 \bigr] + 
  \bigl[ \vek{e}_3, \vek{e}_2 \bigr]
  \text{,}
\end{multline*}
so there is no way of knowing from the superposition whether it was 
$\vek{e}_1$ or $\vek{e}_3$ that was paired with $\vek{e}_2$. However, 
in the tensor case
\begin{multline*}
  \vek{e}_1 \transpose{\vek{e}_2} + \vek{e}_3 \transpose{\vek{e}_4}
  =
  \begin{pmatrix}
    0 & 1 & 0 & 0 \\
    0 & 0 & 0 & 0 \\
    0 & 0 & 0 & 0 \\
    0 & 0 & 0 & 0 
  \end{pmatrix} +
  \begin{pmatrix}
    0 & 0 & 0 & 0 \\
    0 & 0 & 0 & 0 \\
    0 & 0 & 0 & 1 \\
    0 & 0 & 0 & 0 
  \end{pmatrix}
  =
  \begin{pmatrix}
    0 & 1 & 0 & 0 \\
    0 & 0 & 0 & 0 \\
    0 & 0 & 0 & 1 \\
    0 & 0 & 0 & 0 
  \end{pmatrix}
  \neq \\ \neq
  \begin{pmatrix}
    0 & 0 & 0 & 1 \\
    0 & 0 & 0 & 0 \\
    0 & 1 & 0 & 0 \\
    0 & 0 & 0 & 0 
  \end{pmatrix}
  =
  \begin{pmatrix}
    0 & 0 & 0 & 1 \\
    0 & 0 & 0 & 0 \\
    0 & 0 & 0 & 0 \\
    0 & 0 & 0 & 0 
  \end{pmatrix} +
  \begin{pmatrix}
    0 & 0 & 0 & 0 \\
    0 & 0 & 0 & 0 \\
    0 & 1 & 0 & 0 \\
    0 & 0 & 0 & 0 
  \end{pmatrix}
  =
  \vek{e}_1 \transpose{\vek{e}_4} + \vek{e}_3 \transpose{\vek{e}_2}
  \text{!}
\end{multline*}
On the other hand, it is in a cartesian product possible to attach 
separate amplitudes $r$ and $s$ to the components of a pair\Ldash
\[
  \bigl[ r\vek{e}_1, s\vek{e}_2 \bigr] 
  = 
  \begin{pmatrix} r & 0 \\ 0 & s \\ 0 & 0 \\ 0 & 0 \end{pmatrix}
  \neq
  \begin{pmatrix} s & 0 \\ 0 & r \\ 0 & 0 \\ 0 & 0 \end{pmatrix}
  =
  \bigl[ s\vek{e}_1, r\vek{e}_2 \bigr] 
\]
\Rdash but not so in a tensor product, as
\[
  (r \vek{e}_1) \transpose{( s \vek{e}_2 )}
  =
  \begin{pmatrix}
    0 & rs & 0 & 0 \\
    0 & 0 & 0 & 0 \\
    0 & 0 & 0 & 0 \\
    0 & 0 & 0 & 0 
  \end{pmatrix} 
  =
  (s \vek{e}_1) \transpose{( r \vek{e}_2 )}
  \text{.}
\]
It is not always the case that one can take a superposition of 
tensor products apart\Ldash just like it can happen that one cannot 
tell from a product of two elements in a noncommutative ring whether 
that product was $ab$ or $ba$ when $a$ and $b$ happen to 
commute\Rdash but many interesting structures exploit the possibility 
of this and other phenomena that are not seen in cartesian products.

Wrapping one's head around what this implies for the syntax of 
expressions can however be mind-boggling at first.
Sticking with the expression tree terminology, where it is the 
``children'' of a function node that provide input to it, whereas 
output is passed on to the ``parent'', it is clear that in order for 
the expression to not simply become a tree, there must be nodes with 
more than one parent\Dash say for simplicity that there is one type 
of function node which has two parents: one slightly to the left and 
the other slightly to the right. This means the function produces two 
results, one of which (the left) is sent to one parent, and the other 
of which (the right) is sent to the other. 
\begin{figure}
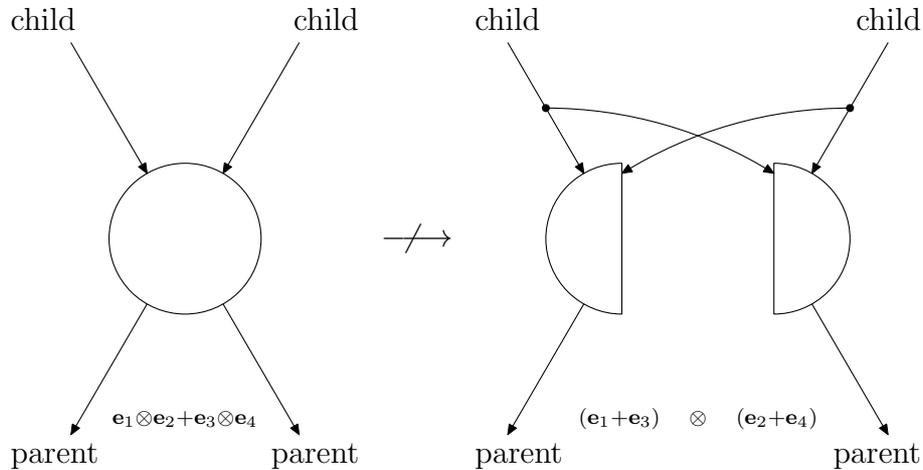

  \[
    \begin{array}{c}
      \text{child}\hspace{0pt plus 1filll}\text{child}
      \\
      \quad
      \begin{mpgraphics*}{-94}
        u := 1cm;
        draw fullcircle scaled 2u;
        drawarrow 3u*dir 120 -- u*dir 120;
        drawarrow 3u*dir 60 -- u*dir 60;
        drawarrow u*dir -120 -- 3u*dir -120;
        drawarrow u*dir -60 -- 3u*dir -60;
      \end{mpgraphics*}
      \quad\\
      \text{parent}\hspace{0pt plus 1filll}\smash{
        \begin{array}[b]{c}
          \scriptstyle
          \vek{e}_1 \otimes \vek{e}_2 + 
            \vek{e}_3 \otimes \vek{e}_4\\
          \quad
        \end{array}
      }\hspace{0pt plus 1filll}\text{parent}
    \end{array}
    \relbar\joinrel
    \not\longrightarrow
    \begin{array}{c}
      \text{child}\hspace{0pt plus 1filll}\text{child}
      \\
      \quad
      \begin{mpgraphics*}{-95}
        u := 1cm;
        pair d; d = (2u,0);
        draw (halfcircle -- cycle) rotated 90 scaled 2u;
        draw (halfcircle -- cycle) rotated -90 scaled 2u shifted d;
        drawarrow 3u*dir 120 -- u*dir 120;
        drawarrow (3u*dir 60 -- u*dir 60) shifted d;
        drawarrow u*dir -120 -- 3u*dir -120;
        drawarrow (u*dir -60 -- 3u*dir -60) shifted d;
        fill fullcircle scaled 3pt shifted (2u*dir120);
        drawarrow 2u*dir120 {right} .. (d + (0,u*sind 60));
        fill fullcircle scaled 3pt shifted (2u*dir60 + d);
        drawarrow (2u*dir60 + d) {left} .. (0,u*sind 60);
      \end{mpgraphics*}
      \quad\\
      \text{parent}\hspace{0pt plus 1filll}\smash{
        \begin{array}[b]{c}
          \scriptstyle
          (\vek{e}_1 + \vek{e}_3)\\
          \,
        \end{array}
      }\hspace{0pt plus 1filll}\smash{
        \begin{array}[b]{c}
          \scriptstyle \otimes \\
          \,
        \end{array}
      }\hspace{0pt plus 1filll}\smash{
        \begin{array}[b]{c}
          \scriptstyle
          (\vek{e}_2 + \vek{e}_4)\\
          \,
        \end{array}
      }\hspace{0pt plus 1filll}\text{parent}
    \end{array}
  \]
  \caption{A whole need not reduce to two halves}
\end{figure}
The tree-minded response to this is often ``Well, then split that 
function into two: one part which produces the left result and 
another which produces the right; then you can put the expression on 
tree form.'' The catch is, that this only works if the pair of 
results is viewed as living in the cartesian product of the ranges 
for the left and right respectively results; an element of a cartesian 
product is uniquely determined by what its first and second parts 
are, and this is the property that characterises cartesian products. 
Tensor products, as demonstrated above, permit values that constitute 
an entanglement of the left and right parts; much information lies in 
how values in one part corresponds to values in the other. Therefore 
both parts have to be \emph{created} as a unit, even though 
there is then no need to \emph{use} them as a unit; an entanglement 
will simply be passed on to the next generation of results.
(Yes, it seems odd, but it actually works that way.)

When venturing to explore unfamiliar mathematical phenomena, it can 
be a great help to employ a notation system that is equipped to deal 
with the oddities that lie ahead; the right notation system for the 
task may even serve as one's guide, in that it makes it easy to take 
the steps that are relevant for one's goals, as those are all steps 
that come naturally in the good notations. Conjuring up a notation 
system is however not a trivial task, as minor notational innovations 
can have major technical implications and an unfortunate choice can 
seriously restrict the applicability of one's labours. Therefore the 
first part of this text is spent on \emph{formally} developing not 
just one, but two notation systems (although they are closely related) 
for the kind of expressions that go beyond mere trees. It is only when 
a concept of expression has been fixed that rewriting can start in 
earnest.

\subsection{Structure of this text}

The contents of this text partition roughly into three themes. The 
first theme, which is the subject of Sections~\ref{Sec:PROPpar}, 
\ref{Sec:AIN}, and~\ref{Sec:Natverk}, is to establish the concept of 
\PROP\ and various expression notations for it. The second theme, 
which is the subject of Sections~\ref{Sec:Deluttryck}, 
\ref{Sec:FriPROP}, and~\ref{Sec:Omskrivning}, is a theory of 
rewriting in \PROPs. The third theme, which by elimination is the 
subject of Sections~\ref{Sec:Ordning}, \ref{Sec:Matrisordning}, 
and~\ref{Sec:Feedbacks}, is the construction of custom orders on 
\PROPs, to the end of providing consistent methods for orienting 
one's rewrite rules. Originally there was also going to be the fourth 
theme of applying the other three to establish a confluent and 
terminating set of rewrite rules for the $\Hopf$ \PROP\ (i.e., the 
\PROP\ encoding the operations and axioms of a Hopf algebra), but the 
foundation consisting of the first three turned out to be so voluminous 
that this latter theme was better split off; remnants of it do 
however show up in examples and the like.

Within the first theme, Section~\ref{Sec:PROPpar} introduces the 
concept of \PROP\ using traditional mathematical notation and gives a 
variety of examples of the same. Section~\ref{Sec:AIN} sets up the 
alternative \emph{Abstract Index Notation}, which is a derivative of 
the Einstein notation (also known as the Einstein summation convention) 
for tensors, and proves that it is well-defined for arbitrary \PROPs. 
Section~\ref{Sec:Natverk} sets up graphical \emph{networks} as a 
third notation for \PROP\ expressions, and proves that it is 
\emph{canonical} in the sense that (i)~a mathematical structure 
supports this notation if \emph{and only if} it is a \PROP, and 
(ii)~the equality modulo \PROP\ axioms of two network notation 
expressions is strictly a matter of network (essentially graph) 
isomorphism.

The degree of novelty of the material within this first theme varies 
from item to item, and is also a matter very much in the eye of the 
beholder. The examples in Section~\ref{Sec:PROPpar}, save perhaps 
that of the biaffine \PROP, should all be standard (even though I 
suspect they may be unusually concrete and elementary for examples 
in this area of mathematics), but when it comes to Sections~\ref{Sec:AIN} 
and~\ref{Sec:Natverk} the results are rather within the gray area of 
things that everyone sufficiently versed in the field quickly 
realises are true, but which it is less clear whether anyone has 
actually  
taken the time to write out in detail before. The abstract index 
notation\Ldash the idea that Einstein's summation convention has a 
coordinate-free interpretation\Rdash is definitely not new, but 
formalisations of it tend to rely on trace maps (also known as 
feedbacks; see Section~\ref{Sec:Feedbacks}), which reduces the range 
of their formal applicability to \PROPs\ with feedbacks. Similarly 
the network notation has many precedents, which on closer examination 
may turn out to be something which formally is slightly different. 
Several authors have employed ``shorthand diagrams'' 
(e.g.~\cite{Majid}) that more or less conform to the network 
notation used here, but perhaps rather meant that the \emph{actual} 
proof is what one gets by transcribing these diagrams into more 
traditional notation, than claiming 
that already the diagrams as such constitute a rigorous proof. Some 
authors~\cite{Penrose,Stefanescu} treat the diagrams as rigorous 
mathematical expressions, but require feedbacks for their 
interpretations of them. Finally it may be argued that the networks 
are merely a concrete realisation of the free \PROP, and a free 
object may more or less by definition be used to encode arbitrary 
expressions within the category in which it is free. It should 
however be observed that the networks used here are discrete 
mathematical objects immediately encodable in a computer, whereas 
traditionally realisations of the free \PROP~(e.g.~\cite{JoyalStreet}) 
rather tend to be topological and rely heavily on properties of 
continuous space. While that may be advantageous for visualisation 
and can at times help to prove specific results, it seems highly 
undesirable for a piece of formal mathematical notation.

The formal construction of the free \PROP\ is here the subject of 
Section~\ref{Sec:FriPROP}; this includes not only the construction as 
such, but also results establishing various additional structures 
within the free \PROP\Dash structures which the rewriting formalism 
makes heavy use of. In particular it defines the operation of 
symmetric join $\join$ of elements of the free \PROP; despite 
operating in a setting which is more general than associative 
polynomials, this operation manages to unify things in a manner which 
is reminiscent of what one sees in the theory for commutative 
polynomials!

The origin for the additional structures defined in 
Section~\ref{Sec:FriPROP} is the subexpression concept established in 
Section~\ref{Sec:Deluttryck}. It is based on abstract index notation 
rather than the traditional notation, and it is more powerful than 
the ``convex'' (cf.~Definition~\ref{Def:KonvextDeluttryck}) 
subexpression concept that one might get from the traditional 
notation in that it can make do with infinitely fewer rewrite rules 
(it may take infinitely many rules with convex left hand sides to 
express what one rule with nonconvex left hand side can do). The fact 
that the chosen subexpression concept has both algebraic and 
graph-theoretical characterisations makes it practical both for 
theory and computer implementation~\cite{cmplutil}.

Section~\ref{Sec:Omskrivning} is where the actual rewriting formalism 
is set up. This is an application of the multi-sorted generic 
framework of~\cite{rpaper}, but rather than following the obvious 
approach of having one sort per \PROP\ component, this set-up takes 
the more refined approach of having a separate sort for each 
transference type. This allows for rules that replace nonconvex 
subexpressions, and may for derived rules more accurately capture the 
requirements of their derivations. Several Diamond Lemmas for 
$\mc{R}$-linear \PROPs\ (and operads) are given, in particular 
Theorem~\ref{S:PROP-skarp-DL} that is directly applicable for the 
``missing fourth theme'' of describing the $\Hopf$ \PROP, since it 
comes with detailed account of what ambiguities need to be explicitly 
resolved.

Still, it must be stated up front that the rewriting formalism of 
Section~\ref{Sec:Omskrivning} may still be a bit too coarse to really 
support a fully general Diamond Lemma for $\mc{R}$-linear \PROPs; 
rather than being the finished product, it should be taken as a first 
working prototype. Limitations are encountered in the analysis of 
minimal sets of ambiguities (``critical pairs''), in that it is 
sometimes not possible to simplify an ambiguity by peeling away parts 
of it which are not in the left hand side of either rule. Even though 
conditions can be given (namely, that the rules are \emph{sharp}) under 
which all ambiguities formed by a pair of rules can be simplified to 
one from a small finite set that can be constructed through a 
straightforward combinatorial search, not all rewriting systems of 
interest satisfy this condition. The detailed analysis given of how 
things can go wrong suggests that there really are new phenomena 
that arise in the \PROP\ setting, and that the classical 
classification of ambiguities as being either inclusions or overlaps 
may have to be extended with a new class that are neither.

In the third theme, we may again encounter the gray area of things 
possibly known but probably not written down; the construction of 
strict orders on \PROPs\ turns out to be surprisingly difficult once 
one goes beyond the basic `compare counts of vertices of each type' 
and seeks to find something that takes into account the 
structure of the expression. Section~\ref{Sec:Ordning} sets up some 
basic theory and provides the connectivity 
\PROP~(Example~\ref{Ex:Sammanhangande}) as one fundamental example, 
whereas Section~\ref{Sec:Matrisordning} focuses on orders obtained by 
comparing matrices. With the notable exception of the connectivity 
\PROP, pretty much every order I have made use of has turned out to 
be a special case of the standard order on some biaffine \PROP~(see 
Corollary~\ref{Kor:BiaffinOrdning}), which can be both a blessing (it 
is very versatile) and a curse (if this seems reluctant to give 
direction to a rewrite rule, then we're pretty much out of options). 
Research to the end of constructing further examples of strict \PROP\ 
orders would be greatly appreciated.

Section~\ref{Sec:Feedbacks} is rather about providing tools to 
demonstrate that specific orders on \PROPs\ are compatible with the 
rewriting framework. To that end, it is convenient to introduce the 
concept of (formal) feedbacks on \PROPs, since these are on one hand 
related to the symmetric join operation, and can on the other hand be 
shown to preserve strict inequalities under the known strict \PROP\ 
orders.

%

\subsection{Preview of results on \texorpdfstring{$\Hopf$}{Hopf}}

The next couple of pages make up a sort of preview of the main results 
of \emph{Network Rewriting II}, the missing ``fourth theme'' of the 
present text. It is included primarily as a motivation for the 
foundational material that follows, to offer some problem which 
really exercises the various parts of the machinery that is being 
built, for the reader to better comprehend why there is a point in 
doing these things. Many parts of this text can of course be appreciated 
as interesting pieces of mathematics in their own right, but what 
motivates collecting \emph{these particular} pieces together is that 
they combine into a machinery for doing rewriting, and for 
appreciating \emph{that} it may be of useful to actually have a 
problem to which that machinery may be applied.

For any Hopf algebra $\mc{H}$ over a field $\mc{K}$, the five basic 
operations (and co-operations) are\Ldash for some pair $(m,n)$ of 
natural numbers\Dash elements in the set $\hom_\mc{K}(\mc{H}^{\otimes n}, 
\mc{H}^{\otimes m})$ of $\mc{K}$-linear maps from $\mc{H}^{\otimes n}$ 
to $\mc{H}^{\otimes m}$. Namely:
\begin{center}
  \begin{tabular}{r c lc}
    Symbol& Signature& Name& $(m,n)$\\
    $\mOp$& \(\mc{H} \otimes \mc{H} \Fpil \mc{H}\)&
    multiplication& $(1,2)$\\
    $\eta$& \(\mc{K} \Fpil \mc{H}\)& unit& $(1,0)$\\
    $\Delta$& \(\mc{H} \Fpil \mc{H} \otimes \mc{H}\)&
    coproduct& $(2,1)$\\
    $\ve$& \(\mc{H} \Fpil \mc{K}\)& counit& $(0,1)$\\
    $S$& \(\mc{H} \Fpil \mc{H}\)& antipode& $(1,1)$
  \end{tabular}
\end{center}
The entire family 
$\bigl\{ \hom_\mc{K}(\mc{H}^{\otimes n}, \mc{H}^{\otimes m}) 
\bigr\}_{m,n\in\N}$ of maps make up an algebraic structure known as a 
\PROP~(with composition of maps, tensor product of maps, and 
permutation of factors as operations), and the five 
given elements generate a sub-\PROP\ thereof, here denoted 
$\Hopf(\mc{H})$. It may be observed that all Hopf algebra 
axioms can be stated as identities in $\hom_\mc{K}(\mc{H}^{\otimes n}, 
\mc{H}^{\otimes m})$ for suitable $m$ and $n$, for example the 
antipode axioms are that \(\mOp \circ (S \otimes\nobreak \mathrm{id}) 
\circ \Delta = \eta \circ \ve = \mOp \circ (\mathrm{id} 
\otimes\nobreak S) \circ \Delta\) in $\hom_\mc{K}(\mc{H},\mc{H})$, 
and this makes it possible to find all consequences of these axioms.

As is generally the case in universal algebra, there is a free 
$\mc{K}$-linear \PROP\ $\mc{K}\{\Omega\}$ of which every 
$\Hopf(\mc{H})$ is a homomorphic image (the particular signature 
$\Omega$ consists of the symbols $\mOp$, $\eta$, $\Delta$, $\ve$, $S$ 
having coarities $1$, $1$, $2$, $0$, $1$ and arities $2$, $0$, 
$1$, $1$, $1$ respectively). There is for every Hopf algebra $\mc{H}$ 
a unique \PROP~homomorphism \(f_{\mc{H}}\colon \mc{K}\{\Omega\} \Fpil 
\Hopf(\mc{H})\) mapping the elements of $\Omega$ onto the generators 
of $\Hopf(\mc{H})$, and the kernel of this map defines a congruence 
relation $C_\mc{H}$ on $\mc{K}\{\Omega\}$. Exactly which elements 
this relation considers to be congruent depends on $\mc{H}$, but it 
is known that it at least considers the left hand sides of all Hopf 
algebra axioms congruent to their respective right hand sides, e.g. 
\(\mOp \circ (S \otimes\nobreak \mathrm{id}) \circ \Delta \equiv 
\eta \circ \ve \pmod{C_\mc{H}}\). Since there for every set of such 
identities is a minimal congruence which satisfies them all, there is 
in particular a congruence $C_{\mathrm{Hopf}}$ on $\mc{K}\{\Omega\}$ 
which is generated by only the Hopf axioms, and hence 
\(C_{\mathrm{Hopf}} \subseteq C_{\mc{H}}\) for every Hopf algebra 
$\mc{H}$. It follows that every $f_\mc{H}$ splits over the 
\PROP~\(\Hopf := \mc{K}\{\Omega\} \big/ C_{\mathrm{Hopf}}\), so in 
particular every $\Hopf(\mc{H})$ is an image of $\Hopf$. The main aim 
of \emph{Network Rewriting II} will be to describe the structure of 
this general $\Hopf$ \PROP, and in particular provide an algorithm 
for deciding equality in $\Hopf$.

Technically this is done by turning the Hopf algebra axioms into 
rewrite rules, and completing the corresponding rewrite system. The 
suitable instance (Theorem~\ref{S:PROP-skarp-DL}) of the Diamond Lemma then 
gives an isomorphism between $\Hopf$ and a vector subspace of 
$\mc{K}\{\Omega\}$, and a projection of $\mc{K}\{\Omega\}$ onto this 
subspace which maps elements to the same thing if and only if they are 
congruent modulo $C_{\mathrm{Hopf}}$. Since this projection is 
computationally effective, an equality algorithm is merely to apply it 
to the difference between the two elements to compare, and check 
whether the projection maps this difference to zero.

The completed rewrite system consists of $16$ infinite families of 
rules and $14$ isolated rules, stated here in abstract index 
notation (see Section~\ref{Sec:AIN} and page~\pageref{Sid:AIN-regel}). 
The $14$ isolated rules are:
\begin{subequations} \label{Eq:SporadiskaRegler}
  \begin{align}
    \label{Rule:mu-eta-left}
    \mOp^a_{bc} \, \eta^b &{}\longmapsto \delta^a_c
    \text{,}\\
    \label{Rule:mu-eta-right}
    \mOp^a_{bc} \, \eta^c &{}\longmapsto \delta^a_b
    \text{,}\\
    \label{Rule:mu-assoc}
    \mOp^a_{bc} \, \mOp^c_{de} &{}\longmapsto
    \mOp^a_{ce} \, \mOp^c_{bd}
    \text{,}\displaybreak[0]\\
    \label{Rule:Delta-ve-left}
    \ve_a \, \Delta^{ab}_c &{}\longmapsto \delta^b_c
    \text{,}\\
    \label{Rule:Delta-ve-right}
    \ve_b \, \Delta^{ab}_c &{}\longmapsto \delta^a_c
    \text{,}\\
    \label{Rule:Delta-coassoc}
    \Delta^{bc}_d \, \Delta^{ad}_e &{}\longmapsto
    \Delta^{ab}_d \, \Delta^{dc}_e
    \text{,}\displaybreak[0]\\
    \label{Rule:Delta-eta}
    \Delta^{ab}_c \, \eta^c &{}\longmapsto \eta^a \, \eta^b
    \text{,}\\
    \label{Rule:ve-eta}
    \ve_a \, \eta^a &{}\longmapsto 1
    \text{,}\\
    \label{Rule:ve-mu}
    \ve_a \, \mOp^a_{bc} &{}\longmapsto \ve_b \, \ve_c
    \text{,}\\
    \label{Rule:Delta-mu}
    \Delta^{ab}_c \, \mOp^c_{gh} &{}\longmapsto
    \mOp^a_{cd} \, \mOp^b_{ef} \, \Delta^{ce}_g \, \Delta^{df}_h
    \text{,}\displaybreak[0]\\
    \label{Rule:S-eta}
    S^a_b \, \eta^b &{}\longmapsto \eta^a
    \text{,}\\
    \label{Rule:S-mu}
    S^a_b \, \mOp^b_{de} &{}\longmapsto
    \mOp^a_{bc} \, S^b_e \, S^c_d
    \text{,}\\
    \label{Rule:ve-S}
    \ve_a \, S^a_b &{}\longmapsto \ve_b
    \text{,}\\
    \label{Rule:Delta-S}
    \Delta^{ab}_c \, S^c_e &{}\longmapsto
    S^a_c \, S^b_d \, \Delta^{dc}_e
    \text{;}
  \end{align}
\end{subequations}
the first $10$ of which are the axioms for a bialgebra\Dash $3$ 
axioms for a unital associative algebra, $3$ axioms for a counital 
coassociative coalgebra, and $4$ axioms stating the compatibility of 
the algebra and coalgebra structures. The last four rules encode 
the facts that the antipode $S$ is an algebra and coalgebra 
antihomomorphism; these are usually not 
taken as axioms, since they are implied by the ``formal group 
inverse'' axioms \(\mOp^a_{bc} \, S^b_d \, \Delta^{dc}_e = 
\eta^a \, \ve_e = \mOp^a_{bc} \, S^c_d \, \Delta^{bd}_e\) for the 
antipode, which are the \(N=0\) instances of \eqref{Rule:1-1-0-1} and 
\eqref{Rule:1-1-1-0}.

The set \eqref{Eq:SporadiskaRegler} of rules constitute a complete 
rewrite system which, although weaker than the full set of Hopf 
algebra axioms, still suffice for transforming any monomial element of 
$\mc{K}\{\Omega\}$ to the normal form \(A \circ B \circ C\), where $A$ 
is constructed from the algebra 
operations $\mOp$ and $\eta$, $C$ is constructed from the coalgebra 
cooperations $\Delta$ and $\ve$, and $B$ is constructed from 
(permutations and) the antipode $S$. The remaining $16$ families of 
rules all remove certain patterns that may be present in such an 
\(A \circ B \circ C\) composition, and the reason they are families 
is that there is no bound on the number of antipodes that may be 
present on each string in the $B$ part; the family parameter $N$ is 
mostly a counter for how many multiples of $S^{\circ 2}$ that are 
common to the $B$ part strings in the rule:
\begin{subequations} \label{Eq:Regelfamiljer}
  \begin{align}
    \label{Rule:1-1-0-1} 
    \mOp^b_{cd} \, \left(S^{\circ(2N)}\right)^c_e \, 
      \left(S^{\circ(2N+1)}\right)^d_f \, \Delta^{ef}_g &{}\longmapsto
    \eta^b \, \ve_g
    \\
    \label{Rule:1-1-1-0}
    \mOp^b_{cd} \, \left(S^{\circ(2N+1)}\right)^c_e \, 
      \left(S^{\circ(2N)}\right)^d_f \, \Delta^{ef}_g &{}\longmapsto
    \eta^b \, \ve_g
    \\
    \mOp^b_{cd} \, \left(S^{\circ(2N+2)}\right)^d_e \, 
      \left(S^{\circ(2N+1)}\right)^c_f \, \Delta^{ef}_g &{}\longmapsto
    \eta^b \, \ve_g
    \\
    \mOp^b_{cd} \, \left(S^{\circ(2N+1)}\right)^d_e \, 
      \left(S^{\circ(2N+2)}\right)^c_f \, \Delta^{ef}_g &{}\longmapsto
    \eta^b \, \ve_g
    \displaybreak[0]\\
    \label{Rule:1-2-1-0}
    \mOp^a_{bc} \, \mOp^b_{de} \, \left(S^{\circ(2N+1)}\right)^c_f \, 
      \left(S^{\circ(2N)}\right)^e_g \, \Delta^{gf}_h &{}\longmapsto
    \delta^a_d \, \ve_h
    \\
    \mOp^a_{bc} \, \mOp^b_{de} \, \left(S^{\circ(2N)}\right)^c_f \, 
      \left(S^{\circ(2N+1)}\right)^e_g \, \Delta^{gf}_h &{}\longmapsto
    \delta^a_d \, \ve_h
    \\
    \mOp^a_{bc} \, \mOp^b_{de} \, \left(S^{\circ(2N+2)}\right)^c_f \, 
      \left(S^{\circ(2N+1)}\right)^e_g \, \Delta^{fg}_h &{}\longmapsto
    \delta^a_d \, \ve_h
    \\
    \mOp^a_{bc} \, \mOp^b_{de} \, \left(S^{\circ(2N+1)}\right)^c_f \, 
      \left(S^{\circ(2N+2)}\right)^e_g \, \Delta^{fg}_h &{}\longmapsto
    \delta^a_d \, \ve_h
    \displaybreak[0]\\
    \mOp^b_{cd} \, \left(S^{\circ(2N+1)}\right)^c_e \, 
      \left(S^{\circ(2N)}\right)^d_f \, \Delta^{ae}_g \, \Delta^{gf}_h 
      &{}\longmapsto
    \delta^a_h \, \eta^b
    \\
    \mOp^b_{cd} \, \left(S^{\circ(2N)}\right)^c_e \, 
      \left(S^{\circ(2N+1)}\right)^d_f \, \Delta^{ae}_g \, \Delta^{gf}_h 
      &{}\longmapsto
    \delta^a_h \, \eta^b
    \\
    \mOp^b_{cd} \, \left(S^{\circ(2N+1)}\right)^d_e \, 
      \left(S^{\circ(2N+2)}\right)^c_f \, \Delta^{ae}_g \, \Delta^{gf}_h 
      &{}\longmapsto
    \delta^a_h \, \eta^b
    \\
    \mOp^b_{cd} \, \left(S^{\circ(2N+2)}\right)^d_e \, 
      \left(S^{\circ(2N+1)}\right)^c_f \, \Delta^{ae}_g \, \Delta^{gf}_h 
      &{}\longmapsto
    \delta^a_h \, \eta^b
    \displaybreak[0]\\
    \mOp^b_{cd} \, \mOp^c_{ef} \, \left(S^{\circ(2N)}\right)^d_g \, 
      \left(S^{\circ(2N+1)}\right)^f_h \, \Delta^{ah}_i \, \Delta^{ig}_j 
      &{}\longmapsto
    \delta^a_j \, \delta^b_e
    && \text{where \(a \canmake e\),}
    \\
    \mOp^b_{cd} \, \mOp^c_{ef} \, \left(S^{\circ(2N+1)}\right)^d_g \, 
      \left(S^{\circ(2N)}\right)^f_h \, \Delta^{ah}_i \, \Delta^{ig}_j 
      &{}\longmapsto
    \delta^a_j \, \delta^b_e
    && \text{where \(a \canmake e\),}
      \label{Eq:MinstaFeedback-exempel}
    \\
    \mOp^b_{cd} \, \mOp^c_{ef} \, \left(S^{\circ(2N+2)}\right)^d_g \, 
      \left(S^{\circ(2N+1)}\right)^f_h \, \Delta^{ag}_i \, \Delta^{ih}_j 
      &{}\longmapsto
    \delta^a_j \, \delta^b_e
    && \text{where \(a \canmake e\),}
    \\
    \mOp^b_{cd} \, \mOp^c_{ef} \, \left(S^{\circ(2N+1)}\right)^d_g \, 
      \left(S^{\circ(2N+2)}\right)^f_h \, \Delta^{ag}_i \, \Delta^{ih}_j 
      &{}\longmapsto
    \delta^a_j \, \delta^b_e
    && \text{where \(a \canmake e\).}
  \end{align}
\end{subequations}
While these $16$ families are distinct, it should also be clear that 
they are very much variations on a single theme, and indeed there are 
many ways in which the rules of one family logically implies the 
rules in another when \eqref{Eq:SporadiskaRegler} is given. What they 
all amount to is basically that two strings of antipodes cancel each 
other out if (i)~they are adjacent both in the $A$ and $C$ parts of 
the expression, (ii)~the number of antipodes on them differ by 
exactly~$1$, and (iii)~they cross each other iff the string with the 
lower number of antipodes has an odd number of antipodes. 

%

\subsection{Notation}

The set $\N$\index{N@$\N$} of natural numbers is considered to 
include $0$. The set of positive integers is written $\Zp$. A 
\emDefOrd{semiring} is like a ring, except that elements need not 
have an additive inverse; $\N$ and $\Z$ are (associative and 
commutative unital) semirings, but $\Zp$ is not a semiring (on 
account of not containing a zero element).

Matrices with parenthesis delimiters are ordinary matrices. Matrices 
with bracket delimiters are \emDefOrd{block matrices}, where entries 
may themselves be matrices, and it is then the elements of those 
entries that are elements of the block matrix. As usual, the $(i,j)$ 
entry of a matrix $A$ may be denoted $A_{ij}$, $a_{ij}$, $A_{i,j}$, 
or $a_{i,j}$ depending on which is most clear in context. Note, 
however, that e.g.~$A_{ij}$ might often denote some block in the 
matrix $A$; in such cases it is explicitly defined as such. $J_{m 
\times n}$\index{J m n@$J_{m \times n}$} denotes the $m \times n$ 
matrix of ones, i.e., the $m \times n$ matrix where all elements 
are~$1$.

Brackets are also used in the notation $[a]_\simeq$ for the 
equivalence class of $a$ (with respect to some equivalence relation 
$\simeq$), and in the notation $[n]$ for $\{1,\dotsc,n\}$; on the 
latter, see Definition~\ref{Def:Permutationer}.

Composition of maps has the inner function on the right, so 
\index{\circ@$\circ$!function composition}
\((f \circ\nobreak g)(x) = f\bigl( g(x) \bigr)\). In some places, 
$(f,g)(x)$ is used as a shorthand for $\bigl( f(x), g(x) \bigr)$. 
Given a function \(f \colon A \Fpil B\), the corresponding direct and 
inverse set maps are defined by%
\index{P@$\setmap{}$}\index{P bar@$\setinv$}
\begin{align*}
  \setmap{f}(X) ={}& \setOf[\big]{ f(x) }{ x \in X }
    && \text{for \(X \subseteq A\),}\\
  \setinv{f}(Y) ={}& \setOf[\big]{ x \in A }{ f(x) \in Y}
    && \text{for \(Y \subseteq B\);}
\end{align*}
the $\setmap{}$ notation is because making $\setmap{f}$ and 
$\setinv{f}$ out of $f$ is precisely what the co- and contravariant 
respectively power set functors do with morphisms. For 
$\setmap{f}(A)$, the shorter notation $\setim f$\index{im@$\setim$} 
is frequently used. The restriction of $f$ to \(X \subseteq A\) is 
denoted $\restr{f}{X}$. The identity function\slash map is generally 
denoted $\mathrm{id}$.\index{id@$\mathrm{id}$}

\section{\PROPs}
\label{Sec:PROPpar}

\PROP~is short for `PROducts and Permutations category', a special 
case of symmetric monoidal category, but this etymological background 
is of little relevance for their use here. Rather, \PROPs\ will be 
viewed as just another type of abstract algebraic structure, and 
incidentally one for which matrix arithmetic provides several 
elementary examples. An instructive \PROP\ is that which
consists of \emph{all matrices regardless of sides} over any fixed 
(unital associative semi-)ring, since the syntactic restrictions on 
the composition operation in a \PROP\ are the same as for matrix 
multiplication.

Concretely, a \PROP\ $\mc{P}$ is usually defined as a family 
\(\bigl\{ \mc{P}(m,n) \bigr\}_{m,n\in\N}\) of sets (the 
\emph{components} of the \PROP, which in the categorical formalism 
are precisely the $\hom$-sets) together with some operations 
satisfying certain axioms. In the \PROP\ $\mc{R}^{\bullet\times\bullet}$ 
of all matrices over some ring $\mc{R}$, the family of components is 
thus $\left\{ \mc{R}^{m \times n} \right\}_{m,n\in\N}$, i.e., for 
every number of rows $m$ and every number of columns $n$ there is the 
component $\mc{R}^{m \times n}$ of $m \times n$ matrices over $\mc{R}$. 
In the abstract setting, the indices $m$ and $n$ are called the 
\emph{coarity} and \emph{arity} respectively, and $\mc{P}(m,n)$ is 
simply the set of elements in $\mc{P}$ which have arity~$n$ and 
coarity~$m$. 


The first operation `$\circ$' is called \emph{composition}. It is 
similar to matrix multiplication in that it is a map \(\mc{P}(l,m) 
\times \mc{P}(m,n) \Fpil \mc{P}(l,n)\) for any \(l,m,n\in\N\), and 
in the \PROP\ $\mc{R}^{\bullet\times\bullet}$ it 
really is matrix multiplication. Like matrix multiplication it is 
associative and has identities\Dash a separate identity in every 
$\mc{P}(n,n)$.

The second operation $\otimes$ is called the tensor product (although 
in the case of $\mc{R}^{\bullet\times\bullet}$ this is perhaps not 
the first name one would choose for this operation), and has 
signature \(\mc{P}(k,l) \otimes \mc{P}(m,n) \Fpil \mc{P}(k +\nobreak 
m, l +\nobreak n)\) for all \(k,l,m,n \in \N\). Intuitively it is 
often a way of putting the left factor beside the right factor 
without making them interact, and in the 
$\mc{R}^{\bullet\times\bullet}$ \PROP\ it merely amounts to putting 
the two factors as diagonal blocks in an otherwise $0$ matrix, e.g.
\begin{equation*}
  \begin{pmatrix} a_{11} & a_{12} \\ a_{21} & a_{22} \end{pmatrix}
  \otimes
  \begin{pmatrix} b_{11} & b_{12} \\ b_{21} & b_{22} \end{pmatrix}
  = \begin{pmatrix}
    a_{11} & a_{12} & 0 & 0 \\
    a_{21} & a_{22} & 0 & 0 \\
    0 & 0 & b_{11} & b_{12} \\
    0 & 0 & b_{21} & b_{22}
  \end{pmatrix}
  \text{.}
\end{equation*}
(There are other matrix \PROPs\ in which $\otimes$ really ends up 
multiplying matrix elements, e.g.~when it is the Kronecker matrix 
product, but we'll get to that later.) The tensor product is 
associative and has a unique identity element (with arity and 
coarity~$0$).

The final operations are the actions of permutations on \PROP\ 
elements, which in the case of $\mc{R}^{\bullet\times\bullet}$ are 
permutation of rows (in the case of left action) and permutation of 
the columns (in the case of the right action). Alternatively (and with 
slightly less machinery) this can be formalised as a family 
$\{\phi_n\}_{n\in\N}$ of maps from the permutation group $\Sigma_n$ 
on $n$ elements to $\mc{P}(n,n)$, such that the actions are given 
by \(\sigma a = \phi_m(\sigma) \circ a\) and \(a \tau = a \circ 
\phi_n(\tau)\) for all \(a \in \mc{P}(m,n)\), \(\sigma \in 
\Sigma_m\), and \(\tau \in \Sigma_n\). In $\mc{R}^{\bullet\times\bullet}$, 
the matrix $\phi_m(\sigma)$ is thus the permutation matrix corresponding 
to $\sigma$, with \(\phi_m(\sigma)_{i,j} = 1\) iff \(i = \sigma(j)\).

Besides these, many \PROPs\ come with a linear structure such that 
each $\mc{P}(m,n)$ is a module over some ring\Ldash $\mc{R}^{m \times n}$ 
is for example an $\mc{R}$-module\Dash but this is not part of the 
basic concept. As is common in diamond lemma\slash Gr\"obner 
basis\slash rewriting theory, there will be a need both for a 
structure of ``monomials'' and for a structure of ``polynomials'', 
and in this case both of these will be \PROPs, just like both the 
monomials and polynomials of commutative Gr\"obner basis theory 
constitute monoids. The structure that relates to a basic \PROP\ as 
an $\mc{R}$-algebra does to a monoid is the 
\emDefOrd[*{linear@$\mc{R}$-linear!PROP@\PROP}]{$\mc{R}$-linear \PROP}, 
and one can always construct an $\mc{R}$-linear \PROP\ from 
formal $\mc{R}$-linear combinations of elements from another \PROP; 
Construction~\ref{Kons:R-linjar} gives the details.

\subsection{Formal definition and basic examples}

\begin{definition}
  An \DefOrd[*{N2-graded@$\N^2$-graded!set}]{$\N^2$-graded set} $\mc{P}$ 
  is a set together with two functions 
  \index{alpha@$\alpha$}\index{omega@$\omega$}%
  \(\alpha_{\mc{P}},\omega_{\mc{P}}\colon \mc{P} \Fpil \N\). 
  In this context, the value $\alpha_{\mc{P}}(b)$ is called the 
  \DefOrd{arity} of $b$ and the value $\omega_{\mc{P}}(b)$ is called 
  the \DefOrd{coarity} of $b$; the dependence on $\mc{P}$ is 
  usually elided. Denote by 
  \index{(m,n)@$\cdot(m,n)$!$\N^2$-graded set component}$\mc{P}(m,n)$ 
  the set of those elements in $\mc{P}$ which have coarity $m$ and 
  arity $n$.
\end{definition}

Modulo some formal nonsense regarding the distinction between 
formally disjoint union and plain set-theoretic union, any family 
$\bigl\{ \mc{P}(m,n) \bigr\}_{m,n\in\N}$ of sets parametrised by two 
natural numbers may be regarded as an $\N^2$-graded set, so in 
particular any \PROP\ may viewed this way. This simplifies the 
definition of many basic concepts such as sub-\PROP, \PROP\ morphism, 
and generating set, but concrete constructions often tend to describe 
each component separately, and it frequently happens that a strict 
set-theoretic interpretation of these descriptions would make some 
components non-disjoint; in such cases it is to be understood that 
elements from different components of the \PROP\ are distinct when 
the \PROP\ is viewed as an $\N^2$-graded set.

\begin{definition}
  Let $\mc{P}$ and $\mc{Q}$ be $\N^2$-graded sets. 
  An \DefOrd[*{N2-graded@$\N^2$-graded!set morphism}]{$\N^2$-graded 
  set morphism} 
  \(f\colon \mc{P} \Fpil \mc{Q}\) is a map \(\mc{P} \Fpil \mc{Q}\) 
  which preserves arity and coarity, i.e., \(\alpha_{\mc{Q}}\bigl( 
  f(b) \bigr) = \alpha_{\mc{P}}(b)\) and \(\omega_{\mc{Q}}\bigl( 
  f(b) \bigr) = \omega_{\mc{P}}(b)\) for all \(b \in \mc{P}\). 
  More generally, a map \(f\colon \mc{P} \Fpil \mc{Q}\) is said to 
  have \DefOrd{degree} $(k,l)$ if \(\omega_{Q}\bigl( f(b) \bigr) = 
  k + \omega_{\mc{P}}(b)\) and \(\alpha_{Q}\bigl( f(b) \bigr) = l + 
  \alpha_{\mc{P}}(b)\) for all \(b \in \mc{P}\).
\end{definition}

As might be expected, \PROP\ homomorphisms are $\N^2$-graded set 
morphisms which satisfy some additional conditions. Various kinds of 
derivatives are often convenient to formalise as degree $(0,1)$ or 
$(1,0)$ maps, but those concepts are more natural to discuss using 
the abstract index notation introduced in Section~\ref{Sec:AIN}.

Permutations are an important ingredient in the \PROP\ concept, so 
before giving the formal definition of the latter, it is necessary to 
introduce some notations for the former; some of this should be 
familiar, but other parts are uncommon outside discussions of \PROPs.

\begin{definition} \label{Def:Permutationer}
  For all \(n \in \N\), let $[n]$ denote the set $\{1,\dotsc,n\}$, 
  and in particular \([0] = \varnothing\). Then define the 
  \DefOrd{permutation group} $\Sigma_n$\index{Sigma n@$\Sigma_n$} to 
  be the group of all bijections \([n] \Fpil [n]\). In particular, 
  $\Sigma_0$ is considered to be a group with one element.
  Explicit permutations may be written in \DefOrd{relation 
  notation}, where \(\sigma = \left( \begin{smallmatrix} 1& 2& 
  \hdots& n \\ \sigma(1) & \sigma(2) & \hdots & \sigma(n) 
  \end{smallmatrix} \right)\), but more commonly in \DefOrd{cycle 
  notation}, where $(k_1\,k_2\,\ldots\,k_r)$ denotes the permutation 
  $\sigma$ which satisfies \(\sigma(k_1) = k_2\), \(\sigma(k_2) = 
  k_3\), \dots, \(\sigma(k_{r-1}) = k_r\), \(\sigma(k_r) = k_1\) 
  and leaves all other elements fixed. 
  
  For the \PROP\ axioms, it is convenient to introduce the 
  notation \index{X@$\cross{k}{m}$}$\cross{k}{m}$ for the element 
  of \(\Sigma_{k+m}\) which exchanges a left block of $k$ things 
  with a right block of $m$ things, i.e.,
  \begin{equation}
    \cross{k}{m} (i) = \begin{cases}
      i+m& \text{if \(i \leqslant k\),}\\
      i-k& \text{if \(i > k\).}
    \end{cases}
  \end{equation}
  One may think of this as a crossing of two flat ``cables'', one 
  containing $k$ wires and the other containing $m$ wires. 
  Similarly the identity element in $\Sigma_n$ will be written as 
  \index{I@$\same{n}$ (identity permutation)}$\same{n}$.
  
  The \PROP\ axioms also use the permutation juxtaposition 
  product \index{\star@$\star$}\((\sigma,\tau) \mapsto 
  \sigma \star \tau : \Sigma_m \times \Sigma_n \Fpil \Sigma_{m+n}\), 
  which is defined by
  \begin{equation}
    (\sigma \star \tau)(i) = \begin{cases}
      \sigma(i)& \text{if \(i \leqslant m\),}\\
      \tau(i-m)+m& \text{if \(i>m\).}
    \end{cases}
  \end{equation}
  The group product of permutations, i.e., the function composition, 
  will be written as $\circ$ if there is a need to emphasise this 
  operation, but usually the permutations are just written next to 
  each other.
\end{definition}

\begin{definition} \label{Def:PROP}
  A \DefOrd[{PROP}*]{\PROP} $\mc{P}$ is an $\N^2$-graded set together 
  with a family \(\{\phi_n\}_{n\in\N}\)\index{phi@$\phi$} of maps 
  \(\phi_n\colon \Sigma_n \Fpil \mc{P}(n,n)\) and two binary operations 
  `$\circ$'\index{\circ@$\circ$!PROP composition@\PROP\ composition} 
  (composition) and `$\otimes$' (tensor product). \emph{The $\otimes$ 
  operation is considered to have higher precedence (binding strength) 
  than the $\circ$ operation.} For any \(a,b \in \mc{P}\), 
  \(\alpha(a \nobreak\otimes b) = \alpha(a) + \alpha(b)\) 
  and \(\omega(a \nobreak\otimes b) = \omega(a) + \omega(b)\). 
  Composition is a partial operation, such that $a \circ b$ is defined 
  if and only if \(\alpha(a) = \omega(b)\), in which case \(\alpha(a 
  \circ\nobreak b) = \alpha(b)\) and \(\omega(a \circ\nobreak b) = 
  \omega(a)\).
  
  A \PROP\ must furthermore satisfy the following axioms:
  \begin{description}
    \item[composition associativity]
      \((a \circ b) \circ c = a \circ (b \circ c)\) for all \(a \in 
      \mc{P}(k,l)\), \(b \in \mc{P}(l,m)\), and \(c \in \mc{P}(m,n)\).
    \item[composition identity]
      \(\phi_m(\same{m}) \circ a = a = a \circ \phi_n(\same{n})\) 
      for all \(a \in \mc{P}(m,n)\).
    \item[tensor associativity]
      \((a \otimes b) \otimes c = a \otimes (b \otimes c)\) for all 
      \(a \in \mc{P}(k,l)\), \(b \in \mc{P}(m,n)\), and \(c \in 
      \mc{P}(r,s)\).
    \item[tensor identity]
      \(\phi_0(\same{0}) \otimes a = a = a \otimes \phi_0(\same{0})\) 
      for all \(a \in \mc{P}(m,n)\).
    \item[composition--tensor compatibility]
      \((a \circ b) \otimes (c \circ d) = (a \otimes c) \circ (b 
      \otimes d)\) for all \(a \in \mc{P}(l,m)\), \(b \in 
      \mc{P}(m,n)\), \(c \in \mc{P}(k,r)\), and \(d \in \mc{P}(r,s)\).
    \item[permutation composition]
      \(\phi_n(\sigma) \circ \phi_n(\tau) = \phi_n(\sigma\tau)\) for 
      all \(\sigma,\tau \in \Sigma_n\). In other words, each $\phi_n$ 
      is a group homomorphism.
    \item[permutation juxtaposition]
      \(\phi_m(\sigma) \otimes \phi_n(\tau) = \phi_{m+n}(\sigma \star 
      \tau)\) for all \(\sigma \in \Sigma_m\) and \(\tau \in \Sigma_n\). 
    \item[tensor permutation]
      \(\phi_{k+m}(\cross{k}{m}) \circ (a \otimes b) = (b \otimes a) 
      \circ \phi_{l+n}(\cross{l}{n})\) for all \(a \in \mc{P}(k,l)\) 
      and \(b \in \mc{P}(m,n)\).
  \end{description}
  Since the index $n$ of a $\phi_n$ map is clear from the permutation 
  fed into it, this part of the notation is often omitted. (In practical 
  applications, it is common to leave the $\phi_n$ maps out 
  altogether.) 
  
  A \DefOrd[*{PROP@\PROP!homomorphism}]{\PROP\ (homo)morphism} 
  \(f\colon \mc{P} \Fpil \mc{Q}\) is 
  an $\N^2$-graded set morphism which preserves $\circ$, $\otimes$, 
  and $\phi$, i.e., \(f(a \circ_{\mc{P}}\nobreak b) = f(a) 
  \circ_{\mc{Q}} f(b)\), \(f(a \otimes_{\mc{P}}\nobreak b) = f(a) 
  \otimes_{\mc{Q}} f(b)\), and \(f \circ \phi_{\mc{P}} = 
  \phi_{\mc{Q}}\).
  
  A \DefOrd[{sub-PROP}*]{sub-\PROP} $\mc{Q}$ of $\mc{P}$ is a subset 
  of $\mc{P}$ which contains $\phi_n(\Sigma_n)$ for all $n$ and is 
  closed under the composition and tensor product operations of 
  $\mc{P}$. An $\N^2$-graded set \(\Omega \subseteq \mc{P}\) is said to 
  \DefOrd{generate} $\mc{P}$ if the only sub-\PROP\ of $\mc{P}$ that 
  contains $\Omega$ is $\mc{P}$ itself. The 
  \DefOrd[*{sub-PROP@sub-\PROP!generated by}]{sub-\PROP\ generated by} 
  $\Omega$ is the intersection of all sub-\PROPs\ of $\mc{P}$ that 
  contains~$\Omega$.
  
  A \PROP\ $\mc{P}$ is said to be 
  \DefOrd[*{linear@$\mc{R}$-linear!PROP@\PROP}]{$\mc{R}$-linear} 
  for some ring $\mc{R}$ if every $\mc{P}(m,n)$ is an $\mc{R}$-module 
  and the two operations $\circ$ and $\otimes$ are $\mc{R}$-bilinear. 
  A \emDefOrd[*{linear@$\mc{R}$-linear!PROP homomorphism@\PROP\ 
  homomorphism}]{homomorphism} \(f\colon \mc{P} \Fpil \mc{Q}\) is 
  $\mc{R}$-linear if $\mc{P}$ and $\mc{Q}$ are both $\mc{R}$-linear and 
  the restriction of $f$ to every component of $\mc{P}$ is 
  $\mc{R}$-linear.
\end{definition}

Theorem~\ref{S:PROP2-definition} can be taken as an alternative 
definition of \PROP, as an $\N^2$-graded set which supports (and is 
closed under) evaluation of networks. Although that requires a 
heavier machinery to set up (particularly for defining `network'), 
it is far more intuitive and not as seemingly arbitrary as 
Definition~\ref{Def:PROP}. The natural next step is however to give 
examples of structures that satisfy the set of axioms given above.


\begin{example}[Permutations \PROP]
  The set of all permutations constitute a \PROP\ $\mc{P}$ with 
  the group operation(s) and $\star$ as composition and tensor 
  product respectively;
  \begin{equation*}
    \mc{P}(m,n) = \begin{cases}
      \Sigma_n& \text{if \(m=n\),}\\
      \varnothing& \text{otherwise,}\\
    \end{cases}
  \end{equation*}
  \(\sigma \circ \tau := \sigma\tau\), \(\sigma \otimes \tau := 
  \sigma \star \tau\), and \(\phi(\sigma) := \sigma\). Several axioms 
  are mere trivialities for this \PROP, and even those that may be 
  non-obvious (e.g.~the tensor associativity and tensor permutation 
  axioms) are straightforward to verify through explicit calculation.
  
  If $\mc{Q}$ is another \PROP\ then $\phi_{\mc{Q}}$ is a \PROP\ 
  morphism \(\mc{P} \Fpil \mc{Q}\). Hence the permutations \PROP\ 
  \emph{is} a free \PROP, although for an empty generating set. A 
  general construction of the free \PROP\ is the subject of 
  Section~\ref{Sec:FriPROP}.
\end{example}

The next example demonstrates that one doesn't have to keep the 
permutations distinct.

\begin{example}[Ring \PROP]
  Let $\mc{R}$ be an associative and commutative unital ring. Let 
  $\mc{P}$ be the $\N^2$-graded set where
  \begin{equation*}
    \mc{P}(m,n) = \begin{cases}
      \mc{R}& \text{if \(m=n\),}\\
      \{0\} \subset \mc{R}& \text{otherwise,}
    \end{cases}
  \end{equation*}
  let \(\phi_n(\sigma) = 1 \in \mc{R} = \mc{P}(n,n)\) for all 
  \(\sigma \in \Sigma_n\), and let $\circ$ and $\otimes$ on $\mc{P}$ 
  both be the multiplication in $\mc{R}$. Then $\mc{P}$ is an 
  $\mc{R}$-linear \PROP.
  
  It follows from the tensor permutation axiom that \(ab = 1 \circ 
  a \otimes b = \phi_2(\cross{1}{1}) \circ a \otimes b =
  b \otimes a \circ \phi_2(\cross{1}{1}) = b \otimes a \circ 1 = ba\) 
  for all \(a,b \in \mc{P}(1,1) = \mc{R}\), so this example requires 
  $\mc{R}$ to be commutative.
\end{example}

There is however a way to turn noncommutative rings into \PROPs, by 
only putting them in the $(1,1)$ component. This is essentially the 
same construction as that of an operad from a ring, where only the 
arity $1$ component is nontrivial.

\begin{example} \label{Ex:Algebra-PROP}
  Let $\mc{R}$ be an associative and commutative unital ring. Let 
  $\mc{A}$ be an associative unital $\mc{R}$-algebra. Let 
  $\mc{P}$ be the $\N^2$-graded set where
  \begin{equation*}
    \mc{P}(m,n) = \begin{cases}
      \mc{R}& \text{if \(m=n=0\),}\\
      \mc{A}& \text{if \(m=n=1\),}\\
      \{0\}& \text{otherwise.}
    \end{cases}
  \end{equation*}
  Define $\circ$ to be:
  \begin{itemize}
    \item
      the multiplication in $\mc{R}$ when mapping \(\mc{P}(0,0) 
      \times \mc{P}(0,0) \Fpil \mc{P}(0,0)\),
    \item
      the multiplication in $\mc{A}$ when mapping \(\mc{P}(1,1) 
      \times \mc{P}(1,1) \Fpil \mc{P}(1,1)\), and
    \item
      the constant $0$ map otherwise.
  \end{itemize}
  Define $\otimes$ to be:
  \begin{itemize}
    \item
      the multiplication in $\mc{R}$ when mapping \(\mc{P}(0,0) 
      \times \mc{P}(0,0) \Fpil \mc{P}(0,0)\),
    \item
      the action of $\mc{R}$ on $\mc{A}$ when mapping \(\mc{P}(0,0) 
      \times \mc{P}(1,1) \Fpil \mc{P}(1,1)\) or \(\mc{P}(1,1) 
      \times \mc{P}(0,0) \Fpil \mc{P}(1,1)\), and
    \item
      the constant $0$ map otherwise.
  \end{itemize}
  Finally define \(\phi_0(\same{0}) = 1 \in \mc{R}\), 
  \(\phi_1(\same{1}) = 1 \in \mc{A}\), and \(\phi_n(\sigma) = 0\) for 
  all \(\sigma \in \Sigma_n\) and \(n>1\). Then $\mc{P}$ is an 
  $\mc{R}$-linear \PROP.
\end{example}

There are also constructions along the same line of \PROPs\ from 
arbitrary (symmetric) operads where the coarity $1$ components of the 
\PROP\ are exactly the components of the operad, but these get more 
technical\Dash in part because the axioms for an operad, though 
fewer, are much messier than their \PROP\ counterparts. It is easier 
to \emph{define} an operad as the coarity~$1$ components of a \PROP, 
and introduce the structure map as
\begin{equation*}
  \gamma_{n_1,\dotsc,n_m}(a,b_1,\dotsc,b_m) =
  a \circ b_1 \otimes \dotsb \otimes b_m
\end{equation*}
or $i$'th composition as
\begin{equation*}
  a \circ_i b = 
  a \circ \phi(\same{i-1}) \otimes b \otimes \phi(\same{\alpha(a)-i})
  \text{.}
\end{equation*}
A better first example of a \PROP\ with nontrivial off-diagonal 
components is the \PROP\ of matrices, which was sketched above but 
can do with getting the corner cases straightened out.

\begin{example}[Matrix \PROP]
  For every associative unital semiring $\mc{R}$, there is a \PROP\ 
  $\mc{R}^{\bullet\times\bullet}$ such that:
  \begin{itemize}
    \item
      \(\mc{R}^{\bullet\times\bullet}(m,n) = \mc{R}^{m \times n}\) 
      ($m$-by-$n$ matrices whose entries are elements of $\mc{R}$). 
      In particular, each of the degenerate sets $\mc{R}^{0 \times 
      n}$ and $\mc{R}^{m \times 0}$ is considered to have exactly 
      one element, which is denoted $0$.
    \item
      The composition \(\circ\colon \mc{R}^{l \times m} \times 
      \mc{R}^{m \times n} \Fpil \mc{R}^{l \times n}\) is 
      multiplication of an $l$-by-$m$ matrix with an $m$-by-$n$ 
      matrix. In the case that \(m=0\), the result is the $l$-by-$n$ 
      all zeroes matrix.
    \item
      The tensor product $\otimes$ constructs a block matrix from its 
      factors, by putting these on the main diagonal and filling the 
      rest of the matrix with zeroes:
      \begin{equation*}
        A \otimes B = \begin{bmatrix} A & 0 \\ 0 & B \end{bmatrix}
        \text{.}
      \end{equation*}
      (This is often called the direct sum of two matrices, and is 
      then typically denoted $\oplus$. A \PROP\ where $\otimes$ is 
      the ``usual'' (Kronecker) tensor product of matrices can be 
      found in Example~\ref{Ex:MatrisHomPROP}.)
    \item
      The $\phi_n$ maps map permutation to corresponding 
      permutation matrices, i.e.,
      \begin{equation*}
        \phi_n(\sigma)_{i,j} = \begin{cases}
          1& \text{if \(i = \sigma(j)\),}\\
          0& \text{otherwise.}
        \end{cases}
      \end{equation*}
  \end{itemize}
  The \PROP\ axioms are either well-known properties (e.g.~matrix 
  multiplication is associative) or easily verified through direct 
  calculations. The tensor permutation axiom is for example verified 
  by observing that
  \begin{multline*}
     \phi_{k+m}(\cross{k}{m}) \circ (A \otimes B) = 
     \begin{bmatrix} 0 & I_m \\ I_k & 0 \end{bmatrix} \circ 
       \begin{bmatrix} A & 0 \\ 0 & B \end{bmatrix} =
     \begin{bmatrix} 0 & B \\ A & 0 \end{bmatrix} = \\ =
     \begin{bmatrix} B & 0 \\ 0 & A \end{bmatrix} \circ
       \begin{bmatrix} 0 & I_n \\ I_l & 0 \end{bmatrix} =
     (B \otimes A) \circ \phi_{l+n}(\cross{l}{n})    
     \text{,}
  \end{multline*}
  where $I_n$ denotes the $n \times n$ identity matrix.
  
  
  In $\mc{R}^{\bullet\times\bullet}$, every component is an 
  $\mc{R}$-module, and if $\mc{R}$ is commutative then composition is 
  $\mc{R}$-bilinear, but the \PROP\ as a whole is typically not 
  $\mc{R}$-linear. The reason for this is that its tensor product 
  fails to be bilinear; e.g.~\((2A) \otimes (2B) = 2(A \otimes\nobreak 
  B)\), whereas the left hand side would be equal to 
  $4(A \otimes\nobreak B)$ in a $\Z$-linear \PROP.
\end{example}


However, when \PROPs\ occur in higher algebra, it is usually first as 
some variant of the following, where the operations are bilinear by 
design.

\subsection{The \texorpdfstring{$\HomPROP$}{Hom} \PROP\ and friends}

\begin{example}[The $\HomPROP$ \PROP] \label{Ex:HomPROP}
  Let $\mc{R}$ be an associative and commutative unital ring. Let $V$ 
  be an $\mc{R}$-module. Let $\HomPROP_V(m,n)$\index{Hom@$\HomPROP$} 
  be defined for all \(m,n \in \N\) by
  \begin{equation}
    \HomPROP_V(m,n) = \hom_{\mc{R}}( V^{\otimes n}, V^{\otimes m} )
    \text{,}\qquad
    \text{where \(V^{\otimes n} = 
    \underbrace{V \otimes_{\mc{R}} \dotsb \otimes_{\mc{R}} V }_{ 
      \text{$n$ factors}}\)}
  \end{equation}
  and $\hom_{\mc{R}}(U,W)$ is the space of all $\mc{R}$-linear maps 
  from $U$ to $W$; this space is an $\mc{R}$-module under pointwise 
  operations. Let the composition and tensor product on $\HomPROP_V$ 
  be the ordinary compositions and tensor products of $\mc{R}$-linear 
  maps, i.e.,
  \begin{align}
    (a \circ b)(\vek{u}) ={}& a\bigl( b(\vek{u}) \bigr)
      && \text{for all \(\vek{u} \in V^{\otimes m}\),}\\
    (a \otimes c)(\vek{u} \otimes \vek{v}) ={}&
      a(\vek{u}) \otimes c(\vek{v}) &&
      \text{for all \(\vek{u} \in V^{\otimes l}\) and 
      \(\vek{v} \in V^{\otimes n}\),}
      \label{Eq3:HomPROP}
  \end{align}
  for all \(a \in \HomPROP_V(k,l)\), \(b \in \HomPROP_V(l,m)\), 
  \(c \in \HomPROP_V(m,n)\), and \(k,l,m,n \in \N\). 
  (It may be remarked that the argument $\vek{u} \otimes \vek{v}$ of 
  $a \otimes c$ in \eqref{Eq3:HomPROP} is not a general element of 
  $V^{\otimes(l+n)}$, but since $V^{\otimes(l+n)}$ is spanned by 
  vectors on this form and $a \otimes c$ is an element of 
  $\hom_{\mc{R}}( V^{\otimes (l+n)}, V^{\otimes (k+m)} )$ it is 
  nonetheless sufficient. The same argument applies in 
  \eqref{Eq4:HomPROP}.)
  Finally let \(\phi_n\colon \Sigma_n \Fpil \HomPROP_V(n,n)\) be 
  defined by
  \begin{equation} \label{Eq4:HomPROP}
    \phi_n(\sigma)(\vek{u}_1 \otimes \dotsb \otimes \vek{u}_n) =
    \vek{u}_{\sigma^{-1}(1)} \otimes \dotsb \otimes
      \vek{u}_{\sigma^{-1}(n)}
  \end{equation}
  for all \(\vek{u}_1, \dotsc, \vek{u}_n \in V\) and \(\sigma \in 
  \Sigma_n\). Then $\HomPROP_V$ is an $\mc{R}$-linear \PROP.
  
  The most reasonable method for proving that $\HomPROP_V$ is a 
  \PROP\ is probably to fall back on the category theory from which 
  the name `\PROP' originated, and consider the category whose objects 
  are $\{ V^{\otimes n} \}_{n\in\N}$ and whose hom-sets are those given 
  by $\hom_{\mc{R}}$; this is a small full subcategory of the 
  category of $\mc{R}$-modules. In that context, the associativity and 
  identity of composition in the \PROP\ are simply these properties 
  with respect to morphisms in a category. The composition--tensor 
  compatibility axiom must be satisfied because ${-}\otimes{-}$ is 
  a bifunctor in this category, and the tensor product associativity 
  and identity axioms must be satisfied because this bifunctor is a 
  product, whose neutral object is \(V^{\otimes 0} = \mc{R}\). 
  As for the permutation axioms however, it might be less work to 
  verify these through explicit calculations. In the case of 
  permutation composition, this amounts to
  \begin{multline*}
    \phi_n(\sigma\tau)(\vek{u}_1 \otimes \dotsb \otimes \vek{u}_n)
    =
    \vek{u}_{(\sigma\tau)^{-1}(1)} \otimes \dotsb \otimes
      \vek{u}_{(\sigma\tau)^{-1}(n)}
    = \\ =
    \vek{u}_{\tau^{-1}(\sigma^{-1}(1))} \otimes \dotsb \otimes
      \vek{u}_{\tau^{-1}(\sigma^{-1}(n))}
    \stackrel{(*)}=
    \phi_n(\sigma)(
      \vek{u}_{\tau^{-1}(1)} \otimes \dotsb \otimes
      \vek{u}_{\tau^{-1}(n)}
    )
    = \\ =
    \phi_n(\sigma)\bigl( \phi_n(\tau)(
      \vek{u}_1 \otimes \dotsb \otimes \vek{u}_n
    ) \bigr)
    =
    \bigl( \phi_n(\sigma) \circ \phi_n(\tau) \bigr)(
      \vek{u}_1 \otimes \dotsb \otimes \vek{u}_n
    ) \text{,}
  \end{multline*}
  where the step marked $(*)$ may seem mysterious, but should after 
  defining \(\vek{v}_i = \vek{u}_{\tau^{-1}(i)}\) be easily 
  recognisable as an instance of \eqref{Eq4:HomPROP}.
  Proving bilinearity of $\circ$ and $\otimes$ as defined above is a 
  standard exercise.
\end{example}



\begin{example} \label{Ex:MatrisHomPROP}
  In the case that \(V = \mc{R}^k\) for some integer $k$, 
  $V^{\otimes n}$ is isomorphic to $\mc{R}^{k^n}$, and 
  $\HomPROP_V(m,n)$ may thus be viewed as a set of $k^m \times k^n$ 
  matrices. A convenient choice of isomorphism is that which, given 
  that $\{\vek{e}_i\}_{i=1}^N$ denotes the standard basis of 
  $\mc{R}^N$, maps
  \begin{equation}
    \vek{e}_{d_1} \otimes \dotsb \otimes \vek{e}_{d_n}
    \mapsto \vek{e}_D
    \qquad\text{where \(D-1 = \sum_{i=1}^n (d_i-1) k^{i-1}\)}
  \end{equation}
  for all \(d_1,\dotsc,d_n \in [k]\); effectively the numbers $d_i-1$ are 
  interpreted as ``radix-$k$ digits'' of $D-1$.\footnote{
    This is much less cumbersome to express using zero-based 
    indexing\Ldash numbering the basis vectors of $\mc{R}^N$ from 
    $\vek{e}_0$ to $\vek{e}_{N-1}$\Rdash but violating the convention 
    about matrix indices beginning at $1$ would also be a source of 
    confusion.
  }
  Under this isomorphism, the permutations are again mapped to 
  permutation matrices, but rather than permuting rows or columns it 
  is the ``radix-$k$ digits'' in row or column indices that are permuted; 
  \(\phi_n(\sigma)_{i,j} = 1\) if and only if \(i-1 = \sum_{r=1}^n 
  d_{\sigma(r)} k^{r-1}\) for \(d_1,\dotsc,d_n \in 
  \{0,\dotsc, k -\nobreak 1\}\) such that \(j-1 = \sum_{r=1}^n d_r 
  k^{r-1}\). Hence
  \begin{align*}
    \phi(\cross{1}{1}) ={}& \begin{pmatrix}
      1& 0& 0& 0\\
      0& 0& 1& 0\\
      0& 1& 0& 0\\
      0& 0& 0& 1
    \end{pmatrix}
    &&\text{for \(k=2\),}\displaybreak[0]\\
    \phi(\cross{1}{1}) ={}& \begin{pmatrix}
      1& 0& 0& 0& 0& 0& 0& 0& 0\\
      0& 0& 0& 1& 0& 0& 0& 0& 0\\
      0& 0& 0& 0& 0& 0& 1& 0& 0\\
      0& 1& 0& 0& 0& 0& 0& 0& 0\\
      0& 0& 0& 0& 1& 0& 0& 0& 0\\
      0& 0& 0& 0& 0& 0& 0& 1& 0\\
      0& 0& 1& 0& 0& 0& 0& 0& 0\\
      0& 0& 0& 0& 0& 1& 0& 0& 0\\
      0& 0& 0& 0& 0& 0& 0& 0& 1
    \end{pmatrix}
    &&\text{for \(k=3\).}
  \end{align*}
  
  As usual $\circ$ is matrix multiplication, but $\otimes$ is 
  the Kronecker matrix product given by
  \begin{equation*}
    A \otimes
    \begin{pmatrix}
      b_{11}& \hdots & b_{1n} \\
      \vdots & \ddots & \vdots \\
      b_{m1}& \hdots & b_{mn}
    \end{pmatrix} :=
    \begin{bmatrix}
      A b_{11}& \hdots & A b_{1n} \\
      \vdots & \ddots & \vdots \\
      A b_{m1}& \hdots & A b_{mn}
    \end{bmatrix}
  \end{equation*}
  where the right hand side block matrix is $km \times ln$ when $A$ is 
  $k \times l$.
\end{example}

\begin{example}[Quantum gate array \PROP]
  Let \(\mc{P} = \HomPROP_V\) in the case that \(V = \C^2\) (as a 
  complex vector space). In this case, $\mc{P}(m,n)$ may be viewed as 
  a set of $2^m \times 2^n$ matrices. Let
  \begin{equation*}
    \mc{Q} = \setOf{ A \in \mc{P} }{ 
      \text{$A$ is unitary, i.e., \(AA^\dag = I = A^\dag A\)} }
  \end{equation*}
  where $A^\dag$ denotes the Hermitian conjugate of $A$. 
  Then $\mc{Q}$ is a \PROP\ (with \(\mc{Q}(m,n)=\varnothing\) for 
  \(m \neq n\)), since \(\phi_\mc{Q}(\sigma)^\dag = 
  \phi_\mc{Q}(\sigma)^{\mathrm{T}} = \phi_\mc{Q}(\sigma^{-1})\), 
  \((AB)(AB)^\dag = A B B^\dag A^\dag = A I A^\dag = I\), and 
  \((A \otimes\nobreak B)(A \otimes\nobreak B)^\dag =
  (A \otimes\nobreak B)(A^\dag \otimes\nobreak B^\dag) =
  (AA^\dag) \otimes (BB^\dag) = I \otimes I = I\) for all permutations 
  $\sigma$ and \(A,B \in \mc{Q}\).
  
  By definition, a qubit is a quantum system whose state space is 
  $\C^2$, and a register of $n$ qubits therefore has state space 
  $(\C^2)^{\otimes n}$. Any unitary $2^n \times 2^n$ matrix 
  corresponds to a quantum gate operating on a register of $n$ 
  qubits. Therefore, $\mc{Q}$ may be called the \emph{quantum gate array 
  \cite{qgate} \PROP}.
  
  The interesting point here is not so much that the unitary matrices 
  constitute a \PROP\Ldash similar arguments can be carried out for 
  many other well-known sets of matrices, and the tensor product of 
  $\mc{R}^{\bullet\times\bullet}$ may actually be easier to reason 
  about\Rdash but that the basic operations in a \PROP\ correspond 
  directly to basic operations for composing quantum gate arrays. 
  $\mc{Q}(n,n)$ is the set of operations that can be applied to an 
  $n$-qubit register. Composition is to first apply one operation and 
  then the other. The tensor product says what happens when you act 
  on some qubits of a register with one operation and on the 
  remaining qubits with another; if \(A \in \mc{Q}(m,m)\) and \(B \in 
  \mc{Q}(n,n)\) then $A \otimes B$ applies $A$ to the first $m$ 
  qubits and $B$ to the remaining $n$ qubits of a register of $m+n$ 
  qubits.
  
  Permutations are less common in presentations of quantum circuits, 
  but they are used implicitly when one lets gates act upon arbitrary 
  lists of qubits rather than just those adjacent to each other. For 
  example, if a \(\mathrm{CNOT} \in \mc{Q}(2,2)\) gate is to be used to 
  let qubit $3$ of a $7$~qubit register control whether to negate qubit 
  $6$, then one might express the corresponding element of 
  $\mc{Q}(7,7)$ as
  $$
    \phi(\same{3} \star \cross{2}{2}) \circ
    \phi(\same{2}) \otimes \mathrm{CNOT} \otimes \phi(\same{3}) \circ
    \phi(\same{3} \star \cross{2}{2})
  $$
  or, if cycle notation for permutations is preferred,
  $$
    \phi\bigl( (1\,3)(2\,6) \bigr) \circ
    \mathrm{CNOT} \otimes \phi(\same{5}) \circ
    \phi\bigl( (1\,3)(2\,6) \bigr) \text{.}
  $$
  Conversely, if \(\mathrm{NOTC} \in \mc{Q}(2,2)\) is $\mathrm{CNOT}$ 
  where qubit $2$ controls the negation of qubit $1$, then 
  \(\phi(\cross{1}{1}) = \mathrm{CNOT} \circ \mathrm{NOTC} \circ 
  \mathrm{CNOT}\), so general permutations can be realised by a 
  quantum gate array even if the qubits are in fixed positions. 
  (Whether it would be \emph{practical} to do so under a specific qubit 
  register implementation is of course a separate matter.)
\end{example}

Another example of physics relevance is the \PROP\ of tensor fields 
on a manifold. Given an associative and commutative unital ring 
$\mc{R}$ that constitutes the set of one's ``scalar fields'', and an 
$\mc{R}$-module $V$ of derivations on $\mc{R}$ that constitutes one's 
``vector fields'', the elements of the $\HomPROP_V$ \PROP\ will be 
the corresponding general ``tensor fields''.

\begin{example}[Physicist's tensor fields] \label{Ex:Tensorfalt}
  Let $M$ be a real, smooth $k$-dimensional manifold, and let $\mc{R}$ 
  be the ring of infinitely differentiable functions \(M \Fpil \R\); 
  elements of $\mc{R}$ may be called (smooth) 
  \emDefOrd[*{scalar field}]{scalar fields} (where `field' is used in 
  the physics sense rather than the abstract algebra sense). Let $V$ 
  be the set of all $\R$-linear functions \(X\colon \mc{R} \Fpil 
  \mc{R}\) which satisfy \(X(f \cdot g) = X(f) \cdot g + f \cdot X(g)\) 
  for all \(f,g \in \mc{R}\).
  Such functions are called (smooth) \emDefOrd[*{vector field}]{vector 
  fields} on $M$; they are necessarily local in the sense that if 
  \(U \subseteq M\) is open and \(f \in \mc{R}\) is such that \(f(p)=0\) 
  for all \(p \in U\) then \(X(f)(p)=0\) for all \(p \in U\).
  Note the explicit function application parenthesis; 
  in differential geometry, it is otherwise common that the result of 
  applying a vector field $X$ to a scalar field $f$ is just 
  written~$Xf$. The ``value at a point \(p \in M\)'' of a vector 
  field $X$ is the function \(\mc{R} \Fpil \R\) given by \(f 
  \mapsto X(f)(p)\); differential geometers may be more comfortable 
  with the notation $X_p f$ for $X(f)(p)$ (the value of $X(f)$ at~$p$).
  
  The set $V$ is an $\mc{R}$-module, with \((fX)(g) = f \cdot X(g)\) for 
  all \(f,g \in \mc{R}\) and \(X \in V\). This being an $\mc{R}$-module, 
  one can form the $n$-fold tensor product $V^{\otimes n}$ over 
  $\mc{R}$ of $V$ with itself. There is a canonical $\mc{R}$-linear map 
  $\theta$ from $V^{\otimes n}$ to \(\mc{R}^n \Fpil \mc{R}\), with
  \begin{equation}
    \theta(X_1 \otimes_\mc{R} \dotsb \otimes_\mc{R} X_n)
      (f_1,\dotsc,f_n) =
    \prod_{i=1}^n X_i(f_i)
  \end{equation}
  for all \(X_1,\dotsc,X_n \in V\) and \(f_1,\dotsc,f_n \in \mc{R}\). 
  
  A (smooth) \emDefOrd{tensor field} on $M$ is an element of 
  $\HomPROP_V$, where \(\HomPROP_V(m,n) = \hom_\mc{R}(V^{\otimes m}, 
  V^{\otimes n})\) as in Example~\ref{Ex:HomPROP}. Since \(V^{\otimes 
  0} = \mc{R}\) and an $\mc{R}$-linear map \(a\colon \mc{R} \Fpil 
  V^{\otimes m}\) is uniquely determined by $a(1)$, it follows that 
  \(\HomPROP_V(0,0) \cong \mc{R}\) and \(\HomPROP_V(1,0) \cong V\), 
  as expected. $\HomPROP_V(0,1)$, the set of covector fields, consists 
  of maps from $V$ to $\mc{R}$, and any \(f \in \mc{R}\) defines a 
  differential \(\mathrm{d}f \in \HomPROP_V(0,1)\) via 
  \(\mathrm{d}f(X) = X(f)\) for all \(X \in V\). 
  A tensor field \(a \in \HomPROP_V(m,n)\) is 
  \emDefOrd[*{symmetric tensor field}]{symmetric} if \(\phi(\sigma) 
  \circ a \circ \phi(\tau) = a\) and \emDefOrd[*{antisymmetric tensor 
  field}]{antisymmetric} if \(\phi(\sigma) \circ a \circ \phi(\tau) = 
  (-1)^{\sigma} (-1)^\tau a\) for all \(\sigma \in \Sigma_m\) and 
  \(\tau \in \Sigma_n\). An \emDefOrd[{form field}*]{$n$-form field} 
  is an antisymmetric element of $\HomPROP_V(0,n)$.
  
  Since $\mc{R}$ contains ``bump functions''\Ldash for any open \(U 
  \subseteq M\) and \(p \in U\), there exists some open neighbourhood 
  \(U' \subset U\) of $p$ and \(B \in \mc{R}\) such that \(B(q)=1\) 
  for all \(q \in U'\) and \(B(q) = 0\) for all \(q \in M \setminus 
  U\)\Rdash it is possible to realise quantities locally defined on 
  some neighbourhood of a point $p$ as fields defined 
  on the whole of $M$, in such a way that these global fields at 
  least coincide with the original local field on a neighbourhood of 
  $p$. In particular, one can for every chart $(U,\varphi)$ (where 
  \(U \subseteq M\) is open and \(\varphi\colon U \Fpil \R^k\) is 
  injective) of the manifold $M$ define a set \(\{x_i\}_{i=1}^k 
  \subseteq \mc{R}\) of coordinate functions valid around \(p \in U\) 
  by
  \begin{equation}
    x_i(q) = \begin{cases}
      B(q) \cdot (u_i \circ \varphi)(q)& \text{for \(q \in U\),}\\
      0& \text{for \(q \in M \setminus U\),}
    \end{cases}
  \end{equation}
  where \(B \in \mc{R}\) is some suitable bump function with 
  \(B(p)=1\) supported within $U$ and \(u_i\colon \R^k \Fpil \R\) 
  denotes projection onto the $i$th component. A corresponding set 
  \(\{E_i\}_{i=1}^k \subseteq V\) of standard vectors can be defined 
  by
  \begin{equation}
    E_i(f)(q) = \begin{cases}
      B(q) \cdot 
        \dfrac{\partial (f \circ \varphi^{-1})}{\partial u_i}
        \bigl( \varphi(q) \bigr)&
      \text{for \(q \in U\),}\\
      0 & \text{for \(q \in M \setminus U\),}
    \end{cases}
  \end{equation}
  where $B$ is the same bump function and $\frac{\partial}{\partial 
  u_i}$ is the partial derivative with respect to the $i$ component 
  of $\R^k$. These scalar and vector fields locally constitute a 
  basis in the sense that
  \begin{equation}
    Y(f)(q) = \sum_{i=1}^k Y(x_i)(q) \cdot E_i(f)(q)
    \qquad\text{where \(B(q)=1\)}
  \end{equation}
  for arbitrary \(f \in \mc{R}\) and \(Y \in V\). This extends to 
  recovering the old ``multidimensional array of numbers'' view of 
  tensors, since any tensor field \(a \in \HomPROP_V(m,n)\) locally 
  determines component scalar fields via
  \begin{equation}
    a_{r_1,\dotsc,r_m,s_1,\dotsc,s_n} =
    \bigotimes_{i=1}^m \mathrm{d}x_{r_i} \circ a \circ 
    \bigotimes_{i=1}^n E_{s_i}
    \quad\text{where \(r_1,\dotsc,r_m,s_1,\dotsc,s_n \in [k]\)}
  \end{equation}
  and conversely
  \begin{equation}
    a = \sum_{r_1,\dotsc,r_m,s_1,\dotsc,s_n=1}^k 
      a_{r_1,\dotsc,r_m,s_1,\dotsc,s_n}
      \bigotimes_{i=1}^m E_{r_i} \otimes 
      \bigotimes_{i=1}^n \mathrm{d}x_{s_i}
    \qquad\text{near $p$}
  \end{equation}
  in the sense that
  \begin{multline*}
    \theta\Bigl( 
      a\left( \textstyle\bigotimes_{i=1}^n Y_i \right)
    \Bigr)(f_1,\dotsc,f_m)(q) 
    = \\ =
    \sum_{r_1,\dotsc,r_m,s_1,\dotsc,s_n=1}^k \!\!\!\!\!
      a_{r_1,\dotsc,r_m,s_1,\dotsc,s_n}(q) \cdot
      \prod_{i=1}^m E_{r_i}(f_i)(q) \cdot
      \prod_{i=1}^n Y_i(x_{s_i})(q)
  \end{multline*}
  for $q$ near $p$, all \(f_1,\dotsc,f_m \in \mc{R}\), and all 
  \(Y_1,\dotsc,Y_n \in V\).

\end{example}

\subsection{More \PROP\ examples}

The rewriting set-up works with linear \PROPs, since many theories 
require a linear structure, even those that do not often benefit 
theoretically from having one available, and in those remaining cases 
where it really isn't of any use it can mostly be ignored (just like 
word problems in semigroups can be studied in associative algebras as 
problems regarding ideals generated by binomial relations). This, of 
course, calls for a method of equipping arbitrary \PROPs\ with a 
linear structure.
The following is a generalisation of the construction of 
a group algebra, so it is natural that one uses the same notation for 
both, albeit with a \PROP\ in the place of the group.

\begin{construction} \label{Kons:R-linjar}
  Let a \PROP\ $\mc{P}$ and an associative commutative unital ring 
  $\mc{R}$ be given. Let $\mc{RP}$ be the $\N^2$-graded set for which 
  each $\mc{RP}(m,n)$ is the set of all formal $\mc{R}$-linear 
  combinations of elements of $\mc{P}(m,n)$. Extend $\circ$ and 
  $\otimes$ to $\mc{RP}$ by $\mc{R}$-bilinearity. Then $\mc{RP}$ is an 
  $\mc{R}$-linear \PROP, and each $\mc{RP}(m,n)$ is a free 
  $\mc{R}$-module with basis $\mc{P}(m,n)$.
  If $\mc{Q}$ is an $\mc{R}$-linear \PROP\ and \(f\colon \mc{P} \Fpil 
  \mc{Q}\) is a \PROP\ homomorphism, then $f$ has a unique extension 
  to an $\mc{R}$-linear \PROP\ homomorphism \(\mc{RP} \Fpil \mc{Q}\).
\end{construction}
\begin{proof}
  The construction of formal linear combinations of a set of 
  elements, and extension to it by linearity of operations, is well 
  known and requires no explanation here. What is less clear is 
  perhaps that the axioms survive this extension, but this is 
  straightforward to verify; e.g.~three general elements of 
  $\mc{RP}(k,l)$, $\mc{RP}(l,m)$, and $\mc{RP}(m,n)$ respectively 
  have the forms $\sum_{i=1}^A r_i a_i$, $\sum_{i=1}^B s_i b_i$, and 
  $\sum_{i=1}^C t_i c_i$ respectively, where
  \(\{a_i\}_{i=1}^A \subseteq \mc{P}(k,l)\), \(\{b_i\}_{i=1}^B 
  \subseteq \mc{P}(l,m)\), \(\{c_i\}_{i=1}^C \subseteq \mc{P}(m,n)\), 
  and \(\{r_i\}_{i=1}^A, \{s_i\}_{i=1}^B, \{t_i\}_{i=1}^C \subseteq 
  \mc{R}\). Hence it follows from
  \begin{multline*}
    \Biggl(
    \biggl( \sum_{i=1}^A r_i a_i \biggr) \circ 
    \biggl( \sum_{j=1}^B s_j b_j \biggr) 
    \Biggr) \circ
    \biggl( \sum_{h=1}^C t_h c_h \biggr) 
    = \\ =
    \biggl(
    \sum_{i=1}^A \sum_{j=1}^B r_i s_j ( a_i \circ  b_j ) 
    \biggr) \circ
    \biggl( \sum_{h=1}^C t_h c_h \biggr) 
    = \displaybreak[0]\\ =
    \sum_{i=1}^A \sum_{j=1}^B \sum_{h=1}^C r_i s_j t_h
    \bigl( ( a_i \circ  b_j ) \circ c_h \bigr) 
    =
    \sum_{i=1}^A \sum_{j=1}^B \sum_{h=1}^C r_i s_j t_h
    \bigl( a_i \circ ( b_j \circ c_h ) \bigr) 
    = \displaybreak[0]\\ =
    \biggl( \sum_{i=1}^A r_i a_i \biggr) \circ
    \biggl(
    \sum_{j=1}^B \sum_{h=1}^C s_j t_h ( b_j \circ c_h ) 
    \biggr) 
    = \\ =
    \biggl( \sum_{i=1}^A r_i a_i \biggr) \circ
    \Biggl(
    \biggl( \sum_{j=1}^B s_j b_j \biggr) \circ 
    \biggl( \sum_{h=1}^C t_h c_h \biggr) 
    \Biggr) 
  \end{multline*}
  that the composition associativity axiom holds also in $\mc{RP}$ as 
  a whole.
  
  Similarly, any map \(\mc{P}(m,n) \Fpil \mc{Q}(m,n)\) extends 
  uniquely to an $\mc{R}$-linear map \(\mc{RP}(m,n) \Fpil 
  \mc{Q}(m,n)\) by the basis property, so the extension of \(f\colon 
  \mc{P} \Fpil \mc{Q}\) to the whole of $\mc{RP}$ is uniquely 
  determined already by the $\mc{R}$-linearity condition. Checking 
  that this extension of $f$ continues to satisfy the definition of 
  a \PROP\ homomorphism is straightforward.
\end{proof}

Thus having some examples of $\mc{R}$-linear \PROPs, one may also 
take Example~\ref{Ex:Algebra-PROP} the other way and observe that, 
for any $\mc{R}$-linear \PROP\ $\mc{P}$:
\begin{itemize}
  \item
    for each \(n\in\N\), the component $\mc{P}(n,n)$ is an 
    associative unital $\mc{R}$-algebra with $\circ$ as 
    multiplication and $\phi(\same{n})$ as unit (except that 
    unitality fails where \(\mc{P}(n,n) = \{0\}\) as it would 
    require \(0 \neq \phi(\same{n})\));
  \item
    for each \(n\in\N\), the component $\mc{P}(n,n)$ for \(n\in\N\) 
    is an associative unital $\mc{P}(0,0)$-algebra with $\circ$ as 
    multiplication and $\otimes$ as scalar multiple operation 
    (except, as above, for the unitality catch should \(\mc{P}(n,n) = 
    \{0\}\));
  \item
    for all \(m,n\in\N\), the component $\mc{P}(m,n)$ is a left 
    $\mc{P}(m,m)$-module, right $\mc{P}(n,n)$-module, and 
    $\bigl( \mc{P}(m,m), \mc{P}(n,n) \bigr)$-bimodule 
    with $\circ$ as scalar multiple operation.
\end{itemize}
That composition on $\mc{P}(n,n)$ is $\mc{P}(0,0)$-bilinear is not 
directly a \PROP\ axiom, but is still quickly verified, as
\begin{multline*}
  (r \otimes a) \circ b = 
  r \otimes a \circ \phi(\same{0}) \otimes b =
  \bigl( r \circ \phi(\same{0}) \bigr) \otimes (a \circ b) =
  r \otimes (a \circ b) = \\ =
  \bigl( \phi(\same{0}) \circ r \bigr) \otimes (a \circ b) =
  \phi(\same{0}) \otimes a \circ r \otimes b =
  a \circ (r \otimes b)
\end{multline*}
for all \(r \in \mc{P}(0,0)\) and \(a,b \in \mc{P}(n,n)\).

Algebras can also be built using the tensor product of a \PROP. For 
any $\mc{R}$-linear \PROP\ $\mc{P}$, the three sets
\begin{align*}
  \mc{A} ={}& \bigoplus_{n\in\N} \mc{P}(n,0) \text{,}&
  \mc{B} ={}& \bigoplus_{n\in\N} \mc{P}(n,n) \text{,}&
  \mc{C} ={}& \bigoplus_{n\in\N} \mc{P}(0,n)
\end{align*}
are all associative, unital, graded $\mc{R}$-algebras with $\otimes$ 
as multiplication operation. The issue of whether $\mc{A}$ above is 
generated in degree $1$ figures in the following theorem.

\begin{theorem}
  Let $\mc{R}$ be an associative and commutative unital ring. Let 
  $\mc{P}$ be an $\mc{R}$-linear \PROP\ with elements \(m \in 
  \mc{P}(1,2)\), \(u \in \mc{P}(1,0)\), \(D \in \mc{P}(2,1)\), 
  \(e \in \mc{P}(0,1)\), and \(A \in \mc{P}(1,1)\) 
  such that
  \begin{align*}
    m \circ m \otimes \phi(\same{1}) ={}&
      m \circ \phi(\same{1}) \otimes m &
    D \otimes \phi(\same{1}) \circ D ={}&
      \phi(\same{1}) \otimes D \circ D \\
    m \circ u \otimes \phi(\same{1}) ={}& \phi(\same{1}) &
    e \otimes \phi(\same{1}) \circ D ={}& \phi(\same{1})\\
    m \circ \phi(\same{1}) \otimes u ={}& \phi(\same{1}) &
    \phi(\same{1}) \otimes e \circ D ={}& \phi(\same{1})\\
    D \circ m ={}&
      m \otimes m \circ 
      \phi(\same{1} \star \cross{1}{1} \star \same{1}) \circ
      D \otimes D 
      \mkern -70mu & \mkern +70mu
    e \circ u ={}& \phi(\same{0}) \\
    u \circ e ={}&
      m \circ \phi(\same{1})\otimes A \circ D &
    m \circ A \otimes \phi(\same{1}) \circ D ={}&
      u \circ e
    \text{.}
  \end{align*}
  If $\mc{P}$ is such that the $\mc{R}$-algebra $\bigoplus_{n\in\N} 
  \mc{P}(n,0)$ with $\otimes$ as multiplication is generated in 
  degree $1$, then $\mc{P}(1,0)$ is a Hopf algebra over $\mc{P}(0,0)$ 
  with operations
  \begin{align*}
    a \cdot b :={}& m \circ a \otimes b &&
      \text{for all \(a,b \in \mc{P}(1,0)\),}\\
    ra := {}& r \otimes a &&
      \text{for all \(r \in \mc{P}(0,0)\) and \(a \in \mc{P}(1,0)\),}\\
    \eta :={}& u \text{,}\\
    \Delta(a) :={}& D \circ a \in \mc{P}(1,0) \otimes \mc{P}(1,0) &&
      \text{for all \(a \in \mc{P}(1,0)\),}\\
    \ve(a) :={}& e \circ a \in \mc{P}(0,0) &&
      \text{for all \(a \in \mc{P}(1,0)\),}\\
    S(a) :={}& A \circ a &&
      \text{for all \(a \in \mc{P}(1,0)\).}
  \end{align*}
\end{theorem}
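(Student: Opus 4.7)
The plan is to show that each Hopf algebra axiom is a direct translation of one of the given PROP identities, once the components $\mc{P}(n,0)$ are identified with tensor powers $H^{\otimes n}_R$ where $H := \mc{P}(1,0)$ and $R := \mc{P}(0,0)$.

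First, I would set up the module-theoretic framework. The component $R = \mc{P}(0,0)$ is a commutative unital ring: composition-tensor compatibility combined with the tensor-identity axiom forces $\circ$ and $\otimes$ to coincide on this component (insert $\phi(\same{0})$ factors appropriately), and the tensor permutation axiom with both block widths zero then makes the common operation commutative. Each $\mc{P}(n,0)$ becomes an $R$-module via $r \cdot x := r \otimes x$, and the tensor permutation axiom with a zero-width strand gives $r \otimes x = x \otimes r$ for $r \in R$. Hence the PROP tensor $\mc{P}(m,0) \times \mc{P}(n,0) \to \mc{P}(m+n,0)$ is $R$-middle-linear and induces a canonical $R$-linear map $H^{\otimes n}_R \to \mc{P}(n,0)$; the degree-$1$ generation hypothesis is exactly the statement that this map is surjective for every $n$. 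With this (implicit) identification, the formulas in the theorem define $R$-linear operations on $H$ because they are all built from compositions and tensors in the $\mc{R}$-linear \PROP.

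Next I would verify each Hopf axiom by pre-composing the corresponding PROP identity with an arbitrary element $a$, $a \otimes b$, or $a \otimes b \otimes c$ of the appropriate $\mc{P}(n,0)$, and using composition-tensor compatibility to re-express the result in terms of $\cdot$, $\eta$, $\Delta$, $\ve$, and $S$. Associativity of $\cdot$ and coassociativity of $\Delta$ come from the first row of identities; the left and right unit and counit laws come from the next two rows. The bialgebra compatibility $\Delta(a \cdot b) = \Delta(a)\, \Delta(b)$ is the identity $D \circ m = (m \otimes m) \circ \phi(\same{1} \star \cross{1}{1} \star \same{1}) \circ (D \otimes D)$ applied to $a \otimes b$: the permutation $\same{1} \star \cross{1}{1} \star \same{1} \in \Sigma_4$ fixes the outer strands and swaps the middle two, i.e.~sends $a_{(1)} \otimes a_{(2)} \otimes b_{(1)} \otimes b_{(2)}$ to $a_{(1)} \otimes b_{(1)} \otimes a_{(2)} \otimes b_{(2)}$, which is precisely the twist that makes $(m \otimes m) \circ \phi(\same{1} \star \cross{1}{1} \star \same{1})$ implement the multiplication in $H \otimes H$. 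The counit-unit identity $\ve(\eta) = 1$ is $e \circ u = \phi(\same{0})$, and the antipode axioms are the last two equations, already in the form $\varphi \circ a = \psi \circ a$ after composing with $a \in H$.

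The main obstacle is the identification step rather than any individual axiom: the degree-$1$ generation hypothesis gives only surjectivity of $H^{\otimes n}_R \to \mc{P}(n,0)$, not injectivity, so ``$\Delta(a) \in H \otimes H$'' is a priori only well-defined modulo the kernel of this surjection. Fortunately, all of the Hopf axioms are being verified as identities \emph{inside} $\mc{P}(n,0)$ (where the PROP hypotheses hold outright), and any two preimages of $\Delta(a)$ in $H \otimes H$ produce the same downstream expressions ($\mu_H \circ \Delta$, $(\Delta \otimes \mathrm{id}) \circ \Delta$, etc.) because those further operations again push through PROP tensors; the cleanest reading is that $H$ acquires its Hopf algebra structure by descent through the surjection, making the rest of the verification a routine exercise in composition-tensor compatibility.
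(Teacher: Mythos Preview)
Your proposal is correct and follows the same approach as the paper's own proof, which is extremely brief: the paper merely observes that the degree-$1$ generation hypothesis means every element of $\mc{P}(2,0)$ can be written as $\sum_k b_k \otimes c_k$ with $b_k,c_k \in \mc{P}(1,0)$, so ``$\mc{P}(2,0) = \mc{P}(1,0) \otimes \mc{P}(1,0)$'' and $\Delta$ lands where a coproduct should, after which it simply states that the Hopf axioms follow directly from the listed \PROP\ identities.

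The one place where you are more careful than the paper is the injectivity issue: you worry about whether the formal tensor product $H \otimes_R H$ surjects isomorphically onto $\mc{P}(2,0)$, whereas the paper silently \emph{identifies} $H \otimes H$ with $\mc{P}(2,0)$ and leaves it at that. Your resolution (verify all identities inside the $\mc{P}(n,0)$ themselves, so that the choice of preimage in the formal tensor product never matters) is the right way to make the paper's shortcut rigorous, and the paper itself signals immediately afterward that the theorem is meant as little more than ``a mildly contorted restatement of the definition of a Hopf algebra''. So your extra care is appropriate but not a departure in strategy.
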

\begin{proof}
  Since $\bigoplus_{n\in\N} \mc{P}(n,0)$ is generated in degree 
  (i.e., coarity) $1$, it follows in particular that every element of 
  $\mc{P}(2,0)$ can be written as $\sum_{k=1}^n b_k \otimes c_k$ for 
  some \(\{b_k\}_{k=1}^n, \{c_k\}_{k=1}^n \subseteq \mc{P}(1,0)\), 
  which means \(\mc{P}(2,0) = \mc{P}(1,0) \otimes \mc{P}(1,0)\). 
  Hence $\Delta$ indeed maps $\mc{P}(1,0)$ into $\mc{P}(1,0) \otimes 
  \mc{P}(1,0)$, as a coproduct is supposed to.
  
  That the axioms for a Hopf algebra hold follow directly from 
  conditions in the theorem.
\end{proof}

Is this ``theorem'' anything but a mildly contorted restatement of 
the definition of a Hopf algebra, though? Perhaps not, but there is a 
point to it: \emph{the techniques presented in this paper make it 
trivial to construct any number of \PROPs\ $\mc{P}$ satisfying the 
list of equations in the theorem, and any list of additional 
equations that one might want to impose,} but this need not yield a 
Hopf algebra in the classical sense if it fails to meet the final 
condition about the nullary components being generated in coarity~$1$.

\medskip


One more \PROP\ that will be of interest is the following 
\emDefOrd[{biaffine PROP}{biaffine \PROP}]{biaffine} \PROP, so named 
because it contains both the \PROP\ of affine transformations (like some 
$\mc{R}^{\bullet\times\bullet}$ is 
the \PROP\ of linear transformations) and its dual (which could perhaps 
be called a \PROP\ of ``coaffine transformations'', even though that 
begs the question of what is being transformed). This extension of the 
ordinary matrix \PROP\ introduces nontrivial elements with $0$ arity 
or coarity.

\begin{example}[Biaffine \PROP]
  Let $\mc{R}$ be an associative unital semiring. The 
  \DefOrd[{biaffine PROP}{biaffine \PROP}]{biaffine \PROP} 
  \index{Baff(R)@$\Baff(\mc{R})$}$\Baff(\mc{R})$ over $\mc{R}$ can be 
  defined as the subset of $\mc{R}^{\bullet\times\bullet}$ of 
  matrices on the form
  \begin{equation} \label{Eq1:Biaffin}
    \begin{bmatrix}
      1 & d & \transpose{\vek{c}} \\
      0 & 1 & 0 \\
      0 & \vek{b} & A
    \end{bmatrix}
    \quad\text{where \(A \in \mc{R}^{m \times n}\), \(\vek{b} \in 
    \mc{R}^m\), \(\vek{c} \in \mc{R}^n\), and \(d \in \mc{R}\),}
  \end{equation}
  i.e.,
  \begin{equation}
    \Baff(\mc{R}) = \setOf{ T \in \mc{R}^{\bullet\times\bullet} }{
      \begin{aligned}
        \alpha_{\mc{R}^{\bullet\times\bullet}}(T) \geqslant{}& 2 
          \text{,}&
        T_{11} ={}& 1\text{,} &
        T_{i1} ={}& 0\text{ if \(i \neq 1\),}\\ 
        \omega_{\mc{R}^{\bullet\times\bullet}}(T) \geqslant{}& 2 
          \text{,}&
        T_{22} ={}& 1\text{,} &
        T_{2j} ={}& 0\text{ if \(j \neq 2\)}
      \end{aligned}
    }
    \text{.}
  \end{equation}
  The $A$, $\vek{b}$, $\vek{c}$, and $d$ parts of \eqref{Eq1:Biaffin} 
  may be described as the \emDefOrd[*{matrix part}]{matrix}, 
  \emDefOrd[*{vector part}]{vector}, 
  \emDefOrd[*{covector part}]{covector}, and 
  \emDefOrd[*{scalar part}]{scalar} respectively parts of this \PROP\ 
  element. 
  
  $\Baff(\mc{R})$ is not a sub-\PROP\ of $\mc{R}^{\bullet\times\bullet}$, 
  since the arities and coarities are distinct. Concretely,
  \begin{align}
    \alpha_{\Baff(\mc{R})}(T) ={}& 
      \alpha_{\mc{R}^{\bullet\times\bullet}}(T)-2\text{,} &
    \omega_{\Baff(\mc{R})}(T) ={}& 
      \omega_{\mc{R}^{\bullet\times\bullet}}(T)-2
  \end{align}
  for all \(T \in \Baff(\mc{R})\). The composition operation on 
  $\Baff(\mc{R})$ is however the same as in 
  $\mc{R}^{\bullet\times\bullet}$:
  \begin{equation}
    \begin{bmatrix}
      1 & d_1 & \transpose{\vek{c}_1} \\
      0 & 1 & 0 \\
      0 & \vek{b}_1 & A_1
    \end{bmatrix} 
    \circ
    \begin{bmatrix}
      1 & d_2 & \transpose{\vek{c}_2} \\
      0 & 1 & 0 \\
      0 & \vek{b}_2 & A_2
    \end{bmatrix}
    =
    \begin{bmatrix}
      1 & d_1 + \transpose{\vek{c}_1} \vek{b}_2 + d_2 
      & \transpose{\vek{c}_1} A_2 + \transpose{\vek{c}_2} \\
      0 & 1 & 0 \\
      0 & \vek{b}_1 + A_1\vek{b}_2 & A_1A_2
    \end{bmatrix}
    \text{;}
  \end{equation}
  and the permutation maps for $\Baff(\mc{R})$ can be defined in 
  terms of those for $\mc{R}^{\bullet\times\bullet}$:
  \begin{equation}
    \phi_{\Baff(\mc{R})}(\sigma) = 
    \phi_{\mc{R}^{\bullet\times\bullet}}(\same{2} \star \sigma) =
    \begin{bmatrix}
      1 & 0 & 0 \\
      0 & 1 & 0 \\
      0 & 0 & \phi_{\mc{R}^{\bullet\times\bullet}}(\sigma)
    \end{bmatrix}
  \end{equation}
  for all permutations $\sigma$. What is not possible to state purely 
  in terms of its $\mc{R}^{\bullet\times\bullet}$ counterpart is the 
  tensor product, since the definition
  \begin{equation} \label{Eq:Biaffin-tensor}
    \begin{bmatrix}
      1 & d_1 & \transpose{\vek{c}_1} \\
      0 & 1 & 0 \\
      0 & \vek{b}_1 & A_1
    \end{bmatrix} 
    \otimes
    \begin{bmatrix}
      1 & d_2 & \transpose{\vek{c}_2} \\
      0 & 1 & 0 \\
      0 & \vek{b}_2 & A_2
    \end{bmatrix}
    =
    \begin{bmatrix}
      1 & d_1 + d_2 & \transpose{\vek{c}_1}
      & \transpose{\vek{c}_2} \\
      0 & 1 & 0 & 0\\
      0 & \vek{b}_1 & A_1 & 0 \\
      0 & \vek{b}_2 & 0 & A_2
    \end{bmatrix}
  \end{equation}
  of this puts more than one nonzero block in some rows and columns. 
  There are nonetheless strong similarities between the two 
  operations, and if one denotes the tensor product of 
  $\mc{R}^{\bullet\times\bullet}$ by $\oplus$ then it may in 
  particular be observed that \(\Psi(A) = \psm{1&0\\0&1} \oplus A\) 
  is an embedding of $\mc{R}^{\bullet\times\bullet}$ in 
  $\Baff(\mc{R})$; \(\Psi(A_1 \oplus\nobreak A_2) = \psm{1&0\\0&1} 
  \oplus A_1 \oplus A_2 = \Psi(A_1) \otimes \Psi(A_2)\) and 
  \(\Psi(A_1 \circ\nobreak A_2) = \psm{1&0\\0&1} \oplus A_1A_2 = 
  \Psi(A_1) \circ \Psi(A_2)\). This suffices for verifying the 
  permutation composition and juxtaposition axioms.
  
  The composition associativity and identity axioms follow from the 
  fact that these hold in $\mc{R}^{\bullet\times\bullet}$. The tensor 
  product associativity and identity axioms are obvious from 
  \eqref{Eq:Biaffin-tensor}. The two remaining axioms are 
  straightforward to verify through explicit calculations, e.g.
  \begin{multline*}
    \left(
      \begin{bmatrix}
        1 & d_1 & \transpose{\vek{c}_1} \\
        0 & 1 & 0 \\
        0 & \vek{b}_1 & A_1
      \end{bmatrix}
      \circ
      \begin{bmatrix}
        1 & d_2 & \transpose{\vek{c}_2} \\
        0 & 1 & 0 \\
        0 & \vek{b}_2 & A_2
      \end{bmatrix}
    \right) \otimes \left(
      \begin{bmatrix}
        1 & d_3 & \transpose{\vek{c}_3} \\
        0 & 1 & 0 \\
        0 & \vek{b}_3 & A_3
      \end{bmatrix}
      \circ
      \begin{bmatrix}
        1 & d_4 & \transpose{\vek{c}_4} \\
        0 & 1 & 0 \\
        0 & \vek{b}_4 & A_4
      \end{bmatrix}
    \right)
    = \\ =
    \begin{bmatrix}
      1 & d_1 + \transpose{\vek{c}_1} \vek{b}_2 + d_2 
      & \transpose{\vek{c}_1} A_2 + \transpose{\vek{c}_2} \\
      0 & 1 & 0 \\
      0 & \vek{b}_1 + A_1\vek{b}_2 & A_1A_2
    \end{bmatrix}
    \otimes
    \begin{bmatrix}
      1 & d_3 + \transpose{\vek{c}_3} \vek{b}_4 + d_4 
      & \transpose{\vek{c}_3} A_4 + \transpose{\vek{c}_4} \\
      0 & 1 & 0 \\
      0 & \vek{b}_3 + A_3\vek{b}_4 & A_3A_4
    \end{bmatrix}
    = \displaybreak[0]\\ =
    \begin{bmatrix}
      1 & \begin{aligned}[c]
          d_1 +{}& d_3 + \transpose{\vek{c}_1} \vek{b}_2 + \\
          &\transpose{\vek{c}_3} \vek{b}_4 + d_2 + d_4
        \end{aligned}
      & \transpose{\vek{c}_1} A_2 + \transpose{\vek{c}_2}
      & \transpose{\vek{c}_3} A_4 + \transpose{\vek{c}_4} \\
      0 & 1 & 0 & 0\\
      0 & \vek{b}_1 + A_1\vek{b}_2 & A_1A_2 & 0 \\
      0 & \vek{b}_3 + A_3\vek{b}_4 & 0 & A_3A_4
    \end{bmatrix}
    = \displaybreak[0]\\ =
    \begin{bmatrix}
      1 & d_1+d_3 & \transpose{\vek{c}_1} & \transpose{\vek{c}_3} \\
      0 & 1 & 0 & 0 \\
      0 & \vek{b}_1 & A_1 & 0 \\
      0 & \vek{b}_3 & 0 & A_3
    \end{bmatrix}
    \circ
    \begin{bmatrix}
      1 & d_2+d_4 & \transpose{\vek{c}_2} & \transpose{\vek{c}_4} \\
      0 & 1 & 0 & 0 \\
      0 & \vek{b}_2 & A_2 & 0 \\
      0 & \vek{b}_4 & 0 & A_4
    \end{bmatrix}
    = \\ =
    \begin{bmatrix}
      1 & d_1 & \transpose{\vek{c}_1} \\
      0 & 1 & 0 \\
      0 & \vek{b}_1 & A_1
    \end{bmatrix}
    \otimes
    \begin{bmatrix}
      1 & d_3 & \transpose{\vek{c}_3} \\
      0 & 1 & 0 \\
      0 & \vek{b}_3 & A_3
    \end{bmatrix}
    \circ
    \begin{bmatrix}
      1 & d_2 & \transpose{\vek{c}_2} \\
      0 & 1 & 0 \\
      0 & \vek{b}_2 & A_2
    \end{bmatrix}
    \otimes
    \begin{bmatrix}
      1 & d_4 & \transpose{\vek{c}_4} \\
      0 & 1 & 0 \\
      0 & \vek{b}_4 & A_4
    \end{bmatrix}
    \text{.}
  \end{multline*}
  
  An observation about the biaffine \PROP\ which is useful when 
  evaluating complicated expressions is that it can be thought of as 
  counting paths in a network. The elements of the matrix part keep 
  track of paths passing through it, entering through the input 
  corresponding to the column and exiting through the output 
  corresponding to the row. The elements of the vector part keep 
  track of paths which \textbf{b}egin within the network but leave 
  it, with a separate row for each output, and conversely the 
  elements of the \textbf{c}ovector part keep track of paths which 
  enter the network but terminate within it, with a separate column 
  for each input. Finally the scalar part keeps track of paths which 
  both begin and end within the network, never leaving it. It is not 
  hard to see that the composition law corresponds precisely to 
  composing two networks such that the inputs of the left factor are 
  identified with the outputs of the right factor, and the tensor 
  product law corresponds to putting two networks next to each other.
\end{example}

\section{Relations}
\label{Sec:Ordning}

Rewriting theory makes a greater use of relations as mathematical 
objects than many other branches of mathematics; this is in part due 
to that it operates in a setting where very little is given in terms 
of operations and their properties, so one has to make do with more 
fundamental and generic concepts. Two types of relation that will be 
of particular interest here are congruence relations and partial 
orders. Congruence relations, as the name suggests, can express the 
property that two expressions are congruent modulo some set of given 
identities. Partial orders are used to encode the fact that one 
expression is ``simpler'' than another. Frequently these types are 
unified into a \emph{rewriting relation} $\rightarrow$, where `\(a 
\rightarrow b\)' might be interpreted as `$a$ is congruent to $b$, but 
the latter is simpler' (on account of being the result of applying a 
single step reduction to $a$). However, in that particular role 
the rewriting formalism of~\cite{rpaper} employs maps rather than 
relations, so that is not a unification that will be made here. On 
the other hand, it turns out that both congruence relations and the 
partial orders of interest may be defined as special cases of \PROP\ 
quasi-orders, which saves some work below.

As usual, a binary relation $P$ on some set $S$ is considered to be a 
subset of $S \times S$, but \((x,y) \in P\) is often an impractical 
(and opaque) piece of notation for the common case of order 
relations, so instead I'll typically write `\(x \leqslant y \pin{P}\)'
\index{in!leqslant@$x \leqslant y \pin{P}$} 
to mean the same thing. This notation has the advantage of clarifying 
for the reader that it is $x$ that is on the ``small side'', and it 
also allows the variations
\begin{alignat*}{3}
  x \geqslant{}& y \pin{P} &\quad&\Epil& \quad
    y \leqslant{}& x \pin{P} \text{,}
    \index{in!geqslant@$x \geqslant y \pin{P}$}\\
  x <{}& y \pin{P}  &&\Epil& 
    x \leqslant{}& y \pin{P} \text{ and } 
    y \not\leqslant x \pin{P} \text{,}
    \index{in!lt@$x < y \pin{P}$}
    \displaybreak[0]\\
  x >{}& y \pin{P} &&\Epil& 
    y <{}& x \pin{P} \text{,}
    \index{in!gt@$x > y \pin{P}$}\\
  x \sim{}& y \pin{P}  &&\Epil& 
    x \leqslant{}& y \pin{P} \text{ and } 
    y \leqslant x \pin{P} 
    \index{in!sim@$x \sim y \pin{P}$}
\end{alignat*}
without introducing any new per-relation symbols. If several such 
relations appear in sequence, e.g.~\(x \leqslant y < z \pin{P}\), 
then this is primarily to be read as a shorthand for the conjunction 
of the individual relations, i.e.,~`\(x \leqslant y \pin{P}\) and 
\(y < z \pin{P}\)', but transitivity may of course permit one to also 
draw conclusions about the relation between nonadjacent steps. 
If $P$ is a partial order then \(x \sim y \pin{P}\) is the same 
thing as \(x=y\), but if $P$ is a more general quasi-order then 
this need not be the case. Quasi-orders are of interest in that 
context because they often occur as intermediate steps in the 
construction of specific partial orders.

A quasi-order $P$ is an equivalence relation if and only if it is 
symmetric. The $P$-equivalence class $[x]_P$ of \(x \in S\) is the 
set of all \(y \in S\) such that \(x \sim y \pin{P}\). The 
\DefOrd{quotient} of $S$ by an equivalence relation $P$ is denoted 
$S/P$\index{/@$/$!quotient}, and is as usual the set 
$\setOf[\big]{ [x]_P }{ x \in S }$ of all $P$-equivalence classes.

Let \(T \subseteq S\). An element \(x \in T\) is said to be 
\DefOrd[{minimal}*]{$P$-minimal} in $T$ if there is no \(y \in T\) 
such that \(y < x \pin{P}\). The quasi-order $P$ is said to be 
\DefOrd{well-founded} 
if every nonempty \(T \subseteq S\) contains an element which is 
$P$-minimal in $T$. Well-founded quasi-orders support induction 
arguments of the form
\begin{quote}
  if \(R \subseteq S\) has the property that
  \begin{quote}
    \(x \in R\) whenever all \(y < x \pin{P}\) satisfy \(y \in R\), 
  \end{quote}
  then \(R = S\);
\end{quote}
the proof is to consider \(T = S \setminus R\), since if that had 
been nonempty then it would contain a $P$-minimal element $x$, which 
would contradict the hypothesis. Well-foundedness is also known as 
satisfying the \DefOrd{descending chain condition}; a descending 
chain is then an infinite sequence \(\{x_i\}_{i=1}^\infty \subseteq 
S\) such that \(x_i \geqslant x_{i+1} \pin{P}\) for all \(i \in 
\Zp\), and the condition is that there must be some $n$ such that 
\(x_n \sim x_i \pin{P}\) for all \(i \geqslant n\). Alternatively one 
may state it as: there are no infinite strictly $P$-descending 
chains\Dash any sequence \(x_1 > x_2 > \dotsb \pin{P}\) must 
terminate after a finite number of steps. This form is convenient 
when proving that something is an algorithm, since any loop where at 
each iteration some quantity strictly $P$-decreases must be finite 
when $P$ is well-founded.

\begin{definition}
  An \DefOrd[*{N2-graded@$\N^2$-graded!quasi-order}]{$\N^2$-graded 
  quasi-order} $Q$ on an $\N^2$-graded set 
  $\mc{P}$ is a quasi-order such that if \(x \leqslant y \pin{Q}\) 
  then \(\alpha(x)=\alpha(y)\) and \(\omega(x)=\omega(y)\). It is 
  sometimes useful to let 
  \index{(m,n)@$\cdot(m,n)$!restriction of relation}$Q(m,n)$ denote 
  the restriction of $Q$ to $\mc{P}(m,n)$.
  
  A \DefOrd[*{PROP@\PROP!quasi-order}]{\PROP\ quasi-order} $Q$ is an 
  $\N^2$-graded quasi-order on a \PROP\ $\mc{P}$ which is compatible 
  with the \PROP\ operations, i.e., firstly
  \begin{subequations}
    \begin{equation} \label{Eq:PROP-order,comp}
      a \leqslant b \pin{Q(l,m)} \Ipil
      c \circ a \circ d \leqslant c \circ b \circ d \pin{Q(k,n)}
    \end{equation}
    for all \(a,b \in \mc{P}(l,m)\), \(c \in \mc{P}(k,l)\), and 
    \(d \in \mc{P}(m,n)\), and secondly
    \begin{equation}
      a \leqslant b \pin{Q(k,l)} \Ipil
      c \otimes a \otimes d \leqslant c \otimes b \otimes d 
      \pin{Q(i+k+m,j+l+n)}
    \end{equation}
  \end{subequations}
  for all \(a,b \in \mc{P}(k,l)\), \(c \in \mc{P}(i,j)\), and \(d \in 
  \mc{P}(m,n)\), for all \(i,j,k,l,m,n \in \N\). A \PROP\ quasi-order 
  is called a 
  \DefOrd[*{PROP@\PROP!congruence relation}]{\PROP\ congruence relation} 
  if it is symmetric, i.e., if \(a \leqslant b\) implies \(a \geqslant 
  b\). It is an \DefOrd[*{linear@$\mc{R}$-linear!PROP congruence 
  relation@\PROP\ congruence relation}]{$\mc{R}$-linear} \PROP\ 
  congruence relation if $\mc{P}$ is $\mc{R}$-linear and additionally 
  for any \(m,n \in \N\), \(r \in \mc{R}\), and \(a,b,c \in 
  \mc{P}(m,n)\) such that \(a \sim b \pin{Q(m,n)}\) it holds that 
  \(ra \sim rb \pin{Q(m,n)}\) and \(a+c \sim b+c \pin{Q(m,n)}\).
  
  A \PROP\ quasi-order $Q$ is said to be \DefOrd{strict} if the 
  \PROP\ operations preserve strict inequalities, i.e., if firstly
  \begin{subequations}
    \begin{equation} \label{Eq:PROP-order,comp,str}
      a < b \pin{Q(l,m)} \Ipil 
      c \circ a \circ d < c \circ b \circ d \pin{Q(k,n)}
    \end{equation}
    for all \(a,b \in \mc{P}(l,m)\), \(c \in \mc{P}(k,l)\), and \(d \in 
    \mc{P}(m,n)\), and secondly
    \begin{equation}
      a < b \pin{Q(k,l)} \Ipil 
      c \otimes a \otimes d < c \otimes b \otimes d \pin{Q(i+k+m,j+l+n)}
    \end{equation}
  \end{subequations}
  for all \(a,b \in \mc{P}(k,l)\), \(c \in \mc{P}(i,j)\), and \(d \in 
  \mc{P}(m,n)\), for all \(i,j,k,l,m,n \in \N\).
  
  $\N^2$-graded partial orders, \PROP\ partial orders, and strict 
  \PROP\ partial orders are ditto quasi-orders which additionally are 
  partial orders on the underlying set.
\end{definition}

The strict \PROP\ partial order concept is a generalisation of the 
monoid partial order concept, and will play a similar role in the 
theory. Unlike the case with monoids however, there are no strict 
\PROP\ total orders, and with the exception of certain degenerate 
\PROPs~(such as that in Example~\ref{Ex:Algebra-PROP}) there cannot 
even be \PROP\ partial orders that are total within each component. 
The reason for this is that any two elements that differ only by 
action of permutations have to be incomparable or equivalent, since 
if it could happen that \(a < a \circ \phi(\tau)\) then it would follow 
from \eqref{Eq:PROP-order,comp,str} that \(a < a \circ \phi(\tau) < 
a \circ \phi(\tau) \circ \phi(\tau)\), and thus by extension that 
\(a < a \circ \phi(\tau^n)\) for every \(n>0\), but because every 
permutation has finite order there exists some \(n>0\) for which 
$\tau^n$ is the identity, and from that one would get the contradiction 
\(a < a\). Still, examples of strict \PROP\ orders are not entirely 
trivial to construct, so for a first example of strictness one may 
be best off with the following, even though it is ``degenerate'' in 
the sense of mapping all permutations to the same thing.

\begin{example}
  Let $M$ be a commutative monoid. Build from it an $\N^2$-graded set 
  $\mc{P}$ by making \(\mc{P}(n,n) = M\) for all \(n \in \N\) and 
  \(\mc{P}(m,n) = \varnothing\) if \(m \neq n\). Define a \PROP\ 
  structure on $\mc{P}$ by making \(\phi_n(\sigma) = 1 \in 
  \mc{P}(n,n)\), \(a \circ b := ab \in \mc{P}(n,n)\), and \(a \otimes 
  c := ac \in \mc{P}(m +\nobreak n, m +\nobreak n)\) for all \(\sigma 
  \in \Sigma_n\), \(a,b \in \mc{P}(n,n)\) and \(c \in \mc{P}(m,m)\). 
  The axioms are trivial to verify.
  
  Let \(f\colon \mc{P} \Fpil M\) be the map which forgets arity and 
  coarity of elements. If $Q$ is a monoid quasi-order on $M$ then the 
  binary relation $P$ on $\mc{P}$ defined by
  \begin{equation}
    a \leqslant b \pin{P} \Epil
    \text{\(\alpha(a)=\alpha(b)\), \(\omega(a) = \omega(b)\), and 
    \(f(a) \leqslant f(b) \pin{Q}\)}
  \end{equation}
  is a \PROP\ quasi-order. If \(a < b \pin{Q}\) implies \(ac < bc 
  \pin{Q}\) for all \(c \in M\) then $P$ is a strict \PROP\ 
  quasi-order.
\end{example}

A \PROP\ which is not ``degenerate'' in the above sense of treating 
all permutations like the identity, and which comes with a strict 
order, is the ``connectivity \PROP'' below. The 
reader may however prefer to skip over it on a first read-through, 
since even though it can be defined in quite elementary terms, the 
explanation of \emph{what} is being compared and \emph{why} the 
definition yields a \PROP\ relies rather heavily on the network 
notation of Section~\ref{Sec:Natverk} and its close relationship with 
the free \PROP\ (Section~\ref{Sec:FriPROP}).

\begin{example}[Connectivity \PROP] \label{Ex:Sammanhangande}
  For any \(m,n \in \N\), let $\mc{C}(m,n)$ be the set of all pairs 
  $(B,c)$, where \(c \in \N\) and $B$ is a partition of \(L(m,n) := 
  \bigl( \{0\} \times [m] \bigr) \cup \bigl( \{1\} \times [n] \bigr)\), 
  i.e., $B$ is a set of 
  pairwise disjoint subsets of $L(m,n)$ whose union is the whole of 
  $L(m,n)$. Let \((B_1,c_1) \leqslant (B_2,c_2) \pin{P(m,n)}\) iff 
  \(c_1 \leqslant c_2\) and there for every \(A_1 \in B_1\) exists 
  some \(A_2 \in B_2\) such that \(A_1 \subseteq A_2\); as a poset, 
  this implies $\mc{C}(m,n)$ is isomorphic to the cartesian product 
  by $\N$ of the partition lattice for an $(m +\nobreak n)$-set. An 
  element of $\mc{C}(m,n)$ is more connected the more elements of 
  $L(m,n)$ are in the same block of the partition.
  
  As it happens, this natural number part makes it possible to turn 
  $\mc{C}(m,n)$ into a \PROP\ on which $P$ is a strict partial order. 
  The idea of this \PROP\ is to keep track of the connectivity of 
  the graph underlying an expression, but it can be defined entirely 
  in the terms of Definition~\ref{Def:PROP}:
  \begin{itemize}
    \item
      For any \(\sigma \in \Sigma_n\),
      \begin{equation*}
        \phi(\sigma) = \Biggl(
          \setOf[\bigg]{ 
            \Bigl\{ \bigl( 0, \sigma(j) \bigr) , (1,j) \bigr) \Bigr\}
          }{ j \in [n] }
        , 0 \Biggr) \text{,}
      \end{equation*}
      i.e., the partition consists of $n$ blocks of two elements 
      each\Ldash item $(1,j)$ goes with item $\bigl( 0, 
      \sigma(j)\bigr)$\Rdash and the $c$ part is $0$.
      
    \item
      The tensor product
      \begin{equation*}
        (B_1,c_1) \otimes (B_2,c_2) := 
        (B_1 \cup\nobreak B_2', c_1 +\nobreak c_2)
        \text{,}
      \end{equation*}
      where $B_2'$ is $B_2$ shifted so that it becomes disjoint 
      from $B_1$; if \((B_1,c_1) \in \mc{C}(k,l)\), then
      \begin{equation*}
        B_2' := \Bigl\{ 
        \setOf[\big]{ (0,k+i) }{ (0,i) \in A } \cup
        \setOf[\big]{ (1,l+j) }{ (1,j) \in A }
      \Bigr\}_{A \in B_2} \text{.}
      \end{equation*}
      
    \item
      Finally, if \((B_1,c_1) \in \mc{C}(l,m)\) and \((B_2,c_2) \in 
      \mc{C}(m,n)\), then
      \begin{equation} \label{Eq:KonnektivitetSammansattning}
        (B_1,c_1) \circ (B_2,c_2) := \bigl( B',
          c_1 + m + \card{B} - \card{B_1} - \card{B_2} + c_2 
        \bigr) \text{,}
      \end{equation}
      where $B$ and $B'$ are obtained by taking a transitive closure 
      of $B_1$ and $B_2$. In more detail, let \(S = \bigl( 
      \{0\} \times [l] \bigr) \cup \bigl( \{\frac{1}{2}\} \times [m] 
      \bigr) \cup \bigl( \{1\} \times [n] \bigr)\). Let $E_1$ 
      be the equivalence relation on $S$ for which \((x,i) \sim 
      (y,j)\) iff (i)~\((x,i)=(y,j)\) or (ii)~$(2x,i)$ and $(2y,j)$ 
      are in the same block of $B_1$; this leaves every \((1,j) \in 
      S\) in a singleton equivalence class. Similarly let $E_2$ be 
      the equivalence relation on $S$ for which \((x,i) \sim (y,j)\) 
      iff (i)~\((x,i)=(y,j)\) or (ii)~$(2x -\nobreak 1,i)$ and $(2y 
      -\nobreak 1,j)$ are in the same block of $B_2$; here it is 
      instead the \((0,j) \in S\) which end up as singletons. Let $E$ 
      be the transitive closure of $E_1 \cup E_2$, and let $B$ be the 
      set of equivalence classes of $E$. Let $B'$ be the set of 
      equivalence classes of the restriction of $E$ to $L(l,n)$.
  \end{itemize}
\end{example}
\begin{proof}[Explanation]
  In order to understand what is going on here, it is convenient to 
  introduce an auxiliary $\N^2$-graded set $\mc{G}$ of ``graphs 
  with terminals''; an element of $\mc{G}(m,n)$ consists of one 
  pseudograph $G$ (i.e., it may have loops and multiple edges, 
  although one could make do without them) and two lists of $m$ and 
  $n$ respectively vertices (where one does not need to require the 
  vertices to be distinct, although one could do so). Apart from 
  these lists, the graph is unlabelled; one may freely rename 
  vertices and edges, as long as the lists are updated accordingly. 
  Denote by $u_i(G)$ the $i$'th vertex ($i\in[m]$) in the first list 
  of $G$ and by $v_j(G)$ the $j$'th vertex ($j \in [n]$) in the 
  second list of $G$.
  
  There is a natural definition of the \PROP\ operations on $\mc{G}$. 
  Both when forming the composition \(H_\circ = G_1 \circ G_2$ and the 
  tensor product \(H_\otimes = G_1 \otimes G_2\) of \(G_1 \in 
  \mc{G}(k,l)\) and \(G_2 \in \mc{G}(m,n)\), one starts by forming the 
  disjoint union \(G = G_1 \mathbin{\dot{\cup}} G_2\) of $G_1$ and $G_2$ 
  (this is where relabelling vertices and edges tends to become 
  necessary, but we'll assume the labellings started out disjoint). As 
  a pseudograph, \(H_\otimes = G\), and the lists of vertices are 
  given by
  \begin{equation*}
    u_i(H_\otimes) = \begin{cases}
      u_i(G_1) & \text{if \(i \leqslant k\),}\\
      u_{i-k}(G_2) & \text{otherwise,}
    \end{cases}
    \qquad
    v_j(H_\otimes) = \begin{cases}
      v_j(G_1) & \text{if \(j \leqslant l\),}\\
      v_{j-l}(G_2) & \text{otherwise}
    \end{cases}
  \end{equation*}
  for all \(i \in [ k +\nobreak m ]\) and \(j \in [l +\nobreak n]\). 
  The composition $H_\circ$ is instead $G + \bigl\{ v_j(G_1)u_j(G_2) 
  \bigr\}_{j=1}^m$\Ldash i.e., $G$ with \(m=l\) extra edges connecting 
  the $G_1$ and $G_2$ parts\Rdash then taking the first list from $G_1$ 
  and the second from $G_2$, so that \(u_i(H_\circ) = u_i(G_1)\) and 
  \(v_j(H_\circ) = v_j(G_2)\). Finally, $\phi_n(\sigma)$ is a graph 
  with vertex set $\{0,1\} \times [n]$, lists given by \(u_i = 
  (0,i)\) and \(v_j = (1,j)$, and $n$ edges connecting $(1,j)$ with 
  $\bigl( 0, \sigma(j) \bigr)$ (i.e., the graph $\phi_n(\sigma)$ is a 
  matching). $\mc{G}$ is not quite a \PROP\ under these 
  operations\Ldash for example the composition identity axiom 
  fails\Rdash but it is pretty close, and would be a \PROP\ modulo 
  graph homeomorphism.
  
  One graph invariant that is relevant for this discussion is the 
  \emph{cyclomatic number} $\mathrm{cyc}(G)$, i.e., the dimension of the 
  cycle space (or for the more topologically oriented, the rank of 
  the homotopy group). In a graph $G$, the easiest way to compute this 
  is to first fix a maximal spanning forest, and then simply count the 
  number of edges not in the forest; every such edge must connect two 
  vertices in the same component of $G$, so there is a unique path 
  between them in the forest, and together with the edge this path 
  forms a cycle. The set of such cycles constitutes a basis for the 
  cycle space, the dimension of which is therefore equal to the 
  number of edges outside the forest. It is well known that the 
  number of edges in a tree is one less than the number of vertices. 
  The corresponding result for a forest is that \(\card{E} = 
  \card{V} - k\), where $k$ denotes the number of components. Hence 
  \(\mathrm{cyc}(G) = \card[\big]{E(G)} - \card[\big]{V(G)} + k(G)\). 
  Applying this to the $\circ$ operation, one finds that
  \begin{multline*}
    \mathrm{cyc}(G_1 \circ G_2) =
    \card[\big]{E(G_1 \circ G_2)} - \card[\big]{V(G_1 \circ G_2)} + 
      k(G_1 \circ G_2) = \\ =
    \card[\big]{E(G_1)} + m + \card[\big]{E(G_2)} - 
      \card[\big]{V(G_1)} - \card[\big]{V(G_2)} + k(G_1 \circ G_2) 
      = \\ =
    \mathrm{cyc}(G_1) - k(G_1) + m + \mathrm{cyc}(G_2) - k(G_2) + 
      k(G_1 \circ G_2)
    \text{.}
  \end{multline*}
  If one makes the interpretation of the connectivity \PROP\ $\mc{C}$ 
  that the $B$ part of an element is its set of components, then this 
  formula is exactly the $c$ part of 
  \eqref{Eq:KonnektivitetSammansattning}! What the tensor product and 
  permutations do with the $c$ part also coincides with what happens 
  to the cyclomatic number, so the connectivity \PROP\ may in fact be 
  viewed as a destillation of $\mc{G}$ down to the minimal amount of 
  information needed to keep track of what happens to the cyclomatic 
  number under \PROP\ operations. It is because both the $B$ and $c$ 
  parts can be considered properties of elements of the free \PROP\ 
  that $\mc{C}$ must be a \PROP.
  
  From the \PROP\ order construction point of view, it is however 
  rather the component partition $B$ that is primary, whereas the 
  integer $c$ is something extra that has to be put in to make the 
  order strict. It is very natural to order expressions by how much 
  different parts of them are interconnected, as an expression with 
  fewer connections has fewer restrictions on how its parts can be 
  rearranged, and is therefore simpler. A natural measure of 
  connectivity is the partition of inputs and outputs into 
  components, but an order by this alone will not be strict: 
  if \(\alpha(\mOp)=\omega(\Delta)=2\), \(\omega(\mOp) = \omega(u) = 
  \alpha(\Delta) = 1\), and \(\alpha(u)=0\) then $\mOp \circ 
  \phi(\same{1}) \otimes u$ and $\mOp \circ \Delta$ both have the input 
  connected to the output, but $\phi(\same{1}) \otimes u$ is less 
  connected than $\Delta$, so a strict order should make \(\mOp \circ 
  \phi(\same{1}) \otimes u < \mOp \circ \Delta\). Such a 
  distinguishing detail can be hidden away deep within a large 
  expression, but remarkably enough the cyclomatic number suffices as 
  a way of remembering the difference.
\end{proof}

Quite a lot of useful orders can be constructed by comparing the 
images of elements under some map to a different set (usually a 
``simpler'') set. Hence a definition may be in order.

\begin{definition}
  Let sets $A$ and $B$, a map \(f\colon A \Fpil B\), and a binary 
  relation $P$ on $B$ be given. The \DefOrd{pullback} of $P$ over $f$ 
  is then the binary relation $f^*P$
  \index{* sup@${}^*$!pullback@pullback ($f^*P$)} on $A$ defined by
  \begin{equation}
    x \leqslant y \pin{f^*P} \Epil f(x) \leqslant f(y) \pin{P}
    \qquad\text{for all \(x,y \in A\).}
  \end{equation}
  The \DefOrd[*{kernel}]{(set-theoretic) kernel} 
  \index{Ker@$\Ker$}$\Ker f$ of $f$ is the relation 
  $f^*E$, where $E$ denotes the equality relation on $B$.
\end{definition}

The more traditional algebraic kernel $\ker f$ of a homomorphism $f$, 
which is generally the $\Ker f$ equivalence class of $0$, is not as 
immediately applicable for \PROPs\ since the most common way of 
developing its theory relies on having a group structure, and not even 
linear \PROPs\ are groups in that way. One can however define the 
algebraic kernel as the $\N^2$-graded set of all equivalence classes 
of $0$ (noticing that there is a separate $0$ in each component) and 
then go on to prove that it is a sub-\PROP, after which the theory 
develops as one would expect. Working instead with relations has the 
merit that this observation is automatic (and in addition it works also 
for non-linear \PROPs).

\begin{lemma} \label{L:Pullback}
  Let $\N^2$-graded sets $\mc{P}$ and $\mc{Q}$, a map \(f\colon 
  \mc{P} \Fpil \mc{Q}\), and a binary relation $Q$ on $\mc{Q}$ be 
  given. Consider the pullback $f^*Q$ of $Q$ to $\mc{P}$.
  \begin{enumerate}
    \item
      If $Q$ is reflexive then $f^*Q$ is reflexive.
    \item
      If $Q$ is symmetric then $f^*Q$ is symmetric.
    \item
      If $Q$ is transitive then $f^*Q$ is transitive.
    \item
      If $Q$ is an $\N^2$-graded quasi-order and $f$ is an 
      $\N^2$-graded set morphism then $f^*Q$ is an $\N^2$-graded 
      quasi-order.
    \item
      If $\mc{Q}$ is a \PROP, $Q$ is a \PROP\ quasi-order, $\mc{P}$ 
      is a \PROP, and $f$ is a \PROP\ homomorphism then $f^*Q$ is a 
      \PROP\ quasi-order. If in addition $Q$ is strict then $f^*Q$ is 
      strict.
    \item
      If $Q$ is well-founded then $f^*Q$ is well-founded.
  \end{enumerate}
\end{lemma}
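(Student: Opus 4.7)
The plan is to handle all six items by unwrapping the definition of $f^*Q$, namely the equivalence $x \leqslant y \pin{f^*Q} \Leftrightarrow f(x) \leqslant f(y) \pin{Q}$, and then quoting the hypothesis on $Q$. Items (1)--(3) are immediate: reflexivity of $Q$ at $f(x)$ gives reflexivity of $f^*Q$ at $x$; symmetry and transitivity transfer because the equivalence replaces a statement about $x,y$ by one about $f(x),f(y)$, where the corresponding property holds by assumption.

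For (4), I would first note that (1) and (3) already make $f^*Q$ a quasi-order. Being $\N^2$-graded is where I use that $f$ is an $\N^2$-graded set morphism: if $x \leqslant y \pin{f^*Q}$ then $f(x) \leqslant f(y) \pin{Q}$, whence $\alpha(f(x))=\alpha(f(y))$ and $\omega(f(x))=\omega(f(y))$, and since $f$ preserves arity and coarity this forces $\alpha(x)=\alpha(y)$ and $\omega(x)=\omega(y)$. For (5), I take $a \leqslant b \pin{f^*Q(l,m)}$ and $c \in \mc{P}(k,l)$, $d \in \mc{P}(m,n)$, so that $f(a) \leqslant f(b) \pin{Q(l,m)}$. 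The compatibility axiom for $Q$ then gives $f(c) \circ f(a) \circ f(d) \leqslant f(c) \circ f(b) \circ f(d) \pin{Q(k,n)}$, and since $f$ is a \PROP\ homomorphism this reads $f(c \circ a \circ d) \leqslant f(c \circ b \circ d) \pin{Q(k,n)}$, i.e., $c \circ a \circ d \leqslant c \circ b \circ d \pin{f^*Q(k,n)}$. The tensor compatibility is the same argument, and strictness follows by replacing every $\leqslant$ by $<$ throughout, noting that strict inequality in $f^*Q$ unfolds as $f(a) \leqslant f(b) \pin{Q}$ with $f(b) \not\leqslant f(a) \pin{Q}$, which is exactly $f(a) < f(b) \pin{Q}$, so the strict hypothesis on $Q$ applies directly.

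For (6), I would pass from a nonempty $T \subseteq \mc{P}$ to $\setmap{f}(T) \subseteq \mc{Q}$, which is nonempty, and use well-foundedness of $Q$ to pick a $Q$-minimal element $y \in \setmap{f}(T)$. Choose any $x \in T$ with $f(x)=y$; then if some $x' \in T$ satisfied $x' < x \pin{f^*Q}$, one would have $f(x') < f(x) = y \pin{Q}$ with $f(x') \in \setmap{f}(T)$, contradicting the choice of $y$. Hence $x$ is $f^*Q$-minimal in $T$, and $f^*Q$ is well-founded.

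The whole proof is essentially a mechanical transfer of properties across the defining bi-implication, so there is no real obstacle; the one place where a small care is needed is the well-foundedness step in (6), where one has to remember to pick a preimage of a minimal image and then verify that strictness in the pullback translates into strictness downstream, but the translation is immediate from the definition of $f^*Q$. For this reason I would keep the writeup very compact, treating (1)--(3) in a single sentence, doing (4) and (5) by the unpacking argument above, and giving (6) the slightly longer treatment it requires.
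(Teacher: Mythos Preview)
Your proposal is correct and matches the paper's proof essentially step for step: items (1)--(4) are handled by the same unpacking of the defining equivalence, item (5) uses the same homomorphism calculation for $\circ$ and $\otimes$, and item (6) is the same minimal-image argument. The only cosmetic difference is that for strictness the paper argues by contrapositive (if $f^*Q$ fails to be strict then $Q$ does too) whereas you argue directly; both rely on the observation that $a < b \pin{f^*Q}$ is equivalent to $f(a) < f(b) \pin{Q}$.
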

\begin{proof}
  Reflexivity, symmetry, and transitivity are trvial; for example if 
  \(a \leqslant b \pin{f^*Q}\) and \(b \leqslant c \pin{f^*Q}\) then 
  by definition \(f(a) \leqslant f(b) \pin{Q}\) and \(f(b) \leqslant 
  f(c) \pin{Q}\), from which follows \(f(a) \leqslant f(c) \pin{Q}\) 
  by transitivity, and hence \(a \leqslant c \pin{f^*Q}\) as required 
  for $f^*Q$ to be transitive. Similarly for $\N^2$-grading: if \(a 
  \leqslant b \pin{f^*Q}\) then \(f(a) \leqslant f(b) \pin{Q}\) and 
  hence \(\alpha(a) = \alpha\bigl( f(a) \bigr) = \alpha\bigl( f(b) 
  \bigr) = \alpha(b)\) and \(\omega(a) = \omega\bigl( f(a) \bigr) = 
  \omega\bigl( f(b) \bigr) = \omega(b)\) as claimed.
  
  Proving that $f^*$ is a \PROP\ order requires a bit more work, but 
  if \(a \leqslant b \pin{f^*Q}\) then \(f(c \otimes\nobreak a 
  \otimes\nobreak d) = f(c) \otimes f(a) \otimes f(d) \leqslant 
  f(c) \otimes f(b) \otimes f(d) = f(c \otimes\nobreak b 
  \otimes\nobreak d) \pin{Q}\) since \(f(a) \leqslant f(b) \pin{Q}\), 
  and hence \(c \otimes a \otimes d \leqslant c \otimes b \otimes d 
  \pin{f^*Q}\). The same argument works for composition.
  
  Then there is the matter of strictness. If $f^*Q$ is not 
  strict, i.e., if \(a < b \pin{f^*Q}\) but \(c \otimes a \otimes d 
  \not< c \otimes b \otimes d \pin{f^*Q}\) for some \(a,b,c,d \in 
  \mc{P}\) then \(c \otimes a \otimes d \sim c \otimes b \otimes d 
  \pin{f^*Q}\) and hence \(f(c) \otimes f(a) \otimes f(d) = 
  f(c \otimes\nobreak a \otimes\nobreak d) \sim f(c \otimes\nobreak b 
  \otimes\nobreak d) = f(c) \otimes f(b) \otimes f(d) \pin{Q}\). 
  Since \(f(a) < f(b) \pin{Q}\), this implies $Q$ isn't strict 
  either. Again the exact same argument applies for composition.
  
  Finally for well-foundedness, let \(T \subseteq \mc{P}\) be an 
  arbitrary nonempty set. Then $f(T)$ is nonempty, and hence it 
  contains some $Q$-minimal element $f(x)$ for \(x \in T\). This $x$ 
  is then $f^*Q$-minimal in $T$, since had some \(y \in T\) satisfied 
  \(y < x \pin{f^*Q}\) then it would follow that \(f(y) < f(x) 
  \pin{Q}\), contradicting the minimality of $x$.
\end{proof}


\begin{lemma}[First isomorphism theorem]
  Let $\mc{P}$ be a \PROP\ and $C$ a \PROP\ congruence relation on 
  $\mc{P}$. Then $\mc{P}/C$ is a \PROP\ with
  \begin{align*}
    \alpha_{\mc{P}\!/C}\bigl( [a]_C \bigr) ={}&
      \alpha_\mc{P}(a) \text{,}&
    \omega_{\mc{P}\!/C}\bigl( [a]_C \bigr) ={}&
      \omega_\mc{P}(a) \text{,}\\
    [a]_C \otimes_{\mc{P}\!/C} [b]_C ={}&
      [a \otimes_{\mc{P}} b]_C \text{,} &
    [a]_C \circ_{\mc{P}\!/C} [b]_C ={}&
      [a \circ_{\mc{P}} b]_C \text{,}\\
    \phi_{\mc{P}\!/C}(\sigma) ={}& \bigl[ \phi_\mc{P}(\sigma) \bigr]_C
  \end{align*}
  for all \(a,b \in \mc{P}\) and permutations $\sigma$. Moreover 
  \(f\colon \mc{P} \Fpil \mc{P}/C\) defined by \(f(a) = [a]_C\) is a 
  \PROP\ homomorphism.
  
  Conversely let \(g\colon \mc{P} \Fpil \mc{Q}\) be a \PROP\ 
  homomorphism. Then $\Ker g$ is a \PROP\ congruence relation. Let 
  \(f\colon \mc{P} \Fpil \mc{P}/\Ker g\) be defined by \(f(a) = 
  [a]_{\Ker g}\) for all \(a \in \mc{P}\). Then there exists a unique 
  map \(h\colon \mc{P}/\Ker g \Fpil \mc{Q}\) such that \(g = h \circ 
  f\), and that $h$ is a \PROP\ monomorphism. If $g$ is surjective, 
  then $h$ is an isomorphism.
\end{lemma}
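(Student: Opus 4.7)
My plan is to split the proof into three parts: showing $\mc{P}/C$ is a \PROP\ (forward direction), identifying $\Ker g$ as a \PROP\ congruence (converse direction), and then constructing the factorisation $h$ and verifying it is a monomorphism (and an isomorphism when $g$ is surjective).

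For the forward direction, the key point is well-definedness. The $\N^2$-grading of $C$ (which is built into the definition of \PROP\ quasi-order) gives \(\alpha(a)=\alpha(b)\) and \(\omega(a)=\omega(b)\) whenever \(a \sim b \pin{C}\), so $\alpha_{\mc{P}\!/C}$ and $\omega_{\mc{P}\!/C}$ make sense on equivalence classes. For $\otimes$ and $\circ$ I would argue that if \(a \sim a' \pin{C}\) and \(b \sim b' \pin{C}\), then applying the \PROP\ congruence compatibility in two stages\Dash first replacing $a$ by $a'$ keeping $b$ fixed, then replacing $b$ by $b'$ keeping $a'$ fixed\Rdash yields \(a \otimes b \sim a' \otimes b' \pin{C}\) and likewise for $\circ$ (whenever the composition is defined, which it is precisely when arities match, a condition preserved by $\sim$). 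The map $\phi_{\mc{P}\!/C}$ needs no well-definedness check since it is defined directly. The eight \PROP\ axioms then transfer to $\mc{P}/C$ routinely: pick representatives, apply the axiom in $\mc{P}$, and project. That \(f\colon a \mapsto [a]_C\) is a \PROP\ homomorphism is immediate from how the quotient operations are defined.

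For the converse, $\Ker g$ is a \PROP\ congruence by Lemma~\ref{L:Pullback}: the equality relation $E$ on $\mc{Q}$ is trivially a \PROP\ quasi-order and is symmetric, $g$ is a \PROP\ homomorphism, so $\Ker g = g^*E$ is a symmetric \PROP\ quasi-order, i.e.~a \PROP\ congruence relation. The $\mc{R}$-linearity clause need not be checked here since the statement only claims it is a \PROP\ congruence relation.

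For the universal property I would set \(h\bigl([a]_{\Ker g}\bigr) := g(a)\); this is well-defined precisely because \(a \sim b \pin{\Ker g}\) unpacks to \(g(a)=g(b)\). The relation \(g = h \circ f\) holds by construction, and any \(h'\) with \(g = h' \circ f\) must satisfy \(h'\bigl([a]_{\Ker g}\bigr) = h'\bigl(f(a)\bigr) = g(a)\), which forces $h'=h$ since every element of $\mc{P}/\Ker g$ has the form $f(a)$. That $h$ is a \PROP\ homomorphism follows from $g$ and $f$ being ones, e.g.~\(h\bigl([a] \circ [b]\bigr) = h\bigl([a \circ\nobreak b]\bigr) = g(a \circ\nobreak b) = g(a) \circ g(b) = h\bigl([a]\bigr) \circ h\bigl([b]\bigr)\), with analogous calculations for $\otimes$ and $\phi$. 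Injectivity is immediate: \(h\bigl([a]\bigr) = h\bigl([b]\bigr)\) gives \(g(a)=g(b)\), i.e.~\(a \sim b \pin{\Ker g}\), hence \([a]=[b]\). If $g$ is surjective then for each \(q \in \mc{Q}\) we pick \(a \in \mc{P}\) with \(g(a)=q\), and then \(h\bigl([a]_{\Ker g}\bigr) = q\), so $h$ is surjective and therefore an isomorphism.

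The main obstacle is merely bookkeeping: there are many \PROP\ axioms and three operations to pass through the quotient, and one must check that each compatibility condition in the definition of \PROP\ congruence lines up with the corresponding axiom. No step is genuinely deep, and the only substantive ingredient imported from elsewhere is Lemma~\ref{L:Pullback} for the $\Ker g$ part.
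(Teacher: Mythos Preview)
Your proposal is correct and follows essentially the same approach as the paper: well-definedness from the congruence conditions, routine transfer of the \PROP\ axioms to the quotient, invocation of Lemma~\ref{L:Pullback} for $\Ker g$, and the standard definition and verification of $h$. The paper is slightly terser (it does not spell out the two-stage argument for well-definedness of $\otimes$ and $\circ$), but the content is the same.
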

\begin{proof}
  That the $\alpha$, $\omega$, $\circ$, and $\otimes$ of $\mc{P}/C$ 
  are well-defined follows from the definition of \PROP\ congruence 
  relation. That they satisfy the axioms of a \PROP\ is shown through 
  the usual exchange of operations and equivalence class brackets; 
  for example the permutation composition law follows from
  \begin{multline*}
    \phi_{\mc{P}\!/C}(\sigma) \circ_{\mc{P}\!/C} 
      \phi_{\mc{P}\!/C}(\tau) =
    \bigl[ \phi_\mc{P}(\sigma) \bigr]_C \circ_{\mc{P}\!/C} 
      \bigl[ \phi_\mc{P}(\tau) \bigr]_C = \\ =
    \bigl[ \phi_\mc{P}(\sigma) \circ_\mc{P} \phi_\mc{P}(\tau) \bigr]_C =
    \bigl[ \phi_\mc{P}(\sigma\tau) \bigr]_C =
    \phi_{\mc{P}\!/C}(\sigma\tau)
    \text{.}
  \end{multline*}
  That \(f\colon \mc{P} \Fpil \mc{P}/C\) is a \PROP\ homomorphism is 
  literally the definition of~$\mc{P}/C$.
  
  That $\Ker g$ is a \PROP\ congruence relation follows from 
  Lemma~\ref{L:Pullback}. The defining property of \(h\colon 
  \mc{P}/\Ker g \Fpil \mc{Q}\) is that \(h\bigl( f(a) \bigr) = g(a)\) 
  for all \(a \in \mc{P}\), and $h$ is well-defined and injective since 
  \(f(a) = f(b)\) iff \(a \sim b \pin{\Ker g}\), which by definition 
  is equivalent to \(g(a) = g(b)\). That $f$ is surjective means $h$ 
  is completely determined by this. That $h$ is a \PROP\ homomorphism 
  is another exchange exercise; for $\otimes$ is amounts to
  \begin{equation*}
    h\bigl( f(a) \otimes f(b) \bigr) =
    h\bigl( f(a \otimes b) \bigr) =
    g( a \otimes b ) =
    g(a) \otimes g(b) =
    h\bigl( f(a) \bigr) \otimes h\bigl( f(b) \bigr)
  \end{equation*}
  for all \(a,b \in \mc{P}\).
\end{proof}

This lemma thus establishes for \PROPs, through elementary means,  
the basic principle of universal algebra: that an arbitrary \PROP\ 
$\mc{Q}$ may be viewed as (is isomorphic to) a quotient of a preferred 
\PROP\ $\mc{P}$, if only one can construct a surjective homomorphism 
from $\mc{P}$ to $\mc{Q}$. The elementary construction of a free 
\PROP, which as usual comes with homomorphisms to arbitrary \PROPs, is 
the subject of Section~\ref{Sec:FriPROP}. 


\begin{construction} \label{Con:Lex.sammans.}
  Let $\mc{S}$ be a set. Define a binary operation 
  $\diamond$\index{$\diamond$} 
  on binary relations \(P,Q \subseteq \mc{S} \times \mc{S}\) by
  \begin{equation}
    P \diamond Q = \setOf[\big]{ (x,y) \in \mc{S} \times \mc{S} }{
      \text{\( x < y \pin{P}\), or \(x \sim y \pin{P}\) and \(x 
        \leqslant y \pin{Q}\)} }
    \text{.}
  \end{equation}
  This operation is associative and it is called the 
  \DefOrd{lexicographic composition}.
  The lexicographic composition of two quasi-orders is itself a 
  quasi-order. The lexicographic composition of a quasi-order 
  and a partial order is a partial order.
  
  If $\mc{S}$ is a \PROP, and $P$ and $Q$ are strict \PROP\ 
  quasi-orders, then $P \diamond Q$ is a strict \PROP\ quasi-order 
  too. If $P$ and $Q$ are well-founded then $P \diamond Q$ is 
  well-founded as well.
\end{construction}
\begin{remark}
  Some strictness condition for composing \PROP\ quasi-orders is 
  necessary, since if \(x < y \pin{P}\) but for example \(x \circ z 
  \sim y \circ z \pin{P}\) then \(x \leqslant y \pin{P \diamond Q}\) 
  regardless of $Q$, but whether \(x \circ z \leqslant y \circ z 
  \pin{P \diamond Q}\) depends entirely on $Q$.
  
  In~\cite{Saito}, the lexicographic composition is instead called the 
  $*$-product.
  
  The lexicographic composition is similar to the `ordinal product' of 
  two posets~\cite[p.~101]{Stanley}, which is a generalisation of 
  multiplication of ordinal numbers; the main difference is that the 
  ordinal product also forms a cartesian product of the underlying 
  sets, whereas the this lexicographic product produces a new 
  relation on the same underlying set.
\end{remark}
\begin{proof}
  First assume $P$ and $Q$ are quasi-orders on $\mc{S}$. To see that 
  $P \diamond Q$ will then be reflexive, it suffices to notice that 
  since $P$ and $Q$ are reflexive, \(x \sim x \pin{P}\) and \(x 
  \leqslant x \pin{Q}\) for all \(x \in \mc{S}\).
  To see that $P \diamond Q$ will be transitive, note that
  $$
    x \leqslant y \pin{P \diamond Q} \mkern 9mu \Epil \mkern 9mu
    (x \leqslant y \pin{P}) \wedge \bigl( (x \geqslant y \pin{P}) 
      \Longrightarrow (x \leqslant y \pin{Q}) \bigr)
  $$
  for all \(x,y \in \mc{S}\). Hence if \(x \leqslant y \pin{P \diamond 
  Q}\) and \(y \leqslant z \pin{P \diamond Q}\) for some \(x,y,z \in 
  \mc{S}\), then \(x \leqslant y \pin{P}\) and \(y \leqslant z 
  \pin{P}\), and thus \(x \leqslant z \pin{P}\). Furthermore the 
  assumption that \(x \geqslant z \pin{P}\) leads to \(x \geqslant y 
  \pin{P}\) and \(y \geqslant z \pin{P}\), hence \(x \leqslant y 
  \pin{Q}\) and \(y \leqslant z \pin{Q}\), and thus \(x \leqslant z 
  \pin{Q}\). The above equivalence now implies that \(x \leqslant z 
  \pin{P \diamond Q}\), as required.
  
  If $P$ is a partial order and $Q$ is reflexive then it is easy to 
  see that \(P \diamond Q = P\) and thus $P \diamond Q$ is trivially 
  a partial order. To see that it is a partial order if $P$ is a 
  quasi-order and $Q$ is a partial order, note that
  $$
    x \sim y \pin{P \diamond Q} \quad\Epil\quad 
    \text{\(x \sim y \pin{P}\) and \(x \sim y \pin{Q}\)}
  $$
  for all \(x,y \in \mc{S}\). Since \(x \sim y \pin{Q}\) by 
  assumption implies \(x=y\), it follows that \(x \sim y 
  \pin{P \diamond Q}\) implies that as well.
  
  Let \(x,y \in \mc{S}\) and \(P,Q,R \subseteq \mc{S}\times\mc{S}\) 
  be arbitrary. Then it follows from the equivalence
  \begin{align*}
    x \leqslant y \pin{P \diamond (Q \diamond R)} 
       \Epil{} \mkern-70mu & \\
    \Epil{}&
    (x < y \pin{P}) \vee \bigl((x \sim y \pin{P}) \wedge 
      (x \leqslant y \pin{Q \diamond R}) \bigr) 
      \displaybreak[0]\\
    \Epil{}& (x < y \pin{P}) \vee \bigl((x \sim y \pin{P}) \wedge 
      (x < y \pin{Q}) \bigr) \vee \\ &\quad{}\vee
      \bigl((x \sim y \pin{P}) \wedge 
      (x \sim y \pin{Q}) \wedge (x \leqslant y \pin{R}) \bigr) 
      \displaybreak[0]\\
    \Epil{}& (x < y \pin{P \diamond Q}) \vee \bigl( (x \sim y 
      \pin{P \diamond Q}) \wedge (x \leqslant y \pin{R}) \bigr) \\
    \Epil{}& x \leqslant y \pin{(P \diamond Q) \diamond R}
  \end{align*}
  that $\diamond$ is associative.
  
  Finally there is the matter of whether $P \diamond Q$ is a strict 
  \PROP\ quasi-order when $P$ and $Q$ are. First, $P \diamond Q$ is 
  clearly $\N^2$-graded since it can only relate pairs of elements 
  already related by $P$.
  
  In order to establish compatibility with composition 
  (\eqref{Eq:PROP-order,comp} and \eqref{Eq:PROP-order,comp,str}), one 
  must consider three cases for \(a,b \in \mc{S}(l,m)\), \(c \in 
  \mc{S}(k,l)\), and \(d \in \mc{S}(m,n)\). The first is that \(a < b 
  \pin{P \diamond Q}\) because \(a < b \pin{P}\), in which \(c \circ a 
  \circ d < c \circ b \circ d \pin{P}\) and hence \(c \circ a \circ d < 
  c \circ b \circ d \pin{P \diamond Q}\) as required. The second is 
  that \(a < b \pin{P \diamond Q}\) despite \(a \sim b \pin{P}\), in 
  which follows that \(a < b \pin{Q}\) and hence \(c \circ a \circ d < 
  c \circ b \circ d \pin{Q}\). By using \eqref{Eq:PROP-order,comp} 
  twice, since `$\sim$' is `$\leqslant$ and $\geqslant$', it also 
  follows that \(c \circ a \circ d \sim c \circ b \circ d \pin{P}\), so 
  again \(c \circ a \circ d < c \circ b \circ d \pin{P \diamond Q}\) as 
  required. The third case is merely that \(a \sim b \pin{P \diamond 
  Q}\), in which it similarly follows from \(a \sim b \pin{P}\) and 
  \(a \sim b \pin{Q}\) that \(c \circ a \circ d \sim c \circ b \circ d 
  \pin{P}\) and \(c \circ a \circ d \sim c \circ b \circ d \pin{Q}\), 
  so that \(c \circ a \circ d \sim c \circ b \circ d \pin{P \diamond 
  Q}\), which is the required conclusion in that case. Compatibility 
  with the tensor product is proved in exactly the same way.
  
  Finally, there is the matter of well-foundedness. Let \(T \subseteq 
  \mc{S}\) be an arbitrary nonempty set. Let $T'$ be the set of 
  $P$-minimal elements in $T$; this is nonempty by the 
  well-foundedness of $P$. Hence there exists a $Q$-minimal element 
  $x$ in $T'$. There cannot be a \(y \in T\) such that \(y < x 
  \pin{P \diamond Q}\), because \(y < x \pin{P}\) would contradict 
  the $P$-minimality of $x$ in $T$, and \(y \sim x \pin{P}\) would 
  imply that $y$ is $P$-minimal in $T$, meaning \(y \in T'\), and 
  then \(y < x \pin{Q}\) would contradict the $Q$-minimality of $x$ 
  in $T'$. Hence $x$ is $(P \diamond\nobreak Q)$-minimal in $T$, and 
  thus $P \diamond Q$ is well-founded.
\end{proof}

\section{Abstract index notation}
\label{Sec:AIN}


Recall the Einstein notation for tensors, under which
\begin{equation*}
  C^i_k = A^i_j B^j_k \qquad\text{is a shorthand for}\qquad
  C^i_k = \sum_j A^i_j B^j_k
\end{equation*}
and superscripts are not exponents, but like subscripts used as 
indices into the base symbol.
The abstract index notation is a generalisation of this, which follows 
the same syntactic principles but is given a more abstract 
interpretation as a shorthand for some combination of compositions, 
tensor products, and permutations. Nonetheless the reader may find 
it instructive to make direct Einstein notation interpretations of 
expressions, and in the case of $\HomPROP_V$ \PROPs\ both 
interpretations are equally valid, especially when $V$ is 
finite-dimensional (e.g.~a finite-dimensional Hopf algebra).

A point of view that is convenient when understanding the abstract 
index notation (and even more the network notation, below) is that a 
\PROP\ element $\gamma$ of arity $n$ and coarity $m$ may be thought 
of as a black box which takes $n$ inputs and produces $m$ outputs. 
Composite \PROP\ elements are formed by connecting outputs from one 
factor to the inputs of another. In order to keep track of these 
connections, every input and output is given a separate label (the 
`abstract index' symbol), and when the same label occurs for an output 
as for an input, then these are connected. Those inputs of a factor 
which are not connected to any output of a factor become inputs of the 
expression as a whole, whereas those outputs that are not connected to 
any input of a factor become outputs of the expression as a whole.

By convention in the case of the Einstein notation, output 
labels~(row indices, signalling a vector aspect of the tensor) are 
written as superscripts of factors and input labels~(column indices, 
signalling a covector aspect of the tensor) are written as subscripts 
of factors,\footnote{
  This convention has been used by Penrose~\cite{Penrose} and in much 
  physics literature, whereas for example 
  Cvitanovi\'{c}~\cite{Cvitanovic} uses the opposite convention. It 
  doesn't matter terribly much which convention is used as long as one 
  stays within one notation system, but it becomes very noticable if 
  one e.g.\@ mixes abstract index notation with categorical notation. 
} so if $\gamma$ has arity $3$ and coarity $2$ then it might appear in 
abstract index notation as the labelled factor $\gamma^{ab}_{cde}$. 
A \emDefOrd{naked expression} in abstract index notation is simply a 
product of labelled factors (satisfying some syntactic conditions), 
but from a formal point of view it is more convenient to 
start with \emph{closed} expressions, and introduce naked expressions 
as a relaxation of these. A \emDefOrd{closed expression} in abstract 
index notation has the form
\begin{equation}
  \fuse{\text{outputs}}{
    \beta^{\text{labels}}_{\text{labels}} \dotsb
    \psi^{\text{labels}}_{\text{labels}}
  }{\text{inputs}}
\end{equation}
where $\beta$ through $\psi$ are \PROP\ elements and the various 
`labels', `inputs', and `outputs' are finite lists of (variable) 
symbols satisfying the arity, matching, and acyclicity conditions 
detailed below. In this notation, the antipode axiom can be written 
as
\begin{equation}
  \fuse{b}{ \mOp^b_{cd} S^c_e \Delta^{ed}_a }{a} =
  \fuse{b}{ \eta^b \ve_a }{a} =
  \fuse{b}{ \mOp^b_{cd} S^d_e \Delta^{ce}_a }{a}
  \text{.}
\end{equation}
The complete bracket construction denotes a \PROP\ element, 
whose arity is the number of inputs and whose coarity is the number 
of outputs. The corresponding naked expression is
\begin{equation}
  \mOp^b_{cd} S^c_e \Delta^{ed}_a =
  \eta^b \ve_a =
  \mOp^b_{cd} S^d_e \Delta^{ce}_a
  \text{,}
\end{equation}
i.e., just the product parts (between a bar and the following right 
bracket) of the closed expressions. An equality of two closed 
expressions can be abbreviated as an equality of the corresponding 
naked expressions \emph{if} the input and output lists are the same; 
conversely any naked expression equality can be turned into an 
equivalent closed expression equality by wrapping the terms up in 
brackets with equal input and output lists. (It follows from 
Lemmas~\ref{L:fperm-right} and~\ref{L:fperm-left} that when closing a 
naked expression equality, every syntactically correct choice of input 
and output lists produces an equivalent equality of closed 
expressions.) A complication when going from closed expressions to 
naked expressions is however that one side of an equality might have 
the same label both as input and output, whereas the other side does 
not:
\[
  \fuse{a}{ \mOp^a_{bc} \eta^b }{c} =
  \fuse{a}{1}{a} \text{.}
\]
In these cases, it may be necessary to insert extra identity factors 
in the naked expression form, just to switch label. In traditional 
Einstein notation, such identities would be 
\emDefOrd[*{Kronecker delta}]{Kronecker deltas}, so with \(\delta := 
\phi(\same{1})\) as this kind of identity, the naked counterpart of 
the above is
\[
  \mOp^a_{bc} \eta^b = \delta^a_c \text{.}
\]
From Section~\ref{Sec:Deluttryck} and on, the symbol $\natural$ 
starts being used in the context of the network notation very much as 
the Kronecker delta is used in the abstract index notation, so 
arguably it might have been better to pick either one and stick with 
it, but offering familiar notation also has its merits.

When transcribing an expression between network and abstract index 
notation, one may at times feel confusion as to how up and down 
correspond to in and out; this is mostly due to the abstract index 
notation being basically one-dimensional (a line of text), whereas 
the network notation is two-dimensional (a diagram). A good principle 
to keep in mind is that \emph{connections go down}, from a 
superscript to a subscript. Conversely ``data'' go ``back up'' to the 
top of the line when a factor ``processes'' it. An image can be that 
an abstract index expression is somewhat like a network notation 
expression that has been rolled up on a transparent rod the size of 
a line of text, in such a way that all the factor are on the far side, 
whereas connections between them are on the near side\Dash that's why 
the in\slash out\slash up\slash down relationships are different.

For \PROPs\ that support operations beyond tensor product and 
composition, particularly addition and multiplication by scalar, 
the naked notation is (in analogy with the Einstein notation) often 
extended to support also these. Hence one might write
\[
  \Delta^{ab}_c \xi^c = \xi^a \eta^b + 2 \zeta^a \zeta^b + \eta^a \xi^b
\]
as a shorthand for
\[
  \fuse{ab}{\Delta^{ab}_c \xi^c}{} = 
  \fuse{ab}{\xi^a \eta^b}{} + 2\fuse{ab}{\zeta^a \zeta^b}{} +
  \fuse{ab}{\eta^a \xi^b}{}
  \text{,}
\]
i.e., in each term the labelled product part becomes a separate closed 
expression. Tensor product and composition both appear as 
``multiplication'' in the naked notation, but one can tell them apart 
by looking at the labels; Theorems~\ref{S:AIN,snitt} 
and~\ref{S:AIN,klyva} formally establishes this correspondence.

One feature commonly introduced as part of the abstract index notation, 
but which \textsc{shall not} be assumed here, is that of ``raising and 
lowering indices'': that one from any $u^a$ can manufacture a 
corresponding $u_a$ and vice versa. 
(A consequence of this is that there is no sense in 
which a superscript comes before or after a subscript; only the 
relative order of superscripts among themselves, and of subscripts 
among themselves, have significance.) Even in contexts where such 
raising and lowering of indices is in common use, it tends to merely 
be a shorthand for composition with two specific elements\Ldash 
usually the metric (i.e., the inner product) and its inverse\Rdash 
from the $(0,2)$ component (for lowering) and $(2,0)$ component 
(for raising) respectively. Hence there is no need for that feature 
in the generic abstract index notation.

A feature which is left for future extensions is that of allowing 
different indices to ``range'' over different sets\Dash e.g.~a 
typical formulation of the Lagrangian for the Standard Model of 
elementary particle physics would typically sport not only Lorentzian 
(space-time) indices, but also spinor indices, flavour indices, 
colour indices, and quite possibly some more types still (depending 
on how much the author seeks to compress the expressions); these must 
all be kept apart. Distinguishing several different types of index 
would correspond to a generalisation from \PROPs\ to (strict) 
symmetric monoidal categories (with free monoid of objects). While 
this would certainly be worth while if one's primary aim were to 
establish the notation system as such, it would also be a technical 
complication, and the issue is largely orthogonal to what is relevant 
for rewriting. Hence this text is better off without it.

When expressing rewrite rules in abstract index notation, it 
sometimes also becomes necessary to go beyond what can be formalised 
for abstract \PROPs, to make a statement to the effect that ``this holds 
even if this abstract index expression is extended with extra 
labelled factors that make input $e$ depend on output $a$'', which is 
the meaning of the \(a \canmake e\) in
\[
  \mOp^b_{cd} \, \mOp^c_{ef} \, S^f_h \, \Delta^{ah}_i \, \Delta^{id}_j 
    \longmapsto
  \delta^a_j \, \delta^b_e
  \qquad \text{where \(a \canmake e\).}
\]
The formal background for this practice here can be found in 
Section~\ref{Sec:Feedbacks} and page~\pageref{Sid:AIN-regel}, but 
informally it may be viewed as a statement that certain ``trace maps'' 
exist or are admissible in the situation at hand.

Finally, it should be observed that one sometimes has to put abstract 
index labels on a \PROP\ element for which the notation already 
contains an ordinary index; one may for example have elements 
$\zeta_1$ and $\zeta_2$. In such cases, it is advisable to put the 
abstract indices on a parenthesis around the symbol for the \PROP\ 
element, like so: $(\zeta_1)^a (\zeta_2)^b$.

%

\subsection{Formal definition and basic properties}

In the formal claims about the abstract index notation below, lists of 
abstract index labels are denoted by boldface letters, and notation for 
such lists is mostly multiplicative; a list may be viewed as a word on 
an alphabet of abstract index labels. Hence $\vek{a}\vek{b}$ is the list 
consisting of first the elements of $\vek{a}$ and then the elements of 
$\vek{b}$. The notation $\norm{\vek{a}}$\index{\norm@$\norm{\vek{a}}$}
denotes the multiset of elements 
in $\vek{a}$, and $\Norm{\vek{a}}$\index{\Norm@$\Norm{\vek{a}}$} 
is the length of $\vek{a}$ (thus 
cardinality of $\norm{\vek{a}}$). Removal of elements is denoted as 
division: $\vek{a}/\vek{b}$\index{/@$/$!for lists} 
is the list of those elements of $\vek{a}$ 
that are not in $\vek{b}$, always in the same order as in $\vek{a}$, 
whereas their order in $\vek{b}$ is ignored. Because of the matching 
condition on labels, the lists occurring below will not have any 
repeated elements, so $/$ behaves like division in that for example 
\(\vek{abc}/\vek{b} = \vek{ac}\).
The formula \(\vek{b} \mid \vek{a}\)\index{\mid@$\mid$ (list divides)}
(`$\vek{b}$ divides\index{divides} $\vek{a}$') is 
a shorthand for \(\norm{\vek{b}} \subseteq \norm{\vek{a}}\). Elements 
in a list of labels, i.e., actual abstract index labels, are denoted 
by lower case latin letters. Upper case letters will sometimes be used 
for products of zero or more labelled factors.
Write \index{N k l m@$\vek{N}_k(l,m)$}\(\vek{N}_k(l,m)\) for the list 
\(n_1 \dotsb n_k\) where \(n_i = (i-1)m + l\), e.g. \(\vek{N}_3(1,2) = 
135\); this is sometimes useful for ``automatically manufacturing'' 
disjoint lists of known lengths.



\begin{definition} \label{D:fuse}
  Let $\vek{a}_0, \dotsc, \vek{a}_n$ and $\vek{b}_0, \dotsc, 
  \vek{b}_n$ be lists. The notation
  \[
    \fuse[\Big]{\vek{a}_0}{ \textstyle
      \prod_{i=1}^n (\gamma_i)^{\vek{b}_i}_{\vek{a}_i} 
    }{\vek{b}_0} =
    \fuse[\Big]{\vek{a}_0}{ \textstyle
      (\gamma_1)^{\vek{b}_1}_{\vek{a}_1} 
      (\gamma_2)^{\vek{b}_2}_{\vek{a}_2} \dotsb
      (\gamma_n)^{\vek{b}_n}_{\vek{a}_n} 
    }{\vek{b}_0}
    \index{[]@$\fuse{\ldots}{\dotsb}{\ldots}$}
  \]
  fulfilling the conditions of
  \begin{description}
    \item[arity:]
      \(\gamma_i \in \mc{P}\bigl( \Norm{\vek{b}_i}, \Norm{\vek{a}_i} 
      \bigr)\) for \(i=1,\dotsc,n\),
    \item[matching:]
      \(V := \norm[\big]{ \prod_{i=0}^n \vek{a}_i} = 
      \norm[\big]{ \prod_{i=0}^n \vek{b}_i}\) is a set, and
    \item[acyclicity:]
      there exists a partial order $Q$ on $V$ such that \(x < y 
      \pin{Q}\) whenever \(x \in \norm{\vek{b}_i}\) and \(y \in 
      \norm{\vek{a}_i}\) for some \(1 \leqslant i \leqslant n\)
  \end{description}
  denotes an element of $\mc{P}\bigl( \Norm{\vek{a}_0}, 
  \Norm{\vek{b}_0} \bigr)$, namely:
  \begin{enumerate}
    \item \label{Item1:D:fuse}
      \(\fuse{\vek{a}}{1}{\vek{b}} = \phi_m(\sigma)\), where \(m = 
      \Norm{\vek{a}} = \Norm{\vek{b}}\) and \(\sigma \in \Sigma_m\) 
      is the permutation for which \(\vek{b} = a_{\sigma(1)} \dotsb 
      a_{\sigma(m)}\) where \(a_1\dotsb a_m := \vek{a}\).
    \item \label{Item2:D:fuse}
      \(\fuse{\vek{a}}{ \gamma^{\vek{b}}_{\vek{c}} }{\vek{d}} =
      \fuse{\vek{a}}{1}{\vek{b}(\vek{a}/\vek{b})} \circ
      \gamma \otimes \fuse{\vek{a}/\vek{b}}{1}{\vek{a}/\vek{b}} \circ
      \fuse{\vek{c}(\vek{a}/\vek{b})}{1}{\vek{d}}\).
    \item \label{Item3:D:fuse}
      If \(n \geqslant 2\) then for the smallest \(k\geqslant 1\) such 
      that \(\vek{b}_k \mid \vek{a}_0\),
      \begin{multline} \label{Eq:D:fuse}
        \fuse[\Big]{\vek{a}_0}{ \textstyle
          \prod_{i=1}^n (\gamma_i)^{\vek{b}_i}_{\vek{a}_i} 
        }{\vek{b}_0} = \\ =
        \fuse[\Big]{\vek{a}_0}{ \textstyle
          (\gamma_k)^{\vek{b}_k}_{\vek{a}_k} 
        }{ \vek{a}_k (\vek{a}_0/\vek{b}_k) } \circ
        \fuse[\Big]{ \vek{a}_k (\vek{a}_0/\vek{b}_k) }{ \textstyle
          \prod_{i=1}^{k-1} (\gamma_i)^{\vek{b}_i}_{\vek{a}_i} 
          \prod_{i=k+1}^n (\gamma_i)^{\vek{b}_i}_{\vek{a}_i} 
        }{\vek{b}_0} 
        \text{.}
      \end{multline}
  \end{enumerate}
  The number $n$ of factors in $\fuse[\Big]{\vek{a}_0}{ 
  \prod_{i=1}^n (\gamma_i)^{\vek{b}_i}_{\vek{a}_i} }{\vek{b}_0}$ is 
  called the \DefOrd{order} of this \emDefOrd{abstract index 
  expression}.
  
  In claims, an equality with one or several abstract index 
  expressions in the left hand side is to be read as being implicitly 
  conditioned with the fulfillment of the corresponding arity, 
  matching, and acyclicity conditions for these. When these and all 
  explicit conditions are fulfilled, all abstract index expressions on 
  the right hand side must fulfill their arity, matching, and 
  acyclicity conditions.
\end{definition}

If comparing this definition of abstract index notation, particularly 
expressions of order $0$ such as $\fuse{2341}{1}{1234}$, to the 
relation notation $\binom{1234}{2341}$ for permutations, it may seem 
as though rule~\ref{Item1:D:fuse} runs the permutation the wrong way 
(bottom to top rather than top to bottom), but this is because of a 
more fundamental difference between the two notations. In relation 
notation for permutations, elements in the top and bottom lists are 
elements in the permutation domain and codomain respectively, and two 
elements are linked if they are in the same position. In the 
abstract index notation, it is the \emph{positions} in the lists that 
are elements of the permutation domain and codomain, and two elements 
are linked if they contain the same label. This duality of list 
elements and positions explains why the permutation definition is 
reversed; Lemma~\ref{L:sammans.perm.} shows that it still behaves as 
expected (top row matched to things on the right, bottom row to 
things on the left).

\begin{theorem}
  The notation introduced in Definition~\ref{D:fuse} is well-defined, 
  i.e., whenever the three conditions are fulfilled, there is a 
  unique rule which assigns a value to the expression.
\end{theorem}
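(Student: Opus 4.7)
The plan is to induct on the order~$n$. Observe first that rules~\ref{Item1:D:fuse}, \ref{Item2:D:fuse}, and~\ref{Item3:D:fuse} partition by order ($n=0$, $n=1$, and $n\geq 2$ respectively), so the three prescriptions cannot clash; it suffices to verify that each rule uniquely determines a value and that the rules for $n\geq 1$ reduce the problem to strictly smaller order.

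For $n=0$ the matching condition gives $\norm{\vek{a}}=\norm{\vek{b}}$ as a set, and in particular $\Norm{\vek{a}}=\Norm{\vek{b}}=m$, so there is a unique $\sigma\in\Sigma_m$ with $\vek{b}=a_{\sigma(1)}\dotsb a_{\sigma(m)}$. For $n=1$ the first thing to notice is that the hypotheses force $\vek{b}\mid\vek{a}$: by matching each element of $\vek{b}$ lies in $\norm{\vek{a}}\cup\norm{\vek{c}}$, and by acyclicity an output of $\gamma$ cannot coincide with an input of $\gamma$, so it must lie in $\vek{a}$. Consequently $\vek{a}/\vek{b}$ is a well-defined sublist, the three order-$0$ factors on the right of rule~\ref{Item2:D:fuse} satisfy their conditions trivially, and the arities of $\circ$ and $\otimes$ line up, giving a unique element of $\mc{P}(\Norm{\vek{a}},\Norm{\vek{d}})$.

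For the inductive step the main task is to show that a smallest $k\in\{1,\dotsc,n\}$ with $\vek{b}_k\mid\vek{a}_0$ exists. I would form the auxiliary directed \emph{factor graph} on $\{1,\dotsc,n\}$ with an edge $i\to j$ whenever $\vek{b}_i\cap\vek{a}_j\neq\varnothing$. A cycle $i_1\to\dotsb\to i_r\to i_1$ with witness labels $v_s\in\vek{b}_{i_s}\cap\vek{a}_{i_{s+1}}$ would, by the factorwise inequality applied to $\gamma_{i_{s+1}}$, force $v_{s+1}<v_s \pin{Q}$ all the way around, contradicting that $Q$ is a partial order. Hence the factor graph is acyclic and admits at least one sink $k$; at such a sink every element of $\vek{b}_k$ fails to appear in any $\vek{a}_j$ with $j\geq 1$, so matching forces it into $\vek{a}_0$, i.e.\ $\vek{b}_k\mid\vek{a}_0$. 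The smallest such $k$ is then canonically determined.

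It remains to check that both subexpressions on the right of~\eqref{Eq:D:fuse} satisfy arity, matching, and acyclicity, so that the induction hypothesis supplies them with unique values. For the singleton factor, matching reduces to $\norm{\vek{a}_0\vek{a}_k}=\norm{\vek{a}_k(\vek{a}_0/\vek{b}_k)\vek{b}_k}$, which holds as a set because $\vek{b}_k\subseteq\vek{a}_0$; absence of repeats uses $\vek{a}_k\cap\vek{b}_k=\varnothing$ from acyclicity, and acyclicity itself is inherited. For the remaining order-$(n-1)$ factor both matching sides equal $V\setminus\vek{b}_k$, and acyclicity follows by restricting $Q$. The coarity of the second factor and the arity of the first both equal $\Norm{\vek{a}_k(\vek{a}_0/\vek{b}_k)}$, so the composition $\circ$ is internally consistent. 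The main obstacle is the existence of the sink~$k$; the factor-graph observation turns this into a standard acyclic-digraph argument, after which the rest is bookkeeping on labels and arities.
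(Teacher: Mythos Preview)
Your proposal is correct and follows essentially the same approach as the paper's proof: induction on the order~$n$, with the key step being the existence of some $k\geqslant 1$ with $\vek{b}_k\mid\vek{a}_0$. The only difference is presentational: where you build a factor graph and argue that acyclicity of~$Q$ forces it to be a DAG with a sink, the paper instead defines a strict partial order~$P$ on $\{1,\dotsc,n\}$ (with $i<j$ whenever some $x\in\norm{\vek{a}_i}$ and $y\in\norm{\vek{b}_j}$ satisfy $x\leqslant y$ in~$Q$) and takes a $P$-minimal element---the two devices are equivalent, and your verification of the subexpression conditions matches the paper's.
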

\begin{proof}
  Rule~\ref{Item1:D:fuse} determines the value of order~$0$ 
  expressions, rule~\ref{Item2:D:fuse} determines the value of 
  order~$1$ expressions, and recursive application of 
  rule~\ref{Item3:D:fuse} determines the value of all expressions 
  with order greater than~$1$.
  
  More precisely, in the order~$0$ case there is a unique 
  permutation $\sigma$ such that \(\vek{a} = a_1\dotsb a_m\) and 
  \(\vek{b} = a_{\sigma(1)} \dotsb a_{\sigma(m)}\) because 
  \(\norm{\vek{a}} = \norm{\vek{b}}\) is a set. 
  
  In the order~$1$ case, one may first 
  observe that \(\norm{\vek{a}} \cup \norm{\vek{c}} = 
  \norm{\vek{a}\vek{c}} = \norm{\vek{d}\vek{b}} = \norm{\vek{d}} 
  \cup \norm{\vek{b}}\), and since \(\norm{\vek{b}} \cap 
  \norm{\vek{c}} = \varnothing\) by the partial order condition, it 
  follows that \(\vek{b} \mid \vek{a}\) and \(\norm{\vek{d}} = 
  \norm{\vek{c}} \cup \norm{\vek{a}} \setminus \norm{\vek{b}}\). 
  Hence
  \begin{align*}
    \norm[\big]{ \vek{b}(\vek{a}/\vek{b}) } ={}&
      \norm{\vek{b}} \cup \norm[\big]{\vek{a}/\vek{b}} =
      \norm{\vek{b}} \cup \bigl( \norm{\vek{a}} \setminus 
        \norm{\vek{b}} \bigr) = 
      \norm{\vek{a}} \text{,} \\
    \norm[\big]{ \vek{c}(\vek{a}/\vek{b}) } ={}& 
      \norm{\vek{c}} \cup \bigl( \norm{\vek{a}} \setminus 
        \norm{\vek{b}} \bigr) = 
      \norm{\vek{d}}
  \end{align*}
  and thus all the abstract index expressions in the right hand side 
  of this rule are well-defined. Furthermore
  \[
    \gamma \otimes \fuse{\vek{a}/\vek{b}}{1}{\vek{a}/\vek{b}} \in 
    \mc{P}\bigl( \Norm{\vek{b}},\Norm{\vek{c}} \bigr) \otimes 
      \mc{P}\bigl( \Norm{\vek{a}/\vek{b}}, \Norm{\vek{a}/\vek{b}} 
      \bigr) \subseteq
    \mc{P}\bigl( \Norm{\vek{b}(\vek{a}/\vek{b})}, 
      \Norm{\vek{c}(\vek{a}/\vek{b})} \bigr)
  \]
  and so also the compositions are all correct.
  
  Finally, in the case of order $2$ or more, one must first establish 
  the existence of some \(k \geqslant 1\) such that \(\vek{b}_k \mid 
  \vek{a}_0\). Let $Q$ be the required partial order on the set of all 
  labels $V$, and define a second partial order $P$ on 
  $\{1,\dotsc,n\}$ by
  \begin{equation*}
    i < j \pin{P} \Epil
    \text{\(x \leqslant y \pin{Q}\) for some \(x \in 
      \norm{\vek{a}_i}\) and \(y \in \norm{\vek{b}_j}\);}
  \end{equation*}
  $P$ becomes transitive because $Q$ is transitive and $P$ is 
  antisymmetric by the antisymmetry and transitivity of $Q$. Now 
  consider some $j$ which is $P$-minimal. No \(x \in 
  \norm{\vek{b}_j}\) can belong to $\norm{\vek{a}_i}$ for any \(i 
  \geqslant 1\), since that would make \(i < j \pin{P}\). Hence 
  \(\vek{b}_j \mid \vek{a}_0\).
  
  Next observe that \(\norm{ \vek{a}_0\vek{a}_k } = \norm{\vek{a}_0} 
  \cup \norm{\vek{a}_k} =  \norm{\vek{a}_k} \cup \bigl( \norm{\vek{a}_0} 
  \setminus \norm{\vek{b}_k} \bigr) \cup \norm{\vek{b}_k} =
  \norm{ \vek{a}_k (\vek{a}_0/\vek{b}_k) \vek{b}_k }\), and so the 
  first abstract index expression fulfills the conditions. For the 
  second it follows from \(\bigcup_{i=0}^n \norm{\vek{b}_i} = 
  \norm[\Big]{ \prod_{i=0}^n \vek{b}_i } = 
  \norm[\Big]{ \prod_{i=0}^n \vek{a}_i } = \bigcup_{i=0}^n 
  \norm{\vek{a}_i}\) and \(\vek{b}_k \mid \vek{a}_0\) that
  \begin{multline*}
    \norm[\Big]{ \textstyle \prod_{i=0}^{k-1} \vek{b}_i 
      \prod_{i=k+1}^n \vek{b}_i } =
    \bigcup_{\substack{0 \leqslant i \leqslant n\\ i \neq k}}
      \norm{\vek{b}_i} =
    \norm[\big]{\vek{a}_0/\vek{b}_k} \cup 
      \bigcup_{1 \leqslant i \leqslant n} \norm{\vek{a}_i} 
      = \\ =
    \norm[\big]{\vek{a}_k(\vek{a}_0/\vek{b}_k)} \cup 
      \bigcup_{\substack{1 \leqslant i \leqslant n\\ i \neq k}}
        \norm{\vek{a}_i} =
    \norm[\Big]{ \textstyle
      \vek{a}_k(\vek{a}_0/\vek{b}_k)
      \prod_{i=1}^{k-1} \vek{a}_i 
      \prod_{i=k+1}^n \vek{a}_i
    } \text{.}
  \end{multline*}
  Restricting the partial order $Q$ from the order $n$ abstract 
  index expression to the labels available in the order $1$ and 
  $n-1$ subexpressions produces partial orders which will serve 
  for these, so they satisfy all the conditions.
\end{proof}

Once existence has been established, the next step is to establish 
some elementary results on how the abstract index notation compares 
to ordinary notation. This is the subject of the next three theorems.

\begin{theorem} \label{S:fuse-just}
  For all lists $\vek{a}$ and $\vek{b}$ such that $\norm{\vek{a}}$ 
  and $\norm{\vek{b}}$ are disjoint sets, and all \(\gamma \in 
  \mc{P}\bigl( \Norm{\vek{a}}, \Norm{\vek{b}} \bigr)\),
  \begin{align}
    \phi\left( \same{\Norm{\vek{a}}} \right) ={}&
      \fuse{\vek{a}}{1}{\vek{a}} 
      \label{Eq1:fuse-just}\text{,}\\
    \gamma ={}& \fuse{\vek{a}}{ \gamma^{\vek{a}}_{\vek{b}} }{\vek{b}}
    \text{.}
  \end{align}
\end{theorem}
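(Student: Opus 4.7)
I would prove the two equations in order. The first is a direct application of clause~\ref{Item1:D:fuse} of Definition~\ref{D:fuse}: the expression $\fuse{\vek{a}}{1}{\vek{a}}$ has order~$0$, so it unpacks to $\phi_m(\sigma)$, where $m = \Norm{\vek{a}}$ and $\sigma$ satisfies $a_{\sigma(i)} = a_i$ for every $i$ with $a_1 \dotsb a_m := \vek{a}$. Since $\norm{\vek{a}}$ is a set, the labels $a_1,\dotsc,a_m$ are pairwise distinct, so matching them on both sides forces $\sigma(i) = i$, i.e.\ $\sigma = \same{m}$.

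For the second equation I would apply clause~\ref{Item2:D:fuse} to $\fuse{\vek{a}}{\gamma^{\vek{a}}_{\vek{b}}}{\vek{b}}$, matching its roles against the $\vek{b}$ and $\vek{c}$ of the clause. The list $\vek{a}/\vek{a}$ is empty, so the clause reduces the expression to
\[
  \fuse{\vek{a}}{1}{\vek{a}} \circ \bigl( \gamma \otimes \fuse{}{1}{} \bigr) \circ \fuse{\vek{b}}{1}{\vek{b}}
  \text{.}
\]
Rewriting the three order-$0$ factors using the first equation turns them into $\phi(\same{\Norm{\vek{a}}})$, $\phi(\same{0})$, and $\phi(\same{\Norm{\vek{b}}})$ respectively. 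A single application of the tensor identity axiom absorbs the $\phi(\same{0})$ into $\gamma$, and two applications of the composition identity axiom (valid since $\gamma \in \mc{P}(\Norm{\vek{a}},\Norm{\vek{b}})$) strip away the outer two permutation factors, leaving just $\gamma$.

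There is no substantive obstacle here; the proof is a mechanical unwinding of Definition~\ref{D:fuse}. The only point that needs a moment's care is noticing that the disjointness hypothesis $\norm{\vek{a}} \cap \norm{\vek{b}} = \varnothing$ is exactly what is needed to make the matching and acyclicity conditions hold for the order-$1$ expression in question, and that $\vek{a}/\vek{a}$ really is the empty list so that the recursion bottoms out cleanly at the order-$0$ rule.
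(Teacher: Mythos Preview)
Your proof is correct and follows essentially the same approach as the paper: both arguments read off the first identity directly from clause~\ref{Item1:D:fuse}, then unfold clause~\ref{Item2:D:fuse} for the second identity, use that $\vek{a}/\vek{a}$ is empty, and simplify via the tensor and composition identity axioms. The paper additionally spells out the acyclicity check (a partial order with every element of $\norm{\vek{a}}$ below every element of $\norm{\vek{b}}$), which you mention only in passing, but that is a minor difference of emphasis rather than of content.
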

\begin{proof}
  The first identity is straight off from the definition. In the 
  second identity, any partial order which has every element of 
  $\norm{\vek{a}}$ less than every element of $\norm{\vek{b}}$ 
  fulfills the acyclicity condition, so the right hand side exists. 
  Computing its value using rule~\ref{Item2:D:fuse} yields
  \begin{multline*}
    \fuse{\vek{a}}{ \gamma^{\vek{a}}_{\vek{b}} }{\vek{b}} =
    \fuse{\vek{a}}{1}{\vek{a}(\vek{a}/\vek{a})} \circ
      \gamma \otimes \fuse{\vek{a}/\vek{a}}{1}{\vek{a}/\vek{a}} \circ
      \fuse{\vek{b}(\vek{a}/\vek{a})}{1}{\vek{b}} = \\ =
    \fuse{\vek{a}}{1}{\vek{a}} \circ
      \gamma \otimes \phi(\same{0}) \circ
      \fuse{\vek{b}}{1}{\vek{b}} =
    \phi\left( \same{\Norm{\vek{a}}} \right) \circ \gamma \circ
      \phi\left( \same{\Norm{\vek{b}}} \right) =
    \gamma
  \end{multline*}
  since $\vek{a}/\vek{a}$ is the empty list.
\end{proof}

\begin{lemma} \label{L:sammans.perm.}
  For all $\vek{a}$, $\vek{b}$, and $\vek{c}$ such that 
  \(\norm{\vek{a}} = \norm{\vek{b}} = \norm{\vek{c}}\),
  \begin{equation}
    \fuse{\vek{a}}{1}{\vek{b}} \circ \fuse{\vek{b}}{1}{\vek{c}} = 
    \fuse{\vek{a}}{1}{\vek{c}} \text{.}
  \end{equation}
\end{lemma}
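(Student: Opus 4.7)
The plan is to unpack each factor using Definition~\ref{D:fuse} rule~\ref{Item1:D:fuse}, which identifies an order-$0$ abstract index expression with $\phi$ of a specific permutation, and then to invoke the permutation composition axiom of a \PROP.

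First I would write $\vek{a} = a_1 \dotsb a_m$ with $m = \Norm{\vek{a}}$, and let $\sigma, \tau \in \Sigma_m$ be the unique permutations such that $\vek{b} = a_{\sigma(1)} \dotsb a_{\sigma(m)}$ and $\vek{c} = a_{\sigma(\tau(1))} \dotsb a_{\sigma(\tau(m))}$. These exist and are unique because the matching condition forces $\norm{\vek{a}} = \norm{\vek{b}} = \norm{\vek{c}}$ to be a set of cardinality $m$, and the stated hypothesis ensures these are indeed the same set. Writing $b_i = a_{\sigma(i)}$ for the $i$-th entry of $\vek{b}$, one sees that $\vek{c} = b_{\tau(1)} \dotsb b_{\tau(m)}$. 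By rule~\ref{Item1:D:fuse} of Definition~\ref{D:fuse}, this gives $\fuse{\vek{a}}{1}{\vek{b}} = \phi(\sigma)$ and $\fuse{\vek{b}}{1}{\vek{c}} = \phi(\tau)$.

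Next I would compute the right hand side. Setting $\rho = \sigma \tau$, the entries of $\vek{c}$ are $a_{\rho(i)}$, so another application of rule~\ref{Item1:D:fuse} yields $\fuse{\vek{a}}{1}{\vek{c}} = \phi(\rho) = \phi(\sigma \tau)$. The conclusion then follows immediately from the permutation composition axiom in Definition~\ref{Def:PROP}:
\[
  \fuse{\vek{a}}{1}{\vek{b}} \circ \fuse{\vek{b}}{1}{\vek{c}} =
  \phi(\sigma) \circ \phi(\tau) = \phi(\sigma \tau) = \fuse{\vek{a}}{1}{\vek{c}}
  \text{.}
\]

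There is no real obstacle in this proof; the only mildly delicate point is the bookkeeping that verifies $\rho = \sigma\tau$ (rather than $\tau\sigma$) given the convention in rule~\ref{Item1:D:fuse}, where the bottom list is indexed through the permutation. The commentary immediately after Definition~\ref{D:fuse} about the ``reversed'' direction of the permutation (elements vs.\ positions) is precisely what makes this composition work out in the natural order, so the small sanity check that $\sigma$ applied to $\tau$'s output gives the positions of $\vek{c}$ in $\vek{a}$ is the main thing to get right.
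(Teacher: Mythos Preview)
Your proof is correct and follows essentially the same approach as the paper's: unpack each order-$0$ expression via rule~\ref{Item1:D:fuse} of Definition~\ref{D:fuse} to get $\phi(\sigma)$, $\phi(\tau)$, and $\phi(\sigma\tau)$, then apply the permutation composition axiom. The paper's version is only slightly terser in that it defines $\tau$ directly by $c_i = b_{\tau(i)}$ rather than via $\vek{c} = a_{\sigma(\tau(i))}$, but this is the same bookkeeping you carry out.
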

\begin{proof}
  Let \(a_1\dotsb a_m = \vek{a}\), \(b_1\dotsb b_m = \vek{b}\), and 
  \(c_1\dotsb c_m = \vek{c}\). Let \(\sigma,\tau \in \Sigma_m\) be 
  such that \(b_i = a_{\sigma(i)}\) and \(c_i = b_{\tau(i)}\) for 
  \(i=1,\dotsc,m\). Then \(c_i = b_{\tau(i)} = a_{\sigma(\tau(i))}\) 
  and thus
  \[
    \fuse{\vek{a}}{1}{\vek{c}} = 
    \phi_m(\sigma\tau) =
    \phi_m(\sigma) \circ \phi_m(\tau) =
    \fuse{\vek{a}}{1}{\vek{b}} \circ \fuse{\vek{b}}{1}{\vek{c}}
    \text{.}
  \]
\end{proof}

\begin{lemma} \label{L:tensor.perm.}
  For all $\vek{a}$, $\vek{b}$, $\vek{c}$, and $\vek{d}$ such that 
  \(\norm{\vek{a}} = \norm{\vek{b}}\) and \(\norm{\vek{c}} = 
  \norm{\vek{d}}\),
  \begin{equation}
    \fuse{\vek{a}\vek{c}}{1}{\vek{b}\vek{d}} =
    \fuse{\vek{a}}{1}{\vek{b}} \otimes \fuse{\vek{c}}{1}{\vek{d}} 
    \text{.}
  \end{equation}
\end{lemma}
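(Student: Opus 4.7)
The plan is to unfold both sides via rule \ref{Item1:D:fuse} of Definition~\ref{D:fuse}, identify the resulting permutations, and then invoke the permutation juxtaposition axiom of Definition~\ref{Def:PROP} to match them up.

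Concretely, write $\vek{a} = a_1 \dotsb a_m$, $\vek{b} = b_1 \dotsb b_m$, $\vek{c} = c_1 \dotsb c_n$, $\vek{d} = d_1 \dotsb d_n$. By rule \ref{Item1:D:fuse}, $\fuse{\vek{a}}{1}{\vek{b}} = \phi_m(\sigma)$ for the unique $\sigma \in \Sigma_m$ with $b_i = a_{\sigma(i)}$, and similarly $\fuse{\vek{c}}{1}{\vek{d}} = \phi_n(\tau)$ for the unique $\tau \in \Sigma_n$ with $d_j = c_{\tau(j)}$. Implicitly, the claim assumes that the left hand side $\fuse{\vek{a}\vek{c}}{1}{\vek{b}\vek{d}}$ is well-defined, which in particular requires $\norm{\vek{a}\vek{c}}$ to be a set; that is, the labels in $\vek{a}$ and $\vek{c}$ are all distinct from each other. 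Then rule \ref{Item1:D:fuse} gives $\fuse{\vek{a}\vek{c}}{1}{\vek{b}\vek{d}} = \phi_{m+n}(\rho)$ for the unique $\rho \in \Sigma_{m+n}$ for which the $i$'th letter of $\vek{b}\vek{d}$ equals the $\rho(i)$'th letter of $\vek{a}\vek{c}$.

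The main (and essentially only) step is to verify by cases that $\rho = \sigma \star \tau$. For $i \leq m$ the $i$'th letter of $\vek{b}\vek{d}$ is $b_i = a_{\sigma(i)}$, which is the $\sigma(i)$'th letter of $\vek{a}\vek{c}$ (note $\sigma(i) \leq m$), so $\rho(i) = \sigma(i) = (\sigma \star \tau)(i)$. For $i = m+j$ with $1 \leq j \leq n$, the $i$'th letter of $\vek{b}\vek{d}$ is $d_j = c_{\tau(j)}$, which is the $(m + \tau(j))$'th letter of $\vek{a}\vek{c}$, so $\rho(i) = m + \tau(j) = (\sigma \star \tau)(i)$. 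Here the disjointness of $\norm{\vek{a}}$ and $\norm{\vek{c}}$ is what guarantees that there is no ambiguity in identifying the position: a letter from $\vek{b}$ matches only inside the $\vek{a}$ block, and a letter from $\vek{d}$ only inside the $\vek{c}$ block.

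Having $\rho = \sigma \star \tau$, the permutation juxtaposition axiom yields $\phi_{m+n}(\sigma \star \tau) = \phi_m(\sigma) \otimes \phi_n(\tau)$, and hence
\[
    \fuse{\vek{a}\vek{c}}{1}{\vek{b}\vek{d}} = \phi_{m+n}(\sigma \star \tau) = \phi_m(\sigma) \otimes \phi_n(\tau) = \fuse{\vek{a}}{1}{\vek{b}} \otimes \fuse{\vek{c}}{1}{\vek{d}}\text{,}
\]
as required. The possible obstacle\Ldash really a bookkeeping point rather than a genuine difficulty\Rdash is simply confirming that the positions are read off correctly when $\vek{a}$ and $\vek{c}$ share no labels, but this is exactly the implicit well-definedness hypothesis on the left hand side.
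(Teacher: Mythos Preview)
The proposal is correct and follows essentially the same approach as the paper's own proof: identify the permutations $\sigma$, $\tau$ underlying the two order-$0$ expressions, verify by a case split on $i \leqslant m$ versus $i > m$ that the permutation for the concatenated expression is $\sigma \star \tau$, and conclude via the permutation juxtaposition axiom. Your version is slightly more explicit about the disjointness hypothesis implicit in well-definedness, but otherwise the arguments coincide.
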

\begin{proof}
  Let \(a_1 \dotsb a_k = \vek{a}\) and \(c_1 \dotsb c_l = \vek{c}\). 
  Let \(\sigma \in \Sigma_k\) and \(\tau \in \Sigma_l\) be such that 
  \(\vek{b} = a_{\sigma(1)} \dotsb a_{\sigma(k)}\) and \(\vek{d} = 
  c_{\tau(1)} \dotsb c_{\tau(l)}\). Define
  \[
    e_i = \begin{cases} 
      a_i & \text{if \(i \leqslant k\),} \\
      c_{i-k} & \text{otherwise,}
    \end{cases}
    \quad\text{and}\quad
    f_i = \begin{cases}
      a_{\sigma(i)} & \text{if \(i \leqslant k\),} \\
      c_{\tau(i-k)} & \text{otherwise.}
    \end{cases}
  \]
  Then \(\vek{ac} = e_1 \dotsb e_{k+l}\), \(\vek{bd} = f_1 \dotsb 
  f_{k+l}\), and \(f_i = e_{(\sigma\star\tau)(i)}\) for all $i$. Thus
  \[
    \fuse{\vek{a}\vek{c}}{1}{\vek{b}\vek{d}} =
    \phi_{k+l}( \sigma\star\tau ) =
    \phi_k(\sigma) \otimes \phi_l(\tau) =
    \fuse{\vek{a}}{1}{\vek{b}} \otimes \fuse{\vek{c}}{1}{\vek{d}}
    \text{.}
  \]
\end{proof}

\begin{lemma} \label{L:fperm-right}
  Whenever \(\norm{\vek{c}} = \norm{\vek{b}_0}\),
  \begin{equation}
    \fuse[\Big]{\vek{a}_0}{ \textstyle
      \prod_{i=1}^n (\gamma_i)^{\vek{b}_i}_{\vek{a}_i} 
    }{\vek{b}_0} \circ 
    \fuse{\vek{b}_0}{1}{\vek{c}} = 
    \fuse[\Big]{\vek{a}_0}{ \textstyle
      \prod_{i=1}^n (\gamma_i)^{\vek{b}_i}_{\vek{a}_i} 
    }{\vek{c}}
    \text{.}
  \end{equation}
\end{lemma}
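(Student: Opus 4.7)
The plan is to induct on the order $n$ of the abstract index expression on the left, exploiting the recursive structure of Definition~\ref{D:fuse}. Before beginning the induction I would verify that the right-hand side is well-defined: the arity conditions on the $\gamma_i$ are untouched, the matching condition persists because $\norm{\vek{c}} = \norm{\vek{b}_0}$ gives $\norm[\big]{\vek{c}\prod_{i=1}^n \vek{b}_i} = \norm[\big]{\vek{b}_0\prod_{i=1}^n \vek{b}_i}$, and the same partial order on $V$ that witnesses acyclicity of the left-hand side also witnesses it for the right-hand side (neither $\vek{b}_0$ nor $\vek{c}$ appears in any factor, so the conditions involving $\vek{a}_i$ and $\vek{b}_i$ for $i \geqslant 1$ are unchanged).

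For the base case $n=0$ the claim reads $\fuse{\vek{a}_0}{1}{\vek{b}_0} \circ \fuse{\vek{b}_0}{1}{\vek{c}} = \fuse{\vek{a}_0}{1}{\vek{c}}$, which is Lemma~\ref{L:sammans.perm.} verbatim. For $n=1$ I would expand the left-hand side using rule~\ref{Item2:D:fuse} so that the rightmost tensor factor is a permutation $\fuse{\vek{a}_1(\vek{a}_0/\vek{b}_1)}{1}{\vek{b}_0}$; composing with $\fuse{\vek{b}_0}{1}{\vek{c}}$ affects only this factor, and Lemma~\ref{L:sammans.perm.} merges the two permutations into $\fuse{\vek{a}_1(\vek{a}_0/\vek{b}_1)}{1}{\vek{c}}$. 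What remains is exactly the rule~\ref{Item2:D:fuse} expansion of $\fuse{\vek{a}_0}{\gamma^{\vek{b}_1}_{\vek{a}_1}}{\vek{c}}$.

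For the inductive step $n \geqslant 2$ the key observation is that the pivot index $k$ chosen by rule~\ref{Item3:D:fuse}, namely the smallest $k \geqslant 1$ with $\vek{b}_k \mid \vek{a}_0$, depends only on $\vek{a}_0$ and on $\vek{b}_1,\dots,\vek{b}_n$; it is therefore the same on both sides of the claimed identity. Both sides thus decompose as $\fuse{\vek{a}_0}{(\gamma_k)^{\vek{b}_k}_{\vek{a}_k}}{\vek{a}_k(\vek{a}_0/\vek{b}_k)} \circ E$ with the same left-hand factor, where the right-hand factor $E$ is an order-$(n-1)$ expression whose bottom list is $\vek{b}_0$ on the left and $\vek{c}$ on the right. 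By associativity of $\circ$, composing the overall left-hand side with $\fuse{\vek{b}_0}{1}{\vek{c}}$ passes that factor through to $E$, and the inductive hypothesis applies to finish the argument. I expect the only real delicacy to be the bookkeeping check that $k$ is shared between the two sides and that the order-$(n-1)$ subexpressions differ in no place other than their bottom list, so that the induction hypothesis fires directly; the actual algebraic manipulations are routine consequences of the two permutation lemmas.
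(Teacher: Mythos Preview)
Your proposal is correct and follows essentially the same route as the paper: induction on the order $n$, with the $n=0$ case being Lemma~\ref{L:sammans.perm.}, the $n=1$ case handled by expanding via rule~\ref{Item2:D:fuse} and absorbing the extra permutation into the rightmost factor, and the $n\geqslant 2$ step using rule~\ref{Item3:D:fuse} together with the observation that the pivot $k$ is unchanged when $\vek{b}_0$ is replaced by $\vek{c}$. Your explicit well-definedness check for the right-hand side is a nice addition that the paper leaves implicit (per the convention at the end of Definition~\ref{D:fuse}).
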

\begin{proof}
  By induction on the order $n$. If \(n=0\) then this is the claim of 
  Lemma~\ref{L:sammans.perm.}. If \(n=1\) then
  \begin{multline*}
    \fuse{\vek{a}_0}{ (\gamma_1)^{\vek{b}_1}_{\vek{a}_1} }{\vek{b}_0} 
      \circ \fuse{\vek{b}_0}{1}{\vek{c}} 
      = \\ =
    \fuse{\vek{a}_0}{1}{ \vek{b}_1(\vek{a}_0/\vek{b}_1) } \circ
      \gamma_1 \otimes 
        \fuse{ \vek{a}_0/\vek{b}_1 }{1}{ \vek{a}_0/\vek{b}_1 }
      \circ \fuse{ \vek{a}_1(\vek{a}_0/\vek{b}_1) }{1}{\vek{b}_0} 
      \circ \fuse{\vek{b}_0}{1}{\vek{c}} 
      = \\ =
    \fuse{\vek{a}_0}{1}{ \vek{b}_1(\vek{a}_0/\vek{b}_1) } \circ
      \gamma_1 \otimes 
        \fuse{ \vek{a}_0/\vek{b}_1 }{1}{ \vek{a}_0/\vek{b}_1 }
      \circ \fuse{ \vek{a}_1(\vek{a}_0/\vek{b}_1) }{1}{\vek{c}} 
      =
    \fuse{\vek{a}_0}{ (\gamma_1)^{\vek{b}_1}_{\vek{a}_1} }{\vek{c}} 
    \text{.}
  \end{multline*}
  Finally if \(n>1\) and assuming it holds for expressions of order 
  $n-1$, then
  \begin{multline*}
    \fuse[\Big]{\vek{a}_0}{ \textstyle
      \prod_{i=1}^n (\gamma_i)^{\vek{b}_i}_{\vek{a}_i} 
    }{\vek{b}_0} 
    \circ \fuse{\vek{b}_0}{1}{\vek{c}} 
    = \\ =
    \fuse[\Big]{\vek{a}_0}{ 
      (\gamma_k)^{\vek{b}_k}_{\vek{a}_k} 
    }{ \vek{a}_k (\vek{a}_0/\vek{b}_k) } \circ
    \fuse[\Big]{ \vek{a}_k (\vek{a}_0/\vek{b}_k) }{ \textstyle
      \prod_{i=1}^{k-1} (\gamma_i)^{\vek{b}_i}_{\vek{a}_i} 
      \prod_{i=k+1}^n (\gamma_i)^{\vek{b}_i}_{\vek{a}_i} 
    }{\vek{b}_0} \circ
    \fuse{\vek{b}_0}{1}{\vek{c}} 
    = \\ =
    \fuse[\Big]{\vek{a}_0}{ 
      (\gamma_k)^{\vek{b}_k}_{\vek{a}_k} 
    }{ \vek{a}_k (\vek{a}_0/\vek{b}_k) } \circ
    \fuse[\Big]{ \vek{a}_k (\vek{a}_0/\vek{b}_k) }{ \textstyle
      \prod_{i=1}^{k-1} (\gamma_i)^{\vek{b}_i}_{\vek{a}_i} 
      \prod_{i=k+1}^n (\gamma_i)^{\vek{b}_i}_{\vek{a}_i} 
    }{\vek{c}}
    = \\ =
    \fuse[\Big]{\vek{a}_0}{ \textstyle
      \prod_{i=1}^n (\gamma_i)^{\vek{b}_i}_{\vek{a}_i} 
    }{\vek{c}}
  \end{multline*}
  where \(k \geqslant 1\) is minimal such that \(\vek{b}_k \mid 
  \vek{a}_0\).
\end{proof}

The analogous identity for left action of permutations also holds, 
but is a bit more technical as there formally is a greater dependency 
on the order of output labels.

\begin{lemma} \label{L:fperm-left}
  Whenever \(\norm{\vek{c}} = \norm{\vek{a}_0}\),
  \begin{equation}
    \fuse{\vek{c}}{1}{\vek{a}_0} \circ 
    \fuse[\Big]{\vek{a}_0}{ \textstyle
      \prod_{i=1}^n (\gamma_i)^{\vek{b}_i}_{\vek{a}_i} 
    }{\vek{b}_0} = 
    \fuse[\Big]{\vek{c}}{ \textstyle
      \prod_{i=1}^n (\gamma_i)^{\vek{b}_i}_{\vek{a}_i} 
    }{\vek{b}_0}
    \text{.}
  \end{equation}
\end{lemma}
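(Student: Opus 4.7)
The plan is to mirror the proof of Lemma \ref{L:fperm-right} by induction on the order $n$, but with one extra wrinkle: because the output list $\vek{a}_0$ is woven through the recursive decomposition given by rules~\ref{Item2:D:fuse} and~\ref{Item3:D:fuse} of Definition~\ref{D:fuse}, replacing it with $\vek{c}$ permutes internal label lists such as $\vek{a}_0/\vek{b}_k$ compared against $\vek{c}/\vek{b}_k$. Reconciling these orderings, using the already-proved Lemma~\ref{L:fperm-right} to rewrite intermediate output lists, will be the main technical step.

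The base case $n=0$ is Lemma~\ref{L:sammans.perm}. For $n=1$, I will expand both sides via rule~\ref{Item2:D:fuse}: writing $\vek{e}:=\vek{a}_0/\vek{b}_1$ and $\vek{f}:=\vek{c}/\vek{b}_1$, the left-hand side reduces (after fusing its two leading permutation factors via Lemma~\ref{L:sammans.perm}) to $\fuse{\vek{c}}{1}{\vek{b}_1\vek{e}} \circ \gamma_1 \otimes \fuse{\vek{e}}{1}{\vek{e}} \circ \fuse{\vek{a}_1\vek{e}}{1}{\vek{b}_0}$, while the right-hand side is the same expression with $\vek{f}$ in place of $\vek{e}$. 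Since $\norm{\vek{e}}=\norm{\vek{f}}$, the discrepancy is the pure permutation $\fuse{\vek{b}_1\vek{e}}{1}{\vek{b}_1\vek{f}}$, which factors as $\fuse{\vek{b}_1}{1}{\vek{b}_1} \otimes \fuse{\vek{e}}{1}{\vek{f}}$ by Lemma~\ref{L:tensor.perm} and is then absorbed into the central factor through two applications of the composition--tensor compatibility axiom.

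For $n>1$, let $k$ be minimal with $\vek{b}_k \mid \vek{a}_0$. Because $\norm{\vek{a}_0}=\norm{\vek{c}}$, the same $k$ is minimal with $\vek{b}_k \mid \vek{c}$, so rule~\ref{Item3:D:fuse} applies with this common $k$ on both sides. On the left-hand side, the $n=1$ case absorbs the leading permutation $\fuse{\vek{c}}{1}{\vek{a}_0}$ into the first factor, yielding $\fuse{\vek{c}}{(\gamma_k)^{\vek{b}_k}_{\vek{a}_k}}{\vek{a}_k(\vek{a}_0/\vek{b}_k)} \circ \fuse{\vek{a}_k(\vek{a}_0/\vek{b}_k)}{\prod_{i\neq k}(\gamma_i)^{\vek{b}_i}_{\vek{a}_i}}{\vek{b}_0}$. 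Inserting the identity $\fuse{\vek{a}_k(\vek{c}/\vek{b}_k)}{1}{\vek{a}_k(\vek{a}_0/\vek{b}_k)}$ between the two factors is legitimate by Lemma~\ref{L:fperm-right} applied in reverse to rewrite the output list of the leading factor. The induction hypothesis then collapses the identity together with the trailing order-$(n-1)$ subexpression into $\fuse{\vek{a}_k(\vek{c}/\vek{b}_k)}{\prod_{i\neq k}(\gamma_i)^{\vek{b}_i}_{\vek{a}_i}}{\vek{b}_0}$, and rule~\ref{Item3:D:fuse} reassembles the result as the desired right-hand side.

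The main obstacle is precisely this bookkeeping of the ordered-list distinction between $\vek{a}_0/\vek{b}_k$ and $\vek{c}/\vek{b}_k$; in the proof of Lemma~\ref{L:fperm-right} the permuted list sat only at the very end of the composition and never propagated into subexpressions, whereas here an analogous mismatch surfaces at every level of the recursive decomposition. The key observation that makes induction work is that each such mismatch is a permutation \emph{of outputs} of a proper subexpression, so Lemma~\ref{L:fperm-right} (right action on outputs, already established) is exactly the tool needed to displace it into the right-hand slot where the induction hypothesis can consume it.
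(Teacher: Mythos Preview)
Your proposal is correct and follows essentially the same route as the paper: induction on $n$, with the $n=1$ case handled by inserting the permutation $\fuse{\vek{b}_1(\vek{c}/\vek{b}_1)}{1}{\vek{b}_1(\vek{a}_0/\vek{b}_1)}$, splitting it via Lemma~\ref{L:tensor.perm.}, and pushing it through the central $\gamma_1\otimes\phi(\same{})$ factor; and the $n\geqslant 2$ case handled by applying rule~\ref{Item3:D:fuse}, using the $n=1$ case on the leading factor, inserting the permutation $\fuse{\vek{a}_k(\vek{c}/\vek{b}_k)}{1}{\vek{a}_k(\vek{a}_0/\vek{b}_k)}$ via Lemma~\ref{L:fperm-right}, and then invoking the induction hypothesis on the order-$(n-1)$ tail. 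One small terminological slip: in the inductive step you say Lemma~\ref{L:fperm-right} rewrites the ``output list'' of the leading factor, but it is the \emph{input} list (the rightmost slot) that is being rewritten.
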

\begin{proof}
  Again by induction on the order $n$. If \(n=0\) then this is the 
  claim of Lemma~\ref{L:sammans.perm.}. If \(n=1\) then by 
  Lemmas~\ref{L:sammans.perm.} and~\ref{L:tensor.perm.},
  \begin{align*}
    \fuse{\vek{c}}{1}{\vek{a}_0} \circ 
    \fuse{\vek{a}_0}{ (\gamma_1)^{\vek{b}_1}_{\vek{a}_1} 
    }{\vek{b}_0} 
    = {} \hspace{-8em}& \\ ={}&
    \fuse{\vek{c}}{1}{\vek{a}_0} \circ 
    \fuse{\vek{a}_0}{1}{ \vek{b}_1 (\vek{a}_0/\vek{b}_1) } \circ
    \gamma_1 \otimes 
      \fuse{ \vek{a}_0/\vek{b}_1 }{1}{ \vek{a}_0/\vek{b}_1 } \circ
    \fuse{ \vek{a}_1 (\vek{a}_0/\vek{b}_1) }{1}{ \vek{b}_0 }
    = \displaybreak[0]\\ ={}&
    \fuse{\vek{c}}{1}{ \vek{b}_1 (\vek{a}_0/\vek{b}_1) } \circ
    \gamma_1 \otimes 
      \fuse{ \vek{a}_0/\vek{b}_1 }{1}{ \vek{a}_0/\vek{b}_1 } \circ
    \fuse{ \vek{a}_1 (\vek{a}_0/\vek{b}_1) }{1}{ \vek{b}_0 }
    = \displaybreak[0]\\ ={}&
    \fuse{\vek{c}}{1}{ \vek{b}_1 (\vek{c}/\vek{b}_1) } \circ
    \fuse{ \vek{b}_1 (\vek{c}/\vek{b}_1) }{1}{ 
      \vek{b}_1 (\vek{a}_0/\vek{b}_1) } \circ
    \gamma_1 \otimes 
      \fuse{ \vek{a}_0/\vek{b}_1 }{1}{ \vek{a}_0/\vek{b}_1 } \circ
    \fuse{ \vek{a}_1 (\vek{a}_0/\vek{b}_1) }{1}{ \vek{b}_0 }
    = \displaybreak[0]\\ ={}&
    \fuse{\vek{c}}{1}{ \vek{b}_1 (\vek{c}/\vek{b}_1) } \circ
    \fuse{ \vek{b}_1 }{1}{ \vek{b}_1 } \otimes
      \fuse{ \vek{c}/\vek{b}_1 }{1}{ \vek{a}_0/\vek{b}_1 } \circ
    \gamma_1 \otimes 
      \fuse{ \vek{a}_0/\vek{b}_1 }{1}{ \vek{a}_0/\vek{b}_1 } \circ
    \fuse{ \vek{a}_1 (\vek{a}_0/\vek{b}_1) }{1}{ \vek{b}_0 }
    = \displaybreak[0]\\ ={}&
    \fuse{\vek{c}}{1}{ \vek{b}_1 (\vek{c}/\vek{b}_1) } \circ
    \Bigl( \fuse{ \vek{b}_1 }{1}{ \vek{b}_1 } \circ \gamma_1 \Bigr)
      \otimes \Bigl(
      \fuse{ \vek{c}/\vek{b}_1 }{1}{ \vek{a}_0/\vek{b}_1 } \circ
      \fuse{ \vek{a}_0/\vek{b}_1 }{1}{ \vek{a}_0/\vek{b}_1 } 
      \Bigr) \circ
    \fuse{ \vek{a}_1 (\vek{a}_0/\vek{b}_1) }{1}{ \vek{b}_0 }
    = \displaybreak[0]\\ ={}&
    \fuse{\vek{c}}{1}{ \vek{b}_1 (\vek{c}/\vek{b}_1) } \circ
    \Bigl( \gamma_1 \circ \fuse{ \vek{a}_1 }{1}{ \vek{a}_1 } \Bigr)
      \otimes \Bigl(
      \fuse{ \vek{c}/\vek{b}_1 }{1}{ \vek{c}/\vek{b}_1 } \circ
      \fuse{ \vek{c}/\vek{b}_1 }{1}{ \vek{a}_0/\vek{b}_1 } 
      \Bigr) \circ
    \fuse{ \vek{a}_1 (\vek{a}_0/\vek{b}_1) }{1}{ \vek{b}_0 }
    = \displaybreak[0]\\ ={}&
    \fuse{\vek{c}}{1}{ \vek{b}_1 (\vek{c}/\vek{b}_1) } \circ
    \gamma_1 \otimes 
      \fuse{ \vek{c}/\vek{b}_1 }{1}{ \vek{c}/\vek{b}_1 } \circ 
    \fuse{ \vek{a}_1 }{1}{ \vek{a}_1 } \otimes
      \fuse{ \vek{c}/\vek{b}_1 }{1}{ \vek{a}_0/\vek{b}_1 } \circ
    \fuse{ \vek{a}_1 (\vek{a}_0/\vek{b}_1) }{1}{ \vek{b}_0 }
    = \displaybreak[0]\\ ={}&
    \fuse{\vek{c}}{1}{ \vek{b}_1 (\vek{c}/\vek{b}_1) } \circ
    \gamma_1 \otimes 
      \fuse{ \vek{c}/\vek{b}_1 }{1}{ \vek{c}/\vek{b}_1 } \circ 
    \fuse{ \vek{a}_1(\vek{c}/\vek{b}_1) }{1}{ 
      \vek{a}_1(\vek{a}_0/\vek{b}_1) } \circ
    \fuse{ \vek{a}_1 (\vek{a}_0/\vek{b}_1) }{1}{ \vek{b}_0 }
    = \\ ={}&
    \fuse{\vek{c}}{1}{ \vek{b}_1 (\vek{c}/\vek{b}_1) } \circ
    \gamma_1 \otimes 
      \fuse{ \vek{c}/\vek{b}_1 }{1}{ \vek{c}/\vek{b}_1 } \circ 
    \fuse{ \vek{a}_1(\vek{c}/\vek{b}_1) }{1}{ \vek{b}_0 }
    =
    \fuse{\vek{c}}{ (\gamma_1)^{\vek{b}_1}_{\vek{a}_1} }{\vek{b}_0}
    \text{.}
  \end{align*}
  
  If \(n \geqslant 2\) then assume the lemma holds for all 
  expressions of order less than $n$. Let $k$ be minimal such that 
  \(\vek{b}_k \mid \vek{a}_0\), let \(A = \prod_{i=1}^{k-1} 
  (\gamma_i)^{\vek{b}_i}_{\vek{a}_i}\) and let \(B = \prod_{i=k+1}^n 
  (\gamma_i)^{\vek{b}_i}_{\vek{a}_i}\). Then
  \begin{multline*}
    \fuse{\vek{c}}{1}{\vek{a}_0} \circ 
    \fuse{\vek{a}_0}{ A (\gamma_k)^{\vek{b}_k}_{\vek{a}_k} B }
      {\vek{b}_0} 
    = 
    \fuse{\vek{c}}{1}{\vek{a}_0} \circ 
    \fuse{\vek{a}_0}{ (\gamma_k)^{\vek{b}_k}_{\vek{a}_k} }
      { \vek{a}_k(\vek{a}_0/\vek{b}_0) } \circ
    \fuse{ \vek{a}_k(\vek{a}_0/\vek{b}_0) }{ A B }{\vek{b}_0} 
    = \\ =
    \fuse{\vek{c}}{ (\gamma_k)^{\vek{b}_k}_{\vek{a}_k} }
      { \vek{a}_k(\vek{a}_0/\vek{b}_0) } \circ
    \fuse{ \vek{a}_k(\vek{a}_0/\vek{b}_0) }{ A B }{\vek{b}_0} 
    = \\ =
    \fuse{\vek{c}}{ (\gamma_k)^{\vek{b}_k}_{\vek{a}_k} }
      { \vek{a}_k(\vek{c}/\vek{b}_0) } \circ
    \fuse{ \vek{a}_k(\vek{c}/\vek{b}_0) }{1}
      { \vek{a}_k(\vek{a}_0/\vek{b}_0) } \circ
    \fuse{ \vek{a}_k(\vek{a}_0/\vek{b}_0) }{ A B }{\vek{b}_0} 
    = \\ =
    \fuse{\vek{c}}{ (\gamma_k)^{\vek{b}_k}_{\vek{a}_k} }
      { \vek{a}_k(\vek{c}/\vek{b}_0) } \circ
    \fuse{ \vek{a}_k(\vek{c}/\vek{b}_0) }{ A B }{\vek{b}_0} 
    = 
    \fuse{\vek{c}}{ A (\gamma_k)^{\vek{b}_k}_{\vek{a}_k} B }{\vek{b}_0}
  \end{multline*}
  where the second last step uses the induction hypothesis and the 
  last step is by definition.
\end{proof}

That composition of two abstract index expressions is a kind of 
concatenation happens not only when one of the subexpressions is of 
order~$0$, but also in general.

\begin{theorem} \label{S:AIN,snitt}
  If \(\vek{b}_0 = \vek{c}_0\) and 
  \(\norm[\Big]{ \prod_{i=0}^m \vek{b}_i } 
  \cap \norm[\Big]{ \prod_{i=0}^n \vek{c}_i } = \norm{\vek{b}_0}\) 
  then
  \begin{equation} \label{Eq:AIN,snitt}
    \fuse[\Big]{ \vek{a}_0 }{ \textstyle
      \prod_{i=1}^m (\beta_i)^{\vek{b}_i}_{\vek{a}_i}
    }{ \vek{b}_0 } \circ
    \fuse[\Big]{ \vek{c}_0 }{ \textstyle
      \prod_{i=1}^n (\gamma_i)^{\vek{d}_i}_{\vek{c}_i}
    }{ \vek{d}_0 } =
    \fuse[\Big]{ \vek{a}_0 }{ \textstyle
      \prod_{i=1}^m (\beta_i)^{\vek{b}_i}_{\vek{a}_i}
      \prod_{i=1}^n (\gamma_i)^{\vek{d}_i}_{\vek{c}_i}
    }{ \vek{d}_0 }
    \text{.}
  \end{equation}
\end{theorem}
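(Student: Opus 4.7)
The plan is to prove \eqref{Eq:AIN,snitt} by induction on $m$, the order of the first abstract index factor. The inductive strategy is to peel a single labelled factor off the top of each side using rule~\ref{Item3:D:fuse} of Definition~\ref{D:fuse}, argue that both rule applications pick the very same factor, and then apply the induction hypothesis to what is left inside.

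In the base case $m = 0$ the left factor degenerates to the bare permutation $\fuse{\vek{a}_0}{1}{\vek{b}_0}$, whose existence forces $\norm{\vek{a}_0} = \norm{\vek{b}_0}$; since $\vek{b}_0 = \vek{c}_0$, the identity then follows directly from Lemma~\ref{L:fperm-left}. In the inductive step $m \geq 1$ I would pick $k \geq 1$ minimal with $\vek{b}_k \mid \vek{a}_0$ (existence is guaranteed by well-definedness of the left factor), apply rule~\ref{Item3:D:fuse} to the left hand side of \eqref{Eq:AIN,snitt}, and use composition associativity to regroup as
\[
  \fuse{\vek{a}_0}{(\beta_k)^{\vek{b}_k}_{\vek{a}_k}}{\vek{a}_k(\vek{a}_0/\vek{b}_k)} \circ \fuse{\vek{a}_k(\vek{a}_0/\vek{b}_k)}{\textstyle\prod_{i \neq k}(\beta_i)^{\vek{b}_i}_{\vek{a}_i}}{\vek{b}_0} \circ \fuse{\vek{c}_0}{\textstyle\prod_{i=1}^n (\gamma_i)^{\vek{d}_i}_{\vek{c}_i}}{\vek{d}_0}.
\]
The last two factors now form an instance of the theorem with $m - 1$ labelled factors on the left; the disjointness hypothesis is inherited because the label set of the leftover left expression is a subset of the original one, so the induction hypothesis fuses them into a single abstract index expression. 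Rule~\ref{Item3:D:fuse} applied to the right hand side of \eqref{Eq:AIN,snitt} must select the same $\beta_k$, because the disjointness hypothesis places every $\norm{\vek{d}_j}$ outside $\norm{\vek{a}_0}$ and hence no $\gamma$-factor is eligible for peeling before the $\beta$-factors are exhausted, while within the $\beta$-factors the minimality condition is exactly the one already used. After the peel the two sides agree, closing the induction.

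Alongside the induction I would verify that the right hand side of \eqref{Eq:AIN,snitt} satisfies the arity, matching, and acyclicity conditions of Definition~\ref{D:fuse}. Arity and matching reduce to set-theoretic accounting using $\vek{b}_0 = \vek{c}_0$ together with the disjointness hypothesis. For acyclicity I would build the required partial order from the two witnessing partial orders $Q_1$ and $Q_2$ of the original subexpressions: one can arrange $Q_1$ so that the labels of $\vek{b}_0$ sit at the bottom (they are the inputs of the whole left expression) and $Q_2$ so that the labels of $\vek{c}_0 = \vek{b}_0$ sit at the top (they are the outputs of the whole right expression), and then glue along this common layer. No chain crossing the interface can return, so the concatenation is antisymmetric. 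The main obstacle is not the induction itself but the label bookkeeping around the peel: confirming that rule~\ref{Item3:D:fuse} picks the same factor on both sides, that the disjointness hypothesis descends cleanly into the smaller instance, and that the inner composition left on the left hand side is literally the left hand side of that smaller instance. Each check follows quickly from the disjointness hypothesis, but the labels must be threaded carefully to see it.
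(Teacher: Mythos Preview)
Your inductive argument is correct and is essentially the paper's proof reorganised: the paper also peels the $\beta$-factors one at a time via rule~\ref{Item3:D:fuse} of Definition~\ref{D:fuse} (fully unrolling all $m$ of them at once rather than inducting on $m$), then uses Lemmas~\ref{L:fperm-right} and~\ref{L:fperm-left} at the interface $\vek{b}_0 = \vek{c}_0$, and builds the acyclicity witness by the same three-layer construction you sketch. One small correction: disjointness does \emph{not} force every $\norm{\vek{d}_j}$ outside $\norm{\vek{a}_0}$ --- a pass-through label in $\norm{\vek{a}_0} \cap \norm{\vek{b}_0}$ may well lie in some $\vek{d}_j$, and any $\gamma_j$ of coarity zero has $\vek{d}_j \mid \vek{a}_0$ trivially --- but this is harmless, since the $\beta$-factors precede the $\gamma$-factors in the product and an eligible $\beta_k$ always exists, so the minimal eligible index on the right-hand side is automatically $\leqslant m$ and agrees with the one chosen on the left.
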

\begin{proof}
  If \(m=0\) or \(n=0\) then this is just Lemma~\ref{L:fperm-left} 
  or~\ref{L:fperm-right} respectively. Hence it can be assumed that 
  \(m,n \geqslant 1\).
  
  Let \(\vek{e}_0 = \vek{a}_0\), and recursively define
  \begin{align*}
    k_i ={}& \min \biggl( 
      \setOf[\Big]{ k \in \{1,\dotsc,m\} }{ 
        (\vek{b}_k \mid \vek{e}_{i-1})
      } \setminus \{k_1,\dotsc,k_{i-1}\} 
    \biggr) \text{,}\\
    \vek{e}_i ={}& \vek{a}_{k_i}(\vek{e}_{i-1}/\vek{b}_{k_i})
  \end{align*}
  for \(i=1,\dotsc,m\). Then by definition
  \begin{equation*}
    \fuse[\Big]{ \vek{a}_0 }{ \textstyle
      \prod_{i=1}^m (\beta_i)^{\vek{b}_i}_{\vek{a}_i}
    }{ \vek{e}_m } =
    \fuse{ \vek{e}_0 }{ 
      (\beta_{k_1})^{\vek{b}_{k_1}}_{\vek{a}_{k_1}} 
    }{ \vek{e}_1 } \circ
    \dotsb \circ
    \fuse{ \vek{e}_{m-1} }{ 
      (\beta_{k_m})^{\vek{b}_{k_m}}_{\vek{a}_{k_m}} 
    }{ \vek{e}_m }
    \text{.}
  \end{equation*}
  
  Let $Q_1$ and $Q_2$ be the partial orders on labels in the two 
  abstract index expressions in the left hand side of 
  \eqref{Eq:AIN,snitt}. Let \(V_1 = \bigcup_{i=1}^m \norm{\vek{b}_i}\), 
  \(V_2 = \bigcup_{i=1}^n \norm{\vek{c}_i}\), and \(V_0 = 
  \norm{\vek{b}_0}\). Define a partial order $Q$ on $V_0 \cup V_1 
  \cup V_2$ by having \(x \leqslant y \pin{Q}\) iff
  \begin{enumerate}
    \item
      \(x,y \in V_1\) and \(x \leqslant y \pin{Q_1}\), or
    \item
      \(x,y \in V_0\) and \(x=y\), or
    \item
      \(x,y \in V_2\) and \(x \leqslant y \pin{Q_2}\), or
    \item
      \(x \in V_1\) and \(y \in V_0 \cup V_2\), or
    \item
      \(x \in V_0\) and \(y \in V_2\).
  \end{enumerate}
  This partial order fulfills the acyclicity condition for the right 
  hand side of \eqref{Eq:AIN,snitt}, and so it is clear that that 
  exists. Since \(m+n \geqslant 2\), repeated use of 
  rule~\ref{Item3:D:fuse} of Definition~\ref{D:fuse} on this right 
  hand side yields
  \begin{multline*}
    \fuse[\Big]{ \vek{a}_0 }{ \textstyle
      \prod_{i=1}^m (\beta_i)^{\vek{b}_i}_{\vek{a}_i}
      \prod_{i=1}^n (\gamma_i)^{\vek{d}_i}_{\vek{c}_i}
    }{ \vek{d}_0 }
    = \\ =
    \fuse{ \vek{e}_0 }{ 
      (\beta_{k_1})^{\vek{b}_{k_1}}_{\vek{a}_{k_1}} 
    }{ \vek{e}_1 } \circ
    \dotsb \circ
    \fuse{ \vek{e}_{m-1} }{ 
      (\beta_{k_m})^{\vek{b}_{k_m}}_{\vek{a}_{k_m}} 
    }{ \vek{e}_m } \circ
    \fuse[\Big]{ \vek{e}_m }{ \textstyle
      \prod_{i=1}^n (\gamma_i)^{\vek{d}_i}_{\vek{c}_i}
    }{ \vek{d}_0 } 
    = \\ =
    \fuse[\Big]{ \vek{a}_0 }{ \textstyle
      \prod_{i=1}^m (\beta_i)^{\vek{b}_i}_{\vek{a}_i}
    }{ \vek{e}_m } \circ
    \fuse[\Big]{ \vek{e}_m }{ \textstyle
      \prod_{i=1}^n (\gamma_i)^{\vek{d}_i}_{\vek{c}_i}
    }{ \vek{d}_0 } 
    = \\ =
    \fuse[\Big]{ \vek{a}_0 }{ \textstyle
      \prod_{i=1}^m (\beta_i)^{\vek{b}_i}_{\vek{a}_i}
    }{ \vek{b}_0 } \circ
    \fuse{ \vek{b}_0 }{1}{ \vek{e}_m } \circ
    \fuse[\Big]{ \vek{e}_m }{ \textstyle
      \prod_{i=1}^n (\gamma_i)^{\vek{d}_i}_{\vek{c}_i}
    }{ \vek{d}_0 } 
    = \\ =
    \fuse[\Big]{ \vek{a}_0 }{ \textstyle
      \prod_{i=1}^m (\beta_i)^{\vek{b}_i}_{\vek{a}_i}
    }{ \vek{b}_0 } \circ
    \fuse[\Big]{ \vek{b}_0 }{ \textstyle
      \prod_{i=1}^n (\gamma_i)^{\vek{d}_i}_{\vek{c}_i}
    }{ \vek{d}_0 } \text{,}
  \end{multline*}
  where Lemmas~\ref{L:fperm-right} and~\ref{L:fperm-left} are used to 
  adjust the order of labels at the composition to match \(\vek{b}_0 
  = \vek{c}_0\).
\end{proof}

\begin{lemma} \label{L:fuse-pad}
  If \(\norm[\Big]{ \prod_{i=0}^m \vek{b}_i }\) and 
  \(\norm{ \vek{c} }\) are disjoint then
  \begin{equation} \label{Eq:fuse-pad}
    \fuse[\Big]{ \vek{a}_0 }{ \textstyle
      \prod_{i=1}^n (\gamma_i)^{\vek{b}_i}_{\vek{a}_i}
    }{ \vek{b}_0 } \otimes
    \fuse{ \vek{c} }{1}{ \vek{c} } 
    =
    \fuse[\Big]{ \vek{a}_0 \vek{c} }{ \textstyle
      \prod_{i=1}^n (\gamma_i)^{\vek{b}_i}_{\vek{a}_i}
    }{ \vek{b}_0 \vek{c} }
    \text{.}
  \end{equation}
\end{lemma}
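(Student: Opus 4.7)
The plan is to prove Lemma~\ref{L:fuse-pad} by induction on the order $n$ of the abstract index expression. Throughout, the crucial observation is that because $\norm{\vek{c}}$ is disjoint from every $\norm{\vek{b}_i}$, operations on label lists behave nicely under concatenation with $\vek{c}$: for any $i$, $\vek{b}_i \mid \vek{a}_0\vek{c}$ if and only if $\vek{b}_i \mid \vek{a}_0$, and $(\vek{a}_0\vek{c})/\vek{b}_i = (\vek{a}_0/\vek{b}_i)\vek{c}$, so the existence and minimality conditions in rule~\ref{Item3:D:fuse} of Definition~\ref{D:fuse} match between the two sides.

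For the base case $n=0$, both sides reduce to permutation expressions, and the identity $\fuse{\vek{a}_0}{1}{\vek{b}_0} \otimes \fuse{\vek{c}}{1}{\vek{c}} = \fuse{\vek{a}_0\vek{c}}{1}{\vek{b}_0\vek{c}}$ is precisely Lemma~\ref{L:tensor.perm.}. For $n=1$, I would expand the right hand side of \eqref{Eq:fuse-pad} by rule~\ref{Item2:D:fuse} of Definition~\ref{D:fuse}, use the observation above to replace $(\vek{a}_0\vek{c})/\vek{b}_1$ by $(\vek{a}_0/\vek{b}_1)\vek{c}$, apply Lemma~\ref{L:tensor.perm.} three times to split each of the three order-$0$ bracket factors as a tensor product with $\fuse{\vek{c}}{1}{\vek{c}}$, and then use the composition--tensor compatibility axiom to pull $\fuse{\vek{c}}{1}{\vek{c}} = \fuse{\vek{c}}{1}{\vek{c}} \circ \fuse{\vek{c}}{1}{\vek{c}} \circ \fuse{\vek{c}}{1}{\vek{c}}$ outside.

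For the inductive step $n \geqslant 2$, let $k$ be the minimal index with $\vek{b}_k \mid \vek{a}_0$ (equivalently, with $\vek{b}_k \mid \vek{a}_0\vek{c}$). Applying rule~\ref{Item3:D:fuse} to the right hand side of \eqref{Eq:fuse-pad} gives a composition of an order-$1$ factor with an order-$(n-1)$ factor, whose middle list can be rewritten as $\vek{a}_k(\vek{a}_0/\vek{b}_k)\vek{c}$. By the $n=1$ case the order-$1$ factor equals $\fuse{\vek{a}_0}{(\gamma_k)^{\vek{b}_k}_{\vek{a}_k}}{\vek{a}_k(\vek{a}_0/\vek{b}_k)} \otimes \fuse{\vek{c}}{1}{\vek{c}}$, and by the induction hypothesis the order-$(n-1)$ factor equals $\fuse{\vek{a}_k(\vek{a}_0/\vek{b}_k)}{\prod_{i \neq k} (\gamma_i)^{\vek{b}_i}_{\vek{a}_i}}{\vek{b}_0} \otimes \fuse{\vek{c}}{1}{\vek{c}}$. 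Composing these via the composition--tensor compatibility axiom and then reassembling the left factor with rule~\ref{Item3:D:fuse} yields the left hand side of \eqref{Eq:fuse-pad}.

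I do not expect any serious obstacle; the bookkeeping of label lists is the main thing to get right. The minor subtlety is that rule~\ref{Item3:D:fuse} picks the \emph{minimal} suitable $k$, so one must check that the same $k$ is selected on both sides of \eqref{Eq:fuse-pad}, which is exactly what the disjointness of $\norm{\vek{c}}$ from all $\norm{\vek{b}_i}$ guarantees. Everything else is mechanical application of Lemma~\ref{L:tensor.perm.} and the composition--tensor compatibility axiom.
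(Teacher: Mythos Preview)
Your proposal is correct and follows essentially the same approach as the paper's proof: induction on the order $n$, with Lemma~\ref{L:tensor.perm.} handling $n=0$, a direct rule~\ref{Item2:D:fuse} expansion plus composition--tensor compatibility for $n=1$, and rule~\ref{Item3:D:fuse} together with the induction hypothesis for $n\geqslant 2$. The only cosmetic differences are that the paper runs each computation from the left hand side toward the right rather than the reverse, and that in the $n=1$ step the middle composition factor is $\gamma_1 \otimes \fuse{\vek{a}_0/\vek{b}_1}{1}{\vek{a}_0/\vek{b}_1}$ rather than a pure order-$0$ bracket, so Lemma~\ref{L:tensor.perm.} is applied to its identity tensorand---but this is exactly what you would do in practice.
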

\begin{proof}
  This is proved through another induction on order. If \(n=0\) then 
  this is a special case of Lemma~\ref{L:tensor.perm.}. If \(n=1\) 
  then by direct calculation,
  \begin{multline*}
    \fuse{ \vek{a}_0 }{ (\gamma_1)^{\vek{b}_1}_{\vek{a}_1} }
      { \vek{b}_0 } \otimes
    \fuse{ \vek{c} }{1}{ \vek{c} } 
    = \\ =
    \Bigl(
    \fuse{ \vek{a}_0 }{1}{ \vek{b}_1(\vek{a}_0/\vek{b}_1) } \circ
    \gamma_1 \otimes 
      \fuse{ \vek{a}_0/\vek{b}_1 }{1}{ \vek{a}_0/\vek{b}_1 } \circ
    \fuse{ \vek{a}_1(\vek{a}_0/\vek{b}_1) }{1}{ \vek{b}_0 } 
    \Bigr) \otimes \Bigl(
    \fuse{ \vek{c} }{1}{ \vek{c} } \circ
    \fuse{ \vek{c} }{1}{ \vek{c} } \circ
    \fuse{ \vek{c} }{1}{ \vek{c} }
    \Bigr)
    = \displaybreak[0]\\ =
    \fuse{ \vek{a}_0 }{1}{ \vek{b}_1(\vek{a}_0/\vek{b}_1) } 
      \otimes \fuse{ \vek{c} }{1}{ \vek{c} } \circ
    \gamma_1 \otimes 
      \fuse{ \vek{a}_0/\vek{b}_1 }{1}{ \vek{a}_0/\vek{b}_1 } 
      \otimes \fuse{ \vek{c} }{1}{ \vek{c} } \circ
    \fuse{ \vek{a}_1(\vek{a}_0/\vek{b}_1) }{1}{ \vek{b}_0 } \otimes
      \fuse{ \vek{c} }{1}{ \vek{c} }
    = \displaybreak[0]\\ =
    \fuse{ \vek{a}_0\vek{c} }{1}
      { \vek{b}_1(\vek{a}_0/\vek{b}_1)\vek{c} } \circ
    \gamma_1 \otimes 
      \fuse{ (\vek{a}_0/\vek{b}_1)\vek{c} }{1}
        { (\vek{a}_0/\vek{b}_1)\vek{c} } \circ
    \fuse{ \vek{a}_1(\vek{a}_0/\vek{b}_1)\vek{c} }{1}
      { \vek{b}_0\vek{c} }
    = \\ =
    \fuse{ \vek{a}_0\vek{c} }{1}
      { \vek{b}_1(\vek{a}_0\vek{c}/\vek{b}_1) } \circ
    \gamma_1 \otimes 
      \fuse{ \vek{a}_0\vek{c}/\vek{b}_1 }{1}
        { \vek{a}_0\vek{c}/\vek{b}_1 } \circ
    \fuse{ \vek{a}_1(\vek{a}_0\vek{c}/\vek{b}_1) }{1}
      { \vek{b}_0\vek{c} }
    =
    \fuse{ \vek{a}_0\vek{c} }{ (\gamma_1)^{\vek{b}_1}_{\vek{a}_1} }
      { \vek{b}_0\vek{c} }
    \text{.}
  \end{multline*}
  Finally, if \(n \geqslant 2\), \(k \geqslant 1\) is minimal such 
  that \(\vek{b}_k \mid \vek{a}_0\), and assuming \eqref{Eq:fuse-pad} 
  holds for expressions of order $n-1$, let \(A = \prod_{i=1}^{k-1} 
  (\gamma_i)^{\vek{b}_i}_{\vek{a}_i}\) and \(B = \prod_{i=k+1}^n 
  (\gamma_i)^{\vek{b}_i}_{\vek{a}_i}\). Then
  \begin{multline*}
    \fuse{ \vek{a}_0 }{ A (\gamma_k)^{\vek{b}_k}_{\vek{a}_k} B }
      { \vek{b}_0 } \otimes
    \fuse{ \vek{c} }{1}{ \vek{c} }
    = \\ =
    \Bigl(
    \fuse{ \vek{a}_0 }{ (\gamma_k)^{\vek{b}_k}_{\vek{a}_k} }
      { \vek{a}_k(\vek{a}_0/\vek{b}_k) } \circ
    \fuse{ \vek{a}_k(\vek{a}_0/\vek{b}_k) }{ A B }{ \vek{b}_0 }
    \Bigr) \otimes \Bigl(
    \fuse{ \vek{c} }{1}{ \vek{c} } \circ \fuse{ \vek{c} }{1}{ \vek{c} }
    \Bigr)
    = \\ =
    \fuse{ \vek{a}_0 }{ (\gamma_k)^{\vek{b}_k}_{\vek{a}_k} }
      { \vek{a}_k(\vek{a}_0/\vek{b}_k) } \otimes
      \fuse{ \vek{c} }{1}{ \vek{c} } \circ
    \fuse{ \vek{a}_k(\vek{a}_0/\vek{b}_k) }{ A B }{ \vek{b}_0 }
      \otimes \fuse{ \vek{c} }{1}{ \vek{c} }
    = \\ =
    \fuse{ \vek{a}_0\vek{c} }{ (\gamma_k)^{\vek{b}_k}_{\vek{a}_k} }
      { \vek{a}_k(\vek{a}_0/\vek{b}_k)\vek{c} } \circ
    \fuse{ \vek{a}_k(\vek{a}_0/\vek{b}_k) \vek{c}}{ A B }
      { \vek{b}_0 \vek{c} }
    =
    \fuse{ \vek{a}_0\vek{c} }{ A (\gamma_k)^{\vek{b}_k}_{\vek{a}_k} B }
      { \vek{b}_0\vek{c} } \text{.}
  \end{multline*}
\end{proof}

\begin{theorem} \label{S:AIN,klyva}
  If \(\norm[\Big]{ \prod_{i=0}^m \vek{b}_i }\) and 
  \(\norm[\Big]{ \prod_{i=0}^n \vek{c}_i }\) are disjoint then
  \begin{equation} \label{Eq:AIN,klyva}
    \fuse[\Big]{ \vek{a}_0 }{ \textstyle
      \prod_{i=1}^m (\beta_i)^{\vek{b}_i}_{\vek{a}_i}
    }{ \vek{b}_0 } \otimes
    \fuse[\Big]{ \vek{c}_0 }{ \textstyle
      \prod_{i=1}^n (\gamma_i)^{\vek{d}_i}_{\vek{c}_i}
    }{ \vek{d}_0 } =
    \fuse[\Big]{ \vek{a}_0 \vek{c}_0 }{ \textstyle
      \prod_{i=1}^m (\beta_i)^{\vek{b}_i}_{\vek{a}_i}
      \prod_{i=1}^n (\gamma_i)^{\vek{d}_i}_{\vek{c}_i}
    }{ \vek{b}_0 \vek{d}_0 }
    \text{.}
  \end{equation}
\end{theorem}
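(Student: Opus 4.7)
The plan is to reduce the tensor-product identity to a composition identity, and then use Theorem~\ref{S:AIN,snitt} to merge the resulting composition into a single abstract index expression. Write $A = \fuse{\vek{a}_0}{\prod_{i=1}^m (\beta_i)^{\vek{b}_i}_{\vek{a}_i}}{\vek{b}_0}$ and $B = \fuse{\vek{c}_0}{\prod_{i=1}^n (\gamma_i)^{\vek{d}_i}_{\vek{c}_i}}{\vek{d}_0}$, and set $I_{\vek{x}} = \fuse{\vek{x}}{1}{\vek{x}}$. Using the composition identity axiom and the composition--tensor compatibility axiom, one may rewrite
\[
  A \otimes B = (A \circ I_{\vek{b}_0}) \otimes (I_{\vek{c}_0} \circ B) = (A \otimes I_{\vek{c}_0}) \circ (I_{\vek{b}_0} \otimes B) \text{.}
\]

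Next I would apply Lemma~\ref{L:fuse-pad} to the first factor of this composition. The disjointness hypothesis of the theorem gives $\norm[\big]{\prod_{i=0}^m \vek{b}_i} \cap \norm{\vek{c}_0} = \varnothing$, so the lemma yields $A \otimes I_{\vek{c}_0} = \fuse{\vek{a}_0\vek{c}_0}{\prod_{i=1}^m (\beta_i)^{\vek{b}_i}_{\vek{a}_i}}{\vek{b}_0\vek{c}_0}$. Similarly, the mirror version applied to the second factor gives $I_{\vek{b}_0} \otimes B = \fuse{\vek{b}_0\vek{c}_0}{\prod_{i=1}^n (\gamma_i)^{\vek{d}_i}_{\vek{c}_i}}{\vek{b}_0\vek{d}_0}$. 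Finally, since the only labels shared between these two abstract index expressions are precisely those of $\vek{b}_0\vek{c}_0$ along the composition seam, Theorem~\ref{S:AIN,snitt} concatenates them into
\[
  \fuse[\Big]{\vek{a}_0\vek{c}_0}{ \textstyle
    \prod_{i=1}^m (\beta_i)^{\vek{b}_i}_{\vek{a}_i}
    \prod_{i=1}^n (\gamma_i)^{\vek{d}_i}_{\vek{c}_i}
  }{\vek{b}_0\vek{d}_0}
  \text{,}
\]
which is exactly the right hand side of \eqref{Eq:AIN,klyva}.

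The main obstacle is that Lemma~\ref{L:fuse-pad} is only stated with the padding identity $I_{\vek{c}}$ appearing on the right of the tensor product, whereas in my second application I need it on the left. This mirror version can be obtained either by imitating the induction proof of L:fuse-pad with the roles of left and right swapped throughout, or (more in keeping with the style already established) by applying L:fuse-pad to $B \otimes I_{\vek{b}_0}$ and then using the tensor permutation axiom together with Lemmas~\ref{L:fperm-left} and~\ref{L:fperm-right} to swap the tensor factors and reorder the boundary lists. In either approach the bookkeeping is mechanical; I would verify once that the disjointness assumption passes through each step so that the arity, matching, and acyclicity conditions of every intermediate abstract index expression are satisfied, and then the rest is routine.
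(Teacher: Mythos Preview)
Your proposal is correct and follows essentially the same route as the paper: the paper also rewrites $A \otimes B$ as $(A \otimes I_{\vek{c}_0}) \circ (I_{\vek{b}_0} \otimes B)$ via composition--tensor compatibility, applies Lemma~\ref{L:fuse-pad} on one side, establishes the mirror version (identity on the left) via the tensor permutation axiom together with Lemmas~\ref{L:fperm-left} and~\ref{L:fperm-right}, and then finishes with Theorem~\ref{S:AIN,snitt}. The only cosmetic difference is that the paper proves the mirror special case first and then does the general reduction, whereas you state the reduction first and defer the mirror lemma to the end.
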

\begin{proof}
  First, the special case that \(m=0\) and \(\vek{a}_0 = \vek{b}_0\); 
  this is complementary to Lemma~\ref{L:fuse-pad}, and uses it to 
  eliminate the tensor product:
  \begin{multline*}
    \fuse{ \vek{b}_0 }{1}{ \vek{b}_0 } \otimes
    \fuse[\Big]{ \vek{c}_0 }{ \textstyle
      \prod_{i=1}^n (\gamma_i)^{\vek{d}_i}_{\vek{c}_i}
    }{ \vek{d}_0 } 
    = \\ =
    \fuse{ \vek{b}_0\vek{c}_0 }{1}{ \vek{c}_0\vek{b}_0 } \circ
    \fuse{ \vek{c}_0\vek{b}_0 }{1}{ \vek{b}_0\vek{c}_0 } \circ
    \fuse{ \vek{b}_0 }{1}{ \vek{b}_0 } \otimes
    \fuse[\Big]{ \vek{c}_0 }{ \textstyle
      \prod_{i=1}^n (\gamma_i)^{\vek{d}_i}_{\vek{c}_i}
    }{ \vek{d}_0 }
    = \\ =
    \fuse{ \vek{b}_0\vek{c}_0 }{1}{ \vek{c}_0\vek{b}_0 } \circ
    \fuse[\Big]{ \vek{c}_0 }{ \textstyle
      \prod_{i=1}^n (\gamma_i)^{\vek{d}_i}_{\vek{c}_i}
    }{ \vek{d}_0 } \otimes
      \fuse{ \vek{b}_0 }{1}{ \vek{b}_0 } \circ
    \fuse{ \vek{d}_0\vek{b}_0 }{1}{ \vek{b}_0\vek{d}_0 } 
    = \\ =
    \fuse{ \vek{b}_0\vek{c}_0 }{1}{ \vek{c}_0\vek{b}_0 } \circ
    \fuse[\Big]{ \vek{c}_0\vek{b}_0 }{ \textstyle
      \prod_{i=1}^n (\gamma_i)^{\vek{d}_i}_{\vek{c}_i}
    }{ \vek{d}_0\vek{b}_0 } \circ
    \fuse{ \vek{d}_0\vek{b}_0 }{1}{ \vek{b}_0\vek{d}_0 } 
    =
    \fuse[\Big]{ \vek{b}_0\vek{c}_0 }{ \textstyle
      \prod_{i=1}^n (\gamma_i)^{\vek{d}_i}_{\vek{c}_i}
    }{ \vek{b}_0\vek{d}_0 }
    \text{.}
  \end{multline*}
  The general case is proved using Theorem~\ref{S:AIN,snitt} and the 
  two special cases where one factor is an identity.
  \begin{multline*}
    \fuse[\Big]{ \vek{a}_0 }{ \textstyle
      \prod_{i=1}^m (\beta_i)^{\vek{b}_i}_{\vek{a}_i}
    }{ \vek{b}_0 } \otimes
    \fuse[\Big]{ \vek{c}_0 }{ \textstyle
      \prod_{i=1}^n (\gamma_i)^{\vek{d}_i}_{\vek{c}_i}
    }{ \vek{d}_0 }
    = \\ =
    \biggl(
    \fuse[\Big]{ \vek{a}_0 }{ \textstyle
      \prod_{i=1}^m (\beta_i)^{\vek{b}_i}_{\vek{a}_i}
    }{ \vek{b}_0 } \circ
    \fuse{ \vek{b}_0 }{1}{ \vek{b}_0 }
    \biggr) \otimes \biggl(
    \fuse{ \vek{c}_0 }{1}{ \vek{c}_0 } \circ
    \fuse[\Big]{ \vek{c}_0 }{ \textstyle
      \prod_{i=1}^n (\gamma_i)^{\vek{d}_i}_{\vek{c}_i}
    }{ \vek{d}_0 }
    \biggr)
    = \displaybreak[0]\\ =
    \fuse[\Big]{ \vek{a}_0 }{ \textstyle
      \prod_{i=1}^m (\beta_i)^{\vek{b}_i}_{\vek{a}_i}
    }{ \vek{b}_0 } \otimes \fuse{ \vek{c}_0 }{1}{ \vek{c}_0 } \circ
    \fuse{ \vek{b}_0 }{1}{ \vek{b}_0 } \otimes 
    \fuse[\Big]{ \vek{c}_0 }{ \textstyle
      \prod_{i=1}^n (\gamma_i)^{\vek{d}_i}_{\vek{c}_i}
    }{ \vek{d}_0 }
    = \displaybreak[0]\\ =
    \fuse[\Big]{ \vek{a}_0\vek{c}_0 }{ \textstyle
      \prod_{i=1}^m (\beta_i)^{\vek{b}_i}_{\vek{a}_i}
    }{ \vek{b}_0\vek{c}_0 } \circ
    \fuse[\Big]{ \vek{b}_0\vek{c}_0 }{ \textstyle
      \prod_{i=1}^n (\gamma_i)^{\vek{d}_i}_{\vek{c}_i}
    }{ \vek{b}_0\vek{d}_0 }
    = \\ =
    \fuse[\Big]{ \vek{a}_0\vek{c}_0 }{ \textstyle
      \prod_{i=1}^m (\beta_i)^{\vek{b}_i}_{\vek{a}_i}
      \prod_{i=1}^n (\gamma_i)^{\vek{d}_i}_{\vek{c}_i}
    }{ \vek{b}_0\vek{d}_0 }
    \text{.}
  \end{multline*}
\end{proof}

The final thing about the abstract index notation that needs to 
be uncovered is that some details in it are irrelevant, namely 
the choice of labels and the order of factors. The first of these is 
mostly an observation.

\begin{theorem} \label{S:fuse-relabel}
  For every injection $f$ on the set of labels,
  \begin{equation}
    \fuse[\Big]{\vek{a}_0}{ \textstyle
      \prod_{i=1}^n (\gamma_i)^{\vek{b}_i}_{\vek{a}_i} 
    }{\vek{b}_0} =
    \fuse[\Big]{f(\vek{a}_0)}{ \textstyle
      \prod_{i=1}^n (\gamma_i)^{f(\vek{b}_i)}_{f(\vek{a}_i)} 
    }{f(\vek{b}_0)}
    \text{,}
  \end{equation}
  where $f$ is extended to lists of labels by acting on each element 
  separately, i.e., \(f(a_1 \dotsb a_m) := f(a_1) \dotsb f(a_m)\).
\end{theorem}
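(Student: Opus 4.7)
The plan is to prove the theorem by induction on the order $n$ of the abstract index expression, following exactly the three-case definition from Definition~\ref{D:fuse}. Before starting, I would observe that the existence conditions on the relabelled expression are automatic from the injectivity of $f$: the arity condition is unchanged since $\Norm{f(\vek{a})} = \Norm{\vek{a}}$; the matching condition is preserved because $f$ carries sets to sets and commutes with $\norm{\cdot}$ on repetition-free lists; and any partial order $Q$ on $V$ witnessing acyclicity yields a partial order on $f(V)$ via \(f(x) \leqslant f(y) \pin{Q'} \Epil x \leqslant y \pin{Q}\), which witnesses acyclicity for the relabelled expression.

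For the base case \(n=0\), I would note that by rule~\ref{Item1:D:fuse} of Definition~\ref{D:fuse}, the value $\fuse{\vek{a}}{1}{\vek{b}} = \phi_m(\sigma)$ is determined entirely by the permutation $\sigma$ matching positions in $\vek{a}$ to positions in $\vek{b}$ bearing the same label. Since $f$ is injective, two positions carry the same label before relabelling if and only if they carry the same (image) label after, so the permutation $\sigma$ is unchanged, and thus $\fuse{\vek{a}}{1}{\vek{b}} = \fuse{f(\vek{a})}{1}{f(\vek{b})}$. The order~$1$ case then falls out immediately by applying rule~\ref{Item2:D:fuse}: each of the three order-$0$ subexpressions is relabelling-invariant by what was just shown, $\gamma$ is not touched by $f$ at all, and crucially \(f(\vek{a}/\vek{b}) = f(\vek{a})/f(\vek{b})\) because injectivity makes set-theoretic removal commute with $f$.

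For the inductive step \(n \geqslant 2\), I would appeal to rule~\ref{Item3:D:fuse}. The only subtle point is that the decomposition picks the smallest $k$ such that \(\vek{b}_k \mid \vek{a}_0\), and one must check that the same $k$ is chosen in the relabelled expression. This is immediate: injectivity of $f$ gives \(\norm{f(\vek{b}_k)} \subseteq \norm{f(\vek{a}_0)}\) if and only if \(\norm{\vek{b}_k} \subseteq \norm{\vek{a}_0}\), i.e., \(f(\vek{b}_k) \mid f(\vek{a}_0) \Leftrightarrow \vek{b}_k \mid \vek{a}_0\). Hence both sides of \eqref{Eq:D:fuse} decompose at the same index $k$ into a factor of order~$1$ and a factor of order \(n-1\), both of which are invariant under relabelling by the induction hypothesis. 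The two sides are thus equal as compositions in $\mc{P}$.

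I do not expect any real obstacle in this proof; it is almost purely bookkeeping. The only place where one must be careful is the interaction between the injectivity hypothesis and the list operations $\norm{\cdot}$, $/$, and $\mid$, and a brief remark ensuring that injectivity of $f$ makes all of these commute with $f$ should suffice to make the induction go through cleanly.
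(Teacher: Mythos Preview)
Your proposal is correct and follows essentially the same approach as the paper, just far more explicitly: the paper's proof is a single sentence observing that no part of Definition~\ref{D:fuse} depends on the values of individual labels, only on which positions carry equal labels, and that injections preserve this pattern. Your induction is precisely the unpacking of that observation rule by rule.
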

\begin{proof}
  No part of Definition~\ref{D:fuse} depends on the value of 
  individual labels, only on in which positions there are equal 
  labels. This is preserved by injections.
\end{proof}

The next lemma generalises rule~\ref{Item3:D:fuse} of 
Definition~\ref{D:fuse} in several ways: it covers also expressions 
of order~$1$, it allows an arbitrary order for the list $\vek{c}$ of 
intermediate labels, and most importantly it ignores the order of 
the labelled factors.

\begin{lemma} \label{L:fuse-anyfactor}
  If \(n \geqslant 1\), \(l\geqslant 1\) is such that \(\vek{b}_l 
  \mid \vek{a}_0\), and \(\norm{\vek{c}} = 
  \norm[\big]{ \vek{a}_l (\vek{a}_0/\vek{b}_l) }\) then
  \begin{equation} \label{Eq:fuse-anyfactor}
    \fuse[\Big]{\vek{a}_0}{ \textstyle
      \prod_{i=1}^n (\gamma_i)^{\vek{b}_i}_{\vek{a}_i} 
    }{\vek{b}_0} = 
    \fuse{\vek{a}_0}{ \textstyle
      (\gamma_l)^{\vek{b}_l}_{\vek{a}_l} 
    }{ \vek{c} } \circ
    \fuse[\Big]{ \vek{c} }{ \textstyle
      \prod_{i=1}^{l-1} (\gamma_i)^{\vek{b}_i}_{\vek{a}_i} 
      \prod_{i=l+1}^n (\gamma_i)^{\vek{b}_i}_{\vek{a}_i} 
    }{\vek{b}_0} 
    \text{.}
  \end{equation}
\end{lemma}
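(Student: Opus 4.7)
My first step is to reduce to the canonical choice $\vek{c} = \vek{a}_l(\vek{a}_0/\vek{b}_l)$. Any admissible $\vek{c}$ differs from this by a permutation of labels, so using Lemma~\ref{L:fperm-right} on the first composition factor on the right-hand side and Lemma~\ref{L:fperm-left} on the second factor freely transports such a permutation across the composition point, where it is absorbed on both sides. I then proceed by induction on the order $n$. In the base case $n = 1$, $l = 1$ is forced, the second factor of the right-hand side collapses to the order-zero expression $\fuse{\vek{a}_1(\vek{a}_0/\vek{b}_1)}{1}{\vek{b}_0}$, and Lemma~\ref{L:fperm-right} immediately yields the conclusion.

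For the inductive step $n \geqslant 2$, let $k$ be the minimal index with $\vek{b}_k \mid \vek{a}_0$, the index selected by rule~\ref{Item3:D:fuse} of Definition~\ref{D:fuse}; the hypothesis ensures $k \leqslant l$. If $k = l$ the identity is literally that rule. Otherwise $k < l$: the matching condition forces $\norm{\vek{b}_k} \cap \norm{\vek{b}_l} = \varnothing$ and hence $\vek{b}_l \mid \vek{a}_k(\vek{a}_0/\vek{b}_k)$. Applying rule~\ref{Item3:D:fuse} to the left-hand side peels off $(\gamma_k)$, after which the induction hypothesis applied to the remaining order-$(n-1)$ factor peels off $(\gamma_l)$; Theorem~\ref{S:AIN,snitt} then merges the two successive one-factor expressions to yield $\text{LHS} = \fuse{\vek{a}_0}{(\gamma_k)^{\vek{b}_k}_{\vek{a}_k}(\gamma_l)^{\vek{b}_l}_{\vek{a}_l}}{\vek{c}_{kl}} \circ R$, where $\vek{c}_{kl} = \vek{a}_l\vek{a}_k(\vek{a}_0/\vek{b}_k\vek{b}_l)$ and $R$ is the abstract index expression collecting the remaining factors. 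A symmetric treatment of the right-hand side (one application of the induction hypothesis to its second factor, then Theorem~\ref{S:AIN,snitt}) gives $\text{RHS} = \fuse{\vek{a}_0}{(\gamma_l)^{\vek{b}_l}_{\vek{a}_l}(\gamma_k)^{\vek{b}_k}_{\vek{a}_k}}{\vek{c}_{lk}} \circ R'$, where $\vek{c}_{lk} = \vek{a}_k\vek{a}_l(\vek{a}_0/\vek{b}_k\vek{b}_l)$ and $R'$ has the same factor content as $R$ but top list $\vek{c}_{lk}$ in place of $\vek{c}_{kl}$. Lemma~\ref{L:fperm-left} rewrites $R'$ as $\fuse{\vek{c}_{lk}}{1}{\vek{c}_{kl}} \circ R$ and Lemma~\ref{L:fperm-right} absorbs this permutation into the two-factor head, reducing the whole goal to the two-factor commutativity identity $\fuse{\vek{a}_0}{(\gamma_k)^{\vek{b}_k}_{\vek{a}_k}(\gamma_l)^{\vek{b}_l}_{\vek{a}_l}}{\vek{c}_{kl}} = \fuse{\vek{a}_0}{(\gamma_l)^{\vek{b}_l}_{\vek{a}_l}(\gamma_k)^{\vek{b}_k}_{\vek{a}_k}}{\vek{c}_{kl}}$.

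This two-factor commutativity is the main obstacle. Letting $\vek{r} = \vek{a}_0/(\vek{b}_k\vek{b}_l)$, I use Lemmas~\ref{L:fperm-left} and~\ref{L:fperm-right} to replace the common top by $\vek{b}_k\vek{b}_l\vek{r}$ and common bottom by $\vek{a}_k\vek{a}_l\vek{r}$, reducing the problem to comparing two expressions on these canonical lists. The first, with $(\gamma_k)(\gamma_l)$ as factor sequence, is recognised as $\gamma_k \otimes \gamma_l \otimes \phi(\same{|\vek{r}|})$ by successive application of Theorem~\ref{S:fuse-just} (each single-factor piece is its bare \PROP\ element), Theorem~\ref{S:AIN,klyva} (to merge the two pieces into a tensor product) and Lemma~\ref{L:fuse-pad} (to incorporate the trailing identity on $\vek{r}$). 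The second, with factor order $(\gamma_l)(\gamma_k)$, has its natural tensor product decomposition adapted to top $\vek{b}_l\vek{b}_k\vek{r}$ and bottom $\vek{a}_l\vek{a}_k\vek{r}$; bringing it to the canonical lists via Lemmas~\ref{L:fperm-left} and~\ref{L:fperm-right} picks up a crossing $\phi(\cross{|\vek{b}_l|}{|\vek{b}_k|} \star \same{|\vek{r}|})$ on the top and $\phi(\cross{|\vek{a}_k|}{|\vek{a}_l|} \star \same{|\vek{r}|})$ on the bottom sandwiching $\gamma_l \otimes \gamma_k \otimes \phi(\same{|\vek{r}|})$. Applying the tensor permutation axiom of a \PROP\ to $\gamma_l \otimes \gamma_k$ converts the top crossing into $(\gamma_k \otimes \gamma_l) \circ \phi(\cross{|\vek{a}_l|}{|\vek{a}_k|} \star \same{|\vek{r}|})$, after which this new crossing and the bottom crossing $\cross{|\vek{a}_k|}{|\vek{a}_l|}$ are mutual inverses that cancel, leaving $\gamma_k \otimes \gamma_l \otimes \phi(\same{|\vek{r}|})$ as required. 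The main subtlety throughout is the careful bookkeeping of these crossing permutations to confirm they telescope precisely as the tensor permutation axiom demands.
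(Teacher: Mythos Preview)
Your proof is correct and follows essentially the same overall structure as the paper's: induction on $n$, with the crux being the two-factor commutativity $\fuse{\vek{a}_0}{(\gamma_k)(\gamma_l)}{\vek{c}}=\fuse{\vek{a}_0}{(\gamma_l)(\gamma_k)}{\vek{c}}$. The one genuine difference is in how that identity is established. The paper places the two factors side by side via Theorem~\ref{S:AIN,klyva}, then invokes the composition--tensor compatibility axiom $(a\circ b)\otimes(c\circ d)=(a\otimes c)\circ(b\otimes d)$ to swap which factor carries the identity padding, directly exchanging $\gamma_k$ and $\gamma_l$ without any crossings appearing. You instead normalise both sides to explicit tensor products $\gamma_k\otimes\gamma_l\otimes\phi(\same{|\vek r|})$ versus the crossed form, and cancel the crossings via the tensor permutation axiom. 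Both routes are clean; the paper's avoids introducing the $\cross{}{}$ bookkeeping altogether, while yours makes the geometric picture (swap two parallel boxes) more visible. The paper also organises the induction slightly differently, treating $n=2$ as an explicit base case rather than folding it into the inductive step as you do, but this is cosmetic.
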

\begin{proof}
  If \(n=1\) then this is just the claim of 
  Lemma~\ref{L:fperm-right}. For \(n=2\) and \(l=1\) this is a 
  special case of Theorem~\ref{S:AIN,snitt}. For \(n=2\) and \(l=2\) 
  there are two possibilities. Either \(\vek{b}_1 \nmid \vek{a}_0\) 
  and then by definition, Lemma~\ref{L:fperm-right}, and 
  Lemma~\ref{L:fperm-left},
  \begin{multline*}
    \fuse{ \vek{a}_0 }{ 
      (\gamma_1)^{\vek{b}_1}_{\vek{a}_1}
      (\gamma_2)^{\vek{b}_2}_{\vek{a}_2}
    }{ \vek{b}_0 }
    =
    \fuse{ \vek{a}_0 }{ 
      (\gamma_2)^{\vek{b}_2}_{\vek{a}_2}
    }{ \vek{a}_2 (\vek{a}_0/\vek{b}_2) } \circ
    \fuse{ \vek{a}_2 (\vek{a}_0/\vek{b}_2) }{
      (\gamma_1)^{\vek{b}_1}_{\vek{a}_1}
    }{ \vek{b}_0 }
    = \\ =
    \fuse{ \vek{a}_0 }{ 
      (\gamma_2)^{\vek{b}_2}_{\vek{a}_2}
    }{ \vek{a}_2 (\vek{a}_0/\vek{b}_2) } \circ
    \fuse{ \vek{a}_2 (\vek{a}_0/\vek{b}_2) }{1}{ \vek{c} } \circ
    \fuse{ \vek{c} }{1}{ { \vek{a}_2 (\vek{a}_0/\vek{b}_2) } } \circ
    \fuse{ \vek{a}_2 (\vek{a}_0/\vek{b}_2) }{
      (\gamma_1)^{\vek{b}_1}_{\vek{a}_1}
    }{ \vek{b}_0 }
    = \\ =
    \fuse{ \vek{a}_0 }{ 
      (\gamma_2)^{\vek{b}_2}_{\vek{a}_2}
    }{ \vek{c} } \circ
    \fuse{ \vek{c} }{
      (\gamma_1)^{\vek{b}_1}_{\vek{a}_1}
    }{ \vek{b}_0 }
    \text{.}
  \end{multline*}
  Otherwise \(\vek{b}_1 \mid \vek{a}_0\), where the definition 
  instead puts $\gamma_1$ in the left operand. Let \(\vek{d} = 
  \vek{a}_0/\vek{b}_2\vek{b}_1\). By Theorems~\ref{S:AIN,snitt} 
  and~\ref{S:AIN,klyva},
  \begin{multline*}
    \fuse{ \vek{a}_0 }{ 
      (\gamma_1)^{\vek{b}_1}_{\vek{a}_1}
      (\gamma_2)^{\vek{b}_2}_{\vek{a}_2}
    }{ \vek{b}_0 }
    =
    \fuse{ \vek{a}_0 }{1}{ \vek{b}_2\vek{b}_1\vek{d} } \circ
    \fuse{ \vek{b}_2\vek{b}_1\vek{d} }{ 
      (\gamma_1)^{\vek{b}_1}_{\vek{a}_1}
    }{ \vek{b}_2\vek{a}_1\vek{d} } \circ
    \fuse{ \vek{b}_2\vek{a}_1\vek{d} }{
      (\gamma_2)^{\vek{b}_2}_{\vek{a}_2}
    }{ \vek{a}_2\vek{a}_1\vek{d} } \circ
    \fuse{ \vek{a}_2\vek{a}_1\vek{d} }{1}{ \vek{b}_0 }
    = \\ =
    \fuse{ \vek{a}_0 }{1}{ \vek{b}_2\vek{b}_1\vek{d} } \circ
    \fuse{ \vek{b}_2 }{1}{ \vek{b}_2 } \otimes
    \fuse{ \vek{b}_1\vek{d} }{ (\gamma_1)^{\vek{b}_1}_{\vek{a}_1}
    }{ \vek{a}_1\vek{d} } \circ
    \fuse{ \vek{b}_2 }{ (\gamma_2)^{\vek{b}_2}_{\vek{a}_2} }
      { \vek{a}_2 } \otimes
      \fuse{ \vek{a}_1\vek{d} }{1}{ \vek{a}_1\vek{d} } \circ
    \fuse{ \vek{a}_2\vek{a}_1\vek{d} }{1}{ \vek{b}_0 }
    = \displaybreak[0]\\ =
    \fuse{ \vek{a}_0 }{1}{ \vek{b}_2\vek{b}_1\vek{d} } \circ
    \Bigl( 
    \fuse{ \vek{b}_2 }{1}{ \vek{b}_2 } \circ
    \fuse{ \vek{b}_2 }{ (\gamma_2)^{\vek{b}_2}_{\vek{a}_2} }
      { \vek{a}_2 }
    \Bigr) \otimes \Bigl(
    \fuse{ \vek{b}_1\vek{d} }{ (\gamma_1)^{\vek{b}_1}_{\vek{a}_1}
    }{ \vek{a}_1\vek{d} } \circ
    \fuse{ \vek{a}_1\vek{d} }{1}{ \vek{a}_1\vek{d} } 
    \Bigr) \circ
    \fuse{ \vek{a}_2\vek{a}_1\vek{d} }{1}{ \vek{b}_0 }
    = \displaybreak[0]\\ =
    \fuse{ \vek{a}_0 }{1}{ \vek{b}_2\vek{b}_1\vek{d} } \circ
    \Bigl( 
    \fuse{ \vek{b}_2 }{ (\gamma_2)^{\vek{b}_2}_{\vek{a}_2} }
      { \vek{a}_2 } \circ
    \fuse{ \vek{a}_2 }{1}{ \vek{a}_2 } 
    \Bigr) \otimes \Bigl(
    \fuse{ \vek{b}_1\vek{d} }{1}{ \vek{b}_1\vek{d} } \circ
    \fuse{ \vek{b}_1\vek{d} }{ (\gamma_1)^{\vek{b}_1}_{\vek{a}_1}
    }{ \vek{a}_1\vek{d} } 
    \Bigr) \circ
    \fuse{ \vek{a}_2\vek{a}_1\vek{d} }{1}{ \vek{b}_0 }
    = \displaybreak[0]\\ =
    \fuse{ \vek{a}_0 }{1}{ \vek{b}_2\vek{b}_1\vek{d} } \circ
    \fuse{ \vek{b}_2 }{ (\gamma_2)^{\vek{b}_2}_{\vek{a}_2} }
      { \vek{a}_2 } \otimes
      \fuse{ \vek{b}_1\vek{d} }{1}{ \vek{b}_1\vek{d} } \circ
    \fuse{ \vek{a}_2 }{1}{ \vek{a}_2 } \otimes 
      \fuse{ \vek{b}_1\vek{d} }{ (\gamma_1)^{\vek{b}_1}_{\vek{a}_1}
      }{ \vek{a}_1\vek{d} } 
      \circ
    \fuse{ \vek{a}_2\vek{a}_1\vek{d} }{1}{ \vek{b}_0 }
    = \displaybreak[0]\\ =
    \fuse{ \vek{a}_0 }{1}{ \vek{b}_2\vek{b}_1\vek{d} } \circ
    \fuse{ \vek{b}_2\vek{b}_1\vek{d} }
      { (\gamma_2)^{\vek{b}_2}_{\vek{a}_2} }
      { \vek{a}_2\vek{b}_1\vek{d} } \circ
    \fuse{ \vek{a}_2\vek{b}_1\vek{d} }
      { (\gamma_1)^{\vek{b}_1}_{\vek{a}_1}}
      { \vek{a}_2\vek{a}_1\vek{d} } \circ
    \fuse{ \vek{a}_2\vek{a}_1\vek{d} }{1}{ \vek{b}_0 }
    = \\ =
    \fuse{ \vek{a}_0 }{ 
      (\gamma_2)^{\vek{b}_2}_{\vek{a}_2}
      (\gamma_1)^{\vek{b}_1}_{\vek{a}_1}
    }{ \vek{b}_0 }
    =
    \fuse{ \vek{a}_0 }{ (\gamma_2)^{\vek{b}_2}_{\vek{a}_2} }
      { \vek{c} } \circ
    \fuse{ \vek{c} }{ (\gamma_1)^{\vek{b}_1}_{\vek{a}_1} }
      { \vek{b}_0 }
    \text{.}
  \end{multline*}
  
  For \(n > 2\) this is again by induction, so assume 
  \eqref{Eq:fuse-anyfactor} holds for all expressions of order less 
  than $n$. If $l$ is minimal with respect to the condition 
  \(\vek{b}_l \mid \vek{a}_0\) then proving \eqref{Eq:fuse-anyfactor} 
  is just a matter of using the definition and then permuting the 
  intermediate labels, like above. Otherwise let \(k \geqslant 1\) 
  be minimal such that \(\vek{b}_k \mid \vek{a}_0\), 
  let \(A = \prod_{i=1}^{k-1} (\gamma_i)^{\vek{b}_i}_{\vek{a}_i}\), 
  let \(B = \prod_{i=k+1}^{l-1} (\gamma_i)^{\vek{b}_i}_{\vek{a}_i}\), 
  let \(C = \prod_{i=l+1}^n (\gamma_i)^{\vek{b}_i}_{\vek{a}_i}\), and 
  let \(\vek{d} = \vek{a}_k \vek{a}_l (\vek{a}_0/\vek{b}_k\vek{b}_l)\). 
  Then \(\prod_{i=1}^n (\gamma_i)^{\vek{b}_i}_{\vek{a}_i} = A 
  (\gamma_k)^{\vek{b}_k}_{\vek{a}_k} B 
  (\gamma_l)^{\vek{b}_l}_{\vek{a}_l} C\) and
  \begin{align*}
    \fuse[\Big]{\vek{a}_0}{ \textstyle
      \prod_{i=1}^n (\gamma_i)^{\vek{b}_i}_{\vek{a}_i} 
    }{\vek{b}_0} 
    ={}& 
    \fuse{ \vek{a}_0 }{ (\gamma_k)^{\vek{b}_k}_{\vek{a}_k} }
      { \vek{a}_k (\vek{a}_0/\vek{b}_k) } \circ
    \fuse{ \vek{a}_k (\vek{a}_0/\vek{b}_k) }
      { A B (\gamma_l)^{\vek{b}_l}_{\vek{a}_l} C }{ \vek{b}_0 }
    = \\ ={}&
    \fuse{ \vek{a}_0 }{ (\gamma_k)^{\vek{b}_k}_{\vek{a}_k} }
      { \vek{a}_k (\vek{a}_0/\vek{b}_k) } \circ
    \fuse{ \vek{a}_k (\vek{a}_0/\vek{b}_k) }
      { (\gamma_l)^{\vek{b}_l}_{\vek{a}_l} }
      { \vek{d} }
      \circ
    \fuse{ \vek{d} }{ A B C }{ \vek{b}_0 }
    = \displaybreak[0]\\ ={}&
    \fuse{ \vek{a}_0 }{ 
      (\gamma_k)^{\vek{b}_k}_{\vek{a}_k} 
      (\gamma_l)^{\vek{b}_l}_{\vek{a}_l}
    }{ \vek{d} } \circ
    \fuse{ \vek{d} }{ A B C }{ \vek{b}_0 }
    = \displaybreak[0]\\ ={}&
    \fuse{ \vek{a}_0 }{ 
      (\gamma_l)^{\vek{b}_l}_{\vek{a}_l}
    }{ \vek{c} } \circ
    \fuse{ \vek{c} }{ 
      (\gamma_k)^{\vek{b}_k}_{\vek{a}_k} 
    }{ \vek{d} } \circ
    \fuse{ \vek{d} }{ A B C }{ \vek{b}_0 }
    = \\ ={}&
    \fuse{ \vek{a}_0 }{ 
      (\gamma_l)^{\vek{b}_l}_{\vek{a}_l}
    }{ \vek{c} } \circ
    \fuse{ \vek{c} }{ 
      A (\gamma_k)^{\vek{b}_k}_{\vek{a}_k} B C 
    }{ \vek{b}_0 }
  \end{align*}
  where the first step is by definition and all others are by 
  the induction hypothesis.
\end{proof}

\begin{theorem} \label{S:fuse-factor-reorder}
  For every permutation \(\sigma \in \Sigma_n\),
  \begin{equation}
    \fuse[\Big]{\vek{a}_0}{ \textstyle
      \prod_{i=1}^n (\gamma_i)^{\vek{b}_i}_{\vek{a}_i} 
    }{\vek{b}_0} = 
    \fuse[\Big]{\vek{a}_0}{ \textstyle
      \prod_{i=1}^n 
      (\gamma_{\sigma(i)})^{\vek{b}_{\sigma(i)}}_{\vek{a}_{\sigma(i)}} 
    }{\vek{b}_0}
    \text{.}
  \end{equation}
  In other words, the abstract index notation does not care about 
  the order of factors in the product part.
\end{theorem}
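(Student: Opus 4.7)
The plan is to prove this by induction on the order $n$, with Lemma~\ref{L:fuse-anyfactor} doing essentially all of the real work. The cases $n = 0$ and $n = 1$ are trivial since $\sigma$ is then forced to be the identity.

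For the inductive step $n \geqslant 2$, I would first argue that some index $l \in \{1,\dotsc,n\}$ must satisfy $\vek{b}_l \mid \vek{a}_0$; this is precisely the argument already made in the existence proof following Definition~\ref{D:fuse} --- pick a minimal element under the partial order on $\{1,\dotsc,n\}$ induced by the acyclicity partial order $Q$. Fix such an $l$ (the same $l$ will be used on both sides), and let $k = \sigma^{-1}(l)$ be its position in the reordered product. Choose once and for all a list $\vek{c}$ with $\norm{\vek{c}} = \norm{\vek{a}_l(\vek{a}_0/\vek{b}_l)}$, and then apply Lemma~\ref{L:fuse-anyfactor} to the left hand side at position $l$ and to the right hand side at position $k$. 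Both applications split the corresponding expression as
\[
  \fuse{\vek{a}_0}{ (\gamma_l)^{\vek{b}_l}_{\vek{a}_l} }{ \vek{c} } \circ
  \fuse[\Big]{ \vek{c} }{ \textstyle
    \prod (\gamma_i)^{\vek{b}_i}_{\vek{a}_i}
  }{\vek{b}_0}
\]
where the leading factor is literally the same on the two sides, and the products appearing in the trailing factor both range over the same set $\{1,\dotsc,n\} \setminus \{l\}$ of indices, merely in possibly different orders. Applying the induction hypothesis to the permutation of $\{1,\dotsc,n-1\}$ which relates these two orderings then closes the case.

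The step I expect to require the most care is checking that the order-$(n-1)$ subexpressions obtained this way genuinely satisfy the arity, matching, and acyclicity conditions demanded by Definition~\ref{D:fuse}, so that the induction hypothesis is actually applicable. Matching and acyclicity both follow by restricting the original partial order $Q$ to the labels that remain after removing $\vek{a}_l$ and $\vek{b}_l$ (and replacing them with $\vek{c}$) --- precisely the restriction maneuver already used inside the existence proof --- while arity is preserved on the nose. Once that bookkeeping is verified, there is nothing left to do.
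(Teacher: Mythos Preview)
Your proposal is correct and is essentially the same argument as the paper's, only with the induction made explicit. The paper's own proof is a one-sentence observation that the only order-dependence in Definition~\ref{D:fuse} is the ``smallest $k$'' clause of rule~\ref{Item3:D:fuse}, and that Lemma~\ref{L:fuse-anyfactor} shows the decomposition \eqref{Eq:D:fuse} holds for any eligible $l$, so the order cannot matter; your induction on $n$ is precisely what unpacks that observation into a formal argument.
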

\begin{proof}
  The only part of Definition~\ref{D:fuse} which depends on the 
  order of the labelled factors is the minimality of $k$ in 
  rule~\ref{Item3:D:fuse}, but Lemma~\ref{L:fuse-anyfactor} states 
  that \eqref{Eq:D:fuse} holds regardless of the minimality of $k$, 
  hence the order of the factors is irrelevant.
\end{proof}

\subsection{Derivatives}

One thing that becomes a lot simpler to state using abstract index 
notation is the definition of a derivative on a \PROP.

\begin{definition}
  Let $\mc{P}$ be an $\mc{R}$-linear \PROP. An 
  \DefOrd[*{derivative}]{(outer) derivative} $D$ on $\mc{P}$ and a 
  \DefOrd{gradient} $\nabla$ on $\mc{P}$ are degree $(0,1)$ and $(1,0)$ 
  respectively $\mc{R}$-linear maps \(\mc{P} \Fpil \mc{P}\) which 
  satisfy the respective 
  \emDefOrd[{product rule}{product rule (of derivative)}]{product rules}
  \begin{subequations} \label{Eq:Produktregeln}
  \begin{align}
    D\Bigl( \fuse{ \vek{a}_0 }{ 
      \beta_{\vek{a}_1}^{\vek{b}_1} \gamma_{\vek{a}_2}^{\vek{b}_2} 
    }{ \vek{b}_0 } \Bigr)
    ={}&
    \fuse{ \vek{a}_0 }{ 
      D(\beta)_{\vek{a}_1 c}^{\vek{b}_1} \gamma_{\vek{a}_2}^{\vek{b}_2} 
    }{ \vek{b}_0 c } 
    +
    \fuse{ \vek{a}_0 }{ 
      \beta_{\vek{a}_1}^{\vek{b}_1} D(\gamma)_{\vek{a}_2c}^{\vek{b}_2} 
    }{ \vek{b}_0 c } 
    \text{,}
    \\
    \nabla \Bigl( \fuse{ \vek{a}_0 }{ 
      \beta_{\vek{a}_1}^{\vek{b}_1} \gamma_{\vek{a}_2}^{\vek{b}_2} 
    }{ \vek{b}_0 } \Bigr)
    ={}&
    \fuse{ \vek{a}_0 c }{ 
      \nabla(\beta)_{\vek{a}_1}^{\vek{b}_1 c} 
      \gamma_{\vek{a}_2}^{\vek{b}_2} 
    }{ \vek{b}_0 } 
    +
    \fuse{ \vek{a}_0 c }{ 
      \beta_{\vek{a}_1}^{\vek{b}_1} 
      \nabla(\gamma)_{\vek{a}_2}^{\vek{b}_2 c} 
    }{ \vek{b}_0 } 
  \end{align}
  \end{subequations}
  where \(c \notin \norm{\vek{a}_0\vek{a}_1\vek{a}_2} = 
  \norm{\vek{b}_0\vek{b}_1\vek{b}_2}\).
\end{definition}

If $\mc{P}$ is some \PROP\ of fields on a manifold (for example that 
of Example~\ref{Ex:Tensorfalt}), then the $(0,0)$ component is the 
scalar fields (ordinary functions), the $(1,0)$ 
component is the vector fields, and the $(0,1)$ component is 
covectors ($1$-forms). Hence it is natural to call a degree $(0,1)$ 
derivation an outer derivative and a degree $(1,0)$ derivation a 
gradient.

For a pure composition or tensor product, the product rule amounts to
\begin{align*}
  D(\beta \circ \gamma) ={}& 
    D(\beta) \circ \gamma \otimes \phi(\same{1}) + 
    \beta \circ D(\gamma)
    \text{,}\\
  D(\beta \otimes \gamma) ={}& D(\beta) \otimes \gamma \circ 
    \phi( \same{\alpha(\beta)} \star \cross{\alpha(\gamma)}{1} ) +
    \beta \otimes D(\gamma)
  \text{.}
\end{align*}
From the former it follows that
\begin{align*}
  D\bigl( \phi(\same{n}) \bigr)
  =
  D\bigl( \phi(\same{n}) \circ \phi(\same{n}) \bigr)
  ={}&
  D\bigl( \phi(\same{n}) \bigr) \circ 
    \phi(\same{n}) \otimes \phi(\same{1}) +
  \phi(\same{n}) \circ D\bigl( \phi(\same{n}) \bigr)
  = \\ ={}&
  D\bigl( \phi(\same{n}) \bigr) \circ \phi(\same{n+1}) +
  D\bigl( \phi(\same{n}) \bigr)
  =
  2 D\bigl( \phi(\same{n}) \bigr)
\end{align*}
and thus \(D\bigl( \phi(\same{n}) \bigr) = 0\). Moreover,
\begin{align*}
  D\bigl( \phi(\cross{1}{1}) \bigr)
  ={}&
  D\Bigl( \fuse{b_2b_1}{ 
    \phi(\same{1})^{b_1}_{a_1} \phi(\same{1})^{b_2}_{a_2} 
  }{a_1a_2} \Bigr)
  = \\ ={}&
  \fuse[\Big]{b_2b_1}{ 
    D\bigl( \phi(\same{1}) \bigr)^{b_1}_{a_1c} 
    \phi(\same{1})^{b_2}_{a_2} 
  }{a_1a_2c} + \fuse[\Big]{b_2b_1}{ 
    \phi(\same{1})^{b_1}_{a_1} 
    D\bigl( \phi(\same{1}) \bigr)^{b_2}_{a_2c} 
  }{a_1a_2c}
  = \displaybreak[0]\\ ={}&
  \phi(\same{1}) \otimes D\bigl( \phi(\same{1}) \bigr) \circ
    \phi(\cross{1}{1} \star \same{1})
  + \phi(\cross{1}{1}) \circ 
    \phi(\same{1}) \otimes D\bigl( \phi(\same{1}) \bigr)
  = \\ ={}&
  \phi(\same{1}) \otimes 0 \circ \phi(\cross{1}{1} \star \same{1})
  + \phi(\cross{1}{1}) \circ \phi(\same{1}) \otimes 0
  = \\ ={}&
  0 \circ \phi(\cross{1}{1} \star \same{1})
  + \phi(\cross{1}{1}) \circ 0
  =
  0 + 0
  =
  0
  \text{.}
\end{align*}
As all permutations can be built up from $\same{1}$ and 
$\cross{1}{1}$, it follows that all permutations have $0$ as 
derivative.

An even nicer way of denoting derivatives, particularly in naked 
abstract index expressions, is to treat the derivative itself as 
a ``tensorial operator'' which takes one abstract index. Then the 
equivalents of \eqref{Eq:Produktregeln} becomes
\begin{subequations} \label{Eq2:Produktregeln}
  \begin{align}
    D_c( 
      \beta_{\vek{a}_1}^{\vek{b}_1} \gamma_{\vek{a}_2}^{\vek{b}_2} 
    )
    ={}&
    D_c(\beta_{\vek{a}_1}^{\vek{b}_1}) \gamma_{\vek{a}_2}^{\vek{b}_2} 
    +
    \beta_{\vek{a}_1}^{\vek{b}_1} D_c(\gamma_{\vek{a}_2}^{\vek{b}_2}) 
    \text{,}
    \\
    \nabla^c (
      \beta_{\vek{a}_1}^{\vek{b}_1} \gamma_{\vek{a}_2}^{\vek{b}_2} 
    )
    ={}&
    \nabla^c(\beta_{\vek{a}_1}^{\vek{b}_1}) 
    \gamma_{\vek{a}_2}^{\vek{b}_2} 
    +
    \beta_{\vek{a}_1}^{\vek{b}_1} 
    \nabla^c(\gamma_{\vek{a}_2}^{\vek{b}_2}) 
    \text{.}
  \end{align}
\end{subequations}
Note, however, that such ``abstract index notation with operators'' 
is not covered by Definition~\ref{D:fuse}. The natural extension that 
could be made to cover these would be that a subexpression
\[
  P_\vek{c}^\vek{d}\Bigl(
    \prod\nolimits_{i=1}^n (\gamma_i)_{\vek{a}_i}^{\vek{b}_i}
  \Bigr)
\]
where $P$ is some map of degree $\bigl( \Norm{\vek{d}}, 
\Norm{\vek{c}} \bigr)$ is to be interpreted as
\[
  P\left(
    \fuse{ \vek{a}_0 }{
    \textstyle \prod_{i=1}^n (\gamma_i)_{\vek{a}_i}^{\vek{b}_i}
    }{ \vek{b}_0 }
  \right)_{\vek{b}_0\vek{c}}^{\vek{a}_0\vek{d}}
\]
where \(\vek{a}_0 := \bigl( \prod_{i=1}^n \vek{b}_i \bigr) \big/ 
\bigl( \prod_{i=1}^n \vek{a}_i \bigr)\) and similarly \(\vek{b}_0 := 
\bigl( \prod_{i=1}^n \vek{a}_i \bigr) \big/ 
\bigl( \prod_{i=1}^n \vek{b}_i \bigr)\). 

Where there is a standard coordinate system, $\partial^c$ and 
$\partial_c$ are often used for the gradient and outer derivative 
respectively with respect to these coordinates. Leibniz-style partial 
derivative operators like $\frac{\partial}{\partial x_i}$ are also 
common, but note that sub- and superscripts by convention switch 
position in the denumerator: \(\frac{\partial}{\partial x_i} = 
\partial^i\) has degree $(1,0)$ and \(\frac{\partial}{\partial x^i} 
= \partial_i\) has degree $(0,1)$! This is apparently so that one can 
have \(\frac{\partial x^a}{\partial x^b} = \delta^a_b = 
\frac{\partial x_b}{\partial x_a}\), which is desirable in the 
coordinate interpretation of the notation. (Due to general rules 
about raising and lowering indices, $x^a$ and $x_a$ can be quite 
different numerically, even if they sort-of encode the same 
information, so the partial derivative of $x_a$ with respect to $x^b$ 
need not be $1$ for \(a=b\) and $0$ for \(a \neq b\).)

Yet another commonly occurring notation for derivation is that a 
punctuation mark in an abstract index sub- or superscript, 
like for example in `$g_{ab,c}$', signals that subsequent indices are 
ones that were added to the root symbol by some derivative. Different 
punctuation marks correspond to different derivatives, e.g.~a comma 
may signal the standard derivative, whereas a semicolon may signal 
some alternative 
derivative. In Riemannian geometry, it is common that the comma means 
``partial derivative'' (i.e., partial derivative with respect to the 
local coordinate system obtained from the current chart), whereas 
semicolon means ``covariant derivative'' (i.e., the globally defined 
derivative that sees the inner product as being constant).

\begin{lemma} \label{L:AIN-multilinear}
  Let $\mc{R}$ be an associateive and commutative unital ring. Let 
  $\mc{P}$ be an $\mc{R}$-linear \PROP. Let
  $$
    \fuse[\Big]{ \vek{a}_0 }{ 
      \textstyle\prod_{i=1}^n (\gamma_i)^{\vek{b}_i}_{\vek{a}_i}
    }{ \vek{b}_0 }
  $$
  be a well-formed abstract index expression. Then
  \begin{multline*}
    (\gamma_1,\dotsc,\gamma_n) \mapsto 
    \fuse[\bigg]{ \vek{a}_0 }{ 
      \prod_{i=1}^n (\gamma_i)^{\vek{b}_i}_{\vek{a}_i}
    }{ \vek{b}_0 }
    : \\ :
    \mc{P}\bigl( \Norm{\vek{b}_1}, \Norm{\vek{a}_1} \bigr) \times
    \dotsb \times
    \mc{P}\bigl( \Norm{\vek{b}_n}, \Norm{\vek{a}_n} \bigr) \Fpil
    \mc{P}\bigl( \Norm{\vek{a}_0}, \Norm{\vek{b}_0} \bigr) 
  \end{multline*}
  is an $\mc{R}$-multilinear map.
\end{lemma}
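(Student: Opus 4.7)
The plan is to induct on the order $n$, at each stage decomposing the abstract index expression into a single outermost factor (where linearity is immediate from the bilinearity of $\circ$ and $\otimes$) composed with a subexpression of strictly smaller order (to which the induction hypothesis applies).

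For $n = 0$ the map out of the empty Cartesian product is constant with value $\fuse{\vek{a}_0}{1}{\vek{b}_0}$, so multilinearity is vacuous. For $n = 1$, rule~\ref{Item2:D:fuse} of Definition~\ref{D:fuse} writes
\[
  \fuse{\vek{a}_0}{\gamma^{\vek{b}_1}_{\vek{a}_1}}{\vek{b}_0}
  =
  \fuse{\vek{a}_0}{1}{\vek{b}_1(\vek{a}_0/\vek{b}_1)} \circ
  \bigl( \gamma \otimes \fuse{\vek{a}_0/\vek{b}_1}{1}{\vek{a}_0/\vek{b}_1} \bigr) \circ
  \fuse{\vek{a}_1(\vek{a}_0/\vek{b}_1)}{1}{\vek{b}_0},
\]
which is $\mc{R}$-linear in $\gamma$: the bilinearity of $\otimes$ makes $\gamma \mapsto \gamma \otimes \phi(\same{\Norm{\vek{a}_0/\vek{b}_1}})$ linear, and the bilinearity of $\circ$ makes pre- and post-composition by the fixed permutation elements linear.

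For the inductive step $n \geq 2$, I fix an index $i \in \{1,\dotsc,n\}$ together with all $\gamma_j$ for $j \neq i$ and show the map is $\mc{R}$-linear in $\gamma_i$. As in the proof that Definition~\ref{D:fuse} is well-defined, the acyclicity condition guarantees the existence of some $k \in \{1,\dotsc,n\}$ with $\vek{b}_k \mid \vek{a}_0$. Using Theorem~\ref{S:fuse-factor-reorder} to permute factors freely and Lemma~\ref{L:fuse-anyfactor} (the free-position form of rule~\ref{Item3:D:fuse}) to peel this factor off, we obtain
\[
  \fuse{\vek{a}_0}{\textstyle\prod_{j=1}^n \gamma_j^{\vek{b}_j}_{\vek{a}_j}}{\vek{b}_0}
  =
  \fuse{\vek{a}_0}{\gamma_k^{\vek{b}_k}_{\vek{a}_k}}{\vek{a}_k(\vek{a}_0/\vek{b}_k)} \circ
  \fuse{\vek{a}_k(\vek{a}_0/\vek{b}_k)}{\textstyle\prod_{j \neq k} \gamma_j^{\vek{b}_j}_{\vek{a}_j}}{\vek{b}_0}.
\]
When $k = i$, the left factor is $\mc{R}$-linear in $\gamma_i$ by the $n=1$ case and the right factor is independent of $\gamma_i$; when $k \neq i$, the left factor is independent of $\gamma_i$ and the right factor is $\mc{R}$-linear in $\gamma_i$ by the induction hypothesis applied to an order-$(n-1)$ expression. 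In either subcase the bilinearity of $\circ$ transports linearity from one tensorand to the full composite.

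The only aspect that needs care is confirming that some factor can always be isolated as the topmost composition regardless of which $\gamma_i$ one wishes to vary; this is precisely what the combination of Lemma~\ref{L:fuse-anyfactor} and Theorem~\ref{S:fuse-factor-reorder} delivers, since the chosen $k$ need not coincide with the chosen $i$ and either case is handled. As $\mc{R}$-multilinearity is by definition separate $\mc{R}$-linearity in each argument with the others held fixed, the induction above suffices.
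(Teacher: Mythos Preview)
Your proof is correct and follows essentially the same approach as the paper: reduce to the order-$1$ case via the recursive structure of Definition~\ref{D:fuse}, where bilinearity of $\circ$ and $\otimes$ gives linearity directly. The paper's version is slightly more streamlined---it simply observes that iterating rule~\ref{Item3:D:fuse} decomposes the expression as a composition of $n$ order-$1$ abstract index expressions, each depending on exactly one $\gamma_j$, so multilinearity is immediate without the case split on $k=i$ versus $k\neq i$; your appeal to Lemma~\ref{L:fuse-anyfactor} and Theorem~\ref{S:fuse-factor-reorder} is therefore not needed, though it does no harm.
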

\begin{proof}
  In an $\mc{R}$-linear \PROP, the $\circ$ and $\otimes$ operations 
  are $\mc{R}$-bilinear. Hence in the case \(n=1\), the 
  $\mc{R}$-linearity of the abstract index expression follows 
  immediately from the definition (item~\ref{Item2:D:fuse} of 
  Definition~\ref{D:fuse}). For \(n>1\), item~\ref{Item3:D:fuse} of 
  Definition~\ref{D:fuse} defines the value of the abstract index 
  expression as a composition of $n$ order~$1$ abstract index 
  expressions, each of which by the above depends $\mc{R}$-linearly 
  on one factor. Hence the map as a whole is $\mc{R}$-multilinear, as 
  claimed.
\end{proof}

%

\section{Network notation}
\label{Sec:Natverk}

The abstract index notation improves upon the categorical notation in 
that it imposes less irrelevant structure upon the expression, but it 
is still obscure in that the very relevant structure of how separate 
factors fit together requires an effort to discern. The network 
notation is more spacious, but shows clearly the relations between 
factors and also makes it easy to spot larger subexpressions within a 
given expression. The downside of it is that it is graphical, 
spreading out in the plane rather than basically sticking to a 
baseline as text and mathematical formulae are mostly supposed to do. 
That \PROPs\ should be allowed such notational liberties when most of 
mathematics is perfectly well served by horizontal notation with only 
the occasional index, exponent, numerator, or denominator escaping 
from the baseline is perhaps not obvious, but a partial explanation 
is that \PROPs\ have both composition and tensor product; it is 
natural to orient these along separate axes. Nor is the issue without 
precedence; just like the notational inventions of abstract index 
notation were present already in the more specialised Einstein 
notation, so are the basic elements of the network notation present 
in e.g.~Penrose graphical notation~\cite{Penrose} and 
Majid's `shorthand diagrams'~\cite{Majid}. 
\c{S}tef\u{a}nescu~\cite[p.~8]{Stefanescu} attributes the term 
`network algebra' to Jan Bergstra~(1994).

\subsection{Formal definition and basic properties}

Concretely, a network notation expression looks a lot like an 
electrical curcuit diagram, with ``wires'' connecting various 
``gates'',\footnote{
  Or instead of `gates' more commonly `components', but that word 
  would risk confusion with the `\PROP\ component' concept.
} 
and additional wires going to external connectors. 
Figure~\ref{Fig:Network1} shows some styles in which the network 
expression equivalent of \(\fuse{ab}{ \mOp^a_{cd} \mOp^b_{ef} 
\Delta^{ce}_g \Delta^{df}_h }{gh}\) might be drawn, but for those who 
have not seen it already, it's probably just as informative to get on 
with the formalia.

\begin{definition} \label{Def:Network}
  A \DefOrd{network} of an $\N^2$-graded set $\mc{P}$ is a tuple
  \begin{equation}
    G = (V,E,h,g,t,s,D)
  \end{equation}
  where
  \begin{itemize}
    \item
      $V$ is a finite set, called the set of \emDefOrd[*{vertex}]{vertices}. 
      \(V \owns 0,1\), where $0$ is called the \emDefOrd{output vertex} 
      and $1$ is called the \emDefOrd{input vertex}. Elements of 
      \(V \setminus \{0,1\}\) are called 
      \emDefOrd[*{inner vertex}]{inner vertices}.
    \item
      $E$ is a finite set, called the set of \emDefOrd[*{edge}]{edges}.
    \item
      \(h\colon E \Fpil V \setminus \{1\}\) is called the 
      \emDefOrd{head} map. \(t\colon E \Fpil V \setminus \{0\}\) is 
      called the \emDefOrd{tail} map. Define the input 
      \index{d minus@$d^-(v)$}$d^-(v)$ and output 
      \index{d plus@$d^+(v)$}$d^+(v)$ valencies of a vertex $v$ by
      \begin{align*}
        d^-(v) ={}& 
          \norm[\Big]{ \setOf[\big]{ e \in E }{ h(e) = v } }
          \text{,}\\
        d^+(v) ={}& 
          \norm[\Big]{ \setOf[\big]{ e \in E }{ t(e) = v } }
          \text{.}
      \end{align*}
      An \DefOrd{output leg} of $G$ is an \(e \in E\) with 
      \(h(e)=0\), and an \DefOrd{input leg} of $G$ is an \(e \in E\) 
      with \(t(e)=1\). A \emDefOrd{leg} in general is an edge which 
      is an input or output leg. A \emDefOrd{stray edge} of a network 
      is an edge which is both an input and an output leg.
    \item
      \(g,s\colon E \Fpil \Zp\) are 
      called the \emDefOrd{head index} and \emDefOrd{tail index} 
      maps.
    \item
      \(D\colon V \setminus \{0,1\} \Fpil \mc{P}\) is called the 
      \emDefOrd{annotation}.
  \end{itemize}
  and these satisfy
  \begin{enumerate}
    \item \label{A1:Network}
      The tuple $(V,E,h,t)$ is a directed acyclic graph, i.e., there 
      is no sequence \(e_1,\dotsc,e_n \in E\) such that 
      \(h(e_i)=t(e_{i+1})\) for \(i=1,\dotsc,n-1\) and 
      \(h(e_n)=t(e_1)\).
    \item \label{A2:Network}
      Head and head index uniquely identifies an edge, as does the 
      combination of tail and tail index; if \(e_1,e_2 \in E\) satisfy 
      \(h(e_1)=h(e_2)\) and \(g(e_1) = g(e_2)\) then \(e_1=e_2\); 
      similarly if \(e_1,e_2 \in E\) satisfy \(t(e_1)=t(e_2)\) and 
      \(s(e_1) = s(e_2)\) then \(e_1=e_2\).
    \item \label{A3:Network}
      Head indices are assigned from $1$ and up; if \(e_1 \in E\) is 
      such that \(g(e_1)>1\) then there exists some \(e_2 \in E\) such 
      that \(h(e_2)=h(e_1)\) and \(g(e_2) = g(e_1)-1\). Similarly 
      tail indices are assigned from $1$ and up; if \(e_1 \in E\) is 
      such that \(s(e_1)>1\) then there exists some \(e_2 \in E\) such 
      that \(t(e_2)=t(e_1)\) and \(s(e_2) = s(e_1)-1\).
    \item \label{A4:Network}
      Arities and coarities of the annotations agree with the 
      in-valencies and out-valencies respectively of the inner vertices; 
      \(d^-(v) = \alpha\bigl( D(v) \bigr)\) and 
      \(d^+(v) = \omega\bigl( D(v) \bigr)\) for every \(v \in V 
      \setminus \{0,1\}\).
  \end{enumerate}
  The arity $\alpha(G)$ is defined to be the valency $d^+(1)$ of the 
  input vertex $1$ and coarity $\omega(G)$ is defined to be the 
  valency \(d^-(0)\) of the output vertex $0$.
  Denote by \index{Nw@$\Nw$}$\Nw(\mc{P})$ the $\N^2$-graded set of all 
  networks of $\mc{P}$ which have \(V,E \subset \N\).
\end{definition}

\begin{figure}[tp]
  \small
  \begin{tabular*}{\linewidth}{@{\extracolsep{\fill}}ccc}
    \begin{mpgraphics*}{3}
      beginfig(3);
        path P[];
        interim IC_legs_bb:=0.5;
        IC_equations1(2,1)();
        IC_equations2(2,1)(); y1=y2;
        IC_equations4(1,2)(); x4=x1;
        IC_equations5(1,2)(); x5=x2; y4=y5;
        IC_equations0(2,0)();
        IC_equations6(0,2)(); x0 = x6 = 0.5[x1,x2];
        y0-y1 = y1-y4 = y4-y6;
        
        x4ll = y6ul = 1in; 
        y1lr = y4ur + 1.3cm;
        x2ll-x1lr = 10pt;
        
        picture one,two;
        one := btex $\scriptstyle 1$ etex;
        two := btex $\scriptstyle 2$ etex;
        draw_IC1(one, "", btex $4\colon\Delta$ etex, one, two);
        draw_IC2(one, "", btex $5\colon\Delta$ etex, one, two);
        draw_IC4(one, two, btex $2\colon\mOp$ etex, one, "");
        draw_IC5(one, two, btex $3\colon\mOp$ etex, one, "");
        label.top(one,z0b1);
        label.top(two,z0b2);
        label.bot(one,z6t1);
        label.bot(two,z6t2);

        P1 := z0b1{down} .. {down}z1t1;
        drawarrow P1; label.ulft(btex $g$ etex, point 0.5 of P1);
        P2 := z0b2{down} .. {down}z2t1;
        drawarrow P2; label.urt (btex $h$ etex, point 0.5 of P2);
        drawarrow z1b1 -- z4t1 ;
        label.lft               (btex $c$ etex, 0.5[z1b1,z4t1]);
        P3 := z1b2{down} .. {down}z5t1;
        drawarrow P3; label.urt (btex $e$ etex, point 0.3 of P3);
        P4 := z2b1{down} .. {down}z4t2;
        drawarrow P4; label.ulft(btex $d$ etex, point 0.3 of P4);
        drawarrow z2b2 -- z5t2 ;
        label.rt                (btex $f$ etex, 0.5[z2b2,z5t2]);
        P5 := z4b1{down} .. {down}z6t1;
        drawarrow P5; label.llft(btex $a$ etex, point 0.5 of P5);
        P6 := z5b1{down} .. {down}z6t2;
        drawarrow P6; label.lrt (btex $b$ etex, point 0.5 of P6);
      endfig;
    \end{mpgraphics*}
    &
    \begin{mpgraphics*}{2}
      beginfig(2);
        interim IC_legs_bb:=0.5;
        IC_equations1(2,1)();
        IC_equations2(2,1)(); y1=y2;
        IC_equations4(1,2)(); x4=x1;
        IC_equations5(1,2)(); x5=x2; y4=y5;
        IC_equations0(2,0)();
        IC_equations6(0,2)(); x0 = x6 = 0.5[x1,x2];
        y0-y1 = y1-y4 = y4-y6;
        
        x4ll = y6ul = 1in; 
        y1lr = y4ur + 1.3cm;
        x2ll-x1lr = 10pt;
        
        draw_IC1("", "", btex $\Delta$ etex, "", "");
        draw_IC2("", "", btex $\Delta$ etex, "", "");
        draw_IC4("", "", btex $\mOp$ etex, "", "");
        draw_IC5("", "", btex $\mOp$ etex, "", "");
        draw z0b1{down} .. {down}z1t1;
        draw z0b2{down} .. {down}z2t1;
        draw z1b1 -- z4t1 ;
        draw z1b2{down} .. {down}z5t1;
        draw z2b1{down} .. {down}z4t2;
        draw z2b2 -- z5t2 ;
        draw z4b1{down} .. {down}z6t1;
        draw z5b1{down} .. {down}z6t2;
      endfig;
    \end{mpgraphics*}
    &
    \begin{mpgraphics*}{1}
      beginfig(1);
        u:=6pt;
        x1=x3; x5=x7; x6=x8; x2=x4; 0.5[x1,x2] = 0.5[x7,x8];
        x2-x1 = 30pt; x6-x5 = 8pt; 
        y7=y8; y1=y2; y3=y4; y5=y6;
        y7-y1 = y1-y3 = y3-y5 = x2-x1; 
        x1=y5=1in;
        
        forsuffixes $=1,2,3,4:
          draw fullcircle scaled 2u shifted z$;
        endfor
        draw z7{down} .. {down}(z1 + (0,u));
        draw z8{down} .. {down}(z2 + (0,u));
        draw (z1 + u*dir-135){dir-135} .. tension 1.5 .. {dir-45}(z3 + u*dir135);
        draw (z1 + u*dir-45) -- (z4 + u*dir135);
        draw (z2 + u*dir-135) -- (z3 + u*dir45);
        draw (z2 + u*dir-45){dir-45} .. tension 1.5 .. {dir-135}(z4 + u*dir45);
        draw (z3 + (0,-u)){down} .. {down}z5;
        draw (z4 + (0,-u)){down} .. {down}z6;
      endfig;
    \end{mpgraphics*}
    \\
    (a) Elaborate style&
    (b) Pragmatic style&
    (c) Bialgebra style
  \end{tabular*}
  
  \medskip
  \setlength{\parindent}{1em}
  
  The above are three different styles for the network expression 
  equivalent of \(\fuse{ab}{ \mOp^a_{cd} \mOp^b_{ef} \Delta^{ce}_g 
  \Delta^{df}_h }{gh}\). The \emDefOrd{elaborate style} explicitly 
  puts all information in the picture, but it is too cumbersome for 
  pretty much everything besides explaining the notation. The vertex 
  (`$2\colon$', `$3\colon$', etc.\@) and edge ($a$, $b$, $c$, etc.\@) 
  labels are typically omitted when one depicts the isomorphism class 
  of a network, but may help to show the correspondence with the 
  abstract index expression.
  
  The \emDefOrd{pragmatic style} drops the edge orientations 
  (arrowheads) and indices, since these can anyway be determined from 
  the picture if one sticks to certain conventions for how it is 
  drawn. First, edges are always oriented downwards, with heads 
  against the top side of a vertex and tails against the bottom side 
  of a vertex. Second, at each vertex the head\slash tail index 
  increases from left to right, so the leftmost input and output are 
  always those which have index~$1$, and the rightmost are always 
  those whose index equals the (co)arity of the vertex annotation. 
  Thus only the annotation remains in the vertex.
  
  The \emDefOrd{bialgebra style} is, like the Penrose graphical 
  notation, an example of a further streamlining of the notation to 
  suite a particular problem domain by eliding also the vertex 
  annotation, while leaving visual clues to this in the shape 
  of the vertex. In this style, $\Delta$, $\mOp$, $\ve$, and $\eta$ 
  vertices are all drawn as circles (which are quicker to draw by 
  hand than squares), and one can tell from the number of inputs and 
  outputs which of the four types is at hand; inputs are on the top 
  half of the circle, outputs are on the bottom half.
  
  \caption{Three styles for network drawing}
  \label{Fig:Network1}
\end{figure}

In short, networks are graphs with some extra structure,\footnote{
  And in case anyone wonders, this is \emph{not} the same extra 
  structure as one adds when one studies flows in graphs, even 
  though those graphs are called `networks' too.
} so it is only natural that one might prefer to draw them. The thing 
about them that is most uncommon for a graph is probably the head and 
tail index maps\Ldash that it matters \emph{which} point on a vertex 
that an edge is attached to\Dash but this is essential for their role 
as alternative presentations of the information in an abstract index 
expression.

One thing that one must watch out for when reading expressions in 
network notation is that different authors follow quite different 
conventions regarding what sides represents `in' and `out'. 
\c{S}tef\u{a}nescu~\cite{Stefanescu}, Majid~\cite{Majid}, 
Lafont~\cite{LafontBoolean}, Baez--Stay~\cite{BaezStay}, and I appear to 
prefer diagrams drawn with edges oriented from top to bottom, whereas 
for example Penrose~\cite{Penrose} and Markl--Voronov~\cite{MarklVoronov} 
rather prefer them oriented from bottom to top. Since 
many important theories have a large degree of symmetry between input 
and output, one can at times get rather far into an argument before 
realising that the notation is the opposite of what one expected it 
to be! There is of course also the possibility of making edges 
horizontal rather than vertical, but then there is equally much the 
ambiguity of whether to go 
left-to-right~\cite{qgate,GraphoidAutomata} (as in the common reading 
direction of Western languages) or right-to-left~\cite{Cvitanovic} 
(as in the common direction for composition of mathematical 
functions). There does not seem to be a convention that is singularly 
more natural than any other, so a certain amount of mental 
gymnastics will probably always be required when translating from one 
notation to another.

\begin{definition}
  Let a network \(G = (V,E,h,g,t,s,D)\) be given. For every \(v \in 
  V\), define $\vek{e}_G^+(v)$ and $\vek{e}_G^-(v)$ by
  \begin{align*}
    \vek{e}_G^+(v) ={}& e_1 \dots e_{d^+(v)} &&
      \index{e G +@$\vek{e}_G^+(v)$}
      \text{where \(t(e_i) = v\) and \(s(e_i)=i\) for all 
        \(i=1,\dotsc,d^+(v)\),}\\
    \vek{e}_G^-(v) ={}& e_1 \dots e_{d^-(v)} &&
      \index{e G -@$\vek{e}_G^-(v)$}
      \text{where \(h(e_i) = v\) and \(g(e_i)=i\) for all 
        \(i=1,\dotsc,d^-(v)\).}
  \end{align*}
  If $G$ is a network of a \PROP\ $\mc{P}$ then the 
  \DefOrd[*{value of network}]{value} of $G$ is
  \begin{equation} \label{Eq1:Network-value}
    \index{eval@$\eval(G)$}\eval(G) :=
    \fuse[\bigg]{ \vek{e}_G^-(0) }{
      \prod_{v \in V \setminus\{0,1\}} 
        D(v)^{\vek{e}_G^+(v)}_{\vek{e}_G^-(v)}
    }{ \vek{e}_G^+(1) }
    \text{.}
  \end{equation}
  If $\mc{P}$ is a \PROP, $G$ is a network of an $\N^2$-graded set 
  $\Omega$, and \(f\colon\Omega \Fpil \mc{P}\) is an $\N^2$-graded 
  set morphism then similarly define the \DefOrd[*{value}]{$f$-value} 
  of $G$ as
  \begin{equation} \label{Eq2:Network-value}
    \index{eval f@$\eval_f(G)$}\eval_f(G) :=
    \fuse[\bigg]{ \vek{e}_G^-(0) }{
      \prod_{v \in V \setminus\{0,1\}} 
        f\bigl( D(v) \bigr)^{\vek{e}_G^+(v)}_{\vek{e}_G^-(v)}
    }{ \vek{e}_G^+(1) }
    \text{.}
  \end{equation}
\end{definition}

The next step is of course to prove that $\eval$ is well-defined, but 
there is another concept which should be defined first.

\begin{definition}
  Two networks
  \[
    G = (V,E,h,g,t,s,D) 
    \qquad\text{and}\qquad
    H = (V',E',h',g',t',s',D')
  \]
  are said to be \DefOrd{isomorphic}, 
  symbolically \index{\simeq@$\simeq$}\(G \simeq H\), if there exists 
  bijections \(\chi\colon V \Fpil V'\) and \(\psi\colon E \Fpil E'\) 
  such that
  \begin{align*}
    h' \circ \psi ={}& \chi \circ h \text{,}&
    g' \circ \psi ={}& g \text{,}&
    \chi(0) ={}& 0\text{,} \\
    t' \circ \psi ={}& \chi \circ t \text{,}&
    s' \circ \psi ={}& s \text{,}&
    \chi(1) ={}& 1\text{,}\\
    &&
    D' \circ \chi ={}& D \text{.}
  \end{align*}
  Denote by \index{Nw tilde@$\Nwt$}$\Nwt(\Omega)$ the set of 
  isomorphism classes of $\Nw(\Omega)$.
\end{definition}

It may be observed that the pragmatic and bialgebra styles of 
Figure~\ref{Fig:Network1} depicts network isomorphism classes rather 
than specific networks in those classes. This is completely analogous 
to graph drawing, where one frequently depicts graphs without 
explicitly labelling vertices or edges.

\begin{theorem} \label{S:network-eval}
  Let $\mc{P}$ be a \PROP\ and $G$ a network of $\mc{P}$. Then 
  $\eval(G)$ is well-defined. If $H$ is a network of $\Omega$ and 
  \(f\colon \Omega \Fpil \mc{P}\) is an $\N^2$-graded set morphism, 
  then $\eval_f(H)$ is well-defined as well. Moreover any network 
  \(G' \simeq G\) has \(\eval(G') = \eval(G)\) and any network \(H' 
  \simeq H\) has \(\eval_f(H') = \eval_f(H)\). Hence $\eval$ is a map 
  \(\Nwt(\mc{P}) \Fpil \mc{P}\) and $\eval_f$ is a map \(\Nwt(\Omega) 
  \Fpil \mc{P}\).
\end{theorem}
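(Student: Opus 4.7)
The plan is to verify that the data prescribed by $G$ fulfil the arity, matching, and acyclicity conditions of Definition~\ref{D:fuse}, and then to reduce the two remaining claims — independence of the implicit ordering of inner vertices in the product of \eqref{Eq1:Network-value}, and isomorphism invariance — to Theorems~\ref{S:fuse-factor-reorder} and~\ref{S:fuse-relabel} respectively. The case of $\eval_f$ is handled identically by replacing each $D(v)$ with $f\bigl(D(v)\bigr)$, using that the $\N^2$-graded set morphism $f$ preserves arity and coarity.

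First I would fix an arbitrary linear ordering of $V\setminus\{0,1\}$ so that \eqref{Eq1:Network-value} becomes a concrete abstract index expression. Arity at each factor is axiom~\ref{A4:Network} of Definition~\ref{Def:Network}. For matching, axioms~\ref{A2:Network}--\ref{A3:Network} say that at every vertex $v$ the head indices of edges with head $v$ take each value in $\{1,\dotsc,d^-(v)\}$ exactly once, and symmetrically for tail indices; combined with $h\colon E \Fpil V\setminus\{1\}$ and $t\colon E \Fpil V\setminus\{0\}$, this forces each edge $e \in E$ to appear exactly once in the concatenation $\vek{e}_G^-(0)\prod_{v} \vek{e}_G^-(v)$ and exactly once in $\vek{e}_G^+(1)\prod_{v} \vek{e}_G^+(v)$, giving two identical label sets.

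For acyclicity I would exploit axiom~\ref{A1:Network}: choose any topological ordering $\prec$ on $V$ with $t(e) \prec h(e)$ for every $e \in E$, and define a strict partial order on $E$ by declaring $e_1 < e_2$ iff $t(e_1) \succeq h(e_2)$. Antisymmetry would create a directed cycle on the vertex set, and transitivity is immediate from $t(e) \prec h(e)$. Whenever $e_1 \in \norm{\vek{e}_G^+(v)}$ and $e_2 \in \norm{\vek{e}_G^-(v)}$ for some inner $v$, the equalities $t(e_1)=v=h(e_2)$ give $e_1 < e_2$, which is exactly the acyclicity hypothesis of Definition~\ref{D:fuse}. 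Hence the abstract index expression has a well-defined value, and by Theorem~\ref{S:fuse-factor-reorder} that value is independent of the initially chosen linear ordering of $V \setminus \{0,1\}$; so $\eval(G)$ exists, and the same argument produces $\eval_f(H)$.

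Finally, if $(\chi,\psi)$ exhibits $G \simeq G'$, the compatibility equations $\chi \circ h = h' \circ \psi$, $g' \circ \psi = g$, together with their $t,s$ counterparts, imply $\vek{e}_{G'}^\pm\bigl(\chi(v)\bigr) = \psi\bigl(\vek{e}_G^\pm(v)\bigr)$ elementwise for every $v$, while $D' \circ \chi = D$ transports the annotations. Choosing the enumeration of $V' \setminus \{0,1\}$ to be the $\chi$-image of a chosen enumeration of $V \setminus \{0,1\}$, the expression for $\eval(G')$ is obtained from that for $\eval(G)$ by the injective relabelling $\psi$ on the underlying label alphabet, and Theorem~\ref{S:fuse-relabel} yields $\eval(G) = \eval(G')$; the $\eval_f$ case is literally identical. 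The main bookkeeping obstacle will be disentangling the four maps $h,g,t,s$ and the distinguished vertices $0,1$ in the matching argument, but once those are set up cleanly the proof reduces to direct applications of the abstract index machinery already at hand.
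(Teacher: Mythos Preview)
Your proof is correct and follows essentially the same route as the paper: verify the arity, matching, and acyclicity conditions of Definition~\ref{D:fuse} from the network axioms~\ref{A1:Network}--\ref{A4:Network}, invoke Theorem~\ref{S:fuse-factor-reorder} for independence of factor order, and invoke Theorem~\ref{S:fuse-relabel} for isomorphism invariance. Your treatment is in fact somewhat more explicit than the paper's (particularly your construction of the edge order from a topological order on vertices), but the underlying argument is the same.
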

\begin{proof}
  First observe that the $\vek{e}^+_G(v)$ and $\vek{e}^-_G(v)$ 
  notations are well-defined for all networks; axiom~\ref{A3:Network} 
  implies that the necessary $e_i$ exist and axiom~\ref{A2:Network} 
  implies that they are unique. Axiom~\ref{A4:Network} then ensures 
  that the arity condition of the abstract index expressions 
  \eqref{Eq1:Network-value} and \eqref{Eq2:Network-value} are 
  fulfilled, and the matching condition holds because $h$, $t$, $g$, 
  and $s$ are all maps. Finally, axiom~\ref{A1:Network} ensures the 
  acyclicity condition is fulfilled. A further point of ambiguity 
  could be that the order of factors in \eqref{Eq1:Network-value} and 
  \eqref{Eq2:Network-value} is unspecified, but 
  Theorem~\ref{S:fuse-factor-reorder} has already established that 
  this is irrelevant for the value of an abstract index expression.
  
  In order to see that $\eval$ and $\eval_f$ does not distinguish 
  between isomorphic networks, one may first observe that the vertex 
  labels do not occur directly in the abstract index expressions 
  defining these; for inner vertices only the annotations matter, and 
  these are preserved by isomorphisms, as are the vertex labels $0$ 
  and~$1$. Second, changing edge labels has by 
  Theorem~\ref{S:fuse-relabel} no effect on the value of an abstract 
  index expression. This covers everything a network isomorphism can 
  change.
\end{proof}

This correspondence between networks and abstract index expressions 
can also be carried in the opposite direction, to establish a 
text-like notation for networks. This is primarily useful when 
reasoning about families of networks on a particular form, as will be 
done in some proofs below.

\begin{definition}
  Let $\mc{P}$ be an $\N^2$-graded set. Let 
  $\vek{a}_0,\dotsc,\vek{a}_n$ and $\vek{b}_0,\dotsc,\vek{b}_n$ be 
  lists such that \(E := \norm[\Big]{\prod_{i=0}^n \vek{a}_i}\) is a 
  set and \(\norm[\Big]{\prod_{i=0}^n \vek{b}_i} = E\). Let 
  \(\gamma_i \in \mc{P}\bigl( \Norm{\vek{b}_i}, \Norm{\vek{a}_i} 
  \bigr)\) for \(i=1,\dotsc,n\) and let $v_1,\dotsc,v_n$ be distinct 
  labels such that \(0,1 \notin \{v_1,\dotsc,v_n\}\). If there exists 
  a partial order $P$ on $E$ such that \(x<y \pin{P}\) for every \(x 
  \in \norm{\vek{b}_i}\), \(y \in \norm{\vek{a}_i}\), and 
  \(i=1,\dotsc,n\), then what is denoted by
  \begin{equation}
    \Nwfuse[\Big]{ \vek{a}_0 }{
      \textstyle
      \prod_{i=1}^n (v_i\colon \gamma_i)^{\vek{b}_i}_{\vek{a}_i}
    }{ \vek{b}_0 }
  \end{equation}
  is the network \(G = \bigl( \{0,1,v_1,\dotsc,v_n\}, E, h,g,t,s,D 
  \bigr)\) of $\mc{P}$ such that 
  \begin{align*}
    \vek{e}_G^-(0) ={}& \vek{a}_0 \text{,}&
    \vek{e}_G^+(1) ={}& \vek{b}_0 \text{,}\\
    \vek{e}_G^-(v_i) ={}& \vek{a}_i \text{,}&
    \vek{e}_G^+(v_i) ={}& \vek{b}_i 
      && \text{for all \(i \in [n]\),}\\
    && D(v_i) ={}& \gamma_i 
      && \text{for all \(i \in [n]\).}
  \end{align*}
  In the special case that \(v_i = i+1\) for all \(i \in [n]\), this 
  notation may be simplified to
  \begin{equation}
    \Nwfuse[\Big]{ \vek{a}_0 }{
      \textstyle
      \prod_{i=1}^n (\gamma_i)^{\vek{b}_i}_{\vek{a}_i}
    }{ \vek{b}_0 }
  \end{equation}
  (that is: when no explicit vertex labelling is given, then assign 
  labels left to right in the product).
\end{definition}

Clearly, the equations for $\vek{e}_G^-$ define $h$ and $g$ on all of 
$E$, whereas the equations for $\vek{e}_G^+$ similarly define $t$ and 
$s$; these are well-defined since each edge label occurs once in 
$\prod_{i=0}^n \vek{a}_i$ and once in $\prod_{i=0}^n \vek{b}_i$. 
Network axiom~\ref{A1:Network} follows from the existence of the 
partial order~$P$, axioms~\ref{A2:Network} and~\ref{A3:Network} 
follow from the definitions of $\vek{e}_G^-$ and $\vek{e}_G^+$, and 
the $\gamma_i$ were explicitly chosen so that axiom~\ref{A4:Network} 
would be fulfilled.

\begin{corollary} \label{Kor:EnklaNatverk}
  Let $\mc{P}$ be a \PROP. Let $\vek{m}$ and $\vek{n}$ be lists of 
  labels such that $\norm{\vek{m}}$ and $\norm{\vek{n}}$ are disjoint 
  sets. Let \(\gamma \in \mc{P}\bigl( \Norm{\vek{m}}, \Norm{\vek{n}} 
  \bigr)\) and \(v \notin \{0,1\}\) be arbitrary. Then
  \begin{align}
    \phi(\same{\Norm{\vek{n}}}) ={}& \eval\Bigl( 
      \Nwfuse{ \vek{n} }{1}{ \vek{n} }
    \Bigr) \text{,}\\
    \gamma ={}& \eval\Bigl( 
      \Nwfuse{ \vek{m} }{ (v\colon\gamma)^{\vek{m}}_{\vek{n}} }
        { \vek{n} } 
    \Bigr) \text{.}
  \end{align}
\end{corollary}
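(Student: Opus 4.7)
The plan is to prove each identity by unfolding the definition of $\eval$ on the network constructed via $\mathrm{Nw}(\cdots)$, reducing it to an abstract index expression, and then invoking Theorem~\ref{S:fuse-just}. Both cases are essentially immediate consequences of combining the definitions just established, so the main work is just verifying that the networks denoted by $\Nwfuse{\vek{n}}{1}{\vek{n}}$ and $\Nwfuse{\vek{m}}{(v\colon\gamma)^{\vek{m}}_{\vek{n}}}{\vek{n}}$ are well-defined elements of $\Nw(\mc{P})$ so that $\eval$ may legitimately be applied to them.

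For the first identity, I would observe that the notation $\Nwfuse{\vek{n}}{1}{\vek{n}}$ has an empty product, so $n=0$; hence the denoted network has vertex set $V = \{0,1\}$, no inner vertices, and edges indexed by $\norm{\vek{n}}$ all going from $1$ to $0$ with $\vek{e}_G^-(0) = \vek{n}$ and $\vek{e}_G^+(1) = \vek{n}$. The acyclicity condition is trivial (take any partial order on $\norm{\vek{n}}$). The definition of $\eval$ then yields the empty-product abstract index expression $\fuse{\vek{n}}{1}{\vek{n}}$, which by~\eqref{Eq1:fuse-just} of Theorem~\ref{S:fuse-just} equals $\phi(\same{\Norm{\vek{n}}})$.

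For the second identity, I would note that the network $G := \Nwfuse{\vek{m}}{(v\colon\gamma)^{\vek{m}}_{\vek{n}}}{\vek{n}}$ has exactly one inner vertex $v$ with $D(v) = \gamma$, $\vek{e}_G^-(v) = \vek{n}$, and $\vek{e}_G^+(v) = \vek{m}$. The edge set is $\norm{\vek{m}} \cup \norm{\vek{n}}$, which is a disjoint union by hypothesis and hence a set; axiom~\ref{A4:Network} is satisfied because $\gamma \in \mc{P}(\Norm{\vek{m}}, \Norm{\vek{n}})$; and acyclicity is witnessed by any partial order making every element of $\norm{\vek{n}}$ strictly less than every element of $\norm{\vek{m}}$. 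Applying the definition of $\eval$ to $G$ produces the order-$1$ abstract index expression $\fuse{\vek{m}}{\gamma^{\vek{m}}_{\vek{n}}}{\vek{n}}$, which by the second clause of Theorem~\ref{S:fuse-just} equals $\gamma$.

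There is no real obstacle; the corollary is a sanity check that the network notation correctly represents the simplest building blocks (permutations and single factors) in the same way the abstract index notation does. If anything requires care, it is just keeping straight that in the $\mathrm{Nw}$-notation the lists of superscripts describe the edges coming \emph{out} of a vertex (so they meet the subscripts of the vertex above, i.e.\@ at the $0$ vertex), whereas subscripts describe the edges going \emph{in}, matching the convention in the definitions of $\vek{e}_G^+$ and $\vek{e}_G^-$.
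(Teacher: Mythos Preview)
Your approach is essentially identical to the paper's: unwind the definition of $\eval$ on the two elementary networks, obtain the abstract index expressions $\fuse{\vek{n}}{1}{\vek{n}}$ and $\fuse{\vek{m}}{\gamma^{\vek{m}}_{\vek{n}}}{\vek{n}}$, and invoke Theorem~\ref{S:fuse-just}. The paper states this in one sentence, citing Theorem~\ref{S:network-eval} for well-definedness of $\eval$ and Theorem~\ref{S:fuse-just} for the values; you have simply spelled out the details.

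One small slip: in the second identity, the acyclicity condition for the $\Nw$-notation requires $x < y$ for $x \in \norm{\vek{b}_1} = \norm{\vek{m}}$ and $y \in \norm{\vek{a}_1} = \norm{\vek{n}}$, so you want every element of $\norm{\vek{m}}$ below every element of $\norm{\vek{n}}$, not the other way around. This does not affect the argument (the sets are disjoint, so such a partial order exists either way), but the direction is worth getting right.
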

\begin{proof}
  This is just as much a corollary of 
  Theorem~\ref{S:network-eval}\Ldash on the point that $\eval$ is 
  well-defined\Dash as of Theorem~\ref{S:fuse-just}, on the point 
  that it gives the values for exactly those abstract index 
  expressions as which the right hand sides are defined.
\end{proof}

More examples of this abstract index notation for networks and their 
corresponding diagrams can be found in Figure~\ref{Fig:PROP-axiom-Nw}. 
In those cases the abstract index notation is convenient because it 
formally allows one to leave the number of edges connecting two 
vertices unspecified; with an explicit diagram, it is more a matter 
of drawing some illustrative number of edges and using the head and 
tail index labels to suggest that they may vary.

Next up, the network counterparts of  
Theorems~\ref{S:AIN,snitt} and~\ref{S:AIN,klyva}. This requires two 
new concepts.

\begin{definition}
  A \DefOrd{cut} in a network \(G = (V,E,h,g,t,s,D)\) is a bipartition 
  \(W_0 \cup W_1\) of $V \setminus \{0,1\}$ such that no \(e \in E\) 
  satisfies \(h(e) \in W_1\) and \(t(e) \in W_0\). The cut is said to 
  be \DefOrd[*{cut!non-trivial}]{non-trivial} if $W_0$ and $W_1$ are 
  both nonempty.
  An edge \(e \in E\) is said to be a \emDefOrd[*{cut!edge}]{cut edge} 
  if \(h(e) \in W_0 \cup \{0\}\) and \(t(e) \in W_1 \cup \{1\}\). 
  Let $E_c$ be the set of all cut edges. The cut is said to be 
  \emDefOrd[*{cut!above}]{above} \(X \subseteq W_0\) if \(X = 
  \setmap{h}(E_c)\), and conversely \emDefOrd[*{cut!below}]{below} 
  \(X \subseteq W_1\) if \(X = \setmap{t}(E_c)\).
  
  A cut is said to be \DefOrd[*{cut!ordered}]{ordered} if it is 
  given as $(W_0,W_1,p)$ where \(p\colon E_c \Fpil \bigl[ \card{E_c} 
  \bigr]\) is a bijection. 
  The \emDefOrd[*{network decomposition}]{decomposition} 
  of $G$ induced by the ordered cut $(W_0,W_1,p)$ is $(G',G'')$, 
  where 
  \begin{align*}
    G' ={}& (V',E',h',g',t',s',D') \quad\text{and} &
    G'' ={}& (V'',E'',h'',g'',t'',s'',D'')
    \\ \intertext{are defined by}
    V' ={}& W_0 \cup \{0,1\} &   V'' ={}& W_1 \cup \{0,1\}\\
    E' ={}& \setOf[\big]{e \in E}{h(e) \in W_0 \cup \{0\}} &
    E'' ={}& \setOf[\big]{e \in E}{t(e) \in W_1 \cup \{1\}} 
      \displaybreak[0]\\
    h' ={}& \restr{h}{E'} & t'' ={}& \restr{t}{E''}\\
    t'(e) ={}& \begin{cases}
      1& \text{if \(e \in E_c\),}\\
      t(e)& \text{otherwise,}
    \end{cases} &
    h''(e) ={}& \begin{cases}
      0& \text{if \(e \in E_c\),}\\
      h(e)& \text{otherwise,}
    \end{cases} \displaybreak[0]\\
    g' ={}& \restr{g}{E'} & s'' ={}& \restr{s}{E''} 
      \displaybreak[0]\\
    s'(e) ={}& \begin{cases}
      p(e)& \text{if \(e \in E_c\),}\\
      s(e)& \text{otherwise,}
    \end{cases} &
    g''(e) ={}& \begin{cases}
      p(e)& \text{if \(e \in E_c\),}\\
      g(e)& \text{otherwise,}
    \end{cases} \\
    D' ={}& \restr{D}{W_0} & 
    D'' ={}& \restr{D}{W_1} \text{.}
  \end{align*}
  An ordered cut $(W_0,W_1,p)$ is said to be 
  \DefOrd[*{cut!obvious}]{obvious} 
  if \(p = \restr{g}{E_c} = \restr{s}{E_c}\). It is said to be 
  \DefOrd[*{cut!obvious above}]{obvious above} $(v_1,\dotsc,v_k)$ if 
  (i)~it is above $\{v_1,\dotsc,v_k\}$, (ii)~\(h(e_1)=v_i\), 
  \(h(e_2)=v_j\), and \(i<j\) for some \(e_1,e_2 \in E_c\) implies 
  \(p(e_1)<p(e_2)\), and (iii)~\(h(e_1)=h(e_2)\) and \(g(e_1)<g(e_2)\) 
  for some \(e_1,e_2 \in E_c\) implies \(p(e_1)<p(e_2)\). Analogously, 
  $(W_0,W_1,p)$ is \DefOrd[*{cut!obvious below}]{obvious below} 
  $(v_1,\dotsc,v_k)$ if (i)~it is below $\{v_1,\dotsc,v_k\}$, 
  (ii)~\(t(e_1)=v_i\), \(t(e_2)=v_j\), and \(i<j\) for some \(e_1,e_2 
  \in E_c\) implies \(p(e_1)<p(e_2)\), and 
  (iii)~\(t(e_1)=t(e_2)\) and \(s(e_1)<s(e_2)\) for some \(e_1,e_2 
  \in E_c\) implies \(p(e_1)<p(e_2)\).
\end{definition}

\begin{lemma} \label{L:evalCut}
  Let $\mc{P}$ be a \PROP\ and $G$ be a network of $\mc{P}$. If 
  $(G',G'')$ is a decomposition of $G$ induced by some ordered cut 
  $(W_0,W_1,p)$ then \(\eval(G) = \eval(G') \circ \eval(G'')\).
  
  More generally, if $\mc{P}$ is a \PROP, $\Omega$ is some 
  $\N^2$-graded set, \(f\colon \Omega \Fpil \mc{P}\) is an 
  $\N^2$-graded set morphism, $G$ a network of $\Omega$, and 
  $(G',G'')$ is a decomposition of $G$ induced by some ordered cut 
  $(W_0,W_1,p)$ then \(\eval_f(G) = \eval_f(G') \circ \eval_f(G'')\).
\end{lemma}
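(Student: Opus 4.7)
The plan is to unfold both $\eval(G')$ and $\eval(G'')$ as abstract index expressions, identify the list of cut edges (ordered by $p$) as the common ``interface'' between the two sides, and then invoke Theorem~\ref{S:AIN,snitt} to merge the composition into a single abstract index expression. Write $E_c$ for the set of cut edges and $\vek{c}$ for the list $c_1\dotsb c_{\card{E_c}}$ where $p(c_i)=i$. By the construction of the decomposition, for every \(v \in W_0\) the edge-label lists $\vek{e}_{G'}^\pm(v)$ coincide with $\vek{e}_G^\pm(v)$, and analogously for \(v \in W_1\) in $G''$; moreover the definitions of $t'$, $s'$, $h''$, $g''$ together with the order given by $p$ yield
\[
  \vek{e}_{G'}^-(0) = \vek{e}_G^-(0), \quad
  \vek{e}_{G'}^+(1) = \vek{c}, \quad
  \vek{e}_{G''}^-(0) = \vek{c}, \quad
  \vek{e}_{G''}^+(1) = \vek{e}_G^+(1) \text{.}
\]
Hence
\begin{align*}
  \eval(G') ={}& \fuse[\Big]{ \vek{e}_G^-(0) }{ \textstyle
    \prod_{v \in W_0} D(v)^{\vek{e}_G^+(v)}_{\vek{e}_G^-(v)}
  }{ \vek{c} } \text{,} \\
  \eval(G'') ={}& \fuse[\Big]{ \vek{c} }{ \textstyle
    \prod_{v \in W_1} D(v)^{\vek{e}_G^+(v)}_{\vek{e}_G^-(v)}
  }{ \vek{e}_G^+(1) } \text{.}
\end{align*}

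Second, I would verify the hypothesis of Theorem~\ref{S:AIN,snitt}, namely that the set of labels appearing in the product part of $\eval(G')$ meets the set of labels appearing in the product part of $\eval(G'')$ precisely in $\norm{\vek{c}}$. Every edge of $G$ with both endpoints in $W_0 \cup \{0\}$ appears only in $\eval(G')$, every edge with both endpoints in $W_1 \cup \{1\}$ appears only in $\eval(G'')$, and a label appears in both products exactly when the corresponding edge is a cut edge (head in $W_0$, tail in $W_1$)\Ldash and these labels form $\norm{\vek{c}}$. No edge of $G$ can have head in $W_1$ and tail in $W_0$ by the definition of a cut, and the directed acyclicity of $G$ rules out any other mixed configuration.

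With the matching condition in place, Theorem~\ref{S:AIN,snitt} gives
\[
  \eval(G') \circ \eval(G'') = \fuse[\Big]{ \vek{e}_G^-(0) }{ \textstyle
    \prod_{v \in W_0} D(v)^{\vek{e}_G^+(v)}_{\vek{e}_G^-(v)}
    \prod_{v \in W_1} D(v)^{\vek{e}_G^+(v)}_{\vek{e}_G^-(v)}
  }{ \vek{e}_G^+(1) } \text{,}
\]
and by Theorem~\ref{S:fuse-factor-reorder} the order of the factors does not matter, so this right hand side equals $\eval(G)$ as defined in \eqref{Eq1:Network-value}.

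The generalisation to $\eval_f$ is immediate: replacing each annotation $D(v)$ with $f\bigl(D(v)\bigr)$ throughout preserves arities and coarities (since $f$ is an $\N^2$-graded set morphism), so all the well-formedness checks and the invocation of Theorem~\ref{S:AIN,snitt} go through verbatim. The main obstacle is really just bookkeeping\Ldash making sure that the labels of the cut edges line up in the correct order on both sides of the composition\Dash but this is guaranteed by the fact that a single bijection $p$ is used to define both $s'$ on $E_c$ (inducing $\vek{e}_{G'}^+(1) = \vek{c}$) and $g''$ on $E_c$ (inducing $\vek{e}_{G''}^-(0) = \vek{c}$), so the two occurrences of $\vek{c}$ are literally the same list.
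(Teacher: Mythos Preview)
Your proof is correct and follows essentially the same approach as the paper: identify the ordered list of cut edges as the common interface, verify that $\vek{e}_{G'}^+(1)$ and $\vek{e}_{G''}^-(0)$ both equal this list, and apply Theorem~\ref{S:AIN,snitt} to merge the two abstract index expressions into $\eval(G)$. The paper proves the $\eval_f$ case first and then specialises to $\eval$, whereas you do the reverse, and you are a bit more explicit about checking the label-intersection hypothesis of Theorem~\ref{S:AIN,snitt}; neither difference is substantive.
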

\begin{proof}
  Consider first the claim about $\eval_f$.
  Let $k$ be the number of cut edges, and let \(\vek{p} = 
  p^{-1}(1) \dotsb p^{-1}(k)\). Then \(\vek{e}_{G'}^-(0) = 
  \vek{e}_{G}^-(0)\), \(\vek{e}_{G'}^+(1) = \vek{p}\), 
  \(\vek{e}_{G''}^-(0) = \vek{p}\), and \(\vek{e}_{G''}^+(1) = 
  \vek{e}_G^+(1)\). Hence by Theorem~\ref{S:AIN,snitt},
  \begin{multline*}
    \eval_f(G') \circ \eval_f(G'') 
    = \\ =
    \fuse[\bigg]{ \vek{e}_G^-(0) }{
      \prod_{v \in W_0} 
        f\bigl( D(v) \bigr)^{\vek{e}_{G'}^+(v)}_{\vek{e}_{G'}^-(v)}
    }{ \vek{p} } \circ
    \fuse[\bigg]{ \vek{p} }{
      \prod_{v \in W_1} 
        f\bigl( D(v) \bigr)^{\vek{e}_{G''}^+(v)}_{\vek{e}_{G''}^-(v)}
    }{ \vek{e}_G^+(1) }
    = \\ =
    \fuse[\bigg]{ \vek{e}_G^-(0) }{
      \prod_{v \in W_0} 
        f\bigl( D(v) \bigr)^{\vek{e}_G^+(v)}_{\vek{e}_G^-(v)}
      \prod_{v \in W_1} 
        f\bigl( D(v) \bigr)^{\vek{e}_G^+(v)}_{\vek{e}_G^-(v)}
    }{ \vek{e}_G^+(1) }
    =
    \eval_f(G) \text{.}
  \end{multline*}
  The claim about $\eval$ is obtained by taking \(\Omega = \mc{P}\) 
  and $f$ being the identity map.
\end{proof}

A special, but frequently useful, kind of cut to make in a network is 
to cut below $1$ or above $0$, but in a non-obvious order to factor 
out a particular permutation. Conversely, it is natural to define 
actions of permutations on networks as permuting the legs.

\begin{definition}
  Let \(G = (V,E,h,g,t,s,D)\) be a network with \(m = \omega(G)\) and 
  \(n = \alpha(G)\). Then the left and right actions of \(\sigma \in 
  \Sigma_m\) and \(\tau \in \Sigma_n\) respectively on $G$ are 
  defined by
  \begin{align*}
    \sigma \cdot G :={}& (V,E,h,g',t,s,D) \text{,}\\
    G \cdot \tau :={}& (V,E,h,g,t,s',D) \text{,}
  \end{align*}
  where
  \begin{align*}
    g'(e) ={}& \begin{cases}
      \sigma\bigl( g(e) \bigr) & \text{if \(h(e)=0\),}\\
      g(e) & \text{if \(h(e) \neq 0\),}
    \end{cases}
    \\
    s'(e) ={}& \begin{cases}
      \tau^{-1}\bigl( s(e) \bigr) & \text{if \(t(e)=1\),}\\
      s(e) & \text{if \(t(e) \neq 1\)}
    \end{cases}
  \end{align*}
  for all \(e \in E\).
\end{definition}

It is easily checked that $\Nw(\Omega)(m,n)$ becomes a 
$(\Sigma_m,\Sigma_n)$-bimodule with these actions. In particular 
\((\sigma \cdot G) \cdot \tau = (V,E,h,g',t,s',D) = \sigma \cdot 
(G \cdot \tau)\), so parentheses can be dropped in such expressions.

\begin{lemma}
  Let $\mc{P}$ be a \PROP\ and $G$ be a network of $\mc{P}$. For any 
  permutations \(\sigma \in \Sigma_{\omega(G)}\) and \(\tau \in 
  \Sigma_{\alpha(G)}\) it holds that
  \begin{equation}
    \eval( \sigma \cdot G \cdot \tau ) =
    \phi(\sigma) \circ \eval(G) \circ \phi(\tau) \text{.}
  \end{equation}
  More generally, if $\mc{P}$ is a \PROP, $\Omega$ is some 
  $\N^2$-graded set, \(f\colon \Omega \Fpil \mc{P}\) is an 
  $\N^2$-graded set morphism, and $G$ a network of $\Omega$, then for 
  any permutations \(\sigma \in \Sigma_{\omega(G)}\) and \(\tau \in 
  \Sigma_{\alpha(G)}\) it holds that
  \begin{equation}
    \eval_f( \sigma \cdot G \cdot \tau ) =
    \phi(\sigma) \circ \eval_f(G) \circ \phi(\tau) \text{.}
  \end{equation}
\end{lemma}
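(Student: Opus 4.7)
The plan is to unfold the definition of $\eval_f(\sigma \cdot G \cdot \tau)$ according to \eqref{Eq2:Network-value} and observe that the actions of $\sigma$ and $\tau$ leave the inner vertices (and therefore the entire product over $V\setminus\{0,1\}$) untouched, but reshuffle the order of labels in the outer label lists $\vek{e}^-_G(0)$ and $\vek{e}^+_G(1)$. These reshufflings are precisely of the form handled by rule~\ref{Item1:D:fuse} of Definition~\ref{D:fuse}, so each can be factored out as a composition with an order~$0$ abstract index expression, which by that rule is $\phi(\sigma)$ resp.\ $\phi(\tau)$.

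Concretely, first I would read off from the definitions of the left and right actions that if $\vek{e}^-_G(0) = e_1 \dotsb e_m$ with $g(e_i) = i$, then in $\sigma \cdot G$ the head index of $e_i$ becomes $\sigma(i)$, so the canonically ordered list of output legs is
\[
  \vek{e}^-_{\sigma\cdot G}(0) = e_{\sigma^{-1}(1)} \dotsb e_{\sigma^{-1}(m)},
\]
and analogously with $\vek{e}^+_G(1) = e'_1\dotsb e'_n$ (where $s(e'_j)=j$) one gets $\vek{e}^+_{G\cdot\tau}(1) = e'_{\tau(1)}\dotsb e'_{\tau(n)}$. Matching these against rule~\ref{Item1:D:fuse} yields
\[
  \phi(\sigma) = \fuse{\vek{e}^-_{\sigma\cdot G}(0)}{1}{\vek{e}^-_G(0)}
  \quad\text{and}\quad
  \phi(\tau) = \fuse{\vek{e}^+_G(1)}{1}{\vek{e}^+_{G\cdot\tau}(1)},
\]
because plugging these input and output lists into the rule recovers the permutations $\sigma$ and $\tau$ respectively.

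Second, I would observe that for every inner vertex $v \in V\setminus\{0,1\}$ the lists $\vek{e}^+_G(v)$ and $\vek{e}^-_G(v)$ are defined by $t,s$ and $h,g$ restricted to edges incident to $v$; but the left action only modifies $g$ on edges with $h(e)=0$ and the right action only modifies $s$ on edges with $t(e)=1$, so $\vek{e}^\pm_{\sigma\cdot G\cdot \tau}(v) = \vek{e}^\pm_G(v)$ and $D$ is unchanged. Hence the big product in \eqref{Eq2:Network-value} for $\sigma\cdot G\cdot \tau$ is syntactically the same as for $G$, and only the outer brackets differ:
\[
  \eval_f(\sigma\cdot G\cdot \tau) =
  \fuse[\bigg]{ \vek{e}^-_{\sigma\cdot G}(0) }{
    \prod_{v \in V\setminus\{0,1\}}
    f\bigl(D(v)\bigr)^{\vek{e}^+_G(v)}_{\vek{e}^-_G(v)}
  }{ \vek{e}^+_{G\cdot\tau}(1) }.
\]

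Finally I would apply Lemma~\ref{L:fperm-left} to strip off the change of output-leg ordering and Lemma~\ref{L:fperm-right} to strip off the change of input-leg ordering, obtaining
\[
  \eval_f(\sigma\cdot G\cdot \tau) =
  \fuse{\vek{e}^-_{\sigma\cdot G}(0)}{1}{\vek{e}^-_G(0)} \circ
  \eval_f(G) \circ
  \fuse{\vek{e}^+_G(1)}{1}{\vek{e}^+_{G\cdot\tau}(1)} =
  \phi(\sigma) \circ \eval_f(G) \circ \phi(\tau),
\]
and the first claim follows by taking $\Omega = \mc{P}$ with $f$ the identity. The main (minor) obstacle is just keeping straight which of $\sigma$, $\sigma^{-1}$, $\tau$, $\tau^{-1}$ appears where; once the bookkeeping of indices is matched with the convention of rule~\ref{Item1:D:fuse} (which, as the remark after Definition~\ref{D:fuse} notes, pairs list \emph{positions} rather than list \emph{elements}), everything reduces to two applications of the permutation-absorption lemmas.
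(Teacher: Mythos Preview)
Your proof is correct and takes a genuinely different route from the paper's. The paper constructs two trivial ordered cuts in $\sigma\cdot G\cdot\tau$ (one below $1$ with ordering $\restr{g}{E_0}$ and one above $0$ with ordering $\restr{s}{E_1}$), obtaining decompositions $(G_0,G_1)$ and then $(G,G_{11})$ where $G_0$ and $G_{11}$ are vertex-free networks; it then invokes Lemma~\ref{L:evalCut} to get $\eval_f(\sigma\cdot G\cdot\tau) = \eval_f(G_0)\circ\eval_f(G)\circ\eval_f(G_{11})$ and computes the outer factors as $\phi(\sigma)$ and $\phi(\tau)$ from rule~\ref{Item1:D:fuse}.

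Your approach is more direct: you observe that the definition \eqref{Eq2:Network-value} of $\eval_f$ already \emph{is} an abstract index expression, so there is no need to pass through the cut machinery --- you can simply note that the product over inner vertices is identical for $G$ and for $\sigma\cdot G\cdot\tau$, and then peel off the altered outer label lists using Lemmas~\ref{L:fperm-left} and~\ref{L:fperm-right}. This is shorter and more elementary, since Lemma~\ref{L:evalCut} itself rests on Theorem~\ref{S:AIN,snitt}, which in turn is built on exactly those two permutation-absorption lemmas. The paper's route has the virtue of exercising the cut-decomposition formalism (which is central to the section), but yours gets to the point faster.
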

\begin{proof}
  Let \((V,E,h,g,t,s,D) = G\) and \((V,E,h,g',t,s',D) = \sigma 
  \cdot G \cdot \tau\). Let \(W = V \setminus \{0,1\}\), \(E_0 = 
  \setinv{h}\bigl( \{0\} \bigr)\), and \(E_1 = \setinv{t}\bigl( \{1\} 
  \bigr)\). Then $(\varnothing,W,\restr{g}{E_0})$ is an ordered cut 
  in $\sigma \cdot G \cdot \tau$ that induces the decomposition 
  $(G_0,G_1)$, where \(G_1 = (V,E,h,g,t,s',D) = G \cdot \tau\) and 
  \(G_0 = \bigl( \{0,1\}, E_0, 0, g', 1, g, \varnothing \bigr)\) 
  (where the latter $0$ and $1$ are the constant maps \(E_0 \Fpil 
  \{0\}\) and \(E_0 \Fpil \{1\}\) respectively). 
  $(W,\varnothing,\restr{s}{E_1})$ is an ordered cut in $G_1$ and 
  similarly induces a decomposition $(G,G_{11})$ where \(G_{11} = 
  \bigl( \{0,1\}, E_1, 0, s, 1, s', \varnothing \bigr)\). Hence by 
  Lemma~\ref{L:evalCut},
  \begin{multline*}
    \eval_f( \sigma \cdot G \cdot \tau ) =
    \eval_f(G_0) \circ \eval_f(G) \circ \eval_f(G_{11}) 
    = \\ =
    \phi\bigl( g' \circ (\restr{g}{E_0})^{-1} \bigr) \circ 
      \eval_f(G) \circ 
      \phi\bigl( s \circ (\restr{s'}{E_1})^{-1} \bigr) =
    \phi(\sigma) \circ \eval_f(G) \circ \phi(\tau) \text{.}
  \end{multline*}
\end{proof}

\begin{definition}
  A \DefOrd{split} in a network \(G = (V,E,h,g,t,s,D)\) is a tuple 
  $(F_\mathrm{l},F_\mathrm{r},W_\mathrm{l},W_\mathrm{r})$, where 
  \(F_\mathrm{l} \cup F_\mathrm{r}\) is a bipartition of 
  $E$ and \(W_\mathrm{l} \cup W_\mathrm{r}\) is a bipartition 
  of $V \setminus \{0,1\}$, such that:
  \begin{enumerate}
    \item
      if \(e \in F_\mathrm{l}\) then \(h(e) \in W_\mathrm{l} \cup 
      \{0\}\) and \(t(e) \in W_\mathrm{l} \cup\{1\}\),
    \item
      if \(e \in F_\mathrm{r}\) then \(h(e) \in W_\mathrm{r} \cup 
      \{0\}\) and \(t(e) \in W_\mathrm{r} \cup\{1\}\),
    \item
      if \(e_\mathrm{l} \in F_\mathrm{l}\) and \(e_\mathrm{r} \in 
      F_\mathrm{r}\) are such that \(h(e_\mathrm{l}) = 
      h(e_\mathrm{r}) = 0\) then \(g(e_\mathrm{l}) < g(e_\mathrm{r})\), 
      and
    \item
      if \(e_\mathrm{l} \in F_\mathrm{l}\) and \(e_\mathrm{r} \in 
      F_\mathrm{r}\) are such that \(t(e_\mathrm{l}) = t(e_\mathrm{r}) 
      = 1\) then \(s(e_\mathrm{l}) < s(e_\mathrm{r})\).
  \end{enumerate}
  Parts in the bipartitions may be empty.
  
  Let \(k = \card[\big]{\setOf{e \in F_\mathrm{l}}{h(e)=0}}\) and 
  \(l = \card[\big]{\setOf{e \in F_\mathrm{l}}{t(e)=1}}\). The 
  \emDefOrd[*{network decomposition}]{decomposition} 
  of $G$ induced by the split $(F_\mathrm{l},F_\mathrm{r},W_\mathrm{l},
  W_\mathrm{r})$ is $(G',G'')$, 
  where \(G' = (V',F_\mathrm{l},h',g',t',s',D')\) and 
  \(G'' = (V'',F_\mathrm{r},h'',g'',t'',s'',D'')\) are defined by
  \begin{align*}
    V' ={}& W_0 \cup \{0,1\} &   V'' ={}& W_1 \cup \{0,1\}\\
    h' ={}& \restr{h}{F_\mathrm{l}} & h'' ={}& \restr{h}{F_\mathrm{r}}
      \displaybreak[0]\\
    t' ={}& \restr{t}{F_\mathrm{l}} & t'' ={}& \restr{t}{F_\mathrm{r}}
      \displaybreak[0]\\
    g' ={}& \restr{g}{F_\mathrm{l}} & 
      g''(e) ={}& \begin{cases}
        g(e)-k& \text{if \(h''(e)=0\),}\\
        g(e)& \text{otherwise,}
      \end{cases}
      \displaybreak[0]\\
    s' ={}& \restr{s}{F_\mathrm{l}} & 
      s''(e) ={}& \begin{cases}
        s(e)-l& \text{if \(t''(e)=1\),}\\
        s(e)& \text{otherwise,}
      \end{cases}\\
    D' ={}& \restr{D}{W_\mathrm{l}} & 
    D'' ={}& \restr{D}{W_\mathrm{r}} \text{.}
  \end{align*}
\end{definition}

\begin{lemma} \label{L:evalSplit}
  Let $\mc{P}$ be a \PROP\ and $G$ be a network of $\mc{P}$. If 
  $(G',G'')$ is a decomposition of $G$ induced by some split 
  $(F_\mathrm{l},F_\mathrm{r},W_\mathrm{l},W_\mathrm{r})$ then 
  \(\eval(G) = \eval(G') \otimes \eval(G'')\).
  
  More generally, let $\mc{P}$ be a \PROP, $\Omega$ be an 
  $\N^2$-graded set, $G$ be a network of $\Omega$, and \(f\colon 
  \Omega \Fpil \mc{P}\) be an $\N^2$-graded set morphism. If 
  $(G',G'')$ is a decomposition of $G$ induced by some split 
  $(F_\mathrm{l},F_\mathrm{r},W_\mathrm{l},W_\mathrm{r})$ then 
  \(\eval_f(G) = \eval_f(G') \otimes \eval_f(G'')\).
\end{lemma}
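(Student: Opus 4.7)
The plan is to mirror the proof of Lemma~\ref{L:evalCut} almost verbatim, but with Theorem~\ref{S:AIN,klyva} (tensor product as concatenation of abstract index expressions with disjoint label sets) playing the role that Theorem~\ref{S:AIN,snitt} plays there. As in that earlier proof, I would first establish the $\eval_f$ version, since the $\eval$ statement falls out by taking $\Omega = \mc{P}$ and $f = \mathrm{id}$.

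After unfolding the defining abstract index expressions for $\eval_f(G)$, $\eval_f(G')$, and $\eval_f(G'')$, the essential observation is that the edge-label sets occurring in the expressions for $\eval_f(G')$ and $\eval_f(G'')$ are disjoint: they are indexed by $F_\mathrm{l}$ and $F_\mathrm{r}$ respectively, which partition $E$. This is exactly the disjointness hypothesis of Theorem~\ref{S:AIN,klyva}.

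The real content of the proof is then the bookkeeping step of verifying three list identities. First, for every inner vertex $v \in W_\mathrm{l}$ one has $\vek{e}_{G'}^\pm(v) = \vek{e}_G^\pm(v)$, and analogously for $W_\mathrm{r}$; this is immediate because conditions~1 and~2 of the definition of a split force every edge incident to $W_\mathrm{l}$ (respectively $W_\mathrm{r}$) to belong to $F_\mathrm{l}$ (respectively $F_\mathrm{r}$), so no local head/tail indices are altered. Second, one needs $\vek{e}_G^-(0) = \vek{e}_{G'}^-(0)\,\vek{e}_{G''}^-(0)$ as ordered lists. Here the definition of $g''$ deliberately subtracts $k = \card{\setOf{e \in F_\mathrm{r}}{h(e)=0}}$'s complement count so that the $F_\mathrm{r}$-legs at $0$ are re-indexed to begin at $1$, and split condition~3 guarantees that the original $g$-values of $F_\mathrm{l}$-legs at $0$ are all strictly below those of $F_\mathrm{r}$-legs at $0$; together these force $\vek{e}_{G'}^-(0)$ to be precisely the initial segment of $\vek{e}_G^-(0)$ of length $k$, with $\vek{e}_{G''}^-(0)$ making up the rest in the correct order. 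Third, the completely analogous argument using $s''$, $l$, and split condition~4 yields $\vek{e}_G^+(1) = \vek{e}_{G'}^+(1)\,\vek{e}_{G''}^+(1)$.

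Given these list equalities, Theorem~\ref{S:AIN,klyva} concatenates the two abstract index expressions into a single one that agrees with the expression for $\eval_f(G)$ except possibly for the order in which labelled factors appear; Theorem~\ref{S:fuse-factor-reorder} removes even that discrepancy, completing the proof. The main obstacle, in the sense of what genuinely needs to be checked rather than merely invoked, is the index arithmetic in the second and third list identities: one must verify that the $k$- and $l$-shifts built into $g''$ and $s''$ are exactly what is needed for the concatenations to hold on the nose, and this is precisely what conditions~3 and~4 of the split definition are designed to ensure.
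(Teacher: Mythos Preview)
Your proposal is correct and follows essentially the same approach as the paper: establish the list identities $\vek{e}_{G'}^\pm(v) = \vek{e}_G^\pm(v)$ for inner vertices and the concatenations $\vek{e}_G^-(0) = \vek{e}_{G'}^-(0)\,\vek{e}_{G''}^-(0)$, $\vek{e}_G^+(1) = \vek{e}_{G'}^+(1)\,\vek{e}_{G''}^+(1)$, then invoke Theorem~\ref{S:AIN,klyva}. One small slip: in your description of the index shift, $k = \card{\setOf{e \in F_\mathrm{l}}{h(e)=0}}$ (the count of \emph{left} output legs), not the $F_\mathrm{r}$ count; with that corrected your bookkeeping argument goes through exactly as stated.
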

\begin{proof}
  For every \(v \in W_\mathrm{l}\), \(\vek{e}_{G'}^-(v) = 
  \vek{e}_G^-(v)\) and \(\vek{e}_{G'}^+(v) = \vek{e}_G^+(v)\); 
  similarly \(\vek{e}_{G''}^-(v) = \vek{e}_G^-(v)\) and 
  \(\vek{e}_{G''}^+(v) = \vek{e}_G^+(v)\) for all \(v \in W_\mathrm{r}\). 
  As for the vertices $0$ and $1$, \(\vek{e}_G^-(0) = 
  \vek{e}_{G'}^-(0) \vek{e}_{G''}^-(0)\) and \(\vek{e}_G^+(1) = 
  \vek{e}_{G'}^+(1) \vek{e}_{G''}^+(1)\). Hence by 
  Theorem~\ref{S:AIN,klyva},
  \begin{multline*}
    \eval_f(G') \otimes \eval_f(G'') 
    = \\ =
    \fuse[\bigg]{ \vek{e}_{G'}^-(0) }{
      \prod_{v \in W_\mathrm{l}} 
        f\bigl( D(v) \bigr)^{\vek{e}_{G'}^+(v)}_{\vek{e}_{G'}^-(v)}
    }{ \vek{e}_{G'}^+(1) } \otimes
    \fuse[\bigg]{ \vek{e}_{G''}^-(0) }{
      \prod_{v \in W_\mathrm{r}} 
        f\bigl( D(v) \bigr)^{\vek{e}_{G''}^+(v)}_{\vek{e}_{G''}^-(v)}
    }{ \vek{e}_{G''}^+(1) }
    = \\ =
    \fuse[\bigg]{ \vek{e}_{G'}^-(0)\vek{e}_{G''}^-(0) }{
      \prod_{v \in W_\mathrm{l}} 
        f\bigl( D(v) \bigr)^{\vek{e}_{G'}^+(v)}_{\vek{e}_{G'}^-(v)}
      \prod_{v \in W_\mathrm{r}} 
        f\bigl( D(v) \bigr)^{\vek{e}_{G''}^+(v)}_{\vek{e}_{G''}^-(v)}
    }{ \vek{e}_{G'}^+(1)\vek{e}_{G''}^+(1) }
    =
    \eval(G) \text{.}
  \end{multline*}
  Again the claim about $\eval$ is obtained by taking \(\Omega = 
  \mc{P}\) and $f$ being the identity map.
\end{proof}

\subsection{Networks as alternative foundation for \PROPs}

\begin{figure}
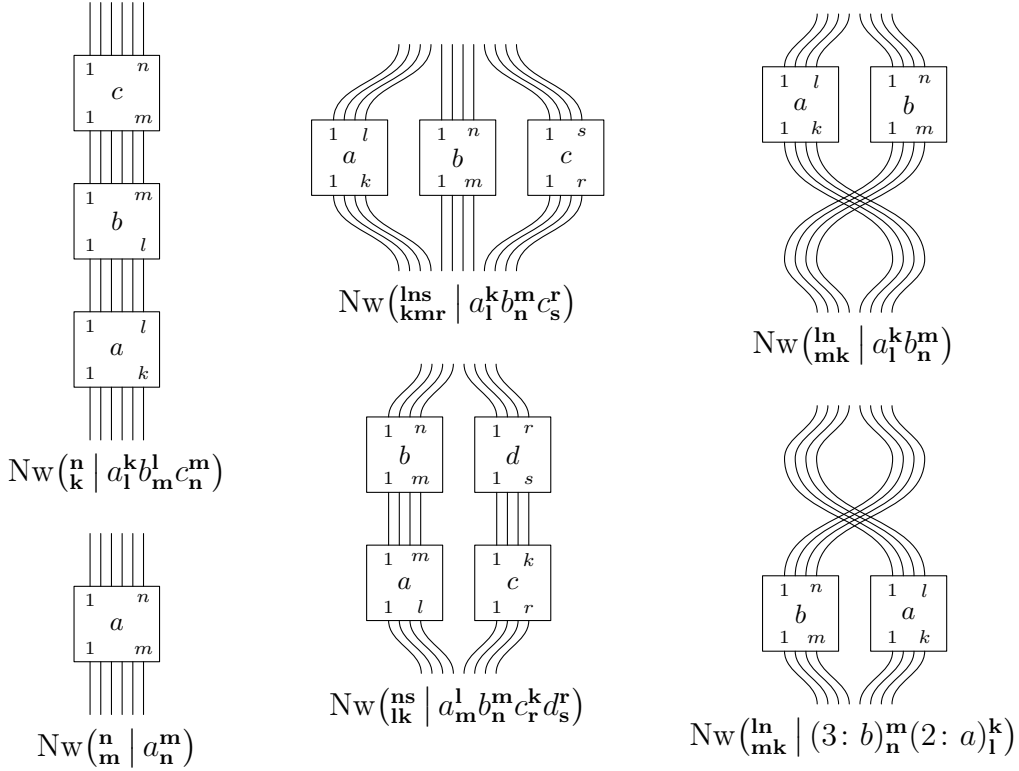

  \begin{tabular}{c}
    \begin{mpgraphics}{4}
      beginfig(4);
        wires:=6;
        IC_equations0(wires,0)();
        IC_equations1(wires,wires)();
        IC_equations2(wires,wires)();
        IC_equations3(wires,wires)();
        IC_equations4(0,wires)();
        x0 = x1 = x2 = x3 = x4;
        y0ll-y1ul = y1ll-y2ul = y2ll-y3ul = y3ll-y4ul = 0.7cm;
        y4t1 = x3ll = 1in;
        
        picture one, pk, pl, pm, pn;
        one := btex $\scriptstyle 1$ etex;
        pk  := btex $\scriptstyle k$ etex;
        pl  := btex $\scriptstyle l$ etex;
        pm  := btex $\scriptstyle m$ etex;
        pn  := btex $\scriptstyle n$ etex;
        draw_IC1( one, pn, btex $c$ etex, one, pm);
        draw_IC2( one, pm, btex $b$ etex, one, pl);
        draw_IC3( one, pl, btex $a$ etex, one, pk);
        
        for i := 1 upto 4:
           for j := 1 upto wires:
              draw z[i-1]b[j] -- z[i]t[j] ;
           endfor
        endfor
      endfig;
    \end{mpgraphics}
    \\
    $\Nwfuse{ \vek{k} }{ 
      a^{\vek{k}}_{\vek{l}} b^{\vek{l}}_{\vek{m}} 
      c^{\vek{m}}_{\vek{n}}
    }{ \vek{n} }$
    \\
    \\
    \begin{mpgraphics}{5}
      beginfig(5);
        wires:=6;
        IC_equations0(wires,0)();
        IC_equations1(wires,wires)();
        IC_equations2(0,wires)();
        x0 = x1 = x2;
        y0ll-y1ul = y1ll-y2ul = 0.7cm;
        y2t1 = x1ll = 1in;
        
        picture one, pm, pn;
        one := btex $\scriptstyle 1$ etex;
        pm  := btex $\scriptstyle m$ etex;
        pn  := btex $\scriptstyle n$ etex;
        draw_IC1( one, pn, btex $a$ etex, one, pm);
        
        for i := 1 upto 2:
           for j := 1 upto wires:
              draw z[i-1]b[j] -- z[i]t[j] ;
           endfor
        endfor
      endfig;
    \end{mpgraphics}
    \\
    $\Nwfuse{ \vek{m} }{ a^{\vek{m}}_{\vek{n}} }{ \vek{n} }$
  \end{tabular}
  \quad\hfill
  \begin{tabular}{c}
    \begin{mpgraphics}{6}
      beginfig(6);
        wires:=4;
        IC_equations0(3*wires,0)();
        IC_equations1(wires,wires)();
        IC_equations2(wires,wires)();
        IC_equations3(wires,wires)();
        IC_equations4(0,3*wires)();
        x0 = x2 = x4 = 0.5[x1,x3];  
        y1 = y2 = y3;
        x2ll-x1lr = 12pt;
        y0ll-y2ul = y2ll-y4ul = 1cm;
        y4t1 = x1ll = 1in;
        
        picture one, pk, pl, pm, pn, pr, ps;
        one := btex $\scriptstyle 1$ etex;
        pk  := btex $\scriptstyle k$ etex;
        pl  := btex $\scriptstyle l$ etex;
        pm  := btex $\scriptstyle m$ etex;
        pn  := btex $\scriptstyle n$ etex;
        pr  := btex $\scriptstyle r$ etex;
        ps  := btex $\scriptstyle s$ etex;
        draw_IC1( one, pl, btex $a$ etex, one, pk);
        draw_IC2( one, pn, btex $b$ etex, one, pm);
        draw_IC3( one, ps, btex $c$ etex, one, pr);
        
        for i := 0 upto 2:
           for j := 1 upto wires:
              draw z0b[i*wires+j]{down} .. {down}z[i+1]t[j] ;
              draw z[i+1]b[j]{down} .. {down}z4t[i*wires+j] ;
           endfor
        endfor
      endfig;
    \end{mpgraphics}
    \\
    $\Nwfuse{ \vek{kmr} }{ 
      a^{\vek{k}}_{\vek{l}} b^{\vek{m}}_{\vek{n}} 
      c^{\vek{r}}_{\vek{s}}
    }{ \vek{lns} }$
    \\
    \\
    \begin{mpgraphics}{7}
      beginfig(7);
        wires:=4;
        IC_equations1(2*wires,0)();
        IC_equations2(wires,wires)();
        IC_equations3(wires,wires)();
        IC_equations4(wires,wires)();
        IC_equations5(wires,wires)();
        IC_equations0(0,2*wires)();
        x0 = x1 = 0.5[x2,x4]; x3=x2; x5=x4;
        x4ll-x2lr = 12pt;
        y2=y4; y3=y5;
        y1b1-y3ur = y3lr-y2ur = y2lr-y0t1 = 0.7cm;
        y0t1 = x2ll = 1in;
        
        picture one, pk, pl, pm, pn, pr, ps;
        one := btex $\scriptstyle 1$ etex;
        pk  := btex $\scriptstyle k$ etex;
        pl  := btex $\scriptstyle l$ etex;
        pm  := btex $\scriptstyle m$ etex;
        pn  := btex $\scriptstyle n$ etex;
        pr  := btex $\scriptstyle r$ etex;
        ps  := btex $\scriptstyle s$ etex;
        draw_IC2( one, pm, btex $a$ etex, one, pl);
        draw_IC3( one, pn, btex $b$ etex, one, pm);
        draw_IC4( one, pk, btex $c$ etex, one, pr);
        draw_IC5( one, pr, btex $d$ etex, one, ps);
        
        for j := 1 upto wires:
           draw z1b[j]{down} .. {down}z3t[j] ;
           draw z1b[wires+j]{down} .. {down}z5t[j] ;
           draw z3b[j] -- z2t[j] ;
           draw z5b[j] -- z4t[j] ;
           draw z2b[j]{down} .. {down}z0t[j] ;
           draw z4b[j]{down} .. {down}z0t[wires+j] ;
        endfor
      endfig;
    \end{mpgraphics}
    \\
    $\Nwfuse{ \vek{lk} }{ 
      a^{\vek{l}}_{\vek{m}} b^{\vek{m}}_{\vek{n}} 
      c^{\vek{k}}_{\vek{r}} d^{\vek{r}}_{\vek{s}}
    }{ \vek{ns} }$
  \end{tabular}
  \quad\hfill
  \begin{tabular}{c}
    \begin{mpgraphics}{8}
      beginfig(8);
        wires:=4;
        IC_equations1(2*wires,0)();
        IC_equations3(wires,wires)();
        IC_equations2(wires,wires)();
        IC_equations0(0,2*wires)();
        for j := 1 upto wires:
           x2bb[j] = x2b[j];  x3bb[j] = x3b[j];
        endfor
        y2bb1 = y3bb1 for j:=2 upto wires: = y2bb[j] = y3bb[j] endfor;
        x0 = x1 = 0.5[x2,x3];  y2 = y3;
        x3ll-x2lr = 12pt;
        y1ll-y2ul = 0.45*(y2ll-y2bb1) = y2bb1-y0ul = 0.7cm;
        y0t1 = x2ll = 1in;
        
        picture one, pk, pl, pm, pn;
        one := btex $\scriptstyle 1$ etex;
        pk  := btex $\scriptstyle k$ etex;
        pl  := btex $\scriptstyle l$ etex;
        pm  := btex $\scriptstyle m$ etex;
        pn  := btex $\scriptstyle n$ etex;
        draw_IC2( one, pl, btex $a$ etex, one, pk);
        draw_IC3( one, pn, btex $b$ etex, one, pm);
        
        for j := 1 upto wires:
           draw z1b[j]{down} .. {down}z2t[j] ;
           draw z2b[j]{down} .. z3bb[j]{down} .. {down}z0t[wires+j] ;
           draw z1b[wires+j]{down} .. {down}z3t[j] ;
           draw z3b[j]{down} .. z2bb[j]{down} .. {down}z0t[j] ;
        endfor
      endfig;
    \end{mpgraphics}
    \\
    $\Nwfuse{ \vek{mk} }{ 
      a^{\vek{k}}_{\vek{l}} b^{\vek{m}}_{\vek{n}} 
    }{ \vek{ln} }$
    \\
    \\
    \begin{mpgraphics}{9}
      beginfig(9);
        wires:=4;
        IC_equations1(2*wires,0)();
        IC_equations3(wires,wires)();
        IC_equations2(wires,wires)();
        IC_equations0(0,2*wires)();
        for j := 1 upto wires:
           x2tt[j] = x2t[j];  x3tt[j] = x3t[j];
        endfor
        y2tt1 = y3tt1 for j:=2 upto wires: = y2tt[j] = y3tt[j] endfor;
        x0 = x1 = 0.5[x2,x3];  y2 = y3;
        x3ll-x2lr = 12pt;
        y1ll-y2tt1 = 0.45*(y2tt1-y2ul) = y2ll-y0ul = 0.7cm;
        y0t1 = x2ll = 1in;
        
        picture one, pk, pl, pm, pn;
        one := btex $\scriptstyle 1$ etex;
        pk  := btex $\scriptstyle k$ etex;
        pl  := btex $\scriptstyle l$ etex;
        pm  := btex $\scriptstyle m$ etex;
        pn  := btex $\scriptstyle n$ etex;
        draw_IC2( one, pn, btex $b$ etex, one, pm);
        draw_IC3( one, pl, btex $a$ etex, one, pk);
        
        for j := 1 upto wires:
           draw z1b[j]{down} .. z2tt[j]{down} .. {down}z3t[j] ;
           draw z3b[j]{down} .. {down}z0t[wires+j] ;
           draw z1b[wires+j]{down} .. z3tt[j]{down} .. {down}z2t[j] ;
           draw z2b[j]{down} .. {down}z0t[j] ;
        endfor
      endfig;
    \end{mpgraphics}
    \\
    $\Nwfuse{ \vek{mk} }{ 
      (3\colon b)^{\vek{m}}_{\vek{n}} (2\colon a)^{\vek{k}}_{\vek{l}} 
    }{ \vek{ln} }$
  \end{tabular}

  \caption{Networks for the proof of Theorem~\ref{S:PROP2-definition}}
  \label{Fig:PROP-axiom-Nw}
\end{figure}

\begin{theorem} \label{S:PROP2-definition}
  Let $\mc{P}$ be an $\N^2$-graded set equipped with
  \begin{itemize}
    \item
      an $\N^2$-graded set morphism \(L\colon \Nw(\mc{P}) \Fpil 
      \mc{P}\),
    \item
      a partial operation $\circ$ which for all \(l,m,n\in\N\) maps 
      all of \(\mc{P}(l,m) \times \mc{P}(m,n)\) into $\mc{P}(l,n)$, 
      and
    \item
      an operation $\otimes$ which for all \(k,l,m,n\in\N\) maps 
      all of \(\mc{P}(k,l) \times \mc{P}(m,n)\) into 
      $\mc{P}(k +\nobreak m, l +\nobreak n)$.
  \end{itemize}
  If these are such that
  \begin{enumerate}
    \item
      \(L(G) = L(H)\) whenever \(G \simeq H\),
    \item
      \(L(G) = L(G') \circ L(G'')\) for every cut decomposition 
      $(G',G'')$ of $G$,
    \item
      \(L(G) = L(G') \otimes L(G'')\) for every split decomposition 
      $(G',G'')$ of $G$,
    \item
      \(L\Bigl( \Nwfuse{ \vek{m} }{ \gamma^{\vek{m}}_{\vek{n}} }{ 
      \vek{n} } \Bigr) = \gamma\) for all \(\gamma \in \mc{P}\bigl( 
      \Norm{\vek{m}}, \Norm{\vek{n}} \bigr)\)
  \end{enumerate}
  then $\mc{P}$ is a \PROP\ with composition $\circ$, tensor product 
  $\otimes$, and permutation map given by
  \begin{equation}
    \phi_n(\sigma) = 
    L\Bigl( \bigl( \{0,1\}, [n], 0, \sigma, 1, \same{n}, \varnothing 
    \bigr) \Bigr)
  \end{equation}
  where the latter $0$ and $1$ denote the constant maps \([n] \Fpil 
  \{0\}\) and \([n] \Fpil \{1\}\) respectively.
\end{theorem}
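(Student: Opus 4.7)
My plan is to verify each of the eight \PROP\ axioms of Definition~\ref{Def:PROP} by exhibiting, for each equation, a single network whose left and right hand sides arise as the images under $L$ of two different decompositions of that network. Condition~2 lets me convert every cut into a $\circ$, condition~3 converts every split into an $\otimes$, condition~1 lets me identify isomorphic networks, and condition~4 lets me recognise the one-vertex network carrying annotation $\gamma$ as simply being $\gamma$. Since $L$ is an $\N^2$-graded set morphism, the arities and coarities match automatically. Once this dictionary is set up, the work of verifying each axiom becomes the combinatorial task of finding two decompositions of the same network.

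I would begin with the three associativity-type axioms, for which the relevant networks are already displayed in Figure~\ref{Fig:PROP-axiom-Nw}. For composition associativity, the three-vertex ``stack'' network (top left) admits two different non-trivial cuts, the one below $a$ giving $L(a) \circ L(b \circ c)$ and the one below $b$ giving $L(a \circ b) \circ L(c)$; both equal $L$ of the whole network, hence they are equal. Tensor associativity is handled analogously by the three-vertex ``side-by-side'' network (top middle) with two different splits. The composition--tensor compatibility axiom is the point of the four-vertex network in the middle column bottom: one way of decomposing it (cut first, then split each half) yields $(a \circ b) \otimes (c \circ d)$, while the other (split first, then cut each half) yields $(a \otimes c) \circ (b \otimes d)$.

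Next I would dispose of the two identity axioms. Both the composition identity and tensor identity axioms follow from condition~4 applied to the one-vertex network $\Nwfuse{\vek{m}}{\gamma^{\vek{m}}_{\vek{n}}}{\vek{n}}$: cutting off the input or output leg reveals an identity permutation network, and condition~4 on that produces $\phi_m(\same{m}) \circ \gamma = \gamma$ and its dual, while a trivial split with an empty-edge-set side gives tensor identity. The permutation composition and permutation juxtaposition axioms follow by applying conditions~2 and~3, respectively, to networks consisting of pure permutations with no inner vertices; a cut in the middle of a stacked permutation network yields the composition law, and a split of a side-by-side permutation network yields the juxtaposition law (after checking that the induced permutation on cut edges is exactly the product resp.\ the juxtaposition).

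The main obstacle, and the place where one genuinely has to be careful, is the tensor permutation axiom $\phi_{k+m}(\cross{k}{m}) \circ (a \otimes b) = (b \otimes a) \circ \phi_{l+n}(\cross{l}{n})$. The key idea is to use the two ``crossing'' networks in the right column bottom of Figure~\ref{Fig:PROP-axiom-Nw}, which are isomorphic (just relabel vertices $2 \leftrightarrow 3$). One side of the axiom arises from an obvious cut above the two vertices together with a split between them, producing the crossing permutation on top composed with $a \otimes b$; the other side arises from the isomorphic network by first splitting and then cutting below, producing $b \otimes a$ composed with the crossing permutation on the bottom. The subtlety is to verify that the induced bijections $p$ on the cut edge set give exactly the permutations $\cross{k}{m}$ and $\cross{l}{n}$, which is a bookkeeping check using the head/tail index conventions of Definition~\ref{Def:Network} and the definition of the $\vek{e}_G^\pm$ lists. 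Once this identification is made, the common value of $L$ on the two isomorphic networks gives the axiom.
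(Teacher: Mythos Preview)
Your proposal is correct and follows essentially the same approach as the paper: for each axiom you exhibit a network whose two different cut/split decompositions, via conditions~1--4, yield the two sides of the equation. The paper's proof carries out exactly this programme, with the same networks (those of Figure~\ref{Fig:PROP-axiom-Nw}) and the same sequence of observations, including the isomorphism-plus-two-trivial-cuts argument for the tensor permutation axiom.
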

\begin{proof}
  Not surprisingly, this is shown by verifying that the axioms are 
  fulfilled. The ease by which they derive from the network notation 
  is however rather striking. For each axiom, the calculations below 
  use the same variables $a$, $b$, $c$, $d$, $k$, $l$, $m$, $n$, $r$, 
  $s$, $\sigma$, and $\tau$ as in Defintion~\ref{Def:PROP}. In 
  addition, boldface letters $\vek{k}$, $\vek{l}$, $\vek{m}$, 
  $\vek{n}$, $\vek{r}$, and $\vek{s}$ are used to denote arbitrary 
  but pairwise disjoint lists of edge labels (thus technically lists 
  of natural numbers in this case, since that is how $\Nw(\mc{P})$ is 
  defined) whose lengths are equal to the corresponding normal 
  letter: \(\Norm{\vek{k}} = k\), \(\Norm{\vek{l}} = l\), etc.
  
  The composition associativity axiom \((a \circ\nobreak b) \circ c = 
  a \circ (b \circ\nobreak c)\) follows from considering the 
  two obvious nontrivial cuts in $\Nwfuse{ \vek{k} }{ 
  a^{\vek{k}}_{\vek{l}} b^{\vek{l}}_{\vek{m}} c^{\vek{m}}_{\vek{n}} }{ 
  \vek{n} }$;
  \begin{align*}
    (a \circ b) \circ c 
    ={}&
    \biggl( 
      L\Bigl( \Nwfuse{\vek{k}}{ a^{\vek{k}}_{\vek{l}} }{\vek{l}} 
        \Bigr) \circ
      L\Bigl( \Nwfuse{\vek{l}}{ (3\colon b)^{\vek{l}}_{\vek{m}} }{\vek{m}} 
        \Bigr)
    \biggr) \circ c
    = \\ ={}&
    L\Bigl( \Nwfuse{\vek{k}}{ a^{\vek{k}}_{\vek{l}} 
      b^{\vek{l}}_{\vek{m}} }{\vek{m}} \Bigr)
    \circ 
    L\Bigl( \Nwfuse{\vek{m}}{ (4\colon c)^{\vek{m}}_{\vek{n}} }{\vek{n}} 
        \Bigr)
    = \displaybreak[0]\\ ={}&
    L\Bigl( \Nwfuse{\vek{k}}{ a^{\vek{k}}_{\vek{l}} 
      b^{\vek{l}}_{\vek{m}} c^{\vek{m}}_{\vek{n}} }{\vek{n}} 
    \Bigr)
    = \displaybreak[0]\\ ={}&
    L\Bigl( 
      \Nwfuse{\vek{k}}{ a^{\vek{k}}_{\vek{l}} }{\vek{l}} 
    \Bigr) \circ L\Bigl(
      \Nwfuse{\vek{l}}{ (3\colon b)^{\vek{l}}_{\vek{m}} 
        (4\colon c)^{\vek{m}}_{\vek{n}} }{\vek{n}} 
    \Bigr)
    = \\ ={}&
    a \circ \biggl( L\Bigl(
      \Nwfuse{\vek{l}}{ (3\colon b)^{\vek{l}}_{\vek{m}} }{\vek{m}}
    \Bigr) \circ L\Bigl(
      \Nwfuse{\vek{m}}{ (4\colon c)^{\vek{m}}_{\vek{n}} }{\vek{n}} 
    \Bigr) \biggr)
    =
    a \circ (b \circ c)
    \text{.}
  \end{align*}
  
  The composition identity axiom follows from considering the two 
  obvious trivial cuts in $\Nwfuse{ \vek{m} }{ a^{\vek{m}}_{\vek{n}} }
  {\vek{n}}$. Let \(\vek{m}' = \vek{N}_m(1,1)\) and \(\vek{n}' = 
  \vek{N}_n(1,1)\); these specific lists are needed in this argument 
  because $\phi_n$ is defined as the value of $L$ for equally specific 
  networks. Then
  \begin{multline*}
    a
    =
    L\Bigl(
    \Nwfuse{\vek{m}}{a^{\vek{m}}_{\vek{n}}}{\vek{n}}
    \Bigr)
    =
    L\Bigl( 
      \Nwfuse{\vek{m}}{1}{\vek{m}}
    \Bigr) \circ L\Bigl(
      \Nwfuse{\vek{m}}{a^{\vek{m}}_{\vek{n}}}{\vek{n}}
    \Bigr)
    = \\ =
    L\Bigl( \Nwfuse{\vek{m}'}{1}{\vek{m}'} \Bigr) \circ a
    =
    \phi_m(\same{m}) \circ a
  \end{multline*}
  and
  \begin{multline*}
    a
    =
    L\Bigl(
    \Nwfuse{\vek{m}}{a^{\vek{m}}_{\vek{n}}}{\vek{n}}
    \Bigr)
    =
    L\Bigl(
      \Nwfuse{\vek{m}}{a^{\vek{m}}_{\vek{n}}}{\vek{n}}
    \Bigr) \circ L\Bigl( 
      \Nwfuse{\vek{n}}{1}{\vek{n}}
    \Bigr)
    = \\ =
    a \circ L\Bigl( \Nwfuse{\vek{n}'}{1}{\vek{n}'} \Bigr)
    =
    a \circ \phi_n(\same{n}) \text{.}
  \end{multline*}
  The tensor identity axiom similarly follows from considering the 
  two trivial splits of the same network. If $\vek{o}$ is the empty 
  list then
  \begin{gather*}
    a
    =
    L\Bigl(
    \Nwfuse{\vek{om}}{a^{\vek{m}}_{\vek{n}}}{\vek{on}}
    \Bigr)
    =
    L\Bigl( 
      \Nwfuse{\vek{o}}{1}{\vek{o}}
    \Bigr) \otimes L\Bigl(
      \Nwfuse{\vek{m}}{a^{\vek{m}}_{\vek{n}}}{\vek{n}}
    \Bigr)
    =
    \phi_0(\same{0}) \otimes a
    \text{,}\\
    a
    =
    L\Bigl(
    \Nwfuse{\vek{mo}}{a^{\vek{m}}_{\vek{n}}}{\vek{no}}
    \Bigr)
    =
    L\Bigl(
      \Nwfuse{\vek{m}}{a^{\vek{m}}_{\vek{n}}}{\vek{n}}
    \Bigr) \otimes L\Bigl( 
      \Nwfuse{\vek{o}}{1}{\vek{o}}
    \Bigr)
    =
    a \otimes \phi_0(\same{0}) 
    \text{.}
  \end{gather*}
  
  By now, it should not be a surprise that the tensor associativity 
  axiom follows from considering the two nontrivial splits in 
  $\Nwfuse{ \vek{kmr} }{ a^{\vek{k}}_{\vek{l}} b^{\vek{m}}_{\vek{n}} 
  c^{\vek{r}}_{\vek{s}} }{ \vek{lns} }$, like so:
  \begin{align*}
    (a \otimes b) \otimes c
    ={}&
    \biggl( L\Bigl(
      \Nwfuse{ \vek{k} }{ a^{\vek{k}}_{\vek{l}} }{ \vek{l} }
    \Bigr) \otimes L\Bigl(
      \Nwfuse{ \vek{m} }{ (3\colon b)^{\vek{m}}_{\vek{n}} }{ \vek{n} }
    \Bigr) \biggr) \otimes c
    = \\ ={}&
    L\Bigl(
      \Nwfuse{ \vek{km} }{ 
        a^{\vek{k}}_{\vek{l}} b^{\vek{m}}_{\vek{n}} 
      }{ \vek{ln} }
    \Bigr) \otimes L\Bigl(
      \Nwfuse{ \vek{r} }{ (4\colon c)^{\vek{r}}_{\vek{s}} }{ \vek{s} }
    \Bigr)
    = \displaybreak[0]\\ ={}&
    L\Bigl(
      \Nwfuse{ \vek{kmr} }{ 
        a^{\vek{k}}_{\vek{l}} b^{\vek{m}}_{\vek{n}} 
        c^{\vek{r}}_{\vek{s}}
      }{ \vek{lns} }
    \Bigr) 
    = \displaybreak[0]\\ ={}&
    L\Bigl(
      \Nwfuse{ \vek{k} }{ a^{\vek{k}}_{\vek{l}} }{ \vek{l} }
    \Bigr) \otimes L\Bigl(
      \Nwfuse{ \vek{mr} }{ 
        (3\colon b)^{\vek{m}}_{\vek{n}} 
        (4\colon c)^{\vek{r}}_{\vek{s}}
      }{ \vek{ns} }
    \Bigr) 
    = \displaybreak[0]\\ ={}&
    a \otimes \biggl( L\Bigl(
      \Nwfuse{ \vek{m} }{ 
        (3\colon b)^{\vek{m}}_{\vek{n}} 
      }{ \vek{n} }
    \Bigr) \otimes L\Bigl(
      \Nwfuse{ \vek{r} }{ 
        (4\colon c)^{\vek{r}}_{\vek{s}}
      }{ \vek{s} }
    \Bigr) \biggr)
    = \\ ={}&
    a \otimes (b \otimes c) 
    \text{.}
  \end{align*}
  Similarly the composition--tensor compatibility axiom follows from 
  considering the nontrivial split and nontrivial cut in 
  $\Nwfuse{ \vek{lk} }{ a^{\vek{l}}_{\vek{m}} b^{\vek{m}}_{\vek{n}} 
  c^{\vek{k}}_{\vek{r}} d^{\vek{r}}_{\vek{s}} }{ \vek{ns} }$, the 
  latter of which should be obvious above $(2,4)$ (or obvious below 
  $(3,5)$, which here is the same thing) to yield
  \begin{align*}
    (a \circ b) \otimes (c \circ d) 
    ={}&
    \biggl( L\Bigl(
      \Nwfuse{ \vek{l} }{ a^{\vek{l}}_{\vek{m}} }{ \vek{m} }
    \Bigr) \circ L\Bigl(
      \Nwfuse{ \vek{m} }{ (3\colon b)^{\vek{m}}_{\vek{n}} }{ \vek{n} }
    \Bigr) \biggr) \\
    &\quad {}\otimes \biggl( L\Bigl(
      \Nwfuse{ \vek{k} }{ (4\colon c)^{\vek{k}}_{\vek{r}} }{ \vek{r} }
    \Bigr) \circ L\Bigl(
      \Nwfuse{ \vek{r} }{ (5\colon d)^{\vek{r}}_{\vek{s}} }{ \vek{s} }
    \Bigr) \biggr) 
    = \displaybreak[0]\\ ={}&
    L\Bigl(
      \Nwfuse{ \vek{l} }{ 
        a^{\vek{l}}_{\vek{m}} b ^{\vek{m}}_{\vek{n}} 
      }{ \vek{n} }
    \Bigr) \otimes L\Bigl(
      \Nwfuse{ \vek{k} }{ 
        (4\colon c)^{\vek{k}}_{\vek{r}}
        (5\colon d)^{\vek{r}}_{\vek{s}}
      }{ \vek{s} }
    \Bigr)
    = \displaybreak[0]\\ ={}&
    L\Bigl(
      \Nwfuse{ \vek{lk} }{ 
        a^{\vek{l}}_{\vek{m}} b ^{\vek{m}}_{\vek{n}} 
        c^{\vek{k}}_{\vek{r}} d^{\vek{r}}_{\vek{s}}
      }{ \vek{ns} }
    \Bigr)
    = \displaybreak[0]\\ ={}&
    L\Bigl(
      \Nwfuse{ \vek{lk} }{ 
        (2\colon a)^{\vek{l}}_{\vek{m}} 
        (4\colon c)^{\vek{k}}_{\vek{r}}
      }{ \vek{mr} }
    \Bigr) \\ &\quad{}\circ L\Bigl(
      \Nwfuse{ \vek{mr} }{ 
        (3\colon b)^{\vek{m}}_{\vek{n}} 
        (5\colon d)^{\vek{r}}_{\vek{s}}
      }{ \vek{ns} }
    \Bigr)
    = \displaybreak[0]\\ ={}&
    \biggl( L\Bigl(
      \Nwfuse{ \vek{l} }{ a^{\vek{l}}_{\vek{m}} }{ \vek{m} }
    \Bigr) \otimes L\Bigl(
      \Nwfuse{ \vek{k} }{ 
        (4\colon c)^{\vek{k}}_{\vek{r}}
      }{ \vek{r} }
    \Bigr) \biggr) \\ 
    &\quad{}\circ \biggl( L\Bigl(
      \Nwfuse{ \vek{m} }{ 
        (3\colon b)^{\vek{m}}_{\vek{n}} 
      }{ \vek{n} }
    \Bigr) \otimes L\Bigl(
      \Nwfuse{ \vek{r} }{ 
        (5\colon d)^{\vek{r}}_{\vek{s}}
      }{ \vek{s} }
    \Bigr) \biggr)
    = \\ ={}&
    (a \otimes c) \circ (b \otimes d)
    \text{.}
  \end{align*}
  
  The permutation composition axiom is again a matter of making the 
  right cut in the right network, but here the networks can be given 
  directly in tuple form. Letting
  \begin{align*}
    G ={}& \bigl( 
      \{0,1\}, [n], 0, \sigma\tau, 1, \same{n}, \varnothing
    \bigr) \text{,}&
    G' ={}& \bigl( 
      \{0,1\}, [n], 0, \sigma\tau, 1, \tau, \varnothing
    \bigr) \text{,}\\
    G'' ={}& \bigl( 
      \{0,1\}, [n], 0, \tau, 1, \same{n}, \varnothing
    \bigr) \text{,}&
    G''' ={}& \bigl( 
      \{0,1\}, [n], 0, \sigma, 1, \same{n}, \varnothing
    \bigr) 
  \end{align*}
  one finds that \(G''' \simeq G'\) and $(G',G'')$ is a cut 
  decomposition of $G$, which implies
  \begin{equation*}
    \phi_n(\sigma\tau) =
    L(G) =
    L(G') \circ L(G'') =
    L(G''') \circ \phi_n(\tau) =
    \phi_n(\sigma) \circ \phi_n(\tau) \text{.}
  \end{equation*}
  Analogously, the permutation juxtaposition axiom is a matter of 
  making the right split of the right network. Defining \(E = 
  [m +\nobreak n] \setminus [m]\), \(g(e) = 
  \tau(e -\nobreak m)\) and \(s(e) = e-m\) for all \(e \in E\), the 
  networks
  \begin{align*}
    G ={}& \bigl( 
      \{0,1\}, [m+n], 0, \sigma\star\tau, 1, \same{m+n}, \varnothing
    \bigr) \text{,}&
    G'' ={}& \bigl( 
      \{0,1\}, E, 0, g, 1, s, \varnothing
    \bigr) \text{,}\\
    G' ={}& \bigl( 
      \{0,1\}, [m], 0, \sigma, 1, \same{m}, \varnothing
    \bigr) \text{,}&
    G''' ={}& \bigl( 
      \{0,1\}, [n], 0, \tau, 1, \same{n}, \varnothing
    \bigr) 
  \end{align*}
  are such that $(G',G'')$ is a split decomposition of $G$ and \(G'' 
  \simeq G'''\), from which follows
  \begin{equation*}
    \phi_{m+n}(\sigma\star\tau) =
    L(G) =
    L(G') \otimes L(G'') =
    \phi_m(\sigma) \otimes L(G''') =
    \phi_m(\sigma) \otimes \phi_n(\tau) \text{.}
  \end{equation*}
  
  Finally, the tensor permutation axiom follows primarily from 
  considering two trivial cuts in $\Nwfuse{\vek{mk}}{ 
  a^{\vek{k}}_{\vek{l}} b^{\vek{m}}_{\vek{n}} }{\vek{ln}}$, namely 
  that which is obvious below $(2,3)$ and that which is obvious above 
  $(3,2)$. In both these cuts, the nonempty part has a nontrivial 
  split, but what is the left part in one split is the right part in 
  the other, and that is what the axiom is all about. Let
  \begin{align*}
    G' ={}& \bigl( 
      \{0,1\}, [k+m], 0, \cross{k}{m}, 1, \same{k+m}, \varnothing 
    \bigr) \text{,}\\
    G'' ={}& \bigl( 
      \{0,1\}, [l+n], 0, \cross{l}{n}, 1, \same{l+n}, \varnothing 
    \bigr) 
  \end{align*}
  so that \(G' \simeq \Nwfuse{\vek{mk}}{1}{\vek{km}}\) and \(G'' 
  \simeq \Nwfuse{\vek{nl}}{1}{\vek{ln}}\). Then
  \begin{align*}
    \phi(\cross{k}{m}) \circ (a \otimes b)
    ={} \hspace{-3em}& \\ ={}&
    L(G') \circ
    \biggl( L\Bigl( 
     \Nwfuse{\vek{k}}{ (2\colon a)^{\vek{k}}_{\vek{l}} }{\vek{l}}
    \Bigr) \otimes L\Bigl( 
      \Nwfuse{\vek{m}}{ (3\colon b)^{\vek{m}}_{\vek{n}} }{\vek{n}}
    \Bigr) \biggr)
    = \displaybreak[0]\\ ={}&
    L\Bigl( \Nwfuse{\vek{mk}}{1}{\vek{km}} \Bigr) \circ
    L\Bigl( \Nwfuse{\vek{km}}{ 
      (2\colon a)^{\vek{k}}_{\vek{l}} (3\colon b)^{\vek{m}}_{\vek{n}} 
    }{\vek{ln}} \Bigr)
    = \displaybreak[0]\\ ={}&
    L\Bigl( \Nwfuse{\vek{mk}}{ 
      (2\colon a)^{\vek{k}}_{\vek{l}} (3\colon b)^{\vek{m}}_{\vek{n}} 
    }{\vek{ln}} \Bigr)
    = \displaybreak[0]\\ ={}&
    L\Bigl( \Nwfuse{\vek{mk}}{ 
      (3\colon b)^{\vek{m}}_{\vek{n}} (2\colon a)^{\vek{k}}_{\vek{l}} 
    }{\vek{nl}} \Bigr) \circ 
    L\Bigl( \Nwfuse{\vek{nl}}{1}{\vek{ln}} \Bigr)
    = \displaybreak[0]\\ ={}&
    \biggl( L\Bigl( 
      \Nwfuse{\vek{m}}{ (3\colon b)^{\vek{m}}_{\vek{n}} }{\vek{n}} 
    \Bigr) \otimes L\Bigl( 
      \Nwfuse{\vek{k}}{ (2\colon a)^{\vek{k}}_{\vek{l}} }{\vek{l}} 
    \Bigr) \biggr) \circ 
    L(G'')
    = \\ ={}&
    (b \otimes a) \circ \phi(\cross{l}{n})
    \text{,}
  \end{align*}
  which completes the proof.
\end{proof}

Theorem~\ref{S:PROP2-definition} can be taken as an alternative axiom 
system for \PROPs, but it is somewhat awkward in that it refers to 
the rather technical concepts of `cut' and `split'. A simpler concept 
that one can use instead is that of ``boxing up'' a convex 
subnetwork as a new vertex.

\begin{definition} \label{Def:KonvextDeluttryck}
  A network \(H = (V_H, E_H, h_H, g_H, t_H, s_H, D_H)\) is said to be 
  a \DefOrd{convex subnetwork} of the network \(G = 
  (V_G, E_G, h_G, g_G, t_G, s_G, D_G)\), written 
  \index{\sqsubseteq@$\sqsubseteq$}\(H \sqsubseteq G\), if:
  \begin{itemize}
    \item
      \(V_H \subseteq V_G\), \(E_H \subseteq E_G\), and \(D_H = 
      \restr{D_G}{V_H \setminus \{0,1\}}\).
    \item
      For any \(e \in E_H\), \(h_H(e)=0\) or \((h_H,g_H)(e) = 
      (h_G,g_G)(e)\), and similarly \(t_H(e)=1\) or \((t_H,s_H)(e) = 
      (t_G,s_G)(e)\).
    \item
      There is no sequence \(e_1,\dotsc,e_n \in E_G\) with \(n>1\) such 
      that \(e_1,e_n \in E_H\), \(h_H(e_1) = 0\), \(t_H(e_n) = 1\), 
      and \(h_G(e_i) = t_G(e_{i+1})\) for \(i \in [n -\nobreak 1]\).
  \end{itemize}
  The sense in which the third condition is convexity is that it says 
  no ``ray'' (path) leaving the subnetwork will visit it again.
  
  Given \(G \in \Nw(\mc{P})\) and \(H \sqsubseteq G\), define the 
  \DefOrd{splice map} \index{\div@$\div$}$G \div H$ by 
  \((G \div\nobreak H)(\gamma) = (V',E',h',g',t',s',D')\) for any 
  \(\gamma \in \mc{P}\bigl( \omega(H), \alpha(H) \bigr)\), where
  \begin{align*}
    u_0 ={}& 1 + \max V_G \text{,}\\
    V' ={}& \{0,1\} \cup (V_G \setminus V_H) \cup \{u_0\}
      \text{,}\\
    E' ={}& \{ 3e \}_{e \in E_G \setminus E_H}
      \begin{aligned}[t]
        &{}\cup
        \setOf{ 3e+1 }{ \text{\(e \in E_H\) and \(h_H(e) = 0\)} }
        \\ &{}\cup
        \setOf{ 3e+2 }{ \text{\(e \in E_H\) and \(t_H(e) = 1\)} }
        \text{,} 
      \end{aligned}
      \displaybreak[0]\\
    D'(v) ={}& \begin{cases}
      D_G(v)& \text{if \(v \in V_G \setminus V_H\),}\\
      \gamma& \text{if \(v=u_0\)}
    \end{cases} 
    \qquad\text{for \(v \in V' \setminus \{0,1\}\),}
    \displaybreak[0]\\
    (h',g')(3e) ={}& (h_G,g_G)(e)
      \qquad\text{for \(3e \in E'\),}
      \\
    (h',g')(3e+1) ={}& (h_G,g_G)(e)
      \qquad\text{for \(3e+1 \in E'\),}
      \\
    (h',g')(3e+2) ={}& \bigl( u_0, s_H(e) \bigr)
      \qquad\text{for \(3e+2 \in E'\),}
      \displaybreak[0]\\
    (t',s')(3e) ={}& (t_G,s_G)(e)
      \qquad\text{for \(3e \in E'\),}
      \\
    (t',s')(3e+1) ={}& \bigl( u_0, g_H(e) \bigr)
      \qquad\text{for \(3e+1 \in E'\),}
      \\
    (t',s')(3e+2) ={}& (t_G,s_G)(e)
      \qquad\text{for \(3e+2 \in E'\).}
  \end{align*}
\end{definition}

Informally, $G \div H$ may be thought of as drawing a frame around 
the $H$ part of $G$, erasing the interior of that frame, and instead 
putting the argument of $G \div H$ there as decoration of this new 
vertex. The frame needs to be convex for it to be valid as the 
``shape'' of a new vertex.

\begin{lemma}
  Let $\mc{P}$ be an $\N^2$-graded set. For any convex 
  subnetwork $H$ of a network \(G \in \Nw(\mc{P})\),
  \[
     G \div H  
     \quad\text{maps}\quad
     \mc{P}\bigl( \omega(H), \alpha(H) \bigr) 
     \quad\text{into}\quad
     \Nw(\mc{P})\bigl( \omega(G), \alpha(G) \bigr)
     \text{.}
  \]
\end{lemma}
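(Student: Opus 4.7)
The plan is to verify, for an arbitrary $\gamma \in \mc{P}\bigl(\omega(H), \alpha(H)\bigr)$, that the tuple $(V', E', h', g', t', s', D')$ defining $(G \div H)(\gamma)$ satisfies the four axioms of Definition~\ref{Def:Network} and has the claimed arity $\alpha(G)$ and coarity $\omega(G)$. The edges of $E'$ split naturally into three classes according to the prefactor: type~A (edges $3e$ for $e \in E_G \setminus E_H$, preserving both endpoints of $e$); type~B (edges $3e+1$ for $e \in E_H$ with $h_H(e) = 0$, rerouted to emanate from $u_0$); and type~C (edges $3e+2$ for $e \in E_H$ with $t_H(e) = 1$, rerouted to terminate at $u_0$). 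Most of the work is routine bookkeeping against this trichotomy; the essential point is acyclicity.

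For arity, coarity, and Axiom~4: the second clause of convex subnetwork implies that any $e \in E_H$ with $t_G(e) = 1$ also has $t_H(e) = 1$, and dually for heads at $0$. Hence the outgoing edges from $1$ in $G'$ (type-A edges with $t_G(e) = 1$ together with type-C edges) are in bijection with those from $1$ in $G$, giving $\alpha(G') = \alpha(G)$; symmetrically $\omega(G') = \omega(G)$. For each old vertex $v \in V_G \setminus V_H$ the edges incident at $v$ in $G'$ are likewise in bijection with those in $G$, so the valencies still match $\alpha\bigl(D(v)\bigr)$ and $\omega\bigl(D(v)\bigr)$. At the new vertex $u_0$, the incoming type-B edges correspond precisely to the $\omega(H)$ output legs of $H$, matching $\omega(\gamma) = \omega(H)$, and the outgoing type-C edges correspond to the $\alpha(H)$ input legs, matching $\alpha(\gamma) = \alpha(H)$. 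Axioms~2 and~3 then follow: at $u_0$ the head index on a type-B edge is the head index $g_H(e)$ at $0$ in $H$, and the tail index on a type-C edge is $s_H(e)$ at $1$ in $H$, so injectivity and the ``indices from~$1$'' property at $u_0$ are inherited from the fact that $H$ is a network; the corresponding properties at all other vertices are inherited from $G$ through the bijections above.

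The main obstacle is Axiom~1 (acyclicity), and this is precisely where the third convexity clause is needed. Suppose for contradiction that $G'$ contains a directed cycle $f_1, \ldots, f_k$. If the vertex $u_0$ does not appear on this cycle then every $f_i$ is of type~A, and the underlying edges $e_i \in E_G \setminus E_H$ form a directed cycle in $G$, contradicting Axiom~1 for $G$. Otherwise $u_0$ lies on the cycle; after cyclic rotation we may assume $f_1$ is a type-B edge $3e_1 + 1$ (leaving $u_0$) and $f_k$ is a type-C edge $3e_k + 2$ (entering $u_0$), while $f_2, \ldots, f_{k-1}$ are all of type~A (since $u_0$ occurs only at the endpoints of the segment). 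Translating the cycle condition $h'(f_i) = t'(f_{i+1})$ back through the splice construction yields $h_G(e_i) = t_G(e_{i+1})$ for $i = 1, \ldots, k-1$, with $e_1, e_k \in E_H$, $h_H(e_1) = 0$, and $t_H(e_k) = 1$. But this is exactly the sequence of edges forbidden by the third clause of the convex subnetwork definition, giving the required contradiction and completing the verification.
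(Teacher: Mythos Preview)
Your proof is correct and follows essentially the same route as the paper: verify Axioms~2--4 by routine bookkeeping at $u_0$ versus the inherited vertices, then handle acyclicity by contradiction, splitting into the case where a putative cycle avoids $u_0$ (reducing to acyclicity of $G$) and the case where it passes through $u_0$ (producing the path forbidden by the third convexity clause). One small point: when you rotate so that $f_1$ leaves $u_0$ and $f_k$ enters it, you should pass to a shortest cycle (or the arc between two consecutive visits to $u_0$) to justify that $f_2,\ldots,f_{k-1}$ avoid $u_0$; the paper makes this explicit by taking $n$ minimal.
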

\begin{proof}
  What needs to be proved is mainly that $(G \div\nobreak 
  H)(\gamma)$, for every \(\gamma \in \mc{P}\bigl( \omega(H), 
  \alpha(H) \bigr)\), fits the definition of a network. Beginning 
  with the annotation, this fits for a vertex \(v \neq u_0\) because 
  it is the same as in $G$. It fits for vertex $u_0$ because 
  \(d^-(u_0) = \alpha(H) = \alpha(\gamma)\) and \(d^+(u_0) = 
  \omega(H) = \omega(\gamma)\). Head and tail indices in 
  $(G \div\nobreak H)(\gamma)$ are assigned from $1$ and up because 
  at every vertex of $(G \div\nobreak H)(\gamma)$ the head (and tail, 
  respectively) indices are the indices at some vertex of $G$ or $H$ 
  (in the latter case those vertices are $0$ and $1$). 
  Head and head index uniquely identifies an edge for the same 
  reason, and the same is true for tail and tail index.
  
  What requires $H$ to be a \emph{convex} subnetwork is the 
  acyclicity condition. Seeking a contradition, one may assume 
  \(e_1,\dotsc,e_n \in E'\) are such that \(h'(e_i) = t'(e_{i+1})\) 
  for \(i \in [n -\nobreak 1]\), that \(h'(e_n) = t'(e_1)\), and that 
  $n$ is the least possible for such a sequence of edges. Then 
  these edges cannot all be of the $3e$ kind, because that would 
  violate the acyclicity of $G$. Hence there must be edges incident 
  with $u_0$, and it can without loss of generality be assumed that 
  \(h'(e_n) = u_0 = t'(e_1)\). There can be no further such edges, 
  because if \(h'(e_k) = u_0\) for some \(k < n\) then 
  \(e_1,\dotsc,e_k\) is a shorter sequence satisfying the same 
  conditions, violating the minimality of $n$. On the other hand, 
  since \(h'(e_i) = u_0\) implies \(e_i \equiv 2 \pmod{3}\) and 
  \(t'(e_i) = u_0\) implies \(e_i \equiv 1 \pmod{3}\), it follows 
  that \(n \geqslant 2\). Hence \(\lfloor e_1/3 \rfloor, \dotsc, 
  \lfloor e_n/3 \rfloor \in E_G\) is a sequence of length greater 
  than one such that \(\lfloor e_1/3 \rfloor, \lfloor e_n/3 \rfloor 
  \in E_H\), \(h_H\bigl(\lfloor e_1/3 \rfloor\bigr) = 0\), 
  \(t_H\bigl(\lfloor e_n/3 \rfloor\bigr) = 1\), and
  \(h_G\bigl(\lfloor e_i/3 \rfloor\bigr) = 
  t_G\bigl(\lfloor e_{i+1}/3 \rfloor\bigr)\) for \(i \in 
  [n -\nobreak 1]\), but this violates the last condition for $H$ to 
  be a convex subnetwork.
\end{proof}

\begin{definition}
  Let $\mc{P}$ be an $\N^2$-graded set. A \DefOrd{network evaluation 
  map} for $\mc{P}$ is an $\N^2$-graded set morphism \(L\colon 
  \Nw(\mc{P}) \Fpil \mc{P}\) such that
  \begin{enumerate}
    \item
      if \(G \simeq H\) then \(L(G) = L(H)\),
    \item
      if \(H \sqsubseteq G\) then
      \begin{math}
        L\Bigl( (G \div H)\bigl( L(H) \bigr) \Bigr) = L(G)
      \end{math},
    \item
      \(L\Bigl( \Nwfuse{ \vek{m} }{ \gamma^{\vek{m}}_{\vek{n}} }{ 
      \vek{n} } \Bigr) = \gamma\) for all \(\gamma \in \mc{P}\bigl( 
      \Norm{\vek{m}}, \Norm{\vek{n}} \bigr)\).
  \end{enumerate}
\end{definition}

\begin{theorem} \label{S:PROP3-definition}
  An $\N^2$-graded set $\mc{P}$ equipped with a network evaluation 
  map $L$ is a \PROP, with operations defined as
  \begin{align}
    a \circ b :={}& 
      L\Bigl( \Nwfuse{\vek{l}}{ 
        a^{\vek{l}}_{\vek{m}} b^{\vek{m}}_{\vek{n}} 
      }{\vek{n}} \Bigr) &
      \text{for \(a \in \mc{P}\bigl( \Norm{\vek{l}}, \Norm{\vek{m}} 
      \bigr)\) }&\text{and \(b \in \mc{P}\bigl( \Norm{\vek{m}}, 
      \Norm{\vek{n}} \bigr)\),}
    \label{Eq:EvalDefCirc}
    \\
    a \otimes b :={}& 
      L\Bigl( \Nwfuse{\vek{km}}{ 
        a^{\vek{k}}_{\vek{l}} b^{\vek{m}}_{\vek{n}} 
      }{\vek{ln}} \Bigr)  &
      \text{for \(a \in \mc{P}\bigl( \Norm{\vek{k}}, \Norm{\vek{l}} 
      \bigr)\) }&\text{and \(b \in \mc{P}\bigl( \Norm{\vek{m}}, 
      \Norm{\vek{n}} \bigr)\),}
    \label{Eq:EvalDefOtimes}
    \\
    \phi(\sigma) :={}&
    L\Bigl( \bigl( \{0,1\}, [n], 0, \sigma, 1, \same{n}, 
      \varnothing 
    \bigr) \Bigr)
      \hspace{-4em}&\hspace{4em}
      &\text{for \(\sigma \in \Sigma_n\).}
  \end{align}
  Conversely, if $\mc{P}$ is a \PROP\ then \(\eval\colon \Nw(\mc{P}) 
  \Fpil \mc{P}\) is a network evaluation map.
\end{theorem}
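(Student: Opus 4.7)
My plan is to split the theorem into its two directions. The forward direction (that a network evaluation map induces a PROP structure) will be obtained by reducing to Theorem~\ref{S:PROP2-definition}, while the converse (that $\eval$ on a PROP is a network evaluation map) will be verified directly from the abstract index formula that defines it.

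For the forward direction, the hypotheses of Theorem~\ref{S:PROP2-definition} share with those of the present theorem the invariance under isomorphism and the single-vertex axiom, so what remains is to derive the cut and split decomposition laws from the splice axiom. My plan is to obtain each via two applications of splice: given a cut $(W_0,W_1,p)$ in $G$, first identify the convex subnetwork $H_1 \sqsubseteq G$ supported on $W_1 \cup \{0,1\}$ (with cut edges redirected to $H_1$'s output vertex via $p$), apply splice to replace $H_1$ by a single vertex annotated $L(G'')$, then identify inside the resulting network the second convex subnetwork $H_2$ supported on $W_0 \cup \{0,1\}$ and isomorphic to $G'$, and apply splice again. The final two-vertex network is by inspection isomorphic to the one used in \eqref{Eq:EvalDefCirc}, so its $L$-value is $L(G') \circ L(G'')$. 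Convexity of the candidate subnetworks follows from the definition of a cut: any forbidden escape path would need an edge from $W_0$ to $W_1$, which the cut forbids. The split case is entirely analogous, using the edge bipartition together with the index-ordering conditions in the split definition to identify two disjoint convex subnetworks, arriving at the two-vertex tensor-product network of \eqref{Eq:EvalDefOtimes}. The operations defined via \eqref{Eq:EvalDefCirc} and \eqref{Eq:EvalDefOtimes} then agree by construction with those implicitly defined by cut and split decomposition in Theorem~\ref{S:PROP2-definition}, and the permutation map coincides on the nose, so that theorem delivers the PROP structure.

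For the converse direction, invariance under isomorphism is Theorem~\ref{S:network-eval} and the single-vertex axiom is Corollary~\ref{Kor:EnklaNatverk}, so only the splice property needs proof. For $H \sqsubseteq G$, I plan to reorder the factors of the abstract index expression for $\eval(G)$ using Theorem~\ref{S:fuse-factor-reorder} so that those corresponding to vertices in $V_H$ are grouped together, then apply Theorem~\ref{S:AIN,snitt} in reverse to factor this grouped subexpression out as a separate abstract index expression whose boundary labels are precisely the cut edges of $H$ in $G$. The convexity of $H$ is exactly what guarantees the requisite partial order on labels exists, so that the decomposition is valid; the grouped subexpression, read on its own, is $\eval(H)$ with those same boundary labels, and reinserting it as the single labelled factor $\eval(H)^{\vek{b}}_{\vek{a}}$ at the position of the new vertex reproduces the abstract index expression that defines $\eval\bigl((G \div H)(\eval(H))\bigr)$.

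The main obstacle I anticipate lies in the converse direction: the bookkeeping needed to ensure that the abstract indices on boundary edges of $H$ inside $G$'s expression coincide with those appearing in the standalone expression for $\eval(H)$ and with those chosen for the new vertex $u_0$ in the spliced network. A secondary subtlety, relevant to both directions, is the treatment of stray edges of $H$, which the splice definition formally splits into two edges in the resulting network; one must confirm that the abstract index expression respects this doubling, and that, in the forward direction's two-splice argument, the networks produced really are isomorphic to the canonical composition and tensor-product networks of \eqref{Eq:EvalDefCirc} and \eqref{Eq:EvalDefOtimes} with head and tail indices matching up correctly.
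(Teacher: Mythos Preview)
Your forward direction matches the paper's: both derive the cut and split laws required by Theorem~\ref{S:PROP2-definition} via two applications of the splice axiom, collapsing the two halves of the decomposition one at a time until the canonical two-vertex network is reached. You splice in the opposite order to the paper, but that is immaterial.

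The converse direction has a gap. Theorem~\ref{S:AIN,snitt} factors an abstract index expression as a \emph{composition}, i.e.\ across a single interface where one side's free subscripts coincide with the other's free superscripts. A convex subnetwork $H \sqsubseteq G$, however, generally meets its complement in $G$ on \emph{two} interfaces (both output and input legs of $H$ may attach to non-$H$ vertices), and there may also be edges of $G$ bypassing $H$ altogether. One application of Theorem~\ref{S:AIN,snitt} cannot isolate such an $H$; what you actually want is a nesting identity asserting that a block of factors may be replaced by their joint value as a single labelled factor, and no such result is available directly. The paper proceeds instead by sandwiching $H$ between two cuts: writing $W_2 = V_H \setminus \{0,1\}$ and partitioning the remaining inner vertices of $G$ into a downstream part $W_0$ and the rest $W_1$, it verifies (this is where convexity is really used) that both $(W_0 \cup W_2, W_1)$ and $(W_0, W_2 \cup W_1)$ are cuts, and via Lemma~\ref{L:evalCut} obtains $\eval(G) = \eval(G_{00}) \circ \phi(\same{\card{F_2}}) \otimes \eval(H) \circ \eval(G_1)$, where $F_2$ is the set of bypass edges. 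Performing the same two-cut decomposition on $(G \div H)\bigl(\eval(H)\bigr)$ yields the identical expression. Your abstract-index approach can be rescued along these lines --- two uses of Theorem~\ref{S:AIN,snitt} together with one of Theorem~\ref{S:AIN,klyva} for the $\phi(\same{\card{F_2}}) \otimes \eval(H)$ factor --- but the downstream/upstream/bypass trichotomy is essential and is not delivered merely by ``convexity guarantees a partial order on labels''.
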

\begin{remark}
  Since many $\N^2$-graded sets support several different \PROP\ 
  structures, there may also be several different network evaluation 
  maps.
\end{remark}
\begin{proof}
  That the \PROP\ axioms are, for the most part, implicit in the 
  network notation was the main point already of 
  Theorem~\ref{S:PROP2-definition}, so what is needed here is 
  primarily a verification that the composition and tensor product 
  defined above exhibits the behaviour with respect to cuts and 
  splits that that theorem requires.
  
  Suppose $G$ is a network and $(G_0,G_1)$ is a decomposition of $G$ 
  induced by some cut. Then $G_0$ and $G_1$ are both convex 
  subnetworks of $G$. Let \(\vek{l} = \vek{e}^-_{G_0}(0)\), \(\vek{m} 
  = \vek{e}^+_{G_0}(1) = \vek{e}^-_{G_1}(0)\), and \(\vek{n} = 
  \vek{e}^+_{G_1}(1)\). Let \(H = (G \div G_0)\bigl( L(G_0) \bigr)\) 
  and \(u_0 = \max V_H\). Let $G_1'$ be the network isomorphic to 
  $G_1$ such that the isomorphism $(\chi,\psi)\colon G_1 \Fpil G_1'\) 
  has \(\chi(v)=v\) for all \(v \in V_{G_1}\), \(\psi(e) = 3e\) for 
  \(e \in E_{G_1} \setminus E_{G_0}\), and \(\psi(e) = 3e+2\) for 
  \(e \in E_{G_1} \cap E_{G_0}\). Then \(G_1' \sqsubseteq H\), and 
  hence
  \begin{multline*}
    L(G_0) \circ L(G_1) 
    =
    L\Bigl( \Nwfuse{\vek{l}}{ 
      \bigl(2\colon L(G_0)\bigr)^{\vek{l}}_{\vek{m}} 
      \bigl(3\colon L(G_1)\bigr)^{\vek{m}}_{\vek{n}} 
    }{\vek{n}} \Bigr) 
    = \\ =
    L\Bigl( \Nwfuse{\vek{l}}{ 
      \bigl(u_0\colon L(G_0)\bigr)^{\vek{l}}_{\vek{m}} 
      \bigl(u_0+1\colon L(G_1)\bigr)^{\vek{m}}_{\vek{n}} 
    }{\vek{n}} \Bigr) 
    = 
    L\Bigl( (H \div G_1')\bigl( L(G_1) \bigr) \Bigr) 
    = \\ =
    L\Bigl( (H \div G_1')\bigl( L(G_1') \bigr) \Bigr) 
    =
    L(H)
    =
    L\Bigl( (G \div G_0)\bigl( L(G_0) \bigr) \Bigr)
    =
    L(G)\text{.}
  \end{multline*}
  
  Next suppose $G$ is a network and $(G_0,G_1)$ is a decomposition of 
  $G$ induced by some split. Then $G_0$ and $G_1$ are both convex 
  subnetworks of $G$. Let \(\vek{k} = \vek{e}^-_{G_0}(0)\), \(\vek{l} 
  = \vek{e}^+_{G_0}(1)\), \(\vek{m} = \vek{e}^-_{G_1}(0)\), and 
  \(\vek{n} = \vek{e}^+_{G_1}(1)\). Let \(H = (G \div G_0)\bigl( 
  L(G_0) \bigr)\) and \(u_0 = \max V_H\). Let $G_1'$ be the network 
  isomorphic to $G_1$ such that the isomorphism $(\chi,\psi)\colon 
  G_1 \Fpil G_1'\) has \(\chi(v)=v\) for all \(v \in V_{G_1}\) and 
  \(\psi(e) = 3e\) for \(e \in E_{G_1}\). Then \(G_1' \sqsubseteq H\), 
  and hence
  \begin{multline*}
    L(G_0) \otimes L(G_1) 
    =
    L\Bigl( \Nwfuse{\vek{km}}{ 
      L(G_0)^{\vek{k}}_{\vek{l}} L(G_1)^{\vek{m}}_{\vek{n}} 
    }{\vek{ln}} \Bigr) 
    =
    L\Bigl( (H \div G_1')\bigl( L(G_1) \bigr) \Bigr) 
    = \\ =
    L\Bigl( (H \div G_1')\bigl( L(G_1') \bigr) \Bigr) 
    =
    L(H)
    =
    L\Bigl( (G \div G_0)\bigl( L(G_0) \bigr) \Bigr)
    =
    L(G)\text{.}
  \end{multline*}
  This has shown a network evaluation map gives rise to a \PROP\ 
  structure.
  
  For the converse, it must be shown that
  \[
    \eval\Bigl( (G \div H)\bigl( \eval(H) \bigr) \Bigr) = \eval(G)
  \]
  whenever \(H \sqsubseteq G\). Let $W_0'$ be set set of all vertices 
  in $V_G \setminus \{0,1\}\) that can be written as $h_G(e_n)$ for 
  some \(e_1,\dotsc,e_n \in E_G\) where \(h_G(e_i) = t_G(e_{i+1})\) 
  for \(i \in [n -\nobreak 1]\) and \(e_1 \in E_H\). Let \(W_0 = W_0' 
  \setminus V_H\), \(W_1 = V_G \setminus \{0,1\} \setminus W_0'\), and 
  \(W_2 = V_H \setminus \{0,1\}\). Then $(W_0 \cup\nobreak W_2, W_1)$ 
  and $(W_0, W_2 \cup\nobreak W_1)$ are both cuts in $G$; a back-edge 
  to $W_1$ would contradict the defining property of $W_0'$, and a 
  back-edge to $W_2$ would contradict the convexity of $H$. Let
  \begin{align*}
    F_0 ={}& \setOf[\big]{ e \in E_H }{ h_H(e) = 0 } 
      \text{,}\\
    F_1 ={}& \setOf[\big]{ e \in E_H }{ t_H(e) = 1 } 
      \text{,}\\
    F_2 ={}& \setOf[\big]{ e \in E_G \setminus E_H }{ 
      h_G(e) \in W_0 \cup \{0\}, t_G(e) \in W_1 \cup \{1\} }
      \text{;}
  \end{align*}
  then $F_0 \cup F_2$ are the cut edges of $(W_0, W_2 \cup\nobreak 
  W_1)$ and $F_1 \cup F_2$ are the cut edges of $(W_0 \cup\nobreak W_2, 
  W_1)$. Let $p_0$ be an ordering of the former cut such that 
  \(p_0(e) \leqslant \card{F_2}\) for \(e \in F_2\) and \(p_0(e) = 
  \card{F_2} + g_H(e)\) for \(e \in F_0\). Let \(p_1(e) = p_0(e)\) 
  for \(e \in F_2\) and \(p_1(e) = \card{F_2} + s_H(e)\) for \(e \in 
  F_1\). Then $(W_0 \cup\nobreak W_2, W_1, p_1)$ is an ordered cut in 
  $G$, so let $(G_0,G_1)$ be the corresponding decomposition. 
  Furthermore $(W_0, W_2, p_0)$ is an ordered cut in $G_0$, so let 
  $(G_{00},G_{01})$ be the corresponding decomposition of $G_0$. Then
  \begin{multline*}
    \eval(G) =
    \eval(G_{00}) \circ \eval(G_{01}) \circ \eval(G_1) = \\ =
    \eval(G_{00}) \circ \phi(\same{\card{F_2}}) \otimes \eval(H) 
      \circ \eval(G_1)
      \text{.}
  \end{multline*}
  
  Next let \(G' = (G \div H)\bigl( \eval(H) \bigr)\) and \(u = \max 
  V_{G'}\). Then $\bigl( W_0 \cup\nobreak \{u\}, W_1 \bigr)$ and 
  $\bigl( W_0, \{u\} \cup\nobreak W_1 \bigr)$ are cuts in $G'$; two 
  orderings of these are
  \begin{align*}
    p_0' \colon{}&
    \left\{ \begin{aligned}
      p_0'(3e) ={}& p_0(e) && \text{for \(e \in F_2\),}\\
      p_0'(3e+1) ={}& p_0(e) && \text{for \(e \in F_0\),}
    \end{aligned}\right.
    \\
    p_1' \colon{}&
    \left\{ \begin{aligned}
      p_1'(3e) ={}& p_1(e) && \text{for \(e \in F_2\),}\\
      p_1'(3e+2) ={}& p_1(e) && \text{for \(e \in F_1\).}
    \end{aligned}\right.
  \end{align*}
  Let $(G_0',G_1')$ be the decomposition of $G'$ induced by 
  $\bigl( W_0 \cup\nobreak \{u\}, W_1, p_1' \bigr)$ and let 
  $(G_{00}',G_{01}')$ be the decomposition of $G_0'$ induced by 
  $\bigl( W_0, \{u\}, p_0'\bigr)$. Then \(G_{00}' \simeq G_{00}\) and 
  \(G_1' \simeq G_1\). Furthermore
  \begin{math}
    G_{01}' = \Nwfuse[\Big]{ \vek{km} }{ 
      \bigl( u\colon \eval(H) \bigr)^\vek{m}_\vek{n}
    }{ \vek{kn} }
  \end{math}
  for some $\vek{k}$, $\vek{m}$, and $\vek{n}$ such that 
  \(\Norm{\vek{k}} = \card{F_2}\), \(\Norm{\vek{m}} = \card{F_0}\), 
  and \(\Norm{\vek{n}} = \card{F_1}\). Hence \(\eval(G_{01}' = 
  \phi(\same{\card{F_2}}) \otimes \eval(H)\) and thus
  \begin{multline*}
    \eval\Bigl( (G \div H)\bigl( \eval(H) \bigr) \Bigr) =
    \eval(G_{00}') \circ \eval(G_{01}') \circ \eval(G_1') 
    = \\ =
    \eval(G_{00}) \circ \phi(\same{\card{F_2}}) \otimes \eval(H) 
      \circ \eval(G_1)
    =
    \eval(G)
  \end{multline*}
  as claimed.
\end{proof}

\begin{example}[Evaluation map in $\mc{R}^{\bullet\times\bullet}$]
  \label{Ex:Matris-eval}
  A practical implementation of the evaluation map in some matrix 
  \PROP\ $\mc{R}^{\bullet\times\bullet}$ can be to view the edges as 
  conduits for messages in the form of row vectors.
  
  Concretely, to define the evaluation map \(\eval\colon 
  \Nw(\mc{R}^{\bullet\times\bullet}) \Fpil 
  \mc{R}^{\bullet\times\bullet}\) for a network \(G = 
  (V,E,h,g,t,s,D)\) of $\mc{R}^{\bullet\times\bullet}$, one first 
  constructs an assignment \(\lambda\colon E \Fpil \mc{R}^{\alpha(G)}\) 
  of vectors to edges, according to the following rules:
  \begin{itemize}
    \item
      If \(t(e) = 1\) then $\lambda(e)$ is the vector which has a $1$ in 
      the $s(e)$'th position and $0$ in all other positions, i.e., it 
      is the $s(e)$'th row in the identity matrix with side 
      $\alpha(G)$.
    \item
      If \(t(e) = v \neq 1\) then $\lambda(e)$ is the $s(e)$'th row of the 
      matrix
      $$
        D(v)
        \cdot
        \begin{bmatrix}
          \lambda(e_1) \\ \lambda(e_2) \\ \vdots \\ \lambda(e_{d^-(v)})
        \end{bmatrix}
      $$
      where each $e_i$ is the unique edge with \(h(e_i)=v\) and 
      \(g(e_i)=i\).
  \end{itemize}
  Then
  \[
    \eval(G) =
    \begin{bmatrix} 
      \lambda(e_1) \\ \lambda(e_2) \\ \vdots \\ \lambda(e_{\omega(G)}) 
    \end{bmatrix} \text{,}
  \]
  where each $e_i$ is the unique edge with \(h(e_i)=0\) and \(g(e_i) = i\).
  
  For example,
  \[
    \eval \left( \begin{mpgraphics*}{-93}
      PROPdiagram(0,0)
        same(3) circ
        elabels(btex $\scriptstyle 6$ etex, btex $\scriptstyle 7$ etex,
          btex $\scriptstyle 8$ etex) circ
        same(1) otimes box(2,2, btex $C$\strut etex) \circ
        cross(1,1) \otimes elabel(btex $\scriptstyle 5$ etex) circ
        lelabel(btex $\scriptstyle 4$ etex) otimes
          box(2,2, btex $B$\strut etex) \circ
        elabels(false, btex $\scriptstyle 3$ etex, false) circ
        box(2,2,btex $A$\strut etex) \otimes same(1) circ
        elabels(btex $\scriptstyle 0$ etex, btex $\scriptstyle 1$ etex, 
          btex $\scriptstyle 2$ etex)
      ;
    \end{mpgraphics*} \right)
    =
    \begin{pmatrix}
      b_{11}a_{21} & b_{11}a_{22} & b_{12} \\
      c_{11}a_{11} + c_{12}b_{21}a_{21} & 
        c_{11}a_{12} + c_{12}b_{21}a_{22} &
        c_{12}b_{22} \\
      c_{21}a_{11} + c_{22}b_{21}a_{21} & 
        c_{21}a_{12} + c_{22}b_{21}a_{22} &
        c_{22}b_{22}
    \end{pmatrix}
  \]
  for \(A = \begin{pmatrix} a_{11} & a_{12} \\ a_{21} & a_{22} 
  \end{pmatrix}\), \(B = \begin{pmatrix} b_{11} & b_{12} \\ b_{21} & 
  b_{22} \end{pmatrix}\), and \(C = \begin{pmatrix} c_{11} & c_{12} \\ 
  c_{21} & c_{22} \end{pmatrix}\), because
  \begin{align*}
    \begin{bmatrix}
      \lambda(0) \\ \lambda(1) \\ \lambda(2)
    \end{bmatrix}
    ={}& I =
    \begin{pmatrix}
      1 & 0 & 0 \\
      0 & 1 & 0 \\
      0 & 0 & 1
    \end{pmatrix}
    \text{,}\displaybreak[0]\\
    \begin{bmatrix}
      \lambda(4) \\ \lambda(3)
    \end{bmatrix}
    ={}&
    A \begin{bmatrix}
      \lambda(0) \\ \lambda(1)
    \end{bmatrix}
    =
    \begin{pmatrix}
      a_{11} & a_{12} \\ a_{21} & a_{22} 
    \end{pmatrix}
    \begin{pmatrix}
      1 & 0 & 0 \\
      0 & 1 & 0
    \end{pmatrix}
    =
    \begin{pmatrix}
      a_{11} & a_{12} & 0 \\ 
      a_{21} & a_{22} & 0
    \end{pmatrix}
    \text{,}\displaybreak[0]\\
    \begin{bmatrix}
      \lambda(6) \\ \lambda(5)
    \end{bmatrix}
    ={}&
    B \begin{bmatrix}
      \lambda(3) \\ \lambda(2)
    \end{bmatrix}
    =
    \begin{pmatrix} 
      b_{11} & b_{12} \\ b_{21} & b_{22}
    \end{pmatrix}
    \begin{pmatrix} 
      a_{21} & a_{22} & 0 \\
      0 & 0 & 1
    \end{pmatrix}
    =
    \begin{pmatrix} 
      b_{11}a_{21} & b_{11}a_{22} & b_{12} \\
      b_{21}a_{21} & b_{21}a_{22} & b_{22}
    \end{pmatrix}
    \text{,}\displaybreak[0]\\
    \begin{bmatrix}
      \lambda(7) \\ \lambda(8)
    \end{bmatrix}
    ={}&
    C \begin{bmatrix}
      \lambda(4) \\ \lambda(5)
    \end{bmatrix}
    \begin{aligned}[t]
      ={}&
      \begin{pmatrix} 
        c_{11} & c_{12} \\ c_{21} & c_{22}
      \end{pmatrix}
      \begin{pmatrix} 
        a_{11} & a_{12} & 0 \\
        b_{21}a_{21} & b_{21}a_{22} & b_{22}
      \end{pmatrix}
      = \\ ={}&
      \begin{pmatrix} 
        c_{11}a_{11} + c_{12}b_{21}a_{21} & 
          c_{11}a_{12} + c_{12}b_{21}a_{22} &
          c_{12}b_{22} \\
        c_{21}a_{11} + c_{22}b_{21}a_{21} & 
          c_{21}a_{12} + c_{22}b_{21}a_{22} &
          c_{22}b_{22}
      \end{pmatrix}
      \text{.}
    \end{aligned}
  \end{align*}
  
  Alternatively (and equivalently), one can start from the output side 
  and assign column vectors to edges, multiplying each $D(v)$ on the left 
  by the matrix built from the outgoing edge messages, and cutting 
  these intermediate matrix products up into columns rather than rows.
\end{example}

Thus having established networks as a notation for \PROP\ 
expressions\Ldash arguably, in view of 
Theorem~\ref{S:PROP3-definition}, even as the \emph{canonical} 
notation for \PROP\ expressions\Rdash the next step towards rewriting 
would be to sort out what constitutes a \emph{sub}expression. That 
will however be the subject of Section~\ref{Sec:Deluttryck}, since 
some settings for the answer bring forth technical considerations 
regarding boolean matrices that will turn out to be very important 
later. As the relevant material is not all that well-known, it can be 
found in the next section.

\section{Ordering matrices}
\label{Sec:Matrisordning}

Within optimisation, it is common to regard matrices (and even more 
so vectors) as ordered by element-wise comparisons: \(A \leqslant B\) 
iff \(A_{ij} \leqslant B_{ij}\) for all rows $i$ and columns $j$. 
Although this is less commonly seen in other branches of mathematics, 
its use within optimisation alone is important enough to earn the 
title of \emph{standard ordering} of matrices. Pullbacks of this 
standard ordering is also going to be our main source of nontrivial 
\PROP\ quasi-orders.

\subsection{Ordered semirings}

\begin{definition}
  A \DefOrd{partially ordered semiring} $(\mc{R},P)$ is a semiring 
  $\mc{R}$ on which is defined a partial order $P$ such that:
  \begin{enumerate}
    \item
      if \(x \leqslant y \pin{P}\) then \(x+z \leqslant y+z \pin{P}\) 
      for all \(z \in \mc{R}\);
    \item
      if \(x \leqslant y \pin{P}\) then there exists some \(z 
      \geqslant 0 \pin{P}\) such that \(y=x+z\);
    \item
      if \(x,y \geqslant 0 \pin{P}\) then \(xy \geqslant 0 \pin{P}\).
  \end{enumerate}
  The \DefOrd{semipositive cone} (sometimes \emDefOrd{nonnegative 
  cone}) in $\mc{R}$ with respect to $P$ is the set
  \begin{equation*}
    \mc{R}_{\geqslant 0} = \setOf{x \in \mc{R}}{x \geqslant 0 \pin{P}}
    \text{.}
  \end{equation*}
  An \(x \in \mc{R}\) is said to be \DefOrd{positive} if \(x 
  \geqslant 0 \pin{P}\) and multiplication by it preserves all strict 
  inequalities, i.e.,
  \begin{equation*}
    y < z \pin{P} \Ipil 
    \text{\(xy < xz \pin{P}\) and \(yx < zx \pin{P}\).}
  \end{equation*}
  The \DefOrd{positive cone} \index{+ sub@${}_+$ (positive cone)}$\mc{R}_+$ 
  in $\mc{R}$ with respect to $P$ is the set of all positive elements 
  of $\mc{R}$.
\end{definition}

Obvious examples of partially (totally, even) ordered semirings are 
the subsemirings of $\R$ under the standard order, e.g.~$\N$, $\Z$, 
and $\Q$. Another elementary semiring that will be of some interest 
is the \DefOrd{Boolean semiring} \index{B@$\B$}\(\B = \{0,1\}\), which 
has `\textsc{or}' as addition, `\textsc{and}' as multiplication, and 
\(0 < 1\). Composite examples of partially ordered semirings are 
provided by matrix rings and semirings.

\begin{construction} \label{Konstr:Matrisordning}
  Let an associative unital partially ordered semiring $(\mc{R},P)$ be 
  given. The \DefOrd{standard order} on $\mc{R}^{\bullet \times \bullet}$ 
  is the $Q$ defined for \(A,B \in \mc{R}^{m \times n}\) by
  \begin{equation}
    A \leqslant B \pin{Q} \Epil
    A_{ij} \leqslant B_{ij} \pin{P}
    \text{ for all \(i \in [m]\), \(j \in [n]\);}
  \end{equation}
  matrices of different shapes are unrelated. This is an 
  $\N^2$-graded partial order which satisfies:
  \begin{enumerate}
    \item \label{Item:Semipositiv}
      \(0 \leqslant A \pin{Q(m,n)}\) if and only if \(A \in 
      (\mc{R}_{\geqslant 0})^{m \times n}\).
    \item \label{Item:Translation}
      If \(A,B,C \in \mc{R}^{m\times n}\) and \(A \leqslant B 
      \pin{Q}\) then \(A+C \leqslant B+C \pin{Q}\).
    \item \label{Item:Reduktion}
      If \(A \leqslant B \pin{Q(m,n)}\) then \(B=A+C\) for some \(C 
      \in (\mc{R}_{\geqslant 0})^{m \times n}\).
    \item \label{Item:Multiplikation}
      If \(A \in (\mc{R}_{\geqslant 0})^{l \times m}\) and
      \(B \in (\mc{R}_{\geqslant 0})^{m \times n}\) then 
      \(AB \in (\mc{R}_{\geqslant 0})^{l \times n}\).
    \item \label{Item:R*,*-tensor}
      The following are equivalent for all \(A,B,C \in 
      \mc{R}^{\bullet\times\bullet}\):
      \begin{enumerate}
        \item \(A \leqslant B \pin{Q}\),
        \item \(A \otimes C \leqslant B \otimes C \pin{Q}\),
        \item \(C \otimes A \leqslant C \otimes B \pin{Q}\).
      \end{enumerate}
    \item \label{Item:R*,*-grundad}
      If $P$ is well-founded then $Q$ is well-founded.
  \end{enumerate}
  Hence $\bigl( \mc{R}^{n \times n}, Q(n,n) \bigr)$ will be 
  a partially ordered semiring for every \(n \geqslant 1\). 
  Moreover the restriction of $Q$ to $(\mc{R}_{\geqslant 0})^
  {\bullet\times\bullet}$ is a \PROP\ partial order.
\end{construction}
\begin{proof}
  It is clear that $Q(m,n)$ is reflexive. That it is transitive follows 
  from
  \begin{multline*}
    A \leqslant B \leqslant C \pin{Q(m,n)} \Epil \\ \Epil
    A_{ij} \leqslant B_{ij} \leqslant C_{ij} \pin{P}
      \text{ for all \((i,j) \in [m] \times [n]\)} \Ipil \\ \Ipil
    A_{ij} \leqslant C_{ij} \pin{P} 
      \text{ for all \((i,j) \in [m] \times [n]\)} \Epil
    A \leqslant C \pin{Q(m,n)}
  \end{multline*}
  and that it is antisymmetric from
  \begin{multline*}
    A \leqslant B \leqslant A \pin{Q(m,n)} \Epil \\ \Epil
    A_{ij} \leqslant B_{ij} \leqslant A_{ij} \pin{P}
      \text{ for all \((i,j) \in [m] \times [n]\)} \Ipil \\ \Ipil
    A_{ij} = B_{ij} \text{ for all \((i,j) \in [m] \times [n]\)} 
      \Epil
    A = B \text{.}
  \end{multline*}
  This has shown that $Q$ is an $\N^2$-graded partial order.
  
  Claim~\ref{Item:Semipositiv} is immediate from the definitions of 
  $Q$ and $\mc{R}_{\geqslant 0}$. Claim~\ref{Item:Translation} 
  (translation) is similarly the observation that
  \begin{multline*}
    A \leqslant B \pin{Q(m,n)} \Epil
    A_{ij} \leqslant B_{ij} \pin{P} 
      \text{ for all \((i,j) \in [m] \times [n]\)} 
    \Ipil \\ \Ipil
    A_{ij} + C_{ij} \leqslant B_{ij} + C_{ij} \pin{P} 
      \text{ for all \((i,j) \in [m] \times [n]\)} 
      \Epil \\ \Epil
    A+C \leqslant B+C \pin{Q(m,n)}
  \end{multline*}
  and if \(A \leqslant B \pin{Q(m,n)}\) then choosing \(C_{ij} \in 
  \mc{R}_{\geqslant 0}\) such that \(B_{ij} = A_{ij}+C_{ij}\) for all 
  \((i,j) \in [m] \times [n]\) will produce \(C \geqslant 0 \pin{Q}\) 
  such that \(B = A+C\), thus verifying claim~\ref{Item:Reduktion}. 
  Claim~\ref{Item:R*,*-tensor} is equally trivial, as those component 
  comparisons in \(A \otimes C \leqslant B \otimes C \pin{Q}\) and 
  \(C \otimes A \leqslant C \otimes B \pin{Q}\) that are not those of 
  \(A \leqslant B \pin{Q}\) are all of the form \(0 \leqslant 0 
  \pin{P}\) or \(C_{i,j} \leqslant C_{i,j} \pin{P}\).
  
  For claim~\ref{Item:Multiplikation}, let \(A \in 
  (\mc{R}_{\geqslant 0})^{l \times m}\) and \(B \in 
  (\mc{R}_{\geqslant 0})^{m \times n}\) be given. About \(C=AB\) one 
  finds for all \(i \in [l]\) and \(j \in [n]\) that
  \begin{equation*}
    C_{ij} = \sum_{k=1}^m A_{ik} B_{kj} \geqslant
    \sum_{k=1}^{m-1} A_{ik} B_{kj} \geqslant \dotsb \geqslant
    A_{i1} B_{1j} \geqslant 0 \pin{P}
  \end{equation*}
  since \(A_{ik} B_{kj} \geqslant 0 \pin{P}\) and hence 
  \(\sum_{r=1}^k A_{ir} B_{rj} \geqslant \sum_{r=1}^{k-1} A_{ir} 
  B_{rj} \pin{P}\) for all \(k \in [m]\).
  
  Claim~\ref{Item:R*,*-grundad} can be shown by considering an 
  infinite sequence \(A_1 \geqslant A_2 \geqslant \dotsb \pin{Q}\). 
  If \(A_k > A_{k+1} \pin{Q}\), then there must be some position 
  $(i,j)$ such that \((A_k)_{ij} > (A_{k+1})_{ij} \pin{P}\). Because 
  $P$ is well-founded, there can only be a finite number of indices 
  $k$ in which this happens in position $(i,j)$. Since all matrices 
  in this sequence have $\omega(A_1)$ rows and $\alpha(A_1)$ columns, 
  there is a bounded number of matrix positions at which such 
  descents can happen. Hence the number of strict inequalities in the 
  sequence must be finite, and thus $Q$ is well-founded.
  
  Finally, the conclusion that $Q$ is a \PROP\ partial order on 
  $(\mc{R}_{\geqslant 0})^{\bullet\times\bullet}$ may need some 
  elaboration. If \(0 \leqslant A \leqslant B \pin{Q(l,m)}\) then 
  there is some \(B' \geqslant 0 \pin{Q(l,m)}\) such that \(B = 
  A+B'\) and hence \(CAD \leqslant CAD + CB'D = CBD \pin{Q(k,n)}\) 
  for all \(C \in (\mc{R}_{\geqslant 0})^{k \times l}\) and 
  \(D \in (\mc{R}_{\geqslant 0})^{m \times n}\).
\end{proof}

The next step will be to exhibit examples of sub-\PROPs\ of 
$\mc{R}^{\bullet\times\bullet}$ where the standard order is strict.

By the first two axioms, \(x \leqslant y\) if and only if there exists 
some \(z \in \mc{R}_{\geqslant 0}\) such that \(y=x+z\); hence a 
semiring partial order is fully specified by its semipositive cone. 
An alternative set of axioms for a subset $C$ of $\mc{R}$ to be the 
semipositive cone of a semiring partial order are:
\begin{enumerate}
  \item
    \(0 \in C\).
  \item
    $C$ is closed under addition.
  \item
    $C$ is closed under multiplication.
  \item
    If \(x = y+u\) and \(y = x+v\) for some \(u,v \in C\) then 
    \(x=y\).
\end{enumerate}
The last of these is the claim that $P$ is antisymmetric. For a 
partially ordered \emph{ring} $\mc{R}$ it can be simplified to the 
claim that $\{u,-u\} \cap C$ has at most one element for any \(u \in 
\mc{R}\), but semirings can have \(x+u+v=x\) even if \(u+v \neq 0\). 
That the semipositive cone is closed under addition follows from 
transitivity and translation of the partial order, since 
\(x,y \geqslant 0 \pin{P}\) implies \(x+y \geqslant 0+y = y 
\geqslant 0 \pin{P}\). That \(0 \geqslant 0\) implies that $0$ is 
always semipositive.


The term `positive cone' suggests that positive elements also form a 
cone, and in many case this is indeed so, but it depends on 
properties of the semiring.

\begin{lemma} \label{L:Positivitet}
  Let a partially ordered semiring $(\mc{R},P)$ be given.
  \begin{enumerate}
    \item
      If addition is cancellative then strictness of inequalities 
      is preserved under translation and \(x+y \in \mc{R}_+\) for 
      any \(x \in \mc{R}_+\) and \(y \in \mc{R}_{\geqslant 0}\). 
    \item
      If multiplication is associative then \(xy \in \mc{R}_+\) for 
      all \(x,y \in \mc{R}_+\).
  \end{enumerate}
\end{lemma}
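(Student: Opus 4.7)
The proof should be a straightforward unpacking of the definitions, so my plan is simply to organise it in the most economical order.

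For part~1(a), the plan is to argue by contradiction using antisymmetry of the partial order. Assuming \(y<z\) but \(y+w \not< z+w\), translation gives \(y+w \leqslant z+w\), so the failure of strictness forces \(z+w \leqslant y+w\) as well, hence \(y+w = z+w\) by antisymmetry. Cancellativity of addition then yields \(y=z\), contradicting \(y<z\). This is the key observation on which everything else in part~1 rests.

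For part~1(b), I would first verify \(x+y \in \mc{R}_{\geqslant 0}\) by applying translation to \(0 \leqslant x\) (obtaining \(y \leqslant x+y\)) and then chaining with \(0 \leqslant y\). Next, given an arbitrary strict inequality \(a<b\), I would expand \((x+y)a\) and \((x+y)b\) by distributivity and split the task into two monotonicity checks. The strict step \(xa < xb\) comes for free from \(x \in \mc{R}_+\). For the non-strict step, I would invoke axiom~(2) of a partially ordered semiring to write \(b=a+c\) with \(c \geqslant 0\), then use \(y \geqslant 0\) together with axiom~(3) to get \(yc \geqslant 0\), and finally apply translation to conclude \(ya \leqslant yb\). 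Combining these using part~1(a) (to add \(ya\) to a strict inequality) and ordinary translation gives \((x+y)a < (x+y)b\); the right-action version is completely symmetric. The only minor subtlety, which I would flag, is that part~1(a) is really what lets us add a non-strict inequality to a strict one without losing strictness.

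Part~2 is then essentially a one-line functoriality argument: semipositivity of \(xy\) is immediate from axiom~(3), and for any strict \(a<b\) associativity lets us write \((xy)a = x(ya)\) and \((xy)b = x(yb)\), so applying the defining property of \(y \in \mc{R}_+\) first and then of \(x \in \mc{R}_+\) yields \((xy)a < (xy)b\); the right-action case is identical. I expect no real obstacle here — the only place where one has to pause and think is part~1(b), to make sure the chain of comparisons keeps its strictness, which is exactly what part~1(a) was set up to guarantee.
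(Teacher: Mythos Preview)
Your proposal is correct and follows essentially the same route as the paper: the contrapositive/contradiction phrasing of part~1(a), the distributivity-plus-translation chain for $x+y$, and the two-step associativity argument for $xy$ all match the paper's proof, with only cosmetic differences (you phrase the translation-of-strictness step by contradiction and explicitly verify the semipositivity conditions that the paper leaves implicit).
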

\begin{proof}
  First assume addition in $\mc{R}$ is cancellative, i.e., 
  \(x+z=y+z\) implies \(x=y\). If \(x < y \pin{P}\) then \(x 
  \leqslant y \pin{P}\) and hence \(x+z \leqslant y+z \pin{P}\) by 
  translation invariance of $P$. Furthermore \(x \not\geqslant y 
  \pin{P}\), hence \(x \neq y\), and thus \(x+z \neq y+z\), from 
  which follows that \(x+z \not\geqslant y+z \pin{P}\). Therefore 
  \(x+z < y+z \pin{P}\), i.e., translation by some arbitrary \(z \in 
  \mc{R}\) preserves strict inequalities. To see that \(x+y \in 
  \mc{R}_+\) when \(x \in \mc{R}_+\) and \(y \in \mc{R}_{\geqslant 
  0}\), let \(w,z \in \mc{R}\) such that \(w<z \pin{P}\) be 
  arbitrary. Then
  \begin{gather*}
    w (x+y) = wx + wy < zx + wy \leqslant zx + zy = z(x+y) \pin{P}
      \text{\rlap{,}}\\
    (x+y)w = xw + yw < xz + yw \leqslant xz + yz = (x+y)z \pin{P}
  \end{gather*}
  where invariance of strict inequalities is needed e.g.~to conclude 
  from \(wx < zx\) (by the \(x \in \mc{R}_+\) hyphothesis) that 
  \(wx + wy < zx + wy\).
  
  Second assume multiplication in $\mc{R}$ is associative. Let \(x,y 
  \in \mc{R}_+\) and \(w,z \in \mc{R}\) such that \(w < z \pin{P}\) 
  be arbitrary. Then \(wx < zx \pin{P}\) and hence \(w(xy) = (wx)y < 
  (zx)y = z(xy) \pin{P}\), similarly \(yw < yz \pin{P}\) and hence 
  \((xy)w = x(yw) < x(yz) = (xy)z \pin{P}\), which has verified that 
  \(xy \in \mc{R}_+\).
\end{proof}

An example of $\mc{R}_+$ failing to be closed under addition when 
addition is not cancellative can be found in \(\mc{R} = 
\B^{2 \times 2}\); the matrices \(\left( \begin{smallmatrix} 1&0 \\ 
0&1 \end{smallmatrix} \right)\) and \(\left( \begin{smallmatrix} 0&1 
\\ 1&0 \end{smallmatrix} \right)\) are both positive (on account of 
being invertible) but their sum \(\left( \begin{smallmatrix} 1&1 
\\ 1&1 \end{smallmatrix} \right)\) is not.

\begin{lemma} \label{L:Positiv}
  Let $(\mc{R},P)$ be an associative unital partially ordered semiring 
  with cancellative addition. Let $Q$ be the standard order on 
  $\mc{R}^{\bullet\times\bullet}$. Let $\mc{Q}$ be the set of those 
  elements of $(\mc{R}_{\geqslant 0})^{\bullet\times\bullet}$ which 
  have at least one element on $\mc{R}_+$ in each row and column. 
  Then the following holds:
  \begin{enumerate}
    \item \label{Item1:Positiv}
      If \(A < B \pin{Q(l,m)}\), and \(C \in 
      (\mc{R}_{\geqslant 0})^{m \times n}\) is such that every row 
      contains at least one element of $\mc{R}_+$, then \(AC < BC 
      \pin{Q(l,n)}\).
    \item \label{Item2:Positiv}
      If \(A < B \pin{Q(m,l)}\), and \(C \in 
      (\mc{R}_{\geqslant 0})^{n \times m}\) is such that every column 
      contains at least one element of $\mc{R}_+$, then \(CA < CB 
      \pin{Q(n,l)}\).
    \item \label{Item3:Positiv}
      If \(0 < 1 \pin{P}\) then $\mc{Q}$ is a sub-\PROP\ of 
      $(\mc{R}_{\geqslant 0})^{\bullet\times\bullet}$, and the 
      restriction of $Q$ to $\mc{Q}$ is strict.
    \item \label{Item4:Positiv}
      If \(0 < 1 \pin{P}\) then all elements of $\mc{Q}(n,n)$ are in 
      the positive cone of the partially ordered semiring 
      $\bigl( \mc{R}^{n \times n}, Q(n,n) \bigr)$.
  \end{enumerate}
\end{lemma}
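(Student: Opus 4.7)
The plan is to work through the four claims in order, since the later ones largely build on the earlier ones, and to rely heavily on the reduction property~\ref{Item:Reduktion} from Construction~\ref{Konstr:Matrisordning} together with the preservation of strict inequalities under translation that Lemma~\ref{L:Positivitet} gives when addition is cancellative.

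For claim~\ref{Item1:Positiv}, I would write $B = A + D$ with $D \in (\mc{R}_{\geqslant 0})^{l \times m}$ (this is~\ref{Item:Reduktion}), so that $BC = AC + DC$ with $DC \geqslant 0$ by~\ref{Item:Multiplikation}; it then suffices to show $DC \neq AC$, i.e.~that $DC$ is nonzero in at least one entry, and then invoke invariance of strict inequalities under translation to conclude $AC < BC$. Since $A \neq B$, there is a position $(i,j)$ at which $A_{ij} < B_{ij} \pin{P}$, and cancellativity of addition plus Lemma~\ref{L:Positivitet} gives $0 < D_{ij} \pin{P}$. By hypothesis there is a $k_0$ with $C_{j k_0} \in \mc{R}_+$, whence by the very definition of $\mc{R}_+$ we get $0 = 0 \cdot C_{j k_0} < D_{ij} C_{j k_0} \pin{P}$; adding the other summands (all $\geqslant 0$) and again using translation invariance of strict inequalities gives $(DC)_{i k_0} > 0 \pin{P}$, as required. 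Claim~\ref{Item2:Positiv} is proved by the mirror argument, multiplying on the left by $C$ and using the column hypothesis.

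For claim~\ref{Item3:Positiv}, I would verify sub-\PROP\ status by checking each of the three closure conditions of Definition~\ref{Def:PROP}. Permutation matrices lie in $\mc{Q}$ because $1 \in \mc{R}_+$ whenever $0 < 1 \pin{P}$ (multiplication by the multiplicative identity trivially preserves all inequalities). For closure under $\circ$, given $A \in \mc{Q}(l,m)$ and $B \in \mc{Q}(m,n)$, pick for each row $i$ of $A$ some $A_{ij} \in \mc{R}_+$ and then $B_{jk} \in \mc{R}_+$; Lemma~\ref{L:Positivitet} gives $A_{ij} B_{jk} \in \mc{R}_+$, and since adding a semipositive element to a positive one yields a positive one (again Lemma~\ref{L:Positivitet}, using cancellative addition), the full sum $(AB)_{ik}$ lies in $\mc{R}_+$; the column argument is symmetric. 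Closure under $\otimes$ is immediate from the block-diagonal form, as each row/column of $A \otimes B$ inherits its positive entry directly from $A$ or from $B$. Strictness of $Q$ on $\mc{Q}$ then follows by one application each of claims~\ref{Item1:Positiv} and~\ref{Item2:Positiv} for composition (the outer factors are in $\mc{Q}$ and hence have the required positive entries), and from~\ref{Item:R*,*-tensor} together with $A \neq B \Rightarrow A \otimes C \neq B \otimes C$ for the tensor product.

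Claim~\ref{Item4:Positiv} is then almost free: given $A \in \mc{Q}(n,n)$ and $B < C \pin{Q(n,n)}$, claim~\ref{Item2:Positiv} (with $A$ playing the role of the ``left multiplier'') yields $AB < AC$, and claim~\ref{Item1:Positiv} (with $A$ as the ``right multiplier'') yields $BA < CA$; this is exactly what is needed to put $A$ into the positive cone of $\bigl( \mc{R}^{n \times n}, Q(n,n) \bigr)$. I expect the main technical hurdle to be the bookkeeping in claim~\ref{Item1:Positiv}: one must be careful to distinguish \emph{positive} from merely \emph{semipositive} elements, making sure every step that strengthens ``$\geqslant$'' to ``$<$'' is licensed by Lemma~\ref{L:Positivitet} and hence ultimately by the cancellativity hypothesis, since the absence of subtraction rules out the shortcuts one might take in a ring.
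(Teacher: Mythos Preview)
Your proof is correct and follows essentially the same route as the paper's: decompose $B=A+D$ with $D\geqslant 0$, locate a strictly positive entry of $D$, push it through the positive entry of $C$ in the matching row, and use cancellativity (via Lemma~\ref{L:Positivitet}) to upgrade $\leqslant$ to $<$; then assemble claims~\ref{Item3:Positiv} and~\ref{Item4:Positiv} from the first two. Two minor remarks: the phrase ``it then suffices to show $DC\neq AC$'' should read ``$BC\neq AC$'' (equivalently $DC\neq 0$), as your ``i.e.''\ clause makes clear; and your closure-under-composition argument is in fact slightly more careful than the paper's, since you explicitly invoke Lemma~\ref{L:Positivitet} to conclude $(AB)_{ik}\in\mc{R}_+$ rather than merely $(AB)_{ik}>0$, which is what the definition of~$\mc{Q}$ actually requires.
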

\begin{remark}
  Since unitality is generally considered to imply \(1 \neq 0\), the 
  \(0 < 1 \pin{P}\) condition is equivalent to \(0 \leqslant 1 
  \pin{P}\). The case that primarily needs to be ruled out is that 
  $0$ and $1$ are unrelated in $P$, since that would put the image of 
  $\phi$ outside $\mc{Q}$. 
\end{remark}
\begin{proof}
  For claim~\ref{Item1:Positiv}, one may first observe that \(B = A + 
  A'\) for some \(A' \geqslant 0 \pin{Q(l,m)}\), and hence \(AC 
  \leqslant AC + A'C = BC \pin{Q(l,n)}\). Second, there must be some 
  position \((i,j) \in [l] \times [m]\) such that \(A_{ij} < B_{ij} 
  \pin{P}\). Let \(k \in [n]\) be such that \(C_{jk} \in \mc{R}_+\). 
  Then 
  \begin{math}
    (A'C)_{ik} = 
    \sum_{r=1}^m A'_{ir} C_{rk} \geqslant 
    A'_{ij} C_{jk} > 0 \pin{P}
  \end{math},
  and hence \((AC)_{ik} < (BC)_{ik} \pin{P}\) by cancellativity of 
  addition. 
  The proof of claim~\ref{Item2:Positiv} is completely analogous.
  
  These two claims, together with those in 
  Construction~\ref{Konstr:Matrisordning}, demonstrate that $Q$ on 
  $\mc{Q}$ is strict with respect to composition and tensor product. 
  Since every row of $A \otimes B$ for \(A, B \in \mc{Q}\) contains a 
  row of $A$ or $B$ in its entirety, and similarly every column of 
  $A \otimes B$ contains a column of $A$ or $B$ in its entirety, it 
  follows that $\mc{Q}$ is closed under the tensor product. To see 
  that it is closed under composition, one may consider some 
  arbitrary \(A \in \mc{Q}(l,m)\) and \(B \in \mc{Q}(m,n)\). For any 
  row index \(i \in [l]\), there is some \(j \in [m]\) such that 
  \(A_{ij} \in \mc{R}_+\) and some \(k \in [n]\) such that \(B_{jk} 
  \in \mc{R}_+\). Hence \((AB)_{ik} = \sum_{r=1}^m A_{ir} B_{rk} 
  \geqslant A_{ij} B_{jk} > 0 \pin{P}\), and thus row $i$ of $AB$ 
  contains a positive element in column $k$. Similarly to show that 
  column $k$ of $AB$ contains a positive element, one first chooses 
  \(j \in [m]\) such that \(B_{jk} \in \mc{R}_+\) and then \(i \in 
  [l]\) such that \(A_{ij} \in \mc{R}_+\). Together they demonstrate 
  that \(A \circ B \in \mc{Q}\).
  
  What remains for $\mc{Q}$ to be a \PROP\ is that it contains  
  $\phi(\sigma)$ for all permutations $\sigma$. These matrices have a 
  $1$ in every row and column, which by \(0 < 1 \pin{P}\) is 
  positive; the special case of \(0 = \phi(\same{0})\) has neither 
  row nor columns, so it gets by without any $1$ elements. This 
  completes the proof of claim~\ref{Item3:Positiv}. 
  Claim~\ref{Item4:Positiv}, finally, is merely the special case 
  \(l=m=n\) of the first two claims.
\end{proof}

\begin{corollary} \label{Kor:BiaffinOrdning}
  Let $(\mc{R},P)$ be an associative unital semiring with 
  cancellative addition, partially ordered so that \(0 < 1 \pin{P}\). 
  Note that the restriction $Q$ of the standard order on 
  $\mc{R}^{\bullet\times\bullet}$ to $\Baff(\mc{R}_{\geqslant 0})$ is 
  a \PROP\ partial order. Let $\mc{B}$ be the set of those elements of 
  $\Baff(\mc{R}_{\geqslant 0})$ which have at least one element of 
  $\mc{R}_+$ in each row and column. Then $\mc{B}$ is a \PROP, and 
  the restriction of $Q$ to $\mc{B}$ is strict.
\end{corollary}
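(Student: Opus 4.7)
My plan is to reduce the corollary to Lemma~\ref{L:Positiv} combined with a direct inspection of the biaffine tensor product formula \eqref{Eq:Biaffin-tensor}. I would begin by observing that $\mc{B}$ coincides with $\Baff(\mc{R}_{\geqslant 0}) \cap \mc{Q}$, where $\mc{Q}$ is the sub-\PROP\ of $(\mc{R}_{\geqslant 0})^{\bullet \times \bullet}$ constructed in that lemma. Containment of permutation images and closure under composition then come essentially for free: $\phi_{\Baff(\mc{R})}(\sigma) = \phi_{\mc{R}^{\bullet\times\bullet}}(\same{2} \star \sigma)$ is a permutation matrix, which lies in $\mc{Q}$ by Lemma~\ref{L:Positiv} since $0 < 1 \pin{P}$; and composition on $\Baff(\mc{R})$ is ordinary matrix multiplication, so composition on $\mc{B}$ is a restriction of composition on $\mc{Q}$.

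What remains for $\mc{B}$ to be a sub-\PROP\ of $\Baff(\mc{R}_{\geqslant 0})$ is closure under the biaffine tensor product, and here direct inspection of \eqref{Eq:Biaffin-tensor} is the whole argument: every row of $A \otimes C$ is either one of the two canonical rows (each containing a $1 \in \mc{R}_+$), or obtained by padding to the right or left with zeros a row from the $[\vek{b} \mid A]$ block of one of the factors, in which case the positive entry guaranteed by $\mc{B}$-membership of that factor survives. The analogous statement for columns is symmetric. This also verifies the \emph{Note} in the statement that $Q$ respects $\otimes$ on all of $\Baff(\mc{R}_{\geqslant 0})$, since every position in $A \otimes C$ is either a copy of an entry of a factor, a hard-coded $0$ or $1$, or the sum $d_A + d_C$, and all of these are monotone in the factors.

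The main obstacle\Ldash and the reason the cancellativity hypothesis enters\Rdash is strictness under $\otimes$, because the scalar corner of \eqref{Eq:Biaffin-tensor} sums the two $d$-entries rather than merely juxtaposing them. Suppose $A < B \pin{Q}$ with $A,B \in \mc{B}(k,l)$ and $C \in \mc{B}(m,n)$. Monotonicity already gives $A \otimes C \leqslant B \otimes C \pin{Q}$, so one only needs a single entry at which the inequality is strict. If $A$ and $B$ differ at any entry of the matrix, vector, or covector part, then that entry is copied verbatim into $A \otimes C$ and $B \otimes C$, and strictness is preserved there. The delicate case is when they differ only at the scalar entry, with $d_A < d_B \pin{P}$: the tensor corner then becomes $d_A + d_C$ versus $d_B + d_C$, and Lemma~\ref{L:Positivitet} yields $d_A + d_C < d_B + d_C \pin{P}$ precisely because addition in $\mc{R}$ is cancellative. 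The mirror argument handles $C \otimes A < C \otimes B \pin{Q}$, and strictness under composition is inherited directly from Lemma~\ref{L:Positiv} since $\circ$ on $\Baff(\mc{R})$ is ordinary matrix multiplication. Together these establish that the restriction of $Q$ to $\mc{B}$ is a strict \PROP\ partial order.
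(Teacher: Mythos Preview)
Your proposal is correct and is essentially what the paper intends: the corollary is stated without proof, with only a remark describing the shape of elements of $\mc{B}$, so the implicit argument is precisely the reduction to Lemma~\ref{L:Positiv} for composition together with direct inspection of \eqref{Eq:Biaffin-tensor} for the tensor product. Your identification of the scalar corner $d_A + d_C$ as the one place where cancellativity (via Lemma~\ref{L:Positivitet}) is genuinely needed is exactly right, and your bootstrapping from one-sided tensor strictness to the two-sided form via associativity is the natural way to finish.
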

\begin{remark}
  Since an element of $\Baff(\mc{R}_{\geqslant 0})$ has the form
  \begin{equation*}
    \begin{bmatrix}
      1& d& \vek{c}^\mathrm{T}\\
      0& 1& 0\\
      0& \vek{b}& A
    \end{bmatrix}
    \text{,}
  \end{equation*}
  the first two columns and first two rows always contain some \(1 \in 
  \mc{R}_+\). If the $A$ part is chosen as an element of the $\mc{Q}$ 
  of Lemma~\ref{L:Positiv} then the whole matrix becomes an element of 
  $\mc{B}$, but one may also meet the condition about a positive 
  element in each row and column by putting them in the $\vek{b}$ and 
  $\vek{c}$ parts.
\end{remark}

\subsection{Nilpotence}

A second application of the standard ordering of matrices exists 
in the formalisation of the ``may depend on'' arrow $\canmake$ seen 
in \eqref{Eq:Regelfamiljer}. The final details of this will have to 
wait until Section~\ref{Sec:Omskrivning}, but closely related 
technicalities will appear repeatedly in the next couple of sections. 
What they are all about is ultimately the prevention of cycles in the 
networks being considered, and keeping track of things to that end 
requires some additional operations on matrices.

\begin{definition}
  The \DefOrd{Kleene star} $A^*$\index{* sup@${}^*$!Kleene star} 
  of a square matrix $A$ is defined to 
  be $\sum_{k=0}^\infty A^k$ if that series converges. Similarly 
  the \DefOrd{Kleene plus} $A^+$\index{+ sup@${}^+$ (Kleene plus)} 
  of $A$ is defined to be 
  $\sum_{k=1}^\infty A^k$ if that series converges.
\end{definition}

The Kleene star and plus are defined for all \(A \in 
\B^{n \times n}\), since all series in $\B$ converges; either all 
terms are $0$, and then the sum is $0$, or at least one term is $1$, 
and then the sum is $1$. Over other semirings, existence of $A^*$ in 
general depends on the topology, but one case in which it does not is 
that $A$ is nilpotent.

\begin{lemma} \label{L:Nilpotens}
  For any \(A \in \B^{n \times n}\), the following are equivalent:
  \begin{enumerate}
    \item \(A^n = 0\).
    \item $A$ is nilpotent.
    \item The main diagonal of $A^+$ is zero.
  \end{enumerate}
\end{lemma}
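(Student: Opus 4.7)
The plan is to interpret $A \in \B^{n \times n}$ as the adjacency matrix of a directed graph on vertex set $[n]$, where there is an edge from $j$ to $i$ precisely when $A_{ij} = 1$. Under this viewpoint, a straightforward induction on $k$ using the definition of Boolean matrix multiplication (where a $\B$-sum is $1$ iff some summand is $1$) shows that $(A^k)_{ij} = 1$ if and only if there exists a directed walk of length exactly $k$ from $j$ to $i$ in the graph. With this combinatorial translation in hand, each of the three conditions acquires a clean graph-theoretic interpretation, and I would proceed by establishing the cycle $(1) \Rightarrow (2) \Rightarrow (3) \Rightarrow (1)$.

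The implication $(1) \Rightarrow (2)$ is immediate from the definition of nilpotence. For $(2) \Rightarrow (3)$, suppose some diagonal entry $(A^+)_{ii}$ is nonzero; then $(A^m)_{ii} = 1$ for some $m \geqslant 1$, meaning the graph has a closed walk of length $m$ at vertex $i$. Concatenating this walk with itself $k$ times produces a closed walk of length $km$ at $i$, so $(A^{km})_{ii} = 1$ and hence $A^{km} \neq 0$ for every $k \geqslant 1$, contradicting nilpotence.

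For $(3) \Rightarrow (1)$, note that if the main diagonal of $A^+$ is zero then for every $i$ and every $k \geqslant 1$ there is no closed walk of length $k$ at $i$, i.e., the graph is acyclic. Any walk of length $n$ visits $n+1$ vertices counted with multiplicity, so by the pigeonhole principle two of them must coincide; the portion of the walk between these two visits would be a closed subwalk, contributing a $1$ somewhere on the diagonal of $A^+$, contradicting the hypothesis. Hence no walk of length $n$ exists at all, which gives $(A^n)_{ij} = 0$ for every $(i,j)$, i.e., $A^n = 0$.

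The only point requiring an aside is that the infinite sum defining $A^+$ is unambiguously well-defined over $\B$: a Boolean series stabilises at $1$ as soon as some summand equals $1$ and otherwise stays at $0$, so $A^+$ exists for every square Boolean matrix without any topological hypotheses. None of the steps looks substantial; the main care required is simply being pedantic about the walk--matrix dictionary and the observation that the concatenation of two closed walks at the same vertex is again a closed walk at that vertex, which is what makes the $(2) \Rightarrow (3)$ step go through.
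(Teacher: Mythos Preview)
Your proof is correct and follows essentially the same approach as the paper. Both arguments use the walk--matrix dictionary and the pigeonhole principle in the same places; the only cosmetic difference is that the paper phrases everything via contrapositives and uses the algebraic identity $(A^+)^m = A^m A^*$ for the $(2)\Rightarrow(3)$ step where you concatenate closed walks, but these are the same idea in different clothing.
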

\begin{proof}
  It is actually easier to reason about the negations of the given 
  claims. Clearly, if $A$ is not nilpotent then \(A^n \neq 0\).
  
  Next assume \(A^n \neq 0\). Then there is some nonzero position 
  $(i_0,i_n)$ in $A^n$, and hence there must be some sequence 
  \(i_0, \dotsc, i_n \in [n]\) such that \(A_{i_{k-1},i_k} = 1\) for 
  all \(k=1,\dotsc,n\). By the pidgeonhole principle, there must be 
  \(0\leqslant k < l \leqslant n\) such that \(i_k = i_l\). Hence the 
  $(i_k,i_k)$ position of $A^{l-k}$ is $1$ and it follows that the 
  same is true for $A^+$; not all of its diagonal elements are $0$.
  
  If the $(i,i)$ position of $A^+$ is $1$ then that position is $1$ 
  also in $(A^+)^2$, and by extension in any $(A^+)^m$ for \(m 
  \geqslant 1\). Since \(\left( \sum_{k=1}^\infty A^k \right)^m = 
  \sum_{k=m}^\infty A^k = A^m \sum_{k=0} A^k\), it follows that \(A^m 
  \neq 0\) if the $(i,i)$ position of $A^+$ is nonzero. Hence $A$ is 
  not nilpotent.
\end{proof}

The relevance of boolean matrices here is that many matrices that are 
nilpotent are so because of their shape: the positions of their 
nonzero elements are such that one can predict based on that alone 
that a certain power is $0$. Via the encoding that $1$ means 
`possibly nonzero' and $0$ means `definitely zero', boolean matrices 
provide a convenient means of keeping tack of such matrix shapes, as 
an addition or multiplication in $\mc{R}^{n \times n}$ under that 
encoding corresponds to the same operation in $\B^{n \times n}$. 
Slightly more formally, one can consider the sets of matrices that 
have the shape of a certain pattern of zeroes and ones.

\begin{definition}
  Let $\mc{R}$ be an associative unital semiring. The \DefOrd{matrix 
  dependency filtration} of $\mc{R}^{\bullet\times\bullet}$ is the 
  family $\{F_q\}_{q \in \B^{\bullet\times\bullet}}$ of subsets of 
  $\mc{R}^{\bullet\times\bullet}$ defined by
  \begin{equation}
    F_q = \setOf{ A \in \mc{R}^{\omega(q) \times \alpha(q)} }{ 
      \text{\(A_{ij}=0\) for all $(i,j)$ such that \(q_{ij}=0\)}
    }
  \end{equation}
  for all \(q \in \B^{\bullet\times\bullet}\).
\end{definition}

The matrix dependency filtration has the important property that if 
\(A \in F_q\) and \(B \in F_r\), then \(AB \in F_{qr}\), since if 
(say) position $(i,k)$ in $AB$ is nonzero then there must be some $j$ 
such that \(A_{i,j} \neq 0\) and \(B_{j,k} \neq 0\), meaning 
\(q_{i,j}=1\) and \(r_{j,k}=1\), and thus position $(i,k)$ in $qr$ is 
$1$ too.

\begin{lemma} \label{L:Semiringstjarna}
  Let $\mc{R}$ be an associative and unital semiring. Let 
  $\{F_q\}_{q \in \B^{\bullet\times\bullet}}$ be the matrix 
  dependency filtration of $\mc{R}^{\bullet\times\bullet}$. If \(q 
  \in \B^{n \times n}\) is nilpotent then every \(A \in F_q\) is 
  nilpotent and $A^*$ exists. If $\mc{R}$ is a ring, then $A^*$ is 
  the multiplicative inverse of $I-A$.
  
  If \(q \in \B^{m \times n}\) and \(r \in \B^{n \times m}\) are such 
  that $qr$ is nilpotent then $rq$ is nilpotent, and \((AB)^* A = A 
  (BA)^*\) for all \(A \in F_q\) and \(B \in F_r\).
  
  For any nilpotent \(q \in \B^{n \times n}\) and \(A,B \in F_q\),
  \begin{equation}
    (A +\nobreak B)^* = (A^* B)^* A^* = A^* (B A^*)^* \text{.}
  \end{equation}
\end{lemma}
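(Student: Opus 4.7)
The plan is to prove the three assertions in the stated order, since each part builds on the filtration's core property that \(A \in F_q\) and \(B \in F_r\) implies \(AB \in F_{qr}\), together with Lemma~\ref{L:Nilpotens} characterising nilpotent boolean matrices.

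\textbf{Part 1.} I would argue that \(A \in F_q\) implies \(A^k \in F_{q^k}\) by induction on \(k\) (base: \(A^0 = I \in F_I\subseteq F_{q^*}\); step: multiply and use the filtration). Since \(q\) is nilpotent, Lemma~\ref{L:Nilpotens} gives \(q^n = 0\), so \(A^n \in F_0 = \{0\}\), i.e.\ \(A^n = 0\). Thus the series defining \(A^*\) collapses to the finite sum \(\sum_{k=0}^{n-1} A^k\), which exists in any semiring. When \(\mc{R}\) is a ring, a telescoping calculation \((I-A)\sum_{k=0}^{n-1} A^k = I - A^n = I\) (and symmetrically on the other side) identifies \(A^*\) as \((I-A)^{-1}\).

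\textbf{Part 2.} For the nilpotency transfer, I would observe \((rq)^{n+1} = r(qr)^n q = 0\) where \(n\) is chosen so that \((qr)^n = 0\); hence \(rq\) is nilpotent and \((BA)^*\) exists in the same way as in Part~1. For the identity, since \(A \in F_q\) and \(B \in F_r\) we have \(AB \in F_{qr}\) and \(BA \in F_{rq}\), so both Kleene stars are finite sums. The identity then follows from the purely associative manipulation \((AB)^k A = A(BA)^k\) for all \(k \geqslant 0\) and linearity:
\[
  (AB)^* A = \sum_{k=0}^\infty (AB)^k A = \sum_{k=0}^\infty A(BA)^k = A(BA)^* .
\]

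\textbf{Part 3.} First I would verify existence of the right-hand sides. Since \(A, B \in F_q\), we have \(A^* \in F_{q^*}\) and hence \(A^*B \in F_{q^* q}\) and \(BA^* \in F_{q q^*}\). Using \(q^* q = q q^* = q^+\) and the observation that \((q^+)^n\) is a join of products \(q^{k_1}\cdots q^{k_n}\) with each \(k_i \geqslant 1\), hence \(q^{k_1+\cdots+k_n} = q^{\geqslant n} = 0\), I conclude \(q^+\) is nilpotent, so \((A^*B)^*\) and \((BA^*)^*\) both exist by Part~1. The equality \((A^*B)^*A^* = A^*(BA^*)^*\) is then the special case of Part~2 with \(A\) and \(B\) replaced by \(A^*\) and \(B\) (noting \(A^*B \in F_{q^+}\) and \(BA^* \in F_{q^+}\) with \(q^+\) nilpotent). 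The main identity I would prove by expanding \((A+B)^*\) as a sum of all words in the alphabet \(\{A,B\}\) (finite, since any word of length \(\geqslant n\) lies in \(F_{q^n} = \{0\}\)) and grouping each such word uniquely as \(A^{j_0} B A^{j_1} B \cdots B A^{j_k}\) with \(k \geqslant 0\) and \(j_0,\ldots,j_k \geqslant 0\); summing over the \(j_i\) gives \(A^*(BA^*)^k\), and then summing over \(k\) yields \(A^*(BA^*)^*\).

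The main obstacle is bookkeeping rather than genuine difficulty: one must check carefully that every series written down is a finite sum in the ambient semiring (for which the filtration and Lemma~\ref{L:Nilpotens} are the essential tools) so that manipulations like distributivity, regrouping of monomials, and the telescoping identity in Part~1 are all justified without needing a topology on \(\mc{R}\). Once that is secured, each identity reduces to an associative-algebra manipulation on finitely many monomials.
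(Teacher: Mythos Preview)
Your proposal is correct and follows essentially the same route as the paper: the filtration property gives \(A^k\in F_{q^k}\) and hence finite sums throughout, the identity \((AB)^kA=A(BA)^k\) yields Part~2, and the word-expansion of \((A+B)^*\) grouped by the positions of \(B\) factors yields \(A^*(BA^*)^*\), with the other form obtained via Part~2. Your explicit check that \(q^+\) is nilpotent (so the intermediate stars on the right-hand side exist) is a nice touch the paper leaves implicit.
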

\begin{proof}
  Suppose \(q^m = 0\). Then \(A^m \in F_{q^m} = \{0\}\), 
  demonstrating that $A$ is nilpotent. \(A^* = \sum_{k=0}^{m-1} A^k\) 
  which clearly exists, and \((I -\nobreak A) \sum_{k=0}^{m-1} A^k = 
  I - A^m = I\).
  
  If $qr$ is nilpotent for \(q \in \B^{m \times n}\) and \(r \in 
  \B^{n \times m}\) then \((qr)^m = 0\) by Lemma~\ref{L:Nilpotens} 
  and hence \((rq)^{m+1} = r (qr)^m q = 0\), demonstrating the 
  nilpotency of $rq$. In $\mc{R}^{\bullet\times\bullet}$, one finds 
  \((AB)^* A = \sum_{k=0}^\infty (AB)^k A = \sum_{k=0}^\infty A 
  (BA)^k = A (BA)^*\).
  
  Considering instead $(A +\nobreak B)^*$ for \(A,B \in F_q\) where 
  \(q \in \B^{n \times n}\) is nilpotent, one has of course
  \begin{equation*}
    (A + B)^* = 
    \sum_{k=0}^\infty (A + B)^k = 
    \sum_{k=0}^\infty \, \sum_{W: [k] \Fpil \{A,B\}} \,
      \prod_{i=1}^k W(i)
    \text{.}
  \end{equation*}
  Collecting sequences of adjacent $A$ factors, taking $l$ as the 
  number of $B$ factors and $i_j$ as the number of $A$s between the 
  $j$'th and $j+1$'th $B$ factor, this last sum can be rewritten as
  \begin{multline*}
    \sum_{k=0}^\infty \sum_{l=0}^k 
      \sum_{\substack{i_0,\dotsc,i_l \geqslant 0 \\ 
        i_0+\dotsb+i_l = k-l}} A^{i_0} 
      \prod_{j=1}^l B A^{i_j}
    =
    \sum_{0 \leqslant l \leqslant k}
      \sum_{\substack{i_0,\dotsc,i_l \geqslant 0 \\ 
        i_0+\dotsb+i_l = k-l}} A^{i_0} 
      \prod_{j=1}^l B A^{i_j}
    = \\ =
    \sum_{0 \leqslant l} \sum_{i_0,\dotsc,i_l \geqslant 0} 
      A^{i_0} \prod_{j=1}^l B A^{i_j}
    =
    \sum_{l=0}^\infty A^* \prod_{j=1}^l B A^*
    =
    A^* (B A^*)^* \text{,}
  \end{multline*}
  with the other variant following from the previous claim in the 
  lemma.
\end{proof}

\begin{lemma} \label{L:Matrisnilpotens}
  For any \(A \in \B^{m \times m}\), \(B \in \B^{m \times n}\), \(C 
  \in \B^{n \times m}\), and \(D \in \B^{n \times n}\),
  the following are equivalent:
  \begin{enumerate}
    \item \label{Item1:Matrisnilpotens}
      \(\left[ \begin{smallmatrix} A & B \\ C & D \end{smallmatrix} 
      \right]\) is nilpotent.
    \item \label{Item2:Matrisnilpotens}
      $A$, $D$, and $A^* B D^* C$ are nilpotent.
    \item \label{Item3:Matrisnilpotens}
      $A$, $D$, and $D^* C A^* B$ are nilpotent.
    \item \label{Item4:Matrisnilpotens}
      $A$ and $D + C A^* B$ are nilpotent.
    \item \label{Item5:Matrisnilpotens}
      $D$ and $A + B D^* C$ are nilpotent.
  \end{enumerate}
\end{lemma}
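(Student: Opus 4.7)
The plan is to extract from Lemma~\ref{L:Semiringstjarna} the following biconditional strengthening of its sum formula, valid for any $X,Y \in \B^{n \times n}$: the matrix $X + Y$ is nilpotent if and only if both $X$ and $YX^*$ are nilpotent (equivalently, by the cyclic conjugate statement of Lemma~\ref{L:Semiringstjarna}, $X^*Y$ is nilpotent), in which case $(X + Y)^* = X^*(YX^*)^*$. The reverse implication is exactly the series rearrangement underlying the proof of Lemma~\ref{L:Semiringstjarna}. For the forward implication, expand
\[
  (YX^*)^k = \sum_{i_1,\dotsc,i_k \geqslant 0} Y X^{i_1} Y X^{i_2} \dotsb Y X^{i_k};
\]
every summand is a word of length at least $k$ in $\{X,Y\}$ and is hence bounded in the standard order of $\B^{n\times n}$ by $(X+Y)^k$, so if $(X+Y)^N = 0$ then $(YX^*)^k = 0$ for all $k \geqslant N$.

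With this tool in hand, the equivalence $(1) \Leftrightarrow (4)$ follows from the block decomposition $M = L + R$ with $L = \left[\begin{smallmatrix} A & 0 \\ C & 0 \end{smallmatrix}\right]$ and $R = \left[\begin{smallmatrix} 0 & B \\ 0 & D \end{smallmatrix}\right]$. Direct calculation gives $L^k = \left[\begin{smallmatrix} A^k & 0 \\ CA^{k-1} & 0 \end{smallmatrix}\right]$ for $k \geqslant 1$, so $L$ is nilpotent exactly when $A$ is, and in that case $L^* = \left[\begin{smallmatrix} A^* & 0 \\ CA^* & I \end{smallmatrix}\right]$; consequently $L^*R = \left[\begin{smallmatrix} 0 & A^*B \\ 0 & D + CA^*B \end{smallmatrix}\right]$, and an easy induction shows $(L^*R)^k$ has lower-right block $E^k$ where $E := D + CA^*B$. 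Therefore $L^*R$ is nilpotent precisely when $E$ is, and the biconditional yields $M$ nilpotent iff $A$ and $E$ are both nilpotent. The dual decomposition, with the two sides of $M$ interchanged, establishes $(1) \Leftrightarrow (5)$.

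For $(4) \Leftrightarrow (2)$, I plan to apply the same biconditional to $E = D + (CA^*B)$: this gives $E$ nilpotent iff $D$ and $CA^*BD^*$ are nilpotent, which via cyclic conjugate (take $X = A^*BD^*$, $Y = C$, so $XY = A^*BD^*C$ and $YX = CA^*BD^*$) is in turn equivalent to $D$ and $A^*BD^*C$ being nilpotent. Together with the condition that $A$ is nilpotent, this is precisely (2). The final equivalence $(2) \Leftrightarrow (3)$ is another cyclic conjugate, applied to $A^*BD^*C$ versus $D^*CA^*B$ with $X = A^*B$, $Y = D^*C$.

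The main obstacle will be justifying the forward direction of the biconditional, since Lemma~\ref{L:Semiringstjarna} as stated presupposes a common nilpotent boolean matrix $q$ dominating both summands, a hypothesis whose only natural choice in $\B$ is $q = X + Y$ itself\Dash begging the question. The domination argument sketched in the first paragraph sidesteps this; once the biconditional is in place, the entire lemma reduces to a short sequence of block multiplications and cyclic conjugates.
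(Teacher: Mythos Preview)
Your proof is correct and takes a genuinely different route from the paper's.

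The paper splits the block matrix as diagonal plus anti-diagonal, $\left[\begin{smallmatrix} A & 0 \\ 0 & D\end{smallmatrix}\right] + \left[\begin{smallmatrix} 0 & B \\ C & 0\end{smallmatrix}\right]$, and then argues each implication by explicit power bounds: for instance it shows $(1)\Rightarrow(2)$ by bounding a power of $\left[\begin{smallmatrix} 0 & A^*B \\ D^*C & 0\end{smallmatrix}\right]$ by a power of $q^*q$, and shows $(2)\Rightarrow(1)$ by bounding $q^{2M^2}$ from above. The implications $(1)\Leftrightarrow(4)$ and $(1)\Leftrightarrow(5)$ are done separately via the Kleene-plus diagonal criterion of Lemma~\ref{L:Nilpotens}. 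Your approach instead isolates once and for all the biconditional ``$X+Y$ nilpotent $\iff$ $X$ and $YX^*$ nilpotent'' (your forward-direction domination argument is sound, and cleanly avoids the $F_q$ hypothesis that Lemma~\ref{L:Semiringstjarna} carries), and then applies it twice: first to the column split $M = \left[\begin{smallmatrix} A & 0 \\ C & 0\end{smallmatrix}\right] + \left[\begin{smallmatrix} 0 & B \\ 0 & D\end{smallmatrix}\right]$ to get $(1)\Leftrightarrow(4)$ directly, and again to $E = D + CA^*B$ to reach $(2)$. The block computations you state are all correct. What your route buys is modularity\Dash the whole lemma falls out of one reusable tool plus cyclic conjugation\Rdash whereas the paper's route is more ad hoc but avoids having to first prove the strengthened biconditional.
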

\begin{proof}
  Equivalence of \ref{Item2:Matrisnilpotens} and 
  \ref{Item3:Matrisnilpotens} is immediate from 
  Lemma~\ref{L:Semiringstjarna}.
  
  Now assume~\ref{Item1:Matrisnilpotens} and let \(q = \left[ 
  \begin{smallmatrix} A & B \\ C & D \end{smallmatrix} \right]\).
  For the nilpotency of $A$ and $D$, one may observe that
  \begin{equation*}
    0 = 
    \begin{bmatrix} 
      A & B \\ C & D 
    \end{bmatrix}^{m+n}
    \geqslant
    \begin{bmatrix} 
      A & 0 \\ 0 & D 
    \end{bmatrix}^{m+n}
    =
    \begin{bmatrix} 
      A^{m+n} & 0 \\ 0 & D^{m+n}
    \end{bmatrix}
    \text{.}
  \end{equation*}
  Let \(N = \bigl\lceil (m +\nobreak n)/2 \bigr\rceil\). By 
  Lemma~\ref{L:Nilpotens}, \(q^{2N} = 0\) and hence the nilpotency of 
  $A^* B D^* C$ and $D^* C A^* B$ 
  follows from
  \begin{multline*}
    0 = 
    (q^*)^{2N} \cdot 0 =
    (q^*)^{2N} q^{2N} =
    (q^* q)^{2N} \geqslant
    \left( 
      \begin{bmatrix} A & 0 \\ 0 & D \end{bmatrix}^*
      \begin{bmatrix} 0 & B \\ C & 0 \end{bmatrix}
    \right)^{2N} = \\ =
    \begin{bmatrix} 
      0 & A^* B \\ D^* C & 0 
    \end{bmatrix}^{2N} = 
    \begin{bmatrix} 
      A^* B D^* C & 0 \\
      0 & D^* C A^* B
    \end{bmatrix}^N 
    \text{.}
  \end{multline*}
  It has thus been shown that \ref{Item1:Matrisnilpotens} implies 
  \ref{Item2:Matrisnilpotens} and \ref{Item3:Matrisnilpotens}.
  
  Conversely assuming that $A$, $D$, $A^* B D^* C$, and $D^* C A^* B$ 
  are nilpotent, one may let \(M = \max\{m,n\}\) and observe that the 
  $M$th power of any of these four expressions is $0$. Next consider
  \begin{multline*}
    q^{2M^2} =
    \left( 
      \begin{bmatrix} A & 0 \\ 0 & D \end{bmatrix} +
      \begin{bmatrix} 0 & B \\ C & 0 \end{bmatrix}
    \right)^{2M^2} \leqslant
    \left( 
      \begin{bmatrix} A & 0 \\ 0 & D \end{bmatrix}^*
      \begin{bmatrix} 0 & B \\ C & 0 \end{bmatrix}
    \right)^{2M} q^* = \\ =
    \begin{bmatrix}
      0 & A^* B \\ D^* C & 0
    \end{bmatrix}^{2M} q^* =
    \begin{bmatrix}
      A^* B D^* C & 0 \\ 
      0 & D^* C A^* B
    \end{bmatrix}^M q^* = \\ =
    \begin{bmatrix}
      (A^* B D^* C)^M & 0 \\ 
      0 & (D^* C A^* B)^M
    \end{bmatrix} q^* =
    0
  \end{multline*}
  where the first line inequality holds because any term in the 
  expansion of 
  \begin{math}
    \left( 
      \left[ \begin{smallmatrix} 
        A & 0 \\ 0 & D 
      \end{smallmatrix} \right] + 
      \left[ \begin{smallmatrix} 
        0 & B \\ C & 0 
      \end{smallmatrix} \right]
    \right)^{2M^2}
  \end{math}
  which has more than $M$ consequtive \( \left[ \begin{smallmatrix} 
  A & 0 \\ 0 & D \end{smallmatrix} \right] \) factors is 
  $0$ since \(A^M = D^M = 0\); the remaining terms have at 
  least $2M$ factors \( \left[ \begin{smallmatrix} 0 & B \\ 
  C & 0 \end{smallmatrix} \right] \), and the star factors 
  before and after cover all possibilities for what else there may be 
  in one of these terms.
  This has shown that \ref{Item2:Matrisnilpotens} and 
  \ref{Item3:Matrisnilpotens} imply \ref{Item1:Matrisnilpotens}.
  
  Next consider the matter of whether \ref{Item2:Matrisnilpotens} is 
  implied by \ref{Item5:Matrisnilpotens}. The nilpotence of $D$ is 
  common to these. For $A$, it is clear that \(A^m \leqslant 
  (A +\nobreak B D^* C)^m = 0\). For $A^* B D^* C$, it follows from 
  Lemma~\ref{L:Semiringstjarna} that 
  \begin{multline*}
    (A^* B D^* C)^+ =
    A^* B D^* C 
      (A^* B D^* C)^* =
    A^* (B D^* C A^*)^* 
      B D^* C = \\ =
    (A + B D^* C)^* B D^* C
      \leqslant
    (A + B D^* C)^+
  \end{multline*}
  and thus the left hand side cannot have any nonzero diagonal 
  elements since the right hand side does not have any. It is 
  similarly shown that \ref{Item4:Matrisnilpotens} 
  implies~\ref{Item3:Matrisnilpotens}.
  
  For the converse, the nilpotence of $D$ is again not an issue, so 
  the task is to show that $A+B D^* C$ is nilpotent. Here it is 
  convenient to assume~\ref{Item1:Matrisnilpotens}, since
  \begin{multline*}
    \begin{bmatrix}
      (A+B D^* C)^+ & 0 \\ 0 & 0
    \end{bmatrix} =
    \begin{bmatrix}
      A+B D^* C & 0 \\ 0 & 0
    \end{bmatrix}^+ = \\ =
    \left(
      \begin{bmatrix} A & 0 \\ 0 & 0 \end{bmatrix} +
      \begin{bmatrix} 0 & B \\ 0 & 0 \end{bmatrix} 
      \begin{bmatrix} 0 & 0 \\ 0 & D \end{bmatrix}^*
      \begin{bmatrix} 0 & 0 \\ C & 0 \end{bmatrix} 
    \right)^+ \leqslant
    (q + q q^* q)^+ = q^+
  \end{multline*}
  and the all zero diagonal of $q^+$ thus requires $(A +\nobreak 
  BD^*C)^+$ to have an all zero diagonal as well. This has thus 
  shown~\ref{Item5:Matrisnilpotens}, and again the argument 
  for~\ref{Item4:Matrisnilpotens} is completely analogous.
\end{proof}

\begin{lemma} \label{L:Matrisstjarna}
  Let $\mc{R}$ be an associative and unital semiring. Let 
  $\{F_q\}_{q \in \B^{\bullet\times\bullet}}$ be the matrix 
  dependency filtration of $\mc{R}^{\bullet\times\bullet}$. 
  Let \(q = \left[ \begin{smallmatrix} q_{11} & q_{12} \\ q_{21} & 
  q_{22} \end{smallmatrix} \right] \in \B^{(m+n)\times(m+n)}\) where 
  \(q_{11} \in \B^{m \times m}\) and \(q_{22} \in \B^{n \times n}\). 
  If $q$ is nilpotent then
  \begin{equation}
    \begin{bmatrix}
      A_{11} & A_{12} \\ A_{21} & A_{22}
    \end{bmatrix}^* =
    \begin{bmatrix}
      (A_{11}^* A_{12} A_{22}^* A_{21})^* A_{11}^* & 
        A_{11}^* A_{12} (A_{22}^* A_{21} A_{11}^* A_{12})^* A_{22}^* \\
      A_{22}^* A_{21} (A_{11}^* A_{12} A_{22}^* A_{21})^* A_{11}^* &
        (A_{22}^* A_{21} A_{11}^* A_{12})^* A_{22}^*
    \end{bmatrix}
  \end{equation}
  for all \(A_{11} \in F_{q_{11}}\), \(A_{12} \in F_{q_{12}}\), 
  \(A_{21} \in F_{q_{21}}\), and \(A_{22} \in F_{q_{22}}\).
\end{lemma}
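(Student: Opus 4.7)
The plan is to reduce the claim to two successive applications of the identity \((X+Y)^* = X^*(YX^*)^*\) from Lemma~\ref{L:Semiringstjarna}, together with the swap identity \(A(BA)^* = (AB)^*A\). Before that, I first need to check that every Kleene star appearing in the formula actually exists. By Lemma~\ref{L:Matrisnilpotens} the nilpotency of \(q\) forces \(q_{11}\), \(q_{22}\), and \(q_{11}^*q_{12}q_{22}^*q_{21}\) (hence also \(q_{22}^*q_{21}q_{11}^*q_{12}\), by the first swap in Lemma~\ref{L:Semiringstjarna}) to be nilpotent. Multiplicativity of the matrix dependency filtration then puts \(A_{11}\), \(A_{22}\), \(A_{11}^*A_{12}A_{22}^*A_{21}\), and \(A_{22}^*A_{21}A_{11}^*A_{12}\) in filtration classes that are nilpotent, so Lemma~\ref{L:Semiringstjarna} supplies each starred expression on the right hand side.

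Next I would write the block matrix as \(M = D + O\) with block-diagonal \(D\) (carrying \(A_{11}, A_{22}\)) and block-off-diagonal \(O\) (carrying \(A_{12}, A_{21}\)). Both \(D\) and \(O\) lie in filtration classes dominated by \(q\), so each is nilpotent and their stars exist. Lemma~\ref{L:Semiringstjarna} gives \(M^* = D^*(OD^*)^*\); block-diagonality yields
\[
  D^* = \begin{bmatrix} A_{11}^* & 0 \\ 0 & A_{22}^* \end{bmatrix},
  \qquad
  OD^* = \begin{bmatrix} 0 & X \\ Y & 0 \end{bmatrix}
\]
with \(X := A_{12}A_{22}^*\) and \(Y := A_{21}A_{11}^*\). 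The central computational step is then the star of the anti-diagonal block matrix \(N := OD^*\): a direct induction on powers of \(N\) gives \(N^{2k} = \mathrm{diag}\bigl((XY)^k,(YX)^k\bigr)\) and \(N^{2k+1} = \left[\begin{smallmatrix} 0 & (XY)^k X \\ (YX)^k Y & 0\end{smallmatrix}\right]\); summing (and noting that \((XY)^*X = X(YX)^*\) by the swap identity) yields
\[
  N^* = \begin{bmatrix} (XY)^* & (XY)^* X \\ (YX)^* Y & (YX)^* \end{bmatrix}.
\]

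It remains only to multiply \(D^*\) by \(N^*\) block by block and to massage each of the four entries into the form claimed in the lemma. For instance, the \((1,1)\) block becomes \(A_{11}^* (A_{12}A_{22}^*A_{21}A_{11}^*)^*\), which by a final application of the swap identity \(A(BA)^* = (AB)^*A\) (with \(A = A_{11}^*\) and \(B = A_{12}A_{22}^*A_{21}\)) rewrites as \((A_{11}^*A_{12}A_{22}^*A_{21})^*A_{11}^*\); the other three blocks reduce by the same device, with the off-diagonal blocks exploiting both the swap identity and the already-derived equality \((XY)^* X = X (YX)^*\). The only real obstacle is the bookkeeping: one must ensure at every step that the star being invoked is well-defined, which is why the preliminary nilpotency check via Lemma~\ref{L:Matrisnilpotens} is essential, and one must apply the swap identity in exactly the right direction to land on the asymmetric expressions written in the statement.
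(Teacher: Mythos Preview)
Your proof is correct and follows essentially the same route as the paper: both split the block matrix as diagonal plus off-diagonal, invoke the $(A+B)^*$ identity from Lemma~\ref{L:Semiringstjarna}, compute the star of the resulting anti-diagonal block matrix by pairing even and odd powers, and then multiply out. The only cosmetic difference is that the paper uses the form $(A+B)^* = (A^*B)^*A^*$ rather than your $A^*(BA^*)^*$, which lands directly on the target expressions without the final round of swap identities you need.
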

\begin{proof}
  That $q$ is nilpotent ensures that the left hand side exists, 
  whereas the conditions for Lemma~\ref{L:Semiringstjarna} to imply 
  thay all stars in the right hand side exist are that $q_{11}$, 
  $q_{22}$, $q_{11}^* q_{12} q_{22}^* q_{21}$, and $q_{22}^* q_{21} 
  q_{11}^* q_{12}$ are nilpotent, but the combination of those is 
  equivalent to the nilpotency of $q$ by 
  Lemma~\ref{L:Matrisnilpotens}.
  Under these conditions, it follows from other claims in 
  Lemma~\ref{L:Semiringstjarna} that
  \begin{align*}
    \begin{bmatrix}
      A_{11} & A_{12} \\ A_{21} & A_{22}
    \end{bmatrix}^* ={}&
    \left(
      \begin{bmatrix} A_{11} & 0 \\ 0 & A_{22} \end{bmatrix} +
      \begin{bmatrix} 0 & A_{12} \\ A_{21} & 0 \end{bmatrix}
    \right)^* 
    = \\ ={}&
    \left(
      \begin{bmatrix} A_{11} & 0 \\ 0 & A_{22} \end{bmatrix}^*
      \begin{bmatrix} 0 & A_{12} \\ A_{21} & 0 \end{bmatrix}
    \right)^* 
    \begin{bmatrix} A_{11} & 0 \\ 0 & A_{22} \end{bmatrix}^* 
    = \displaybreak[0]\\ ={}&
    \left(
      \begin{bmatrix} A_{11}^* & 0 \\ 0 & A_{22}^* \end{bmatrix}
      \begin{bmatrix} 0 & A_{12} \\ A_{21} & 0 \end{bmatrix}
    \right)^*
    \begin{bmatrix} A_{11}^* & 0 \\ 0 & A_{22}^* \end{bmatrix} 
    = \displaybreak[0] \\ ={}&
    \begin{bmatrix} 
      0 & A_{11}^* A_{12} \\ A_{22}^* A_{21} & 0 
    \end{bmatrix}^*
    \begin{bmatrix} A_{11}^* & 0 \\ 0 & A_{22}^* \end{bmatrix}
    = \displaybreak[0] \\ ={}&
    \Biggl( \sum_{k=0}^\infty 
      \begin{bmatrix} 
        0 & A_{11}^* A_{12} \\ A_{22}^* A_{21} & 0 
      \end{bmatrix}^k
    \Biggr)
    \begin{bmatrix} A_{11}^* & 0 \\ 0 & A_{22}^* \end{bmatrix}
    = \displaybreak[0]\\ ={}&
    \Biggl( 
      \begin{bmatrix} 
        I & A_{11}^* A_{12} \\ A_{22}^* A_{21} & I
      \end{bmatrix}
      \sum_{k=0}^\infty 
      \begin{bmatrix} 
        0 & A_{11}^* A_{12} \\ A_{22}^* A_{21} & 0 
      \end{bmatrix}^{2k}
    \Biggr)
    \begin{bmatrix} A_{11}^* & 0 \\ 0 & A_{22}^* \end{bmatrix}
    = \displaybreak[0]\\ ={}&
    \begin{bmatrix} 
      I & A_{11}^* A_{12} \\ A_{22}^* A_{21} & I
    \end{bmatrix}
      \times \\ &\qquad
    \begin{bmatrix}
      A_{11}^* A_{12} A_{22}^* A_{21} & 0 \\
      0 & A_{22}^* A_{21} A_{11}^* A_{12}
    \end{bmatrix}^*
    \begin{bmatrix} A_{11}^* & 0 \\ 0 & A_{22}^* \end{bmatrix}
    = \\ ={}&
    \begin{bmatrix} 
      I & A_{11}^* A_{12} \\ A_{22}^* A_{21} & I
    \end{bmatrix}
      \times \\ &\qquad
    \begin{bmatrix}
      (A_{11}^* A_{12} A_{22}^* A_{21})^* A_{11}^* & 0 \\
      0 & (A_{22}^* A_{21} A_{11}^* A_{12})^* A_{22}^*
    \end{bmatrix}
    = \\ ={}&
    \begin{bmatrix}
      (A_{11}^* A_{12} A_{22}^* A_{21})^* A_{11}^* & 
        A_{11}^* A_{12} (A_{22}^* A_{21} A_{11}^* A_{12})^* A_{22}^* \\
      A_{22}^* A_{21} (A_{11}^* A_{12} A_{22}^* A_{21})^* A_{11}^* & 
        (A_{22}^* A_{21} A_{11}^* A_{12})^* A_{22}^*
    \end{bmatrix}
    \text{.}
  \end{align*}
\end{proof}

\subsection{Filtration and transference}

The matrix dependency filtration is not an isolated construction, but 
rather the basic example of a very general concept.

\begin{definition} \label{Def:PROP-filtrering}
  Let $\mc{P}$ and $\mc{Q}$ be \PROPs, and let $Q$ be \PROP\ 
  quasi-order on $\mc{Q}$. A 
  \DefOrd[*{PROP@\PROP!filtration in}]{$(\mc{Q},Q)$-filtration} 
  in $\mc{P}$ is a family \(F = \{ F_q \}_{q \in \mc{Q}}\) of subsets 
  of $\mc{P}$ such that:
  \begin{enumerate}
    \item
      For all \(m,n \in \N\), and every \(q \in \mc{Q}(m,n)\), \(F_q 
      \subseteq \mc{P}(m,n)\).
    \item
      If \(q \leqslant r \pin{Q}\) then \(F_q \subseteq F_r\).
    \item
      \(a \otimes b \in F_{q \otimes r}\)
      for all \(q,r \in \mc{Q}\), \(a \in F_q\), and \(b \in F_r\). 
    \item
      \(a \circ b \in F_{q \circ r}\) 
      for all \(q,r \in \mc{Q}\) such that \(\alpha(q) = \omega(r)\), 
      all \(a \in F_q\), and all \(b \in F_r\).
    \item
      \(\phi_\mc{P}(\sigma) \in F_{\phi_\mc{Q}(\sigma)}\)
      for all \(n \in \N\) and \(\sigma \in \Sigma_n\).
  \end{enumerate}
  $F$ is a $(\mc{Q},Q)$-filtration 
  \DefOrd[*{PROP@\PROP!filtration of}]{of} $\mc{P}$ if 
  \(\mc{P}(m,n) = \bigcup_{q \in \mc{Q}(m,n)} F_q\) for 
  all \(m,n \in \N\). A filtration $F$ is 
  \DefOrd[*{linear@$\mc{R}$-linear!filtration}]{$\mc{R}$-linear} if 
  every component $F_q$ is an $\mc{R}$-module.
\end{definition}

For \(q,r \in \B^{m \times n}\), \(q \leqslant r\) means \(r_{ij}=0 
\Longrightarrow q_{ij}=0\), and hence \(F_q \subseteq F_r\) in the matrix 
dependency filtration. Verifying the $\otimes$ property is a simple 
matter of keeping track of which matrix positions in the factors 
correspond to which matrix positions in the product, and the 
permutation property is immediate from the definition of $\phi$ in 
a matrix \PROP. The matrix dependency filtration is moreover 
$\mc{R}$-linear.

%

As with filtrations in rings, one can use the form of an expression 
for a specific element to argue that this element must be in a 
specific component of a filtration. 

\begin{lemma} \label{L:eval-filtrering}
  Let $\Omega$ be an $\N^2$-graded set, $\mc{P}$ and $\mc{Q}$ be \PROPs, 
  $Q$ a \PROP\ quasi-order on $\mc{Q}$, and $\{F_q\}_{q \in \mc{Q}}$ be 
  a $(\mc{Q},Q)$-filtration in $\mc{P}$. If \(f\colon \Omega \Fpil 
  \mc{P}\) and \(J\colon \Omega \Fpil \mc{Q}\) are $\N^2$-graded set 
  morphisms such that \(f(x) \in F_{J(x)}\) for all \(x \in \Omega\), 
  then
  \begin{equation}
    \eval_f(G) \in F_{\eval_J(G)} 
    \quad\text{for all \(G \in \Nw(\Omega)\).}
  \end{equation}
\end{lemma}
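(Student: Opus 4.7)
The plan is to prove the claim by induction on $n := |V_G \setminus \{0,1\}|$, the number of inner vertices of $G$, peeling off one vertex at a time. The crucial observation enabling the induction is that cut decompositions of networks are purely combinatorial — they do not depend on the annotation maps $D$, $f$, or $J$. Consequently, any ordered cut that decomposes $G$ into $(G', G'')$ simultaneously decomposes $\eval_f(G)$ as $\eval_f(G') \circ \eval_f(G'')$ via Lemma~\ref{L:evalCut} and $\eval_J(G)$ as $\eval_J(G') \circ \eval_J(G'')$ via the same lemma applied with $J$ in place of $f$; the composition closure axiom of $F$ then combines the two halves.

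For the base case $n = 0$, the abstract index expression defining $\eval_f(G)$ contains an empty product and reduces to $\fuse{\vek{e}_G^-(0)}{1}{\vek{e}_G^+(1)}$, which equals $\phi_\mc{P}(\sigma)$ for a specific permutation $\sigma$ read off from the head and tail index maps of $G$ alone. The very same $\sigma$ arises from $\eval_J(G) = \phi_\mc{Q}(\sigma)$, so the $\phi$-closure axiom of the filtration directly gives $\phi_\mc{P}(\sigma) \in F_{\phi_\mc{Q}(\sigma)}$, as required.

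For $n \geq 1$, acyclicity of the underlying directed graph provides an inner vertex $v_0$ all of whose outgoing edges terminate at the output vertex $0$ — a topological sink among the inner vertices. The bipartition $W_0 := \{v_0\}$, $W_1 := V_G \setminus \{0, 1, v_0\}$ is a cut, since by choice of $v_0$ no edge has its tail in $W_0$ and its head in $W_1$. Equipping it with any ordering of the cut edges yields an ordered cut and, via Lemma~\ref{L:evalCut}, a decomposition $(G', G'')$ with $G'$ having a single inner vertex and $G''$ having $n-1$. The inductive hypothesis handles $G''$. For $G'$, unfolding the defining abstract index expression using item~2 of Definition~\ref{D:fuse} writes $\eval_f(G')$ as a three-fold composition of a permutation, a tensor of $f(D(v_0))$ with an identity permutation, and another permutation, all determined by the edge labellings of $G'$. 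Applying the three closure axioms ($\phi$, $\otimes$, and $\circ$) of $F$ together with the hypothesis $f(D(v_0)) \in F_{J(D(v_0))}$ yields $\eval_f(G') \in F_{\eval_J(G')}$, and composition closure finishes the induction step.

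The main obstacle is the bookkeeping for the single-vertex case: one must verify that the permutations appearing when unfolding $\eval_f(G')$ via Definition~\ref{D:fuse} really do match, permutation by permutation, the ones appearing when unfolding $\eval_J(G')$. This reduces to the fact that the formula in item~2 depends only on the label lists $\vek{a},\vek{b},\vek{c},\vek{d}$ and not at all on the annotation of the single vertex, so the two unfoldings proceed in lockstep. Once this parallel tracking is nailed down, the remainder is a routine combination of the filtration closure axioms with the inductive hypothesis.
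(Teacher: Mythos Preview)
Your proof is correct and follows essentially the same approach as the paper: induction on the number of inner vertices, with the zero-vertex case handled by the $\phi$-axiom, the one-vertex case unfolded via item~2 of Definition~\ref{D:fuse} and dispatched with the $\otimes$- and $\circ$-axioms, and larger networks reduced by a cut decomposition through Lemma~\ref{L:evalCut}. The only cosmetic difference is that the paper treats the one-vertex case as a standalone step and then cuts an $n\geqslant 2$ network into two nonempty pieces of unspecified sizes, whereas you fold the one-vertex argument into the induction step by always peeling off a single sink vertex; the content is the same.
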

\begin{proof}
  Consider first the case that \(G = (V,E,h,g,t,s,D)\) has no inner 
  vertices. Then \(\sigma := g \circ s^{-1}\) is a permutation and 
  \(\eval_f(G) = \phi_{\mc{P}}(\sigma) \in F_{\phi_{\mc{Q}}(\sigma)} = 
  F_{\eval_J(G)}\).
  
  Consider next the case that $G$ has one inner vertex $v$ with 
  decoration \(x = D(v)\). Then
  \(
    G = \Nwfuse{\vek{m}}{ (v\colon x)^\vek{k}_\vek{l} }{\vek{n}}
  \)
  for some \(\vek{k}, \vek{l}, \vek{m}, \vek{n}\) such that 
  \(\norm{\vek{k}} \subseteq \norm{\vek{m}}\) and \(\norm{\vek{l}} 
  \subseteq \norm{\vek{n}}\) but \(\norm{\vek{k}} \cap \norm{\vek{l}} 
  = \varnothing\). Hence
  $$
    \eval_f(G) =
    \fuse{ \vek{m} }{ f(x)^\vek{k}_\vek{l} }{ \vek{n} }_\mc{P} =
    \fuse{ \vek{m} }{ 1 }{ \vek{k(m/k)} }_\mc{P} \circ 
      f(x) \otimes \fuse{ \vek{m/k} }{1}{ \vek{m/k} }_\mc{P} \circ
      \fuse{ \vek{l(m/k)} }{1}{ \vek{n} }_\mc{P} 
    \text{.}
  $$
  Now \(f(x) \in F_{J(x)}\) by assumption, and
  $$
    \fuse{ \vek{m/k} }{1}{ \vek{m/k} }_\mc{P} \in
    F_{\fuse{ \vek{m/k} }{1}{ \vek{m/k} }_\mc{Q}}
    \text{,\quad so }
    f(x) \otimes \fuse{ \vek{m/k} }{1}{ \vek{m/k} }_\mc{P} \in
    F_{J(x) \otimes \fuse{ \vek{m/k} }{1}{ \vek{m/k} }_\mc{Q}}
    \text{,}
  $$
  and similarly
  $$
    \fuse{ \vek{m} }{1}{ \vek{k(m/k)} }_\mc{P} \in
    F_{\fuse{ \vek{m} }{1}{ \vek{k(m/k)} }_\mc{Q}}
    \text{,}\qquad
    \fuse{ \vek{l(m/k)} }{1}{ \vek{n} }_\mc{P} \in
    F_{\fuse{ \vek{l(m/k)} }{1}{ \vek{n} }_\mc{Q}}
    \text{.}
  $$
  Thus \(\eval_f(G) \in F_q\) for
  $$
    q =
    \fuse{ \vek{m} }{ 1 }{ \vek{k(m/k)} }_\mc{Q} \circ 
      J(x) \otimes \fuse{ \vek{m/k} }{1}{ \vek{m/k} }_\mc{Q} \circ
      \fuse{ \vek{l(m/k)} }{1}{ \vek{n} }_\mc{Q} =
    \fuse{ \vek{m} }{ J(x)^\vek{k}_\vek{l} }{ \vek{n} }_\mc{Q} =
    \eval_J(G) \text{.}
  $$
  
  Finally, induction over the number of inner vertices is used to 
  establish the lemma in the remaining cases. When there are at least 
  two inner vertices, there is some cut $(W_0,W_1,p)$ in $G$ such 
  that neither $W_0$ nor $W_1$ is empty. Hence the obvious induction 
  hypothesis applies to both parts of the corresponding decomposition 
  $(G_0,G_1)$ of $G$, and thus
  $$
    \eval_f(G) = \eval_f(G_0) \circ \eval_f(G_1) \in 
    F_{\eval_J(G_0) \circ \eval_J(G_1)} = F_{\eval_J(G)} \text{.}
  $$
\end{proof}

It is not at this point in the exposition possible to apply the 
filtration concept to the networks themselves, since it has not yet been 
established that these carry a \PROP\ structure for a filtration to be 
related to, but one can define something which keeps track of pretty 
much the same information as a dependency filtration would.

\begin{definition} \label{Def:Transferens}
  The \DefOrd{transference} of a network $G$ is the matrix \(\Tr(G) \in 
  \B^{\omega(G) \times \alpha(G)}\) which has $1$ in position $(i,j)$ 
  if and only if there is a directed path from $1$ to $0$ in $G$ such 
  that the initial edge has tail index $j$ and the final edge has 
  head index $i$.
\end{definition}

As defined, transference is a purely syntactic concept. Its main 
application will be to predict whether a network has some output that 
could be fed back to an input without creating a cycle; this is a key 
part of the formalisation of the ``may depend on'' arrow $\canmake$ 
seen in \eqref{Eq:Regelfamiljer}. Obviously, if position $(i,j)$ in 
$\Tr(G)$ is $1$ then joining the $i$th output of $G$ with its $j$th 
input will create a cycle. The purpose of making the transference 
a matrix is to handle simultaneous joining of several inputs and 
outputs, as it turns out nilpotence and the Kleene star are exactly 
the concepts needed to keep track of things in that situation.


If viewing networks as circuits, a more conceptual description is 
that the transference keeps track of which inputs contributes to 
which outputs. The classical information interpretation of the 
transference of a network is thus that a $0$ in some position $(i,j)$ 
signals that ``output $i$ cannot depend on input $j$, since there is no 
path along which such information could be carried.'' Whether this 
interpretation is valid will however depend on the \PROP\ in which a 
network is evaluated; there are sometimes mechanisms which can be used 
as loopholes to escape that conclusion about how information may be 
transferred, especially in ``quantum'' (or more generally linear) 
\PROPs. There are for example \PROPs\ with a ``cup'' element $U$ in the 
$(0,2)$ component and a ``cap'' element $\Lambda$ in the $(2,0)$ 
component which satisfy the equation 
\(\fuse{a}{ \Lambda^{ab} U_{bc} }{c} = \phi(\same{1}) = 
\fuse{a}{ U_{cb} \Lambda^{ba} }{c}\); this will happen if $U$ is an inner 
product on a finite dimensional vector space and $\Lambda$ is the 
corresponding inner product on the dual vector space. 
The transference of the networks $\Nwfuse{a}{ \Lambda^{ab} U_{bc} }{c}$ 
and $\Nwfuse{a}{ U_{cb} \Lambda^{ba} }{c}$ is $0$, but they may anyway 
evaluate to the identity (which puts ``exactly the same information'' 
on the output as it receives from the input) in a particular \PROP. 
Attempting an information transfer interpretation of 
transference may thus be somewhat tenuous, but the same is true for 
network expressions in general: in $\fuse{a}{ U_{cb} \Lambda^{ba} }{c}$, 
one interpretation is that cups and caps can ``reverse the direction of 
time'' (causality), whereas another is that the cap $\Lambda$ outputs 
a state $\sum_x x \otimes x$ that is a superposition of all pairs of 
identical basis states and conversely the cup $U$ destroys (maps to 
$0$) all histories where its two inputs are not equal. Neither 
interpretation is particularly physical, although the second bears 
some resemblance to quantum teleportation (but noticeably missing the 
steps involving classical information).


Yet another view of transference is that it is an evaluation of a 
network. 

\begin{lemma} \label{L:TrHomomorfi}
  Let $\Omega$ be an $\N^2$-graded set, and let \(J\colon \Omega 
  \Fpil \B^{\bullet\times\bullet}\) be the $\N^2$-graded set morphism 
  which maps every \(x \in \Omega\) to the $\omega(x) \times 
  \alpha(x)$ matrix of ones. Then \(\Tr(G) = \eval_J(G)\) for every 
  network \(G \in \Nw(\Omega)\).
\end{lemma}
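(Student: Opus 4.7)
My plan is to prove the identity by induction on the number of inner vertices of $G$, using the cut- and split-decomposition lemmas (Lemmas~\ref{L:evalCut} and~\ref{L:evalSplit}) to reduce to minimal cases.

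For the base case of zero inner vertices, $G$ is (isomorphic to) a permutation network $\bigl( \{0,1\}, [n], 0, \sigma, 1, \same{n}, \varnothing \bigr)$ for some $\sigma \in \Sigma_n$, so $\eval_J(G) = \phi_{\B^{\bullet\times\bullet}}(\sigma)$, the $n\times n$ Boolean permutation matrix with a $1$ in position $(\sigma(j), j)$. The only directed paths from $1$ to $0$ are the single legs, each of which starts with tail index $j$ and terminates with head index $\sigma(j)$, so $\Tr(G)$ equals the same permutation matrix. For the base case of one inner vertex $v$ decorated by $x$, every edge of $G$ is either a leg at $v$ (touching $v$ at one end and $0$ or $1$ at the other) or a stray edge (a leg of $G$ itself); I write $G$ in the form $\Nwfuse{\vek{m}}{(v\colon x)^{\vek{k}}_{\vek{l}}}{\vek{n}}$ and use Definition~\ref{D:fuse}\ref{Item2:D:fuse} to unfold $\eval_J(G)$. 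Since $J(x)$ is all ones, direct computation shows that the $(i,j)$ entry of the result is $1$ iff either input $j$ and output $i$ of $G$ lie on a common stray edge, or input $j$ is connected to some input of $v$ and output $i$ is connected to some output of $v$; this matches the definition of $\Tr(G)$ exactly.

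For the inductive step, suppose $G$ has at least two inner vertices. Then $G$ admits a nontrivial cut (take any topological order on the inner vertices and split at any intermediate point), inducing a decomposition $(G_0, G_1)$. By Lemma~\ref{L:evalCut}, $\eval_J(G) = \eval_J(G_0) \circ \eval_J(G_1)$, and by the induction hypothesis this equals $\Tr(G_0) \cdot \Tr(G_1)$, Boolean matrix product. I then need the matching identity $\Tr(G) = \Tr(G_0) \cdot \Tr(G_1)$, which is the statement that a directed path in $G$ from input $j$ to output $i$ exists iff there is some cut edge $e$ (labelled $k$ by the ordering) such that $G_1$ has a path from input $j$ to its output labelled $k$ and $G_0$ has a path from its input labelled $k$ to output $i$. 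This follows because any path in $G$ from $1$ to $0$ must cross the cut, and the cut edges are precisely the shared input/output legs of the two subnetworks.

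The main obstacle is keeping the path-combinatorics bookkeeping clean enough to make the last step airtight; the safest way to manage this is to bypass the hand-crafted induction entirely and apply the concrete description of evaluation given in Example~\ref{Ex:Matris-eval}. In the Boolean case, a simple induction on the natural topological ordering of vertices shows that for every edge $e$ the row vector $\lambda(e) \in \B^{\alpha(G)}$ assigned by that algorithm has a $1$ in position $j$ iff there is a directed path from the input vertex (starting with tail index $j$) to edge $e$: for input legs this is immediate, and at any inner vertex $v$, multiplying the all-ones matrix $J\bigl(D(v)\bigr)$ by the stacked incoming $\lambda$-vectors forms the pointwise \textsc{or} of all rows, which is exactly the set of inputs reaching any output edge of $v$. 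Reading off the rows for the edges into vertex $0$ then yields both $\eval_J(G)$ (by definition) and $\Tr(G)$ (by the path characterization), finishing the proof.
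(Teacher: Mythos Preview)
Your proposal is correct and follows essentially the same route as the paper: base cases for zero and one inner vertices, then cut decompositions with the path-splicing argument to show $\Tr(G)=\Tr(G_0)\Tr(G_1)$ (the paper additionally uses a split step to reduce the one-vertex-with-stray-edges case to the bare one-vertex case, where you instead compute directly from Definition~\ref{D:fuse}\ref{Item2:D:fuse}, but this is a minor structural difference). Your alternative via Example~\ref{Ex:Matris-eval} is also anticipated by the paper, which notes it as an equivalent approach immediately after the proof.
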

\begin{proof}
  Beginning with networks without inner vertices, one may observe 
  that any path in that case has the form $0 \: e \: 1$ (written going 
  right-to-left) for some edge $e$. Hence the $(i,j)$ entry of 
  $\Tr(G)$ is $1$ if and only if \(g^{-1}(i) = s^{-1}(j)\), which 
  is precisely the defining condition for the permutation matrix 
  $\phi_{\B^{\bullet\times\bullet}}(\sigma)$ for \(\sigma = g \circ 
  s^{-1}\). This permutation matrix is also what $\eval_J(G)$ is 
  defined to be.
  
  Next considering networks \(G = \Nwfuse{ \vek{a} }{ 
  (v\colon x)^\vek{a}_\vek{b} }{ \vek{b} }\), one immediately finds 
  that \(\eval_J(G) = J(x)\) is a matrix of ones. $\Tr(G)$ also 
  consists only of ones, since \(0 \: e \: v \: f \: 1\) is a path 
  from $1$ to $0$ for any \(e \in \norm{\vek{a}}\) and \(f \in 
  \norm{\vek{b}}\). The equality has thus been verified for both base 
  cases.
  
  If the network $G$ has a split 
  $(F_\mathrm{l},F_\mathrm{r},W_\mathrm{l},W_\mathrm{r})$ then 
  Lemma~\ref{L:evalSplit} implies that
  \begin{equation*}
    \eval_J(G) = 
    \eval_J(G_\mathrm{l}) \otimes \eval_J(G_\mathrm{r}) =
    \begin{bmatrix}
      \eval_J(G_\mathrm{l})& 0 \\
      0 & \eval_J(G_\mathrm{r})
    \end{bmatrix}
  \end{equation*}
  for the corresponding decomposition $(G_\mathrm{l},G_\mathrm{r})$ 
  of $G$. On the transference side of things, any path from $1$ to $0$ 
  in $G$ is either a path in $G_\mathrm{l}$ or a path in $G_\mathrm{r}$. 
  From the definition of split (in particular the modification of 
  indices in the right component), it thus follows that
  \begin{equation*}
    \Tr(G) = \begin{bmatrix}
      \Tr(G_\mathrm{l})& 0 \\
      0 & \Tr(G_\mathrm{r})
    \end{bmatrix}
    \text{.}
  \end{equation*}
  Hence if \(\Tr(G_\mathrm{l}) = \eval_J(G_\mathrm{l})\) and 
  \(\Tr(G_\mathrm{r}) = \eval_J(G_\mathrm{r})\) then \(\Tr(G) = 
  \eval_J(G)\). In particular, the lemma claim holds for \(G = 
  \Nwfuse{ \vek{ac} }{ x^\vek{a}_\vek{b} }{ \vek{bc} }\).
  
  Similarly, if the network $G$ has a cut $(W_0,W_1,q)$ inducing a 
  decomposition $(G_0,G_1)$ then Lemma~\ref{L:evalCut} implies that 
  \(\eval_J(G) = \eval_J(G_0) \eval_J(G_1)\). 
  On the transference side of things, if \(\Tr(G)_{i,k} = 1\) then a 
  corresponding path $P$ from $1$ to $0$ in $G$ contains exactly one cut 
  edge $e$. The edges up to and including $e$ induce a path $P_1$ in $G_1$ 
  from $1$ to $0$, and the edges from $e$ and on induce a path $P_0$ in 
  $G_0$ from $1$ to $0$, meaning \(\Tr(G_0)_{i,q(e)} = 
  \Tr(G_1)_{q(e),k} = 1\). Hence \(\Tr(G) \leqslant \Tr(G_0) 
  \Tr(G_1)\). Conversely, if \(\Tr(G_0)_{i,j} = \Tr(G_1)_{j,k} = 1\) 
  then there is a path $P_0$ from $1$ to $0$ in $G_0$ whose first 
  edge is $q^{-1}(j)$ and whose last edge has head index $i$, and 
  also a path $P_1$ from $1$ to $0$ in $G_1$ whose first edge has 
  tail index $k$ and whose last edge is $q^{-1}(j)$. Combining these 
  in $G$ yields a path $P$ from $1$ to $0$ whose first edge has tail 
  index $k$ and whose last edge has head index $i$. It follows that 
  \(\Tr(G_0) \Tr(G_1) \leqslant \Tr(G)\). Hence if \(\Tr(G_0) = 
  \eval_J(G_0)\) and \(\Tr(G_1) = \eval_J(G_1)\) then \(\Tr(G) = 
  \eval_J(G)\).
  
  From the recursive definition of the abstract index expression 
  defining $\eval_J(G)$, one gets a multicut decomposition of $G$ 
  into parts which are each either on the form $\Nwfuse{ \vek{a} }{ 1 }{ 
  \vek{b} }$ or on the form $\Nwfuse{ \vek{ac} }{ x^\vek{a}_\vek{b} 
  }{ \vek{bc} }$. Since $\Tr$ and $\eval_J$ coincide for all of 
  these, it now follows that \(\Tr(G) = \eval_J(G)\) in general.
\end{proof}

An alternative way of arriving to the same conclusion would be to use 
Example~\ref{Ex:Matris-eval}, since every $1$ in the result there can 
be traced back along a path to the input.



\begin{corollary} \label{Kor:eval-filtrering}
  Let $\mc{P}$ be a \PROP, $\Omega$ be an $\N^2$-graded set, and 
  \(f\colon \Omega \Fpil \mc{P}\) be an $\N^2$-graded set morphism.
  If $\{F_q\}_{q \in \B^{\bullet\times\bullet}}$ is a filtration of 
  $\mc{P}$ then for any network $G$ of $\Omega$,
  \begin{equation}
    \eval_f(G) \in F_{\Tr(G)} \text{.}
  \end{equation}
\end{corollary}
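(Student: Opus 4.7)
The plan is to assemble this directly from the two preceding lemmas, since the statement is essentially the specialization of Lemma~\ref{L:eval-filtrering} to the universal ``matrix of ones'' morphism whose network evaluation was identified with the transference in Lemma~\ref{L:TrHomomorfi}. So I would take $\mc{Q} = \B^{\bullet\times\bullet}$ with its standard order $Q$ (which by Construction~\ref{Konstr:Matrisordning} is a \PROP\ partial order) and let $J\colon \Omega \Fpil \B^{\bullet\times\bullet}$ be the $\N^2$-graded set morphism sending each $x \in \Omega$ to the $\omega(x) \times \alpha(x)$ matrix of ones, exactly as in Lemma~\ref{L:TrHomomorfi}.

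The one small verification to perform is that $f(x) \in F_{J(x)}$ for every $x \in \Omega$, so that Lemma~\ref{L:eval-filtrering} actually applies. This is where the hypothesis that $\{F_q\}$ is a filtration \emph{of} $\mc{P}$ (not merely \emph{in} $\mc{P}$) enters: by the covering clause in Definition~\ref{Def:PROP-filtrering}, one has $f(x) \in F_r$ for some $r \in \B^{\omega(x) \times \alpha(x)}$, and since $J(x)$ is the all-ones matrix one trivially has $r \leqslant J(x) \pin{Q}$, so the monotonicity clause of the filtration gives $F_r \subseteq F_{J(x)}$ and hence $f(x) \in F_{J(x)}$.

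With that in hand, Lemma~\ref{L:eval-filtrering} yields $\eval_f(G) \in F_{\eval_J(G)}$ for every $G \in \Nw(\Omega)$, and Lemma~\ref{L:TrHomomorfi} identifies $\eval_J(G) = \Tr(G)$, giving the desired conclusion $\eval_f(G) \in F_{\Tr(G)}$. There is no real obstacle here; the only thing that might merit a sentence of remark is the invocation of the covering property, since that is the one hypothesis of the corollary (as opposed to Lemma~\ref{L:eval-filtrering}) that is used nontrivially.
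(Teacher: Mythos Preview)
Your proof is correct and follows essentially the same route as the paper: specialize Lemma~\ref{L:eval-filtrering} with $\mc{Q}=\B^{\bullet\times\bullet}$ and $J$ the matrix-of-ones morphism, use the ``filtration \emph{of}'' hypothesis to secure $f(x)\in F_{J(x)}$, and then invoke Lemma~\ref{L:TrHomomorfi} for $\eval_J(G)=\Tr(G)$. The paper phrases the verification of $f(x)\in F_{J(x)}$ slightly more directly (noting that $F_{J(x)}=\mc{P}\bigl(\omega(x),\alpha(x)\bigr)$ since $J(x)$ is the maximum element), but this is the same argument you give, just compressed.
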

\begin{proof}
  In Lemma~\ref{L:eval-filtrering}, take \(\mc{Q} = 
  \B^{\bullet\times\bullet}\) and let $J(x)$ be the $\omega(x) \times 
  \alpha(x)$ matrix of ones. Then $J(x)$ is the maximum element in 
  $\B^{\omega(x) \times \alpha(x)}$ and hence 
  \(F_{J(x)} = \mc{P}\bigl( \omega(x),\alpha(x) \bigr)\) because 
  $\{F_q\}_{q \in \B^{\bullet\times\bullet}}$ is a filtration 
  \emph{of} $\mc{P}$. The claim now follows from \(\Tr(G) = 
  \eval_J(G)\).
\end{proof}

\section{Subexpressions}
\label{Sec:Deluttryck}

With the view of networks as ``\PROP\ expressions'' firmly 
established, and some technical lemmas taken care of, it is now time to 
consider what it would mean for one network to be a subexpression of 
another, as that is an initial step of doing rewriting.

The categorically inclined might suggest as definition that $A$ is a 
factor\slash subexpression of $B$ iff \(B = C_1 \circ C_2 \otimes A 
\otimes C_3 \circ C_4\) for some $C_1$ through $C_4$, or perhaps 
even streamline this to the equivalent condition that \(B = C_1' \circ 
A \otimes \phi(\same{k}) \circ C_4'\) (combine $C_2$ and $C_3$ with 
$C_1$, then insert some permutations to get rid of one of the 
$\otimes$ factors). This is equivalent to being a convex subnetwork, 
but that is in fact unnecessarily restrictive. 
A better starting point is the abstract index notation, 
which rather suggests the concept that a naked expression $A$ is a 
subexpression of any naked expression $B=AC$, i.e., $A$ is a 
subexpression of $B$ iff padding $A$ with some extra factors $C$ 
produces $B$ (after some minor cleanup, such as renaming labels). The 
smallest example that demonstrates the difference is probably 
\eqref{Eq:MinstaFeedback-exempel} since
\begin{multline*}
  \fuse{ ab }{ 
    \mOp^b_{cd} \, \mOp^c_{ef} \, S^d_g \, \Delta^{af}_i \, 
    \Delta^{ig}_j 
  }{ ej } = \\ =
  \phi(\same{1}) \otimes \mOp \circ
    \phi(\same{1}) \otimes \mOp \otimes \phi(\same{1}) \circ
    \phi(\cross{1}{1} \star \same{1}) \otimes S \circ
    \phi(\same{1}) \otimes \Delta \otimes \phi(\same{1}) \circ
    \phi(\same{1}) \otimes \Delta
\end{multline*}
is a subexpression of
\begin{multline*}
  \fuse{ hb }{ 
    \mOp^b_{cd} \, \mOp^c_{ef} \, S^d_g \, \Delta^{af}_i \, 
    \Delta^{ig}_j \, \Delta^{he}_a
  }{ j } = \\ =
  \phi(\same{1}) \otimes \mOp \circ
    \phi(\same{1}) \otimes \mOp \otimes \phi(\same{1}) \circ
    \Delta \otimes \phi(\same{1}) \otimes S \circ
    \Delta \otimes \phi(\same{1}) \circ
    \Delta
\end{multline*}
in the latter sense, but not in the former; in the more transparent 
network notation, the two expressions are
\[
  \begin{mpgraphics*}{13}
    beginfig(13);
      PROPdiagramnoarrow(0,0)
        elabels(btex $\scriptstyle a$ etex, btex $\scriptstyle b$ etex) 
        circ
        same(1) otimes box(1,2)(btex \strut$\mOp$ etex) circ
        elabels(0, btex $\scriptstyle c$ etex, 0) circ
        same(1) otimes box(1,2)(btex \strut$\mOp$ etex) 
          otimes elabel(btex $\scriptstyle d$ etex) circ
        cross(1,1) otimes 
          elabel(btex $\scriptstyle f$ etex) otimes 
          box(1,1)(btex \strut$S$ etex) circ
        same(1) otimes box(2,1)(btex \strut$\Delta$ etex) otimes
          elabel(btex $\scriptstyle g$ etex) circ
        elabels(false, btex $\scriptstyle i$ etex , false) circ
        same(1) otimes box(2,1)(btex \strut$\Delta$ etex) circ
        elabels(btex $\scriptstyle e$ etex,btex $\scriptstyle j$ etex)
    endfig;
  \end{mpgraphics*}
  \qquad\text{and}\qquad
  \begin{mpgraphics*}{14}
    beginfig(14);
      PROPdiagramnoarrow(0,0)
        elabels(btex $\scriptstyle h$ etex, btex $\scriptstyle b$ etex) 
        circ
        same(1) otimes box(1,2)(btex \strut$\mOp$ etex) circ
        elabels(0, btex $\scriptstyle c$ etex, 0) circ
        same(1) otimes box(1,2)(btex \strut$\mOp$ etex) 
          otimes elabel(btex $\scriptstyle d$ etex) circ
        same(1) otimes elabel(btex $\scriptstyle e$ etex) otimes
          same(2) circ
        box(2,1)(btex \strut$\Delta$ etex) otimes 
          elabel(btex $\scriptstyle f$ etex) otimes 
          box(1,1)(btex \strut$S$ etex) circ
        elabels(btex $\scriptstyle a$ etex , false, false) circ
        box(2,1)(btex \strut$\Delta$ etex) otimes
          elabel(btex $\scriptstyle g$ etex) circ
        elabels(btex $\scriptstyle i$ etex , false) circ
        box(2,1)(btex \strut$\Delta$ etex) circ
        elabel(btex $\scriptstyle j$ etex)
    endfig;
  \end{mpgraphics*}
\]
The obstruction is that 
the extra $\Delta$ is connected to the $A$ subexpression both with an 
incoming edge ($a$) and with an outgoing edge ($e$), so it cannot be 
in either of $C_1$ or $C_4$, but it can be in a generic product $C$ 
of abstract index factors.

At first sight, such non-convex subexpressions may seem strange, 
and downright implausible\Ldash why would one want to look for 
\emph{that} as a subexpression?\Rdash but experience 
seems to indicate that rules with this kind of left hand side arise 
naturally as soon as the rules become complicated enough for nonconvexity 
to occur. When a nonconvex rule is a derived rule, it can always be 
understood as a combination of more elementary rules (some of which 
may be used backwards), and these may of course encircle vertices 
outside the subexpression that is being acted upon, because the mere 
fact that two vertices in an expression are affected by rules does 
not necessarily imply that all vertices on every path between those 
vertices are affected by some rule as well. (Such a conclusion could 
however be drawn in cases where all expressions are treelike, since 
in that case there would only be one path to consider. Nonconvexity 
of rules can therefore be viewed as a complication that arises when 
generalising from treelike to general DAG expressions.)

\subsection{The symmetric join}

What is then the network counterpart of the abstract index ``multiply 
by some naked expression''? As the example indicates, it would be a 
network where paths can go back and forth between the two factors, 
so it follows that a general construction may end up looking somewhat 
like Figure~\ref{Fig:Korskoppling}. This raises the problem of knowing 
whether the end result will obey the acyclicity condition for a network, 
but luckily all aspects of the parts that are relevant for this are 
recorded in their transferences.

\begin{figure}
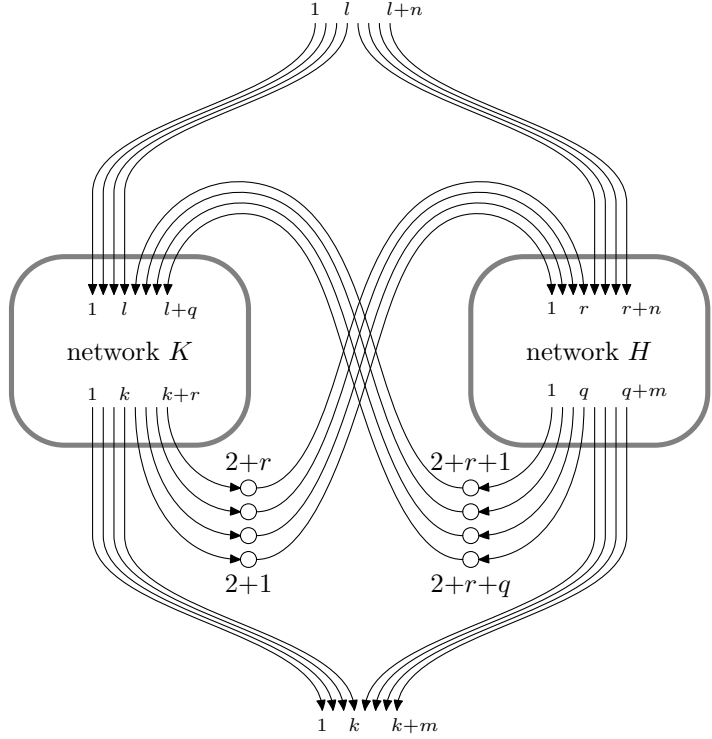

  \begin{center}
%
%
%
%
%
%
%
    \begin{mpgraphics}{30}
      beginfig(30);
        wires:=4;
        intrusion:=0.5cm;
        slant:=2;
        squash:=1;
        u:=3pt;
        rounding:=0.7cm;
        picture two_one, two_r, two_r_one, two_r_q;
        two_one := btex $2{+}1$ etex;
        two_r   := btex $2{+}r$ etex;
        two_r_one := btex $2{+}r{+}1$ etex;
        two_r_q   := btex $2{+}r{+}q$ etex;
        width_two_r_one := 
          xpart lrcorner two_r_one - xpart llcorner two_r_one;
        IC_equations1(2*wires,2*wires)(
           interim IC_height := 2.5cm;
           interim IC_width := 3cm;
           interim IC_legs_a := width_two_r_one;
           interim IC_legs_c := width_two_r_one;
        );
        IC_equations2(2*wires,2*wires)(
           interim IC_height := 2.5cm;
           interim IC_width := 3cm;
           interim IC_legs_a := width_two_r_one;
           interim IC_legs_c := width_two_r_one;
        );
        0.5[x3[1],x3[wires]] = 0.5[x1,x2]; 
        y1 = y2 for j:=1 upto wires: = y3[j] endfor ;
        x1ur for j:=1 upto wires: = x4[j] = x6[j] endfor;
        x2ul for j:=1 upto wires: = x5[j] = x7[j] endfor;
        for j := 1 upto 2*wires:
           x1tt[j] = x1t[j]; y1tt[j] = y1t[j] - intrusion;
           x2tt[j] = x2t[j]; y2tt[j] = y2t[j] - intrusion;
           x1bb[j] = x1b[j]; y1bb[j] = y1b[j] + intrusion;
           x2bb[j] = x2b[j]; y2bb[j] = y2b[j] + intrusion;
        endfor
        for j := 2 upto wires: 
           y7[j-1] - y7[j] = 3u;
           y5[j] - y5[1] = x2t[j] - x2t[1] = x3[1] - x3[j];
           y4[1] - y4[j] = x1t[j+wires] - x1t[1+wires];
           y6[j] - y6[j-1] = 3u;
        endfor
        y6[wires] = y7[1] = y2bb[1] - squash*(x6[1]-x1b[2*wires]);
        y4[wires] = y5[1] = y2tt[1] + squash*(x6[1]-x1b[2*wires]);
        y6[2] for j:=1 upto wires: = y8[j] endfor;
        y7[wires-1] for j:=1 upto wires: = y9[j] endfor;
        for j:=1 upto wires:
          x8[j] = x1b[j];
          x9[j] = x2b[wires+j];
        endfor
        for j:=2 upto wires:
          x10[j] - x8[j] = x10[1] - x8[1];
          y10[j] = y10[1];
          x10[wires+j] - x9[j] = x10[wires+1] - x9[1];
          y10[wires+j] = y10[wires+1];
        endfor
        x10[wires+1]-x10[wires] = x10[2]-x10[1];
        y10[wires+1] = y10[wires];
        z10[1] = z8[1] + whatever*(3,-2.3);
        z10[wires+1] = z9[1] + whatever*(-3,-2.3);
        y4[2] for j:=1 upto wires: = y11[j] endfor;
        y5[wires-2] for j:=1 upto wires: = y12[j] endfor;
        for j:=1 upto wires:
          x11[j] = x1t[j];
          x12[j] = x2t[wires+j];
        endfor
        for j:=2 upto wires:
          x13[j] - x11[j] = x13[1] - x11[1];
          y13[j] = y13[1];
          x13[wires+j] - x12[j] = x13[wires+1] - x12[1];
          y13[wires+j] = y13[wires+1];
        endfor
        x13[wires+1]-x13[wires] = x13[2]-x13[1];
        y13[wires+1] = y13[wires];
        z13[1] = z11[1] + whatever*(3,2.3);
        z13[wires+1] = z12[1] + whatever*(-3,2.3);
        y5[wires] - y7[wires] = slant*(x7[wires] - x6[1] - 4u);
        y6[1] = 0;  x1ll = 0;
        
        draw z1ll+(rounding,0) --- z1lr-(rounding,0) ..
          z1lr+(0,rounding) --- z1ur-(0,rounding) ..
          z1ur-(rounding,0) --- z1ul+(rounding,0) ..
          z1ul-(0,rounding) --- z1ll+(0,rounding) .. cycle
          withpen pencircle scaled 2pt withcolor 0.5[black,white];
        label (btex network $K$ etex, z1);
        draw z2ll+(rounding,0) --- z2lr-(rounding,0) ..
          z2lr+(0,rounding) --- z2ur-(0,rounding) ..
          z2ur-(rounding,0) --- z2ul+(rounding,0) ..
          z2ul-(0,rounding) --- z2ll+(0,rounding) .. cycle
          withpen pencircle scaled 2pt withcolor 0.5[black,white];
        label (btex network $H$ etex, z2);
        
        path P;
        for j := 1 upto wires:
           drawarrow z1bb[wires+j]{down} .. {right}(z6[j]-(u,0));
           draw fullcircle scaled 2u shifted z6[j];
           drawarrow (z6[j]+(u,0)){right} ... z3[j]{0.6,slant} ... 
             z5[j] .. {down}z2tt[j];
           drawarrow z2bb[j]{down} .. {left}(z7[j]+(u,0));
           draw fullcircle scaled 2u shifted z7[j];
           drawarrow (z7[j]-(u,0)){left} ... z3[j]{-0.6,slant} ... 
             z4[j] .. {down}z1tt[wires+j];
           drawarrow z1bb[j] --- z8[j] .. tension 1.5 .. {down}z10[j];
           drawarrow z2bb[wires+j] --- z9[j] .. tension 1.5 .. 
             {down}z10[wires+j];
           drawarrow z13[j]{down} .. tension 1.5 .. z11[j] --- z1tt[j];
           drawarrow z13[wires+j]{down} .. tension 1.5 .. 
             z12[j] --- z2tt[wires+j];
        endfor
        
        label.bot (two_one,  z6[1]-(0,u));
        label.top (two_r,    z6[wires]+(0,u));
        label.top (two_r_one,z7[1]+(0,u));
        label.bot (two_r_q,  z7[wires]-(0,u));
        
        label.bot (btex $\scriptstyle 1\vphantom{k}$ etex, z10[1]);
        label.bot (btex $\scriptstyle k$ etex, z10[wires]);
        label.bot (btex $\scriptstyle k$\rlap{$\scriptstyle+m$} etex, 
          z10[2*wires]);
        label.top (btex $\scriptstyle 1$ etex, z13[1]);
        label.top (btex $\scriptstyle l$ etex, z13[wires]);
        label.top (btex $\scriptstyle l$\rlap{$\scriptstyle\smash{+}n$} etex, 
          z13[2*wires]);
        label.top (btex $\scriptstyle 1$ etex, z1bb[1]);
        label.top (btex $\scriptstyle k$ etex, z1bb[wires]);
        label.top (btex $\scriptstyle k$\rlap{$\scriptstyle\smash{+}r$} etex, 
          z1bb[2*wires]);
        label.top (btex $\scriptstyle 1\vphantom{q}$ etex, z2bb[1]);
        label.top (btex $\scriptstyle q$ etex, z2bb[wires]);
        label.top (btex $\scriptstyle q$\rlap{$\scriptstyle\smash{+}m$} etex, 
          z2bb[2*wires]);
          label.bot (btex $\scriptstyle 1\vphantom{l}$ etex, z1tt[1]);
        label.bot (btex $\scriptstyle l$ etex, z1tt[wires]);
        label.bot (btex $\scriptstyle l$\rlap{$\scriptstyle+q$} etex, 
          z1tt[2*wires]);
        label.bot (btex $\scriptstyle 1$ etex, z2tt[1]);
        label.bot (btex $\scriptstyle r\vphantom{1}$ etex, z2tt[wires]);
        label.bot (btex $\scriptstyle r$%
            \rlap{$\scriptstyle\vphantom{1}\smash{+}n$} etex, 
          z2tt[2*wires]);
          
      endfig;
    \end{mpgraphics}
  \end{center}
  \caption{Symmetric join of two networks}
  \label{Fig:Korskoppling}
\end{figure}

\begin{construction}[Symmetric join] \label{Kons:symjoin}
  Let $\Omega$ be an $\N^2$-graded set with \(y \in \Omega\) such 
  that \(\alpha(y) = \omega(y) = 1\). Let \(k,l,m,n,q,r \in \N\),
  \begingroup
    \addtolength{\medmuskip}{-2mu plus 2mu}%
  \begin{align*}
    K = (V_K,E_K,h_K,g_K,t_K,s_K,D_K) \in{}& 
      \Nw(\Omega)(k +\nobreak r, l +\nobreak s)
      \text{, and}\\
    H = (V_H,E_H,h_H,g_H,t_H,s_H,D_H) \in{}&
      \Nw(\Omega)(s +\nobreak m, r +\nobreak n)
  \end{align*}
  be given. Consider Figure~\ref{Fig:Korskoppling}. Define
  \begin{align*}
    K \join[y]^r_q H :={}& (V,E,h,g,t,s,D)
  \intertext{where}
    E ={}& \{2e\}_{e \in E_K} \mathbin{\dot{\cup}} 
      \{2e+1\}_{e \in E_H}
      \text{,}
    \\
    V ={}& \{0,1\} \mathbin{\dot{\cup}}  
      \setOf{v \in \N}{ 1 \leqslant v-2 \leqslant r+q} 
      \mathbin{\dot{\cup}} \\ &\qquad {}\mathbin{\dot{\cup}}
      \{r+q+2v\}_{v \in V_K \setminus \{0,1\}} \mathbin{\dot{\cup}} 
      \{r+q+2v+1\}_{v \in V_H \setminus \{0,1\}} 
      \text{,} 
    \displaybreak[0]\\
    D(v) ={}& \begin{cases}
      y& \text{if \(1 \leqslant v-2 \leqslant r+q\),}\\
      D_K\bigl( \bigl\lfloor (v-r-q)/2 \bigr\rfloor \bigr)&
        \text{if \(v > 2+r+q\) and \(2 \mid v-r-q\),}\\
      D_H\bigl( \bigl\lfloor (v-r-q)/2 \bigr\rfloor \bigr)&
        \text{if \(v > 2+r+q\) and \(2 \nmid v-r-q\),}
    \end{cases} 
    \displaybreak[0]\\
    (h,g)(2e) ={}& \begin{cases}
      \bigl( r+q+2h_K(e), g_K(e) \bigr)& 
        \text{if \(h_K(e) \neq 0\),}\\
      \bigl( 0, g_K(e) \bigr)& 
        \text{if \(h_K(e) = 0\) and \(g_K(e) \leqslant k\),}\\
      \bigl( 2+g_K(e)-k, 1\bigr)& 
        \text{if \(h_K(e) = 0\) and \(g_K(e) > k\),}
    \end{cases}\\
    (h,g)(2e+1) ={}& \begin{cases}
      \bigl( r+q+2h_H(e)+1, g_H(e) \bigr)& 
        \text{if \(h_H(e) \neq 0\),}\\
      \bigl( 2+r+g_H(e), 1 \bigr)& 
        \text{if \(h_H(e) = 0\) and \(g_H(e) \leqslant q\),}\\
      \bigl( 0, k+ h_H(e)-q \bigr)& 
        \text{if \(h_H(e) = 0\) and \(g_H(e) > q\),}
    \end{cases}
    \displaybreak[0]\\
    (t,s)(2e) ={}& \begin{cases}
      \bigl( r+q + 2t_K(e), s_K(e) \bigr)&
        \text{if \(t_K(e) \neq 1\),}\\
      \bigl( 1, s_K(e) \bigr)&
        \text{if \(t_K(e)=1\) and \(s_K(e) \leqslant l\),}\\
      \bigl( 2+r + s_K(e)-l , 1\bigr)&
        \text{if \(t_K(e)=1\) and \(s_K(e) > l\),}
    \end{cases}\\
    (t,s)(2e+1) ={}& \begin{cases}
      \bigl( r+q + 2t_H(e)+1, s_H(e) \bigr)&
        \text{if \(t_H(e) \neq 1\),}\\
      \bigl( 2 + s_H(e), 1 \bigr)&
        \text{if \(t_H(e) = 1\) and \(s_H(e) \leqslant r\),}\\
      \bigl( 1, l + s_H(e)-r \bigr)&
        \text{if \(t_H(e) = 1\) and \(s_H(e) > r\).}
    \end{cases}
  \end{align*}
  \endgroup 
  Let
  \begin{align*}
    \begin{bmatrix}
      a_{11} & a_{12} \\ a_{21} & a_{22}
    \end{bmatrix} :={}&
      \Tr(K) \text{,} &
    \begin{bmatrix}
      b_{22} & b_{23} \\ b_{32} & b_{33}
    \end{bmatrix} :={}&
      \Tr(H)
  \end{align*}
  where \(a_{22} \in \B^{r \times q}\) and \(b_{22} \in \B^{q \times 
  r}\). 
  
  If $a_{22} b_{22}$ is nilpotent then \((V,E,h,g,t,s,D) \in 
  \Nw(\Omega)(k +\nobreak m, l +\nobreak n)\) and
  \begin{equation} \label{Eq:TrSymJoin}
    \Tr( K \join[y]^r_q H ) = \begin{bmatrix}
      a_{11} + a_{12} b_{22} (a_{22} b_{22})^* a_{21} &
        a_{12} (b_{22} a_{22})^* b_{23} \\
      b_{32} (a_{22} b_{22})^* a_{21} &
        b_{33} + b_{32} a_{22} (b_{22} a_{22})^* b_{23}
    \end{bmatrix}
    \text{.}
  \end{equation}
\end{construction}
\begin{remark}
  In Section~\ref{Sec:Feedbacks} it will be shown that 
  \eqref{Eq:TrSymJoin} is a special case of the formula 
  \eqref{Eq:Feedback-join} for $\eval_f(K \join^r_q\nobreak H)$.
  
  In practice, the join-vertex annotation $y$ is typically going to 
  be a ``neutral'' symbol $\natural$ that is not present in the 
  $\Omega$ primarily under consideration, but rather in some extended 
  \(\Omega' = \Omega \mathbin{\dot{\cup}} \{\natural\}\). Such 
  $\natural$ vertices are mostly there for technical purposes and 
  should not contribute any mathematical content, but since it for 
  some results makes no difference either way, these will be stated for 
  a general \(y \in \Omega(1,1)\). Notationally, the $y$ may be omitted 
  from $\join[y]$ when \(y = \natural\).
  
  Another notational simplification is that $K 
  \join^{\alpha(H)}_{\omega(H)} H$, i.e., when the right part $H$ is 
  completely engulfed by $K$, is written $K \rtimes H$. This 
  annexation of one network ($H$) into another ($K$) is the original 
  application for $\join$, but it turned out the more general 
  symmetric join was useful when proving things about it.
\end{remark}
\begin{proof}
  The first claim to verify is that the unions in the formulae for 
  $E$ and $V$ really are distinct. For $E$ this is immediately clear 
  (any $2e$ is even, any $2e+1$ is odd), and the same idea is used 
  for the last two parts of $V$. When it comes to separation of these 
  two from the previous parts, one must realise that \(v \geqslant 2\) 
  for \(v \in V_K \setminus\{0,1\}\) or \(v \in V_H 
  \setminus\{0,1\}\), so the least element that can be found in these 
  latter parts is $r+q+4$, whereas the largest join vertex label is 
  $r+q+2$. That there is never any \(r+q+3 \in V\), nor \(2 \in V\), is 
  deliberate\Dash the gaps simplify the formulae, and an offset of 
  $2$ for the join vertex labels is more visible than an offset 
  of~$1$.
  
  Once it is realised that the definitions of $E$ and $V$ are mostly 
  about relabelling old vertices so that $K$ and $H$ don't collide, 
  it should be clear that the top cases of the definitions of $h$, 
  $g$, $t$, and $s$ are merely reproducing the inner parts of $K$ and 
  $H$. The remaining two cases deal with external edges, and have to 
  distinguish between external edges that get joined up and external 
  edges that remain external. The deciding factor here is the head 
  and tail indices, and it is easily checked that everything works 
  out as suggested in Figure~\ref{Fig:Korskoppling}. At the same 
  time, one may verify that the conditions for $L = (V,E,h,g,t,s,D)$ 
  to be a network, save the acyclicity condition, are automatically 
  fulfilled.
  
  Now consider a walk (in practice, a path or cycle) \(P = v_N e_N 
  v_{N-1} \dotsb e_1 v_0\) in $L$. This has a decomposition at 
  vertices \(v \leqslant 2+r+q\) into subpaths with all of their 
  edges from either $K$ or $H$. If in particular \(v_N,v_0 \leqslant 
  2+r+q\) then each such subpath gives rise to a $1$ in some 
  $a_{ij}$ or $b_{ij}$ matrix. Say a walk is \emph{odd} if all edges 
  in it are odd (i.e., come from $H$) and \emph{even} if all edges in 
  it are even (i.e., come from $K$). Then there is an odd path from 
  $2+j$ to $2+r+i$ if and only if the $(i,j)$ position in $b_{22}$ is 
  $1$, and there is an even path from $2+r+j$ to $2+i$ if and only if 
  the $(i,j)$ position in $a_{22}$ is $1$.
  
  Any cycle $P$ in $L$ must contain some edge from $H$ and some edge 
  from $K$, since neither $H$ nor $K$ contain a cycle within 
  themselves. Hence one can assume \(v_0 = v_N \leqslant 2+r\) for 
  $P$ that is a cycle. The matrix $a_{22}b_{22}$ has a $1$ in position 
  $(i,j)$ if and only if 
  there is an odd--even path combination that goes from vertex $2+j$ 
  to vertex $2+i$. For a cycle $P$ consisting of $2p$ subpaths 
  (alternatingly odd and even), it follows that the $(v_0 -\nobreak 
  2, v_0 -\nobreak 2)$ position of $(a_{22}b_{22})^p$ must be $1$. 
  Therefore any cycle in $L$ will give rise to a $1$ in the main 
  diagonal of $(a_{22}b_{22})^+$, but if $a_{22}b_{22}$ is nilpotent 
  then the main diagonal of $(a_{22}b_{22})^+$ is zero by 
  Lemma~\ref{L:Nilpotens}. Thus $L$ will be a network whenever 
  $a_{22}b_{22}$ is nilpotent.
  
  Finally consider \eqref{Eq:TrSymJoin}. Let $P$ be a path from $1$ to 
  $0$ with \(s(e_1) > l\) and \(g(e_N) > k\). Then $e_1$ and $e_N$ are 
  both odd. If all edges in $P$ are odd then it corresponds to a path 
  in $H$, and hence the $\bigl( g(e_N) -\nobreak k +\nobreak q, 
  s(e_1) -\nobreak l +\nobreak r \bigr)$ element in $\Tr(H)$, and 
  equivalently the $\bigl( g(e_N) -\nobreak k, s(e_1) -\nobreak l 
  \bigr)$ element in $b_{33}$, is $1$. Otherwise $P$ consists of some 
  $2p+1 \geqslant 3$ segments which are alternatingly odd and even; let 
  \(v_0 = u_0, u_1, \dotsc, u_{2p+1} = v_N\) be the segment boundary 
  vertices. 
  As stated above, this implies that the $(u_{2i} -\nobreak 2, u_{2i-1}
  -\nobreak 2 -\nobreak r)$ position of $a_{22}$ is $1$ for \(i \in 
  [p]\), and also that the $(u_{2i+1} -\nobreak 2 -\nobreak r, u_{2i} 
  -\nobreak 2)$ position of $b_{22}$ is $1$ for \(i \in [p -\nobreak 
  1]\). Similarly the $\bigl( u_1 -\nobreak 2 -\nobreak r, s(e_1) 
  -\nobreak l \bigr)$ position of $b_{23}$ and the 
  $\bigl( g(e_N) -\nobreak k, u_{2p} -\nobreak 2 \bigr)$ position of 
  $b_{32}$ must be $1$. Hence the $\bigl( g(e_N) -\nobreak k, s(e_1) 
  -\nobreak l \bigr)$ element in $b_{32} a_{22} (b_{22}a_{22})^{p-1} 
  b_{23}$ must be $1$. Since this holds for an arbitrary $P$, it 
  follows that $b_{33} + b_{32} a_{22} (b_{22}a_{22})^* b_{23}$ is an 
  upper bound for the lower right $m \times n$ submatrix of 
  $\Tr(L)$. From similar considerations in 
  the other three quadrants, it follows that the left hand side of 
  \eqref{Eq:TrSymJoin} is $\leqslant$ the right hand side.
  
  Conversely, any $1$ in the right hand side of \eqref{Eq:TrSymJoin} 
  corresponds to a $1$ in some position in one of the component 
  terms. Each such term is a product of alternatingly $a_{ij}$ and 
  $b_{ji}$ matrices, so that $1$ corresponds in turn to a sequence 
  $(u_p,u_{p-1})$, $(u_{p-1},u_{p-2})$, \dots, $(u_1,u_0)$ of 
  positions such that $(u_j,u_{j-1})$ is $1$ in the $j$'th factor 
  from the end. For each such position that is $1$, there is a 
  corresponding path in $K$ or $H$ respectively, and any path 
  corresponding to $(u_j,u_{j-1})$ joins up in $L$ with any path 
  corresponding to $(u_{j+1},u_j)$, at the common vertex $2+u_j$ (if 
  the factors are on the form $b_{i2}a_{2i'}$) or $2+r+u_j$ (if the 
  factors are on the form $a_{i2}b_{2i'}$). Joining up all segments 
  produces a path from $1$ to $0$ in $L$, so the left hand side of 
  \eqref{Eq:TrSymJoin} is $\geqslant$ the right hand side.
\end{proof}

The symmetric join has several nice algebraic properties.

\begin{lemma} \label{L:join-associativitet}
  Let an $\N^2$-graded set $\Omega$, not necessarily distinct \(y,z 
  \in \Omega(1,1)\), and \(G,H,K \in \Nw(\Omega)\) be given. Let
  \begin{equation*}
    \begin{bmatrix} A_{11} & A_{12} \\ A_{21} & A_{22} 
      \end{bmatrix} := \Tr(G) \text{,}\ 
    \begin{bmatrix} B_{22} & B_{23} & B_{24} \\ 
      B_{32} & B_{33} & B_{34} \\ B_{42} & B_{43} & B_{44}
      \end{bmatrix} := \Tr(H) \text{,}\ 
    \begin{bmatrix} C_{44} & C_{45} \\ C_{54} & C_{55} 
      \end{bmatrix} := \Tr(K) 
  \end{equation*}
  where the block sizes are such that $A_{11}$ is $i \times j$, 
  $A_{22}$ is $p \times q$, $B_{22}$ is $q \times p$, $B_{33}$ is 
  $k \times l$, $B_{44}$ is $r \times s$, $C_{44}$ is $s \times r$, and 
  $C_{55}$ is $m \times n$ for some \(i,j,k,l,m,n,p,q,r,s \in \N\).
  
  If $A_{22}B_{22}$, $B_{44}C_{44}$, and $(A_{22}B_{22})^* 
  A_{22}B_{24}C_{44} (B_{44}C_{44})^* B_{42}$ are nilpotent then both 
  sides of \eqref{Eq:join-associativitet} are networks of $\Omega$ and
  \begin{equation} \label{Eq:join-associativitet}
    (G \join[y]^p_q H) \join[z]^r_s K \simeq
    G \join[y]^p_q (H \join[z]^r_s K) \text{.}
  \end{equation}
\end{lemma}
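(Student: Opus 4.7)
The plan is to establish \eqref{Eq:join-associativitet} in two stages: first, verify that the nilpotency hypotheses imply both sides are well-defined networks; second, exhibit an explicit isomorphism between them.

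For well-definedness, the inner joins $G \join[y]^p_q H$ and $H \join[z]^r_s K$ are immediately networks by Construction~\ref{Kons:symjoin}, given the hypotheses on $A_{22}B_{22}$ and $B_{44}C_{44}$. Applying \eqref{Eq:TrSymJoin} to compute $\Tr(G \join[y]^p_q H)$, with $H$'s external part further split into ``will remain external'' (the index-$3$ blocks) and ``will be joined with $K$'' (the index-$4$ blocks), the join-with-$K$ sub-block of the resulting transference turns out to be $B_{44} + B_{42} A_{22} (B_{22} A_{22})^* B_{24}$. Hence $(G \join[y]^p_q H) \join[z]^r_s K$ is a well-defined network precisely when $(B_{44} + B_{42} A_{22} (B_{22} A_{22})^* B_{24}) \cdot C_{44}$ is nilpotent. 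A parallel computation for the right-hand side, treating $H$'s index-$2$ blocks as the ``will be joined with $G$'' sub-blocks, yields $A_{22} \cdot (B_{22} + B_{24} C_{44} (B_{44} C_{44})^* B_{42})$ as the corresponding condition. Now apply Lemma~\ref{L:Matrisnilpotens} with $A := A_{22} B_{22}$, $B := A_{22} B_{24} C_{44}$, $C := B_{42}$, $D := B_{44} C_{44}$: using the identity $A_{22} (B_{22} A_{22})^* = (A_{22} B_{22})^* A_{22}$ from Lemma~\ref{L:Semiringstjarna}, the LHS requirement is precisely item~4 and the RHS requirement is precisely item~5, while the three hypotheses of the present lemma collectively form item~2. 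Equivalence of items~2, 4, and~5 of Lemma~\ref{L:Matrisnilpotens} therefore guarantees well-definedness of both sides.

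For the isomorphism, the key observation is that both associations build the same underlying combinatorial object. Both $L_1 := (G \join[y]^p_q H) \join[z]^r_s K$ and $L_2 := G \join[y]^p_q (H \join[z]^r_s K)$ have vertex sets consisting of the two external vertices $0$ and $1$, the inner vertices of $G$, $H$, and $K$, together with $p + q$ join vertices annotated $y$ on bridges between $G$ and $H$, and $r + s$ join vertices annotated $z$ on bridges between $H$ and $K$. Their edge sets correspond bijectively to $E_G \cup E_H \cup E_K$, with each edge having endpoints at $0$, $1$, at an inner vertex of its original component, or at an appropriate join vertex, according to the source, target, and head/tail index of the original edge in its component. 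I would construct the isomorphism $(\chi,\psi)$ by mapping each $G$-derived vertex (or edge) of $L_1$ to the $G$-derived vertex (or edge) of $L_2$ corresponding to the same vertex (or edge) of $G$, analogously for $H$ and $K$, and matching join vertices by their annotation type and position. Verification that this satisfies $h' \circ \psi = \chi \circ h$, $t' \circ \psi = \chi \circ t$, and the index and annotation conditions is then a matter of unfolding the explicit formulas in Construction~\ref{Kons:symjoin}.

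The main obstacle is bookkeeping: the parity-based relabelling scheme in Construction~\ref{Kons:symjoin} (even indices for one factor, odd for the other) composes nontrivially under two applications, so that a vertex $v$ of $H$ ends up with quite different numerical labels in $L_1$ and in $L_2$. Rather than expressing $\chi$ and $\psi$ by closed-form arithmetic on the composite labels, I would introduce an intermediate ``role-based'' description of a three-way symmetric join --- recording, for each vertex or edge, which of $G$, $H$, $K$ (or which join slot) it originated from, together with its local label there --- and argue that this abstract role assignment determines the resulting network up to isomorphism, with both $L_1$ and $L_2$ realising the same assignment. The well-definedness computation is a secondary technical obstacle, since applying \eqref{Eq:TrSymJoin} when the ``external'' blocks of $\Tr(H)$ are themselves further sub-divided requires careful tracking of a $3 \times 3$ (rather than the displayed $2 \times 2$) block structure, but this is a routine extension of the displayed formula.
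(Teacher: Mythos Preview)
Your proposal is correct and follows essentially the same approach as the paper: both use Lemma~\ref{L:Matrisnilpotens} (with exactly your assignment of blocks) to reduce well-definedness to the three stated nilpotency hypotheses, and both construct the isomorphism by tracking the provenance of each vertex and edge through the two nested applications of Construction~\ref{Kons:symjoin}. The only cosmetic difference is that the paper does write out the closed-form arithmetic for $\chi$ and $\psi$ (e.g.\ $\psi(4e)=2e$, $\psi(4e+2)=4e+1$, $\psi(2e+1)=4e+3$ on edges) rather than introducing an intermediate role-based description, and then spot-checks one case of the incidence-preservation; this turns out to be no worse than your proposed detour, so you may as well write the formulas directly.
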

\begin{proof}
  That $A_{22}B_{22}$ and $B_{44}C_{44}$ are nilpotent ensure that 
  $G \join[y]^p_q H$ and $H \join[z]^r_s K$ respectively are networks. 
  That also $(A_{22}B_{22})^* A_{22}B_{24}C_{44} (B_{44}C_{44})^* 
  B_{42}$ is nilpotent implies by Lemma~\ref{L:Matrisnilpotens} that 
  the matrices
  \begin{gather}
    \begin{bmatrix}
      A_{22}B_{22} & A_{22}B_{24}C_{44} \\
      B_{42} & B_{44}C_{44}
    \end{bmatrix}
    \\
    B_{44}C_{44} + B_{42}(A_{22}B_{22})^*A_{22}B_{24}C_{44}
    \label{Eq2:join-associativitet}
    \\
    A_{22}B_{22} + A_{22}B_{24}C_{44}(B_{44}C_{44})^*B_{42}
    \label{Eq3:join-associativitet}
  \end{gather}
  are nilpotent too. That \eqref{Eq2:join-associativitet} is 
  nilpotent is the condition for $(G \join[y]^p_q\nobreak H) 
  \join[z]^r_s K$ to be a network, whereas the condition for 
  $G \join[y]^p_q (H \join[z]^r_s\nobreak K)$ to be a network is that 
  \eqref{Eq3:join-associativitet} is nilpotent.
  
  The isomorphism $(\chi,\psi)$ from left hand side to right hand side 
  is immediate from the interpretation of $\join$ as mainly making a 
  combined copy of its factors:
  \begingroup
    \addtolength{\medmuskip}{-2mu plus 2mu minus -2mu}%
  \begin{align*}
    \psi(4e) ={}& 2e & e \in{}& E_G
      \text{,}\\
    \psi(4e+2) ={}& 4e+1 & e \in{}& E_H
      \text{,}\\
    \psi(2e+1) ={}& 4e+3 & e \in{}& E_K
      \text{,} \displaybreak[0]\\
    \chi\bigl( r+s + 2(p+q + 2v) \bigr) ={}&
      p+q + 2v & v \in{}& V_G \setminus\{0,1\} 
      \text{,} \displaybreak[0]\\
    \chi\bigl( r+s + 2(2 + i) \bigr) ={}& 2+i &
      i \in{}& [p+q]
      \text{,} \displaybreak[0]\\
    \chi\bigl( r+s + 2(p+q + 2v + 1) \bigr) ={}&
      p+q + 2(r+s + 2v) + 1 & v \in{}& V_H \setminus\{0,1\} 
      \text{,} \displaybreak[0]\\
    \chi( 2 + i ) ={}& p+q + 2(2+i) &
      i \in{}& [r+s]
      \text{,} \displaybreak[0]\\
    \chi\bigl( r+s + 2v + 1 \bigr) ={}&
      p+q + 2(r+s + 2v + 1) + 1 & v \in{}& V_K \setminus\{0,1\} 
      \text{,}\\
    \chi(i) ={}& i & i \in{}& \{0,1\}
      \text{.}
  \end{align*}
  \endgroup
  To verify that this really is an isomorphism, one would on one hand 
  check that it preserves incidences within each part, and on the 
  other check that the parts are joined in the same way in the left 
  and right hand sides. As an example of the latter, one may consider 
  some input $e$ of $H$, which corresponds to edge $2e+1$ in $G 
  \join[y]^p_q H$, edge $4e+2$ in $(G \join[y]^p_q\nobreak H) 
  \join[z]^r_s K$, edge $2e$ in $H \join[z]^r_s K$, and edge \(4e+1 = 
  \psi( 4e +\nobreak 2 )\) in $G \join[y]^p_q (H \join[z]^r_s\nobreak 
  K)$. If \(s_H(e) \leqslant p\), then the head of that edge is the 
  input vertex $1$ in $H$ and $H \join[z]^r_s K$, but a join-vertex 
  decorated by $y$ in the other three, carrying the label $2 + 
  s_H(e)$ in $G \join[y]^p_q H$, $r+s + 2\bigl( 2 +\nobreak s_H(e) 
  \bigr)$ in $(G \join[y]^p_q\nobreak H) \join[z]^r_s K$, and again 
  \( 2 + s_H(e) = \chi\bigl( r+s + 2( 2 +\nobreak s_H(e) ) \bigr) \) 
  in $G \join[y]^p_q (H \join[z]^r_s\nobreak K)$. 
  If \(p < s_H(e) \leqslant p+k\) then the tail is the input vertex 
  $1$ in all five networks, but the tail index varies: it is $s_H(e)$ 
  in $H$ and $H \join[z]^r_s K$, but $i + s_K(e) - k$ in 
  $G \join[y]^p_q H$, $(G \join[y]^p_q\nobreak H) \join[z]^r_s K$, and 
  $G \join[y]^p_q (H \join[z]^r_s\nobreak K)$. 
  Finally if if \(s_H(e) > p+k\) then it is in $H$ and $G 
  \join[y]^p_q H$ that the tail is the input vertex $1$, and in the 
  other cases it is a join-vertex decorated by $z$, namely that 
  labelled $2 + r + s_H(e)-p-k$ in $H \join[z]^r_s K$ and 
  $(G \join[y]^p_q\nobreak H) \join[z]^r_s K$, but \(p+q + 2\bigl( 
  2 +\nobreak r +\nobreak s_H(e) -\nobreak p -\nobreak k \bigr) = 
  \chi\bigl( 2 +\nobreak r +\nobreak s_H(e) -\nobreak p -\nobreak k 
  \bigr)\) in $G \join[y]^p_q (H \join[z]^r_s\nobreak K)$.
\end{proof}

\begin{lemma} \label{L:Symjoin-transposition}
  For every $\N^2$-graded set $\Omega$, numbers \(k,l,m,n,q,r \in 
  \N\), networks \(K \in \Nw(\Omega)(k +\nobreak r, l +\nobreak q)\), 
  \(H \in \Nw(\Omega)( q +\nobreak m, r +\nobreak n )\), and \(y \in 
  \Omega(1,1)\), it holds that
  \begin{equation} \label{Eq1:Symjoin-transposition}
    \cross{k}{m} \cdot ( K \join[y]^r_q H ) \cdot \cross{n}{l}
    \simeq
    (\cross{q}{m} \cdot H \cdot \cross{n}{r}) \join[y]^q_r
      (\cross{k}{r} \cdot K \cdot \cross{q}{l})
  \end{equation}
  whenever either symmetric join is defined.
\end{lemma}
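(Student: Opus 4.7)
The plan is to construct an explicit network isomorphism $(\chi,\psi)$ between the two sides. The argument breaks into three steps.

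\emph{Step 1: The two joins are simultaneously defined.} Write $\Tr(K)=\begin{bmatrix}a_{11}&a_{12}\\a_{21}&a_{22}\end{bmatrix}$ and $\Tr(H)=\begin{bmatrix}b_{22}&b_{23}\\b_{32}&b_{33}\end{bmatrix}$ with $a_{22}\in\B^{r\times q}$ and $b_{22}\in\B^{q\times r}$ as in Construction~\ref{Kons:symjoin}. Left (respectively right) action by a permutation on a network permutes the rows (respectively columns) of its transference. Tracing through, the ``join block'' of the would-be left factor $\cross{q}{m}\cdot H\cdot\cross{n}{r}$ in $\join^q_r$ occupies its last $q$ rows and last $r$ columns, and these correspond precisely to the first $q$ rows and first $r$ columns of $\Tr(H)$, namely the block $b_{22}$. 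Symmetrically, the join block of $\cross{k}{r}\cdot K\cdot\cross{q}{l}$ turns out to be $a_{22}$. Hence the RHS is defined iff $b_{22}a_{22}$ is nilpotent, which by Lemma~\ref{L:Semiringstjarna} is equivalent to $a_{22}b_{22}$ being nilpotent, i.e., to the LHS being defined.

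\emph{Step 2: The isomorphism.} Using the labelling conventions of Construction~\ref{Kons:symjoin}, I would define
\begin{align*}
  \chi(r+q+2v) &= q+r+2v+1 && \text{for } v\in V_K\setminus\{0,1\},\\
  \chi(r+q+2v+1) &= q+r+2v && \text{for } v\in V_H\setminus\{0,1\},\\
  \chi(2+i) &= 2+q+i && \text{for } i\in[r],\\
  \chi(2+r+j) &= 2+j && \text{for } j\in[q],\\
  \chi(0) &= 0,\quad \chi(1)=1,
\end{align*}
and $\psi(2e)=2e+1$ for $e\in E_K$, $\psi(2e+1)=2e$ for $e\in E_H$. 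The effect of $\chi$ on join vertices is to interchange the two blocks of join vertices, which is forced because the ``downward'' connections of the original join play the role of ``upward'' connections in the reversed join and vice versa. On inner vertices, $\chi$ merely relabels $K$ (respectively $H$) as a right (respectively left) factor, since $K\join^r_q H$ labels left-factor vertices with even suffixes and right-factor vertices with odd suffixes, whereas $H'\join^q_r K'$ does the opposite.

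\emph{Step 3: Verification.} One must check that $(\chi,\psi)$ preserves $h,g,t,s,D$. Preservation of $D$ is clear: the join-vertex annotations on both sides are $y$, and inner-vertex annotations are read off $D_K$ or $D_H$ either way. For the incidence maps, the analysis splits according to where an edge lies:
\begin{itemize}
\item Edges internal to $K$ (both endpoints in $V_K\setminus\{0,1\}$) and edges internal to $H$ transfer verbatim; the consistent use of $\chi$ on inner-vertex labels makes everything match.
\item Edges incident with a join vertex pair up by the construction of $\chi$ on join vertices: a downward connection from $K$ in the LHS is the same edge as a downward connection from $K'$ in the RHS (after applying the left-action of $\cross{k}{r}$, which reindexes $K$'s outputs so that the join-bound ones come first), and similarly for the other three incidences.
\item Edges incident with the external vertices $0$ or $1$ are the case requiring the most care, since here the outer permutations $\cross{k}{m},\cross{n}{l}$ on the LHS and the inner permutations $\cross{q}{m},\cross{n}{r},\cross{k}{r},\cross{q}{l}$ on the RHS must cooperate.
\end{itemize}

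The main obstacle is this last case analysis, which is essentially bookkeeping. As a representative example, consider an output edge $e$ of $K$ with $h_K(e)=0$ and $g_K(e)=i\in[k]$. In the LHS, $K\join^r_q H$ assigns it the new data $h=0,g=i$ (first case of $(h,g)(2e)$), and the outer action $\cross{k}{m}\cdot$ then changes $g$ to $i+m$. In the RHS, $K$ is first permuted to $K'=\cross{k}{r}\cdot K\cdot\cross{q}{l}$, where this edge acquires head index $\cross{k}{r}(i)=i+r$; in $H'\join^q_r K'$ this index exceeds $r$, so the edge becomes external with head index $m+(i+r)-r=m+i$. The two results agree. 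The seven other symmetric cases (external outputs of $H$; external inputs of $K$ and of $H$; downward and upward join edges from each factor) are verified by analogous chases, each one reducing to an elementary identity such as $\cross{k}{r}(k+j)=j$ for $j\in[r]$ or $\cross{n}{r}(r+j)=j$ for $j\in[n]$. Once all cases check out, $(\chi,\psi)$ is the desired isomorphism.
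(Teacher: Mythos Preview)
Your proposal is correct and follows essentially the same route as the paper's proof: the same transference check in Step~1 (the paper likewise computes $\Tr(\cross{q}{m}\cdot H\cdot\cross{n}{r})=\begin{bmatrix}B_{33}&B_{32}\\B_{23}&B_{22}\end{bmatrix}$ and $\Tr(\cross{k}{r}\cdot K\cdot\cross{q}{l})=\begin{bmatrix}A_{22}&A_{21}\\A_{12}&A_{11}\end{bmatrix}$, reducing both definedness conditions to nilpotence of $A_{22}B_{22}$), the identical isomorphism $(\chi,\psi)$ in Step~2 (the paper writes $\psi(e)=e+(-1)^e$, which is your formula), and the same representative case check in Step~3. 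The paper works through the $h_K(e)=0$ case for both $g_K(e)\leqslant k$ and $g_K(e)>k$ and leaves the remaining leg cases to the reader, just as you do.
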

\begin{remark}
  On one hand, this lemma states a symmetric join can be moved out of 
  (some) permutations, so it is a kind of distributivity. On the other, 
  it states that
  $$
    \begin{mpgraphics*}{58}
      beginfig(58);
        PROPdiagram(0,0) 
          tightcross(0.3,2,1,1) circ
          frame (
          box(2,2)(btex \strut$a$ etex) symjoinIi 
            box(2,2)(btex \strut$b$ etex)
          )
          circ tightcross(0.3,2,1,1)
        ;
      endfig;
    \end{mpgraphics*}
    \quad\text{is isomorphic to}\quad
    \begin{mpgraphics*}{59}
      beginfig(59);
        PROPdiagram(0,0) 
          same(2) circ frame( 
            cross(1,1) circ box(2,2)(btex \strut$b$ etex) circ cross(1,1)
          ) symjoinIi frame(
            cross(1,1) circ box(2,2)(btex \strut$a$ etex) circ cross(1,1)
          ) circ same(2)
        ;
      endfig;
    \end{mpgraphics*}
  $$
  and these diagrams are schematic in the sense that any ``wire'' 
  above can be replaced by any number of parallel ``wires'' (as long 
  as these do not cross among themselves).
\end{remark}
\begin{proof}
  Writing
  \begin{align*}
    \Tr(K) ={}& 
    \begin{bmatrix} A_{11} & A_{12} \\ A_{21} & A_{22} \end{bmatrix}
    \text{,}& 
    \Tr(H) ={}& 
    \begin{bmatrix} B_{22} & B_{23} \\ B_{32} & B_{33} \end{bmatrix}
  \end{align*}
  one finds
  \begin{align*}
    \Tr(\cross{k}{r} \cdot K \cdot \cross{q}{l}) ={}& 
    \begin{bmatrix} A_{22} & A_{21} \\ A_{12} & A_{11} \end{bmatrix}
    \text{,}& 
    \Tr(\cross{q}{m} \cdot H \cdot \cross{n}{r}) ={}& 
    \begin{bmatrix} B_{33} & B_{32} \\ B_{23} & B_{22} \end{bmatrix}
  \end{align*}
  meaning the condition for either side to be defined is that 
  $A_{22}B_{22}$ is nilpotent.
  
  The isomorphism $(\chi,\psi)$ from left to right hand side of 
  \eqref{Eq1:Symjoin-transposition} is a straightforward relabelling; 
  in the left hand side the $K$ edges are even and the $H$ edges are 
  odd, whereas in the right hand side it is the other way around, so 
  \(\psi(e) = e + (-1)^e\) (add $1$ if even, subtract $1$ if odd). 
  For the vertices it is given by
  \begin{align*}
    \chi(v) ={}& v & \text{for } v \in{}& \{0,1\} \text{,}\\
    \chi(2+i) ={}& 2+q+i & \text{for } i \in{}& [r] \text{,}\\
    \chi(2+r+i) ={}& 2+i & \text{for } i \in{}& [q] \text{,}\\
    \chi(r+q+2v) ={}& q+r+2v+1 & 
      \text{for } v \in{}& V_K \setminus\{0,1\} \text{,}\\
    \chi(r+q+2v+1) ={}& q+r+2v & 
      \text{for } v \in{}& V_H \setminus\{0,1\} \text{.}
  \end{align*}
  The thing that needs to be checked is primarily that both sides 
  attach $K$- and $H$-legs in the same way.
  
  Suppose \(e \in E_K\) has \(h_K(e) = 0\). In the left hand side, 
  $2e$ will go with head index $1$ to the join-vertex $2 + g_K(e)-k$ 
  if \(g_K(e) > k\), and to the output vertex $0$ with head index 
  \(\cross{k}{m}\bigl( g_K(e) \bigr) = m + g_K(e)\) if \(g_K(e) 
  \leqslant k\). In the right hand side, if \(\cross{k}{r}\bigl( 
  g_K(e) \bigr) > r\) (i.e., if \(g_K(e) \leqslant k\)) then \(2e+1 
  = \psi(e)\) will go to the output vertex $0$ with head index 
  \(m + \cross{k}{r}\bigl( g_K(e) \bigr) - k = m + g_K(e)\), and if 
  \(\cross{k}{r}\bigl( g_K(e) \bigr) \leqslant r\) (i.e., if \(g_K(e) 
  > k\)) then \(2e+1 = \psi(e)\) will go with head index $1$ to the 
  join-vertex \(2+q + \cross{k}{r}\bigl( g_K(e) \bigr) = 2+q + 
  g_K(e)-k = \chi\bigl( 2 +\nobreak g_K(e) -\nobreak k \bigr)\). The 
  other three cases (out-legs of $H$, in-legs of $K$, and in-legs of 
  $H$) are checked in the same way. For the in-legs, it may be useful 
  to observe that e.g.~\((\cross{q}{l})^{-1} = \cross{l}{q}\).
\end{proof}

\subsection{Homeomorphisms}

The beauty of algebraic identities aside, it is time to get back to 
the matter of network subexpressions, because the symmetric join by 
itself does not completely define such a concept. When \(B = CA\) in 
naked abstract index notation, there are typically some indices 
that are common to $C$ and $A$, but in \(L = K \join^r_q H\) there 
is no corresponding commonality of edges. Instead an edge from $K$ is 
joined with an edge from $H$ by means of a new neutral vertex, which 
in abstract index notation would correspond to doing all joining by 
means of Kronecker delta factors. What more is needed for a network 
subexpression concept is thus a way of getting at what the combined 
network would be if such Kronecker factors were eliminated.

\begin{definition}
  A \DefOrd[*{homeomorphism}]{network homeomorphism} from
  \begin{equation*}
    L = (V_L,E_L,h_L,g_L,t_L,s_L,D_L)
    \quad\text{to}\quad 
    G = (V_G,E_G,h_G,g_G,t_G,s_G,D_G)
  \end{equation*}
  is a pair $(\beta,\gamma)$, where \(\gamma\colon E_L \Fpil E_G\) and 
  \(\beta\colon V_G \Fpil V_L\) (note reversal) are such that:
  \begin{enumerate}
    \item \(\beta(0) = 0\), \(\beta(1)=1\), and $\beta$ is injective. 
    \item $\gamma$ is surjective.
    \item 
      \(D_L\bigl( \beta(v) \bigr) = D_G(v)\) for all \(v \in V_G 
      \setminus \{0,1\}\).
    \item
      If \(e \in E_L\) satisfies \(h_L(e) \in \setim\beta\) then 
      \(h_L(e) = (\beta \circ\nobreak h_G \circ\nobreak \gamma)(e)\) and 
      \(g_L(e) = (g_G \circ\nobreak \gamma)(e)\).
      If \(e \in E_L\) satisfies \(t_L(e) \in \setim\beta\) then 
      \(t_L(e) = (\beta \circ\nobreak t_G \circ\nobreak \gamma)(e)\) and 
      \(s_L(e) = (s_G \circ\nobreak \gamma)(e)\).
    \item
      If \(v \in V_L \setminus \setim\beta\) then 
      \(\alpha\bigl( D_L(v) \bigr) = \omega\bigl( D_L(v) \bigr) = 1\) 
      and \(\gamma(e)=\gamma(f)\) for the \(e,f \in E_L\) such that 
      \(h_L(e) = v = t_L(f)\).
  \end{enumerate}
  A \DefOrd[*{homeomorphism!$y$-homeomorphism}]{$y$-homeomorphism} 
  is a homeomorphism such that \(D_L(v) = y\) for all 
  \(v \in V_L \setminus \setim\beta\). A 
  \DefOrd[*{subdivision}]{$y$-subdivision} of a network $G$ is 
  some network $L$ from which there exists a $y$-homeomorphism to $G$.
\end{definition}

\begin{lemma} \label{L:UnderdeladKant}
  Let $(\beta,\gamma)$ be a network homeomorphism from
  \begin{equation*}
    L = (V_L,E_L,h_L,g_L,t_L,s_L,D_L)
    \quad\text{to}\quad 
    G = (V_G,E_G,h_G,g_G,t_G,s_G,D_G)
  \end{equation*}
  and let \(X = \setim\beta \subseteq V_L\). 
  For every \(e \in E_G\), the set $\setinv{\gamma}\bigl( \{e\} \bigr)$ 
  has the form $\{e_1,\dotsc,e_n\}$ where \(t_L(e_1) \in X\), 
  \(h_L(e_i) = t_L(e_{i+1}) \notin X\) for \(i \in [n -\nobreak 1]\), 
  and \(h_L(e_n) \in X\).
\end{lemma}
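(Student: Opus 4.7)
\textbf{Plan for the proof of Lemma \ref{L:UnderdeladKant}.} The strategy is to start from a uniquely determined ``first'' edge in $\setinv{\gamma}(\{e\})$ whose tail lies in $X$, then walk forward through subdivision vertices, each step being forced by condition~5, until one is forced out of the walk at a vertex in $X$; finally show that every edge of $\setinv{\gamma}(\{e\})$ is visited by this walk.

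First I would establish the two deterministic ``stepping'' lemmas. If $e' \in \setinv{\gamma}(\{e\})$ and $h_L(e') = v \notin X$, then by condition~5 the vertex $v$ satisfies $\omega(D_L(v)) = \alpha(D_L(v)) = 1$, and network axioms~\ref{A2:Network} and~\ref{A3:Network} force $v$ to have exactly one incoming edge (namely $e'$) and exactly one outgoing edge $e''$; condition~5 then gives $\gamma(e'') = \gamma(e') = e$, so $e'' \in \setinv{\gamma}(\{e\})$. Symmetrically, if $t_L(e') = v \notin X$ there is a unique edge $e''$ with $h_L(e'') = v$, and again $e'' \in \setinv{\gamma}(\{e\})$.

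Next I would identify the starting edge. Surjectivity of $\gamma$ gives some $e^\circ \in \setinv{\gamma}(\{e\})$. Iteratively stepping backward along the ``tail'' direction using the lemma above produces a sequence of edges all in $\setinv{\gamma}(\{e\})$; since $L$ is acyclic (axiom~\ref{A1:Network}) and finite, this backward walk cannot loop and must terminate, which can only happen when the current tail lies in $X$. Call the resulting edge $e_1$. By condition~4 applied at $e_1$, we have $t_L(e_1) = \beta(t_G(e))$ and $s_L(e_1) = s_G(e)$, and by axiom~\ref{A2:Network} these data determine $e_1$ uniquely in $E_L$; hence $e_1$ is the \emph{only} edge of $\setinv{\gamma}(\{e\})$ whose tail lies in $X$. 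Now step forward from $e_1$ using the second stepping lemma to obtain $e_1, e_2, \ldots$; again acyclicity and finiteness force this to terminate, and the only way it can terminate is at some $e_n$ with $h_L(e_n) \in X$. By construction $h_L(e_i) = t_L(e_{i+1}) \notin X$ for $i \in [n-1]$, which is exactly the chain described in the statement. (Condition~4 applied at $e_n$, together with axiom~\ref{A2:Network}, moreover shows that $e_n$ is the unique edge of $\setinv{\gamma}(\{e\})$ with head in $X$ and verifies $h_L(e_n) = \beta(h_G(e))$, $g_L(e_n) = g_G(e)$.)

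Finally I would show that every edge of $\setinv{\gamma}(\{e\})$ appears in this chain. Given any $e^\circ \in \setinv{\gamma}(\{e\})$, step backward from $e^\circ$ until reaching an edge $e'$ with $t_L(e') \in X$; by the uniqueness established above, $e' = e_1$, so $e^\circ$ is reached from $e_1$ by some number of forward steps, and since forward-stepping is deterministic this means $e^\circ \in \{e_1, \ldots, e_n\}$.

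The main obstacle is purely bookkeeping: one must be careful to justify, from condition~5 plus axioms~\ref{A2:Network} and~\ref{A3:Network}, that each subdivision vertex really has exactly one in-edge and one out-edge (so that forward- and backward-stepping are single-valued), and that the walks terminate only at vertices in $X$ rather than running off the diagram. Once that is clear, the identification of the unique starting edge via condition~4 plus axiom~\ref{A2:Network}, together with the acyclicity argument, makes the rest routine.
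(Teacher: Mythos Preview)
Your proposal is correct and follows essentially the same approach as the paper: both arguments pick an arbitrary edge in $\setinv{\gamma}(\{e\})$, step backward along subdivision vertices (using condition~5 and acyclicity) to find the unique $e_1$ with tail in $X$, then step forward to build the chain, and finally show completeness by stepping any other edge back to $e_1$. The paper presents the uniqueness of $e_1$ first and interleaves the stepping observations into the main argument rather than isolating them as separate lemmas, but the logic is the same.
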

\begin{proof}
  Let \(E = \setinv{\gamma}\bigl( \{e\} \bigr)\). If \(v \in X \cap 
  \setmap{t_L}(E)\) then \(v = \beta\bigl( t_H(e) \bigr)\) by 
  definition of homeomorphism. Furthermore if \(f \in E\) is such 
  that \(t_L(f) \in X\) then \(s_L(f) = s_G(e)\) by definition of 
  homeomorphism. Since \(t_L(f) = \beta\bigl( t_G(e) \bigr)\) and 
  \(s_L(f) = s_G(e)\) uniquely identify $f$ as an edge of $L$, 
  there can be at most one \(f \in E\) with \(t_L(f) \in X\); this 
  will be the edge \(e_1 \in E\).
  
  In order to exhibit this $e_1$, one may pick an arbitrary \(f_1 \in 
  E\) and proceed iteratively as follows: If \(t_L(f_i) \in X\) then 
  \(e_1 = f_i\), and otherwise \((\alpha \circ\nobreak D_L 
  \circ\nobreak t_L)(f_i) = 1\), meaning there is a unique \(f_{i+1} 
  \in E_L\) such that \(h_L(f_{i+1}) = t_L(f_i)\). By acyclicity of 
  the network $L$, \(f_{i+1} \notin \{f_1,\dotsc,f_i\}\), and by 
  definition of homeomorphism \(\gamma(f_{i+1})=e\), so \(f_{i+1} \in 
  E\). Since $E$ is finite, this will produce $e_1$ after a finite 
  number of steps. Then one may work in the opposite direction and 
  define \(e_{i+1} \in E_L\) to be the unique edge with 
  \(t_L(e_{i+1}) = h_L(e_i)\), for as long as \(h_L(e_i) \notin X\); 
  again uniqueness is because \((\omega \circ\nobreak D_L 
  \circ\nobreak h_L)(e_i) = 1\), \(e_{i+1} \in E\) by definition 
  of homeomorphism since \(h_L(e_i) \notin X\), and eventually there 
  is some $e_n$ for which \(h_L(e_n) \in X\) since $E$ is finite and 
  $L$ contains no cycles.
  
  The above has exhibited a sequence \(e_1,\dotsc,e_n \in E\) 
  satisfying the stated conditions for heads and tails, so what 
  remains is only to establish that these are all the edges in $E$. 
  However, had there been some \(f \in E \setminus 
  \{e_1,\dotsc,e_n\}\) then even when taking $f$ as $f_1$ one would 
  still have to arrive at the same $e_1$, so not being able to 
  conversely reach $f$ from $e_1$ would contradict the network nature 
  of $L$.
\end{proof}

It may be observed that homeomorphisms can be composed to form new 
homeomorphisms; the composition of \((\beta_1,\gamma_1)\colon L \Fpil 
G\) and \((\beta_2,\gamma_2)\colon G \Fpil H\) is \( (\beta_1 
\circ\nobreak \beta_2, \gamma_2 \circ\nobreak \gamma_1)\colon L \Fpil 
H\). It follows that the networks constitute a category with 
homeomorphisms as morphisms. It may also be observed that every 
isomorphism gives rise to an invertible homeomorphism: if 
$(\chi,\psi)$ is an isomorphism from $G$ to $H$ then $(\chi^{-1},\psi)$ 
is a homeomorphism from $G$ to $H$. Conversely, a pair of 
homeomorphisms with common target will give rise to an isomorphism if 
they subdivide each edge the same number of times.

\begin{corollary} \label{Kor:Homeomorfilyft}
  Let \(G = (V_G,E_G,h_G,g_G,t_G,s_G,D_G)\) be a network, and let 
  $L_1$ and $L_2$ be $y$-subdivisions of $G$ for some $y$. If the 
  homeomorphisms $(\beta_1,\gamma_1)$ from $L_1$ to $G$ and 
  $(\beta_2,\gamma_2)$ from $L_2$ to $G$ satisfy 
  \(\card[\big]{\setinv{\gamma_1}(\{e\})} = 
  \card[\big]{\setinv{\gamma_2}(\{e\})}\) for all \(e \in E_G\), then 
  there is a unique isomorphism $(\chi,\psi)$ from $L_1$ to $L_2$ 
  such that \(\chi \circ \beta_1 = \beta_2\).
\end{corollary}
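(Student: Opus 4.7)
The plan is to construct $(\chi,\psi)$ edge by edge, using Lemma~\ref{L:UnderdeladKant} to turn each $\gamma_i$-fiber into a directed path of known length, and then to transport structure between these paths. The condition $\chi \circ \beta_1 = \beta_2$ forces me to set $\chi(\beta_1(v)) := \beta_2(v)$ for every $v \in V_G$; this is unambiguous by injectivity of $\beta_1$ and handles the images $0,1$ correctly. The annotation condition $D_{L_i} \circ \beta_i = D_G$ immediately gives $D_{L_2}(\chi(w)) = D_{L_1}(w)$ on $\setim\beta_1$.

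For each $e \in E_G$, Lemma~\ref{L:UnderdeladKant} gives me an ordered list $\setinv{\gamma_1}(\{e\}) = \{e^{(1)}_1,\dotsc,e^{(1)}_n\}$ forming a directed path in $L_1$ whose initial tail and final head lie in $\setim\beta_1$ and whose $n-1$ intermediate vertices lie outside $\setim\beta_1$. The hypothesis $\card[\big]{\setinv{\gamma_2}(\{e\})} = n$ gives me an analogous list $\{e^{(2)}_1,\dotsc,e^{(2)}_n\}$ in $L_2$. I then define $\psi(e^{(1)}_i) := e^{(2)}_i$ for $i \in [n]$, and for the intermediate vertices $\chi\bigl(h_{L_1}(e^{(1)}_i)\bigr) := h_{L_2}(e^{(2)}_i)$ for $i \in [n-1]$. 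Since the fibers $\setinv{\gamma_1}(\{e\})$ partition $E_{L_1}$ and their intermediate vertices partition $V_{L_1}\setminus\setim\beta_1$ (each such vertex has in- and out-valency $1$, hence lies on exactly one fiber), the maps $\chi$ and $\psi$ are well-defined bijections.

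It remains to verify that $(\chi,\psi)$ satisfies the isomorphism conditions at each edge $e^{(1)}_i$. For the two "boundary" edges $e^{(1)}_1$ and $e^{(1)}_n$, the head\slash tail in $\setim\beta_1$ is pinned down by the homeomorphism axiom (e.g.~$h_{L_1}(e^{(1)}_n) = \beta_1(h_G(e))$ with $g_{L_1}(e^{(1)}_n) = g_G(e)$), and the same identity in $L_2$ combined with the definition $\chi \circ \beta_1 = \beta_2$ yields compatibility. For the "interior" endpoints, axioms~\ref{A2:Network} and~\ref{A3:Network} force head and tail indices at a vertex of valency $1$ to equal $1$, so matching $e^{(1)}_i$ with $e^{(2)}_i$ automatically matches these indices; head and tail vertices then match by the recursive definition of $\chi$ on intermediate vertices. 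Finally, since $L_1$ and $L_2$ are both $y$-subdivisions, all intermediate vertices in both carry annotation $y$, so $D_{L_2} \circ \chi = D_{L_1}$ holds on the remainder of $V_{L_1}$.

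Uniqueness is essentially forced: $\chi$ is determined on $\setim\beta_1$ by the compatibility condition, and any isomorphism must send the unique $\gamma_1$-fiber over $e$ to the unique $\gamma_2$-fiber over $e$ in order-preserving fashion (the path structure of each fiber being recoverable from the graph structure and the membership in $\setim\beta_i$), so $\psi$ and the extension of $\chi$ are uniquely forced as well. I expect the main bookkeeping obstacle to be cleanly establishing that the $\gamma_i$-fibers really do partition the edge sets and that the "interior" indices are all equal to $1$ — once Lemma~\ref{L:UnderdeladKant} is in hand this is straightforward, but it must be spelled out carefully to certify that $\psi$ respects both $(h,g)$ and $(t,s)$ at every edge simultaneously.
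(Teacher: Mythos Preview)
The proposal is correct and follows essentially the same approach as the paper's proof: define $\chi$ on $\setim\beta_1$ via the compatibility condition, invoke Lemma~\ref{L:UnderdeladKant} to realise each $\gamma_i$-fiber as an ordered path, match the $i$th edge to the $i$th edge, and use the $y$-subdivision hypothesis to match annotations on the intermediate vertices. Your write-up is considerably more detailed than the paper's (which leaves the isomorphism verification and uniqueness implicit), but the underlying argument is identical.
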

\begin{proof}
  The condition \(\chi \circ \beta_1 = \beta_2\) defines $\chi$ on 
  $\setim\beta_1$, i.e., those vertices that are endpoints of those 
  paths that by Lemma~\ref{L:UnderdeladKant} constitute the inverse 
  image of an edge in $G$. Since these paths have the same length in 
  $L_2$ as in $L_1$, there is a consistent extension of $\chi$ to all 
  vertices and a related definition of $\psi$ which is to map the 
  $i$th edge of $\setinv{\gamma_1}(\{e\})$ to the $i$th edge of 
  $\setinv{\gamma_2}(\{e\})$, for each \(e \in E_G\). The annotations 
  match because all vertices not in $\setim\beta_1$ or 
  $\setim\beta_2$ respectively have annotation $y$.
\end{proof}

Continuing to explore the relation between isomorphisms and 
homeomorphisms, one may observe that having a common subdivision can 
be a way of showing that two networks are isomorphic.

\begin{lemma} \label{L:Dropping}
  Let $\Omega'$ be an $\N^2$-graded set with some \(y \in 
  \Omega'(1,1)\), and define \(\Omega = \Omega' \setminus\{y\}\). 
  Suppose
  \begin{align*}
    L ={}& (V,E,h,g,t,s,D) 
      && \text{is a network of $\Omega'$,}\\
    G_1 ={}& (V_1,E_1,h_1,g_1,t_1,s_1,D_1)
      && \text{is a network of $\Omega$, and}\\
    G_2 ={}& (V_2,E_2,h_2,g_2,t_2,s_2,D_2)
      && \text{is a network of $\Omega$.}
  \end{align*}
  If there are $y$-homeomorphisms \((\beta_1,\gamma_1)\colon L \Fpil 
  G_1\) and \((\beta_2,\gamma_2)\colon L \Fpil G_2\), then \(G_1 
  \simeq G_2\).
\end{lemma}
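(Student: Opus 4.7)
The plan is to reconstruct the isomorphism $(\chi,\psi)\colon G_1 \Fpil G_2$ directly from the two homeomorphisms, using the fact that the ``skeleton'' of non-$y$ vertices is intrinsic to $L$.

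First I would observe that the images $\setim\beta_1$ and $\setim\beta_2$ coincide. Indeed, by the definition of homeomorphism, each $v \in \setim\beta_i \setminus\{0,1\}$ has $D_L(v) = D_i(\beta_i^{-1}(v)) \in \Omega$, while the $y$-homeomorphism condition forces $D_L(v) = y$ for $v \in V \setminus \setim\beta_i$. Since \(y \notin \Omega\), it follows that \(\setim\beta_i = \{0,1\} \cup \setOf{v \in V \setminus \{0,1\}}{D_L(v) \neq y}\), a set $X$ that depends only on $L$. Hence $\setim\beta_1 = X = \setim\beta_2$, and the composition \(\chi := \beta_2^{-1} \circ \beta_1\colon V_1 \Fpil V_2\) is a well-defined bijection fixing $0$ and $1$.

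Next I would define $\psi$ by matching up edge-fibers. Applying Lemma~\ref{L:UnderdeladKant} to $(\beta_1,\gamma_1)$ writes each fiber $\setinv{\gamma_1}(\{e\})$ for \(e \in E_1\) as a maximal directed path in $L$ whose endpoints lie in $X$ and whose internal vertices lie in $V \setminus X$ (and the same applies to $\setinv{\gamma_2}$). Since $X$ is the same set in both cases, the partitions of $E$ induced by $\gamma_1$ and $\gamma_2$ coincide: for each \(e \in E_1\) there is a unique \(e' \in E_2\) with \(\setinv{\gamma_1}(\{e\}) = \setinv{\gamma_2}(\{e'\})\). Set $\psi(e) := e'$; this is a bijection $E_1 \Fpil E_2$.

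It then remains to verify the isomorphism conditions, which I expect to be the main (but still mechanical) obstacle. For the head maps, given \(e \in E_1\) take the terminal edge $e_n$ of the path $\setinv{\gamma_1}(\{e\})$; by the homeomorphism axiom \(h_L(e_n) = \beta_1\bigl( h_1(e) \bigr)\) and \(g_L(e_n) = g_1(e)\), and the same edge $e_n$ is also terminal in $\setinv{\gamma_2}(\{\psi(e)\})$, giving \(h_L(e_n) = \beta_2\bigl( h_2(\psi(e)) \bigr)\) and \(g_L(e_n) = g_2(\psi(e))\). Combining these yields \(h_2(\psi(e)) = \chi(h_1(e))\) and \(g_2(\psi(e)) = g_1(e)\); the tail/index calculation is symmetric. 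Finally, for \(v \in V_1 \setminus\{0,1\}\), \(D_2(\chi(v)) = D_L(\beta_2(\chi(v))) = D_L(\beta_1(v)) = D_1(v)\). The difficulty, such as it is, lies in keeping straight that the path $\setinv{\gamma_1}(\{e\})$ and $\setinv{\gamma_2}(\{\psi(e)\})$ really are the \emph{same} sequence of edges in $L$ with the same orientation, so that the endpoints line up; this is guaranteed by the uniqueness part of Lemma~\ref{L:UnderdeladKant} together with $\setim\beta_1 = \setim\beta_2$.
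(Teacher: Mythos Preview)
Your proof is correct and follows essentially the same approach as the paper: establish $\setim\beta_1=\setim\beta_2$ via the $y$-annotation criterion, set $\chi=\beta_2^{-1}\circ\beta_1$, and use Lemma~\ref{L:UnderdeladKant} to see that the $\gamma_1$- and $\gamma_2$-fibres give the same partition of $E$, whence $\psi$ is well-defined and bijective. The paper phrases the edge map as $\psi(\gamma_1(e))=\gamma_2(e)$ and checks well-definedness, whereas you phrase it as matching fibres; these are the same argument, and your explicit verification of the head/tail/index conditions is what the paper summarises as ``immediately fulfils all the conditions''.
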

\begin{proof}
  Since $G_i$ is a network of \(\Omega \not\owns y\) and 
  $(\beta_i,\gamma_i)$ is a $y$-homeomorphism, it follows that \(D(v) 
  = y\) for some \(v \in V\) if and only if \(v \notin 
  \setim\beta_i\). Since $\beta_i$ furthermore is injective, it 
  follows that \(\chi = \beta_2^{-1} \circ \beta_1\) is a bijection 
  from $V_1$ to $V_2$, which satisfies \(\chi(0)=0\), \(\chi(1)=1\), 
  and \(D_1 = D_2 \circ \chi\). For the corresponding edge map 
  \(\psi\colon E_1 \Fpil E_2\) it is natural to use the definition 
  \(\psi\bigl( \gamma_1(e) \bigr) = \gamma_2(e)\) for all \(e \in E\), 
  as the fact that $(\beta_1,\gamma_1)$ and $(\beta_2,\gamma_2)$ are 
  homeomorphisms immediately fulfils all the conditions for 
  $(\chi,\psi)$ to be an isomorphism, provided that this $\psi$ is 
  well-defined and a bijection.
  
  To that end, let \(e,e' \in E\) such that \(\gamma_1(e) = 
  \gamma_1(e')\) be given. By Lemma~\ref{L:UnderdeladKant} there are 
  then \(e_1,\dotsc,e_n \in E\) such that \(h(e_k)=t(e_{k+1}) \notin 
  \setim\beta_1\) for all \(k \in [n -\nobreak 1]\) and \(i,j \in 
  [n]\) such that \(e_i = e\) and \(e_j = e'\). Hence there is 
  between $e$ and $e'$ a sequence of vertices not in $\setim\beta_2$, 
  and this implies \(\gamma_2(e) = \gamma_2(e')\). Thus $\psi$ is 
  indeed well-defined. Exchanging the roles of $\gamma_1$ and 
  $\gamma_2$ in this argument demonstrates that $\psi$ is invertible, 
  so it is also a bijection.
\end{proof}

But let's get back to the subexpressions.
As it should be, removing ``Kronecker delta factors'' from a network 
will not change its value.

\begin{lemma} \label{L:evalNatural}
  Let a network $G$ of $\Omega$ and a $y$-subdivision $L$ of $G$ be 
  given, where \(y \in \Omega(1,1)\). If $\mc{P}$ is a \PROP\ and 
  \(f\colon \Omega \Fpil \mc{P}\) is an $\N^2$-graded set morphism 
  such that \(f(y) = \phi_{\mc{P}}(\same{1})\), then \(\eval_f(L) = 
  \eval_f(G)\).
\end{lemma}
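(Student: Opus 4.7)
I would prove this by induction on $k := \card{V_L \setminus \setim\beta}$, the number of ``subdivision vertices'' of $L$. For the base case $k=0$, $\beta$ is a bijection $V_G \to V_L$, and direct inspection of the $y$-homeomorphism axioms shows that $(\beta^{-1}, \gamma)$ is a network isomorphism from $L$ to $G$; hence $\eval_f(L) = \eval_f(G)$ by Theorem~\ref{S:network-eval}.

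For the inductive step with $k \geqslant 1$, pick any subdivision vertex $v \in V_L \setminus \setim\beta$. By the homeomorphism conditions, $D_L(v) = y$, the vertex $v$ has in- and out-valency~$1$, and the unique edges $e_0$ (with $h_L(e_0) = v$) and $f_0$ (with $t_L(f_0) = v$) satisfy $\gamma(e_0) = \gamma(f_0)$. I would construct $L'$ from $L$ by deleting $v$ and $f_0$ and redefining the head and head index of $e_0$ to be $h_L(f_0)$ and $g_L(f_0)$ respectively while keeping its tail data; one checks that acyclicity and axioms~\ref{A2:Network}--\ref{A4:Network} are preserved, and that $(\beta, \restr{\gamma}{E_L \setminus \{f_0\}})$ is a $y$-homeomorphism from $L'$ to $G$ with $k-1$ subdivision vertices. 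By the inductive hypothesis $\eval_f(L') = \eval_f(G)$, so it suffices to prove $\eval_f(L) = \eval_f(L')$.

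For this last step, I would pass to $L_f, L'_f \in \Nw(\mc{P})$, the networks obtained by applying $f$ to every annotation; then $\eval_f(L) = \eval(L_f)$ and $\eval_f(L') = \eval(L'_f)$ by definition. In $L_f$, the triple $V_{H_v} := \{0,1,v\}$, $E_{H_v} := \{e_0,f_0\}$ (with $t_{H_v}(e_0) = 1$, $h_{H_v}(f_0) = 0$, and all other attachments inherited) defines a convex subnetwork $H_v \sqsubseteq L_f$: the acyclicity of $L$ rules out any offending ``return ray.'' Corollary~\ref{Kor:EnklaNatverk} together with the hypothesis $f(y) = \phi_{\mc{P}}(\same{1})$ gives $\eval(H_v) = \phi_{\mc{P}}(\same{1})$. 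Analogously, the modified edge $e_0$ in $L'_f$ on its own constitutes a ``stray-edge'' convex subnetwork $H_{e_0} \sqsubseteq L'_f$ with $V_{H_{e_0}} = \{0,1\}$, and $\eval(H_{e_0}) = \phi_{\mc{P}}(\same{1})$ by Corollary~\ref{Kor:EnklaNatverk} again. Applying the splice axiom of Theorem~\ref{S:PROP3-definition} in both cases yields
\begin{equation*}
  \eval(L_f) = \eval\bigl( (L_f \div H_v)(\phi_{\mc{P}}(\same{1})) \bigr),
  \qquad
  \eval(L'_f) = \eval\bigl( (L'_f \div H_{e_0})(\phi_{\mc{P}}(\same{1})) \bigr).
\end{equation*}
Unpacking the splice via Definition~\ref{Def:KonvextDeluttryck}, both resulting networks share the same unspliced remnant (the vertices of $V_L \setminus \{v\}$ and the edges of $E_L \setminus \{e_0,f_0\}$, each with its original attachments) and differ only by a single new identity-annotated vertex connected by two new edges to the same external attachment points $(t_L(e_0), s_L(e_0))$ and $(h_L(f_0), g_L(f_0))$. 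They are therefore isomorphic, and so $\eval(L_f) = \eval(L'_f)$ by Theorem~\ref{S:network-eval}, closing the induction.

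The hard part will be the explicit verification that the two splices produce isomorphic networks: conceptually each inserts an identity vertex ``in the same place,'' but aligning the $\{3e, 3e+1, 3e+2\}$ relabelling conventions of Definition~\ref{Def:KonvextDeluttryck} against the deletion/modification used to construct $L'$ requires some careful index bookkeeping. Everything else amounts to routine checking against the definitions of homeomorphism, convex subnetwork, and $\eval_f$.
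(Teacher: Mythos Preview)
Your proof is correct, but it handles the single-vertex step differently from the paper. Both arguments share the same induction on the number of subdivision vertices and the same base case (a $y$-homeomorphism with no subdivision vertices is an isomorphism). The difference lies in how you establish $\eval_f(L)=\eval_f(L')$ when $L$ has exactly one extra vertex over $L'$.

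The paper does this directly via \emph{cut decompositions}: it takes $W_0$ to be the set of inner vertices reachable from the subdivision vertex~$v$ by a nontrivial path, so that $(W_0,W_1)$ is a cut in $G$ and $\bigl(W_0,\{v\}\cup W_1\bigr)$ is a cut in $L$. After a second cut isolating $v$, Lemma~\ref{L:evalCut} gives $\eval_f(L)=\eval_f(G_0)\circ\bigl(f(y)\otimes\phi(\same{\Norm{\vek f}})\bigr)\circ\eval_f(G_1)$, and the middle factor collapses to an identity because $f(y)=\phi(\same{1})$. This uses only the elementary Lemma~\ref{L:evalCut}.

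Your route goes through the heavier Theorem~\ref{S:PROP3-definition}: you splice out the convex subnetwork $H_v$ from $L_f$ and the stray-edge subnetwork $H_{e_0}$ from $L'_f$, both evaluating to $\phi(\same{1})$, and then argue that the two resulting spliced networks are isomorphic. This is perfectly valid and conceptually tidy (``replace a subdivision by an identity vertex on both sides, then compare''), but as you yourself note the verification of that final isomorphism against the $\{3e,3e{+}1,3e{+}2\}$ relabelling convention of Definition~\ref{Def:KonvextDeluttryck} is the most laborious part. The paper's cut argument avoids this bookkeeping entirely, trading it for a short abstract-index computation of the middle factor. Neither approach is strictly better; yours leans on a stronger theorem to get a more uniform reduction, while the paper's stays closer to first principles.
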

\begin{proof}
  Let \((\beta,\gamma)\colon L \Fpil G\) be the homeomorphism. Without 
  loss of generality, it may be assumed that \(\beta(v) = v\) for all 
  \(v \in V(G) \setminus\{0,1\}\), since any $L$ would be isomorphic 
  to a network $L'$ satisfying this condition and network 
  isomorphisms preserve the value. Furthermore it is sufficient to 
  consider the case that $L$ has one vertex more than $G$, since the 
  general equality is obtained by adding the vertices one by one.
  
  Let \(v \in V(L) \setminus V(G)\) be the new vertex that had been 
  added. Let \(W_0 \subset V(L)\) be the set of inner vertices of $L$ 
  reachable from $v$ via a path of length at least $1$ (meaning \(v 
  \notin W_0\)) and let \(W_1 = V(G) \setminus W_0 \setminus 
  \{0,1\}\); then $(W_0,W_1)$ is a cut in $G$ and $\bigl( W_0, \{v\} 
  \cup\nobreak W_1 \bigr)$ is a cut in $L$. Let \(e_0,e_1 \in E(L)\) 
  be the unique edges such that \(t(e_0) = v = h(e_1)\). Pick some 
  ordering $p$ of the cut $(W_0,W_1)$ such that \(p\bigl( \gamma(e_0) 
  \bigr) = 1\). Then $\bigl( W_0, \{v\} \cup\nobreak W_1, 
  p \circ\nobreak \gamma \bigr)$ is an ordered cut in $L$ which induces 
  a decomposition $(L_0,L_1)$ of $L$. Moreover, $\bigl( \{v\}, W_1, 
  p \circ\nobreak \gamma \bigr)$ is an ordered cut in $L_1$ which 
  induces a decomposition $(L_{10},L_{11})$ where \(L_{10} = 
  \Nwfuse{e_0 \vek{f} }{ (v\colon y)^{e_0}_{e_1} }{ e_1 \vek{f} }\) for 
  some list of edges $\vek{f}$. Additionally \(L_0 \simeq G_0\) and 
  \(L_{11} \simeq G_1\) where $(G_0,G_1)$ is the decomposition of $G$ 
  induced by $(W_0,W_1,p)$. Hence
  \begin{multline*}
    \eval_f(L) = 
    \eval_f(L_0) \circ \eval_f(L_1) =
    \eval_f(L_0) \circ \eval_f(L_{10}) \circ \eval_f(L_{11}) = \\ =
    \eval_f(G_0) \circ f(y) \otimes \phi(\same{\Norm{\vek{f}}}) \circ 
      \eval_f(G_1) =
    \eval_f(G_0) \circ \phi(\same{1+\Norm{\vek{f}}}) \circ 
      \eval_f(G_1) = \\ =
    \eval_f(G_0) \circ \eval_f(G_1) =
    \eval_f(G)
  \end{multline*}
  as claimed.
\end{proof}

\subsection{Embeddings}

A direct translation of the abstract index subexpression concept to 
networks is thus: \emph{$H$ is a subexpression of $G$ if there is 
some $K$ such that $K \rtimes H$ is a $\natural$-subdivision of $G$}. 
This is practical as a method of enumerating the networks $G$ which 
has a given $H$ as a subexpression, but it is not so practical for 
testing whether a specific $G$ contains $H$. Luckily, there is an 
alternative characterisation in terms of networks that makes the 
latter straightforward.

\begin{definition}
  An \DefOrd{embedding} of a network \(H = (V',E',h',g',t',s',D')\) 
  into a network \(G = (V,E,h,g,t,s,D)\) is a pair $(\chi,\psi)$ of 
  maps \(\chi\colon V'\setminus\{0,1\} \Fpil V\setminus\{0,1\}\) and 
  \(\psi\colon E' \Fpil E\) such that:
  \begin{enumerate}
    \item
      $\chi$ is injective.
    \item
      \(D' = D \circ \chi\).
    \item
      If \(e \in E'\) satisfies \(h'(e) \neq 0\) then 
      \(h\bigl( \psi(e) \bigr) = \chi\bigl( h'(e) \bigr)\) and \(g(e) 
      = g'(e)\).
    \item
      If \(e \in E'\) satisfies \(t'(e) \neq 1\) then 
      \(t\bigl( \psi(e) \bigr) = \chi\bigl( t'(e) \bigr)\) and 
      \(s(e)=s'(e)\).
    \item
      If \(\psi(e) = \psi(f)\) for \(e,f \in E'\) then (i)~\(e=f\), 
      (ii)~\(h'(e)=0\) and \(t'(f)=1\), or (iii)~\(t'(e)=1\) and 
      \(h'(f)=0\). In cases (ii) and~(iii), the legs $e$ and $f$ are 
      said to be \emDefOrd{joined} by the embedding.
  \end{enumerate}
\end{definition}

An embedding is basically that each inner vertex of $H$ should be mapped 
injectively to an equally decorated inner vertex of $G$, while 
preserving all incidences from $H$. A convex subnetwork $H$ of $G$ has 
a trivial embedding into $G$, where $\chi$ and $\psi$ both map every 
element in their domains to themselves. For testing whether $H$ has an 
embedding into $G$, one may practically start by picking some vertex 
$v$ of $H$ and merely go through all possibilities for $\chi(v)$, 
i.e., those vertices of $G$ which has the same decoration as $v$. 
Fixing any $\chi(v)$ immediately determines $\psi(e)$ for all edges 
$e$ incident with $v$, and conversely fixing $\psi(e)$ determines 
$\chi$ for both endpoints of $e$. Hence the way an entire component 
of $H$ is embedded gets fixed after choosing just how to embed a 
single vertex, so the search tree that has to be examined is usually 
very small\Dash much smaller than in the corresponding problem for 
ordinary graphs.

%

\begin{lemma} \label{L:Inbaddning}
  Let an $\N^2$-graded set $\Omega'$ and some \(y \in \Omega(1,1)\) 
  be given. If \(G,H,K \in \Nw\bigl( \Omega' \setminus\nobreak \{y\} 
  \bigr)\) are such that $K \overset{y}\rtimes H$ is a $y$-subdivision of 
  $G$ then there is an embedding of $H$ into $G$.
\end{lemma}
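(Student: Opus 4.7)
The plan is to construct the embedding $(\chi,\psi)$ directly from the $y$-homeomorphism $(\beta,\gamma)\colon L\to G$, where $L = K\overset{y}{\rtimes} H$. Specialising Construction~\ref{Kons:symjoin} to the annexation case (so $r=\alpha(H)$, $q=\omega(H)$, and the parameters ``$m$'' and ``$n$'' of the construction vanish), each inner vertex $v$ of $H$ becomes the vertex $r+q+2v+1$ of $L$ with annotation $D_H(v)\in\Omega'\setminus\{y\}$, and each edge $e\in E_H$ becomes the edge $2e+1$ of $L$. Since the annotation is not $y$, the $y$-homeomorphism axiom forces $r+q+2v+1\in\setim\beta$, so the formulas
\[
  \chi(v) := \beta^{-1}(r+q+2v+1), \qquad \psi(e) := \gamma(2e+1)
\]
give well-defined maps $V_H\setminus\{0,1\}\to V_G\setminus\{0,1\}$ and $E_H\to E_G$.

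The first four conditions of the embedding definition are then routine. Injectivity of $\chi$ is inherited from that of $\beta$, and $D_G\circ\chi = D_H$ follows from $D_L\circ\beta = D_G$. For the incidence conditions, if $h_H(e)\neq 0$ then Construction~\ref{Kons:symjoin} gives $h_L(2e+1) = r+q+2h_H(e)+1\in\setim\beta$ and $g_L(2e+1) = g_H(e)$, so the homeomorphism axioms immediately yield $h_G(\psi(e)) = \chi(h_H(e))$ and $g_G(\psi(e)) = g_H(e)$; the case $t_H(e)\neq 1$ is handled identically.

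The main obstacle is the joining condition, which demands that whenever $\psi(e) = \psi(f)$ with $e\neq f$, one of these edges is an output leg of $H$ and the other an input leg. Given such $e$ and $f$, both $2e+1$ and $2f+1$ lie in $\setinv{\gamma}(\{\psi(e)\})$, which by Lemma~\ref{L:UnderdeladKant} forms a directed path $e_1,\ldots,e_n$ in $L$ whose intermediate endpoints are forced out of $\setim\beta$ and therefore coincide with the join-vertices of $K\overset{y}\rtimes H$. Each join-vertex has exactly one incoming and one outgoing edge, and inspection of Construction~\ref{Kons:symjoin} shows that these two edges always come one from $K$ (even index in $L$) and the other from $H$ (odd index). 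Hence edges along the path strictly alternate in parity, and accommodating two distinct $H$-edges forces $n\geqslant 3$.

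Order the positions so that $e_i = 2e+1$ and $e_j = 2f+1$ with $i<j$. Since $i<n$, the vertex $h_L(e_i) = t_L(e_{i+1})$ is intermediate and thus a join-vertex; consulting the $(h,g)$ formula in Construction~\ref{Kons:symjoin} shows that $h_L(2e+1)$ is a join-vertex precisely when $h_H(e)=0$, making $e$ an output leg of $H$. Symmetrically, $j>1$ forces $t_L(e_j)$ to be a join-vertex, and then $t_H(f)=1$, making $f$ an input leg. Thus $e$ and $f$ satisfy $h_H(e)=0$ and $t_H(f)=1$, which is exactly one of the two admissible configurations in the embedding's joining condition, completing the verification.
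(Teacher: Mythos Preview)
The proposal is correct and follows essentially the same approach as the paper: the embedding maps are defined by the identical formulas $\chi(v)=\beta^{-1}(r+q+2v+1)$ and $\psi(e)=\gamma(2e+1)$, and the verification of the embedding axioms proceeds via the homeomorphism conditions together with Lemma~\ref{L:UnderdeladKant}. Your handling of the joining condition differs only cosmetically---you impose a WLOG ordering $i<j$ and invoke the parity alternation at join-vertices (a true but ultimately unneeded refinement, since $i<j$ already gives $i<n$ and $j>1$), whereas the paper argues by the contrapositive cases $h_H(e)\neq 0$ and $t_H(f)\neq 1$---but the substance is the same.
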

\begin{proof}
  To fix notation, let
  \begin{align*}
    (V_G,E_G,h_G,g_G,t_G,s_G,D_G) :={}& G \text{,}\\
    (V_H,E_H,h_H,g_H,t_H,s_H,D_H) :={}& H \text{,}\displaybreak[0]\\
    (V_K,E_K,h_K,g_K,t_K,s_K,D_K) :={}& K \text{,}\\
    L = (V_L,E_L,h_L,g_L,t_L,s_L,D_L) :={}& K \rtimes H \text{,}
  \end{align*}
  and let $(\beta,\gamma)$ be the homeomorphism from $L$ to $G$. Let 
  \(q=\omega(H)\) and \(r=\alpha(H)\). The embedding $(\chi,\psi)$ 
  of $H$ in $G$ is then given by
  \begin{align*}
    \chi(v) ={}& \beta^{-1}( r+q + 2v + 1 ) &&
      \text{for \(v \in V_H \setminus\{0,1\}\),}\\
    \psi(e) ={}& \gamma(2e+1) &&
      \text{for \(e \in E_H\),}
  \end{align*}
  where $r+q + 2v + 1$ is known to be in the image of $\beta$ since 
  \(D_L( r +\nobreak q +\nobreak 2v +\nobreak 1) = D_H(v)\) by the 
  definition of $L$ and \(D_H(v) \neq y\). That $\chi$ is injective 
  is clear from the construction, and its compatibility with 
  annotations follows from the similar properties for homeomorphisms 
  and symmetric joins. The incidence conditions for  \(e \in E_H\) 
  satisfying \(h_H(e) \neq 0\) and \(t_H(e) \neq 1\) respectively are 
  established by chasing the commutative diagram
  \begin{equation*}
    \begin{CD}
      E_H @>{e \mapsto 2e+1}>> E_L @>{\gamma}>> E_G \\
      @V{h_H}V{t_H}V  @V{h_L}V{t_L}V    @V{h_G}V{t_G}V  \\
      V_H @>>{v \mapsto r+q+2v+1}> V_L @<<{\beta}< V_G
    \end{CD}
  \end{equation*}
  Finally, if \(\psi(e) = \psi(f)\) for \(e \neq f\) then this must 
  be because \(\gamma(2e +\nobreak 1) = \gamma(2f +\nobreak 1)\). By 
  Lemma~\ref{L:UnderdeladKant}, \(\setinv{\gamma}\bigl( \bigl\{ \gamma(2e 
  +\nobreak 1) \bigr\} \bigr) = \{e_1,\dotsc,e_n\}\) where \(2 < 
  h_L(e_i) = t_L(e_{i+1}) \leqslant 2+r+q\) for \(i \in [n -\nobreak 
  1]\); since \(2e+1 \neq 2f+1\), it follows that \(n \geqslant 2\). 
  Having \(h_H(e) \neq 0\) would make \(h_L(2e +\nobreak 1) > 2+r+q\) 
  and thus imply \(2e+1 = e_n\), in which case \(2 < t_L(2e +\nobreak 
  1) \leqslant 2+r+q\) making \(t_H(e)=1\) and moreover \(2f+1 = 
  e_i\) for some \(i<n\) so that \(2 < h_L(2f +\nobreak 1) \leqslant 
  2+r+q\) and thus \(h_H(f) = 0\). Similarly having \(t_H(f) \neq 1\) 
  implies \(2f+1 = e_1\), from which follows \(t_H(e) = 1\) and 
  \(h_H(f) = 0\). Hence \(h_H(e)=0\) and \(t_H(f)=1\), or \(t_H(e) = 1\) 
  and \(h_H(f) = 0\), which is exactly the final condition for an 
  embedding.
\end{proof}

A downside with the straightforward embedding concept is that it does 
not always uniquely identify the padding $K$\Dash not even up to 
isomorphism, even though it gets close. The case where it throws away 
information is that $H$ has two or more edges directly connecting 
input to output, and two of these are mapped to the same edge in $G$; 
in this case the relative order of those two isolated $H$-edges is 
not recorded in the embedding, whereas it would be apparent in $K 
\rtimes H$. To remedy this, it is appropriate to introduce a slightly 
stronger kind of embedding.

\begin{definition}
  Given two networks \(H = (V_H,E_H,h_H,g_H,t_H,s_H,D_H)\) and
  \(G = (V_G,E_G,h_G,g_G,t_G,s_G,D_G)\) where \(E_G \subseteq \N\), a 
  \DefOrd{strong embedding} of a $H$ into $G$ is a triplet 
  $(\chi,\psi,m)$ where \(\chi\colon V_H\setminus\{0,1\} \Fpil 
  V_G\setminus\{0,1\}\), \(\psi\colon E_H \Fpil \N\), and \(m \in 
  \Zp\) are such that:
  \begin{enumerate}
    \item
      $(\chi,R_m \circ \psi)$ is an embedding of $H$ into $G$, 
      where \(R_m\colon \N \Fpil \N\) denotes the remainder function for 
      integer division by $m$; \(R_m(k) = k \bmod m < m\).
    \item
      $\psi$ is injective.
    \item
      \(e < m\) for all \(e \in E_G\).
    \item
      If \(e,f \in E_H\) are such that \(\psi(e) \equiv \psi(f) 
      \pmod{m}\) and \(t_H(e) \neq 1\) then \(\psi(e) \leqslant 
      \psi(f)\).
    \item
      If \(e,f \in E_H\) are such that \(\psi(e) \equiv \psi(f) 
      \pmod{m}\) and \(h_H(e) \neq 0\) then \(\psi(e) \geqslant 
      \psi(f)\).
  \end{enumerate}
  The values of $\psi$ are called \emDefOrd{segment labels}, or 
  sometimes \emph{$H$-segments}.
\end{definition}

The idea is to (i)~make $\psi$ injective by adding some multiple of 
$m$ to the basic $G$-edge labels that it has as values and (ii)~encode 
the relative order of the $H$-segments within a $G$-edge into these 
multiples, so that the multiple increases when going from tail to 
head. The only conditions that have to be imposed are to ensure that 
segments that have a $H$-vertex at one end fall constitute an 
endpoint for the range of multiples for that $G$-edge; for other 
edges, every assignment of multiples actually means something.

\begin{lemma} \label{L:StarkInbaddning}
  Let \(G = (V_G,E_G,h_G,g_G,t_G,s_G,D_G) \in \Nw(\Omega)\) and \(H = 
  (V_H,E_H,h_H,g_H,t_H,s_H,D_H) \in \Nw(\Omega)\) together with an 
  embedding $(\chi,\psi')$ of $H$ into $G$ be given. Then for any \(m 
  \in \Zp\) with \(m > e\) for all \(e \in E_G\), there exists a 
  strong embedding $(\chi,\psi,m)$ of $H$ into $G$ such that 
  \(\psi(e) \equiv \psi'(e) \pmod{m}\) for all \(e \in E_H\).
\end{lemma}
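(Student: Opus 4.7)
The plan is to construct $\psi$ by treating one $G$-edge at a time: for each $g \in E_G$, I list the preimage $S_g := (\psi')^{-1}(\{g\})$ in a carefully chosen order $e_0, e_1, \dots, e_{k-1}$ and set $\psi(e_i) := g + im$. Since the hypothesis gives $g < m$, this ensures $\psi(e) \bmod m = g = \psi'(e)$, so $R_m \circ \psi = \psi'$ and condition~1 of the strong-embedding definition follows from $(\chi, \psi')$ being an embedding. Injectivity (condition~2) is automatic within each $S_g$ by construction and across different preimages because the residue mod~$m$ identifies $g$ uniquely. Condition~3 is part of the hypothesis, so the real content is arranging the order so that conditions~4 and~5 hold.

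The structural step I would carry out first is to classify the elements of $S_g$ using the fifth embedding axiom. For distinct $e, f \in S_g$ that axiom forces one of them to satisfy $h_H = 0$ and the other to satisfy $t_H = 1$; hence no interior edge (one with $h_H \neq 0$ and $t_H \neq 1$) can share a $G$-edge with anything else, and likewise two distinct non-stray out-legs cannot both land in $S_g$ (nor can two non-stray in-legs). So when $|S_g| \geq 2$ the set $S_g$ consists of at most one non-stray out-leg, at most one non-stray in-leg, and some number of stray edges (those with $h_H = 0$ and $t_H = 1$ simultaneously); when $|S_g| = 1$ its single element may also be an interior edge, but then conditions~4 and~5 are trivially met.

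From this classification I would order $S_g$ so that the non-stray out-leg (if present) is $e_0$, the non-stray in-leg (if present) is $e_{k-1}$, and the stray edges are placed in the intermediate positions in some arbitrary but fixed order. Then condition~4 reduces to the observation that when $t_H(e) \neq 1$ and $f \in S_g$, the edge $e$ is either the unique element of $S_g$ (interior case) or the non-stray out-leg at position~$0$, and in both cases $\psi(e) \leq \psi(f)$; condition~5 is symmetric. There is no real obstacle here: everything that might go wrong is controlled by the fifth embedding axiom, and the classification of $S_g$ is essentially where all the work lies.
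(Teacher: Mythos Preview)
Your proof is correct and follows essentially the same approach as the paper: define $\psi(e) = \psi'(e) + m\theta(e)$ where $\theta$ assigns position~$0$ to any edge with $t_H(e) \neq 1$ and the maximal position to any edge with $h_H(e) \neq 0$. The paper dismisses the existence of such a $\theta$ as ``trivial to arrange,'' whereas you have carefully worked out the classification of $S_g$ via the fifth embedding axiom to justify it; your version is thus more explicit but not different in substance.
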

\begin{proof}
  Merely take \(\psi(e) = \psi'(e) + m\theta(e)\) for some suitable 
  \(\theta\colon E_H \Fpil \N\). Defining \(E_e = \setinv{\psi}\bigl( 
  \{ \psi(e) \} \bigr)\) for \(e \in E_H\), the conditions 
  are automatically fulfilled provided for example that the restriction 
  of $\theta$ to any $E_e$ is injective, \(\theta(e) < \card{E_e}\) 
  for all \(e \in E_H\), \(\theta(e) = 0\) if \(t_H(e) \neq 1\), and 
  \(\theta(e) = \card{E_e} - 1\) if \(h_H(e) \neq 0\). This is 
  trivial to arrange.
\end{proof}

The final result in this section is that the embedding 
characterisation of a subexpression is in fact completely equivalent 
to that using the symmetric join.

\begin{theorem} \label{S:InbaddningHomeomorfi}
  Let an $\N^2$-graded set $\Omega'$ and \(y \in \Omega'(1,1)\) be 
  given. Let \(\Omega = \Omega' \setminus \{y\}\). For \(G,H \in 
  \Nw(\Omega)\), the following are equivalent:
  \begin{enumerate}
    \item \label{Item1:InbaddningHomeomorfi}
      There exists an embedding of $H$ into $G$.
    \item \label{Item5:InbaddningHomeomorfi}
      There exists a strong embedding of $H$ into $G$.
    \item \label{Item2:InbaddningHomeomorfi}
      There exists some \(K \in \Nw(\Omega)\) for which there is a 
      $y$-homeomorphism from $K \overset{y}\rtimes H$ to $G$.
  \end{enumerate}
\end{theorem}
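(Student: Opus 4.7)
The plan is to close the cycle $\ref{Item5:InbaddningHomeomorfi} \Rightarrow \ref{Item2:InbaddningHomeomorfi} \Rightarrow \ref{Item1:InbaddningHomeomorfi} \Rightarrow \ref{Item5:InbaddningHomeomorfi}$. The third implication is just Lemma~\ref{L:StarkInbaddning}, and the second is Lemma~\ref{L:Inbaddning} (since the first component of a strong embedding triple gives, after composing with $R_m$, an embedding by definition). The genuine work is the first implication: given a strong embedding $(\chi,\psi,m)$ of $H$ into $G$, we must produce a network $K$ and a $y$-homeomorphism $K \overset{y}\rtimes H \to G$.

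For each $G$-edge $e$ put $S_e = \{f \in E_H : R_m(\psi(f)) = e\}$, ordered as $f_1^e < \dotsb < f_{k_e}^e$ by $\psi$. The strong embedding axioms guarantee that \emph{at most one} $f \in S_e$ has $t_H(f) \neq 1$; when it exists, it is forced to be $f_1^e$, and by the embedding condition for $(\chi, R_m\circ\psi)$ one then has $t_G(e) = \chi(t_H(f_1^e))$. Dually at most one element of $S_e$ has $h_H(f) \neq 0$, necessarily $f_{k_e}^e$, with $h_G(e) = \chi(h_H(f_{k_e}^e))$. This observation is what makes $H$ ``fit'' inside $G$ along the edge $e$ with $H$-pieces and non-$H$-pieces alternating in a well-defined way.

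I would then construct the subdivision $L$ of $G$ directly: replace each $G$-edge $e$ by a directed path whose ``named'' segments are $f_1^e,\dotsc,f_{k_e}^e$, filling the at most $k_e+1$ remaining gaps by fresh edges (the $K$-pieces), and inserting a $y$-vertex at every interface where an $H$-input-leg or $H$-output-leg meets a $K$-piece (exactly as dictated by the join-vertex schema of Construction~\ref{Kons:symjoin}). The $y$-homeomorphism $(\beta,\gamma): L \to G$ is then transparent: $\beta$ agrees with $\chi$ on $\chi(V_H\setminus\{0,1\})$ and is the identity on the remaining inner vertices of $G$, while $\gamma$ sends every segment back to its parent $G$-edge; the $y$-vertices lie outside $\setim\beta$ by construction, and the annotation, incidence and arity conditions are immediate from the strong-embedding axioms. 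Finally I would define $K$ as the subnetwork of $L$ consisting of the non-$H$ inner vertices together with all gap-edges, promoted to input/output legs wherever they meet a $y$-vertex, so that $\alpha(K) = \alpha(G)+\omega(H)$ and $\omega(K) = \omega(G)+\alpha(H)$; the nilpotency condition required by Construction~\ref{Kons:symjoin} follows because a cycle in $K \overset{y}\rtimes H$ would descend under $\gamma$ to a closed walk in $G$, contradicting network acyclicity.

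The main obstacle is the bookkeeping in the last step: to exhibit an explicit isomorphism $K \overset{y}\rtimes H \simeq L$, one has to choose the head and tail indices of $K$'s legs so that the deterministic recipe of Construction~\ref{Kons:symjoin}—which prescribes exactly which $y$-join-vertex label goes with which $H$-leg and with which $K$-index—reproduces $L$ on the nose. Rather than grinding through the indices by hand, I would shortcut the verification using Corollary~\ref{Kor:Homeomorfilyft}: both $L$ and $K \overset{y}\rtimes H$ are $y$-subdivisions of $G$ (the latter via the subdivision-by-joining construction, the former by design), and a direct count shows that for every $e \in E_G$ their respective homeomorphism-fibres over $e$ have the same cardinality, namely $k_e$ plus the number of ``boundary gaps'' at $t_G(e)$ and $h_G(e)$; the corollary then delivers the isomorphism without any further index chasing. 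This completes $\ref{Item5:InbaddningHomeomorfi}\Rightarrow\ref{Item2:InbaddningHomeomorfi}$ and hence the theorem.
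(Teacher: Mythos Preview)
Your cycle of implications and your use of Lemmas~\ref{L:StarkInbaddning} and~\ref{L:Inbaddning} for the two easy directions match the paper exactly, and your structural analysis of how the $H$-segments $f_1^e,\dotsc,f_{k_e}^e$ sit along each $G$-edge (at most one with inner tail, necessarily first; at most one with inner head, necessarily last) is correct and is essentially the content of the paper's construction of~$K$.

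The gap is your proposed shortcut via Corollary~\ref{Kor:Homeomorfilyft}. That corollary requires \emph{both} $L$ and $K\overset{y}\rtimes H$ to already come equipped with $y$-homeomorphisms to $G$. You have one for $L$ by design, but your phrase ``the latter via the subdivision-by-joining construction'' does not point to any established result: no lemma asserts that $K\overset{y}\rtimes H$ is automatically a $y$-subdivision of $G$ just because $K$ was carved out of one. Constructing that second homeomorphism is precisely the work you are trying to skip, and once you have it you are already done---the corollary becomes superfluous. Your nilpotency argument has the same circularity: it invokes a map $\gamma$ from $K\overset{y}\rtimes H$ to $G$ that has not yet been built (the $\gamma$ you did build has domain $E_L$, not $E_{K\overset{y}\rtimes H}$).

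The paper proceeds in the opposite order: it constructs $K$ \emph{first}, directly from the strong-embedding data, via explicit edge classes $B_{00},B_{01},B_{10},B_{11}$ (indexed by whether head and tail lie in $V_K$ or at a join); then it forms $L=K\overset{y}\rtimes H$, writes down $(\beta,\gamma)\colon L\to G$ explicitly, and carries out the index-tracking you hoped to avoid. The genuinely delicate step is acyclicity of $L$, which the paper handles by exhibiting a partial order $P_L=\gamma^*P_G\diamond\varphi^*T$ on $E_L$ (lexicographic: first compare parent $G$-edges, then compare segment labels) and verifying case by case---at $H$-vertices, at $K$-vertices, and at the two flavours of $y$-vertex---that this order strictly decreases along every edge; the homeomorphism conditions fall out of the same case analysis. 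Your direct-subdivision picture $L$ is a fine mental model, but the verification cannot be offloaded to Corollary~\ref{Kor:Homeomorfilyft}; the indices really do have to be tracked.
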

\begin{proof}
  That \ref{Item1:InbaddningHomeomorfi} implies 
  \ref{Item5:InbaddningHomeomorfi} is the claim of 
  Lemma~\ref{L:StarkInbaddning}, and that 
  \ref{Item2:InbaddningHomeomorfi} implies 
  \ref{Item1:InbaddningHomeomorfi} is the claim of 
  Lemma~\ref{L:Inbaddning}. Hence what remains is to, from a given 
  strong embedding $(\chi,\psi,m)$ of $H$ into $G$, construct the 
  symmetric join \(L = K \overset{y}\rtimes H\) that is a 
  $y$-subdivision of $G$.
  Not surprisingly, the first step in that will be to construct the 
  ``padding'' network \(K = (V_K,E_K,h_K,g_K,t_K,s_K,D_K)\) from the 
  given data \(G = (V_G,E_G,h_G,g_G,t_G,s_G,D_G)\) and \(H = 
  (V_H,E_H,h_H,g_H,t_H,s_H,D_H)\), but when digesting this 
  construction it is more instructive to think about $K$ as already a 
  part of the symmetric join $L$.
  
  The vertices of $K$ are simply going to be the vertices of $G$ that 
  are not in the image of the embedding, so
  \begin{equation*}
    V_K = V_G \setminus \setim\chi
    \qquad\text{and}\qquad
    D_K = \restr{D_G}{V_K \setminus \{0,1\}}
    \text{.}
  \end{equation*}
  The edges of $K$ fall into four different categories, according to 
  whether their head and tail in $L$ would be vertices of $K$ or 
  join-vertices; an index $0$ will denote ``even'' (in $K$) and an 
  index $1$ will denote ``odd'' (joining to $H$). Edges with an odd 
  end will have their labels derived from $H$-segment labels, whereas 
  edges with two even ends will be labelled as in $G$. Let
  \begin{align*}
    S ={}& \setOf[\big]{ \psi(e) }{ 
      \text{\(e \in E_H\), with \(h_H(e)=0\) or \(t_H(e)=1\)} 
    } \text{,}\\
    S_e ={}& \setOf[\big]{ f \in S }{ f \equiv e \pmod{m} }
      \qquad \text{for all \(e \in E_G\)}
  \intertext{\Dash then the set of even--even edges can be set up as}
    B_{00} ={}& \setOf[\big]{ 
      e \in \setinv{h_G}(V_K) \cap \setinv{t_G}(V_K)
    }{ S_e = \varnothing }
    \text{.}
  \end{align*}
  The rule for other edges is that an $H$-segment donates its label 
  to the $K$-segment preceding it (if any), whereas a $K$-segment not 
  followed by another $H$-segment gets as label $m$ more than the 
  label of the last $H$-segment on that $G$-edge. Hence
  \begin{align*}
    B_{01} ={}& \setOf{ m + \max S_e }{ 
      \text{\(e \in \setinv{h_G}(V_K)\) and \(S_e \neq \varnothing\)}
    } \text{,}\\
    B_{10} ={}& \setOf[\big]{ \min S_e }{ 
      \text{\(e \in \setinv{t_G}(V_K)\) and \(S_e \neq \varnothing\)}
    } \text{,}\\
    B_{11} ={}& \bigcup_{\substack{e \in E_G \\ S_e \neq \varnothing}}
      \bigl( S_e \setminus \{\min S_e\} \bigr)
      \text{,} \qquad\text{and}\\
    E_K ={}& B_{00} \cup B_{01} \cup B_{10} \cup B_{11} \text{.}
  \end{align*}
  The label of an $H$-segment whose tail is attached to a vertex in $G$ 
  will not become the label of an edge in $K$, so $B_{10} \cup B_{11}$ 
  can be a proper subset of $S$.
  
  When defining head and head index, there are two different cases 
  simply corresponding to whether the head endpoint is even or odd:
  \begin{equation*}
    (h_K,g_K)(e) = \begin{cases}
      (h_G,g_G)(e \bmod m) & \text{if \(e \in B_{00} \cup B_{01}\),}\\
      \bigl( 0, \omega(G) + (s_H \circ \psi^{-1})(e) \bigr)&
        \text{if \(e \in B_{10} \cup B_{11}\);}
    \end{cases}
  \end{equation*}
  the head index in the latter case is to make it join up with the 
  appropriate input leg in $H$. 
  The tail and tail index are completely analogous, but there it is 
  convenient to first define a helper function for determining the 
  $H$-label of the $H$-segment preceding a $K$-segment:
  \begin{gather*}
    \theta(e) = \psi^{-1}\Bigl( \max \setOf[\big]{ f \in S }{ 
      \text{\(f < e\) and \(f \equiv e \pmod{m}\)}
    } \Bigr) \quad\text{for \(e \in B_{01} \cup B_{11}\),}\\
    (t_K,s_K)(e) = \begin{cases}
      (t_G,s_G)(e \bmod m) & \text{if \(e \in B_{00} \cup B_{10}\),}\\
      \bigl( 1, \alpha(G) + (g_H \circ \theta)(e) \bigr) &
        \text{if \(e \in B_{01} \cup B_{11}\).}
    \end{cases}
  \end{gather*}
  When showing this $K$ is a network, the acyclicity condition is for 
  once trivial: any edge endpoint that is not the same as in $G$ is 
  taken to be $0$ or $1$ as appropriate, and from there a walk cannot 
  be extended any further. It should on the other hand be observed 
  that it is because of the allowed ranges for $e$ in the definitions 
  of $B_{00}$, $B_{01}$, and $B_{10}$ that one knows $h_K$ and $t_K$ 
  only assume values in $V_K$. That a combination of head 
  and head index uniquely identifies an edge requires a slight case 
  analysis. First, the combination is either the same as for some 
  edge of $G$, or it is $(0,i)$ for some \(i > \omega(G)\); these two 
  cannot overlap. In the former case it follows from unique 
  identification of edges in $G$ that a head and head index 
  combination determines $e \bmod m$ for \(e \in B_{00} \cup 
  B_{01}\), and that is unique by condition on $m$ and construction of 
  $B_{01}$. In the latter case of \(e \in B_{10} \cup B_{11}\), it holds 
  that \(t_H\bigl( \psi^{-1}(e) \bigr) = 1\) since if \(t_H(f) \neq 1\) 
  for some \(f \in E_H\) with \(\psi(f) \equiv e \pmod{m}\) then the 
  definition of embedding requires $\psi(f)$ to be minimal within 
  $S_{e \bmod m}$ and thus not an element of $B_{11}$, and it also 
  places $h_G(e \bmod m)$ in the image of $\chi$ and thus outside 
  $V_K$, meaning \(e \notin B_{10}\). Then the value of $s_H(f)$ 
  uniquely identifies an \(f \in E_H\) with \(t_H(f) = 1\). The 
  argument for the combination of tail and tail index is similar. 
  That for each \(v \in V_K \setminus\{0,1\}\) conversely any head 
  index up to \(\alpha\bigl( D_K(v) \bigr) = \alpha\bigl( D_G(v) 
  \bigr) = d_G^-(v)\) is assumed by some \(e \in B_{00} \cup B_{01}\) 
  with \(h_K(e)=v\) follows from considering the corresponding edge 
  in $G$; it has a counterpart in $B_{00} \cup B_{01}$. For the 
  output vertex of $K$ it follows by combining considerations for the 
  output vertex of $G$ and the input vertex of $H$, and again the 
  corresponding argument for tail indices is similar.
  
  Letting \(L = (V_L,E_L,h_L,g_L,t_L,s_L,D_L) := K 
  \overset{y}\rtimes H\), the claimed homeomorphism $(\beta,\gamma)$ 
  is easy to define:
  \begin{align*}
    \gamma(2e) ={}& 
       \rlap{$e \bmod m$} \hphantom{\psi(e) \bmod m}
       \qquad \text{for \(e \in E_K\),}\\
    \gamma(2e+1) ={}& 
       \psi(e) \bmod m \qquad \text{for \(e \in E_H\),}\\
    \beta(v) ={}& \begin{cases}
      \alpha(H) + \omega(H) + 2\chi^{-1}(v) + 1& 
        \text{if \(v \in \setim \chi\),}\\
      \alpha(H) + \omega(H) + 2v& 
        \text{if \(v \in V_K \setminus \{0,1\}\),}\\
      v& \text{if \(v \in \{0,1\}\).}
    \end{cases}
  \end{align*}
  Proving $L$ to be acyclic (and thus at all a network) is another 
  matter, but for this the above $\gamma$ provides a useful support. 
  Since $G$ is a network, there is a partial order $P_G$ on $E_G$ 
  satisfying \(e_1 > e_2 \pin{P_G}\) for all \(e_1,e_2 \in E_G\) such 
  that \(t_G(e_1) = h_G(e_2)\). Define \(\varphi\colon E_L \Fpil \N\) 
  by \(\varphi(e)=e\) for even $e$ and \(\varphi(e) = 
  2\psi(\frac{e-1}{2}) + 1\) for odd $e$; this so to say gives the 
  label an edge would have been given in $L$ had each edge of $H$ 
  been labelled $\psi(e)$ rather than $e$. Denote the standard order 
  on $\N$ by $T$ and define \(P_L = \gamma^*P_G \diamond 
  \varphi^*T\) (a lexicographic composition of two pullbacks of 
  quasi-orders). From the claim that this $P_L$ is a partial order on 
  $E_L$ satisfying \(e_1 > e_2 \pin{P_L}\) for all \(e_1,e_2 \in E_L\) 
  such that \(t_L(e_1) = h_L(e_2)\), it now follows that $L$ is 
  acyclic and thus a network.
  
  The verification of this claim splits into a number of cases 
  corresponding to the different kinds of vertices there are in $L$. 
  At a vertex \(v = \alpha(H) + \omega(H) + 2u + 1\), any \(e_1 \in 
  \setinv{t_L}\bigl(\{v\}\bigr)\) and \(e_2 \in 
  \setinv{h_L}\bigl(\{v\}\bigr)\) are of the form \(e_i = 2f_i+1\) 
  for some \(f_i \in E_H\), where \(t_H(f_1) = u = h_H(f_2)\). Then 
  \(t_G\bigl( \gamma(e_1) \bigr) = t_G\bigl( \psi(f_1) \bmod m \bigr) 
  = \chi\bigl( t_H(f_1) \bigr) = \chi(u) = \chi\bigl( h_H(f_2) \bigr) = 
  h_G\bigl( \psi(f_2) \bmod m \bigr) = h_G\bigl( \gamma(e_2) \bigr)\) 
  and hence \(e_1 > e_2 \pin{\gamma^*P_G}\). Moreover \(v = 
  \beta\bigl( \chi(u) \bigr)\), so the conditions for $(\beta,\gamma)$ 
  to be a homeomorphism are fulfilled at this kind of vertex.
  
  At a vertex \(v = \alpha(H) + \omega(H) + 2u\), any \(e_1 \in 
  \setinv{t_L}\bigl(\{v\}\bigr)\) and \(e_2 \in 
  \setinv{h_L}\bigl(\{v\}\bigr)\) satisfy \(e_1 = 2f_1\) for some 
  \(f_1 \in B_{00} \cup B_{10}\) and \(e_2 = 2f_2\) for some \(f_2 
  \in B_{00} \cup B_{01}\) respectively. Here, \(t_G\bigl( 
  \gamma(e_1) \bigr) = t_G(f_1 \bmod m) = t_K(f_1) = u = h_K(f_2) = 
  h_G(f_2 \bmod m) = h_G\bigl( \gamma(e_2) \bigr)\) and thus \(e_1 > 
  e_2 \pin{\gamma^*P_G}\). Moreover \(\beta(u) = v\), so the 
  homeomorphism conditions are fulfilled at this kind of vertex too. 
  This latter argument also applies for \(v \in \{0,1\}\), where the 
  $P_L$ claim is void as $0$ is never the tail of an edge and $1$ is 
  never the head of an edge.
  
  At a vertex \(v = 2 + i\) for \(1 \leqslant i \leqslant 
  \alpha(H)\), any \(e_1 \in \setinv{t_L}\bigl(\{v\}\bigr)\) must 
  have the form \(e_1 = 2f_1 + 1\) for some \(f_1 \in E_H\) with 
  \(t_H(f_1)=1\) and any \(e_2 \in \setinv{h_L}\bigl(\{v\}\bigr)\) 
  must have the form \(e_2 = 2f_2\) for some \(f_2 \in B_{10} \cup 
  B_{11}\). For these two have been joined, it must be the case that 
  \(g_K(f_2) - \omega(G) = i = s_H(f_1)\), i.e., \(f_1 = 
  \psi^{-1}(f_2)\). Hence \(\gamma(e_1) = \gamma(e_2)\), which means 
  the \(v \notin \setim \beta\) homeomorphism condition is fulfilled 
  here, and \(\varphi(e_1) = 2\psi(f_1) + 1 > 2f_2 = e_2 = 
  \varphi(e_2)\). Thus \(e_1 > e_2 \pin{P_L}\). At a vertex \(v = 2 + 
  \alpha(H) + i\) for \(1 \leqslant i \leqslant \omega(H)\), any 
  \(e_1 \in \setinv{t_L}\bigl(\{v\}\bigr)\) must have the form \(e_1 = 
  2f_1\) for some \(f_1 \in B_{01} \cup B_{11}\) and any \(e_2 \in 
  \setinv{h_L}\bigl(\{v\}\bigr)\) must have the form \(e_2 = 2f_2+1\) 
  for some \(f_2 \in E_H\) with \(h_H(f_1)=0\). For these two have 
  been joined, it must be the case that \(s_K(f_1) - \alpha(G) = i = 
  g_H(f_2)\), i.e., \(\theta(f_1) = f_2\). From \(f_1 \equiv 
  \psi(f_2) \pmod{m}\) then follows \(\gamma(e_1) = f_1 \bmod m = 
  \psi(f_2) \bmod m = \gamma(e_2)\), fulfilling the homeomorphism 
  condition at \(v \notin \setim \beta\), and from \(\psi(f_2) < 
  f_1\) follows \(\varphi(e_1) = 2f_1 > 2\psi(f_2)+1 = 
  \varphi(e_2)\). Hence \(e_1 > e_2 \pin{P_L}\).
  
  Finally, the \(v \in V_L \setminus \setim\beta\) are those with \(2 
  < v \leqslant 2 + \alpha(H) + \omega(H)\), and for these 
  \(D_L(v))=y\). Hence $L$ is a $y$-subdivision of $G$.
\end{proof}

\section{The free \PROP}
\label{Sec:FriPROP}

For many readers, it was no doubt becoming obvious already in 
Section~\ref{Sec:Natverk} that $\Nwt(\Omega)$ is the (set underlying 
the) free \PROP\ generated by $\Omega$; sets of expressions are 
generally good candidates for the construction of free objects. Is it 
necessary to go through the trouble of making an explicit construction, 
though; isn't the existence and properties of this free \PROP\ 
guaranteed by general principles of universal algebra? To some extent, 
they certainly are, but the catch here is that $\Nwt(\Omega)$ has more 
structure than what follows immediately from the universal property, 
and the rewriting mechanisms will make explicit use of this. 
Theorem~\ref{S:InbaddningHomeomorfi} 
links the subexpression concept to the symmetric join of networks, and 
since the elements of the free \PROP\ are (isomorphism classes of) 
networks, there is a way to define symmetric joins also of elements 
of $\Nwt(\Omega)$, even though this operation will have to be subject 
to some restrictions of a mostly syntactical nature. Therefore,
it turns out that the family of modules on which one wants to apply 
the diamond lemma is not that of the components of the free \PROP, but 
a $\B^{\bullet\times\bullet}$-filtration of the free \PROP!


A great help when putting forth $\Nwt(\Omega)$ as the free \PROP\ 
generated by $\Omega$ is that the extension of an arbitrary 
$\N^2$-graded set morphism \(f\colon \Omega \Fpil \mc{P}\) to a 
homomorphism \(\Nwt(\Omega) \Fpil \mc{P}\) required by the universal 
property is already known: it is the map $\eval_f$. It has however 
not been shown yet that this map is a \PROP\ homomorphism, nor for 
that matter that $\Nwt(\Omega)$ is a \PROP\ or even what its 
operations are (even though Theorems~\ref{S:AIN,snitt},  
\ref{S:AIN,klyva}, and~\ref{S:PROP3-definition} provide strong hints)! 
For the latter points, 
one may preferably turn to Theorem~\ref{S:PROP2-definition}, as there 
is a natural choice of evaluation map \(L\colon 
\Nw\bigl( \Nwt(\Omega) \bigr) \Fpil \Nwt(\Omega)\). A practical 
construction of it is the bottom row in the commutative diagram
\begin{equation} \label{Eq:FlattenSmoothen}
  \begin{CD}
    \Nw\bigl( \Nw(\Omega) \bigr) 
    @>{\text{flatten}}>>
    \Nw(\Omega')
    @>\text{smoothen}>>
    \Nw(\Omega) 
    \\
    @V{\Nw(c)}VV  @VV{c'}V  @VV{c}V
    \\
    \Nw\left( \Nwt(\Omega) \right) 
    @>>F>
    \Nwt(\Omega')
    @>>M>
    \Nwt(\Omega)
  \end{CD}
\end{equation}
where \(\Omega' = \Omega \cup \{\natural\}\), the vertical $c$ and $c'$ 
arrows map an element of $\Nw(\Omega)$ and $\Nw(\Omega')$ respectively 
to its equivalence class in the corresponding $\Nwt$, and $\Nw(c)$ 
applies $c$ to each annotation of a network. The flattening map takes the 
inner networks found in the outer network annotations, splits the $0$ 
and $1$ vertices of each annotation into separate vertices for each 
input and output (these new vertices get the distinct annotation 
\(\natural \in \Omega' \setminus \Omega\), which has arity and 
coarity~$1$), and joins it all up with the edges from the outer 
network. Then the smoothening map removes these $\natural$ vertices, 
joining the incident edges with each other. Thus one goes
\[
  \text{from $G={}$}\quad
  \begin{mpgraphics*}{10}
    beginfig(10);
      u:=6pt;
      IC_equations3(2,1)(
         interim IC_width:=8u;  interim IC_height:=50pt;
         interim IC_legs_bb:=0.8;
      );
      y1 = y2 = 0.5[y3t1,y3b1];
      x1 = x3ll+2u; x2 = x3lr-3u;
      IC_equations6(2,2)(
         interim IC_width:=50pt;  interim IC_height:=10u;
         interim IC_legs_bb:=0.8;
      );
      x4 = x5 = 0.5[x6t1,x6t2];
      y4 = y6t1 - 3u;  y5 = y6b1 + 3u;
      x3 = x6;
      y3b1 - y6t1 = 3u;
      z6ll = (in,in);
      
      forsuffixes $=2,4,5:
        draw fullcircle scaled 2u shifted z$;
      endfor
      draw_IC3("", "", "", "", "");
      draw_IC6("", "", "", "", "");
      draw (z3t1 + (0,2u)) --- z3t1 ... z1{down} ... z3b1{down} ...
         z6t1{down} ... {dir-45}(z4 + u*dir135);
      draw (z2 + (0,-u)){down} ... z3b2{down} ... z6t2{down} ...
        {dir-135}(z4 + u*dir45);
      draw (z4 + (0,-u)) -- (z5 + (0,u));
      draw (z5 + u*dir-135){dir-135} ... z6b1 --- (z6b1 + (0,-2u));
      draw (z5 + u*dir-45){dir-45} ... z6b2 --- (z6b2 + (0,-2u));
      for $ := z3t1,z3b1,z3b2,z6t1,z6t2,z6b1,z6b2:
         draw $ withpen pencircle scaled dotlabeldiam;
      endfor
    endfig;
  \end{mpgraphics*}
  \qquad\text{via $H={}$}\quad
  \begin{mpgraphics*}{11}
    beginfig(11);
      u:=6pt;
      interim IC_width:=IC_height:=2u;
      forsuffixes $=1,3,4,5,6,9,10: IC_equations$(1,1)(); endfor
      x1 = x3 = x5 = x9;  x2 = x4 = x6 = x10;
      x7 = x8 = 0.5[x5,x6];  x5lr + 2u = x6ll;
      y3=y4; y5=y6; y9=y10;
      y2-y3 = y3-y5 = y5-y7 = y7-y8 = y8-y9;
      y3ll = y5ul + 2u;  y1lr = y2 + u; 
      z9ll = (in,in);
      
      picture natural;
      natural := btex $\natural$ etex;
      forsuffixes $=1,3,4,5,6,9,10: 
         draw_IC$("", "", natural, "", "");
      endfor
      forsuffixes $=2,7,8:
        draw fullcircle scaled 2u shifted z$;
      endfor
      draw (z1t1 + (0,u)) -- z1t1;
      draw z1b1{down} ... {down}z3t1;
      draw (z2 + (0,-u)){down} ... {down}z4t1;
      draw z3b1{down} ... {down}z5t1;
      draw z4b1{down} ... {down}z6t1;
      draw z5b1{down} ... {dir-45}(z7 + u*dir135);
      draw z6b1{down} ... {dir-135}(z7 + u*dir45);
      draw (z7 + (0,-u)){down} ... {down}(z8 + (0,u));
      draw (z8 + u*dir-135){dir-135} ... {down}z9t1;
      draw (z8 + u*dir-45){dir-45} ... {down}z10t1;
      draw z9b1 -- (z9b1 + (0,-u));
      draw z10b1 -- (z10b1 + (0,-u));
    endfig;
  \end{mpgraphics*}
  \qquad\text{to $K={}$}\quad
  \begin{mpgraphics*}{12}
    beginfig(12);
      u:=6pt;
      x4 = x5 = 0.5[x1,x2]; x1=x6; x2=x7;
      x2-x1 = 3u;
      y2-y4 = y4-y5 = 4u; y5-y6 = 3u; y7=y6; y1 = y2 + 2u;
      z6 = (in,in);
      
      forsuffixes $=2,4,5:
        draw fullcircle scaled 2u shifted z$;
      endfor
      draw z1{down} ... {dir-45}(z4 + u*dir135);
      draw (z2 + (0,-u)){down} ... {dir-135}(z4 + u*dir45);
      draw (z4 + (0,-u)) -- (z5 + (0,u));
      draw (z5 + u*dir-135){dir-135} ... z6;
      draw (z5 + u*dir-45){dir-45} ... z7;
    endfig;
  \end{mpgraphics*}
\]
when evaluating a network of networks. Smoothening can also be handy 
in defining the symmetric join, but more on that after the \PROP\ 
structure of $\Nwt(\Omega)$ has been established.

\subsection{Formal construction of \PROP\ structure}

From now on, since many networks will be considered 
simultaneously, the various data defining a network will be denoted 
by their symbols in the definition ($V$, $E$, $h$, $g$, $t$, $s$, 
and~$D$) subscripted with the symbol of the network (typically $G$, 
$H$, $K$, or $D_G(u)$ for some vertex $u$ of $G$).
Beginning with the smoothening map, one has to consider how some 
\(H \in \Nw(\Omega')\) is mapped to the corresponding \(K = 
\mathrm{smoothen}(H) \in \Nw(\Omega)\). Clearly,
\begin{align*}
  V_K ={}& V_H \setminus \setOf[\big]{ v \in V_H }{ D(v) = \natural }
    \text{,}\\
  D_K ={}& \restr{D_H}{V_K \setminus \{0,1\}}
\end{align*}
and one may choose
\begin{equation*}
  E_K = \setOf[\big]{ e \in E_H }{ t(e) \in V_K }
\end{equation*}
after which it is natural to take
\begin{equation*}
  t_K = \restr{t_H}{E_K}
  \qquad\text{and}\qquad
  s_K = \restr{s_H}{E_K}
  \text{.}
\end{equation*}
For the head ends, it is convenient to first define
\begin{equation*}
  S(e) = \begin{cases}
    e& \text{if \(h_H(e) = 0\),}\\
    e& \text{if \(h_H(e) > 1\) and \(D_H\bigl( h_H(e) \bigr) \neq \natural\),}\\
    S(f)\text{, }\parbox[t]{0.3\linewidth}{\raggedright
       where \(f \in E_H\) is such that \(t_H(f) = h_H(e)\)
    }& \parbox[t]{0.3\linewidth}{\strut\\otherwise,}
  \end{cases}
\end{equation*}
for all \(e \in E_H\), since then $S(e)$ is the headmost edge on the 
path in $H$ beginning with $e$ and only having $\natural$ inner 
vertices; this is well-defined since $H$ is acyclic. Given $S$, the 
remaining maps are trivially defined as
\begin{equation*}
  h_K(e) = h_H\bigl( S(e) \bigr)
  \quad\text{and}\quad
  g_K(e) = g_H\bigl( S(e) \bigr)
  \qquad\text{for all \(e \in E_K\).}
\end{equation*}
The completes the definition of the smoothening map for labelled 
networks. In order to see that the unlabelled counterpart $M$ exists, 
it suffices to observe that $S$ can be pushed forward across 
isomorphisms (the property of being the headmost edge does not depend 
on the labelling). It should also be observed that $(\mathrm{id},S)$ 
is a homeomorphism from $H$ to \(\mathrm{smoothen}(H) = K\). 

When concretely constructing the flattening map from 
\(\Nw\bigl( \Nw(\Omega) \bigr)\) to \(\Nw(\Omega')\), 
one should first observe that there is 
both an inner and an outer level of networks in the domain of this map; 
the inner networks (which are networks of $\Omega$) appear as 
annotations in the outer network (which is a network of $\Nw(\Omega)$) 
that the map is defined for. Since the idea is to combine all these 
networks into one, the following construction employs an injection 
\(p\colon \N^2 \Fpil \N\), namely \(p(i,j) = (i +\nobreak j)^2 + i\), 
to avoid collisions when assigning labels in the flat result. 
An edge $e$ in the outer network will become edge $p(1,e)$ in the 
result, whereas an edge $e$ in the inner network decorating vertex 
$u$ will become edge $p(u,e)$. 
An inner vertex $v$ in the network 
decorating vertex $u$ will become vertex $p(u,v)$, whereas the 
$\natural$ vertices will have labels on the forms 
$p\bigl( i, p(u,j) \bigr)$, where \(i \in \{0,1\}\) depends on 
whether they were made from an output or input, and $j$ is the 
corresponding head or tail index. Finally, the outer output and input 
vertices retain the labels $0$ and $1$; these do not collide with the 
aforementioned since \(p(i,j) \in \{0,1\}\) implies \(i=0\) and \(j 
\in \{0,1\}\), while \(u \geqslant 2\) in all cases where these 
formulae are applied.

Given \(G \in \Nw\bigl( \Nw(\Omega) \bigr)\), the flattened form of 
$G$ is thus the network $H$, where
\begin{align*}
  V_H ={}& \{0,1\} \cup \bigcup_{u \in V_G} W_u 
    \text{,}\\
  W_u ={}& \setOf[\big]{ p(u,v) }{ 
      v \in V_{D_G(u)} \setminus \{0,1\}
    } 
    \text{ for \(u \in V_G \setminus \{0,1\}\),}\\
  W_0 ={}& \setOf[\Big]{ p\bigl( 0, p(u,j) \bigr) }{ 
      u \in V_G \setminus \{0,1\}, 
      j \in \bigl[ d^+_G(u) \bigr]
    }
    \text{,}\\
  W_1 ={}& \setOf[\Big]{ p\bigl( 1, p(u,j) \bigr) }{ 
      u \in V_G \setminus \{0,1\}, 
      j \in \bigl[ d^-_G(u) \bigr]
    }\text{,}
    \displaybreak[0]\\
  E_H ={}& \bigcup_{u \in V_G \setminus \{0\}} F_u \text{,}\\
  F_1 ={}& \setOf[\big]{ p(1,e) }{ e \in E_G }\text{,}\\
  F_u ={}& \setOf[\big]{ p(u,e) }{ e \in E_{D_G(u)} }
    \quad \text{for \(u \in V_G \setminus \{0,1\}\).} 
\end{align*}
For any \(u \in V_G \setminus \{0,1\}\) and \(e \in E_{D_G(u)}\) 
(i.e., for any inner edge),
\begin{align*}
  (h_H,g_H)\bigl( p(u,e) \bigr) ={}& \begin{cases}
    \Bigl( p\bigl( u, h_{D_G(u)}(e) \bigr), g_{D_G(u)}(e) \Bigr)&
      \text{if \(h_{D_G(u)}(e) \neq 0\),}\\
    \Bigl( p\bigl( 0, p\bigl( u, g_{D_G(u)}(e) \bigr) \bigr), 1 \Bigr)& 
     \text{otherwise,}
  \end{cases}
  \displaybreak[0]\\
  (t_H,s_H)\bigl( p(u,e) \bigr) ={}& \begin{cases}
    \Bigl( p\bigl( u, t_{D_G(u)}(e) \bigr), s_{D_G(u)}(e) \Bigr)& 
      \text{if \(t_{D_G(u)}(e) \neq 1\),}\\
    \Bigl( p\bigl( 1, p\bigl( u, s_{D_G(u)}(e) \bigr) \bigr), 1 \Bigr)& 
      \text{otherwise.}
  \end{cases}
\end{align*}
For any \(e \in E_G\) (i.e., any outer edge),
\begin{align*}
  (h_H,g_H)\bigl( p(1,e) \bigr) ={}& \begin{cases}
    \bigl( 0, g_G(e) \bigr)& \text{if \(h_G(e)=0\),}\\
    \Bigl( p\bigl( 1, p\bigl( h_G(e), g_G(e) \bigr) \bigr),
      1 \Bigr)& \text{otherwise,}
  \end{cases}
  \displaybreak[0]\\
  (t_H,s_H)\bigl( p(1,e) \bigr) ={}& \begin{cases}
    \bigl( 1, s_G(e) \bigr)& \text{if \(t_G(e)=1\),}\\
    \Bigl( p\bigl( 0, p\bigl( t_G(e), s_G(e) \bigr) \bigr),
      1 \Bigr)& \text{otherwise.}
  \end{cases}
\end{align*}
Lastly
\begin{equation*}
  D_H\bigl( p(u,v) \bigr) = \begin{cases}
    D_{D_G(u)}(v)& \text{if \(u \notin \{0,1\}\),}\\
    \natural& \text{otherwise}
  \end{cases}
\end{equation*}
for all \(u \in V_G\) and \(v\in\N\) such that \(p(u,v) \in V_H\). 
That $H$ so defined fulfils axioms~\ref{A2:Network}, 
\ref{A3:Network}, and~\ref{A4:Network} of 
Definition~\ref{Def:Network} is a direct consequence of $G$ and the 
inner networks meeting these conditions, since they are all about 
local conditions which for non-$\natural$ vertices are preserved by 
the flattening map and for $\natural$ vertices are trivial. The 
acyclicity axiom~\ref{A1:Network} is not local, but still easy to 
verify; an oriented cycle in $H$ either contains some edge $p(1,e)$ 
for \(e \in E_G\), and than the set of such edges would constitute an 
oriented cycle in $G$ thus contradicting its acyclicity, or it only 
contains edges on the form $p(u,e)$ for some fixed $u$, and then it 
contradicts the acyclicity of $D_G(u)$.

Finally, there is the matter of whether there exists some map $F$ 
which makes the left square in the commutative diagram commute. Since 
the effect of the vertical maps is to replace networks with equivalence 
classes of same, what needs to be shown is that $\mathrm{flatten}$ 
maps isomorphic elements in the domain to isomorphic elements in the 
codomain, or more precisely that there are isomorphisms of the 
flattened network that correspond to isomorphisms of the inner 
networks; only edges and vertices corresponding to edges and vertices 
of some inner network may be affected by the class of isomorphism 
considered. By definition, 
an isomorphism is just a relabelling, and the deterministic 
construction of the flattening map is such that one can read off the 
pre-flattening labels from the flattened labels. Hence the 
construction of a corresponding isomorphism of the flattened target 
merely consists of: compute pre-flattening inner label of the edge or 
vertex, map it using the isomorphism corresponding to the outer 
vertex it is part of, and then compute the new post-flattening label. 
Thus the map $F$ is well-defined.


\begin{lemma} \label{L:NwL-uppdelning}
  Let \(L = F \circ M : \Nw\bigl( \Nwt(\Omega) \bigr) \Fpil 
  \Nwt(\Omega)\) as described by \eqref{Eq:FlattenSmoothen}.
  If \(G \in \Nw\bigl( \Nwt(\Omega) \bigr)\) has a split or cut 
  decomposition $(G_0,G_1)$, then there is in each \(K \in L(G)\) a 
  split or cut respective decomposition $(K_0,K_1)$ such that \(K_0 \in 
  L(G_0)\) and \(K_1 \in L(G_1)\).
\end{lemma}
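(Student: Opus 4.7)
My plan is to work at the level of network representatives in the top row of the commutative diagram \eqref{Eq:FlattenSmoothen} and then project to equivalence classes. Given the decomposition $(G_0, G_1)$ of $G$, lift $G$ to some $\tilde G \in \Nw(\Nw(\Omega))$ by choosing inner-network representatives for each annotation; the split or cut data lifts verbatim to give a decomposition $(\tilde G_0, \tilde G_1)$ of $\tilde G$ at the outer level. It then suffices to show that each of the maps $\mathrm{flatten}$ and $\mathrm{smoothen}$ carries a split (respectively cut) decomposition of its domain to a split (respectively cut) decomposition of its codomain, after which I apply them in turn and project back through $c$.

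For the flattening stage, every vertex of $\tilde H := \mathrm{flatten}(\tilde G) \setminus \{0,1\}$ has a canonical \emph{associated outer vertex} $u \in V_{\tilde G} \setminus \{0,1\}$: an inner vertex $p(u,v)$ has $u$ as its first coordinate, and a $\natural$ vertex $p(i, p(u,j))$ has $u$ as the inner first coordinate. Likewise each edge of $\tilde H$ is either an inner edge $p(u,e)$ (associated with outer vertex $u$) or an outer edge $p(1,e)$ (associated with an outer edge $e$ of $\tilde G$). Given a split $(F_l, F_r, W_l, W_r)$ of $\tilde G$, I assign each non-$\{0,1\}$ vertex of $\tilde H$ to the side of its outer vertex, each inner edge to the side of its outer vertex, and each outer edge $p(1,e)$ to the side containing $e$. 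The split axioms for $\tilde G$ ensure that every edge so labelled has both endpoints on the correct side (an outer edge $e \in F_l$ has $h_{\tilde G}(e), t_{\tilde G}(e) \in W_l \cup \{0,1\}$, hence its two $\natural$ endpoints inherit $W_l$), and the leg-ordering conditions at $0$ and $1$ transfer because flattening preserves head and tail indices at outer endpoints. The cut case is analogous, with the cut edges of $\tilde H$ being precisely the images $p(1,e)$ of outer cut edges, inheriting their ordering directly.

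For the smoothening stage, recall that the pair $(\mathrm{id}, S)$ is a $\natural$-homeomorphism from $\tilde H$ to $\tilde K$, with $S$ taking each edge to the head-most edge of its $\natural$-chain. A maximal $\natural$-chain in $\tilde H$ has one of two shapes: either it consists of a single inner edge of some $D_{\tilde G}(u)$ whose endpoints are already non-$\natural$, or it spans exactly one outer edge in the pattern (inner edge) $\to$ $\natural$ $\to$ (outer edge) $\to$ $\natural$ $\to$ (inner edge), with the ends possibly terminating at $0$ or $1$. By Step~1 the side assignment is constant along any such chain, so the vertex and edge partitions descend to well-defined partitions of $\tilde K$. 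Split leg-orderings at $0$ and $1$ are preserved because the head-index at $0$ and tail-index at $1$ pass through the $\natural$-chain unchanged by the rules defining smoothening. In the cut case, the cut edges of $\tilde K$ are exactly the $S$-images of the cut edges of $\tilde H$ and inherit the ordering. Projecting to equivalence classes via $c$ yields the required $(K_0, K_1)$ with $K_i \in L(G_i)$.

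The main technical obstacle is disciplined bookkeeping rather than any conceptual difficulty: keeping track of the three flavours of vertex in $\tilde H$ (inner, $\natural$-from-input, $\natural$-from-output) and verifying that the split's leg-ordering condition and the cut's edge-ordering condition survive both stages. Checking the cut-edge correspondence is the subtlest part, since the cut edges of $\tilde K$ are no longer literally the edges $p(1,e)$ but their $S$-extensions; one must confirm that this extension is a bijection on the cut-edge set and that the induced ordering makes $(W_0^{\tilde K}, W_1^{\tilde K}, p^{\tilde K})$ a valid ordered cut. Both verifications reduce to unwinding the definitions of the smoothening head/tail maps.
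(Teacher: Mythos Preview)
Your approach is essentially the paper's: lift to a concrete $\tilde G \in \Nw(\Nw(\Omega))$, push the decomposition through $\mathrm{flatten}$ and then $\mathrm{smoothen}$, and descend to classes. Two small corrections are worth noting. First, your claim that a maximal $\natural$-chain spans at most one outer edge fails when some inner network $D_{\tilde G}(u)$ has a stray edge (an edge that is simultaneously an input and output leg), in which case the chain alternates outer--inner--outer and can be arbitrarily long; fortunately the property you actually need---that the side assignment is constant along a chain in the split case---still holds, since every vertex and edge on the chain inherits its side from a single connected block of outer data. Second, the cut edges of $\tilde K$ are not the $S$-images of those of $\tilde H$ but rather the \emph{tailmost} edges of the chains containing them (the paper calls this map $T$); more importantly, once you form the pieces you should check that in the cut case $\mathrm{smoothen}(\tilde H_0)$ is only \emph{isomorphic} to $\tilde K_0$ rather than equal, because in $\tilde H_0$ the cut edges acquire the non-$\natural$ tail $1$ and therefore survive smoothening under their original labels $p(1,e)$, whereas in $\tilde K_0$ they carry the labels of the tailmost inner edges---the paper supplies the explicit edge-relabelling isomorphism $T$.
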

\begin{proof}
  Let $H$ be a counterpart of $G$ in $\Nw\bigl( \Nw(\Omega) \bigr)$, 
  i.e., choose \(D_H(u) \in D_G(u)\) for every \(u \in V_G \setminus 
  \{0,1\}\) and define \(H = (V_G,E_G,h_G,g_G,t_G,s_G,D_H)\). 
  Let \(H' \in F(G)\) be the 
  flattening of $H$ and let \(H'' \in L(G)\) be the smoothening of 
  $H'$. Observe that split and cut decompositions are graphic 
  properties independent of the labelling, so it is sufficient to 
  exhibit a decomposition of one element of $L(G)$ (namely $H''$) 
  whose parts satisfy the wanted conditions.
  
  Suppose $(F_\mathrm{l},F_\mathrm{r},W_\mathrm{l},W_\mathrm{r})$ is 
  a split in $G$ inducing the decomposition 
  $(G_\mathrm{l},G_\mathrm{r})$. A split in $G$ is also a split in 
  $H$; let $(H_\mathrm{l},H_\mathrm{r})$ be the induced 
  decomposition of $H$. Splits survive flattening; there is a 
  corresponding split $(F_\mathrm{l}',F_\mathrm{r}',
  W_\mathrm{l}',W_\mathrm{r}')$ in $H'$, namely
  \begin{align*}
    F_\mathrm{l}' ={}& 
      \setOf[\big]{ p(1,e) \in E_{H'} }{ e \in F_\mathrm{l} } \cup
      \setOf[\Big]{ p(u,e) \in E_{H'} }{ u \in W_\mathrm{l} }
      \text{,}\\
    F_\mathrm{r}' ={}& 
      \setOf[\big]{ p(1,e) \in E_{H'} }{ e \in F_\mathrm{r} } \cup
      \setOf[\Big]{ p(u,e) \in E_{H'} }{ u \in W_\mathrm{r} }
      \text{,}\\
    W_\mathrm{l}' ={}&
      \setOf[\big]{ p(u,v) \in V_{H'} }{ u \in W_\mathrm{l} } \cup
      \setOf[\Big]{ 
        p\bigl( i, p(u,j) \bigr) \in V_{H'} 
      }{ u \in W_\mathrm{l} }
      \text{,}\\
    W_\mathrm{r}' ={}& 
      \setOf[\big]{ p(u,v) \in V_{H'} }{ u \in W_\mathrm{r} } \cup
      \setOf[\Big]{ 
        p\bigl( i, p(u,j) \bigr) \in V_{H'} 
      }{ u \in W_\mathrm{r} }
      \text{.}
  \end{align*}
  Let $(H'_\mathrm{l},H'_\mathrm{r})$ be the induced decomposition of 
  $H'$. Since flattening only considers local properties of networks, 
  it follows that $H_\mathrm{l}$ flattens to $H'_\mathrm{l}$ and 
  $H_\mathrm{r}$ flattens to $H'_\mathrm{r}$.
  
  Splits also survive smoothening, and the corresponding split in $H''$ 
  is $(F_\mathrm{l}'',F_\mathrm{r}'',W_\mathrm{l}'',W_\mathrm{r}'')$ 
  where \(F_\mathrm{l}'' = F_\mathrm{l}' \cap E_{H''}\), 
  \(F_\mathrm{r}'' = F_\mathrm{r}' \cap E_{H''}\), \(W_\mathrm{l}'' = 
  W_\mathrm{l}' \cap V_{H''}\), and \(W_\mathrm{r}'' = W_\mathrm{r}' 
  \cap V_{H''}\). The locality of the smoothening map definition 
  again implies that the smoothenings $H''_\mathrm{l}$ and 
  $H''_\mathrm{r}$ of $H'_\mathrm{l}$ and $H'_\mathrm{r}$ 
  respectively are precisely the parts of the decomposition 
  $(H''_\mathrm{l},H''_\mathrm{r})$ induced by 
  $(F_\mathrm{l}'',F_\mathrm{r}'',W_\mathrm{l}'',W_\mathrm{r}'')$. 
  Since \(\Nw(c)(H_\mathrm{l}) = G_\mathrm{l}\) and 
  \(\Nw(c)(H_\mathrm{r}) = G_\mathrm{r}\), it follows that 
  \(H''_\mathrm{l} \in L(G_\mathrm{l})\) and \(H''_\mathrm{r} \in 
  L(G_\mathrm{r})\).
  
  For a cut decomposition $(G_0,G_1)$ of $G$, one similarly 
  observes that the ordered cut $(W_0,W_1,q)$ inducing it is also an 
  ordered cut in $H$. The flattening $H'$ has the ordered cut 
  $(W_2,W_3,q_{23})$ where
  \begin{align*}
    W_2 ={}& \setOf[\big]{ p(u,v) \in V_{H'} }{ u \in W_0 } \cup
      \setOf[\Big]{ p\bigl( i, p(u,j) \bigr) \in V_{H'} }{ u \in W_0 }
      \text{,}\\
    W_3 ={}& \setOf[\big]{ p(u,v) \in V_{H'} }{ u \in W_1 } \cup
      \setOf[\Big]{ p\bigl( i, p(u,j) \bigr) \in V_{H'} }{ u \in W_1 }
      \text{,}\\
    q_{23}\bigl( p(1,e) \bigr) ={}& q(e)
      \quad\text{for cut edges \(e \in E_H\).}
  \end{align*}
  The corresponding ordered cut in $H''$ is $(W_4,W_5,q_{45})$, where 
  \(W_4 = W_2 \cap V_{H''}\), \(W_5 = W_3 \cap V_{H''}\), and 
  \(q_{45}^{-1} = T \circ q_{23}^{-1}\), where \(T\colon E_{H'} \Fpil 
  E_{H''}\) is the map which satisfies \(T(e) = e\) for \(e \in E_{H'}\) 
  such that \(t_{H'}(e) \in V_{H''}\) and \(T(e) = T\bigl( f(e) \bigr)\) 
  for other \(e \in E_{H'}\), where \(f(e) \in E_{H'}\) is the unique 
  edge such that \(h_{H'}\bigl( f(e) \bigr) = t_{H'}(e)\).
  
  Let $(H_0,H_1)$ be the cut decomposition of $H$ induced by 
  $(W_0,W_1,q)$, let $(H'_0,H'_1)$ be the cut decomposition of $H'$ 
  induced by $(W_2,W_3,q_{23})$, and let $(H''_0,H''_1)$ be the cut 
  decomposition of $H''$ induced by $(W_4,W_5,q_{45})$. As in the 
  split case, $H_0$ and $H_1$ flatten to $H'_0$ and $H'_1$ 
  respectively, and $H'_1$ smoothens to $H''_1$, but $H'_0$ smoothens 
  to some $H''_{00}$ which is usually not the same as $H''_0$; the cut 
  edges of $H'$ survive smoothening in $H'_0$ as they there have the 
  non-$\natural$ tail $1$, whereas a cut edge in $H'$ would only have 
  a non-$\natural$ tail if its counterpart in $G$ had $1$ as tail. 
  $H''_{00}$ and $H''_0$ are however isomorphic, with the isomorphism 
  \(H''_{00} \Fpil H''_0\) being the identity map on vertices and the 
  map $T$ on edges. Hence \(H''_0 \in L(G_0)\) and \(H''_1 \in L(G_1)\).
\end{proof}

\begin{theorem} \label{S:NwtPROP}
  For any $\N^2$-graded set $\Omega$, the set $\Nwt(\Omega)$ is a 
  \PROP\ with evaluation $L$ as described above, composition as 
  defined by \eqref{Eq:EvalDefCirc}, and tensor product as defined by 
  \eqref{Eq:EvalDefOtimes}.
\end{theorem}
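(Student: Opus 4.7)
The plan is to invoke Theorem~\ref{S:PROP2-definition} with $\mc{P} = \Nwt(\Omega)$, with the map $L = F \circ M$ from the commutative diagram~\eqref{Eq:FlattenSmoothen}, and with composition and tensor product defined by~\eqref{Eq:EvalDefCirc} and~\eqref{Eq:EvalDefOtimes} respectively. This reduces the theorem to verifying five things about $L$: that it is an $\N^2$-graded set morphism, that it descends through network isomorphism, that it distributes over cut decompositions via~$\circ$, that it distributes over split decompositions via~$\otimes$, and the generator identity $L\bigl( \Nwfuse{\vek{m}}{\gamma^{\vek{m}}_{\vek{n}}}{\vek{n}} \bigr) = \gamma$. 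All the non-trivial content is carried by Lemma~\ref{L:NwL-uppdelning}.

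First I would dispatch the three easy conditions. That $L$ preserves arity and coarity is immediate: flattening leaves the outer vertices $0,1$ intact and only inserts $\natural$-vertices on outer edges, while smoothening removes $\natural$-vertices without altering the valencies at $0$ or~$1$. Isomorphism invariance is built into the construction $L = F \circ M$: the left square of~\eqref{Eq:FlattenSmoothen} commutes by the definition of~$F$ (as already established in the paragraphs preceding Lemma~\ref{L:NwL-uppdelning}), and the smoothening map is visibly defined in isomorphism-invariant graph-theoretic terms, so it too descends to yield the map~$M$ on equivalence classes. The generator identity follows by computing $L$ directly: for any representative $G$ of~$\gamma$, the flattening of $\Nwfuse{\vek{m}}{\gamma^{\vek{m}}_{\vek{n}}}{\vek{n}}$ is just $G$ with one $\natural$-vertex spliced into each external leg, and smoothening removes precisely these $\natural$-vertices, returning a network isomorphic to~$G$.

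The cut and split conditions are where Lemma~\ref{L:NwL-uppdelning} does the real work. For the cut condition, suppose $G \in \Nw(\Nwt(\Omega))$ has cut decomposition $(G_0, G_1)$; I must show $L(G) = L(G_0) \circ L(G_1)$. Let $H := \Nwfuse{\vek{l}}{L(G_0)^{\vek{l}}_{\vek{m}} L(G_1)^{\vek{m}}_{\vek{n}}}{\vek{n}}$, so $L(H) = L(G_0) \circ L(G_1)$ by definition of~$\circ$. The network $H$ has a trivial cut decomposition into two single-vertex parts whose $L$-values equal $L(G_0)$ and $L(G_1)$ respectively by the generator identity. Applying Lemma~\ref{L:NwL-uppdelning} to both $G$ and $H$ gives representatives $K \in L(G)$ and $K' \in L(H)$ together with compatible cut decompositions $(K_0, K_1)$ and $(K'_0, K'_1)$ where $K_0, K'_0 \in L(G_0)$ and $K_1, K'_1 \in L(G_1)$. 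Since $L(G_0)$ and $L(G_1)$ are single isomorphism classes, $K_0 \simeq K'_0$ and $K_1 \simeq K'_1$; after matching the orderings of cut edges (a choice which is available because the ordering of the cut in $G$ may be picked freely for each application of the lemma), these isomorphisms glue into an isomorphism $K \simeq K'$. Hence $L(G) = L(H) = L(G_0) \circ L(G_1)$. The split case runs along identical lines using~\eqref{Eq:EvalDefOtimes}.

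The main obstacle I anticipate is the final gluing step: synthesising the isomorphisms $K_0 \simeq K'_0$ and $K_1 \simeq K'_1$ into an isomorphism $K \simeq K'$. This requires careful bookkeeping about how the ordered cut edges are indexed in each of the four decomposition parts, and is the point where one must be deliberate about choosing compatible orderings rather than arbitrary ones; fortunately the tail/head indices at the distinguished vertices $1$ and $0$ of the decomposed parts pin down these identifications uniquely. Once the gluing is in place, the conclusion is an immediate application of Theorem~\ref{S:PROP2-definition}, and the defining formula for $\phi_n$ given there makes it automatic that the permutation maps behave as expected.
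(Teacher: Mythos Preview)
Your overall strategy matches the paper's: verify the four hypotheses of Theorem~\ref{S:PROP2-definition}, with Lemma~\ref{L:NwL-uppdelning} supplying the heavy lifting for the cut and split conditions. Your gluing argument for those conditions works; isomorphisms $K_0 \simeq K'_0$ and $K_1 \simeq K'_1$ are forced to agree on cut edges because a network isomorphism fixes the vertices $0$ and $1$ and preserves head and tail indices there, while in a cut decomposition the cut edges are precisely the legs at these vertices, indexed by the ordering $p$. The paper takes a slightly different route for this step: rather than applying Lemma~\ref{L:NwL-uppdelning} a second time to $H$, it directly writes down the flatten--smoothen $H''$ of $H$ (using the explicit representatives $K_0$, $K_1$ obtained from the first application as annotations) and exhibits the isomorphism $K \simeq H''$ by an explicit formula on vertex and edge labels. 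Your version is a little cleaner conceptually but trades the explicit formula for the gluing lemma you would need to state.

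There is, however, a gap in your treatment of isomorphism invariance. What the paragraphs preceding Lemma~\ref{L:NwL-uppdelning} establish is that $F$ is well-defined: the value of $L(G)$ does not depend on which representative in $\Nw(\Omega)$ is chosen for each \emph{inner} annotation of $G$. The condition required by Theorem~\ref{S:PROP2-definition} is different: one needs $L(G_1) = L(G_2)$ whenever $G_1 \simeq G_2$ as elements of $\Nw\bigl(\Nwt(\Omega)\bigr)$, i.e., invariance under isomorphisms of the \emph{outer} network structure. Commutativity of the left square in~\eqref{Eq:FlattenSmoothen} does not give this, and neither does the well-definedness of $M$. The paper handles it explicitly as the first step of its proof: lift both $G_1$ and $G_2$ to $\Nw\bigl(\Nw(\Omega)\bigr)$ compatibly with the given outer isomorphism, and then write down an explicit isomorphism $(\chi',\psi')$ between their flattenings. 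The argument is routine, but it is not a consequence of what you cite.
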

\begin{proof}
  The idea of the proof is simply to verify the conditions of 
  Theorem~\ref{S:PROP2-definition}. The first condition is that $L$ 
  must be invariant under isomorphisms of the outer network 
  (isomorphisms of inner networks figured already in the definition 
  of $L$). 
  Let \(G_1,G_2 \in \Nw\left( \Nwt(\Omega) \right)\) such that \(G_1 
  \simeq G_2\) be arbitrary; let \((\chi,\psi)\colon G_1 \Fpil 
  G_2\) be the isomorphism. Define \(D'\colon V_{G_1} \setminus 
  \{0,1\} \Fpil \Nw(\Omega)\) by picking as $D'(v)$ some element of 
  $D_{G_1}(v)$. Define
  \begin{align*}
    G_1' ={}& (V_{G_1},E_{G_1},h_{G_1},g_{G_1},t_{G_1},s_{G_1},D')
      \in \Nw\bigl(\Nw(\Omega)\bigr)
      \text{,}\\
    G_2' ={}& (V_{G_2},E_{G_2},h_{G_2},g_{G_2},t_{G_2},s_{G_2},
      D' \circ \chi^{-1})
      \in \Nw\bigl(\Nw(\Omega)\bigr)
      \text{.}
  \end{align*}
  Since $G_1$ and $G_2$ are the images of $G_1'$ and $G_2'$ 
  respectively under the $\Nw(c)$ map of the commutative diagram 
  \eqref{Eq:FlattenSmoothen}, it will follow that \(F(G_1) = F(G_2)\) 
  (and hence \(L(G_1)=L(G_2)\)) once it has been shown that the 
  flattenings of $G_1'$ and $G_2'$ are isomorphic.
  
  Let $H_1'$ and $H_2'$ be the flattenings of $G_1'$ and $G_2'$ 
  respectively. Then the isomorphism \((\chi',\psi')\colon H_1' 
  \Fpil H_2'\) is given by
  \begin{align*}
    \chi'\bigl( p(u,v) \bigr) ={}&
      p\bigl( \chi(u), v \bigr) 
      \\
    \chi'\Bigl( p\bigl(0, p(u,j)\bigr) \Bigr) ={}&
      p\Bigl(0, p\bigl( \chi(u), j \bigr) \Bigr)
      \\
    \chi'\Bigl( p\bigl(1, p(u,k)\bigr) \Bigr) ={}&
      p\Bigl(1, p\bigl( \chi(u), k \bigr) \Bigr)
  \end{align*}
  for all \(v \in V_{D'(u)} \setminus \{0,1\}\), \(j \in 
  \bigl[ d_{G_1}^+(u) \bigr]\), \(k \in \bigl[ d_{G_1}^-(u) \bigr]\), 
  and \(u \in V_{G_1} \setminus \{0,1\}\) on one hand, and by
  \begin{align*}
    \psi'\bigl( p(1,e) \bigr) ={}& p\bigl( 1, \psi(e) \bigr) \text{,}\\
    \psi'\bigl( p(u,f) \bigr) ={}& p\bigl( \chi(u), f \bigr)
  \end{align*}
  for all \(e \in E_{G_1}\), \(f \in E_{D_{G_1}(u)}\), and \(u \in 
  V_{G_1} \setminus \{0,1\}\) on the other.
  
  Next consider the composition condition: that \(L(G) = L(G_0) \circ 
  L(G_1)\) for every cut decomposition $(G_0,G_1)$ of $G$. By 
  Lemma~\ref{L:NwL-uppdelning}, every \(K \in L(G)\) has a cut 
  decomposition $(K_0,K_1)$ such that \(K_0 \in L(G_0)\) and \(K_1 
  \in L(G_1)\). Hence to exhibit a representative of $L(G_0) \circ 
  L(G_1)$, one may first let 
  \begin{math}
    H = \Nwfuse[\big]{\vek{l}}{ (2\colon K_0)^\vek{l}_\vek{m} 
    (3\colon K_1)^\vek{m}_\vek{n} }{\vek{n}}
  \end{math}
  where \(\vek{l} = \vek{N}_{\omega(K)}(0,3)\), \(\vek{m} = 
  \vek{N}_{\omega(K_1)}(1,3)\), and \(\vek{n} = 
  \vek{N}_{\alpha(K)}(2,3)\). Taking $H'$ to be the flattening 
  of $H$ and $H''$ to be the smoothening of $H'$, the problem reduces 
  to that of showing \(K \simeq H''\). Let $(W_0,W_1)$ be the cut in 
  $K$ that induces $(K_0,K_1)$. Then the sought isomorphism 
  \((\chi,\psi)\colon K \Fpil H''\) is
  \begin{align*}
    \chi(v) ={}& \begin{cases}
      v& \text{if \(v \in \{0,1\}\),}\\
      p(2,v)& \text{if \(v \in W_0\),}\\
      p(3,v)& \text{if \(v \in W_1\)}
    \end{cases}
    &&\text{for \(v \in V_K\),}\\
    \psi(e) ={}& \begin{cases}
      p(2,e)& \text{if \(t_K(e) \in W_0\),}\\
      p(3,e)& \text{if \(t_K(e) \in W_1\),}\\
      p\bigl( 1, 3s_K(e)-1 \bigr) & \text{if \(t_K(e) = 1\)}
    \end{cases}
    &&\text{for \(e \in E_K\);}
  \end{align*}
  an edge of $K$ may have up to five counterparts in $H'$, but the 
  one that survives smoothening is that whose tail is at a 
  non-$\natural$ vertex.
  
  Third consider the tensor product condition: that \(L(G) = L(G_2) 
  \otimes  L(G_3)\) for every split decomposition $(G_2,G_3)$ of $G$. 
  By Lemma~\ref{L:NwL-uppdelning}, every \(K \in L(G)\) has a split 
  decomposition $(K_2,K_3)$ such that \(K_2 \in L(G_2)\) and \(K_3 
  \in L(G_3)\). Hence to exhibit a representative of $L(G_2) \otimes 
  L(G_3)$, one may first let 
  \begin{math}
    H = \Nwfuse[\big]{\vek{km}}{ 
      (2\colon K_2)^\vek{k}_\vek{l} (3\colon K_3)^\vek{m}_\vek{n} 
    }{\vek{ln}}
  \end{math}
  where \(\vek{k} = \vek{N}_{\omega(K_2)}(0,2)$, \(\vek{l} = 
  \vek{N}_{\alpha(K_2)}(1,2)\), \(\vek{m} = 
  \vek{N}_{\omega(K_3)}\bigl( 2\omega(K_2), 2 \bigr)\), and \(\vek{n} = 
  \vek{N}_{\alpha(K_3)}\bigl( 2\alpha(K_2) +\nobreak 1, 2 \bigr)\).
  Taking $H'$ to be the flattening of $H$ and $H''$ to be the 
  smoothening of $H'$, the problem reduces to that of showing 
  \(K \simeq H''\). Let $(F_2,F_3,W_2,W_3)$ be the split in $K$ that 
  induces $(K_2,K_3)$. Then the sought isomorphism \((\chi,\psi)\colon 
  K \Fpil H''\) is
  \begin{align*}
    \chi(v) ={}& \begin{cases}
      v& \text{if \(v \in \{0,1\}\),}\\
      p(2,v)& \text{if \(v \in W_2\),}\\
      p(3,v)& \text{if \(v \in W_3\)}
    \end{cases}
    &&\text{for \(v \in V_K\),}\\
    \psi(e) ={}& \begin{cases}
      p(2,e)& \text{if \(t_K(e) \in W_2\),}\\
      p(3,e)& \text{if \(t_K(e) \in W_3\),}\\
      p\bigl( 1, 2s_K(e) - 1 \bigr) & \text{if \(t_K(e) = 1\)}
    \end{cases}
    &&\text{for \(e \in E_K\).}
  \end{align*}
  
  Finally consider the elementary network condition: that given any 
  \(\gamma \in \Nwt(\Omega)\), it holds that 
  \(L\Bigl( \Nwfuse{ \vek{m} }{ \gamma^{\vek{m}}_{\vek{n}} }{ \vek{n} } 
  \Bigr) = \gamma\) for \(\vek{m} = 
  \vek{N}_{\omega(\gamma)}(0,2)\) and \(\vek{n} = 
  \vek{N}_{\alpha(\gamma)}(1,2)\). 
  Pick some \(G \in \gamma\) and let \(K \in \Nw(\Omega)\) be the result 
  of flattening and smoothening $\Nwfuse{ \vek{m} }{ 
  (2\colon G)^{\vek{m}}_{\vek{n}} }{ \vek{n} }$. 
  The claim is then that \(K \in \gamma\), or equivalently that \(G 
  \simeq K\). Since all inner vertices of $K$ are images under 
  flattening of inner vertices in $G$, and similarly all edges in $K$ 
  except those with tail $1$ are images under flattening of edges in 
  $G$, it turns out that this isomorphism \((\chi,\psi)\colon G 
  \Fpil K\) can be expressed as
  \begin{align*}
    \chi(v) ={}& \begin{cases}
      v& \text{if \(v \in \{0,1\}\),}\\
      p(2,v)& \text{otherwise}
    \end{cases}
    &&\text{for all \(v \in V_G\),}\\
    \psi(e) ={}& \begin{cases}
      p\bigl( 1, 2s_G(e)-1 \bigr)& \text{if \(t_G(e)=1\),}\\
      p(2,e)& \text{otherwise}
    \end{cases}
    &&\text{for all \(e \in E_G\).}
  \end{align*}
\end{proof}

The next order of business is the universal property.

\begin{lemma} \label{L:evalHomomorfi}
  Let an $\N^2$-graded set $\Omega$, a \PROP\ $\mc{P}$, and an 
  $\N^2$-graded set morphism \(f\colon \Omega \Fpil \mc{P}\) be 
  given. If \(i\colon \Omega \Fpil \Nwt(\Omega)\) is the $\N^2$-graded 
  set morphism satisfying
  \begin{equation} \label{Eq:Nwt-injektion}
    i(x) \owns \Nwfuse[\Big]{ \vek{N}_{\omega(x)}(0,2) }{ 
       x^{\vek{N}_{\omega(x)}(0,2)}_{\vek{N}_{\alpha(x)}(1,2)} 
    }{ \vek{N}_{\alpha(x)}(1,2) } 
    \qquad\text{for all \(x \in \Omega\)}
  \end{equation}
  then \(\eval_f\colon\Nwt(\Omega) \Fpil \mc{P}\) is a \PROP\ 
  homomorphism satisfying \(\eval_f \circ i = f\).
\end{lemma}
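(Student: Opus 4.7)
The plan is to separately verify the four requirements ($\eval_f \circ i = f$, preservation of $\alpha,\omega$, preservation of $\phi$, and preservation of $\circ$ and $\otimes$), with the first three immediate from the basic lemmas on $\eval$ and the last two both reducing to the same core manoeuvre: apply a cut/split evaluation lemma inside a flattened network, then use Lemma~\ref{L:evalNatural} to absorb the $\natural$-vertices that flattening introduces.

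First I would dispose of the easy pieces. The formula $\eval_f \circ i = f$ is immediate, since $i(x)$ is represented by a single-vertex network $\Nwfuse{\vek{m}}{x^\vek{m}_\vek{n}}{\vek{n}}$ and Theorem~\ref{S:fuse-just} then gives $\eval_f\bigl(i(x)\bigr) = \fuse{\vek{m}}{f(x)^\vek{m}_\vek{n}}{\vek{n}} = f(x)$; this is essentially the second half of Corollary~\ref{Kor:EnklaNatverk} applied in $\mc{P}$. That $\eval_f$ preserves $\alpha$ and $\omega$ is visible from its definition as an abstract index expression with input list $\vek{e}_G^+(1)$ and output list $\vek{e}_G^-(0)$. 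Preservation of $\phi$ is equally direct: $\phi_{\Nwt(\Omega)}(\sigma)$ is represented by a network with no inner vertex, and the zero-order abstract index expression for it evaluates in $\mc{P}$ to $\phi_\mc{P}(\sigma)$ by item~\ref{Item1:D:fuse} of Definition~\ref{D:fuse}.

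The heart of the argument is preservation of $\circ$; preservation of $\otimes$ is then identical in form, with ``cut'' replaced by ``split'' and Lemma~\ref{L:evalCut} replaced by Lemma~\ref{L:evalSplit}. Given $a,b \in \Nwt(\Omega)$ with $\alpha(a) = \omega(b)$, pick representatives $A \in a$ and $B \in b$ and form $G = \Nwfuse{\vek{l}}{(2\colon A)^\vek{l}_\vek{m}(3\colon B)^\vek{m}_\vek{n}}{\vek{n}}$ in $\Nw\bigl(\Nw(\Omega)\bigr)$. By the definition of composition in $\Nwt(\Omega)$ together with the construction of $L$ in diagram~\eqref{Eq:FlattenSmoothen}, a representative $K$ of $a \circ b$ is obtained from $G$ by first flattening to a network $H \in \Nw(\Omega')$ and then smoothening to $K \in \Nw(\Omega)$. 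Extend $f$ to $f'\colon \Omega' \Fpil \mc{P}$ by $f'(\natural) := \phi_\mc{P}(\same{1})$. Since smoothening is, by construction, a $\natural$-homeomorphism from $H$ to $K$, Lemma~\ref{L:evalNatural} (applied in $\Omega'$) yields $\eval_{f'}(H) = \eval_f(K)$. The flattened network $H$ admits a natural cut separating the $A$-derived vertices (together with the $\natural$-vertices produced from the outputs of $A$) from the $B$-derived vertices (together with the $\natural$-vertices produced from the inputs of $B$), with the outer edges of $G$ becoming the cut edges; picking an ordered cut whose order matches $\vek{m}$, Lemma~\ref{L:evalCut} factors $\eval_{f'}(H) = \eval_{f'}(H_A) \circ \eval_{f'}(H_B)$. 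Each $H_A$ is $A$ with a strip of $\natural$-vertices glued onto its outputs, hence a $\natural$-subdivision of $A$ when viewed in $\Nw(\Omega')$; Lemma~\ref{L:evalNatural} applied once more then gives $\eval_{f'}(H_A) = \eval_f(A)$, and similarly $\eval_{f'}(H_B) = \eval_f(B)$. Chaining the equalities, $\eval_f(a \circ b) = \eval_f(K) = \eval_f(A) \circ \eval_f(B) = \eval_f(a) \circ \eval_f(b)$.

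The one place where care is needed is the identification of the pieces $H_A$ and $H_B$ after flattening: one must unwind the labelling conventions of \textrm{flatten} closely enough to see that the inner structure of $A$ (respectively $B$) survives unperturbed inside $H$, that each outer edge of $G$ contributes precisely a pair of $\natural$-vertices bridged by one edge, and that an ordered cut with the required index order actually exists. Once this syntactic bookkeeping is discharged, the two uses of Lemma~\ref{L:evalNatural} around a cut or split handle everything else, and the entire homomorphism property of $\eval_f$ follows from a disciplined reassembly of the network-evaluation lemmas already in hand.
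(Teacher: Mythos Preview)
Your proposal is correct and follows essentially the same route as the paper: represent $a\circ b$ (resp.\ $a\otimes b$) via flatten--smoothen of a two-vertex outer network, pass to $\eval_{f'}$ on the flattened network using Lemma~\ref{L:evalNatural}, make a cut (resp.\ split) there via Lemma~\ref{L:evalCut} (resp.~\ref{L:evalSplit}), and apply Lemma~\ref{L:evalNatural} again to each piece. The only imprecision is cosmetic: in the cut, \emph{all} $\natural$-vertices arising from $A$ (both its input side and its output side) go into $W_0$, and likewise for $B$ into $W_1$, so $H_A$ carries $\natural$-strips on both ends rather than just on its outputs---but your conclusion that $H_A$ is a $\natural$-subdivision of $A$ is unaffected, and you already flag this bookkeeping as the place needing care.
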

\begin{proof}
  To begin with, a \PROP\ homomorphism must preserve composition. 
  To that end, let \(l,m,n \in \N\), \(\beta \in \Nwt(\Omega)(l,m)\), 
  and \(\gamma \in \Nwt(\Omega)(m,n)\) be 
  arbitrary. Pick some \(G_0 \in \beta\) and \(G_1 \in \gamma\). Let 
  \[
    G = \Nwfuse[\Big]{ \vek{N}_l(0,3) }{ 
    (G_0)^{\vek{N}_l(0,3)}_{\vek{N}_m(1,3)} 
    (G_1)^{\vek{N}_m(1,3)}_{\vek{N}_n(2,3)} }{ \vek{N}_n(2,3) }
    \text{.} 
  \]
  Let $H$ be the flattening of $G$ and $K$ be the smoothening of $H$. 
  Then \(K \in \beta \circ \gamma\). Let \(f'(x)=f(x)\) for \(x \in 
  \Omega\) and \(f'(\natural) = \phi_{\mc{P}}(\same{1})\); then 
  \[
    \eval_f( \beta \circ \gamma ) =
    \eval_f(K) =
    \eval_{f'}(H)
  \]
  by Lemma~\ref{L:evalNatural}, since $H$ is a $\natural$-subdivision 
  of $K$. $H$ has an ordered cut $(W_0,W_1,q)$ where
  \begin{align*}
    W_0 ={}& \Bigl\{ p\bigl( 0, p(2,j) \bigr) \Bigr\}_{j \in [l]} \cup
      \Bigl\{ p\bigl( 1, p(2,j) \bigr) \Bigr\}_{j \in [m]} \cup
      \bigl\{ p(2,v) \bigr\}_{v \in V_{G_0}} 
      \text{,}\\
    W_1 ={}& \Bigl\{ p\bigl( 0, p(3,j) \bigr) \Bigr\}_{j \in [m]} \cup
      \Bigl\{ p\bigl( 1, p(3,j) \bigr) \Bigr\}_{j \in [n]} \cup
      \bigl\{ p(3,v) \bigr\}_{v \in V_{G_1}} 
      \text{,}\\
    q^{-1}(j) ={}& p( 1, 3j-2 ) \quad\text{for \(j \in [m]\).}
  \end{align*}
  The corresponding decomposition $(H_0,H_1)$ is such that $H_0$ is a 
  $\natural$-subdivision of $G_0$ and $H_1$ is a 
  $\natural$-subdivision of $G_1$. Hence
  \begin{multline*}
    \eval_{f'}(H) = 
    \eval_{f'}(H_0) \circ \eval_{f'}(H_1) = \\ =
    \eval_f(G_0) \circ \eval_f(G_1) =
    \eval_f(\beta) \circ \eval_f(\gamma)
  \end{multline*}
  and thus \(\eval_f(\beta \circ\nobreak \gamma) = \eval_f\beta \circ 
  \eval_f\gamma\).
  
  Next consider the tensor product: let \(k,l,m,n \in \N\), 
  \(\beta \in \Nwt(\Omega)(k,l)\), and \(\gamma \in \Nwt(\Omega)(m,n)\) 
  be arbitrary. Pick some \(G_\mathrm{l} \in \beta\) and \(G_\mathrm{r} 
  \in \gamma\). Let \(G = \Nwfuse[\Big]{ \vek{N}_k(0,4)\vek{N}_m(2,4) }{ 
  (G_\mathrm{l})^{\vek{N}_k(0,4)}_{\vek{N}_l(1,4)} 
  (G_\mathrm{r})^{\vek{N}_m(2,4)}_{\vek{N}_n(3,4)} }{ 
  \vek{N}_l(1,4)\vek{N}_n(3,4) }\). Let $H$ be the flattening of $G$ and 
  $K$ be the smoothening of $H$. Then \(K \in \beta \otimes \gamma\). 
  Let \(f'(x)=f(x)\) for \(x \in \Omega\) and \(f'(\natural) = 
  \phi_{\mc{P}}(\same{1})\); then by Lemma~\ref{L:evalNatural},
  \[
    \eval_f( \beta \otimes \gamma ) =
    \eval_f(K) =
    \eval_{f'}(H) \text{.}
  \]
  $H$ has a split $(F_\mathrm{l},F_\mathrm{r},W_\mathrm{l},W_\mathrm{r})$ 
  where
  \begin{align*}
    F_\mathrm{l} ={}& \bigl\{ p(1,4j-4) \bigr\}_{j \in [k]} \cup 
      \bigl\{ p(1,4j-3) \bigr\}_{j \in [l]} \cup 
      \bigl\{ p(2,e) \bigr\}_{e \in E_{G_\mathrm{l}}} \text{,}\\
    F_\mathrm{r} ={}& \bigl\{ p(1,4j-2) \bigr\}_{j \in [m]} \cup 
      \bigl\{ p(1,4j-1) \bigr\}_{j \in [n]} \cup 
      \bigl\{ p(3,e) \bigr\}_{e \in E_{G_\mathrm{r}}} \text{,}\\
    W_\mathrm{l} ={}& 
      \Bigl\{ p\bigl( 0, p(2,j) \bigr) \Bigr\}_{j \in [k]} \cup
      \Bigl\{ p\bigl( 1, p(2,j) \bigr) \Bigr\}_{j \in [l]} \cup
      \bigl\{ p(2,v) \bigr\}_{v \in V_{G_0}} 
      \text{,}\\
    W_\mathrm{r} ={}& 
      \Bigl\{ p\bigl( 0, p(3,j) \bigr) \Bigr\}_{j \in [m]} \cup
      \Bigl\{ p\bigl( 1, p(3,j) \bigr) \Bigr\}_{j \in [n]} \cup
      \bigl\{ p(3,v) \bigr\}_{v \in V_{G_1}} 
      \text{.}
  \end{align*}
  The corresponding decomposition $(H_\mathrm{l},H_\mathrm{r})$ is such 
  that $H_\mathrm{l}$ is a $\natural$-subdivision of $G_\mathrm{l}$ and 
  $H_\mathrm{r}$ is a $\natural$-subdivision of $G_\mathrm{r}$. Hence
  \begin{multline*}
    \eval_{f'}(H) = 
    \eval_{f'}(H_\mathrm{l}) \otimes \eval_{f'}(H_\mathrm{r}) = \\ =
    \eval_f(G_\mathrm{l}) \otimes \eval_f(G_\mathrm{r}) =
    \eval_f(\beta) \otimes \eval_f(\gamma)
  \end{multline*}
  and thus \(\eval_f(\beta \otimes\nobreak \gamma) = \eval_f\beta 
  \otimes \eval_f\gamma\).
  
  Third consider permutations: let \(n \in \N\) and \(\sigma \in 
  \Sigma_n\) be arbitrary. Then
  \begin{multline*}
    \eval_f\bigl( \phi_{\Nwt(\Omega)}(\sigma) \bigr) =
    \eval_f\Bigl( \bigl( \{0,1\}, [n], 0, \sigma, 1, \same{n}, 
      \varnothing \bigr) \Bigr) = \\ =
    \fuse[\Big]{ \sigma^{-1}(1)\dotsb\sigma^{-1}(n) }{ 1 }{ 
      \vek{N}_n(1,1) }_\mc{P} =
    \phi_\mc{P}(\sigma)
  \end{multline*}
  since \(\vek{N}_n(1,1) = a_{\sigma(1)}\dotsb a_{\sigma(n)}\) if 
  \(a_i = \sigma^{-1}(j)\) for all \(j \in [n]\).
  
  Finally consider the map $i$: let \(x \in \Omega\) be arbitrary. 
  Let \(\vek{m} = \vek{N}_{\omega(x)}(0,2)\) and \(\vek{n} = 
  \vek{N}_{\alpha(x)}(1,2)\). Then
  \begin{equation*}
    \eval_f\bigl( i(x) \bigr) =
    \eval_f\Bigl( \Nwfuse{\vek{m}}{ x^\vek{m}_\vek{n} }{\vek{n}} 
      \Bigr) =
    \fuse{\vek{m}}{ f(x)^\vek{m}_\vek{n} }{\vek{n}}_\mc{P} =
    f(x) \text{,}
  \end{equation*}
  as claimed.
\end{proof}

\begin{theorem} \label{S:NwtFriPROP}
  $\Nwt(\Omega)$ is the free \PROP\ on $\Omega$.
\end{theorem}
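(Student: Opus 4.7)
The plan is to combine existence, which is already provided by Lemma~\ref{L:evalHomomorfi}, with a uniqueness argument by induction on the number of inner vertices of a network representative. Theorem~\ref{S:NwtPROP} has established that $\Nwt(\Omega)$ is a \PROP{}, and Lemma~\ref{L:evalHomomorfi} supplies, for any \PROP{} $\mc{P}$ and $\N^2$-graded set morphism $f\colon \Omega \Fpil \mc{P}$, the \PROP{} homomorphism $\eval_f\colon \Nwt(\Omega) \Fpil \mc{P}$ satisfying $\eval_f \circ i = f$. All that therefore remains is to show that $\eval_f$ is the unique such homomorphism.

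So suppose $g\colon \Nwt(\Omega) \Fpil \mc{P}$ is any \PROP{} homomorphism with $g \circ i = f$; I will show $g([G]) = \eval_f([G])$ for every $G \in \Nw(\Omega)$ by induction on the number of inner vertices of $G$. In the base case of zero inner vertices, $[G] = \phi_{\Nwt(\Omega)}(\sigma)$ for some permutation $\sigma$ (with the identity map playing the role of the annotation), and since $g$ and $\eval_f$ are both \PROP{} homomorphisms, each sends this to $\phi_\mc{P}(\sigma)$.

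For the inductive step, the key observation is that any $G$ with at least one inner vertex admits a decomposition that strictly reduces vertex count while being expressible in terms of composition, tensor product, and permutations. If $G$ has a single inner vertex $v$ with annotation $x$, then a split separating $v$ and its incident edges from any stray edges yields $[G] = \phi(\sigma_1) \circ \bigl( [G_v] \otimes \phi(\same{k}) \bigr) \circ \phi(\sigma_2)$ for appropriate permutations $\sigma_1,\sigma_2$ and stray-edge count $k$, where $[G_v]$ has no stray edges and equals $i(x)$ up to further pre- and post-composition with permutations; since $g(i(x)) = f(x) = \eval_f(i(x))$ and $g,\eval_f$ agree on all permutations, they agree on $[G]$. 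If $G$ has $n \geqslant 2$ inner vertices, then taking any inner vertex $v$ and letting $W_0$ consist of $v$ together with all inner vertices reachable from $v$ (and $W_1$ the remaining inner vertices), the pair $(W_1, W_0)$ is a non-trivial cut in $G$ yielding $[G] = [G_0] \circ [G_1]$ with both $G_0$ and $G_1$ having strictly fewer inner vertices than $G$; the induction hypothesis applied to each part combined with the homomorphism property finishes the argument.

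The main obstacle I expect is the careful handling of the single-vertex base case, in particular ensuring that stray edges are properly accounted for and that the ``relabeling'' of $i(x)$ by leg permutations actually lands in the image of $g$ in the required way. The ingredients are all present (Theorems~\ref{S:AIN,snitt}, \ref{S:AIN,klyva}, and Lemmas~\ref{L:fperm-right}, \ref{L:fperm-left} on manipulating abstract index expressions, together with Corollary~\ref{Kor:EnklaNatverk} identifying elementary networks with their annotations), but weaving them into a clean one-vertex expression requires some bookkeeping.
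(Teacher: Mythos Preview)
Your approach matches the paper's: existence via Lemma~\ref{L:evalHomomorfi}, uniqueness by induction on the number of inner vertices, handling the one-vertex case explicitly and the general case by a cut decomposition. There is, however, a genuine (if easily repaired) gap in the $n\geqslant 2$ step.

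Taking an \emph{arbitrary} inner vertex $v$ and letting $W_0$ be its forward closure does not guarantee $W_1\neq\varnothing$: if every inner vertex happens to be reachable from $v$, the cut is trivial and the induction stalls. You must choose $v$ more carefully. The paper takes $v$ to be an inner vertex whose out-neighbours all lie in $\{0\}$ (a sink among the inner vertices), giving the cut $\bigl(\{v\},\,V_G\setminus\{0,1,v\}\bigr)$ with both parts nonempty. Also, with $W_0$ the forward (downstream) closure of $v$, the cut in the paper's convention is $(W_0,W_1)$, not $(W_1,W_0)$: the first component of a cut is the side into which edges point. With these two small corrections your argument goes through and coincides with the paper's.
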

\begin{proof}
  Lemma~\ref{L:evalHomomorfi} already states that any $\N^2$-graded 
  set map \(f\colon \Omega \Fpil \mc{P}\) for a \PROP\ $\mc{P}$ 
  extends to a \PROP\ homomorphism \(\eval_f\colon \Nwt(\Omega) \Fpil 
  \mc{P}\), so what remains to show is that this homomorphism is 
  unique; with $i$ as in the lemma, it must be shown that \(\varphi_1 
  \circ i = f = \varphi_2 \circ i\) for \PROP\ homomorphisms 
  \(\varphi_1,\varphi_2\colon \Nwt(\Omega) \Fpil \mc{P}\) implies 
  \(\varphi_1 = \varphi_2\).
  
  Let \(\gamma \in \Nwt(\Omega)\) and \(G \in \gamma\) be arbitrary. 
  The proof is by induction on the number of vertices in $G$. If $G$ 
  has two vertices (i.e., no inner vertices) then \(\gamma = 
  \phi_{\Nwt(\Omega)}(\sigma)\) for some permutation $\sigma$ and 
  \(\varphi_1(\gamma) = \varphi_1\bigl( \phi_{\Nwt(\Omega)}(\sigma) 
  \bigr) = \phi_\mc{P}(\sigma) = \varphi_2\bigl( 
  \phi_{\Nwt(\Omega)}(\sigma) \bigr) = \varphi_2(\gamma)\). If $G$ 
  has three vertices then \(V_G = \{0,1,v\}\), meaning $\bigl( 
  \varnothing, \{v\} \bigr)$ and $\bigl( \{v\}, \varnothing \bigr)$ 
  are cuts in $G$. Defining \(\vek{a} = \vek{e}_G^-(0)\), \(\vek{b} 
  = \vek{e}_G^+(v)\), \(\vek{c} = \vek{e}_G^-(v)\), and \(\vek{d} = 
  \vek{e}_G^+(1)\), one may utilise these cuts to produce the 
  decomposition
  \begin{equation*}
    \gamma = 
    \fuse{ \vek{a} }{ 1 }{ \vek{b}(\vek{a}/\vek{b}) } \circ
    i\bigl( D_G(v) \bigr) \otimes 
      \fuse{ \vek{a}/\vek{b} }{ 1 }{ \vek{a}/\vek{b} } \circ
    \fuse{ \vek{c} (\vek{a}/\vek{b}) }{ 1 }{ \vek{d} }
    \text{.}
  \end{equation*}
  Since \(\varphi_1\bigl( i(x) \bigr) = \varphi_2\bigl( i(x) \bigr)\) 
  for all \(x \in \Omega\), it again follows that \(\varphi_1(\gamma) 
  = \varphi_2(\gamma)\).
  
  Finally consider the case that \(n := \card{V_G} > 3\), and assume 
  $\varphi_1$ and $\varphi_2$ coincide for all networks with less 
  than $n$ vertices. Let $v$ be an inner vertex of $G$ whose 
  out-neighbours are all in $\{0\}$. Then $\bigl( \{v\}, V_G 
  \setminus\nobreak \{0,1,v\} \bigr)$ is a cut in $G$; let 
  $(G_0,G_1)$ be a corresponding decomposition. Then \(\gamma = 
  \eval_i(G_0) \circ \eval_i(G_1)\) and hence
  \begin{multline*}
    \varphi_1(\gamma) = 
    \varphi_1\bigl( \eval_i(G_0) \circ \eval_i(G_1) \bigr) =
    \varphi_1\bigl( \eval_i(G_0) \bigr) \circ 
      \varphi_1\bigl( \eval_i(G_1) \bigr) = \\ =
    \varphi_2\bigl( \eval_i(G_0) \bigr) \circ 
      \varphi_2\bigl( \eval_i(G_1) \bigr) =
    \varphi_2\bigl( \eval_i(G_0) \circ \eval_i(G_1) \bigr) =
    \varphi_2(\gamma) \text{.}
  \end{multline*}
  It follows that \(\varphi_1(\gamma) = \varphi_2(\gamma)\) for all 
  \(\gamma \in \Nwt(\Omega)\).
\end{proof}

With the free \PROP\ properly established, it is next time to sort 
out the free $\mc{R}$-linear \PROP. It seems appropriate to follow 
the suite of $\mc{R}(X)$, $\mc{R}[X]$, and $\mc{R}\langle X\rangle$ 
in a basic notation for this with $\mc{R}\{X\}$, as braces are the 
simplest delimiters not yet appropriated, and furthermore the 
``spurs'' on the braces suggest the possibility (as in networks) of 
external connections. 

\begin{definition}
  Let $\mc{R}$ be an associative and commutative ring with unit. Let 
  $\Omega$ be an $\N^2$-graded set. Then the 
  \DefOrd[{free linear PROP}*]{free $\mc{R}$-linear \PROP\ on $\Omega$} 
  is denoted $\mc{R}\{\Omega\}$\index{R{\Omega}@$\mc{R}\{\Omega\}$}.
\end{definition}

\begin{corollary}
  $\mc{R}\{\Omega\}$ is the same as $\mc{R}\Nwt(\Omega)$. In 
  particular, $\Nwt(\Omega)$ may be regarded as a subset of 
  $\mc{R}\{\Omega\}$ and $\Nwt(\Omega)(m,n)$ is a basis of the 
  $\mc{R}$-module $\mc{R}\{\Omega\}(m,n)$, for all \(m,n \in \N\). 
  Similarly, any \(x \in \Omega\) may be identified with the 
  isomorphism class of $\Nwfuse{ \vek{N}_{\omega(x)}(0,2) }{ 
  x^{\vek{N}_{\omega(x)}(0,2)}_{\vek{N}_{\alpha(x)}(1,2)} }{ 
  \vek{N}_{\alpha(x)}(1,2) }$ in $\Nwt(\Omega)$.
\end{corollary}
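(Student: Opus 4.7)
The plan is to combine two earlier results: Theorem~\ref{S:NwtFriPROP}, which identifies $\Nwt(\Omega)$ as the free \PROP\ on $\Omega$, and Construction~\ref{Kons:R-linjar}, which passes from an arbitrary \PROP\ to an $\mc{R}$-linear \PROP\ by taking formal linear combinations componentwise. Applying the latter to $\mc{P} = \Nwt(\Omega)$ produces an $\mc{R}$-linear \PROP\ $\mc{R}\Nwt(\Omega)$ in which each component is the free $\mc{R}$-module on $\Nwt(\Omega)(m,n)$, so the basis and ``subset'' claims of the corollary would follow immediately once we justify calling this \PROP\ the free $\mc{R}$-linear \PROP\ on $\Omega$.

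First I would set up the universal property to be checked: for any $\mc{R}$-linear \PROP\ $\mc{Q}$ and any $\N^2$-graded set morphism $f\colon \Omega \Fpil \mc{Q}$, there must be a unique $\mc{R}$-linear \PROP\ homomorphism $\widetilde{f}\colon \mc{R}\Nwt(\Omega) \Fpil \mc{Q}$ extending $f$ along the composite map \(\Omega \xrightarrow{i} \Nwt(\Omega) \hookrightarrow \mc{R}\Nwt(\Omega)\), where $i$ is the $\N^2$-graded set morphism from Lemma~\ref{L:evalHomomorfi} identifying $x \in \Omega$ with the isomorphism class of $\Nwfuse{ \vek{N}_{\omega(x)}(0,2) }{ x^{\vek{N}_{\omega(x)}(0,2)}_{\vek{N}_{\alpha(x)}(1,2)} }{ \vek{N}_{\alpha(x)}(1,2) }$. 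For existence, Theorem~\ref{S:NwtFriPROP} gives a unique ordinary \PROP\ homomorphism $\eval_f\colon \Nwt(\Omega) \Fpil \mc{Q}$ with $\eval_f \circ i = f$, and Construction~\ref{Kons:R-linjar} then extends $\eval_f$ uniquely to an $\mc{R}$-linear \PROP\ homomorphism $\widetilde{f}\colon \mc{R}\Nwt(\Omega) \Fpil \mc{Q}$; this $\widetilde{f}$ is the required extension.

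For uniqueness, I would observe that any $\mc{R}$-linear \PROP\ homomorphism $\widetilde{f}\colon \mc{R}\Nwt(\Omega) \Fpil \mc{Q}$ extending $f$ restricts to an ordinary \PROP\ homomorphism $\Nwt(\Omega) \Fpil \mc{Q}$ which still satisfies the composition with $i$ condition. By the uniqueness half of Theorem~\ref{S:NwtFriPROP}, this restriction must equal $\eval_f$, and then by the uniqueness in Construction~\ref{Kons:R-linjar} its $\mc{R}$-linear extension to the whole of $\mc{R}\Nwt(\Omega)$ is forced. Chaining the two uniqueness statements yields the one needed, and chaining their existence statements yields the other. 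The ``basis'' assertion and the identification of elements of $\Omega$ with specific isomorphism classes in $\Nwt(\Omega)$ are then read off directly from the explicit description of $\mc{R}\Nwt(\Omega)$ in Construction~\ref{Kons:R-linjar} and from the definition of $i$ in Lemma~\ref{L:evalHomomorfi}.

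There is no serious obstacle here: the corollary is really a bookkeeping exercise that composes two previously proven universal properties, and the main thing to watch is merely that the notion of ``$\mc{R}$-linear \PROP\ homomorphism'' splits neatly into ``\PROP\ homomorphism plus $\mc{R}$-linearity'' so that the two uniqueness statements can be applied one after the other without friction.
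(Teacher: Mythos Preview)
Your proposal is correct and takes essentially the same approach as the paper, which simply says ``Merely combine Theorem~\ref{S:NwtFriPROP} with Construction~\ref{Kons:R-linjar}.'' You have just spelled out in detail how that combination verifies the universal property of the free $\mc{R}$-linear \PROP, which is exactly what the one-line proof intends.
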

\begin{proof}
  Merely combine Theorem~\ref{S:NwtFriPROP} with 
  Construction~\ref{Kons:R-linjar}.
\end{proof}

\begin{remark}
  When using network notation\index{network notation} for elements in 
  $\mc{R}\{\Omega\}$, it can be a good idea to surround each network 
  with a bracket, as in
  \[
    \left[ \begin{mpgraphics*}{-105}
      PROPdiagramnoarrow(0,0)
        same(2) circ
        box(2,1)(btex $\Delta$\strut etex) circ
        box(1,2)(btex $\mOp$\strut etex)
        circ same(2)
      ;
    \end{mpgraphics*} \right]
    -
    \left[ \begin{mpgraphics*}{-106}
      PROPdiagramnoarrow(0,0)
        same(2) circ
        box(1,2)(btex $\mOp$\strut etex) otimes same(1) circ
        same(1) otimes box(2,1)(btex $\Delta$\strut etex) 
        circ same(2)
      ;
    \end{mpgraphics*} \right]
    -
    \left[ \begin{mpgraphics*}{-107}
      PROPdiagramnoarrow(0,0)
        same(2) circ
        same(1) otimes box(1,2)(btex $\mOp$\strut etex) circ
        box(2,1)(btex $\Delta$\strut etex) otimes same(1) 
        circ same(2)
      ;
    \end{mpgraphics*} \right]
    +
    \left[ \begin{mpgraphics*}{-108}
      draw (default_sep,0) -- (default_sep,38);
      draw (3default_sep,0) -- (3default_sep,38);
    \end{mpgraphics*} \right]
    \text{.}
  \]
  There are two reasons for this. One, being more theoretical, is 
  that the elements of the \PROP\ $\Nwt(\Omega)$ are isomorphism 
  classes of networks whereas the diagrams themselves are concrete 
  networks, so the brackets are ``needed'' to make it a notation for 
  an element of the \PROP. The other reason, of a more practical 
  nature, is that they improve legibility; when networks become part 
  of a greater formula in more traditional notation, it can be hard 
  to tell exactly where the boundaries of one network are. Delimiters 
  that are placed around a network solve this problem, by visually 
  becoming a frame.
\end{remark}

It may also be observed that $\mc{R}[X]$ and $\mc{R}\langle X\rangle$ 
both arise as special cases of $\mc{R}\{X\}$, for suitable choices of 
arity and coarity for the generators: if \(\alpha(x)=\omega(x)=0\) 
for all \(x \in X\) then \(\mc{R}\{X\}(0,0) \cong \mc{R}[X]\), and 
if \(\alpha(x)=\omega(x)=1\) for all \(x \in X\) then 
\(\mc{R}\{X\}(1,1) \cong \mc{R}\langle X\rangle\).

\subsection{The symmetric join}

So far, this section has delt with establishing that this concrete 
construction of the free \PROP\ from networks satisfy the basic 
category-theoretical properties required of a free object. That focus 
will now shift towards properties that are not so basic from a 
categorical perspective\Ldash to categorically define for example a 
\PROP\ filtration seems to require climbing a step or two on the 
abstraction ladder\Rdash but remain perfectly basic from the more naive 
perspective of classical abstract algebra. The first order of 
business is to define the symmetric join in the free \PROP, so that 
one can get a free \PROP\ characterisation of the subexpression 
concept of Section~\ref{Sec:Deluttryck}.

\begin{lemma} \label{L:Join-homeomorfi}
  Let $\Omega$ be an $\N^2$-graded set and \(z \in \Omega(1,1)\) be 
  arbitrary. Let \(H,H',K,K' \in \Nw(\Omega')\) and \(r,q \in \N\) be 
  such that $K \join[z]^r_q H$ and $K' \join[z]^r_q H'$ are networks. 
  If $(\beta_0,\gamma_0)$ is a homeomorphism from $K'$ to $K$ and 
  $(\beta_1,\gamma_1)$ is a homeomorphism from $H'$ to $H$, then a 
  homeomorphism from $K' \join[z]^r_q H'$ to $K \join[z]^r_q H$ is
  $(\beta,\gamma)$, where
  \begin{align*}
    \beta(v) ={}& v && \text{if \(v \leqslant 2+r+q\),}\\
    \beta(r+q+2v) ={}& r+q+2\beta_0(v) 
      && \text{if \(v \in V_K \setminus \{0,1\}\),}\\
    \beta(r+q+2v+1) ={}& r+q+2\beta_1(v) +1 
      && \text{if \(v \in V_H \setminus \{0,1\}\),}
      \displaybreak[0]\\
    \gamma(2e) ={}& 2\gamma_0(e) && \text{if \(e \in E_{K'}\),}\\
    \gamma(2e+1) ={}& 2\gamma_1(e)+1 && \text{if \(e \in E_{H'}\).}
  \end{align*}
  Conversely, if $(\beta,\gamma)$ is a homeomorphism from 
  $K' \join[z]^r_q H'$ to $K \join[z]^r_q H$ such that \(\beta(v) = v\) for 
  \(v-2 \in [r +\nobreak q]\) and \(\beta(v) \equiv v \pmod{2}\) for 
  all \(v \in V_{K \join[z]^r_q H}\), then there is a homeomorphism 
  $(\beta_0,\gamma_0)$ from $K'$ to $K$ and a homeomorphism 
  $(\beta_1,\gamma_1)$ from $H'$ to $H$ satisfying
  \begin{align*}
    \gamma_0(e) ={}& \tfrac{1}{2}\gamma(2e)
      && \text{for \(e \in E_{K'}\),}\\
    \gamma_1(e) ={}& \tfrac{1}{2}\bigl( \gamma(2e+1) - 1 \bigr)
      && \text{for \(e \in E_{H'}\),}
      \displaybreak[0]\\
    \beta_0(v) ={}& \tfrac{1}{2}\bigl( \beta(r+q+2v) -r -q \bigr)
      && \text{for \(v \in V_K \setminus \{0,1\}\),}
      \\
    \beta_1(v) ={}& \tfrac{1}{2}\bigl( \beta(r+q+2v+1) -r -q -1 \bigr)
      && \text{for \(v \in V_H \setminus \{0,1\}\).}
  \end{align*}
\end{lemma}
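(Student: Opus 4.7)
The plan is to handle the two directions in turn. The forward direction is a mechanical but case-heavy verification that the explicit formulas fulfil the five defining conditions of a homeomorphism; the reverse direction is the substantive one, where the main obstacle is showing that $\gamma$ necessarily respects the parity (i.e.\ the $K'/H'$-provenance) of the edges, which is what makes the reconstruction formulas well-defined.

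For the forward direction I would first observe that $\beta(0)=0$ and $\beta(1)=1$ are built in and that $\beta$ is injective because its restrictions to the three ranges (small labels, even labels above $2+r+q$, odd labels above $2+r+q$) have disjoint images, with injectivity of $\beta_0$ and $\beta_1$ ensuring it on the latter two. Surjectivity of $\gamma$ and the annotation condition $D_{K'\join H'}\circ\beta=D_{K\join H}$ follow from the corresponding properties of $(\beta_0,\gamma_0)$ and $(\beta_1,\gamma_1)$ together with the fact that join-vertices in both sides carry the same annotation $z$. The incidence conditions split into cases according to whether an edge $2e$ or $2e+1$ has an endpoint at $1$, at $0$, at a join-vertex, or at an inner $K$- or $H$-vertex; in each case the definition of $K\join[z]^r_q H$ reduces the required identity to the incidence condition for $(\beta_0,\gamma_0)$ or $(\beta_1,\gamma_1)$. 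Finally, a non-image vertex of $\beta$ in $K'\join[z]^r_q H'$ is precisely an image $r+q+2v$ or $r+q+2v+1$ of a non-image vertex of $\beta_0$ or $\beta_1$ respectively, so its arity/coarity and edge-identification properties are inherited.

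The reverse direction begins by noting that the hypothesis $\beta(v)\equiv v\pmod{2}$ together with $\beta(v)=v$ for $v\leqslant 2+r+q$ and the injectivity of $\beta$ forces $\beta$ to map the range $\{r+q+2v : v\in V_K\setminus\{0,1\}\}$ into itself and similarly for the odd range; hence the formulas for $\beta_0$ and $\beta_1$ yield well-defined injective maps into $V_{K'}\setminus\{0,1\}$ and $V_{H'}\setminus\{0,1\}$. The main obstacle is the analogous statement for $\gamma$: that $\gamma(2e)$ is even for every $e\in E_{K'}$ and $\gamma(2e+1)$ is odd for every $e\in E_{H'}$. To establish this I would invoke Lemma~\ref{L:UnderdeladKant} to write the preimage $\gamma^{-1}(\{e_G\})$ as a path $e_1,\dotsc,e_n$ in $K'\join[z]^r_q H'$ whose terminal vertices lie in $\mathrm{im}\,\beta$ while all intermediate vertices do not. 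The parity condition on $\beta$ forces $\beta^{-1}$ of a $K$-vertex to be a $K'$-vertex (and similarly for $H$-vertices), and $\beta$ fixes join-vertices; together with a case analysis on what $t_G(e_G)$ and $h_G(e_G)$ can be (a join-vertex, $0/1$ with index in the $K$-range or $H$-range, or an inner $K$/$H$-vertex), this pins down the parity of the first and last edges $e_1$ and $e_n$. For the intermediate edges, the key observation is that every non-image vertex of $\beta$ in $K'\join[z]^r_q H'$ is either a $K'$-vertex (of even label) or an $H'$-vertex (of odd label), since all other vertices are small-labelled and therefore fixed by $\beta$; since $K'$-vertices in the join are incident only to even edges and $H'$-vertices only to odd edges, the parity propagates uniformly along the subdivision path.

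Once parity preservation is in hand, the formulas define maps $\gamma_0\colon E_{K'}\to E_K$ and $\gamma_1\colon E_{H'}\to E_H$. Verifying that $(\beta_0,\gamma_0)$ is a homeomorphism from $K'$ to $K$ (and analogously for $(\beta_1,\gamma_1)$) is then a direct translation: injectivity of $\beta_0$ is inherited from $\beta$; surjectivity of $\gamma_0$ is the restriction of surjectivity of $\gamma$ to the even-edge part; the annotation condition is inherited since $D_{K'}(\beta_0(v))=D_{K'\join H'}(r+q+2\beta_0(v))=D_{K'\join H'}(\beta(r+q+2v))=D_{K\join H}(r+q+2v)=D_K(v)$; the incidence and non-image conditions at an inner $K'$-vertex come from applying the corresponding conditions of $(\beta,\gamma)$ at the lifted vertex $r+q+2v$, using again parity preservation to identify the values. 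The argument for $(\beta_1,\gamma_1)$ is symmetric.
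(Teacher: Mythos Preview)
Your proposal is correct and is essentially what the paper intends: its entire proof reads ``Merely verify that the constructed pairs of maps satisfy the conditions for being homeomorphisms,'' so you have supplied the details the paper omits. One cosmetic slip: the parenthetical ``$K'$-vertex (of even label)'' and ``$H'$-vertex (of odd label)'' is not literally correct, since the parity of the vertex label $r+q+2v$ depends on $r+q$; but this is irrelevant to your argument, which only uses that the \emph{edges} incident to a $K'$-vertex are even and those incident to an $H'$-vertex are odd.
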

\begin{proof}
  Merely verify that the constructed pairs of maps satisfy the 
  conditions for being homeomorphisms.
\end{proof}

A special case of the above lemma is that if \(K \simeq K'\) and \(H 
\simeq H'\) then \(K \join^r_q H \simeq K' \join^r_q H'\), because an 
isomorphism is an invertible homeomorphism. Hence the symmetric join 
can be defined as an operation on $\Nwt(\Omega)$ too.

\begin{definition}
  Let $\Omega$ be an $\N^2$-graded set with \(\natural \notin \Omega\) 
  and $\mc{R}$ be an associative and commutative ring with unit. 
  The \DefOrd{symmetric join} $\join^r_q$ on $\Nwt(\Omega)$ is 
  defined by
  \begin{equation}
    [K]_\simeq \join^r_q [H]_\simeq := 
    \bigl[ \mathrm{smoothen}(K \join^r_q H) \bigr]_\simeq
  \end{equation}
  for all \(K,H \in \Nw(\Omega)\) such that the right hand side 
  exists. The symmetric join operation is extended to 
  $\mc{R}\{\Omega\}$ by bilinearity. In both cases, the 
  \emDefOrd[*{annexation}]{right annexation} $a \rtimes b$ is defined 
  as $a \join^{\alpha(b)}_{\omega(b)} b$.
\end{definition}

The next lemma extends the subexpression characterisations of 
Theorem~\ref{S:InbaddningHomeomorfi} with a claim purely in terms of 
elements of the free \PROP.


\begin{lemma}
  Let $\Omega$ be an $\N^2$-graded set with \(\natural \notin 
  \Omega\). Let networks \(G,H,K \in \Nw(\Omega)\) and \(r,q \in \N\) 
  such that $K \join^r_q H$ is a network be given. Then \([G]_\simeq = 
  [K]_\simeq \join^r_q [H]_\simeq\) if and only if $K \join^r_q H$ is 
  a $\natural$-subdivision of $G$.
\end{lemma}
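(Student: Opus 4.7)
The plan is to reduce both directions to the construction of $\natural$-homeomorphisms and to invoke Lemma~\ref{L:Dropping} together with the composability of (isomorphisms as) homeomorphisms. The key observation that makes the whole argument work is that the smoothening map has a canonical companion: if $L = K \join^r_q H$, viewed as a network of $\Omega' = \Omega \cup \{\natural\}$, then the pair $(\mathrm{id}, S)$ built from the ``headmost edge'' function $S$ in the smoothening construction is a homeomorphism from $L$ to $\mathrm{smoothen}(L)$, and it is in fact a $\natural$-homeomorphism since by the definition of Construction~\ref{Kons:symjoin} the only vertices of $L$ that carry the annotation $\natural$ are exactly the join-vertices $\{2,\dotsc,2+r+q\}$, which are precisely the ones collapsed by smoothening. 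Moreover $\mathrm{smoothen}(L)$ is by construction a network of $\Omega$, containing no $\natural$-annotated vertices.

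For the ``if'' direction, assume $L = K \join^r_q H$ is a $\natural$-subdivision of $G$. Then there exists a $\natural$-homeomorphism from $L$ to $G$, and by the remark above there is also a $\natural$-homeomorphism from $L$ to $\mathrm{smoothen}(L)$. Since $G$ and $\mathrm{smoothen}(L)$ are both networks of $\Omega = \Omega' \setminus \{\natural\}$, Lemma~\ref{L:Dropping} applies with $L$ as the common subdivision and yields $G \simeq \mathrm{smoothen}(L)$. By the definition of $\join^r_q$ on $\Nwt(\Omega)$, this is exactly the statement $[G]_\simeq = [K]_\simeq \join^r_q [H]_\simeq$.

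For the ``only if'' direction, assume $[G]_\simeq = [K]_\simeq \join^r_q [H]_\simeq$, i.e., $G \simeq \mathrm{smoothen}(L)$. Pick an isomorphism $(\chi,\psi)\colon \mathrm{smoothen}(L) \Fpil G$; as noted in the text, $(\chi^{-1}, \psi)$ is then a homeomorphism, and in fact a $\natural$-homeomorphism (vacuously, since the source network contains no $\natural$-vertices). Composing the $\natural$-homeomorphism $(\mathrm{id}, S)\colon L \Fpil \mathrm{smoothen}(L)$ with $(\chi^{-1}, \psi)$ yields a $\natural$-homeomorphism $L \Fpil G$, so $L = K \join^r_q H$ is by definition a $\natural$-subdivision of $G$.

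The only technical point that needs a sentence of care is checking that the composition of the two $\natural$-homeomorphisms is still a $\natural$-homeomorphism, but this is immediate: composition of homeomorphisms is a homeomorphism, and the only pre-images of removed vertices in the composition are the removed vertices of the first stage, all annotated $\natural$. Thus I do not expect any serious obstacle; the proof is essentially two invocations, once of Lemma~\ref{L:Dropping} and once of the functoriality of homeomorphisms, with the smoothening map bridging $L$ and its equivalence-class representative in $\Nwt(\Omega)$.
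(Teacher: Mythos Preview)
Your proof is correct and follows essentially the same approach as the paper's: both directions hinge on the canonical $\natural$-homeomorphism $(\mathrm{id},S)$ from $L = K \join^r_q H$ to $\mathrm{smoothen}(L)$, with the ``only if'' direction composing this with the isomorphism to $G$, and the ``if'' direction invoking Lemma~\ref{L:Dropping} to conclude $G \simeq \mathrm{smoothen}(L)$ from the two $\natural$-homeomorphisms out of $L$. Your write-up is somewhat more explicit about why the composite remains a $\natural$-homeomorphism, but the argument is the same.
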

\begin{proof}
  There is always a $\natural$-homeomorphism 
  $(\mathrm{id},\mathrm{smoothen})$ from $K \join^r_q H$ to 
  \(G' := \mathrm{smoothen}(K \join^r_q\nobreak H)\). 
  If \([G]_\simeq = [K]_\simeq \join^r_q [H]_\simeq\), then there is 
  by definition of $\join$ an isomorphism from $G'$ to $G$, and this 
  can be composed with $(\mathrm{id},\mathrm{smoothen})$ to yield 
  the necessary homeomorphism from $K \join^r_q H$ to $G$. 
  If conversely there is a $\natural$-homeomorphism from 
  $K \join^r_q H$ to $G$, then \(G \simeq G'\) by 
  Lemma~\ref{L:Dropping}.
\end{proof}

The definition makes it clear how to compute the symmetric join of 
two explicitly given networks, but it is less clear how one might 
compute it for more abstract expressions. It turns out, however, that 
the abstract index notation makes a good match with the symmetric 
join.

\begin{theorem} \label{S:AIN-sammanbindning}
  Let $\Omega$ be an $\N^2$-graded set with \(\natural \notin 
  \Omega\). Let $\mc{R}$ be an associative and commutative unital 
  ring. Let $\{a_i\}_{i=1}^I,\{b_j\}_{j=1}^J \subset 
  \mc{R}\{\Omega\}$ be arbitrary. Let $\{\vek{k}_i\}_{i=0}^I$, 
  $\{\vek{l}_i\}_{i=0}^I$, $\{\vek{m}_j\}_{j=0}^J$, and 
  $\{\vek{n}_j\}_{j=0}^J$ be lists of labels. Let $\vek{q}$ and 
  $\vek{r}$ be lists such that \(\norm{\vek{r}} = 
  \bigl( \bigcup_{i=0}^I \norm{\vek{l}_i} \bigr) \setminus \bigl( 
  \bigcup_{i=0}^I \norm{\vek{k}_i} \bigr)\) and \(\norm{\vek{q}} = 
  \bigl( \bigcup_{j=0}^J \norm{\vek{n}_j} \bigr) \setminus \bigl( 
  \bigcup_{j=0}^J \norm{\vek{m}_j} \bigr) \). Then
  \begin{equation} \label{Eq:AIN-sammanbindning}
    \fuse[\bigg]{ \vek{k}_0 \vek{m}_0 }{
      \prod_{i=1}^I (a_i)^{\vek{l}_i}_{\vek{k}_i}
      \prod_{j=1}^J (b_j)^{\vek{n}_j}_{\vek{m}_j}
    }{ \vek{l}_0 \vek{n}_0 }
    =
    \fuse[\bigg]{ \vek{k}_0 \vek{r} }{
      \prod_{i=1}^I (a_i)^{\vek{l}_i}_{\vek{k}_i}
    }{ \vek{l}_0 \vek{q} }
    \join^{\Norm{\vek{r}}}_{\Norm{\vek{q}}}
    \fuse[\bigg]{ \vek{q} \vek{m}_0 }{
      \prod_{j=1}^J (b_j)^{\vek{n}_j}_{\vek{m}_j}
    }{ \vek{r} \vek{n}_0 }
  \end{equation}
  \Dash that is to say: if the label lists are such that the abstract 
  index expression in the left hand side is defined, then the right 
  hand side is defined too, and the two expressions are equal.
\end{theorem}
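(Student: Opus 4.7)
My plan is to reduce the identity to a statement purely about networks in $\Nwt(\Omega)$ and then verify it by exhibiting an explicit network isomorphism.

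The first step is to invoke Lemma~\ref{L:AIN-multilinear}, which states that an abstract index expression depends $\mc{R}$-multilinearly on its factors; combined with the bilinearity by which the symmetric join is extended to $\mc{R}\{\Omega\}$, this reduces the claim to the case where each $a_i$ and $b_j$ is a basis element of $\mc{R}\{\Omega\}$, i.e.\ an isomorphism class of networks. Pick network representatives $A_i \in a_i$ and $B_j \in b_j$.

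The second step is to identify both sides with concrete networks. For the LHS, the recursive Definition~\ref{D:fuse} together with Corollary~\ref{Kor:EnklaNatverk} and Theorem~\ref{S:PROP3-definition} expresses the fuse as the isomorphism class of a single network $G$ constructed by taking a disjoint union of the $A_i$'s and $B_j$'s, then identifying (via the flattening/smoothening of \eqref{Eq:FlattenSmoothen}) heads and tails of edges that share a common abstract index label. The existence of a partial order on labels required by the acyclicity condition on the LHS guarantees $G$ is acyclic. For the RHS, each inner fuse expression yields in the same way two networks $K$ and $H$: the network $K$ is built from the $A_i$'s with output leg list $\vek{k}_0 \vek{r}$ and input leg list $\vek{l}_0 \vek{q}$, and $H$ is built from the $B_j$'s with output list $\vek{q} \vek{m}_0$ and input list $\vek{r} \vek{n}_0$. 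By construction of $\join^{\Norm{\vek{r}}}_{\Norm{\vek{q}}}$ (Construction~\ref{Kons:symjoin}), the last $\Norm{\vek{r}}$ output legs of $K$ are joined to the last $\Norm{\vek{r}}$ input legs of $H$ via bridge vertices decorated by $\natural$, and symmetrically for $\vek{q}$. After smoothening, these bridge vertices disappear, and the resulting network should agree with $G$.

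The third step is to exhibit this isomorphism between $G$ and $\mathrm{smoothen}(K \join^{\Norm{\vek{r}}}_{\Norm{\vek{q}}} H)$. At the level of vertices, inner vertices of $K$ and $H$ correspond bijectively to the inner vertices of $G$ from the $A_i$ and $B_j$ sides respectively; at the level of edges, each edge of $G$ is classified as either internal to some $A_i$ (corresponding to an even edge of $K \join H$), internal to some $B_j$ (an odd edge), a cross-edge labelled by an element of $\vek{r}$ or $\vek{q}$ (a pair of edges in $K \join H$ meeting at a bridge vertex, which become a single edge after smoothening), or an edge connecting external legs directly (again two edges through a bridge vertex). The main obstacle is carrying out this bookkeeping carefully: one must track how the head and tail indices on external legs of $K$ and $H$ are assigned by Construction~\ref{Kons:symjoin} and verify that after smoothening the corresponding edge in $G$ has the same head, tail, head index, and tail index as predicted by the unified abstract index expression on the LHS. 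A secondary obstacle is the prerequisite that the symmetric join be well-defined, i.e.\ that the relevant $a_{22} b_{22}$ matrix be nilpotent: a non-nilpotent product would force a sequence of cross-edges alternating between $\vek{r}$ and $\vek{q}$ labels forming an oriented cycle, but the partial order witnessing acyclicity of the LHS strictly decreases along each such edge, contradiction.
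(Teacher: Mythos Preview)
Your proposal is correct and follows the same strategy as the paper: reduce to basis elements via multilinearity, then establish that the network representing the left-hand side agrees with the smoothened symmetric join of the networks representing the two factors on the right-hand side.

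The one difference worth noting is where the isomorphism is established. You work directly in $\Nw(\Omega)$: you construct $G$, $K$, $H$ by flattening and smoothening the $A_i$'s and $B_j$'s, and then describe a direct isomorphism $G \simeq \mathrm{smoothen}(K \join H)$, which forces you to track internal edges of the $A_i$ and $B_j$. The paper instead works one level up, in $\Nw\bigl(\Nw(\Omega)\bigr)$: it forms networks $G''$, $K''$, $H''$ in which each $a_i$ or $b_j$ is a \emph{single} decorated vertex, and exhibits the almost trivial $\natural$-homeomorphism from $K'' \join H''$ to $G''$ (just the relabelling built into Construction~\ref{Kons:symjoin}). The claim then follows because flattening preserves homeomorphisms and the resulting common $\natural$-subdivision certifies the equality in $\Nwt(\Omega)$. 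This buys a cleaner argument with essentially no edge-by-edge bookkeeping.

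One small gap in your write-up: you should explicitly check that the two factor fuse expressions on the right-hand side satisfy the matching condition of Definition~\ref{D:fuse}. The paper does this by computing that $\norm{\vek{r}}$, defined via the $\vek{k}_i,\vek{l}_i$, can equally be written as $\bigl(\bigcup_j \norm{\vek{m}_j}\bigr) \setminus \bigl(\bigcup_j \norm{\vek{n}_j}\bigr)$ (and symmetrically for $\vek{q}$), which is what is needed for the $H$-expression to be well-formed. Your nilpotency argument for the symmetric join being defined is fine.
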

\begin{proof}
  By Lemma~\ref{L:AIN-multilinear}, abstract index expressions are 
  multilinear, and the symmetric join is bilinear by definition. 
  Hence it is sufficient to verify \eqref{Eq:AIN-sammanbindning} for 
  \(\{a_i\}_{i=1}^I, \{b_j\}_{j=1}^J \subset \Nwt(\Omega)\). By 
  Theorem~\ref{S:fuse-relabel} any bijective relabelling of an 
  abstract index expression is equivalent to the original one, so it 
  can without loss of generality be assumed that all the labels are 
  natural numbers, so that
  \begin{align*}
    G ={}& \Nwfuse[\bigg]{ \vek{k}_0 \vek{m}_0 }{
      \prod_{i=1}^I (a_i)^{\vek{l}_i}_{\vek{k}_i}
      \prod_{j=1}^J (b_j)^{\vek{n}_j}_{\vek{m}_j}
    }{ \vek{l}_0 \vek{n}_0 }
    \text{,}\\
    K ={}& \Nwfuse[\bigg]{ \vek{k}_0 \vek{r} }{
      \prod_{i=1}^I (a_i)^{\vek{l}_i}_{\vek{k}_i}
    }{ \vek{l}_0 \vek{q} }
    \text{,}&
    H ={}& \Nwfuse[\bigg]{ \vek{r} \vek{m}_0 }{
      \prod_{j=1}^J (b_j)^{\vek{n}_j}_{\vek{m}_j}
    }{ \vek{q} \vek{n}_0 }
  \end{align*}
  are elements of $\Nw\bigl( \Nwt(\Omega) \bigr)$. That the 
  expressions for $K$ and $H$, as well as their counterparts in 
  \eqref{Eq:AIN-sammanbindning}, fulfill the arity and acyclicity 
  conditions is immediate from the fact that they are fulfilled for 
  $G$, but the matching condition depends on the way $\vek{q}$ and 
  $\vek{r}$ were chosen. 
  Since the expression for $G$ fulfills the matching condition, 
  \(\bigcup_{i=0}^I \norm{\vek{k}_i} \cup \bigcup_{j=0}^J 
  \norm{\vek{m}_j} = \bigcup_{i=0}^I \norm{\vek{l}_i} \cup 
  \bigcup_{j=0}^J \norm{\vek{n}_j}\), and these unions are disjoint. 
  Hence
  \begin{multline*}
    \norm{\vek{r}}
    = 
    \biggl( \bigcup_{i=0}^I \norm{\vek{l}_i} \biggr) \setminus 
      \biggl( \bigcup_{i=0}^I \norm{\vek{k}_i} \biggr)
    =
    \biggl( 
      \bigcup_{i=0}^I \norm{\vek{l}_i} \cup
      \bigcup_{j=0}^J \norm{\vek{n}_j}
    \biggr) \setminus \biggl( 
      \bigcup_{i=0}^I \norm{\vek{k}_i} \cup
      \bigcup_{j=0}^J \norm{\vek{n}_j}
    \biggr)
    = \\ =
    \biggl( 
      \bigcup_{i=0}^I \norm{\vek{k}_i} \cup 
      \bigcup_{j=0}^J \norm{\vek{m}_j}
    \biggr) \setminus \biggl( 
      \bigcup_{i=0}^I \norm{\vek{k}_i} \cup
      \bigcup_{j=0}^J \norm{\vek{n}_j}
    \biggr)
    =
    \biggl( 
      \bigcup_{j=0}^J \norm{\vek{m}_j}
    \biggr) \setminus \biggl( 
      \bigcup_{j=0}^J \norm{\vek{n}_j}
    \biggr)
  \end{multline*}
  meaning $\norm{\vek{r}}$ is the half of the symmetric difference 
  between $\bigcup_{j=0}^J \norm{\vek{m}_j}$ and $\bigcup_{j=0}^J 
  \norm{\vek{n}_j}$ which is not in $\vek{q}$, and thus the 
  expression for $H$ fulfills the matching condition. Similarly one 
  sees that $\vek{q}$ is the part of the symmetric difference between 
  $\bigcup_{i=0}^I \norm{\vek{k}_i}$ and $\bigcup_{i=0}^I 
  \norm{\vek{l}_i}$ that is not in $\vek{r}$.
  
  The left hand side of \eqref{Eq:AIN-sammanbindning} is by 
  \eqref{Eq1:Network-value} equal to $\eval(G)$, and likewise 
  $\eval(K)$ and $\eval(H)$ are equal to the two abstract index 
  expressions in the right hand side of 
  \eqref{Eq:AIN-sammanbindning}. Hence what needs to be shown is that  
  \(\eval(G) = \eval(K) \join^{\Norm{\vek{r}}}_{\Norm{\vek{q}}} 
  \eval(H)\). Since the symmetric join on $\Nwt(\Omega)$ is defined in 
  terms of the symmetric join on $\Nw(\Omega)$, a way to establish 
  that equality is to exhibit \(G' \in \eval(G)\), \(K' \in 
  \eval(K)\), and \(H' \in \eval(H)\) such that $G'$ and $K' 
  \join^{\Norm{\vek{r}}}_{\Norm{\vek{q}}} H'$ have a common
  $\natural$-subdivision.
  Pick some \(K_i \in a_i\) for \(i=1,\dotsc,I\) and \(H_j \in b_j\) 
  for \(j=1,\dotsc,J\). Let
  \begin{align*}
    G'' ={}& \Nwfuse[\bigg]{ \vek{k}_0 \vek{m}_0 }{
      \prod_{i=1}^I (K_i)^{\vek{l}_i}_{\vek{k}_i}
      \prod_{j=1}^J (H_j)^{\vek{n}_j}_{\vek{m}_j}
    }{ \vek{l}_0 \vek{n}_0 }
    \text{,}&
    K'' ={}& \Nwfuse[\bigg]{ \vek{k}_0 \vek{r} }{
      \prod_{i=1}^I (K_i)^{\vek{l}_i}_{\vek{k}_i}
    }{ \vek{l}_0 \vek{q} }
    \text{,}\\ &&
    H'' ={}& \Nwfuse[\bigg]{ \vek{q} \vek{m}_0 }{
      \prod_{j=1}^J (H_j)^{\vek{n}_j}_{\vek{m}_j}
    }{ \vek{r} \vek{n}_0 }
    \text{.}
  \end{align*}
  Clearly 
  \(G'', K'', H'' \in \Nw\bigl( \Nw(\Omega) \bigr)\). Keeping in mind 
  that $\eval$ on $\Nw\bigl( \Nwt(\Omega) \bigr)$ is also defined by 
  \eqref{Eq:FlattenSmoothen}, it follows that the smoothenings of the 
  flattenings of $G''$, $K''$, and $H''$ are elements of $\eval(G)$, 
  $\eval(K)$, and $\eval(H)$ respectively, whence these smoothenings 
  of flattenings are candidates for being the $G'$, $K'$, and $H'$ 
  described above.
  
  As for subdivisions, it is easy to see that defining
  \begin{align*}
    \beta(v) ={}& v && \text{if \(v \in \{0,1\}\),}\\
    \beta(v) ={}& \Norm{\vek{r}} + \Norm{\vek{q}} + 2v
      && \text{if \(v-1 \in [I]\),}\\
    \beta(v) ={}& \Norm{\vek{r}} + \Norm{\vek{q}} + 2(v-I) + 1
      && \text{if \(v-I-1 \in [J]\),} \displaybreak[0]\\
    \gamma(2e) ={}& e && \text{if \(e \in E_{K''}\),}\\
    \gamma(2e+1) ={}& e && \text{if \(e \in E_{H''}\)}
  \end{align*}
  makes $(\beta,\gamma)$ a $\natural$-homeomorphism from $K'' 
  \join^{\Norm{\vek{r}}}_{\Norm{\vek{q}}} H''$ to $G''$. Hence there 
  is also a ditto homeomorphism from the flattening of the former to 
  the flattening of the latter, and thus the flattening of $K'' 
  \join^{\Norm{\vek{r}}}_{\Norm{\vek{q}}} H''$ is the sought common 
  $\natural$-subdivision.
\end{proof}

Having the ability to put a generic abstract index expression back 
togeter from an arbitrary bipartition of it makes the symmetric join 
a very versatile operation; one recipe for expressing something as a 
symmetric join is to first express it as an abstract index expression 
and then collect the individual factors into separate operands of the 
symmetric join. For example, one can use this to show that \(a \circ b 
\circ c = \fuse{ \vek{k} }{ a^\vek{k}_\vek{l} b^\vek{l}_\vek{m} 
c^\vek{m}_\vek{n} }{ \vek{n} } = \fuse { \vek{km} }{ 
a^\vek{k}_\vek{l} c^\vek{m}_\vek{n} }{ \vek{nl} } 
\join^{\Norm{\vek{m}}}_{\Norm{\vek{l}}} \fuse{ \vek{l} }{ 
b^\vek{l}_\vek{m} }{ \vek{m} } = \bigl( a \otimes\nobreak 
c \circ\nobreak \phi(\cross{{\Norm{\vek{n}}}}{{\Norm{\vek{l}}}}) \bigr) 
\rtimes b\) for appropriate label lists $\vek{k}$, $\vek{l}$, 
$\vek{m}$, and $\vek{n}$.

\begin{corollary} \label{Kor:Sammanbindingsformler}
  Let $\Omega$ be an $\N^2$-graded set with \(\natural \notin 
  \Omega\). Let $\mc{R}$ be an associative and commutative unital 
  ring. The following hold for any \(k,l,m,n \in \N\):
  \begin{enumerate}
    \item
      \(a \otimes b = a \join^0_0 b\) for all \(a \in 
      \mc{R}\{\Omega\}(k,l)\) and \(b \in \mc{R}\{\Omega\}(m,n)\).
    \item
      \(a \join^0_m b = a \circ b = b \join^m_0 a\) for all 
      \(a \in \mc{R}\{\Omega\}(l,m)\) and \(b \in 
      \mc{R}\{\Omega\}(m,n)\).
    \item \label{Item3:Sammanbindingsformler}
      \(a \join^m_n \phi(\cross{m}{n}) = a = \phi(\cross{l}{k}) 
      \join^l_k a\) for all \(a \in \mc{R}\{\Omega\}(k +\nobreak m, l 
      +\nobreak n)\).
  \end{enumerate}
\end{corollary}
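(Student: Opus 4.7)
The plan is to obtain all three parts as applications of Theorem~\ref{S:AIN-sammanbindning}: write each claimed left-hand side as an abstract index expression, pick a partition of the outer label lists into $\vek{k}_0, \vek{m}_0, \vek{l}_0, \vek{n}_0$ that makes the $\vek{r}, \vek{q}$ emitted by the theorem have precisely the lengths stipulated by the $\join^r_q$ on the right, and read off the resulting factorisation.

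For part~1, write $a \otimes b = \fuse{\vek{k}\vek{m}}{a^\vek{k}_\vek{l} b^\vek{m}_\vek{n}}{\vek{l}\vek{n}}$ using four pairwise disjoint label lists of lengths $k, l, m, n$, take $I = J = 1$ with $a_1 = a, b_1 = b$, and set $\vek{k}_0 = \vek{k}, \vek{m}_0 = \vek{m}, \vek{l}_0 = \vek{l}, \vek{n}_0 = \vek{n}$; then both $\vek{r}$ and $\vek{q}$ come out empty, and after recognising each side-factor via Theorem~\ref{S:fuse-just} the theorem delivers $a \otimes b = a \join^0_0 b$. For part~2, start from $a \circ b = \fuse{\vek{l}}{a^\vek{l}_\vek{m} b^\vek{m}_\vek{n}}{\vek{n}}$ and place all outputs on the $a$-side ($\vek{k}_0 = \vek{l}, \vek{m}_0 = \emptyset$) and all inputs on the $b$-side ($\vek{l}_0 = \emptyset, \vek{n}_0 = \vek{n}$); this yields $\norm{\vek{r}} = \emptyset$ and $\norm{\vek{q}} = \vek{m}$, so the theorem reads $a \circ b = a \join^0_m b$. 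The companion identity $a \circ b = b \join^m_0 a$ comes from the same expression with $a$ and $b$ interchanged as $a_1$ and $b_1$, using Theorem~\ref{S:fuse-factor-reorder} to permit reordering the two factors inside the outer fuse.

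Part~3 needs the mild twist of applying Theorem~\ref{S:AIN-sammanbindning} with $J = 0$ (no $b$-factor at all). Writing $a = \fuse{\vek{k}\vek{x}}{a^{\vek{k}\vek{x}}_{\vek{l}\vek{y}}}{\vek{l}\vek{y}}$ with disjoint $\vek{k}, \vek{x}, \vek{l}, \vek{y}$ of lengths $k, m, l, n$ and choosing $\vek{k}_0 = \vek{k}, \vek{m}_0 = \vek{x}, \vek{l}_0 = \vek{l}, \vek{n}_0 = \vek{y}$ forces $\norm{\vek{r}} = \vek{x}$ and $\norm{\vek{q}} = \vek{y}$. The $a$-side of the theorem reproduces $a$, and the $b$-side collapses to the empty-product expression $\fuse{\vek{y}\vek{x}}{1}{\vek{x}\vek{y}}$; rule~\ref{Item1:D:fuse} of Definition~\ref{D:fuse} identifies this as $\phi(\sigma)$ for the $\sigma$ that re-lists $\vek{y}\vek{x}$ as $\vek{x}\vek{y}$, and a direct comparison against Definition~\ref{Def:Permutationer} gives $\sigma = \cross{m}{n}$. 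Hence $a = a \join^m_n \phi(\cross{m}{n})$. The mirror identity $a = \phi(\cross{l}{k}) \join^l_k a$ is proved symmetrically with $I = 0, J = 1, b_1 = a$, where now it is the $a$-side factor $\fuse{\vek{k}\vek{l}}{1}{\vek{l}\vek{k}}$ that becomes $\phi(\cross{l}{k})$.

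The only nonroutine check is the permutation identification in part~3, together with the observation that Theorem~\ref{S:AIN-sammanbindning} remains valid in the corner cases $I = 0$ and $J = 0$ (the matching condition and the construction in its proof both make sense when one factor-product is empty); both are brief verifications.
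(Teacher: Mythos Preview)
Your proof is correct and follows essentially the same route as the paper: each identity is obtained by applying Theorem~\ref{S:AIN-sammanbindning} to an abstract index expression for the left-hand side, with the partition of the outer label lists chosen so that $\vek{r}$ and $\vek{q}$ acquire the right lengths, and then identifying the resulting factors via Theorem~\ref{S:fuse-just} (the paper additionally cites Theorem~\ref{S:AIN,klyva} for the initial rewriting of $a\otimes b$, which you use implicitly). Your explicit attention to the $I=0$/$J=0$ corner case and to the permutation identification $\fuse{\vek{y}\vek{x}}{1}{\vek{x}\vek{y}}=\phi(\cross{m}{n})$ goes slightly beyond what the paper spells out, but the underlying argument is the same.
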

\begin{proof}
  Letting \(\vek{k} = \vek{N}_k(0,4)\), \(\vek{l} = \vek{N}_l(1,4)\), 
  \(\vek{m} = \vek{N}_m(2,4)\), and \(\vek{n} = \vek{N}_n(3,4)\), 
  all these identities follow from Theorem~\ref{S:AIN-sammanbindning} 
  for suitable choices of $\vek{q}$ and $\vek{r}$. Taking both empty 
  and employing Theorem~\ref{S:AIN,klyva}, one gets
  \begin{equation*}
    a \otimes b =
    \fuse{ \vek{k} }{ a^\vek{k}_\vek{l} }{ \vek{l} }
    \otimes 
    \fuse{ \vek{m} }{ b^\vek{m}_\vek{n} }{ \vek{n} }
    =
    \fuse{ \vek{km} }{ 
      a^\vek{k}_\vek{l} b^\vek{m}_\vek{n} 
    }{ \vek{ln} }
    =
    \fuse{ \vek{k} }{ a^\vek{k}_\vek{l} }{ \vek{l} }
    \join^0_0
    \fuse{ \vek{m} }{ b^\vek{m}_\vek{n} }{ \vek{n} }
    =
    a \join^0_0 b
    \text{.}
  \end{equation*}
  Taking \(\vek{q} = \vek{m}\) and $\vek{r}$ empty gives
  \begin{equation*}
    a \circ b
    =
    \fuse{ \vek{l} }{
      a^\vek{l}_\vek{m} b^\vek{m}_\vek{n}
    }{ \vek{n} }
    =
    \fuse{ \vek{l} }{ a^\vek{l}_\vek{m} }{ \vek{m} }
    \join^0_m
    \fuse{ \vek{m} }{ b^\vek{m}_\vek{n} }{ \vek{n} }
    =
    a \join^0_m b \text{,}
  \end{equation*}
  whereas taking \(\vek{r} = \vek{m}\) and $\vek{q}$ empty gives
  \begin{equation*}
    a \circ b
    =
    \fuse{ \vek{l} }{
      a^\vek{l}_\vek{m} b^\vek{m}_\vek{n}
    }{ \vek{n} }
    =
    \fuse{ \vek{m} }{ b^\vek{m}_\vek{n} }{ \vek{n} }
    \join^m_0
    \fuse{ \vek{l} }{ a^\vek{l}_\vek{m} }{ \vek{m} }
    =
    b \join^m_0 a \text{.}
  \end{equation*}
  Taking \(\vek{q} = \vek{n}\) and \(\vek{r} = \vek{m}\) gives
  \begin{equation*}
    a = 
    \fuse{ \vek{km} }{ a^\vek{km}_\vek{ln} }{ \vek{ln} }
    =
    \fuse{ \vek{km} }{ a^\vek{km}_\vek{ln} }{ \vek{ln} }
    \join^m_n
    \fuse{ \vek{nm} }{ 1 }{ \vek{mn} }
    =
    a \join^m_n \phi(\cross{m}{n})
  \end{equation*}
  whereas taking \(\vek{q} = \vek{k}\) and \(\vek{r} = \vek{l}\) gives
  \begin{equation*}
    a = 
    \fuse{ \vek{km} }{ a^\vek{km}_\vek{ln} }{ \vek{ln} }
    =
    \fuse{ \vek{kl} }{ 1 }{ \vek{lk} }
    \join^l_k
    \fuse{ \vek{km} }{ a^\vek{km}_\vek{ln} }{ \vek{ln} }
    =
    \phi(\cross{l}{k}) \join^l_k a \text{.}
  \end{equation*}
\end{proof}

The symmetric join formulae for $\otimes$ and $\circ$ above suggest 
that an alternative approach to introducing the \PROP\ structure on 
$\Nwt(\Omega)$ would be to start with the symmetric join and then use 
these formulae to define the $\otimes$ and $\circ$ operations. The 
kind of associativity that Lemma~\ref{L:join-associativitet} 
establishes for the symmetric join gives both
\[
  (a \circ b) \circ c =
  (a \join^0_l b) \join^0_m c =
  a \join^0_l (b \join^0_m c) =
  a \circ (b \circ c)
\]
and
\[
  (a \otimes b) \otimes c =
  (a \join^0_0 b) \join^0_0 c =
  a \join^0_0 (b \join^0_0 c) =
  a \otimes (b \otimes c)
  \text{.}
\]
Similarly Lemma~\ref{L:Symjoin-transposition} contains the bulk of 
the tensor permutation axiom for \PROPs\ and 
item~\ref{Item3:Sammanbindingsformler} of the corollary has the tensor 
and permutation identity axioms as special cases.

A disadvantage of the symmetric join as defined so far, which would 
be more pronounced if seeking to use it as the primitive operation in 
the free \PROP, is that it is a \emph{partial} operation not 
necessarily defined for every pair of operands. It is however possible 
to distinguish fairly large subsets of $\Nwt(\Omega)$ on which one can 
predict that a certain 
symmetric join is always defined; knowing this should help reduce any 
worries one might have about strange corner cases in the extension to 
$\mc{R}\{\Omega\}$ of the symmetric join. As it turns out, the 
subsets where $\join$ is predictably defined constitute a filtration.

\begin{lemma} \label{L:Pullback-filtrering}
  Let $\mc{P}$ and $\mc{Q}$ be \PROPs, let \(f\colon \mc{P} \Fpil 
  \mc{Q}\) be a \PROP\ homomorphism, and let $Q$ be a \PROP\ 
  quasi-order on $\mc{Q}$. Then the family $\{F_q\}_{q \in \mc{Q}}$Ê
  of subsets of $\mc{P}$ defined by
  \begin{equation}
    F_q = \setOf[\big]{ a \in \mc{P} }{ f(a) \leqslant q \pin{Q} }
    \quad\text{for all \(q \in \mc{Q}\)}
  \end{equation}
  if a $(\mc{Q},Q)$-filtration of $\mc{P}$.
\end{lemma}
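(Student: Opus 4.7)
The approach is a straightforward verification of the five conditions in Definition~\ref{Def:PROP-filtrering}, plus the ``of $\mc{P}$'' condition. All six properties follow directly from (a) the fact that $f$ preserves $\alpha$, $\omega$, $\circ$, $\otimes$, and $\phi$, (b) reflexivity and transitivity of the quasi-order $Q$, and (c) the compatibility of $Q$ with the \PROP\ operations of $\mc{Q}$. I do not expect any genuine obstacle; the only mildly non-trivial point is making sure the tensor and composition conditions are obtained from the one-sided \PROP\ quasi-order axioms by applying them twice with appropriate identity elements.

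First I would handle the trivial items. For item~1, if \(a \in F_q\) with \(q \in \mc{Q}(m,n)\), then \(f(a) \leqslant q \pin{Q}\), so $\N^2$-gradedness of $Q$ gives \(\alpha\bigl(f(a)\bigr) = n\) and \(\omega\bigl(f(a)\bigr) = m\); since $f$ is an $\N^2$-graded set morphism this forces \(a \in \mc{P}(m,n)\). Item~2 is immediate from transitivity: \(q \leqslant r \pin{Q}\) and \(f(a) \leqslant q \pin{Q}\) together give \(f(a) \leqslant r \pin{Q}\). Item~5 uses \(f \circ \phi_\mc{P} = \phi_\mc{Q}\) to conclude \(f\bigl(\phi_\mc{P}(\sigma)\bigr) = \phi_\mc{Q}(\sigma) \leqslant \phi_\mc{Q}(\sigma) \pin{Q}\) by reflexivity. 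The ``of $\mc{P}$'' condition is equally cheap: every \(a \in \mc{P}(m,n)\) trivially satisfies \(a \in F_{f(a)}\), so \(\mc{P}(m,n) \subseteq \bigcup_{q \in \mc{Q}(m,n)} F_q\), and the reverse inclusion follows from item~1.

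For item~3, given \(a \in F_q\) and \(b \in F_r\), I would apply the tensor compatibility axiom \eqref{Eq:PROP-order,comp} for $Q$ twice: first with left factor \(\phi_\mc{Q}(\same{0})\) and right factor $f(b)$ on the inequality \(f(a) \leqslant q\) to get \(f(a) \otimes f(b) \leqslant q \otimes f(b) \pin{Q}\), then with left factor $q$ and right factor \(\phi_\mc{Q}(\same{0})\) on \(f(b) \leqslant r\) to get \(q \otimes f(b) \leqslant q \otimes r \pin{Q}\); transitivity then yields \(f(a \otimes b) = f(a) \otimes f(b) \leqslant q \otimes r \pin{Q}\), whence \(a \otimes b \in F_{q \otimes r}\). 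The argument for item~4 is exactly analogous, using composition compatibility of $Q$ together with appropriate identities \(\phi_\mc{Q}(\same{n})\) to upgrade \(f(a) \leqslant q\) and \(f(b) \leqslant r\) to \(f(a) \circ f(b) \leqslant q \circ r \pin{Q}\), and then invoking \(f(a \circ b) = f(a) \circ f(b)\).
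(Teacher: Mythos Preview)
Your proposal is correct and follows essentially the same approach as the paper: a direct verification of the five filtration axioms plus the ``of $\mc{P}$'' condition, each reduced to the corresponding property of $f$ and $Q$. The only difference is that you spell out the two-step argument (with identity padding and transitivity) needed to obtain \(f(a) \otimes f(b) \leqslant q \otimes r\) from the one-variable compatibility axiom, whereas the paper simply writes \(f(a) \otimes f(b) \leqslant q \otimes r\) in one line; your version is more explicit but not substantively different. One small slip: the label \eqref{Eq:PROP-order,comp} refers to the \emph{composition} compatibility axiom, not the tensor one, so cite the correct equation when you write it up.
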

\begin{proof}
  What must be done is to verify the conditions from 
  Definition~\ref{Def:PROP-filtrering}. That \(F_q \subseteq 
  \mc{P}\bigl( \omega(q), \alpha(q) \bigr)\) is because \(f(a) 
  \leqslant q \pin{Q}\) implies \(\omega(a) = \omega\bigl( f(a) 
  \bigr) = \omega(q)\) and \(\alpha(a) = \alpha\bigl( f(a) 
  \bigr) = \alpha(q)\) by definition of $\N^2$-graded set morphism 
  $f$ and quasi-order $Q$. That \(F_q \subseteq F_r\) if \(q 
  \leqslant r \pin{Q}\) follows from the transitivity of $Q$ via
  \[
    a \in F_q  \Epil  f(a) \leqslant q \pin{Q}
    \Ipil
    f(a) \leqslant r \pin{Q}   \Epil  a \in F_r
    \text{.}
  \]
  For \(a \in F_q\) and \(b \in F_r\) one finds that
  \begin{align*}
    f(a \otimes b) ={}& f(a) \otimes f(b) \leqslant q \otimes r 
    \text{,}\\
    f(a \circ b) ={}& f(a) \circ f(b) \leqslant q \circ r
    \text{.}
  \end{align*}
  For any permutation $\sigma$, \(f\bigl( \phi_\mc{P}(\sigma) 
  \bigr) = \phi_\mc{Q}(\sigma) \leqslant \phi_\mc{Q}(\sigma) 
  \pin{Q}\) and thus \(\phi_\mc{P}(\sigma) \in 
  F_{\phi_\mc{Q}(\sigma)}\). Finally it is a filtration \emph{of} 
  $\mc{P}$ since any \(a \in \mc{P}\) belongs to $F_{f(a)}$.
\end{proof}

\begin{definition} \label{Def:Y&M}
  Let an $\N^2$-graded set $\Omega$ and an associative and 
  commutative ring $\mc{R}$ with unit be given. For every \(A \in 
  \B^{\bullet\times\bullet}\), let
  \begin{align}
    \mc{Y}_\Omega(A) :={}& \setOf[\Big]{ 
      a \in \Nwt(\Omega)\bigl( \omega(A), \alpha(A) \bigr)
    }{ \Tr(a) \leqslant A }
      \text{,}\\
    \mc{M}_\Omega(A) :={}& \Span\bigl( \mc{Y}_\Omega(A) \bigr)
      \subseteq \mc{R}\{\Omega\}\bigl( \omega(A), \alpha(A) \bigr)
    \text{.}
  \end{align}
  In contexts where the signature $\Omega$ is fixed, the subscript may 
  be omitted from $\mc{Y}_\Omega$ and $\mc{M}_\Omega$.
\end{definition}

\begin{corollary}
  $\bigl\{ \mc{Y}_\Omega(A) \bigr\}_{A \in \B^{\bullet\times\bullet}}$ 
  is a filtration of $\Nwt(\Omega)$. 
  $\bigl\{ \mc{M}_\Omega(A) \bigr\}_{A \in \B^{\bullet\times\bullet}}$ 
  is an $\mc{R}$-linear filtration of $\mc{R}\{\Omega\}$. 
\end{corollary}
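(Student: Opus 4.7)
The plan is to derive both statements from Lemma~\ref{L:Pullback-filtrering}, which constructs filtrations as pullbacks of \PROP\ quasi-orders along \PROP\ homomorphisms. The key preliminary observation is that by Lemma~\ref{L:TrHomomorfi}, the transference map $\Tr$ on networks of $\Omega$ coincides with $\eval_J$ for the $\N^2$-graded set morphism $J\colon \Omega \Fpil \B^{\bullet\times\bullet}$ sending each generator to the all-ones matrix of the appropriate shape. By Theorem~\ref{S:network-eval} this $\eval_J$ descends to a well-defined map on $\Nwt(\Omega)$, and by Lemma~\ref{L:evalHomomorfi} (applied with $\mc{P} = \B^{\bullet\times\bullet}$ and $f = J$) that descended map is a \PROP\ homomorphism $\Tr\colon \Nwt(\Omega) \Fpil \B^{\bullet\times\bullet}$.

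Next I would invoke Construction~\ref{Konstr:Matrisordning} with $\mc{R} = \B$ under its standard partial order $0 < 1$. Because every element of $\B$ is semipositive, $(\B_{\geqslant 0})^{\bullet\times\bullet}$ coincides with all of $\B^{\bullet\times\bullet}$, and the standard componentwise order is therefore a \PROP\ partial order $Q$ on the entire matrix \PROP. Pulling $Q$ back along the homomorphism $\Tr$ via Lemma~\ref{L:Pullback-filtrering} yields a $(\B^{\bullet\times\bullet}, Q)$-filtration of $\Nwt(\Omega)$ whose components are precisely the sets $\mc{Y}_\Omega(A) = \{ a : \Tr(a) \leqslant A\}$. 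That this is a filtration \emph{of} $\Nwt(\Omega)$ (rather than merely in it) is immediate: for any \(a \in \Nwt(\Omega)(m,n)\), the all-ones matrix $J_{m \times n}$ dominates $\Tr(a)$, so \(a \in \mc{Y}_\Omega(J_{m \times n})\).

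For the $\mc{R}$-linear statement, the idea is to push the preceding filtration through Construction~\ref{Kons:R-linjar}'s $\mc{R}$-linear extension. By definition each $\mc{M}_\Omega(A)$ is an $\mc{R}$-submodule of $\mc{R}\{\Omega\}\bigl(\omega(A),\alpha(A)\bigr)$. Monotonicity \(\mc{M}_\Omega(A) \subseteq \mc{M}_\Omega(B)\) whenever \(A \leqslant B\) follows from the corresponding inclusion of spanning sets $\mc{Y}_\Omega(A) \subseteq \mc{Y}_\Omega(B)$. For the $\otimes$- and $\circ$-closure conditions, one checks them on basis elements \(a \in \mc{Y}_\Omega(A)\), \(b \in \mc{Y}_\Omega(B)\)\Dash the homomorphism property of $\Tr$ gives \(\Tr(a \otimes\nobreak b) = \Tr(a) \otimes \Tr(b) \leqslant A \otimes B\) and \(\Tr(a \circ\nobreak b) = \Tr(a) \circ \Tr(b) \leqslant A \circ B\), hence \(a \otimes b \in \mc{Y}_\Omega(A \otimes B)\) and \(a \circ b \in \mc{Y}_\Omega(A \circ B)\)\Dash and then the $\mc{R}$-bilinearity of $\otimes$ and $\circ$ on $\mc{R}\{\Omega\}$ promotes these pointwise inclusions to \(\mc{M}_\Omega(A) \otimes \mc{M}_\Omega(B) \subseteq \mc{M}_\Omega(A \otimes B)\) and similarly for composition. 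Permutation images lie in the required components because \(\phi_{\Nwt(\Omega)}(\sigma) \in \mc{Y}_\Omega\bigl(\phi_{\B^{\bullet\times\bullet}}(\sigma)\bigr)\). Finally every \(a \in \mc{R}\{\Omega\}(m,n)\) is a finite $\mc{R}$-linear combination of elements of $\Nwt(\Omega)(m,n) \subseteq \mc{Y}_\Omega(J_{m \times n})$ and so lies in $\mc{M}_\Omega(J_{m \times n})$.

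There is essentially no real obstacle here\Dash the work has all been done in the supporting lemmas, and the corollary is mainly a matter of assembling them. The one point that warrants a moment of attention is making sure the pullback construction applies to $\Tr$ as defined on $\Nwt(\Omega)$ rather than just on $\Nw(\Omega)$; this is handled by Theorem~\ref{S:network-eval}'s guarantee that $\eval_J$ respects isomorphism classes, so that $\Tr$ is well-defined at the level of the free \PROP.
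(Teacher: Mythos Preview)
Your proposal is correct and follows essentially the same approach as the paper: invoke Lemma~\ref{L:Pullback-filtrering} with the transference $\Tr$ as the homomorphism (justified via Lemma~\ref{L:TrHomomorfi}) for the $\mc{Y}$ part, then extend by bilinearity of $\circ$ and $\otimes$ for the $\mc{M}$ part. You are in fact more careful than the paper in explicitly citing Lemma~\ref{L:evalHomomorfi} to justify that $\Tr$ is a \PROP\ homomorphism on $\Nwt(\Omega)$, and in invoking Construction~\ref{Konstr:Matrisordning} for the \PROP\ quasi-order on $\B^{\bullet\times\bullet}$; the paper leaves both implicit.
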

\begin{proof}
  For $\mc{Y}$, this follows immediately from 
  Lemma~\ref{L:Pullback-filtrering} by taking the transference 
  \(\Tr\colon \Nwt(\Omega) \Fpil \B^{\bullet\times\bullet}\) as the 
  homomorphism; that $\Tr$ is a homomorphism was the subject of 
  Lemma~\ref{L:TrHomomorfi}. For $\mc{M}$, this follows from the 
  linearity of $\circ$ and $\otimes$ on $\mc{R}\{\Omega\}$.
\end{proof}


\begin{remark}
  If \(k,l,m,n,r,q \in \N\), \(A_{11} \in \B^{k \times l}\), 
  \(A_{12} \in \B^{k \times q}\), 
  \(A_{21} \in \B^{r \times l}\), \(A_{22} \in \B^{r \times q}\), 
  \(B_{22} \in \B^{q \times r}\), \(B_{23} \in \B^{q \times n}\), 
  \(B_{32} \in \B^{m \times r}\), and \(B_{33} \in \B^{m \times n}\) 
  are such that $A_{22}B_{22}$ is nilpotent, then the symmetric join 
  gives rise to maps
  \begin{align*}
    \join^r_q \colon \mc{Y}_\Omega\left(
      \begin{bmatrix} A_{11} & A_{12} \\ A_{21} & A_{22} \end{bmatrix}
    \right) \times \mc{Y}_\Omega\left(
      \begin{bmatrix} B_{22} & B_{23} \\ B_{32} & B_{33} \end{bmatrix}
    \right) \Fpil {}& \mc{Y}_\Omega\left( 
      \begin{bmatrix} C_{11} & C_{13} \\ C_{31} & C_{33} \end{bmatrix}
    \right)\\
    \join^r_q \colon \mc{M}_\Omega\left(
      \begin{bmatrix} A_{11} & A_{12} \\ A_{21} & A_{22} \end{bmatrix}
    \right) \times \mc{M}_\Omega\left(
      \begin{bmatrix} B_{22} & B_{23} \\ B_{32} & B_{33} \end{bmatrix}
    \right) \Fpil {}& \mc{M}_\Omega\left( 
      \begin{bmatrix} C_{11} & C_{13} \\ C_{31} & C_{33} \end{bmatrix}
    \right)
  \end{align*}
  where
  \begin{align*}
    C_{11} :={}&
      A_{11} + A_{12} B_{22} (A_{22} B_{22})^* A_{21} \text{,}&
    C_{13} :={}&
      A_{12} (B_{22} A_{22})^* B_{23} \text{,}\\
    C_{31} :={}&
      B_{32} (A_{22} B_{22})^* A_{21} \text{,}&
    C_{33} :={}&
      B_{33} + B_{32} (A_{22} B_{22})^* A_{22} B_{23} \text{.}
  \end{align*}
  A variation on this observation, where one keeps the left factor 
  fixed, is the core of Definition~\ref{D:KategorinV}.
\end{remark}


The symmetric join moreover has some factorisation properties that 
are almost reminiscent of what one has in a commutative polynomial 
algebra, but there are also some easily believed hypotheses about 
$\join$-factorisation that turn out to not be true. One of these is 
that \(a_1 \join^r_q b = a_2 \join^r_q b\) should imply \(a_1=a_2\), 
but for any \(c \in \Nwt(\Omega)(1,0)\) it turns out that 
\(\phi(\same{2}) \join^0_2 (c \otimes\nobreak c) = \phi(\same{2}) 
\circ (c \otimes\nobreak c) = c \otimes c = \phi(\cross{1}{1}) 
\circ (c \otimes\nobreak c) = \phi(\cross{1}{1}) \join^0_2 (c 
\otimes\nobreak c)\). Another example is that \(\fuse{ikj}{1}{ijk} 
\join^1_1 \phi(\same{1}) = \phi(\same{2}) = \fuse{kji}{1}{ijk} 
\join^1_1 \phi(\same{1})\). On the other hand, if one can nail down 
the join-vertices and vertex parity then isomorphism of symmetric 
joins implies isomorphism of respective factors by the second part of 
Lemma~\ref{L:Join-homeomorfi}

\begin{definition}
  Let a homeomorphism $(\beta,\gamma)$ from a network $L$ to a 
  network $G$ be given. For any \(e \in E_L\) such that \(h_L(e) \in 
  V_L \setminus \setim\beta\), one says that \(\gamma(e) \in E_G\) is 
  \DefOrd[*{edge subdivided}]{the edge subdivided by} the vertex 
  $h_L(e)$. The \DefOrd{subdivision index} of the edge \(e \in E_L\) 
  is the index $k$ such that \(e=e_k\) if \(\{e_1,\dotsc,e_n\} = 
  \setinv{\gamma}\bigl( \bigl\{ \gamma(e) \bigr\} \bigr)\) are 
  indexed such that \(t_L(e_1) \in \setim\beta\), 
  \(h_L(e_i)=t_L(e_{i+1})\) for \(i \in [n -\nobreak 1]\), and 
  \(t_L(e_n) \in \setim\beta\). The \DefOrd[*{subdivision index}]
  {subdivision index of the vertex} \(h_L(e) \in V_L \setminus 
  \setim\beta\) is the subdivision index of $e$. All of these 
  concepts are with respect to the homeomorphism $(\beta,\gamma)$.
\end{definition}

That the subdivision index is well-defined is a consequence of 
Lemma~\ref{L:UnderdeladKant}; defining $\zeta(v)$ to be the edge 
subdivided by $v$ and $\theta(v)$ to be the subdivision index, one 
even gets that \((\zeta,\theta)\colon V_L \setminus \setim\beta \Fpil 
E_G \times \Zp\) is an injective map. One point of the next lemma is 
that these data determine the subdivision up to isomorphism, but also 
that between subdivisions made using a subset of the vertices there 
are homeomorphisms in the expected manner.

\begin{lemma} \label{L:UnderdelningNatverk}
  Let $\Omega$ be an $\N^2$-graded set and \(y \in \Omega(1,1)\). 
  Let \(G \in \Nw(\Omega)\) and \(X \subseteq \N \setminus V_G\) be 
  given, together with functions \(\zeta\colon X \Fpil E_G\) and 
  \(\theta\colon X \Fpil \Zp\) such that \((\zeta,\theta)\colon 
  X \Fpil E_G \times \Zp\) is injective. Then for every 
  \(A \subseteq X\) there exists a $y$-subdivision $G(A)$ of $G$ with 
  \(V_{G(A)} = V_G \cup A\) and a homeomorphism from $G(A)$ to $G$ of 
  the form $(\mathrm{id},\gamma_A)$ such that
  \begin{itemize}
    \item 
      If \(h_{G(A)}(e) = u\) or \(t_{G(A)}(e) = u\) for some \(u \in 
      A\) and \(e \in E_{G(A)}\) then \(\gamma_A(e) = \zeta(u)\).
    \item
      If \(e \in E_{G(A)}\) is such that \(h_{G(A)}(e), t_{G(A)}(e) 
      \in A\) then \(\theta\bigl( h_{G(A)}(e) \bigr) > 
      \theta\bigl( t_{G(A)}(e) \bigr)\).
  \end{itemize}
  Moreover there exist \(\Psi_{A,B}\colon E_{G(B)} \Fpil E_{G(A)}\) 
  for all \(A \subseteq B \subseteq X\) such that $(\mathrm{id}, 
  \Psi_{A,B})$ is a homeomorphism from $G(B)$ to $G(A)$ and 
  \(\gamma_B = \gamma_A \circ \Psi_{A,B}\).
\end{lemma}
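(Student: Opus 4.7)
The plan is to construct $G(A)$ explicitly by splitting each $e \in E_G$ into a chain whose intermediate vertices are the elements of $A_e := \zeta^{-1}(\{e\}) \cap A$, arranged in increasing order of $\theta$. Injectivity of $(\zeta,\theta)$ makes $\theta$ injective on each $A_e$, yielding a unique enumeration $u_{e,1}, \ldots, u_{e,n_e}$ with $\theta(u_{e,1}) < \cdots < \theta(u_{e,n_e})$, where $n_e := \card{A_e}$. Fixing an injection $\langle -, - \rangle \colon E_G \times \N \to \N$, I let $E_{G(A)}$ consist of $\langle e, k \rangle$ for $e \in E_G$ and $0 \leqslant k \leqslant n_e$, with $\langle e,k\rangle$ going from $u_{e,k}$ (or $t_G(e)$ if $k=0$) to $u_{e,k+1}$ (or $h_G(e)$ if $k=n_e$); head and tail indices are $1$ at new vertices and match those of $e$ at original endpoints. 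Set $V_{G(A)} = V_G \cup A$, extend $D_G$ by $D_{G(A)}(u) = y$ for $u \in A$, and define $\gamma_A(\langle e,k\rangle) = e$.

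Verifying the network axioms and the homeomorphism conditions for $(\mathrm{id}, \gamma_A) \colon G(A) \Fpil G$ is mechanical. Head and tail index uniqueness and completeness at $v \in V_G$ are inherited from $G$, while at $u \in A$ they are trivial because $u$ has exactly one incoming and one outgoing edge, matching $D_{G(A)}(u) = y$ of arity and coarity~$1$. Acyclicity holds because any directed cycle in $G(A)$ either projects via $\gamma_A$ to a directed cycle in $G$, contradicting the acyclicity of $G$, or lies entirely within a single chain $\gamma_A^{-1}(e)$, impossible since each such chain is a directed path. The five conditions for $(\mathrm{id},\gamma_A)$ to be a $y$-homeomorphism then follow by construction, and the two bulleted conditions in the statement are immediate: any edge incident with $u \in A$ is one of $\langle \zeta(u), k{-}1\rangle$ or $\langle \zeta(u), k\rangle$ (where $u = u_{\zeta(u),k}$), both of which $\gamma_A$ sends to $\zeta(u)$; and for $\langle e, k\rangle$ with $1 \leqslant k \leqslant n_e - 1$ one has $\theta(u_{e,k+1}) > \theta(u_{e,k})$ by the chosen enumeration.

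For $A \subseteq B \subseteq X$, each $v \in B \setminus A$ sits, in the $\theta$-enumeration of $B_{\zeta(v)}$, between two consecutive elements of $A_{\zeta(v)} \cup \{t_G(\zeta(v)), h_G(\zeta(v))\}$, and these cut out a unique edge $\zeta'(v) \in E_{G(A)}$. Setting $\theta'(v) := \theta(v)$, the pair $(\zeta',\theta')$ is injective on $B \setminus A$, and applying the construction above to $(G(A), B \setminus A, \zeta', \theta')$ produces a $y$-subdivision $G'$ of $G(A)$ with vertex set $V_G \cup B$ and homeomorphism $(\mathrm{id}, \delta) \colon G' \Fpil G(A)$. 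Composing with $\gamma_A$ exhibits $G'$ as a $y$-subdivision of $G$ that subdivides each $e \in E_G$ exactly $\card{B_e}$ times, the same count as $G(B)$. Corollary~\ref{Kor:Homeomorfilyft} now supplies a unique isomorphism $(\mathrm{id}, \psi) \colon G(B) \Fpil G'$, and $\Psi_{A,B} := \delta \circ \psi$ is the required edge map; the identity $\gamma_B = \gamma_A \circ \Psi_{A,B}$ and the homeomorphism property of $(\mathrm{id}, \Psi_{A,B})$ are automatic from this composition of $y$-homeomorphisms.

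The main obstacle is purely bookkeeping: keeping the edge labels inside $\N$ and ensuring that the $\theta$-orderings restrict consistently when passing from $B_e$ to $A_e$. Both are resolved by the explicit construction and by appealing to the uniqueness clause of Corollary~\ref{Kor:Homeomorfilyft}; no nontrivial algebraic argument arises.
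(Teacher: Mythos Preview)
Your construction of $G(A)$ is essentially the paper's, differing only in the edge-labelling scheme: you index segments by position $\langle e, k\rangle$ with $0 \leqslant k \leqslant n_e$, whereas the paper labels them by $\theta$-value, taking $p(e,0)$ for the tailmost segment and $p\bigl(\zeta(u),\theta(u)\bigr)$ for the segment whose tail is $u \in A$. That seemingly cosmetic choice is what drives the difference in the second half. With the paper's labels one has $E_{G(A)} \subseteq E_{G(B)}$ whenever $A \subseteq B$, so $\Psi_{A,B}$ admits the explicit formula $p(e,i) \mapsto p\bigl(e, \max M(e,i,A)\bigr)$ with $M(e,i,A) = \{0\} \cup \{\theta(v) : v \in A_e,\ \theta(v) \leqslant i\}$, and the homeomorphism axioms are checked by direct computation with the helper functions $M$ and $N$.

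Your compositional route via Corollary~\ref{Kor:Homeomorfilyft} also works, but one step is underjustified. The corollary only promises an isomorphism $(\chi,\psi)\colon G(B) \to G'$ with $\chi \circ \beta_1 = \beta_2$, i.e.\ $\chi|_{V_G} = \mathrm{id}$; you need $\chi|_{V_G \cup A} = \mathrm{id}$ for the composite $(\chi^{-1}|_{V_{G(A)}}, \delta \circ \psi)$ to equal $(\mathrm{id}, \Psi_{A,B})$. This does hold---the corollary's proof matches the $i$th subdivision vertex of $e$ in $G(B)$ with the $i$th in $G'$, and in both networks that vertex is the $i$th element of $B_e$ in $\theta$-order (here is where your ``$\theta$-orderings restrict consistently'' remark enters)---but the point should be argued, not asserted. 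Similarly, $\gamma_B = \gamma_A \circ \Psi_{A,B}$ comes from the $\gamma_2 \circ \psi = \gamma_1$ built into the corollary's proof rather than its statement. Once these are spelled out, your argument is complete; it trades the paper's hands-on bookkeeping for an appeal to an earlier uniqueness result.
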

\begin{proof}
  One way to show this is to give an explicit construction of $G(A)$. 
  The vertex labels are prescribed in the statement, and a way of 
  inventing edge labels would be to make them $p$-pairs of an edge 
  label from $G$ and the position along that edge as reflected in the 
  value of $\theta$ for the tail end (not quite the subdivision 
  index, but close enough). This amounts to
  \begin{align*}
    V_{G(A)} ={}& V_G \cup A \text{,}\\
    D_{G(A)}(v) ={}& \begin{cases}
      D_G(v)& \text{if \(v \in V_G \setminus \{0,1\}\),}\\
      y& \text{if \(v \in A\),}
    \end{cases}\\
    E_{G(A)} ={}& \setOf[\big]{ p(e,0) }{ e \in E_G } \cup
      \setOf[\Big]{ p\bigl( \zeta(u), \theta(u) \bigr) }{ 
        u \in A }
      \text{,}\\
    \gamma_A\bigl( p(e,i) \bigr) ={}& e \text{,}\\
  \intertext{and of course}
    (t_{G(A)},s_{G(A)})\bigl( p(e,i) \bigr) ={}& \begin{cases}
      (t_G,s_G)(e)& \text{if \(i=0\),}\\
      \bigl( (\zeta,\theta)^{-1}(e,i), 1 \bigr) &
        \text{otherwise.}
    \end{cases}
  \end{align*}
  The head map is a ``next vertex on edge'' map, so defining
  \begin{align*}
    N(e,i,A) ={}& \setOf[\big]{ \theta(v) }{ 
      \text{\(v \in A\), \(\zeta(v) = e\), and \(\theta(v) > i\)}
    } \text{,}\\
    M(e,i,A) ={}& \setOf[\big]{ \theta(v) }{ 
      \text{\(v \in A\), \(\zeta(v) = e\), and \(\theta(v) \leqslant i\)}
    } \cup \{0\}
  \end{align*}
  as helper functions, one can state it as
  \begin{equation*}
    (h_{G(A)},g_{G(A)})\bigl( p(e,i) \bigr) = \begin{cases}
      (h_G,g_G)(e)& \text{if \(N(e,i,A) = \varnothing\),}\\
      \Bigl( (\zeta,\theta)^{-1}\bigl( e, \min N(e,i,A) \bigr), 1
      \Bigr)& \text{otherwise.}
    \end{cases}
  \end{equation*}
  The first three conditions for $(\mathrm{id},\gamma_A)$ to be a 
  homeomorphism are trivially fulfilled. If \(p(e,i) \in E_{G(A)}\) 
  is such that \(h_{G(A)}\bigl( p(e,i) \bigr) \in V_G\) then this 
  head is by definition \(h_G(e) = (\mathrm{id} \circ\nobreak h_G 
  \circ\nobreak \gamma_A)\bigl( p(e,i) \bigr)\) and similarly at the 
  tail. Any \(v \in A\) has $p\bigl( \zeta(v), \theta(v) \bigr)$ 
  as only edge with $v$ as tail and $p\bigl( \zeta(v), i \bigr)$ 
  where \(i = \max M\bigl( \zeta(v), \theta(v) -\nobreak 1, A \bigr)\) 
  as only edge with $v$ as head, both of which are mapped to 
  $\zeta(v)$ by $\gamma_A$. Hence it has been shown that 
  $(\mathrm{id},\gamma_A)$ is a homeomorphism from $G(A)$ to $G$.
  
  The definition of $\Psi_{A,B}$ is \(\Psi_{A,B}\bigl( p(e,i) \bigr) 
  = p\bigl( e, \max M(e,i,A) \bigr)\), which trivially 
  satisfies \(\gamma_A \circ \Psi_{A,B} = \gamma_B\). To see that it is 
  surjective, one may observe that if \(p(e,i) \in E_{G(A)}\) then 
  \(p(e,i) \in E_{G(B)}\) too since \(A \subseteq B\), and clearly 
  \(\max M(e,i,A) = i\), meaning \(\Psi_{A,B}\bigl( p(e,i) \bigr) = 
  p(e,i)\). These edges are precisely those which 
  have \(t_{G(B)}\bigl( p(e,i) \bigr) \in V_{G(A)}\), and the 
  restriction of $(t_{G(B)},s_{G(B)})$ to the set of these edges is 
  exactly $(t_{G(A)},s_{G(A)})$. On the head end, let \(j = 
  \max M(e,i,A)\). \(h_{G(B)}\bigl( p(e,i) \bigr) \in 
  V_G\) iff \(N(e,i,B) = \varnothing\), which implies \(N(e,j,A) = 
  \varnothing\), and thus \((h_{G(B)},g_{G(B)})\bigl( p(e,i) \bigr) = 
  (h_G,g_G)(e) = (h_{G(A)},g_{G(A)})\bigl( p(e,j) \bigr)\) as 
  required for $(\mathrm{id},\Psi_{A,B})$ to be a homeomorphism. For 
  edges $p(e,i)$ with \(N(e,i,B) \neq \varnothing\) there are two 
  cases for \(v = h_{G(B)}\bigl( p(e,i) \bigr) = 
  (\zeta,\theta)^{-1}\bigl( e, \min N(e,i,B) \bigr)\). If \(v \in 
  A\) then since \(\theta(v) > i \geqslant j\) it follows that 
  \(\theta(v) \in N(e,j,A)\) and thus \(\min N(e,j,A) = \theta(v) 
  = \min N(e,i,B)\), meaning \((h_{G(B)},g_{G(B)})\bigl( p(e,i) \bigr) 
  = (v,1) = (h_{G(A)},g_{G(A)})\bigl( p(e,j) \bigr)\). If instead \(v 
  \in B \setminus A\) then \(M\bigl( e, \theta(v), A) = 
  M(e,i,A)\) and hence \(\Psi_{A,B}\bigl( p(e,\theta(v)) \bigr) = 
  p(e,j) = \Psi_{A,B}\bigl( p(e,i) \bigr)\), as required at a vertex 
  that is smoothed over by a homeomorphism. The remaining conditions 
  for  $(\mathrm{id},\Psi_{A,B})$ to be a homeomorphism are trivially 
  fulfilled.
\end{proof}

The following theorem states that two $\rtimes$-factorisations of a 
network can be combined, in the sense of combined factorisation that 
any factor of a component factorisation appears as a join of factors 
in the combined factorisation. For practical reasons, the result is 
stated in the particular case that one right factor has an embedding 
into the other right factor, but multiple applications of it would 
yield a combined factorisations also in the more general situation 
where two factors partially overlap; the trick is to observe that 
their ``intersection'' has embeddings into both.

\begin{theorem} \label{S:Inbaddningsuppdelning}
  Let \(G,H_1,H_2,K_1,K_2 \in \Nw(\Omega)\) be networks such that 
  \([G]_\simeq = [K_1]_\simeq \rtimes [H_1]_\simeq = [K_2]_\simeq 
  \rtimes [H_2]_\simeq\). Let \((\chi_1,\psi_1)\colon H_1 \Fpil G\) and 
  \((\chi_2,\psi_2)\colon H_2 \Fpil G\) be the corresponding embeddings. 
  If there exists an intermediate embedding \((\chi_0,\psi_0)\colon 
  H_1 \Fpil H_2\) such that \(\chi_1 = \chi_2 \circ \chi_0\) and 
  \(\psi_1 = \psi_2 \circ \psi_0\) then there also exists some 
  \(K_0 \in \Nw(\Omega)\) such that \([K_1]_\simeq = [K_2]_\simeq 
  \join^{\alpha(H_2)}_{\omega(H_2)} [K_0]_\simeq\) and \([H_2]_\simeq 
  = [K_0]_\simeq \rtimes [H_1]_\simeq\).
\end{theorem}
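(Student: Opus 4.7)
My plan is to obtain $K_0$ directly from the intermediate embedding and then push both claimed identities through the associativity of $\join$ together with a cancellation argument based on the uniqueness of the ``padding'' around a fixed embedding.

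First, I would apply Theorem~\ref{S:InbaddningHomeomorfi} to the embedding $(\chi_0,\psi_0)\colon H_1 \Fpil H_2$ to produce a network $K_0 \in \Nw(\Omega)$ such that $K_0 \rtimes H_1$ is a $\natural$-subdivision of $H_2$. This immediately yields $[H_2]_\simeq = [K_0]_\simeq \rtimes [H_1]_\simeq$, which is the second assertion of the theorem.

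Next, I would invoke associativity of the symmetric join (Lemma~\ref{L:join-associativitet}) to rewrite
\begin{equation*}
   [K_2]_\simeq \rtimes [H_2]_\simeq
   = [K_2]_\simeq \rtimes \bigl( [K_0]_\simeq \rtimes [H_1]_\simeq \bigr)
   = \bigl( [K_2]_\simeq \join^{\alpha(H_2)}_{\omega(H_2)} [K_0]_\simeq \bigr) \rtimes [H_1]_\simeq \text{.}
\end{equation*}
The two ``outer'' nilpotency hypotheses in Lemma~\ref{L:join-associativitet} are automatic, since both $K_2 \rtimes H_2$ and $K_0 \rtimes H_1$ are already known to be well-defined networks; for the mixed middle condition one observes that a failure would manifest as a cycle in $K_2 \rtimes (K_0 \rtimes H_1)$, but this network is isomorphic to $K_2 \rtimes H_2 = G$ and is therefore acyclic. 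Combined with $[G]_\simeq = [K_1]_\simeq \rtimes [H_1]_\simeq = [K_2]_\simeq \rtimes [H_2]_\simeq$, this gives
\begin{equation*}
   [K_1]_\simeq \rtimes [H_1]_\simeq
   = \bigl( [K_2]_\simeq \join^{\alpha(H_2)}_{\omega(H_2)} [K_0]_\simeq \bigr) \rtimes [H_1]_\simeq \text{.}
\end{equation*}

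The last step, and the main obstacle, is cancelling the $\rtimes [H_1]_\simeq$ on both sides. This is not a generic property of the symmetric join, but it holds in the present situation because the two sides share a common embedding of $H_1$: on the left, the embedding is $(\chi_1,\psi_1)$, while on the right it is the composition of $(\chi_0,\psi_0)$ with the canonical embedding of $H_2 \simeq K_0 \rtimes H_1$ into $K_2 \rtimes H_2$, which equals $(\chi_2\circ\chi_0, \psi_2\circ\psi_0) = (\chi_1,\psi_1)$ by hypothesis. To exploit this, I would drop down to concrete representatives and inspect the construction in the proof of Theorem~\ref{S:InbaddningHomeomorfi}: given a network together with a fixed embedding of $H_1$, the ``padding'' produced there is pinned down (up to isomorphism) by the complementary vertices and the segment-structure around each $H_1$-leg. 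Since $K_1$ and $K_2 \join^{\alpha(H_2)}_{\omega(H_2)} K_0$ are both such paddings for the same embedding of $H_1$ into the same ambient $G$, Lemma~\ref{L:Dropping} (applied to a common $\natural$-subdivision refining both factorisations, built using Lemma~\ref{L:UnderdelningNatverk} and Corollary~\ref{Kor:Homeomorfilyft}) forces the two paddings to be isomorphic, giving $[K_1]_\simeq = [K_2]_\simeq \join^{\alpha(H_2)}_{\omega(H_2)} [K_0]_\simeq$ as required.
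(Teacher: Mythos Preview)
Your strategy is precisely the one the paper's own Remark (immediately after the theorem statement) flags as insufficient, and the gap sits exactly at your cancellation step. The $K_0$ that Theorem~\ref{S:InbaddningHomeomorfi} hands you from the weak embedding $(\chi_0,\psi_0)$ is not canonical: that theorem's proof first passes through Lemma~\ref{L:StarkInbaddning} to choose a \emph{strong} embedding, and when $H_1$ has two stray edges $e,e'$ with $\psi_0(e)=\psi_0(e')$ there are several inequivalent such choices, yielding non-isomorphic paddings $K_0$. Your last paragraph asserts that both $K_1$ and $K_2 \join^{\alpha(H_2)}_{\omega(H_2)} K_0$ are paddings for ``the same embedding'' of $H_1$ in $G$, but what pins down a padding is the strong embedding (equivalently, the segment structure you mention), and you never check that the segment order induced through your $K_0$ agrees with the one recorded by $K_1$. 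Concretely: if $K_1$ places the $e$-segment before the $e'$-segment along some $G$-edge while your $K_0$ (chosen without reference to $K_1$) encodes the opposite order inside $H_2$, then the isomorphism $K_1\rtimes H_1 \simeq (K_2\join K_0)\rtimes H_1$ produced by Corollary~\ref{Kor:Homeomorfilyft} will not fix the join-vertices, so Lemma~\ref{L:Join-homeomorfi} does not split it, and in fact $[K_1]_\simeq \neq [K_2]_\simeq \join^{\alpha(H_2)}_{\omega(H_2)} [K_0]_\simeq$ for that $K_0$.

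The paper's proof repairs this by not picking $K_0$ up front. Instead it builds, via Lemma~\ref{L:UnderdelningNatverk}, a single subdivision $G(A)$ of $G$ whose subdivision-index function $\theta$ is assembled from the subdivision-index data of \emph{both} given homeomorphisms $K_1\rtimes H_1\to G$ and $K_2\rtimes H_2\to G$ simultaneously (the formula for $\theta$ on $A_1$ mixes $\theta_1$ with $\theta_2'$). Only after verifying that $G(A)$ carries compatible join structures from both sides does it read off $K_0$, and this $K_0$ is then guaranteed to satisfy $[K_1]_\simeq = [K_2]_\simeq \join^{\alpha(H_2)}_{\omega(H_2)} [K_0]_\simeq$ because the relevant homeomorphism now does fix join-vertices and parity, so Lemma~\ref{L:Join-homeomorfi} applies.
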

\begin{remark}
  It may appear as though this theorem should follow immediately from 
  Theorem~\ref{S:InbaddningHomeomorfi}\Ldash since $H_1$ has an 
  embedding into $H_2$ it follows that there is some $K_0$ for which 
  $H_2$ is a subdivision of $K_0 \rtimes H_1$, and hence
  \begin{multline} \label{Eq:Inbaddningsuppdelning}
    [K_1]_\simeq \rtimes [H_1]_\simeq = 
    [G]_\simeq = 
    [K_2]_\simeq \rtimes [H_2]_\simeq = 
    [K_2]_\simeq \rtimes \bigl( [K_0]_\simeq \rtimes\nobreak 
      [H_1]_\simeq \bigr) = \\ =
    [K_2]_\simeq \join^{\alpha(H_2)}_{\omega(H_2)} 
      \bigl( [K_0]_\simeq \rtimes\nobreak [H_1]_\simeq \bigr) = 
    \bigl( [K_2]_\simeq \join^{\alpha(H_2)}_{\omega(H_2)}\nobreak 
      [K_0]_\simeq \bigr) \rtimes [H_1]_\simeq
    \text{.}
  \end{multline}
  The catch is however that a mere embedding of $H_1$ into $H_2$ need 
  not uniquely identify the padding $[K_0]_\simeq$\Ldash one needs a 
  strong embedding for that\Rdash and thus it could happen that $K_2 
  \join^{\alpha(H_2)}_{\omega(H_2)} K_0$ as constructed above is not 
  a subdivision of the given $K_1$, but a subdivision of some other 
  network $K_1'$ which also happens to satisfy \([K_1']_\simeq 
  \rtimes [H_1]_\simeq = [G]_\simeq\). That $[K_2]_\simeq 
  \join^{\alpha(H_2)}_{\omega(H_2)} [K_0]_\simeq \rtimes [H_1]_\simeq$ 
  is a factorisation combining the \emph{exact} decompositions from 
  $[K_1]_\simeq \rtimes [H_1]_\simeq$ and $[K_2]_\simeq \rtimes 
  [H_2]_\simeq$ is what the theorem is all about.
\end{remark}
\begin{proof}
  Even if \eqref{Eq:Inbaddningsuppdelning} alone does not produce the 
  wanted result, it captures the basic idea; what is missing is a 
  detailed record of the subdivisions that are involved and the 
  homeomorphisms between them. The first step for producing this will 
  be to use Lemma~\ref{L:UnderdelningNatverk} to construct a 
  subdivision $G(A)$ of $G$ which contains all join-vertices 
  introduced by the symmetric joins in 
  \eqref{Eq:Inbaddningsuppdelning}.
  
  Let \(n_1 = \max V_G\), let \(A_1 = \{n_1+j\}_{j \in 
  [\alpha(H_1)+\omega(H_1)]}\), let \(n_2 = n_1 + 
  \alpha(H_1)+\omega(H_1)\), let \(A_2 = \{n_2 + j\}_{j 
  \in [\alpha(H_2)+\omega(H_2)]}\), and let \(A = A_1 \cup A_2\). For 
  \(i=1,2\), let $(\beta_i,\gamma_i)$ be the homeomorphism from 
  $K_i \rtimes H_i$ to $G$. Define \(\zeta\colon A \Fpil E_G\) by 
  making $\zeta(n_i +\nobreak j)$ be the edge that under 
  $(\beta_i,\gamma_i)$ is subdivided by \(2+j \in V_{K_i \rtimes 
  H_i}\), and let $\theta_i(n_i +\nobreak j)$ be the corresponding 
  subdivision index of the join vertex $2+j$. Also let $\theta_2'(e)$ 
  be the subdivision index of \(e \in E_{K_2 \rtimes H_2}\) with 
  respect to $(\beta_2,\gamma_2)$. Let \(m = \alpha(H_1) + 
  \omega(H_1) + 1\), let $f(v)$ for \(v \in A_1\) be the edge \(e \in 
  E_{H_1}\) such that $2e+1$ is incident with $v -n_1 +2$ in $K_1 
  \rtimes H_1$ (i.e., \(h_{K_1 \rtimes H_1}\bigl( 2f(v) +\nobreak 1 
  \bigr) = v -n_1 +2\) for \(v > n_1 + \alpha(H_1)\), and 
  \(t_{K_1 \rtimes H_1}\bigl( 2f(v) +\nobreak 1 \bigr) = v -n_1 +2\) 
  for \(v \leqslant n_1 + \alpha(H_1)\)), and define 
  \(\theta\colon A \Fpil \Zp\) by
  \begin{equation*}
    \theta(v) = \begin{cases}
      m \theta_2(v) & \text{if \(v \in A_2\),}\\
      m \theta_2'\bigl( 2(\psi_0 \circ f)(v) + 1 \bigr) - m + 
        \theta_1(v)
        & \text{if \(v \in A_1\).}
    \end{cases}
  \end{equation*}
  Since \(0 < \theta_1(v) < m\) for all \(v \in A_1\), it follows from 
  the injectivity of $(\zeta,\theta_1)$ and $(\zeta,\theta_2)$ that 
  $(\zeta,\theta)$ is injective as well. Hence the conditions in 
  Lemma~\ref{L:UnderdelningNatverk} are fulfilled and the three 
  subdivisions (in the notation of that lemma) $G(A)$, $G(A_1)$, and 
  $G(A_2)$ of $G$ exist.
  
  First consider $G(A_2)$, which has a homeomorphism 
  $(\mathrm{id},\gamma_3)$ to $G$. Since for any \(e \in E_G\),
  \begin{equation*}
    \card[\big]{ \setinv{\gamma_3}(\{e\}) } =
    1 + \card[\big]{ A_2 \cap \setinv{\zeta}(\{e\}) } =
    \card[\big]{ \setinv{\gamma_2}(\{e\}) }
    \text{,}
  \end{equation*}
  it follows from Corollary~\ref{Kor:Homeomorfilyft} that \(G(A_2) 
  \simeq K_2 \rtimes H_2\), with an isomorphism 
  \((\chi_3,\psi_3)\colon G(A_2) \Fpil K_2 \rtimes H_2\) satisfying 
  \(\chi(v) = \beta_3(v)\) for \(v \in V_G\). Since $\theta$ and 
  $\theta_2$ assign the same relative order to subdivision vertices, 
  it also follows that the relation between \(v \in A_2\) and $\chi_3(v)$ 
  is the same as between $\theta_2$ and the $(\beta_2,\gamma_2)$ 
  subdivision index: \(\chi_3(v) = 2 + v - n_2\) for \(v \in A_2\).
  
  Next observe that $(\chi_3,\psi_3)$ composes with the canonical 
  homeomorphism from $G(A)$ to $G(A_2)$ to yield a homeomorphism 
  $(\beta_4,\gamma_4)$ from $G(A)$ to $K_2 \rtimes H_2$; this 
  satisfies \(\beta_4(v) = \beta_2^{-1}(v)\) for \(v \in \setim\beta_2
  \) and \(\beta_4(2 +\nobreak j) = n_2 + j\) for \(j \in 
  \bigl[ \alpha(H_2) +\nobreak \omega(H_2) \bigr]\). The vertices not 
  in $\setim\beta_4$ are those in $A_1$, 
  so one may conversely wonder which edges these subdivide. Any 
  \(n_1+j \in A_1\) for \(j \leqslant \alpha(H_1)\) 
  corresponds to the edge \(e \in E_{H_1}\) with \(t_{H_1}(e)=1\) and 
  \(s_{H_1}(e) = j\), so \(\theta(n_1 +\nobreak j) = 
  m\theta_2'\bigl( 2\psi_0(e) +\nobreak 1 \bigr) - m + 
  \theta_1(n_1 +\nobreak j)\). This means it subdivides $2\psi_0(e) + 
  1$, because \(\zeta(2 +\nobreak j) = \gamma_2\bigl( 2\psi_0(e) 
  +\nobreak 1 \bigr)\), if \(u := h_{K_2 \rtimes H_2}\bigl( 2\psi_0(e) 
  +\nobreak 1 \bigr) \notin \setim\beta_2\) then \(\theta\bigl( 
  \beta_4(u) \bigr) = m\theta_2\bigl( \beta_4(u) \bigr) = 
  m\theta_2'\bigl( 2\psi_0(e) +\nobreak 1 \bigr) > 
  \theta( n_1 +\nobreak j )\), and if 
  \(v := t_{K_2 \rtimes H_2}\bigl( 2\psi_0(e) +\nobreak 1 \bigr) \notin 
  \setim\beta_2\) then \(\theta\bigl( \beta_4(v) \bigr) = 
  m\theta_2\bigl( \beta_4(u) \bigr) \leqslant m\theta_2'\bigl( 
  2\psi_0(e) +\nobreak 1 \bigr) - m < \theta( n_1 +\nobreak j )\). 
  Similarly provided \(t_{H_1}(e)=1\), any \(n_1 + \alpha(H_1) + 
  g_{H_1}(e) \in A_1\) subdivides the edge \(2\psi_0(e) + 1 \in 
  E_{K_2 \rtimes H_2}\) under $(\beta_4,\gamma_4)$.
  
  Being a homeomorphism to a symmetric join, $(\beta_4,\gamma_4)$ 
  transports the symmetric join structure back to $G(A)$; a mere  
  relabelling of the edges and vertices will make this explicit. 
  Concretely there is a bipartition $E_0 \cup E_1$ of $E_{G(A)}$ into 
  the ``even'' edges coming from the left factor and the ``odd'' edges 
  coming from the right factor:
  \begin{align*}
    E_0 ={}& \setOf[\big]{ e \in E_{G(A)} }{ 2 \mid \gamma_4(e) } 
      \text{,}&
    E_1 ={}& \setOf[\big]{ e \in E_{G(A)} }{ 2 \nmid \gamma_4(e) } 
      \text{.}
  \end{align*}
  Similarly there is a tripartition $V_0 \cup V_\natural \cup V_1$ of 
  $V_{G(A)}$ into ``even'', ``neutral'', and ``odd'' vertices, where 
  one preferably defines
  \begin{align*}
    V_\natural ={}& \{0,1\} \cup A_2 = \setmap{\beta_4}\Bigl( 
      \{0,1\} \cup \{ 2 + j\}_{j \in [\alpha(H_2)+\omega(H_2)]} 
    \Bigr) \text{,}\\
    V_0 ={}& \setmap{h_{G(A)}}(E_0) \cup \setmap{t_{G(A)}}(E_0) 
      \setminus V_\natural
      \text{,}\\
    V_1 ={}& \setmap{h_{G(A)}}(E_1) \cup \setmap{t_{G(A)}}(E_1) 
      \setminus V_\natural
      \text{.}
  \end{align*}
  These $V_0$ and $V_1$ come out as disjoint, since if for example 
  \(e_0 \in E_0\) and \(e_1 \in E_1\) are such that \(h_{G(A)}(e_0) = 
  t_{G(A)}(e_1)\) then this vertex must be in $\setim\beta_4$ since 
  \(\gamma_4(e_0) \neq \gamma_4(e_1)\), and thus \(h_{K_2 \rtimes 
  H_2}\bigl( \gamma_4(e_0) \bigr) = t_{K_2 \rtimes H_2}\bigl( 
  \gamma_4(e_1) \bigr)\), which implies that \(2 < h_{K_2 \rtimes 
  H_2}\bigl( \gamma_4(e_0) \bigr) \leqslant 2 + \alpha(H_2)\) and 
  thus \(h_{G(A)}(e_0) \in V_\natural\); the other combinations of 
  head and tail work out similarly.
  
  Since $\gamma_4$ is injective on $E_0$ (because the $A_1$ vertices 
  all subdivide edges in the odd $H_2$ part of $K_2 \rtimes H_2$), it 
  turns out that the left factor can be taken to be $K_2$. A 
  relabelling $(\chi_4,\psi_4)$ of $G(A)$ which makes it so is given 
  by
  \begin{align*}
    \psi_4(e) ={}& \begin{cases}
      2e+1 & \text{if \(e \in E_1\),}\\
      \gamma_4(e) & \text{if \(e \in E_0\),}
    \end{cases}\\
    \chi_4(v) ={}& \begin{cases}
      \beta_2(v) & \text{if \(v \in V_0\),}\\
      \beta_4^{-1}(v)& \text{if \(v \in V_\natural\),}\\
      \alpha(H_2) + \omega(H_2) + 2v + 1& \text{if \(v \in V_1\);}
    \end{cases}
  \end{align*}
  this is an isomorphism from $G(A)$ to a network of the form 
  $K_2 \rtimes H_2'$, where \(E_{H_2'} = E_1\) and \(V_{H_2'} = 
  \{0,1\} \cup V_1\). By Lemma~\ref{L:Join-homeomorfi}, the composite 
  homeomorphism $(\chi_4 \circ\nobreak \beta_4, \gamma_4 
  \circ\nobreak \psi_4^{-1})$ splits into separate homeomorphisms for 
  each factor, and so there is a homeomorphism $(\beta_5,\gamma_5)$ 
  from $H_2'$ to $H_2$.
  
  On the $A_1$ side it similarly follows that \(G(A_1) \simeq K_1 
  \rtimes H_1\) and therefore there is a homeomorphism 
  $(\beta_6,\gamma_6)$ from $G(A)$ to $K_1 \rtimes H_1$ which has 
  \(\beta_6(2 +\nobreak j) = n_1 + j\) for all \(n_1 + j \in A_1\) 
  and \(\beta_6\bigl( \beta_1(v) \bigr) = v\) for all \(v \in V_G\). 
  For no \(e \in E_{H_1}\) is there any \(u \in A_2\) that subdivides 
  \(2e+1 \in E_{K_1 \rtimes H_1}\) under $(\beta_6,\gamma_6)$, 
  because $\setinv{\gamma_6}\bigl( \{ 2e +\nobreak 1\} \bigr)$ is 
  contained within $\setinv{\gamma_4}\bigl( \{ 2\psi_0(e) +\nobreak 
  1\} \bigr)$. On the tail end, \(u \in A_2\) with \(\theta(u) 
  \leqslant m\theta_2'\bigl( 2\psi_0(e) +\nobreak 1 \bigr) - m\) cannot 
  subdivide 
  $2e+1$, because the existence of that $u$ forces \(\theta_2'\bigl( 
  2\psi_0(e) +\nobreak 1 \bigr) \geqslant 2\) and thus \(t_{H_2}\bigl( 
  \psi_0(e) \bigr) = 1\), whence \(t_{H_1}(e) = 1\) renders 
  \(\beta_6\bigl( t_{K_1 \rtimes H_1}( 2e +\nobreak 1) \bigr) = n_1 + 
  s_{H_1}(e) \in A_1\), and \(\theta\bigl( n_1 +\nobreak s_{H_1}(e) 
  \bigr) > m\theta'\bigl( 2\psi_0(e) +\nobreak 1 \bigr) - m \geqslant 
  \theta(u)\); such $u$ must be subdividing some other edge of $K_1 
  \rtimes H_1$. Similarly on the head end, some \(u \in A_2\) with 
  \(\theta(u) \geqslant m\theta_2'\bigl( 2\psi_0(e) +\nobreak 1 \bigr)\) 
  cannot subdivide $2e+1$, because the existence of such a $u$ requires 
  there to be a segment of $\psi_2\bigl( \psi_0(e) \bigr)$ with 
  subdivision index higher than that of $\psi_0(e)$, whence 
  \(h_{H_2}\bigl( \psi_0(e) \bigr) = 0\) and thus \(h_{H_1}(e) = 0\), 
  from which follows \(\beta_6\bigl( h_{K_1 \rtimes H_1}( 2e +\nobreak 
  1) \bigr) = n_1 + \alpha(H_1) + g_{H_1}(e) \in A_1\), and 
  \(\theta\bigl( n_1 +\nobreak \alpha(H_1) +\nobreak g_{H_1}(e) 
  \bigr) < m\theta'\bigl( 2\psi_0(e) +\nobreak 1 \bigr) \leqslant 
  \theta(u)\). Therefore there is for every \(e \in E_{H_1}\) a 
  unique \(e' \in E_{G(A)}\) such that \(\gamma_6(e') = 2e+1\), and 
  moreover every \(e' \in E_{G(A)}\) with \(\gamma_6(e') \equiv 1 
  \pmod{2}\) has \(\gamma_4(e') \equiv 1 \pmod{2}\), \(h_{G(A)}(e') = 
  (\beta_6 \circ\nobreak h_{K_1 \rtimes H_1} \circ\nobreak 
  \gamma_6)(e')\) and \(t_{G(A)}(e') = (\beta_6 \circ\nobreak 
  t_{K_1 \rtimes H_1} \circ\nobreak \gamma_6)(e')\).
  
  As before, the symmetric join structure of $K_1 \rtimes H_1$ pulls 
  back to $G(A)$ over the homeomorphism $(\beta_6,\gamma_6)$, and 
  since all of $E_0$ is in the even part, it can be seen as a 
  symmetric join structure on $H_2'$. The bipartition of $E_1$ is
  \[
    E_{10} = \setOf[\big]{ e \in E_1 }{ 2 \mid \gamma_6(e) }
    \qquad\text{and}\qquad
    E_{11} = \setOf[\big]{ e \in E_1 }{ 2 \nmid \gamma_6(e) }
    \text{,}
  \]
  and since furthermore $\gamma_6$ is injective on $E_{11}$, one can take 
  the right factor in the symmetric join producing $H_2'$ to be $H_1$.  
  A relabelling which does this is $(\chi_5,\psi_5)$, where
  \begin{align*}
    \chi_5(v) ={}& \begin{cases}
      v - n_1 + 2& \text{if \(v \in A_1\),}\\
      \beta_1(v)& \text{if \(v \in V_{H_2'} \setminus A_1\),}
    \end{cases} \displaybreak[0]\\
    \psi_5(e) ={}& \begin{cases}
      2e& \text{if \(e \in E_{10}\),}\\
      \gamma_6(e)& \text{if \(e \in E_{11}\).}
    \end{cases}
  \end{align*}
  From this relabelling of $H_2'$ one gets $K_0$ such that \(H_2' 
  \simeq K_0 \rtimes H_1\), and thus \([H_2]_\simeq = [K_0]_\simeq 
  \rtimes [H_1]_\simeq\), which is half the claim in the theorem.
  
  For the other half, one may observe that the isomorphism sequence 
  \(G(A) \simeq K_2 \rtimes H_2' \simeq K_2 \rtimes (K_0 \rtimes\nobreak 
  H_1) \simeq \bigl( K_2 \join^{\alpha(H_2)}_{\omega(H_2)}\nobreak 
  K_0 \bigr) \rtimes H_1\) composes with $(\beta_6,\gamma_6)$ to yield 
  a homeomorphism $(\beta_7,\gamma_7)$ from 
  $\bigl( K_2 \join^{\alpha(H_2)}_{\omega(H_2)}\nobreak K_0 \bigr) 
  \rtimes H_1$ to $K_1 \rtimes H_1$. Working through the steps one 
  finds that \(\beta_7(2 +\nobreak j) = 2 + j\) for \(j \in \bigl[ 
  \alpha(H_1) +\nobreak \omega(H_1) \bigr]\) and \(\beta_7(v) \equiv 
  1 \pmod{2}\) if and only if \(v \equiv 1 \pmod{2}\) for \(v \in 
  V_{K_1 \rtimes H_1}\). Hence $(\beta_7,\gamma_7)$ splits by 
  Lemma~\ref{L:Join-homeomorfi} and yields in the left factor a 
  homeomorphism $(\beta_8,\gamma_8)$ from $K_2 
  \join^{\alpha(H_2)}_{\omega(H_2)} K_0$ to $K_1$. Thus \([K_1]_\simeq = 
  [K_2]_\simeq \join^{\alpha(H_2)}_{\omega(H_2)} [K_0]_\simeq\).
\end{proof}

\section{Feedbacks}
\label{Sec:Feedbacks}

Symmetric joins of networks will in the rewriting formalism be used for 
placing rules in contexts. Since rules are expected to be compatible 
with an ordering of the networks, this implies that this ordering must 
be preserved by the operation of placing something in a context. 
Therefore there is a need to have a name for orderings that are 
preserved under symmetric join.

\begin{definition}
  Let $\Omega$ be an $\N^2$-graded set. 
  An $\N^2$-graded quasi-order $P$ on the free \PROP\ $\Nwt(\Omega)$ 
  is said to be \DefOrd[*{N2-graded@$\N^2$-graded!quasi-order!preserved 
  under symmetric join}]{preserved under symmetric join} if
  \begin{equation} \label{Eq:BeveradUnderSammanbindning}
    a \leqslant a' \pin{P} \text{ and }
    b \leqslant b' \pin{P} \qquad\text{implies}\qquad
    a \join^r_q b \leqslant a' \join^r_q b' \pin{P}
  \end{equation}
  for all \(r,q \in \N\), \(A,B \in \B^{\bullet\times\bullet}\), 
  \(a,a' \in \mc{Y}_\Omega(A)\), and \(b,b' \in \mc{Y}_\Omega(B)\) such 
  that \(A = \left[ \begin{smallmatrix} A_{11} & A_{12} \\ A_{21} & A_{22} 
  \end{smallmatrix} \right]\) where \(A_{22} \in \B^{r \times q}\), 
  \(B = \left[ \begin{smallmatrix} B_{22} & B_{23} \\ B_{32} & 
  B_{33} \end{smallmatrix} \right]\) where \(B_{22} \in \B^{q \times 
  r}\), and $A_{22}B_{22}$ is nilpotent. The quasi-order $P$ is said 
  to be \DefOrd[*{N2-graded@$\N^2$-graded!quasi-order!preserved under 
  symmetric join}]{strictly preserved} if 
  a strict inequality for either factor in 
  \eqref{Eq:BeveradUnderSammanbindning} implies a strict inequality 
  between the respective symmetric joins.
\end{definition}

\begin{lemma} \label{L:OrdningSammanbindingPROP}
  An $\N^2$-graded quasi-order $P$ on the free \PROP\ $\Nwt(\Omega)$ 
  which is preserved under symmetric join is a \PROP\ quasi-order. If 
  $P$ is strictly preserved then it is a strict \PROP\ quasi-order.
\end{lemma}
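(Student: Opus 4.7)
The plan is to reduce the \PROP\ quasi-order axioms to instances of the preservation under symmetric join hypothesis, by invoking the rewriting of $\circ$ and $\otimes$ as special cases of $\join$ given in Corollary~\ref{Kor:Sammanbindingsformler}. Specifically, that corollary gives $a \otimes b = a \join^0_0 b$ and $c \circ a = c \join^0_l a$ when $a$ has coarity $l$, and analogous formulas on the other side.

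First I would verify the composition axiom. Let $a \leqslant b \pin{P(l,m)}$, $c \in \Nwt(\Omega)(k,l)$, and $d \in \Nwt(\Omega)(m,n)$ be arbitrary. Choose $A,B,C \in \B^{\bullet \times \bullet}$ so that $a,b \in \mc{Y}_\Omega(A)$, $c \in \mc{Y}_\Omega(C)$ and $d \in \mc{Y}_\Omega(B)$ (for instance, take each matrix to be the all-ones matrix of the appropriate shape). In the join $c \join^0_l a$ the parameter $r$ is $0$, so the relevant block $C_{22}$ has zero rows and the ``cycle matrix'' $C_{22}A_{22}$ is the empty $0 \times 0$ matrix, trivially nilpotent. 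The hypothesis applied to $c \leqslant c \pin{P}$ and $a \leqslant b \pin{P}$ therefore gives $c \circ a = c \join^0_l a \leqslant c \join^0_l b = c \circ b \pin{P}$. Applying the hypothesis once more, now to $c \circ a \leqslant c \circ b \pin{P}$ and $d \leqslant d \pin{P}$, yields $c \circ a \circ d \leqslant c \circ b \circ d \pin{P}$. The tensor product axiom is handled in the same way, using $x \otimes y = x \join^0_0 y$: here both $r$ and $q$ are $0$, so the nilpotency condition is again vacuous, and two applications of preservation under $\join^0_0$ produce $c \otimes a \otimes d \leqslant c \otimes b \otimes d \pin{P}$.

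For the strict version, the same two-step reduction applies: if $a < b \pin{P}$, then the strict version of the hypothesis, applied first with $c \leqslant c$ and $a < b$, yields a strict inequality for $c \circ a$ versus $c \circ b$; composing (or tensoring) with $d$ then preserves that strict inequality by a second application. No step requires handling ambiguity about whether the join is defined, since the nilpotency condition reduces to a triviality in all the special cases invoked.

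The main obstacle, such as it is, is purely bookkeeping: one must be careful that the factors $c$, $d$, and the elements $a, b$ actually lie in common $\mc{Y}_\Omega$ components as demanded by the definition, and that the superscripts and subscripts on $\join$ agree with the arities and coarities. Since one is always free to enlarge the matrix $A$ indexing the filtration (by monotonicity, $\mc{Y}_\Omega(A) \subseteq \mc{Y}_\Omega(A')$ whenever $A \leqslant A'$), the common-component requirement poses no real problem; picking the all-ones matrix of the right shape uniformly resolves it.
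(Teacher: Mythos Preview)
Your proof is correct. Both you and the paper reduce the \PROP\ quasi-order axioms to the symmetric join hypothesis, but the executions differ slightly. You invoke Corollary~\ref{Kor:Sammanbindingsformler} to rewrite $\circ$ and $\otimes$ as the degenerate joins $\join^0_l$ and $\join^0_0$, and then apply the preservation hypothesis \emph{twice} (once to absorb $c$ on one side, once to absorb $d$ on the other). The paper instead constructs a single element $a$ (for composition, $a = c \otimes \phi(\same{m}) \circ \phi(\cross{m}{l}) \circ d \otimes \phi(\same{l})$, and an analogous one for the tensor case) such that $c \circ b \circ d = a \rtimes b$, so that \emph{one} application of the hypothesis suffices. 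Your route is a bit more elementary since it only needs the trivially nilpotent special cases of the join; the paper's route is more economical and reveals directly that the two-sided context operation is itself a single annexation, which is the form in which this fact gets reused later (e.g.\ in Lemma~\ref{L:KategorinV}).
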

\begin{proof}
  In order to relate a $P(l,m)$ comparison of $b$ with $b'$ to 
  a $P(k,n)$ comparison of $c \circ b \circ d$ with $c \circ b' \circ 
  d$, where \(c \in \Nwt(\Omega)(k,l)\) and \(d \in \Nwt(\Omega)(m,n)\), 
  let \(a = c \otimes \phi(\same{m}) \circ 
  \phi(\cross{m}{l}) \circ d \otimes \phi(\same{l})\), as this makes 
  \(c \circ b \circ d = a \rtimes b\) and \(c \circ b' \circ d = a 
  \rtimes b'\).
  
  A similar argument for $c \otimes b \otimes d$ where \(c \in 
  \Nwt(\Omega)(i,j)\), \(b,b' \in \Nwt(\Omega)(k,l)\), and \(d \in 
  \Nwt(\Omega)(m,n)\) follows from taking \(a = \phi(\same{i} 
  \star\nobreak \cross{m}{k} \star\nobreak \same{l}) \circ c \otimes 
  d \otimes \phi(\cross{l}{k}) \circ \phi(\same{j} \star\nobreak 
  \cross{l}{n} \star\nobreak \same{k})\) and comparing \(a \rtimes b 
  = c \otimes b \otimes d\) with \(a \rtimes b' = c \otimes b \otimes 
  d\).
\end{proof}

Previous work on \PROP\ quasi-orders is therefore not lost, as all 
quasi-orders preserved under symmetric join are to be found among the 
\PROP\ quasi-orders, but it reintroduces the question of how they 
might be constructed. As stated before, the main approach to the 
construction of orders on $\Nwt(\Omega)$ is to pull back an order on 
some other other \PROP\ $\mc{P}$ over an evaluation homomorphism 
$\eval_f$, but this works well because the \PROP\ operations are 
defined on both the domain and the codomain. The symmetric join is 
only defined on $\Nw(\Omega)$ and $\Nwt(\Omega)$, so in seeking to 
establish preservation under symmetric join of an order pulled back, 
one faces the problem of showing things about 
$\eval_f(a \join^r_q\nobreak b)$ given only information about 
$\eval_f(a)$ and $\eval_f(b)$\Dash it is not clear that one can even 
compute the former from just the two latter! As it turns out, the 
\PROP\ operations alone need not suffice, but if the \PROP\ structure 
of $\mc{P}$ is augmented with a \emph{feedback} then there is a formula 
(which is the subject of Theorem~\ref{S:Feedback-join}) in $\mc{P}$ for 
$\eval_f(a \join^r_q\nobreak b)$.

\subsection{\PROPs\ with unrestricted feedback}

Informally, a feedback is to take an output and connect it back to an 
input; in some cases this makes sense, and in other cases it doesn't. 
Consequently, some \PROPs\ support an extra `feedback' operation and 
others do not. We will primarily consider feedbacks that are partial 
operations\Ldash defined only on such elements in the \PROP\ that 
they make sense\Rdash but it is useful to first define what they 
behave like when defined on all elements.

\begin{definition} \label{Def:Feedback}
  A \DefOrd[*{PROP@\PROP!with feedback}]{\PROP\ with feedback} is a 
  \PROP\ $\mc{P}$ together with a family 
  \(\{\feedback{n}\}_{n\in\N}\)\index{\feedback@$\feedback{n}$} of 
  partial unary operations\index{feedback} such that \(a\feedback{n} 
  \in \mc{P}(k,l)\) for all \(a \in \mc{P}(k +\nobreak n, 
  l +\nobreak n)\). These must satisfy the following axioms:
  \begin{description}
    \item[tightening]
      \(\bigl( a \otimes \phi(\same{n}) \circ b \circ c \otimes 
      \phi(\same{n}) \bigr)\feedback{n} = a \circ b\feedback{n} \circ 
      c\) for all \(a \in \mc{P}(i,j)\), \(b \in \mc{P}(j +\nobreak 
      n, k +\nobreak n)\), and \(c \in \mc{P}(k,l)\).
    \item[superposing]
      \(a \otimes b\feedback{n} = (a \otimes b)\feedback{n}\) for all 
      \(a \in \mc{P}(i,j)\), \(b \in \mc{P}(k +\nobreak n, l 
      +\nobreak n)\).
    \item[sliding]
      \(\bigl( \phi(\same{k}) \otimes a \circ b \bigr)\feedback{m} = 
      \bigl( b \circ \phi(\same{l}) \otimes a \bigr)\feedback{n}\) 
      for all \(a \in \mc{P}(m,n)\), \(b \in \mc{P}(k +\nobreak 
      n, l +\nobreak m)\).
    \item[vanishing]
      \(a\feedback{n}\feedback{m} = a\feedback{m+n}\) if 
      \(\alpha(a),\omega(a) \geqslant m+n\) and 
      \(a\feedback{0} = a\) for all \(a \in \mc{P}\).
    \item[yanking]
      \(\phi(\cross{n}{n}) \feedback{n} = \phi(\same{n})\).
  \end{description}
  If in addition they satisfy the axiom of
  \begin{description}
    \item[normalisation]
      \(\phi(\same{n})\feedback{n} = \phi(\same{0})\),
  \end{description}
  then the feedback is \DefOrd[*{normalised feedback}]{normalised}.
\end{definition}

\begin{figure}
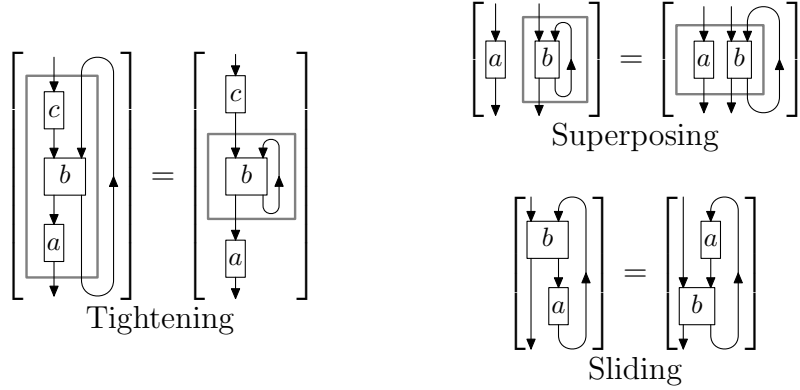

  \begin{center}
    \begin{tabular}{c}
      \(
      \left[ \begin{mpgraphics*}{60}
        beginfig(60);
          PROPdiagram(0,0)
            frame (
              box(1,1)(btex \strut$a$ etex) otimes same(1) circ
              box(2,2)(btex \strut$b$ etex) circ
              box(1,1)(btex \strut$c$ etex) otimes same(1) 
            ) feedback 1
          ;
        endfig;
      \end{mpgraphics*}\right]
      =
      \left[ \begin{mpgraphics*}{61}
        beginfig(61);
          PROPdiagram(0,0)
            box(1,1)(btex \strut$a$ etex) circ
            frame (
              box(2,2)(btex \strut$b$ etex) feedback 1
            ) circ
            box(1,1)(btex \strut$c$ etex) 
          ;
        endfig;
      \end{mpgraphics*}\right]
      \)\\
      Tightening
    \end{tabular}
    \hfil
    \begin{tabular}{c}
      \(
      \left[ \begin{mpgraphics*}{63}
      beginfig(63);
        PROPdiagram(0,0)
          box(1,1)(btex \strut$a$ etex) otimes frame(
             box(2,2)(btex \strut$b$ etex) feedback 1
          ) 
        ;
      endfig;
      \end{mpgraphics*} \right]
      =
      \left[ \begin{mpgraphics*}{62}
      beginfig(62);
        PROPdiagram(0,0)
          frame(
            box(1,1)(btex \strut$a$ etex) otimes 
            box(2,2)(btex \strut$b$ etex)
          ) feedback 1
        ;
      endfig;
      \end{mpgraphics*} \right]
      \)\\
      Superposing
      \\
      \\
      \(
      \left[ \begin{mpgraphics*}{64}
        beginfig(64);
          PROPdiagram(0,0)
            (
              same(1) otimes box(1,1)(btex \strut$a$ etex) circ
              box(2,2)(btex \strut$b$ etex) 
            ) feedback 1
          ;
        endfig;
      \end{mpgraphics*}\right]
      =
      \left[ \begin{mpgraphics*}{65}
        beginfig(65);
          PROPdiagram(0,0)
            (
              box(2,2)(btex \strut$b$ etex) circ
              same(1) otimes box(1,1)(btex \strut$a$ etex)
            ) feedback 1
          ;
        endfig;
      \end{mpgraphics*}\right]
      \)\\
      Sliding
    \end{tabular}
  \end{center}
  
  \caption{Schematic representation of some feedback axioms}
\end{figure}

See \c{S}tef\u{a}nescu~\cite[pp.~22--24]{Stefanescu} for references 
and notes on this particular axiom system. He includes normalisation 
in the basic axiom system.

In the context of abstract index notation, taking a feedback is 
generally known as \emDefOrd{contracting}: using the same index as 
sub- and superscript of a single factor. When that factor is a 
matrix, one ends up with an expression of the form $A^i_i$, which for 
\(A \in \HomPROP_{\mc{R}^n}(1,1)\) and in more classical notation 
becomes \(\sum_{i=1}^n A_{i,i} = \tr(A)\); hence another term used for 
feedback operations is \emph{trace map}. Note however that the 
normalisation axiom would require $\feedback{1}$ of the identity 
matrix \(I = \fuse{i}{1}{i}\) to be $1$ rather than the dimension $n$, 
so a normalised $\feedback{1}$ is more like $\frac{1}{n}\tr$ than $\tr$. 
Such normalisation can actually be useful, because operators on 
infinite-dimensional spaces need not have a finite trace (those that do 
are said to be of \emDefOrd{trace-class}, which is comparatively small), 
whereas $\frac{1}{n}\tr$\Ldash an ``average diagonal element map''\Rdash 
generalises to a larger class of operators. On the other hand, the 
normalisation by $\frac{1}{n}$ in $\frac{1}{n}\tr$ will break the 
yanking axiom for $\HomPROP_{\mc{R}^n}$, so one cannot achieve 
normalisation simply by slapping on an extra factor. Furthermore it 
can be useful to go unnormalised; when Penrose~\cite{Penrose} wrote 
about ``applications of negative dimensional tensors'', he was 
referring specifically to theories where \(\fuse{i}{1}{i}\feedback{1}\) 
is negative.

\begin{theorem} \label{S:CupCapFeedback}
  Let $\mc{P}$ be a \PROP\ with elements \(U \in \mc{P}(0,2)\) (the 
  \emDefOrd{cup}) and \(\Lambda \in \mc{P}(2,0)\) (the \emDefOrd{cap}) 
  such that
  \begin{equation} \label{Eq1:CupCapFeedback}
    \phi(\same{1}) \otimes U \circ
      \phi(\cross{1}{1}) \otimes \phi(\same{1}) \circ
      \phi(\same{1}) \otimes \Lambda
    = \phi(\same{1}) \text{.}
  \end{equation}
  Then $\mc{P}$ is a \PROP\ with feedback, where
  \begin{align}
    a \feedback{0} :={}& a \text{,}\\
    a \feedback{1} :={}&
    \phi(\same{\omega(a)-1}) \otimes U \circ
      a \otimes \phi(\same{1}) \circ
      \phi(\same{\alpha(a)-1}) \otimes \Lambda
    \text{,}\\
    a \feedback{n} :={}& (a \feedback{1})\feedback{n-1} \text{,}
  \end{align}
  for all \(a \in \mc{P}\) and \(1 < n \leqslant \max\bigl\{ 
  \alpha(a), \omega(a) \bigr\}\). Defining
  \begin{align*}
    \Lambda_0 :={}& \phi(\same{0}) \text{,}&
    \Lambda_{n+1} :={}& 
      \phi(\same{n}) \otimes \Lambda \otimes \phi(\same{n}) \circ
      \Lambda_n
      \text{,}\\
    U_0 :={}& \phi(\same{0}) \text{,}&
    U_{n+1} :={}& 
      U_n \circ \phi(\same{n}) \otimes U \otimes \phi(\same{n})
  \end{align*}
  for all \(n \in \N\), it also holds that
  \begin{equation} \label{Eq4:CupCapFeedback}
    a \feedback{n} = 
    \phi(\same{\omega(a)-n}) \otimes U_n \circ
      a \otimes \phi(\same{n}) \circ
      \phi(\same{\alpha(a)-n}) \otimes \Lambda_n
  \end{equation}
  for all \(n \in \N\) and \(a \in \mc{P}\) with 
  \(\alpha(a),\omega(a) \geqslant n\).
\end{theorem}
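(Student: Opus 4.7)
The plan is to first establish the explicit closed form \eqref{Eq4:CupCapFeedback} by induction on $n$, and then use it together with the recursive definition of $\feedback{n}$ to verify each of the five feedback axioms in turn. For \eqref{Eq4:CupCapFeedback}, the cases $n=0$ and $n=1$ are immediate from the definitions, since $U_0 = \Lambda_0 = \phi(\same{0})$ and the $n=1$ clause of the definition of $\feedback{1}$ is exactly \eqref{Eq4:CupCapFeedback} at $n=1$. For the induction step, I would expand $a\feedback{n+1} = (a\feedback{1})\feedback{n}$ by inserting the definition of $a\feedback{1}$ into the closed form given by the induction hypothesis for $(a\feedback{1})\feedback{n}$, and then apply composition--tensor compatibility (together with $\phi(\same{i}) \otimes \phi(\same{j}) = \phi(\same{i+j})$) to re-associate the newly introduced $U$ and $\Lambda$ so that they merge with $U_n$ and $\Lambda_n$ according to the recursions $U_{n+1} = U_n \circ \phi(\same{n}) \otimes U \otimes \phi(\same{n})$ and $\Lambda_{n+1} = \phi(\same{n}) \otimes \Lambda \otimes \phi(\same{n}) \circ \Lambda_n$.

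Of the five axioms, four are relatively routine. Vanishing is essentially definitional: both $a\feedback{n}\feedback{m}$ and $a\feedback{m+n}$ unfold via the recursive definition to $m+n$ iterated applications of $\feedback{1}$. Tightening and superposing I would verify for $n=1$ by direct calculation using composition--tensor compatibility, appealing to auxiliary identities such as $\phi(\same{i}) \otimes U \circ a \otimes \phi(\same{2}) = a \otimes U$ (for $a \in \mc{P}(i,j)$), which follow from the comp--tensor axiom by viewing $a$ and $U$ as acting on disjoint tensor slots; the extension to general $n$ is then by induction using vanishing. Yanking for $n=1$ is literally \eqref{Eq1:CupCapFeedback}, and for $n \geq 2$ it follows by induction on $n$ using \eqref{Eq4:CupCapFeedback}: unfolding the outermost layer of $U_n$ and $\Lambda_n$ by their recursions isolates a single-strand zig-zag on one wire which collapses by the $n=1$ case, reducing to $\phi(\cross{n-1}{n-1})\feedback{n-1} = \phi(\same{n-1})$ after some permutation bookkeeping.

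The main obstacle will be the sliding axiom, which genuinely uses the zig-zag identity and is not a consequence of the PROP axioms alone\Dash in particular, the two sides of sliding have different feedback dimensions when $m \neq n$, so one cannot just apply tightening or superposing. For the scalar case $a \in \mc{P}(1,1)$ with $m=n=1$, both sides expand under the definition of $\feedback{1}$ into expressions that differ essentially by ``transporting $a$ around the cup--cap loop''; equality then follows from a careful application of \eqref{Eq1:CupCapFeedback} (used once to insert a redundant zig-zag on one side, and once to absorb it together with $a$ on the other). For the general $(m,n)$ case I would use \eqref{Eq4:CupCapFeedback} to express both sides as iterated single-strand feedbacks and then slide one strand at a time by the scalar case, chaining intermediate expressions to interpolate between the two sides; each intermediate step is an instance of sliding already proven, so the induction closes.
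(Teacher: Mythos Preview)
Your plan for \eqref{Eq4:CupCapFeedback}, vanishing, tightening, and superposing is fine and essentially matches the paper (the paper does tightening and superposing for general $n$ in one shot rather than via $n=1$ plus induction, but your route works equally well). Your inductive approach to yanking is also viable, though the ``permutation bookkeeping'' you allude to is where all the content lies: $\phi(\cross{n}{n})\feedback{1}$ is not literally $\phi(\cross{n-1}{n-1})$ but a permutation conjugate to $\phi(\cross{n-1}{n-1} \star \same{1})$, and you will need the already-proved tightening (or superposing) to strip off the extra strand before invoking the induction hypothesis.

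The genuine gap is in sliding. Your reduction to the scalar case $a \in \mc{P}(1,1)$ and then ``slide one strand at a time'' does not work as stated: the element $a \in \mc{P}(m,n)$ being slid is a single block with $m$ outputs and $n$ inputs, and it does not decompose into $\mc{P}(1,1)$ pieces that could be slid through successive $\feedback{1}$'s. Writing $\feedback{m}$ as $m$ iterated $\feedback{1}$'s does not help, because each $\feedback{1}$ connects one output of $a$ to one input of $b$, and there is nothing of shape $\mc{P}(1,1)$ sitting on that single wire to which your scalar case would apply. The paper's approach is to handle general $(m,n)$ directly: after expanding both sides via \eqref{Eq4:CupCapFeedback}, it inserts the identity $\phi(\same{k+n+m})$ rewritten (via the $n$-strand zig-zag, equivalently general yanking, which you do have) as an expression involving a $\Lambda_n$, $U_n$ pair, then rearranges. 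In abstract index notation this rearrangement is trivial because the factors commute; without that notation you would need to do the same insertion and then push $a$ around the loop using the multi-strand analogue of your scalar argument. Either way, the key move is the $n$-strand zig-zag insertion, not a strand-by-strand induction.
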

\begin{remark}
  Instances of \eqref{Eq1:CupCapFeedback} are not all that common in 
  the literature; more common are instances of the ``zig--zag 
  identities''
  \begin{align*}
    U \otimes \phi(\same{1}) \circ \phi(\same{1}) \otimes \Pi
      ={}& \phi(\same{1}) \text{,}\\
    \phi(\same{1}) \otimes u \circ \Lambda \otimes \phi(\same{1})
      ={}& \phi(\same{1}) \text{,}
  \end{align*}
  which however are easily converted to \eqref{Eq1:CupCapFeedback} by 
  defining \(\Lambda = \phi(\cross{1}{1}) \circ \Pi\) or \(U = u 
  \circ \phi(\cross{1}{1})\) respectively. Furthermore, it is not 
  uncommon that cups and caps are symmetric (\(\Lambda=\Pi\) and 
  \(U=u\)). 
  One case where the cap and cup satisfying the zig--zag 
  identities are \emph{not} symmetric is that of ribbon 
  categories,\footnote{
    Which are natively braided monoidal categories rather than 
    symmetric ditto, so the issue of what $\phi(\cross{1}{1})$ even 
    means in that case is not entirely trivial. However if one does 
    not require the ribbons to be concrete topological objects then 
    it is mathematically no problem to consider ribbons that can pass 
    through each other, and then the braiding reduces to full 
    symmetry.
  } and in that case the value of $\phi(\same{1}) \otimes u \circ
  \phi(\cross{1}{1}) \otimes \phi(\same{1}) \circ \phi(\same{1}) 
  \otimes \Pi$ would be a twist of a ribbon rather than 
  $\phi(\same{1})$.
\end{remark}
\begin{proof}
  Beginning with \eqref{Eq4:CupCapFeedback}, it is clear that
  \begin{equation*}
    \phi(\same{\omega(a)}) \otimes U_0 \circ
      a \otimes \phi(\same{0}) \circ
      \phi(\same{\alpha(a)}) \otimes \Lambda_0
    =
    \phi(\same{\omega(a)}) \circ a \circ \phi(\same{\alpha(a)}) 
    =
    a
    =
    a\feedback{0} \text{,}
  \end{equation*}
  providing the basis for an induction on $n$. Assuming that 
  \eqref{Eq4:CupCapFeedback} holds for \(n=m\), it follows from
  \begin{align*}
    a \feedback{m+1}
    ={}&
    (a \feedback{1}) \feedback{m}
    = \\ ={}&
    \phi(\same{\omega(a)-1-m}) \otimes U_m 
      \\ & \quad{} \circ
    \bigl( 
      \phi(\same{\omega(a)-1}) \otimes U \circ
        a \otimes \phi(\same{1}) \circ
        \phi(\same{\alpha(a)-1}) \otimes \Lambda
    \bigr) \otimes \phi(\same{m}) 
      \\ & \quad{} \circ
    \phi(\same{\alpha(a)-1-m}) \otimes \Lambda_m
    = \displaybreak[0]\\ ={}&
    \phi(\same{\omega(a)-1-m}) \otimes U_m \circ
    \phi(\same{\omega(a)-1-m}) \otimes \phi(\same{m}) \otimes U 
      \otimes \phi(\same{m}) 
      \\ & \quad{} \circ
    a \otimes \phi(\same{1}) \otimes \phi(\same{m}) 
      \\ & \quad{} \circ
    \phi(\same{\alpha(a)-1-m}) \otimes \phi(\same{m}) \otimes \Lambda
      \otimes \phi(\same{m}) \circ
    \phi(\same{\alpha(a)-1-m}) \otimes \Lambda_m
    = \\ ={}&
    \phi(\same{\omega(a)-(m+1)}) \otimes U_{m+1} \circ
    a \otimes \phi(\same{m+1}) \circ
    \phi(\same{\alpha(a)-(m+1)}) \otimes \Lambda_{m+1}
  \end{align*}
  that it also holds for \(n=m+1\); hence it holds for general $n$.
  
  Concerning the axioms, one may observe that
  \begin{align*}
    U_n ={}& \fuse{}{ \textstyle \prod_{i=1}^n (U)_{r_i t_i} }{ 
        r_1 \dotsb r_n t_n \dotsb t_1
      }
      \text{,}\displaybreak[0]\\
    \Lambda_n ={}& \fuse{s_1 \dotsb s_n t_n \dotsb t_1}{ 
      \textstyle \prod_{i=1}^n (\Lambda)^{s_i t_i} }{}
      \text{,}\displaybreak[0]\\
    \phi(\cross{n}{n}) ={}& \fuse{ s_1 \dotsb s_n r_1 \dotsb r_n }{
      1
    }{ r_1 \dotsb r_n s_1 \dotsb s_n }
    \text{.}
  \end{align*}
  Hence defining \(\vek{r} = r_1 \dotsb r_n\), \(\vek{s} = s_1 \dotsb 
  s_n\), and \(\vek{t} = t_n \dotsb t_1\) (note order of $t_i$!), the 
  yanking axiom is verified through
  \begin{multline*}
    \phi(\cross{n}{n}) \feedback{n}
    = \\ =
    \phi(\same{n}) \otimes \fuse{}{
        \textstyle \prod_{i=1}^n (U)_{r_i t_i}
      }{\vek{r} \vek{t}} \circ
    \fuse{ \vek{s} \vek{r} }{
      1
    }{ \vek{r} \vek{s} } \otimes \phi(\same{n}) \circ
    \phi(\same{n}) \otimes 
      \fuse{\vek{s} \vek{t}}{
        \textstyle \prod_{i=1}^n (\Lambda)^{s_i t_i}
      }{}
    = \displaybreak[0]\\ =
    \fuse{\vek{s}}{
        \textstyle \prod_{i=1}^n (U)_{r_i t_i}
      }{\vek{s} \vek{r} \vek{t}} \circ
    \fuse{ \vek{s} \vek{r} \vek{t} }{
      1
    }{ \vek{r} \vek{s} \vek{t} } \circ
    \fuse{\vek{r} \vek{s} \vek{t}}{
        \textstyle \prod_{i=1}^n (\Lambda)^{s_i t_i}
      }{\vek{r}}
    = \displaybreak[0]\\ =
    \fuse{ \vek{s} }{ \textstyle
      \prod_{i=1}^n (\Lambda)^{s_i t_i}
      \prod_{i=1}^n (U)_{r_i t_i}
    }{ \vek{r} }
    = \\ =
    \bigotimes_{i=1}^n \fuse{ s_i }{ 
      (\Lambda)^{s_i t_i} (U)_{r_i t_i}
    }{ r_i }
    = 
    \bigotimes_{i=1}^n \phi(\same{1})
    =
    \phi(\same{n}) \text{.}
  \end{multline*}
  Vanishing is satisfied by definition. For sliding of \(a \in 
  \mc{P}(m,n)\) past \(b \in \mc{P}(k +\nobreak n, l +\nobreak m)\), 
  one may define \(\vek{r} = r_1 \dotsb r_l\), 
  \(\vek{s} = s_1 \dotsb s_m\), \(\vek{t} = t_1 \dotsb t_m\), 
  \(\vek{u} = u_1 \dotsb u_n\), \(\vek{v} = v_1 \dotsb v_n\), 
  \(\vek{w} = w_1 \dotsb w_k\), \(\vek{x} = x_1 \dotsb x_n\), and 
  \(\vek{z} = z_1 \dotsb z_m\). Then
  \begin{align*}
    \bigl( \phi(\same{k}) \otimes a \circ b \bigr) \feedback{m} 
    ={} \kern-6em& \\ ={}&
    \phi(\same{k}) \otimes U_m \circ
      \phi(\same{k}) \otimes a \otimes \phi(\same{m}) \circ 
      b \otimes \phi(\same{m}) \circ 
      \phi(\same{l}) \otimes \Lambda_m
    = \displaybreak[0]\\ ={}&
    \fuse{\vek{w}}{ (U_m)_{\vek{z}\vek{t}} }{\vek{wzt}} \circ
      \fuse{\vek{wzt}}{ a^\vek{z}_\vek{u} }{\vek{wut}} \circ
      \phi( \same{k} \star \same{n} \star \same{m}) \circ
      \fuse{\vek{wxt}}{ b^\vek{wx}_\vek{rs} }{\vek{rst}} \circ
      \fuse{\vek{rst}}{ (\Lambda_m)^{\vek{st}} }{\vek{r}}
    = \displaybreak[0]\\ ={}&
    \fuse{\vek{w}}{ 
        (U_m)_\vek{zt} a^\vek{z}_\vek{u} 
      }{\vek{wut}} \circ
      \fuse{\vek{wut}}{
        (\Lambda_n)^\vek{uv} (U_n)_\vek{xv}
      }{\vek{wxt}} \circ
      \fuse{\vek{wxt}}{ 
        b^\vek{wx}_\vek{rs} (\Lambda_m)^{\vek{st}} 
      }{\vek{r}}
    = \displaybreak[0]\\ ={}&
    \fuse{\vek{w}}{ 
        (U_m)_\vek{zt} a^\vek{z}_\vek{u} 
        (\Lambda_n)^\vek{uv} (U_n)_\vek{xv}
        b^\vek{wx}_\vek{rs} (\Lambda_m)^{\vek{st}} 
      }{\vek{r}}
    = \displaybreak[0]\\ ={}&
    \fuse{\vek{w}}{ 
        (U_n)_\vek{xv} b^\vek{wx}_\vek{rs} 
        (U_m)_\vek{zt} (\Lambda_m)^{\vek{st}}  
        a^\vek{z}_\vek{u} (\Lambda_n)^\vek{uv} 
      }{\vek{r}}
    = \displaybreak[0]\\ ={}&
    \fuse{\vek{w}}{ 
        (U_n)_\vek{xv} b^\vek{wx}_\vek{rs} 
      }{\vek{rsv}} \circ
      \fuse{\vek{rsv}}{ 
        (U_m)_\vek{zt} (\Lambda_m)^{\vek{st}}  
      }{\vek{rzv}} \circ
      \fuse{\vek{rzv}}{ 
        a^\vek{z}_\vek{u} (\Lambda_n)^\vek{uv} 
      }{\vek{r}}
    = \displaybreak[0]\\ ={}&
    \fuse{\vek{w}}{ (U_n)_\vek{xv} }{\vek{wxv}} \circ
      \fuse{\vek{wxv}}{ b^\vek{wx}_\vek{rs} }{\vek{rsv}} \circ
      \phi( \same{k} \star \same{m} \star \same{n}) \circ
      \fuse{\vek{rzv}}{ a^\vek{z}_\vek{u} }{\vek{ruv}} \circ
      \fuse{\vek{ruv}}{ (\Lambda_n)^\vek{uv} }{\vek{r}}
    = \displaybreak[0]\\ ={}&
    \phi(\same{k}) \otimes U_n \circ
      b \otimes \phi(\same{n}) \circ
      \phi(\same{l}) \otimes a \otimes \phi(\same{n}) \circ
      \phi(\same{l}) \otimes \Lambda_n
    = \\ ={}&
    \bigl( b \circ \phi(\same{l}) \otimes a \bigr) \feedback{n}
    \text{.}
  \end{align*}
  Superposing is merely that
  \begin{multline*}
    a \otimes b\feedback{n}
    =
    \bigl( 
      \phi(\same{i}) \circ a \circ \phi(\same{j})
    \bigr) \otimes \bigl(
      \phi(\same{k}) \otimes U_n \circ
      b \otimes \phi(\same{n}) \circ 
      \phi(\same{l}) \otimes \Lambda_n
    \bigr)
    = \\ =
    \phi(\same{i} \star \same{k}) \otimes U_n \circ
    a \otimes b \otimes \phi(\same{n}) \circ 
    \phi(\same{j} \star \same{l}) \otimes \Lambda_n
    =
    (a \otimes b) \feedback{n}
    \text{.}
  \end{multline*}
  Similarly tightening is that
  \begin{multline*}
    \bigl( 
      a \otimes \phi(\same{n}) \circ b \circ c \otimes \phi(\same{n}) 
    \bigr)\feedback{n} 
    = \\ =
    \phi(\same{i}) \otimes U_n \circ
    a \otimes \phi(\same{n}) \otimes \phi(\same{n}) \circ
    b \otimes \phi(\same{n}) \circ
    c \otimes \phi(\same{n}) \otimes \phi(\same{n}) \circ
    \phi(\same{l}) \otimes \Lambda_n
    = \\ =
    a \circ
    \phi(\same{j}) \otimes U_n \circ
    b \otimes \phi(\same{n}) \circ
    \phi(\same{k}) \otimes \Lambda_n \circ
    c 
    =
    a \circ b\feedback{n} \circ c
    \text{.}
  \end{multline*}
\end{proof}

\begin{example}
  The connectivity \PROP\ of Example~\ref{Ex:Sammanhangande} has a cap 
  and cup pair, namely
  \begin{align*}
    \Lambda ={}& \Bigl( \bigl\{ \{1,2\} \times \{0\} \bigr\}, 0 \Bigr)
      \text{,}\\
    U ={}& \Bigl( \bigl\{ \{1,2\} \times \{1\} \bigr\}, 0 \Bigr)
    \text{.}
  \end{align*}
  With respect to \eqref{Eq1:CupCapFeedback}, one finds that
  \begin{align*}
    \phi(\same{1}) \otimes \Lambda ={}&
      \Bigl( \Bigl\{ \bigl\{ (1,0), (1,1) \bigr\},
        \bigl\{ (2,0), (3,0) \bigr\} \Bigr\}, 
      0 \Bigr) \text{,}\\
    \phi(\cross{1}{1} \star \same{1}) ={}&
      \Bigl( \Bigl\{ \bigl\{ (1,0), (2,1) \bigr\},
        \bigl\{ (2,0), (1,1) \bigr\}, \bigl\{ (3,0), (3,1) \bigr\}
      \Bigr\}, 0 \Bigr) 
      \text{,}\\
    \phi(\same{1}) \otimes U ={}&
      \Bigl( \Bigl\{ \bigl\{ (1,0), (1,1) \bigr\},
        \bigl\{ (2,1), (3,1) \bigr\} \Bigr\}, 
      0 \Bigr) \text{,}
  \end{align*}
  meaning
  \begin{equation*}
    \phi(\same{1}) \otimes U \circ \phi(\cross{1}{1} \star \same{1})
    =
    \Bigl( \Bigl\{ \bigl\{ (1,0), (2,1) \bigr\},
      \bigl\{ (1,1), (3,1) \bigr\} \Bigr\}, 
    0 \Bigr) \text{,}
  \end{equation*}
  so that \(\phi(\same{1}) \otimes U \circ \phi(\cross{1}{1} 
  \star\nobreak \same{1}) \circ \phi(\same{1}) \otimes \Lambda = 
  \phi(\same{1})\).
  
  This \PROP\ is not normalised, since
  \begin{equation}
    U \circ \Lambda =
    ( \varnothing, 1 ) \neq
    ( \varnothing, 0 ) = \phi(\same{0})
    \text{.}
  \end{equation}
\end{example}

As mentioned previously, caps and cups may also turn up in 
$\HomPROP_V$ style \PROPs, but here it should be observed that it 
makes a significant difference whether $V$ is finite- or 
infinite-dimensional. When $V$ is finite-dimensional, 
$\{e_i\}_{i=1}^n$ is some basis of \(\HomPROP_V(1,0) \cong V\), and 
$\{f_i\}_{i=1}^n$ is the corresponding dual basis of 
$\HomPROP_V(0,1)\), then one can simply let \(\Lambda = 
\sum_{i=1}^n e_i \otimes e_i\) and \(U = \sum_{j=1}^n f_j \otimes 
f_j\), since
\begin{multline*}
  \phi(\same{1}) 
  =
  \sum_{i=1}^n e_i \circ f_i
  =
  \sum_{i=1}^n \sum_{j=1}^n e_i \circ (f_j \circ e_i) \circ f_j
  = \\ =
  \sum_{i=1}^n \sum_{j=1}^n \phi(\same{1}) \otimes f_j \otimes f_j
    \circ \phi(\cross{1}{1}) \otimes \phi(\same{1}) \circ
    \phi(\same{1}) \otimes e_i \otimes e_i
  = \\ =
  \phi(\same{1}) \otimes U \circ
    \phi(\cross{1}{1}) \otimes \phi(\same{1}) \circ
    \phi(\same{1}) \otimes \Lambda
  \text{.}
\end{multline*}
A taste of the infinite-dimensional case can be had by 
considering as $V$ some suitable class of functions \(\R \Fpil \C\). 
Zig--zags can be found for such things\Dash one starting point could 
be the Fourier and inverse Fourier transformation combination
\begin{equation} \label{Eq:Fouriertransform}
  f(x) = \frac{1}{2\pi} \int_{-\infty}^\infty e^{ipx}
    \int_{-\infty}^\infty e^{-ipy} f(y) \, \mathit{dy} \, \mathit{dp}
  \qquad\text{for all \(x \in \R\) and \(f \in V\).}
\end{equation}
The inner integral here is an inner product $\int_{-\infty}^\infty 
f(y) \overline{g(y)} \, \mathit{dy}$ for \(g(y) = e^{ipy}\), so this 
would constitute an obvious cup candidate\Dash were it not for the 
fact that this is sesquilinear rather than bilinear, as an element of 
$\HomPROP_V(0,2)$ would have to be. Instead taking that integral 
without conjugation as cup, and the rest of 
\eqref{Eq:Fouriertransform} as the corresponding cap, one gets
\begin{align*}
  \Lambda(x,y) ={}& 
    \frac{1}{2\pi} \int_{-\infty}^\infty e^{ip(x-y)} \, \mathit{dp}
    \quad\text{for \(x,y \in \R\),}\\
  U(f,g) ={}& \int_{-\infty}^\infty f(y) g(y) \, \mathit{dy}
    \quad\text{for \(f,g \in V\).}
\end{align*}
Because these were obtained from \eqref{Eq:Fouriertransform}, they 
satisfy the basic instance \eqref{Eq1:CupCapFeedback} of the yanking 
axiom, but regarding normalisation one finds that
$$
  \phi(\same{1})\feedback{1} = U \circ \Lambda =
  \frac{1}{2\pi} \int_{-\infty}^\infty \int_{-\infty}^\infty 1
   \, \mathit{dp} \, \mathit{dy}
$$
which is rather divergent; clearly the composition of two \PROP\ 
elements should rather be something finite.

Of course, one could argue that the divergence problem is present 
already in the definition of $\Lambda$. Certainly this $\Lambda$ is 
not an element of \(\HomPROP_V(2,0) = V \otimes V\), since \(\Lambda 
= \frac{1}{2\pi} \int_{-\infty}^\infty f_p \otimes g_p 
\,\mathit{dp}\) for \(f_p(x) = e^{ipx}\) and \(g_p(y) = e^{-ipy}\) is 
at best an integral over elements $f \otimes g$ for \(f,g \in V\), 
rather than the finite sum of such elements that an element of $V 
\otimes V$ must be; that the tensor product only does finite sums is 
also why the dual of an algebra need not be a coalgebra. There 
\emph{could} be some other \PROP\ which is similar to $\HomPROP_V$ but 
not so restricted (for example, it might be defined using a tensor 
product analogue that also takes a topological closure) in which 
$\Lambda$ finds a home. While it may seem to a mathematician that the 
integral should still be hopelessly divergent since the integrand does 
not tend to $0$ as \(p \rightarrow \pm\infty\), a physicist would 
probably have recognised $\frac{1}{2\pi} \int_{-\infty}^\infty 
e^{ip(x-y)} \,\mathit{dp}$ as a standard formula for the Dirac 
$\delta(x -\nobreak y)$, which is a very natural cap candidate when 
dealing with spaces of functions. Having tamed $\Lambda$ with the 
formal framework of Dirac delta functions and found a name for what it 
is does however not get rid of the infinity: \(U \circ \Lambda = 
\int_{-\infty}^\infty \delta(x -\nobreak x) \,\mathit{dx} = 
\int_{-\infty}^\infty \delta(0) \,\mathit{dx}\) is every bit as 
divergent as the previous expression for this composition.

\subsection{\PROPs\ with formal feedback}

The problem above is not so much constructing a feedback from separate 
cap and cup elements, but rather that the feedback itself becomes a 
troublesome concept when it must apply to arbitrary elements. (Several 
technicalities in Section~\ref{Sec:AIN} are all about making sense of 
the abstract index notation \emph{without} resorting to feedbacks; if 
the target is from the start known to be a \PROP\ with feedbacks then 
it would be straightforward to define an ``abstract index notation with 
contraction'' that drops the acyclicity constraint.)
An interesting problem when given a \PROP\ is therefore to distinguish 
sets of well-behaved elements on which feedback maps can be defined.

In the free \PROP\ $\Nwt(\Omega)$, a natural approach to defining 
such a restricted feedback is to make use of the symmetric join and 
define
\begin{equation} \label{Eq:DefFriFeedback}
  a \feedback{n} = a \join^n_n \phi(\same{n}) \text{.}
\end{equation}
The necessary and sufficient condition for this to be defined is that 
\(\Tr(a) = \left[ \begin{smallmatrix} q_{11} & q_{12} \\ 
q_{21} & q_{22} \end{smallmatrix} \right]\) has \(q_{22} \in \B^{n 
\times n}\) nilpotent, so it appears the domain of that 
$\feedback{n}$ is
\[
  \bigcup_{\substack{
    \left[ \begin{smallmatrix} q_{11} & q_{12} \\ 
    q_{21} & q_{22} \end{smallmatrix} \right] \in 
    \B^{\bullet\times\bullet}
    \\
    q_{22} \in \B^{n \times n} \text{ is nilpotent}
  }} \mkern-36mu
  \mc{Y}_\Omega\left( \left[ \begin{smallmatrix} q_{11} & q_{12} \\ 
    q_{21} & q_{22} \end{smallmatrix} \right] \right)
  \text{.}
\]
From \eqref{Eq:DefFriFeedback} and Construction~\ref{Kons:symjoin}, one 
may also conclude that \(\Tr(a\feedback{n}) = q_{11} + 
q_{12}q_{22}^*q_{21}\), which is equivalent to \(a\feedback{n} \in 
\mc{Y}_\Omega(q_{11} +\nobreak q_{12}q_{22}^*q_{21})\). Replacing 
$\mc{Y}_\Omega$ by a general $\B^{\bullet\times\bullet}$-filtration, 
we arrive at a definition that makes sense for a general \PROP.

\begin{definition}
  Let a \PROP\ $\mc{P}$ and $\B^{\bullet\times\bullet}$-filtration 
  $\{F_q\}_{q \in \B^{\bullet\times\bullet}}$ in $\mc{P}$ be given. A 
  \DefOrd{formal feedback} in $\mc{P}$ with respect to $\{F_q\}_{q 
  \in \B^{\bullet\times\bullet}}$ is a family \(\{\feedback{n}\}_{n 
  \in \N}\) of partial maps from $\mc{P}$ into itself satisfying:
  \begin{enumerate}
    \item
      $a\feedback{n}$ is defined whenever \(a \in F_q\) such that 
      \(q = \left[ \begin{smallmatrix} q_{11} & q_{12} \\ q_{21} & q_{22} 
      \end{smallmatrix} \right]\) has \(q_{22} \in \B^{n \times n}\) 
      nilpotent, and moreover \(a\feedback{n} \in F_r\) where \(r = 
      q_{11} + q_{12}q_{22}^*q_{21}\).
    \item
      The tightening, superposing, sliding, vanishing, and 
      yanking axioms of Definition~\ref{Def:Feedback} are satisfied 
      whenever the expressions involved are defined.
  \end{enumerate}
  A \DefOrd[*{PROP@\PROP!with formal feedback}]{\PROP\ with formal 
  feedback} is a triplet $\bigl( \mc{P}, \{F_q\}_{q \in 
  \B^{\bullet\times\bullet}}, \{\feedback{n}\}_{n\in\N} \bigr)$, 
  where $\mc{P}$ is a \PROP, $\{F_q\}_{q \in 
  \B^{\bullet\times\bullet}}$ is a filtration of $\mc{P}$, and 
  $\{\feedback{n}\}_{n\in\N}$ is a formal feedback with respect to 
  these.
\end{definition}

There would not be much point in defining normalised formal 
feedbacks, because the normalisation axiom will typically not be 
applicable for formal feedbacks. If \(\phi(\same{n}) 
\in F_q\) and $q$ is nilpotent (as it would need to be for a formal 
$\feedback{n}$ to be defined), then \(\phi(\same{n}) = 
\phi(\same{n})^{\circ n} \in F_{q^n} = F_0\). Furthermore for \(i = 1 
\in \B^{1 \times 1}\), \(z = 0 \in \B^{n \times n}\), and \(Z = 0 \in 
\B^{(n+1)\times(n+1)}\), one finds \(\phi(\same{n+1}) = 
\phi(\same{1}) \otimes \phi(\same{n}) \circ \phi(\same{n}) \otimes 
\phi(\same{1}) \in F_{i \otimes z \circ z \otimes i} = F_Z\) and hence 
\(\phi(\same{1}) = \phi(\same{1}) \otimes \phi(\same{n})\feedback{n} 
= \phi(\same{n+1})\feedback{n} \in F_0\). From this follows that the 
$F_0$ sets are the whole \PROP, so the feedbacks must in fact be 
unrestricted; a \PROP\ with formal feedbacks where the normalisation 
axiom would apply \emph{somewhere} is in fact a \PROP\ with 
feedbacks, period.

An important example of \PROP\ with formal feedbacks is the 
$\mc{R}^{\bullet\times\bullet}$ matrix \PROP.

\begin{lemma} \label{L:Matrisfeedback}
  Let $\mc{R}$ be an associative unital semiring. Then 
  $\mc{R}^{\bullet\times\bullet}$, with the dependency filtration 
  $\{F_q\}_{q \in \B^{\bullet\times\bullet}}$, is a \PROP\ with formal 
  feedbacks given by
  \begin{equation}
    \left. \begin{bmatrix}
      A_{11} & A_{12} \\ A_{21} & A_{22} 
    \end{bmatrix} \rightfeedback{n} :=
    A_{11} + A_{12} A_{22}^* A_{21}
  \end{equation}
  for all \(A_{11} \in \mc{R}^{k \times l}\), \(A_{12} \in 
  \mc{R}^{k \times n}\), \(A_{21} \in \mc{R}^{n \times l}\), and 
  \(A_{22} \in F_q\) where \(q \in \B^{n \times n}\) is nilpotent.
\end{lemma}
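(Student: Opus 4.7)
The plan is to verify, in order, well-definedness of the formula, membership in the claimed filtration component, and then each of the five formal feedback axioms by direct block-matrix computation. Well-definedness and the filtration claim both fall out of the nilpotence machinery: since $A_{22} \in F_{q_{22}}$ with $q_{22}$ nilpotent, $A_{22}$ is nilpotent in $\mc{R}^{n \times n}$ and the star $A_{22}^*$ exists by Lemma~\ref{L:Semiringstjarna}. Each power $A_{22}^k$ lies in $F_{q_{22}^k}$, so the (finite) sum $A_{22}^* = \sum_{k \geqslant 0} A_{22}^k$ lies in $F_{q_{22}^*}$ (the dependency filtration is evidently closed under addition of matrices in the same component); closure under multiplication then gives $A_{11} + A_{12} A_{22}^* A_{21} \in F_{q_{11} + q_{12} q_{22}^* q_{21}}$, exactly the component required of a formal feedback.

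Next I would dispatch the easy axioms by direct computation. Yanking is immediate:
\[
  \phi(\cross{n}{n})\feedback{n} =
  \begin{bmatrix} 0 & I_n \\ I_n & 0 \end{bmatrix}\feedback{n}
  = 0 + I_n \cdot 0^* \cdot I_n = I_n = \phi(\same{n}).
\]
Tightening falls out after writing $a \otimes \phi(\same{n}) \circ b \circ c \otimes \phi(\same{n}) = \left[\begin{smallmatrix} a B_{11} c & a B_{12} \\ B_{21} c & B_{22} \end{smallmatrix}\right]$ and applying the defining formula to get $a(B_{11} + B_{12} B_{22}^* B_{21})c = a \circ b\feedback{n} \circ c$; superposing is analogous, with the untouched factor $a$ appearing as a diagonal block commuting with the feedback slot. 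Sliding is the first genuine use of a nontrivial Kleene identity: if $b = \left[\begin{smallmatrix} b_{11} & b_{12} \\ b_{21} & b_{22} \end{smallmatrix}\right]$, the two sides reduce to $b_{11} + b_{12} (a b_{22})^* a b_{21}$ and $b_{11} + b_{12} a (b_{22} a)^* b_{21}$ respectively, which agree by the identity $(AB)^* A = A (BA)^*$ of Lemma~\ref{L:Semiringstjarna}.

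The main obstacle is the vanishing axiom $a\feedback{n}\feedback{m} = a\feedback{m+n}$. Partitioning $a$ into a $3 \times 3$ block matrix $[A_{ij}]_{i,j=1}^{3}$ with $A_{22}$ of size $m \times m$ and $A_{33}$ of size $n \times n$, the inner $\feedback{n}$ produces a $2 \times 2$ matrix whose $(2,2)$ entry is $A_{22} + A_{23} A_{33}^* A_{32}$, after which the outer $\feedback{m}$ demands taking the star of that sum. On the other side, $a\feedback{m+n}$ uses the star of the joint block $\left[\begin{smallmatrix} A_{22} & A_{23} \\ A_{32} & A_{33} \end{smallmatrix}\right]$, which Lemma~\ref{L:Matrisstjarna} expands into an explicit $2 \times 2$ formula whose four entries match the iterated computation term by term once one applies the identity $(X+Y)^* = X^*(YX^*)^*$ of Lemma~\ref{L:Semiringstjarna}. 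That all intermediate stars involved actually exist under the single hypothesis on the joint block is guaranteed by the equivalences of Lemma~\ref{L:Matrisnilpotens}; the remaining book-keeping is lengthy but wholly mechanical.
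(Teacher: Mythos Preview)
Your proposal is correct and follows essentially the same route as the paper: direct block-matrix verification of each axiom, invoking Lemma~\ref{L:Semiringstjarna} for the sliding identity $(AB)^*A = A(BA)^*$, and Lemmas~\ref{L:Matrisnilpotens} and~\ref{L:Matrisstjarna} for the vanishing axiom. If anything you are slightly more thorough than the paper in explicitly checking the filtration-component membership $a\feedback{n} \in F_{q_{11}+q_{12}q_{22}^*q_{21}}$, which the paper leaves implicit.
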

\begin{proof}
  That $A_{22}^*$ is defined follows from 
  Lemma~\ref{L:Matrisstjarna}, so what remains is to check that the 
  feedback defined satisfies all the axioms.
  
  With notation as in Definition~\ref{Def:Feedback}, one finds for 
  the tightening axiom that
  \begin{multline*}
    \bigl( a \otimes \phi(\same{n}) \circ b \circ 
      c \otimes \phi(\same{n}) \bigr)\feedback{n} = \\ =
    \left. \left(
      \begin{bmatrix} a & 0 \\ 0 & \phi(\same{n}) \end{bmatrix}
      \begin{bmatrix} b_{11} & b_{12} \\ b_{21} & b_{22} \end{bmatrix}
      \begin{bmatrix} c & 0 \\ 0 & \phi(\same{n}) \end{bmatrix}
    \right) \rightfeedback{n} = 
    \left. \begin{bmatrix}
      a b_{11} c & a b_{12} \\ b_{21} c & b_{22}
    \end{bmatrix} \rightfeedback{n} = \\ =
    a b_{11} c + a b_{12} b_{22}^* b_{21} c = 
    a (b_{11} + b_{12} b_{22}^* b_{21}) c = 
    a \circ b\feedback{n} \circ c
  \end{multline*}
  for all \(a \in \mc{R}^{i \times j}\), \(b = \left[ 
  \begin{smallmatrix} b_{11} & b_{12} \\ b_{21} & b_{22} 
  \end{smallmatrix} \right] \in \mc{R}^{(j+n) \times (k+n)}\) such 
  that \(b_{22} \in F_q\) where \(q \in \B^{n \times n}\) is 
  nilpotent, and \(c \in \mc{R}^{k \times l}\).
  
  The superposing axiom is similarly just a matter of
  \begin{equation*}
    a \otimes b\feedback{n} =
    \begin{bmatrix}
      a & 0 \\ 0 & b\feedback{n}
    \end{bmatrix} =
    \begin{bmatrix}
      a & 0 \\ 0 & b_{11} + b_{12} b_{22}^* b_{21}
    \end{bmatrix} =
    \left. \begin{bmatrix}
      a & 0 & 0 \\
      0 & b_{11} & b_{12} \\
      0 & b_{21} & b_{22}
    \end{bmatrix} \rightfeedback{n} =
    (a \otimes b)\feedback{n}
  \end{equation*}
  for all \(a \in \mc{R}^{i \times j}\) and \(b = \left[ 
  \begin{smallmatrix} b_{11} & b_{12} \\ b_{21} & b_{22} 
  \end{smallmatrix} \right] \in \mc{R}^{(k+n) \times (l+n)}\) such 
  that \(b_{22} \in F_q\) where \(q \in \B^{n \times n}\) is 
  nilpotent.
  
  For sliding, consider \(a \in F_r\) and \(b = \left[ \begin{smallmatrix} 
  b_{11} & b_{12} \\ b_{21} & b_{22} \end{smallmatrix} \right] \in F_q\) 
  where \(r \in \B^{m \times n}\), \(q = \left[ \begin{smallmatrix} 
  q_{11} & q_{12} \\ q_{21} & q_{22} \end{smallmatrix} \right]\), 
  \(q_{11} \in \B^{k \times l}\), \(q_{12} \in \B^{k \times m}\),
  \(q_{21} \in \B^{n \times l}\), and \(q_{22} \in \B^{n \times m}\) 
  are such that $r q_{22}$ (and equivalently $q_{22} r$) is nilpotent. 
  Then \((a b_{22})^* a = a (b_{22} a)^*\) by 
  Lemma~\ref{L:Matrisstjarna}, and hence
  \begin{multline*}
    \bigl( \phi(\same{k}) \otimes a \circ b \bigr)\feedback{m} = 
    \left. \left( 
      \begin{bmatrix} \phi(\same{k}) & 0 \\ 0 & a \end{bmatrix}
      \begin{bmatrix} b_{11} & b_{12} \\ b_{21} & b_{22} \end{bmatrix}
    \right) \rightfeedback{m} =
    \left. \begin{bmatrix} 
       b_{11} & b_{12} \\
       a b_{21} & a b_{22}
    \end{bmatrix} \rightfeedback{m} = \\ =
    b_{11} + b_{12} (a b_{22})^* a b_{21} =
    b_{11} + b_{12} a (b_{22} a)^* b_{21} = \\ =
    \left. \begin{bmatrix} 
       b_{11} & b_{12} a \\
       b_{21} & b_{22} a
    \end{bmatrix} \rightfeedback{n} =
    \left. \left( 
      \begin{bmatrix} b_{11} & b_{12} \\ b_{21} & b_{22} \end{bmatrix}
      \begin{bmatrix} \phi(\same{l}) & 0 \\ 0 & a \end{bmatrix}
    \right) \rightfeedback{n} =
    \bigl( b \circ \phi(\same{l}) \otimes a \bigr)\feedback{n}
    \text{.}
  \end{multline*}
  
  Vanishing is a bit trickier. Here one must consider
  \begin{equation*}
    a = \begin{bmatrix}
      a_{11} & a_{12} & a_{13}\\
      a_{21} & a_{22} & a_{23}\\
      a_{31} & a_{32} & a_{33}
    \end{bmatrix} \in F_q
    \qquad\text{where}\qquad
    q = \begin{bmatrix}
      q_{11} & q_{12} & q_{13}\\
      q_{21} & q_{22} & q_{23}\\
      q_{31} & q_{32} & q_{33}
    \end{bmatrix}
  \end{equation*}
  has \(q_{11} \in \B^{k \times l}\), \(q_{22} \in \B^{m \times m}\), 
  and \(q_{33} \in \B^{n \times n}\). Moreover, the immediate 
  condition for $a\feedback{m+n}$Êto be defined is that $\left[ 
  \begin{smallmatrix} q_{22} & q_{23} \\ q_{32} & q_{33} 
  \end{smallmatrix} \right]$ is nilpotent, whereas the immediate 
  condition for $a \feedback{n} \feedback{m}$ to be defined is that 
  $q_{33}$ and $q_{22} + q_{23} q_{33}^* q_{32}$ are nilpotent; these 
  conditions are however equivalent by Lemma~\ref{L:Matrisnilpotens}. 
  Introducing the shorthand \(a_{232} = a_{23} a_{33}^* a_{32}\), and 
  using Lemma~\ref{L:Matrisstjarna}, one finds
  \begin{multline*}
    \left. \left. \begin{bmatrix}
      a_{11} & a_{12} & a_{13} \\
      a_{21} & a_{22} & a_{23} \\
      a_{31} & a_{32} & a_{33}
    \end{bmatrix} \rightfeedback{n} \rightfeedback{m}
    = 
    \left. \begin{bmatrix}
      a_{11} + a_{13} a_{33}^* a_{31} &
        a_{12} + a_{13} a_{33}^* a_{32} \\
      a_{21} + a_{23} a_{33}^* a_{31} &
        a_{22} + a_{232}
    \end{bmatrix} \rightfeedback{m}
    = \\ =
    a_{11} + a_{13} a_{33}^* a_{31} +
      (a_{12} + a_{13} a_{33}^* a_{32})
      (a_{22} + a_{232})^*
      (a_{21} + a_{23} a_{33}^* a_{31})
    = \\ =
    a_{11} + a_{13} a_{33}^* a_{31} +
      (a_{12} + a_{13} a_{33}^* a_{32})
      (a_{22}^* a_{232})^* a_{22}^*
      (a_{21} + a_{23} a_{33}^* a_{31})
    = \displaybreak[0]\\ = \begin{aligned}[t]
      & a_{11} + 
        (a_{12} + a_{13} a_{33}^* a_{32})
        (a_{22}^* a_{232})^* a_{22}^* a_{21} + \\
      & \quad {}+ a_{13} a_{33}^* a_{31} +
        (a_{12} + a_{13} a_{33}^* a_{32})
        (a_{22}^* a_{23} a_{33}^* a_{32})^* a_{22}^*
        a_{23} a_{33}^* a_{31} =
    \end{aligned}
    \displaybreak[0]\\ = \begin{aligned}[t]
      & a_{11} + 
        (a_{12} + a_{13} a_{33}^* a_{32})
        (a_{22}^* a_{232})^* a_{22}^* a_{21} + 
        a_{13} a_{33}^* a_{31} + \\
      & \quad {}+ 
        a_{12} a_{22}^* a_{23} 
          (a_{33}^* a_{32} a_{22}^* a_{23})^* a_{33}^* a_{31} +
        a_{13} (a_{33}^* a_{32} a_{22}^* a_{23})^+ a_{33}^* a_{31} =
    \end{aligned}
    \displaybreak[0]\\ = \begin{aligned}[t]
      & a_{11} + 
        (a_{12} + a_{13} a_{33}^* a_{32})
        (a_{22}^* a_{232})^* a_{22}^* a_{21} + \\
      & \qquad {}+ 
        (a_{12} a_{22}^* a_{23} + a_{13})
        (a_{33}^* a_{32} a_{22}^* a_{23})^* a_{33}^* a_{31} =
    \end{aligned}
    \displaybreak[0]\\ =
    a_{11} + 
      \begin{bmatrix} a_{12} & a_{13} \end{bmatrix}
      \begin{bmatrix} 
        (a_{22}^* a_{232})^* a_{22}^* &
          a_{22}^* a_{23} (a_{33}^* a_{32} a_{22}^* a_{23})^* a_{33}^*\\
        a_{33}^* a_{32} (a_{22}^* a_{232})^* a_{22}^* &
          (a_{33}^* a_{32} a_{22}^* a_{23})^* a_{33}^*
      \end{bmatrix}
      \begin{bmatrix} a_{21} \\ a_{31} \end{bmatrix}
    = \\ =
    a_{11} + 
      \begin{bmatrix} a_{12} & a_{13} \end{bmatrix}
      \begin{bmatrix} 
        a_{22} & a_{23} \\ a_{32} & a_{33}
      \end{bmatrix}^*
      \begin{bmatrix} a_{21} \\ a_{31} \end{bmatrix}
    = 
    \left. \begin{bmatrix}
      a_{11} & a_{12} & a_{13} \\
      a_{21} & a_{22} & a_{23} \\
      a_{31} & a_{32} & a_{33}
    \end{bmatrix} \rightfeedback{n+m}
    \text{.}
  \end{multline*}
  
  Finally, the verification of the yanking axiom is simply that
  $$
    \phi(\cross{n}{n}) \feedback{n} 
    =
    \left. \begin{bmatrix} 0 & I_n \\ I_n & 0 \end{bmatrix}
      \rightfeedback{n}
    =
    0 + I_n 0^* I_n 
    = 
    I_n
    =
    \phi(\same{n})
    \text{,}
  $$
  where $I_n$ denotes the $n \times n$ identity matrix.
\end{proof}

The same $\feedback{n}$ operations also constitute formal feedbacks in 
the biaffine \PROP\ $\Baff(\mc{R})$, but it turns out the end of 
establishing that a particular quasi-order is preserved under 
symmetric join does not require that each and every \PROP\ employed 
in defining this orders is equipped with a formal feedback; there is 
often an easier way of getting there. That way does however still 
rely upon having a general formula for $\eval_f(K \join^r_q\nobreak H)$ 
in terms of $\eval_f(K)$, $\eval_f(H)$, and the operations of a 
\PROP\ with formal feedbacks.

\begin{theorem} \label{S:Feedback-join}
  Let $\bigl( \mc{P}, \{F_q\}_{q \in \B^{\bullet\times\bullet}}, 
  \{\feedback{n}\}_{n\in\N} \bigr)$ be a \PROP\ with formal 
  feedbacks. Let $\Omega$ be an $\N^2$-graded set and 
  \(y \in \Omega(1,1)\) some element. Let \(f\colon \Omega \Fpil 
  \mc{P}\) be an $\N^2$-graded set morphism such that \(f(y) = 
  \phi_{\mc{P}}(\same{1})\). If \(H,K \in \Nw(\Omega)\) are such that
  \begin{align*}
    \Tr(K) ={}& 
    \begin{bmatrix}
      a_{11} & a_{12} \\ a_{21} & a_{22}
    \end{bmatrix}
    \text{,} 
    &
    \Tr(H) ={}&
    \begin{bmatrix}
      b_{22} & b_{23} \\ b_{32} & b_{33}
    \end{bmatrix} 
    \text{,}
  \end{align*}
  where \(a_{11} \in \B^{k \times l}\), \(a_{22} \in \B^{r \times 
  q}\), \(b_{22} \in \B^{q \times r}\), and \(b_{33} \in \B^{m \times 
  n}\) are such that $a_{22} b_{22}$ is nilpotent, then 
  \begin{equation} \label{Eq:Feedback-join}
    \eval_f( K \join[y]^r_q H ) =
    \bigl( 
      \phi(\same{k} \star \cross{r}{m}) \circ
      \eval_f(K) \otimes \phi(\same{m}) \circ
      \phi(\same{l}) \otimes \eval_f(H) \circ
      \phi(\same{l} \star \cross{n}{r}) 
    \bigr) \feedback{r}
    \text{.}
  \end{equation}
\end{theorem}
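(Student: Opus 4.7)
The plan is to prove the identity in two stages: first check that the right hand side is well-defined, then establish the equation itself. For well-definedness I need to verify that the operand of $\feedback{r}$ lies in some filtration set $F_{q'}$ whose lower-right $r \times r$ block $q'_{22}$ is nilpotent. By Corollary~\ref{Kor:eval-filtrering}, $\eval_f(K) \in F_{\Tr(K)}$ and $\eval_f(H) \in F_{\Tr(H)}$; propagating these through the tensor products with identities, the two $\cross{}{}$ permutations, and the outer composition via the filtration axioms of Definition~\ref{Def:PROP-filtrering} yields an explicit boolean upper bound $q'$. A direct calculation identifies $q'_{22}$ as $a_{22} b_{22}$, which is nilpotent by hypothesis, so $\feedback{r}$ is defined.

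For the identity itself I would proceed by induction on $r$. The base case $r=0$ simplifies to $\eval_f(K \join^0_q H) = \eval_f(K) \otimes \phi(\same{m}) \circ \phi(\same{l}) \otimes \eval_f(H)$, since both $\cross{0}{m}$ and $\cross{n}{0}$ reduce to identity permutations and $\feedback{0}$ is the identity. This follows by unpacking both sides via the abstract index recipe of Theorem~\ref{S:AIN-sammanbindning}: the only join vertices in $K \join^0_q H$ are the $q$ ones connecting $H$'s first $q$ outputs to $K$'s last $q$ inputs, and applying $f$ with $f(y) = \phi_\mc{P}(\same{1})$ turns each such join vertex into the identity, realising the same ``direct'' connections as the tensor-with-identity composition on the right. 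For the inductive step $r \to r+1$ I would use the vanishing axiom to factor $\feedback{r+1}$ as $\feedback{r}\feedback{1}$, and use sliding, superposing, and tightening to reshuffle the permutations and tensor factors so that the inner $\feedback{r}$ application again has the form of the theorem statement (with $r$ in place of $r+1$), while the outermost $\feedback{1}$ closes off one remaining feedback loop.

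The main obstacle will be bookkeeping: threading the $\cross{r}{m}$ and $\cross{n}{r}$ permutations correctly through the peeling process, and keeping the $r$ feedback wires separated from the $q$ wires closed by composition. An alternative route, perhaps cleaner, would be to first prove the analogous identity inside the free \PROP\ $\Nwt(\Omega)$, where the operation $a \mapsto a \join^n_n \phi(\same{n})$ defines a formal feedback (its axioms follow from Lemma~\ref{L:join-associativitet}, Lemma~\ref{L:Symjoin-transposition}, and Corollary~\ref{Kor:Sammanbindingsformler}), and then transfer the identity to $\mc{P}$ via $\eval_f$; the catch is that one must independently check that $\eval_f$ intertwines the two formal feedbacks, which is essentially the special case $H = \phi(\same{r})$ of the theorem, so the same combinatorial core must be handled either way.
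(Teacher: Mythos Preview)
Your well-definedness check is correct: the lower-right $r \times r$ block of the filtration index for the operand does work out to $a_{22}b_{22}$. The main gap is in the inductive step. You manipulate the right hand side using the feedback axioms (vanishing, sliding, etc.), but the left hand side is $\eval_f$ of a \emph{specific network}, and you never explain how peeling off one $\feedback{1}$ corresponds to any operation on that network. To invoke the induction hypothesis you would need networks $K'$, $H'$ with $\eval_f(K' \join^r_{q'} H') = \eval_f(K \join^{r+1}_q H)$, and producing such $K'$, $H'$ (and proving that equality) requires network-side work --- cuts, subdivisions, Lemma~\ref{L:evalNatural} --- that your sketch does not do. Purely algebraic reshuffling on the $\mc{P}$ side cannot by itself establish anything about $\eval_f(K \join^{r+1}_q H)$; you correctly diagnose this circularity for your alternative route, but it applies to the main route as well.

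A secondary problem is that your base case $r=0$ already has arbitrary $q$ and is not trivial: the network $K \join^0_q H$ still has $q$ join vertices, and while the claimed identity is true, it needs Lemma~\ref{L:evalNatural} plus a cut decomposition argument (Lemma~\ref{L:evalCut}), not Theorem~\ref{S:AIN-sammanbindning}, which is a statement about $\mc{R}\{\Omega\}$ rather than evaluation into $\mc{P}$.

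The paper's proof is structured differently: induction on $r+q$ with base case $r=q=0$, where $K \join^0_0 H$ has a split and evaluates to a tensor product. The inductive step is a case analysis on which join vertex is minimal in the acyclic order on $V_{K \join^r_q H}$. In the main case one finds (or manufactures, via a cut in $K$) a straight-through edge in $K$ whose presence makes $K \join^r_q H$ a $y$-subdivision of $K' \join^r_{q-1} H$; Lemma~\ref{L:evalNatural} then equates their evaluations, and the tightening axiom handles the corresponding change on the $\mc{P}$ side. The remaining case (a minimal join vertex goes from $K$ to $H$ rather than $H$ to $K$) is handled by swapping $K$ and $H$ using Lemma~\ref{L:Symjoin-transposition} together with a purely algebraic identity in $\mc{P}$ showing that the two forms of the right hand side (with $\feedback{r}$ versus $\feedback{q}$) agree. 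The essential point is that every inductive reduction is anchored in a network-level operation (cut, split, subdivision, transposition) whose effect on $\eval_f$ is already known from Section~\ref{Sec:Natverk}.
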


Schematically, what this theorem says is that
$$
  \eval_f\left( \begin{mpgraphics*}{38}
    beginfig(38);
      PROPdiagram(0,0)
        same(2) circ
        box(2,2)(btex \strut$K$ etex) symjoinIi
          box(2,2)(btex \strut$H$ etex)
        circ same(2)
      ;
    endfig;
  \end{mpgraphics*} \right)
  =
  \left[ \begin{mpgraphics*}{39}
    beginfig(39);
      save defaultsep; defaultsep:=8pt;
      PROPdiagram(0,0)
        same(2) circ (
          same(1) otimes tightcross(0.5,2,1,1) circ
          box(2,2)(btex \strut$\eval_f(K)$ etex) otimes same(1) circ
          same(1) otimes box(2,2)(btex \strut$\eval_f(H)$ etex) circ
          same(1) otimes tightcross(0.5,2,1,1)
        ) feedback 1
        circ same(2)
      ;
    endfig;
  \end{mpgraphics*} \right]
$$
which should seem plausible on account of the two diagrams being 
topologically equivalent. The right hand side is however an 
expression in a specific \PROP\ with feedback, whereas the left hand 
side is (the evaluation of) a network, so the topological equivalence 
must be taken more as a roadmap to understand what individual steps 
below are about than as an actual proof.

Moreover, the proof is somewhat long and will be interrupted by 
various lemmas that establish facts useful not only in the specific 
context of proving Theorem~\ref{S:Feedback-join}. As in 
Section~\ref{Sec:FriPROP}, the various data defining a network are 
referred to using their basic symbol ($V$, $E$, $h$, $g$, $t$, $s$, 
and $D$ respectively) subscripted by the network at hand.

\begin{lemma}
  For any \(K,H \in \Nw(\Omega)\), the network \(G = K 
  \join[y]^0_0 H\) has a split $(F_\mathrm{l}, F_\mathrm{r}, 
  W_\mathrm{l}, W_\mathrm{r})$ such that the induced decomposition 
  $(G_\mathrm{l},G_\mathrm{r})$ has \(G_\mathrm{l} \simeq K\) and 
  \(G_\mathrm{r} \simeq H\).
\end{lemma}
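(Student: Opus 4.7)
The plan is to verify directly from Construction~\ref{Kons:symjoin} that when $r=q=0$, the network $G = K \join[y]^0_0 H$ decomposes as the disjoint ``side-by-side'' placement of $K$ and $H$, and that the obvious bipartition of vertices and edges yields a split in the required sense.

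First I would observe that when $r=q=0$ the construction simplifies dramatically: there are no join-vertices (since $V$ contains $\{v : 1 \leqslant v-2 \leqslant r+q\} = \varnothing$), no even edge has its head or tail routed to a join-vertex, and the edge label prefix ``$r+q+2$'' becomes just ``$2$''. So $V_G = \{0,1\} \cup \{2v : v \in V_K \setminus\{0,1\}\} \cup \{2v+1 : v \in V_H \setminus\{0,1\}\}$ and $E_G = \{2e\}_{e \in E_K} \cup \{2e+1\}_{e \in E_H}$, with heads and tails given by $(h,g)(2e) = (2h_K(e), g_K(e))$ when $h_K(e) \neq 0$ (and $(0, g_K(e))$ otherwise), $(h,g)(2e+1) = (2h_H(e)+1, g_H(e))$ when $h_H(e) \neq 0$ (and $(0, k + g_H(e))$ otherwise), and analogously for tails, where the shift of head (resp.\ tail) indices by $k$ (resp.\ $l$) appears only for right-hand legs going to the output (resp.\ from the input) vertex.

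Given this, the natural candidate for the split is
\[
  F_\mathrm{l} = \{2e\}_{e \in E_K}, \quad
  F_\mathrm{r} = \{2e+1\}_{e \in E_H}, \quad
  W_\mathrm{l} = \{2v\}_{v \in V_K \setminus\{0,1\}}, \quad
  W_\mathrm{r} = \{2v+1\}_{v \in V_H \setminus\{0,1\}}.
\]
The first two split conditions (that left edges are incident only with left vertices plus $\{0,1\}$, and similarly on the right) follow directly from reading off the head/tail formulae above. The third condition, that a left output-leg has smaller head index than a right output-leg, reduces to $g_K(e) \leqslant k < k + g_H(e')$; the fourth is the analogous statement for input-legs.

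Finally I would check that the induced decomposition $(G_\mathrm{l}, G_\mathrm{r})$ reproduces $K$ and $H$ up to relabelling. The maps $v \mapsto 2v$ on $V_K \setminus\{0,1\}$ (fixing $0,1$) and $e \mapsto 2e$ on $E_K$ give an isomorphism $K \Fpil G_\mathrm{l}$; the only subtle point is that the decomposition adjusts head indices of right-component edges meeting $0$ by subtracting $k$ (the number of left cut-edges at $0$) and tail indices of right-component edges meeting $1$ by subtracting $l$, which exactly cancels the $+k$ and $+l$ shifts in the construction, so that $v \mapsto 2v+1$, $e \mapsto 2e+1$ is an isomorphism $H \Fpil G_\mathrm{r}$. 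I expect no genuine obstacle here --- the whole content is bookkeeping with the index shifts in the definitions of $\join^0_0$ and of split decomposition, and the main thing to be careful about is that these two shifts match up exactly.
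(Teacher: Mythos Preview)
Your proposal is correct and takes essentially the same approach as the paper: the paper also defines the split by parity of edge and vertex labels and remarks that the isomorphisms ``mostly consist of doubling the labels (for $K$), or doubling the labels and adding $1$ (for $H$).'' Your version is just more explicit about verifying the split conditions and the cancellation of the index shifts.
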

\begin{proof}
  The split is given by
  \begin{align*}
    F_\mathrm{l} ={}& \setOf{ e \in E_G }{ \text{$e$ even} }
      \text{,}&
    F_\mathrm{r} ={}& \setOf{ e \in E_G }{ \text{$e$ odd} }
      \text{,}\\
    W_\mathrm{l} ={}& 
      \setOf[\big]{ v \in V_G \setminus \{0,1\} }{ \text{$v$ even} }
      \text{,}&
    W_\mathrm{r} ={}& 
      \setOf[\big]{ v \in V_G \setminus \{0,1\} }{ \text{$v$ odd} }
  \end{align*}
  and the two isomorphisms mostly consist of doubling the labels (for 
  $K$), or doubling the labels and adding $1$ (for $H$).
\end{proof}

This lemma provides for the base case of an induction over $r+q$, 
since if \(r=q=0\) it follows that
\begin{multline*}
  \eval_f( K \join[y]^r_q H ) =
  \eval_f(K) \otimes \eval_f(H) = \\ =
  \eval_f(K) \otimes \phi(\same{m}) \circ
    \phi(\same{l}) \otimes \eval_f(H) 
  = \\ =
  \phi(\same{k}) \otimes \phi(\same{m}) \circ
  \eval_f(K) \otimes \phi(\same{m}) \circ
  \phi(\same{l}) \otimes \eval_f(H) \circ
  \phi(\same{l}) \otimes \phi(\same{n}) 
  = \\ =
  \phi(\same{k}) \otimes \phi(\cross{0}{m}) \circ
  \eval_f(K) \otimes \phi(\same{m}) \circ
  \phi(\same{l}) \otimes \eval_f(H) \circ
  \phi(\same{l}) \otimes \phi(\cross{n}{0}) 
  = \\ =
  \bigl(
  \phi(\same{k}) \otimes \phi(\cross{r}{m}) \circ
  \eval_f(K) \otimes \phi(\same{m}) \circ
  \phi(\same{l}) \otimes \eval_f(H) \circ
  \phi(\same{l}) \otimes \phi(\cross{n}{r}) 
  \bigr) \feedback{r}
  \text{.}
\end{multline*}
Unsurprisingly, the induction hypothesis is that 
\eqref{Eq:Feedback-join} holds whenever $r+q$ is one less than 
for the two networks $K$ and $H$ currently being considered.

What is perhaps more surprising is that the only case in which this 
hypothesis will be invoked directly is when $K$ has an edge $e_0$ 
such that \(h_K(e_0) = 0\), \(g_K(e_0) = k\), \(t_K(e_0) = 1\), and 
\(s_K(e_0) = l+q\). Schematically, what one wants to prove then looks 
like
\begin{equation} \label{Eq2:Feedback-join}
  \eval_f \left(
  \begin{mpgraphics*}{70}
    beginfig(70);
      PROPdiagram(0,0)
        same(3) circ
        symjoin_pq( frame(
          same(1) otimes cross(1,1) circ
          box(2,2)(btex \strut$K'$ etex) otimes same(1)
        ), 1, 2, 
          box(3,2)(btex \strut$H$ etex)
        )
        circ same(2)
      ;
    endfig;
  \end{mpgraphics*} \right)
  =
  \left[ \begin{mpgraphics*}{71}
    beginfig(71);
      PROPdiagram(0,0)
        same(3) circ (
          same(2) otimes tightcross(0.5,2,1,1) circ
          frame(
            same(1) otimes tightcross(0.7,1.3,1,1) circ
            box(2,2)(btex \strut$\eval_f(K')$ etex) otimes same(1)
          ) otimes same(1) circ
          same(1) otimes box(3,2)(btex \strut$\eval_f(H)$ etex) circ
          same(1) otimes tightcross(0.5,2,1,1)
        ) feedback 1
        circ same(2)
      ;
    endfig;
  \end{mpgraphics*} \right]
  \text{,}
\end{equation}
and this is done by reducing to
\begin{equation} \label{Eq3:Feedback-join}
  \eval_f \left(
  \begin{mpgraphics*}{72}
    beginfig(72);
      PROPdiagram(0,0)
        same(3) circ
        symjoin_pq( 
          box(2,2)(btex \strut$K'$ etex) 
        , 1, 1, 
          box(3,2)(btex \strut$H$ etex)
        )
        circ same(2)
      ;
    endfig;
  \end{mpgraphics*} \right)
  =
  \eval_f( K' \join[y]^r_{q-1} H )
  =
  \left[ \begin{mpgraphics*}{73}
    beginfig(73);
      PROPdiagram(0,0)
        same(3) circ (
          same(1) otimes tightcross(0.7,1.3,1,2) circ
          box(2,2)(btex \strut$\eval_f(K')$ etex) otimes same(2) circ
          same(1) otimes box(3,2)(btex \strut$\eval_f(H)$ etex) circ
          same(1) otimes tightcross(0.7,1.3,1,1)
        ) feedback 1
        circ same(2)
      ;
    endfig;
  \end{mpgraphics*} \right]
\end{equation}
where the latter equality holds by the induction hypothesis.

A proper proof will show that the left hand sides of 
\eqref{Eq2:Feedback-join} and \eqref{Eq3:Feedback-join} are equal, 
and ditto the right hand sides, but first it must be sorted out what 
$K'$ is for the given $K$. In $(\same{k-1} \star\nobreak \cross{1}{r}) 
\cdot K$, the head index of $e_0$ is the maximal $k+r$, just as the 
tail index is the maximal $l+q$. Hence $(\same{k-1} \star\nobreak 
\cross{1}{r}) \cdot K$ has a split with $e_0$ in the right part and 
everything else in the left part; let $K'$ be this left part. Then
\begin{multline*}
  \eval_f(K) 
  =
  \phi( \same{k-1} \star \cross{r}{1}) \circ
  \eval_f\bigl( (\same{k-1} \star \cross{1}{r}) \cdot K \bigr)
  = \\ =
  \phi( \same{k-1}) \otimes \phi(\cross{r}{1}) \circ
  \eval_f(K') \otimes \phi(\same{1})
  \text{,}
\end{multline*}
hence
\begin{align*}
  \hspace{5em} & \hspace{-5em}
  \phi(\same{k} \star \cross{r}{m}) \circ
  \eval_f(K) \otimes \phi(\same{m}) \circ
  \phi(\same{l}) \otimes \eval_f(H) \circ
  \phi(\same{l} \star \cross{n}{r}) 
  = \\ ={}&
  \phi(\same{k} \star \cross{r}{m}) \circ
  \bigl(
    \phi( \same{k-1}) \otimes \phi(\cross{r}{1}) \circ
    \eval_f(K') \otimes \phi(\same{1})
  \bigr) \otimes \phi(\same{m}) 
    \\ & \qquad\qquad {}\circ
  \phi(\same{l}) \otimes \eval_f(H) \circ
  \phi(\same{l} \star \cross{n}{r}) 
  = \\ ={}&
  \phi(\same{k} \star \cross{r}{m}) \circ
  \phi( \same{k-1}) \otimes \phi(\cross{r}{1}) \otimes 
    \phi(\same{m}) \circ
  \eval_f(K') \otimes \phi(\same{1+m}) 
    \\ & \qquad\qquad {}\circ
  \phi(\same{l}) \otimes \eval_f(H) \circ
  \phi(\same{l} \star \cross{n}{r}) 
  = \\ ={}&
  \phi( \same{k-1}) \otimes \phi(\cross{r}{1+m}) \circ
  \eval_f(K') \otimes \phi(\same{1+m}) 
    \\ & \qquad\qquad {}\circ
  \phi(\same{l}) \otimes \eval_f(H) \circ
  \phi(\same{l} \star \cross{n}{r}) 
\end{align*}
and thus \(\bigl( \phi(\same{k} \star \cross{r}{m}) \circ
\eval_f(K) \otimes \phi(\same{m}) \circ 
\phi(\same{l}) \otimes \eval_f(H) \circ
\phi(\same{l} \star \cross{n}{r}) \bigr) \feedback{r} = 
\eval_f(K' \join[y]^r_{q-1}\nobreak H)\) by the induction hypothesis.

The reason \(G = K \join[y]^r_q H\) and \(G' = K' \join[y]^r_{q-1} H\) 
evaluate to the same thing is that the former is a subdivision of the 
latter; the extra vertex in $G$ is that labelled $2+r+q$, which also 
is the tail of $e_0$'s counterpart $2e_0$. Cutting $G$ above that 
vertex, one obtains a decomposition $(G_0,G_1)$ where \(G_0 = 
\Nwfuse{ \vek{a} b \vek{c} }{ (2+r+q\colon y)^b_d }{ \vek{a} d 
\vek{c} }\) for \(\Norm{\vek{a}} = k-1\), \(\Norm{\vek{c}} = m\), \(b 
= 2e_0\) and \(d = 2e_1+1\) for that \(e_1 \in E_H\) which has 
\(h_H(e_1) = 0\) and \(g_H(e_1) = q\). Furthermore \(G_1 \simeq 
G'\)\Ldash the only difference in the labelling being a shift of 
vertex labels above $2+r+q$ by $1$ since $G'$ has one join vertex 
less\Rdash and thus
\begin{multline*}
  \eval_f(G) =
  \eval_f(G_0) \circ \eval_f(G_1) =
  \phi(\same{k-1}) \otimes f(y) \otimes \phi(\same{m}) \circ
    \eval_f(G') = \\ =
  \phi(\same{k-1}) \otimes \phi(\same{1}) \otimes \phi(\same{m})
    \circ \eval_f(G') =
  \phi(\same{k+m}) \circ \eval_f(G') =
  \eval_f(G')
\end{multline*}
as claimed.

The next case, which can be reduced to the previous one, is the mild 
generalisation that the edge $e_0$ has \(l < s_K(e_0) < l+q\) rather 
than \(s_K(e_0) = l+q\). The way to perform that reduction is 
effectively to permute the join vertices.

\begin{lemma}
  For any \(K \in \Nw(\Omega)(k +\nobreak r, l +\nobreak q)\), 
  \(H \in \Nw(\Omega)(q +\nobreak m, r +\nobreak n)\), 
  \(\sigma \in \Sigma_q\), and \(\tau \in \Sigma_r\), it holds 
  that
  \begin{equation}
    \bigl( K \cdot (\same{l} \star \sigma) \bigr) 
    \join[y]^r_q 
    \bigl( H \cdot (\tau \star \same{n}) \bigr)
    \simeq
    \bigl( (\same{k} \star \tau) \cdot K \bigr)
    \join[y]^r_q
    \bigl( (\sigma \star \same{m}) \cdot H \bigr)
  \end{equation}
  whenever either side of the equation is defined.
\end{lemma}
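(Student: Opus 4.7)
The plan is to exhibit an explicit isomorphism between the two networks, the key observation being that all join-vertices carry the ``neutral'' annotation $y$ of arity and coarity $1$, so they can be freely permuted provided the head/tail indices at their $K$- and $H$-facing sides are adjusted consistently.

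First I would check that both sides are defined simultaneously. Writing $\Tr(K) = \left[\begin{smallmatrix} a_{11} & a_{12} \\ a_{21} & a_{22} \end{smallmatrix}\right]$ and $\Tr(H) = \left[\begin{smallmatrix} b_{22} & b_{23} \\ b_{32} & b_{33} \end{smallmatrix}\right]$, the condition for $K \join^r_q H$ to be a network is nilpotency of $a_{22} b_{22}$. Since $\Tr(G \cdot \mu) = \Tr(G) \cdot \phi(\mu)$ and $\Tr(\mu \cdot G) = \phi(\mu) \cdot \Tr(G)$, the left side of the equation requires nilpotency of $a_{22} \phi(\sigma) b_{22} \phi(\tau)$, while the right side requires nilpotency of $\phi(\tau) a_{22} \phi(\sigma) b_{22}$. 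These are equivalent by Lemma~\ref{L:Semiringstjarna}, so one side is defined iff the other is.

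Next I would observe that, writing out Construction~\ref{Kons:symjoin} for each side, the vertex sets and edge sets agree verbatim with those of $K \join^r_q H$; the only differences are in the head/tail indices of edges incident with the join-vertices $2+i$ for $i \in [r+q]$. I would then define the candidate isomorphism $(\chi,\psi)$ from the left to the right side by $\psi = \mathrm{id}$, $\chi(v)=v$ for non-join-vertices, and
\[
  \chi(2+i) = 2 + \tau(i) \text{ for } i \in [r], \qquad
  \chi(2+r+j) = 2 + r + \sigma(j) \text{ for } j \in [q].
\]

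The verification then reduces to four cases corresponding to legs of $K$ or $H$ attached to a join-vertex. For instance, an edge $e \in E_K$ with $h_K(e)=0$ and $g_K(e) = k+i$ ($i\in[r]$) has head $2+i$ on the left and head $2 + \tau(i)$ on the right (because $(\same{k} \star \tau)$ permutes $K$'s outputs), so $\chi$ matches; an edge $f \in E_H$ with $t_H(f)=1$ and $s_H(f)=j$ ($j\in[r]$) has tail $2+\tau^{-1}(j)$ on the left (since $H \cdot (\tau \star \same{n})$ changes $s_H(f)$ to $\tau^{-1}(j)$) and tail $2+j$ on the right, and again $\chi(2+\tau^{-1}(j)) = 2+j$. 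The two analogous cases for $\sigma$ acting on the inputs of $K$ resp.\ outputs of $H$ are treated the same way. All other incidence data is unaffected, and the annotation condition is immediate since join-vertices all carry $y$. The main obstacle is purely bookkeeping: keeping straight the inverse conventions in the left and right actions (recall $s'(e) = \tau^{-1}(s(e))$ for $G \cdot \tau$ but $g'(e) = \sigma(g(e))$ for $\sigma \cdot G$), but once set up carefully the four cases match on the nose.
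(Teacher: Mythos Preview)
Your proof is correct and essentially identical to the paper's: both construct the same isomorphism with $\psi = \mathrm{id}$ and $\chi$ permuting the join-vertices by $\tau$ on the first $r$ and by $\sigma$ on the last $q$, then verify the leg incidences case by case. Your explicit check that both sides are defined simultaneously (via nilpotency of the cyclically conjugate matrices) is a small addition the paper leaves implicit.
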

\begin{proof}
  Let \(K' = K \cdot (\same{l} \star\nobreak \sigma)\), \(H' = 
  H \cdot (\tau \star\nobreak \same{n})\), \(K'' = (\same{k} 
  \star\nobreak \tau) \cdot K\), and \(H'' = (\sigma \star\nobreak 
  \same{m}) \cdot H\). Then the isomorphism $(\chi,\psi)$ from $K' 
  \join[y]^r_q H'$ to $K'' \join[y]^r_q H''$ has $\psi$ being the 
  identity, and
  \[
    \chi(v) = \begin{cases}
      v& \text{if \(v \leqslant 1\) or \(v > 2+r+q\),}\\
      2 + \tau(v-2)& \text{if \(2 < v \leqslant 2+r\),}\\
      2+r + \sigma(v-2-r)& \text{if \(2+r < v \leqslant 2+r+q\),}
    \end{cases}
  \]
  because if \(e_1 \in E_H\) is an output leg with \(g_H(e_1) 
  \leqslant q\) then the corresponding edge $2e_1+1$ from the 
  symmetric joins has head $2+r+g_K(e_1)$ in $K' \join[y]^r_q H'$ and 
  head $2+r + \sigma\bigl( g_K(e_1) \bigr)$ in $K'' \join[y]^r_q 
  H''$. In the latter case, the edge leaving that vertex is $2e_2$ 
  for \(e_2 \in E_K\) such that \(s_K(e_2) - l = \sigma\bigl( 
  g_K(e_1) \bigr)\), but that is equivalent to \(\sigma^{-1}\bigl( 
  s_K(e_2) -\nobreak l \bigr) = g_K(e_1)\), which is the condition 
  that the head in $K' \join[y]^r_q H'$ of $2e_2$ is  $2+r+g_K(e_1)$. 
  Hence the joining of output legs of $H'$Êto input legs of $K'$ is 
  the same as that of output legs of $H''$ to input legs of $K''$, 
  which is what the isomorphism requires. The effect of $\tau$ is 
  checked in the same way.
\end{proof}

Thus, if \(\sigma \in \Sigma_q\) is such that \(l + \sigma(q) = 
(\same{l} \star\nobreak \sigma)( l +\nobreak q ) = s_K(e_0)\) then
\begin{align*}
  \hspace{3em} & \hspace{-3em}
  \eval_f( K \join[y]^r_q H )
  = \\ ={}&
  \eval_f\Bigl( \bigl(K \cdot (\same{l} \star \sigma) \cdot 
  (\same{l} \star \sigma^{-1}) \bigr) \join[y]^r_q H \Bigr)
  = \displaybreak[0]\\ ={}&
  \eval_f\Bigl( \bigl(K \cdot (\same{l} \star \sigma) \bigr) 
  \join[y]^r_q \bigl( (\sigma^{-1} \star \same{m}) \cdot H \bigr) \Bigr)
  = \displaybreak[0]\\ ={}&
  \Bigl(
    \phi(\same{k} \star \cross{r}{m}) \circ
    \eval_f\bigl( K \cdot (\same{l} \star \sigma) \bigr) \otimes
      \phi(\same{m}) 
    \\ & \qquad{}\circ
    \phi(\same{l}) \otimes 
      \eval_f\bigl( (\sigma^{-1} \star \same{m}) \cdot H \bigr) \circ
    \phi(\same{l} \star \cross{n}{r})
  \Bigr) \feedback{r}
  = \displaybreak[0]\\ ={}&
  \Bigl(
    \phi(\same{k} \star \cross{r}{m}) \circ
    \bigl( \eval_f(K) \circ \phi(\same{l} \star \sigma) \bigr) \otimes
      \phi(\same{m}) 
  \\ & \qquad{} \circ
    \phi(\same{l}) \otimes 
      \bigl( \phi(\sigma^{-1} \star \same{m}) \circ \eval_f(H) \bigr) \circ
    \phi(\same{l} \star \cross{n}{r})
  \Bigr) \feedback{r}
  = \displaybreak[0]\\ ={}&
  \Bigl(
    \phi(\same{k} \star \cross{r}{m}) \circ
    \eval_f(K) \otimes \phi(\same{m}) \circ 
    \phi(\same{l} \star \sigma \star \same{m}) \circ
    \phi(\same{l} \star\sigma^{-1} \star \same{m}) 
    \\ & \qquad{} \circ 
    \phi(\same{l}) \otimes \eval_f(H) \circ
    \phi(\same{l} \star \cross{n}{r})
  \Bigr) \feedback{r}
  = \\ ={}&
  \Bigl(
    \phi(\same{k} \star \cross{r}{m}) \circ
    \eval_f(K) \otimes \phi(\same{m}) \circ 
    \phi(\same{l}) \otimes \eval_f(H) \circ
    \phi(\same{l} \star \cross{n}{r})
  \Bigr) \feedback{r} 
  \text{.}
\end{align*}

Finding such edges through $K$ is however not a very common 
occurrence, so the third case deals with manufacturing them, by making 
a suitable cut in the network. Let \(G = K \join[y]^r_q H\). By 
acyclicity, there is a partial order $P$ on $V_G$ which has \(h_G(e) < 
t_G(e) \pin{P}\) for all \(e \in E_G\); pick the minimal such order. 
The third case covers those $G$ for which the restriction of $P$ to 
the join-vertices has a minimal element $v_0$ satisfying \(2+r < v_0 
\leqslant 2+r+q\) (and which have not been covered by the first or 
second case already).

Let \(W_0 = \setOf[\big]{ v \in V_G \setminus \{0,1\} }{ v < v_0 
\pin{P} }\) and \(W_1 = V_G \setminus \{0,1\} \setminus W_0\); then 
$(W_0,W_1)$ is a cut in $G$. Let $E_\mathrm{c}$ be the set of cut 
edges. The odd (i.e., corresponding to $H$-) edges in $E_\mathrm{c}$ 
are the odd edges with head $0$, since all join-vertices are in $W_1$, 
and by minimality of $P$ all inner vertices of $H$ therefore must be 
mapped into $W_1$ as well. There are $m$ such edges, so define \(k' = 
\card{E_\mathrm{c}} - m\). For an ordered cut $(W_0,W_1,p)$ it is 
convenient to define \(p(e) = g_G(e) - k + k'\) for these odd edges. 
Furthermore let \(e_0 \in E_K\) be the edge such that \(t_G(2e_0) = 
v_0\). This is a cut edge too, so define \(p(2e_0) = m'\). Remaining 
cut edges can be put in any order.

The main idea for this third case is that in the decomposition 
$(G_0,G_1)$ induced by the ordered cut $(W_0,W_1,p)$, the upper part 
$G_1$ is itself a symmetric join $K_1 \join[y]^r_q H$ for the $K_1$ 
obtained by making the corresponding cut in $K$, whereas the lower 
parts $G_0$ and $K_0$ differ only by some padding. Clearly, 
$(W_0',W_1')$ is a cut in $K$ for
\begin{align*}
  W_0' ={}& 
  \setOf[\big]{ v \in V_K \setminus\{0,1\} }{ r+q + 2v \in W_0 }
  \text{,}\\
  W_1' ={}& 
  \setOf[\big]{ v \in V_K \setminus\{0,1\} }{ r+q + 2v \in W_1 }
  \text{;}
\end{align*}
let $E_\mathrm{c}'$ be the corresponding set of cut edges. $k'$ of 
those are the \(e \in E_K\) for which \(2e \in E_\mathrm{c}\), but 
there are also $r$ edges with head $0$ and head index $>k$; these are 
the edges whose counterparts in $G$ have their heads at join 
vertices. Therefore define
$$
  p'(e) = \begin{cases}
    p(2e)& \text{if \(2e \in E_\mathrm{c}\),}\\
    k' + s_K(e)-k& \text{otherwise}
  \end{cases}
  \qquad\text{for \(e \in E'_\mathrm{c}\).}
$$
Then let $(K_0,K_1)$ be the decomposition of $K$ corresponding to the 
ordered cut $(W_0',W_1',p')$.

What happens is that \(G_1 = K_1 \join[y]^r_q H\)\Ldash not merely 
isomorphic (even though that would suffice), but actually equal\Rdash 
since $\join^r_q$ relabels vertices and edges in the exact same way 
regardless of what networks they are part of, and all vertices and 
edges in $K_1$ are present also in $K$; clearly $K_1 \join[y]^r_q H$ 
contains all the $H$-, join-, and non-inner vertices of $G$ and hence 
$G_1$, and the condition for a $K$-vertex to end up above the cut in 
$G$ is exactly the same as for it to end up in $K_1$. Similarly \(e 
\in E_K\) satisfies \(e \in E_{K_1}\) iff \(t_K(e) \in W_1' \cup 
\{1\}\), which is equivalent to the condition \(t_G(2e) \in W_1 
\cup \{1\}\) for its counterpart $2e$ to end up in $G_1$. The only 
area where something could happen is at the cut, but this choice of 
$p'$ is the one which ensures that any edge $2e$ for \(e \in 
E'_\mathrm{c}\) has the same head and head index in $K_1 \join[y]^r_q 
H$ as it does in $G_1$. Letting \(a = \phi(\same{l}) \otimes 
\eval_f(H) \circ \phi(\same{l} \star\nobreak \cross{n}{r})\) as a 
shorthand, one therefore has
\begin{multline*}
  \eval_f(G) =
  \eval_f(G_0) \circ \eval_f(G_1) =
  \eval_f(G_0) \circ \eval_f(K_1 \join[y] H)
  = \\ =
  \eval_f(G_0) \circ \bigl(
    \phi(\same{k'} \star \cross{r}{m}) \circ
    \eval_f(K_1) \otimes \phi(\same{m}) \circ
    a
  \bigr) \feedback{r}
\end{multline*}
on account of $K_1$ having the edge $e_0$ which makes $K_1 \join[y] 
H$ fit the first or second case.

Regarding the lower parts $G_0$ and $K_0$, one may observe that these 
have splits in which the right parts consist of $m$ and $r$ 
respectively parallel edges. Writing $(G_{00},G_{01})$ and 
$(K_{00},K_{01})$ for the corresponding decompositions, one may 
furthermore observe that \(G_{00} \simeq K_{00}\) with the 
isomorphism $(\chi,\psi)$ from $K_{00}$ to $G_{00}$ having 
\(\psi(e) = 2e\) and \(\chi(v) = r+q+2v\) for \(v > 1\). Therefore 
\(\eval_f(G_0) = \eval_f(G_{00}) \otimes \eval_f(G_{01}) = 
\eval_f(K_{00}) \otimes \phi(\same{m})\), and hence
\begin{multline*}
  \eval_f(G_0) \circ \bigl(
    \phi(\same{k'} \star \cross{r}{m}) \circ
    \eval_f(K_1) \otimes \phi(\same{m}) \circ
    a
  \bigr) \feedback{r}
  = \\ =
  \eval_f(K_{00}) \otimes \phi(\same{m}) \circ \bigl(
    \phi(\same{k'} \star \cross{r}{m}) \circ
    \eval_f(K_1) \otimes \phi(\same{m}) \circ
    a
  \bigr) \feedback{r}
  = \displaybreak[0]\\ =
  \bigl(
    \eval_f(K_{00}) \otimes \phi(\same{m}) \otimes \phi(\same{r}) \circ
    \phi(\same{k'} \star \cross{r}{m}) \circ
    \eval_f(K_1) \otimes \phi(\same{m}) \circ
    a
  \bigr) \feedback{r}
  = \displaybreak[0]\\ =
  \bigl(
    \phi(\same{k} \star \cross{r}{m}) \circ
    \eval_f(K_{00}) \otimes \phi(\same{r}) \otimes \phi(\same{m}) \circ
    \eval_f(K_1) \otimes \phi(\same{m}) \circ
    a
  \bigr) \feedback{r}
  = \displaybreak[0]\\ =
  \Bigl(
    \phi(\same{k} \star \cross{r}{m}) \circ
    \bigl(
      \eval_f(K_{00}) \otimes \phi(\same{r}) \circ
      \eval_f(K_1) 
    \bigr) \otimes \phi(\same{m}) \circ
    a
  \Bigr) \feedback{r}
  = \displaybreak[0]\\ =
  \Bigl(
    \phi(\same{k} \star \cross{r}{m}) \circ
    \bigl(
      \eval_f(K_0) \circ
      \eval_f(K_1) 
    \bigr) \otimes \phi(\same{m}) \circ
    a
  \Bigr) \feedback{r}
  = \\ =
  \bigl(
    \phi(\same{k} \star \cross{r}{m}) \circ
    \eval_f(K) \otimes \phi(\same{m}) \circ
    a
  \bigr) \feedback{r}
\end{multline*}
as claimed in \eqref{Eq:Feedback-join}; this sequence of equalities 
features the only use of tightening in this proof to move some right 
hand side material (other than mere permutations) out of the feedback.

The fourth and final case of the induction step concerns those $K 
\join[y]^r_q H$ where none of the join-vertices from $H$ to $K$ is 
minimal in the partial order $P$. Since the set of join-vertices is 
finite, it must contain at least one minimal element, and since it is 
not one of those from right to left factor it must be one from right 
to left factor and may be denoted by $2+i$ for some \(i \in [r]\). 
This implies that in
\[
  G' := 
  (\cross{q}{m} \cdot H \cdot \cross{n}{r}) \join[y]^q_r
    (\cross{k}{r} \cdot K \cdot \cross{q}{l})
  \simeq
  \cross{k}{m} \cdot G \cdot \cross{n}{l} 
\]
(isomorphism by Lemma~\ref{L:Symjoin-transposition}) the join-vertex 
$2+q+i$ is minimal under this partial order, and hence $G'$ falls 
into one of the first three cases. 
\begin{figure}
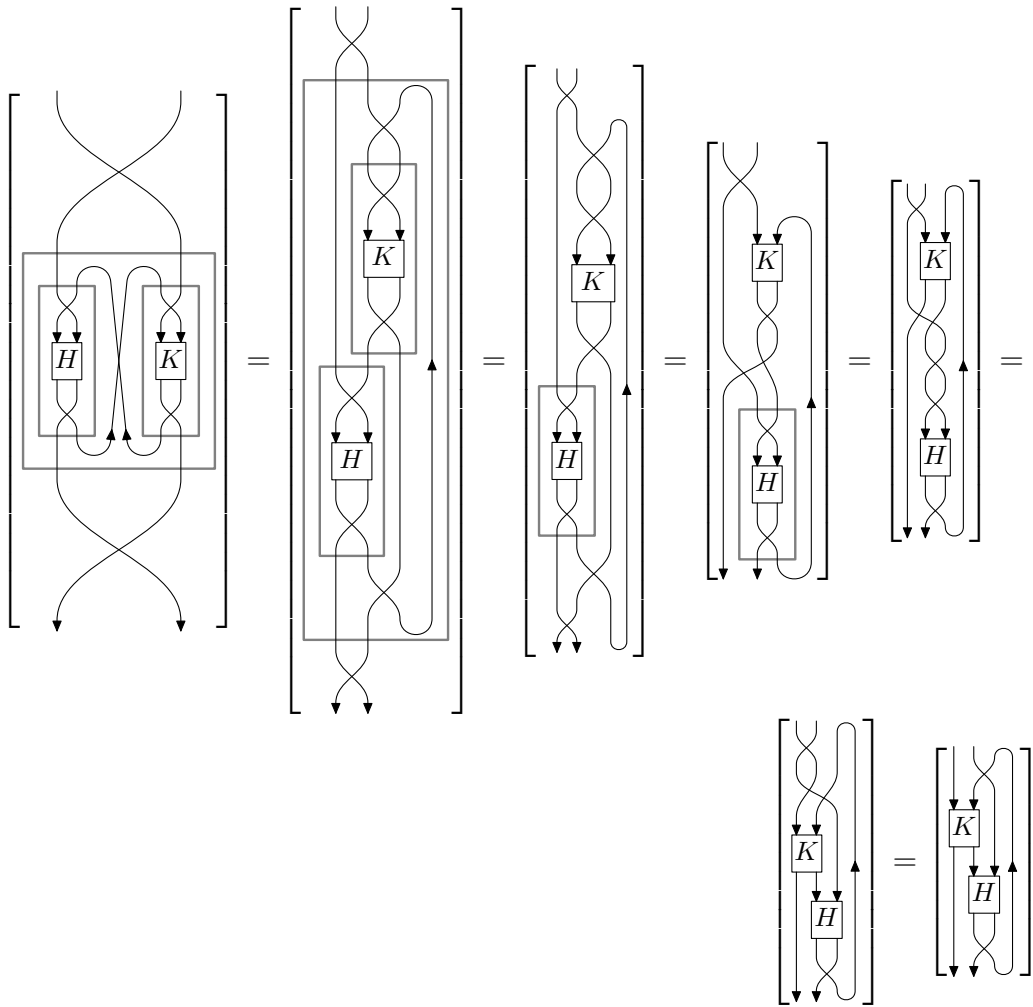

\begin{center}
  \hspace*{-\leftskip}\begin{math}
  \left[ \begin{mpgraphics*}{74}
    beginfig(74);
      PROPdiagram(0,0)
        tightcross(0.7,1.2,1,1) circ frame(
          same(2) circ
          frame(
            cross(1,1) circ 
            box(2,2)(btex \strut$H$ etex) circ
            cross(1,1)
          ) symjoinIi frame(
            cross(1,1) circ 
            box(2,2)(btex \strut$K$ etex) circ
            cross(1,1)
          )
          circ same(2)
        ) circ tightcross(0.7,1.2,1,1)
      ;
    endfig;
  \end{mpgraphics*} \right]
  =
  \left[ \begin{mpgraphics*}{75}
    beginfig(75);
      PROPdiagram(0,0)
        cross(1,1) circ frame(
          (
            same(1) otimes cross(1,1) circ
            frame(
              cross(1,1) circ 
              box(2,2)(btex \strut$H$ etex) circ
              cross(1,1)
            ) otimes same(1) circ
            same(1) otimes frame(
              cross(1,1) circ 
              box(2,2)(btex \strut$K$ etex) circ
              cross(1,1)
            ) circ
            same(1) otimes cross(1,1)
          ) feedback 1
        ) circ cross(1,1)
      ;
    endfig;
  \end{mpgraphics*} \right]
  =
  \left[ \begin{mpgraphics*}{76}
    beginfig(76);
      PROPdiagram(0,0)
        (
          cross(1,1) otimes same(1) circ 
          same(1) otimes cross(1,1) circ
          frame(
            cross(1,1) circ 
            box(2,2)(btex \strut$H$ etex) circ
            cross(1,1)
          ) otimes same(1) circ
          same(1) otimes (
            cross(1,1) circ 
            box(2,2)(btex \strut$K$ etex) circ
            cross(1,1)
          ) circ
          same(1) otimes cross(1,1)
        ) feedback 1
        circ cross(1,1)
      ;
    endfig;
  \end{mpgraphics*} \right]
  =
  \left[ \begin{mpgraphics*}{77}
    beginfig(77);
      PROPdiagram(0,0)
        (
          same(1) otimes frame(
            cross(1,1) circ 
            box(2,2)(btex \strut$H$ etex) circ
            cross(1,1)
          ) circ
          cross(2,1) circ
          same(1) otimes cross(1,1) circ 
          same(1) otimes box(2,2)(btex \strut$K$ etex)
        ) feedback 1
        circ cross(1,1)
      ;
    endfig;
  \end{mpgraphics*} \right]
  =
  \left[ \begin{mpgraphics*}{78}
    beginfig(78);
      PROPdiagram(0,0)
        (
          same(1) otimes (
            cross(1,1) circ 
            box(2,2)(btex \strut$H$ etex) circ
            cross(1,1)
          ) circ
          same(1) otimes cross(1,1) circ 
          cross(1,2) circ
          same(1) otimes box(2,2)(btex \strut$K$ etex)
          circ cross(1,1) otimes same(1)
        ) feedback 1
      ;
    endfig;
  \end{mpgraphics*} \right]
  =
  \left[ \begin{mpgraphics*}{79}
    beginfig(79);
      PROPdiagram(0,0)
        (
          same(1) otimes cross(1,1) circ 
          same(1) otimes box(2,2)(btex \strut$H$ etex) circ
          box(2,2)(btex \strut$K$ etex) otimes same(1) circ
          cross(1,2) circ
          cross(1,1) otimes same(1)
        ) feedback 1
      ;
    endfig;
  \end{mpgraphics*} \right]
  =
  \left[ \begin{mpgraphics*}{80}
    beginfig(80);
      PROPdiagram(0,0)
        (
          same(1) otimes cross(1,1) circ 
          same(1) otimes box(2,2)(btex \strut$H$ etex) circ
          box(2,2)(btex \strut$K$ etex) otimes same(1) circ
          same(1) otimes cross(1,1)
        ) feedback 1
      ;
    endfig;
  \end{mpgraphics*} \right]
  \end{math}\hspace*{-\rightskip}
\end{center}
  \caption{Final case of Theorem~\ref{S:Feedback-join}: unwind the 
  symmetric join by half a turn}
  \label{Fig:VevaUpp}
\end{figure}
Thus an evaluation of this symmetric join can schematically be reduced 
to an expression involving feedbacks as shown in 
Figure~\ref{Fig:VevaUpp}, the formal counterpart of which reads
\begin{align*}
  \eval_f(G) ={}&
  \phi(\cross{m}{k}) \circ \eval_f(G') \circ \phi(\cross{l}{n}) 
  = \\ ={}&
  \phi(\cross{m}{k}) \circ \Bigl(
    \phi(\same{m} \star \cross{q}{k}) \circ
    \eval_f(\cross{q}{m} \cdot H \cdot \cross{n}{r}) \otimes
      \phi(\same{k}) \circ
        {}\\ & \qquad{}\circ
    \phi(\same{n}) \otimes 
      \eval_f(\cross{k}{r} \cdot K \cdot \cross{q}{l}) \circ
    \phi(\same{n} \star \cross{l}{q})
  \Bigr) \feedback{q} \circ \phi(\cross{l}{n}) 
  = \displaybreak[0]\\ ={}&
  \Bigl(
    \phi(\cross{m}{k}) \otimes \phi(\same{q}) \circ 
    \phi(\same{m} \star \cross{q}{k}) \circ
    \eval_f(\cross{q}{m} \cdot H \cdot \cross{n}{r}) \otimes
      \phi(\same{k}) \circ
        {}\\ & \qquad{}\circ
    \phi(\same{n}) \otimes \bigl(
      \phi(\cross{k}{r}) \circ 
      \eval_f(K) \circ \phi(\cross{q}{l}) 
    \bigr) \circ
    \phi(\same{n} \star \cross{l}{q}) 
  \Bigr) \feedback{q} \circ \phi(\cross{l}{n})  
  = \displaybreak[0]\\ ={}&
  \Bigl(
    \phi(\cross{m+q}{k}) \circ
    \eval_f(\cross{q}{m} \cdot H \cdot \cross{n}{r}) \otimes
      \phi(\same{k}) \circ
        {}\\ & \qquad{}\circ
    \phi(\same{n}) \otimes \bigl(
      \phi(\cross{k}{r}) \circ \eval_f(K) \circ 
      \phi(\cross{q}{l}) \circ \phi(\cross{l}{q})
    \bigr) 
  \Bigr) \feedback{q}  \circ \phi(\cross{l}{n}) 
  = \displaybreak[0]\\ ={}&
  \Bigl(
    \phi(\same{k}) \otimes
      \eval_f(\cross{q}{m} \cdot H \cdot \cross{n}{r}) \circ
    \phi(\cross{n+r}{k}) \circ
        {}\\ & \qquad{}\circ
    \phi(\same{n} \star \cross{k}{r}) \circ 
    \phi(\same{n}) \otimes \eval_f(K) 
  \Bigr) \feedback{q}  \circ \phi(\cross{l}{n}) 
  = \displaybreak[0]\\ ={}&
  \Bigl(
    \phi(\same{k}) \otimes \bigl(
      \phi(\cross{q}{m}) \circ \eval_f(H) \circ \phi(\cross{n}{r}) 
    \bigr) \circ
    \phi(\same{k} \star \cross{r}{n}) \circ
        {}\\ & \qquad{}\circ
    \phi(\cross{n}{k+r}) \circ 
    \phi(\same{n}) \otimes \eval_f(K) \circ
    \phi(\cross{l}{n}) \otimes \phi(\same{q})
  \Bigr) \feedback{q} 
  = \displaybreak[0]\\ ={}&
  \bigl(
    \phi(\same{k} \star \cross{q}{m}) \circ 
    \phi(\same{k}) \otimes \eval_f(H) \circ 
        {}\\ & \qquad{}\circ
    \eval_f(K) \otimes \phi(\same{n}) \circ
    \phi(\cross{l+q}{n}) \circ 
    \phi(\cross{l}{n} \star \same{q})
  \bigr) \feedback{q} 
  = \\ ={}&
  \bigl(
    \phi(\same{k} \star \cross{q}{m}) \circ 
    \phi(\same{k}) \otimes \eval_f(H) \circ 
    \eval_f(K) \otimes \phi(\same{n}) \circ
    \phi(\same{l} \star \cross{n}{q})
  \bigr) \feedback{q} 
\end{align*}
but this expression differs from the right hand side of 
\eqref{Eq:Feedback-join} in that $\eval_f(H)$ and $\eval_f(K)$ are in 
different orders (also the feedback width is $q$ rather than $r$). 
That the two expressions are still equal is the subject of another 
lemma.

\begin{lemma}
  Let $\bigl( \mc{P}, \{F_A\}_{A \in \B^{\bullet\times\bullet}}, 
  \{\feedback{n}\}_{n\in\N} \bigr)$ be a \PROP\ with formal feedbacks. 
  Let
  \begin{align*}
    A = \begin{bmatrix}
      A_{11} & A_{12} \\ A_{21} & A_{22}
    \end{bmatrix} \in{}& \B^{(k+r)\times(l+q)} 
    \text{,}&
    B = \begin{bmatrix}
      B_{22} & B_{23} \\ B_{32} & B_{33}
    \end{bmatrix} \in{}& \B^{(q+m)\times(r+n)} 
    \text{,}
  \end{align*}
  where \(A_{22} \in \B^{r \times q}\) and \(B_{22} \in \B^{q \times 
  r}\) are such that $A_{22}B_{22}$ is nilpotent. Then for each \(a 
  \in F_A\) and \(b \in F_B\),
  \begin{multline}
    \bigl(
      \phi(\same{k} \star \cross{r}{m}) \circ
      a \otimes \phi(\same{m}) \circ
      \phi(\same{l}) \otimes b \circ
      \phi(\same{l} \star \cross{n}{r})
    \bigr) \feedback{r}
    = \\ =
    \bigl(
      \phi(\same{k} \star \cross{q}{m}) \circ
      \phi(\same{k}) \otimes b \circ
      a \otimes \phi(\same{n}) \circ
      \phi(\same{l} \star \cross{n}{q})
    \bigr) \feedback{q}
    \text{.}
  \end{multline}
\end{lemma}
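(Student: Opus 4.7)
The identity expresses a symmetry between two ways of computing the same symmetric-join diagram of $a$ and $b$, joined by two loops of widths $r$ and $q$: in the LHS the $r$-loop is closed by the explicit $\feedback{r}$ while the $q$-loop is closed implicitly through composition, whereas in the RHS the roles of the two loops are exchanged. The plan is to reduce the identity to an application of the vanishing axiom, by expressing both sides as a common $\feedback{r+q}$ applied to a rearrangement of $a \otimes b$.

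The core identity needed in the first step is that, in any \PROP{} with formal feedback, any composition $u \circ v$ with $u \in \mc{P}(p,n)$ and $v \in \mc{P}(n,s)$ can be rewritten as $\bigl( (u \otimes v) \circ \phi(\cross{s}{n}) \bigr) \feedback{n}$. This is established by inserting $\phi(\same{n}) = \phi(\cross{n}{n})\feedback{n}$ via yanking into $u \circ \phi(\same{n}) \circ v$, then reading the tightening axiom backwards to extract a single feedback, and simplifying via tensor permutation and composition--tensor compatibility. An analogous identity, with suitably chosen permutations $\sigma'' = \same{k+r} \star \cross{q}{m}$ and $\tau'' = \same{l} \star \cross{r+n}{q}$ so that only the $q$ loop wires are fed back while the $l$ and $m$ identity connections remain external, gives $a \otimes \phi(\same{m}) \circ \phi(\same{l}) \otimes b = \bigl( \phi(\sigma'') \circ (a \otimes b) \circ \phi(\tau'') \bigr) \feedback{q}$. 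Substituting this into the LHS, using tightening to combine feedbacks, and then vanishing $\feedback{q}\feedback{r} = \feedback{r+q}$ yields $\text{LHS} = (C_L)\feedback{r+q}$, where $C_L = \phi(\sigma_L^*) \circ (a \otimes b) \circ \phi(\tau_L^*)$ has its last $r + q$ coarity and arity positions occupied by the loop wires in order $(r, q)$. The symmetric manipulation on the RHS yields $\text{RHS} = (C_R)\feedback{r+q}$ with the loop wires in the reversed order $(q, r)$.

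A block-by-block computation using the tensor permutation axiom then shows that $C_R = \phi(\same{k+m} \star \cross{r}{q}) \circ C_L \circ \phi(\same{l+n} \star \cross{q}{r})$. Setting $C_L' = C_L \circ \phi(\same{l+n} \star \cross{q}{r})$, the sliding axiom applied with the slid element $\phi(\cross{r}{q}) \in \mc{P}(r+q,r+q)$ gives
\[ \bigl( \phi(\same{k+m} \star \cross{r}{q}) \circ C_L' \bigr)\feedback{r+q} = \bigl( C_L' \circ \phi(\same{l+n} \star \cross{r}{q}) \bigr)\feedback{r+q}; \]
the left-hand side equals $(C_R)\feedback{r+q}$, while the right-hand side simplifies to $(C_L)\feedback{r+q}$ via $\cross{q}{r} \cdot \cross{r}{q} = \same{r+q}$ and the composition identity axiom. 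The main obstacle throughout is the permutation bookkeeping: tracking how the crossings $\cross{r}{m}, \cross{n}{r}, \cross{q}{m}, \cross{n}{q}$ from the lemma statement combine with those introduced in Step~1 to produce $\sigma_L^*, \tau_L^*, \sigma_R^*, \tau_R^*$, and verifying that the difference between $C_L$ and $C_R$ is precisely the loop-block swap $\cross{r}{q}$ (rather than some other permutation that happens to give the same feedback result), is tedious though entirely mechanical.
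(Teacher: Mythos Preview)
Your proposal is correct and follows essentially the same strategy as the paper's proof: introduce an extra feedback via yanking, use tightening (and superposing) to absorb the outer factors, combine the two nested feedbacks into a single $\feedback{r+q}$ via vanishing, and then reconcile the two sides by sliding the permutation $\phi(\cross{r}{q})$. The only organizational difference is that the paper writes this as a single chain of equalities from the LHS to the RHS (inserting $\phi(\same{l}\star\cross{m}{q})\circ\phi(\same{l}\star\cross{q}{m})$ in the middle, replacing one factor by $\phi(\cross{q}{q})\feedback{q}$ via yanking, then unwinding symmetrically on the other side), whereas you reduce each side separately to a $(C)\feedback{r+q}$ form and compare; the underlying axiom applications are the same, and the paper likewise remarks that sliding is only applied to $\phi(\cross{r}{q})$.
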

\begin{proof}
  This is a straightforward application of the \PROP\ with formal 
  feedback axioms. Conceptually, the steps are
  \begin{center}
    \(\hspace{-\leftskip}
    \left[ \begin{mpgraphics*}{40}
      beginfig(40);
        PROPdiagram(0,0)
          (
            same(1) otimes cross(1,1) circ
            box(2,2)(btex \strut$a$ etex) otimes same(1) circ
            same(1) otimes box(2,2)(btex \strut$b$ etex) circ
            same(1) otimes cross(1,1)
          ) feedback 1
        ;
      endfig;
    \end{mpgraphics*}\right]
    =
    \left[ \begin{mpgraphics*}{41}
      beginfig(41);
        PROPdiagram(0,0)
          (
            same(1) otimes cross(1,1) circ
            box(2,2)(btex \strut$a$ etex) otimes same(1) circ
            same(1) otimes cross(1,1) circ
            same(1) otimes cross(1,1) circ
            same(1) otimes box(2,2)(btex \strut$b$ etex) circ
            same(1) otimes cross(1,1)
          ) feedback 1
        ;
      endfig;
    \end{mpgraphics*}\right]
    =
    \left[ \begin{mpgraphics*}{42}
      beginfig(42);
        PROPdiagram(0,0)
          (
            same(1) otimes cross(1,1) circ
            box(2,2)(btex \strut$a$ etex) otimes same(1) circ
            same(1) otimes cross(1,1) circ
            same(2) otimes cross(1,1) feedback 1 circ
            same(1) otimes cross(1,1) circ
            same(1) otimes box(2,2)(btex \strut$b$ etex) circ
            same(1) otimes cross(1,1)
          ) feedback 1
        ;
      endfig;
    \end{mpgraphics*}\right]
    =
    \left[ \begin{mpgraphics*}{43}
      beginfig(43);
        PROPdiagram(0,0)
          (
            same(1) otimes cross(1,1) otimes same(1) circ
            box(2,2)(btex \strut$a$ etex) otimes same(2) circ
            same(1) otimes cross(1,1) otimes same(1) circ
            same(2) otimes cross(1,1) circ
            same(1) otimes cross(1,1) otimes same(1) circ
            same(1) otimes box(2,2)(btex \strut$b$ etex) otimes same(1) circ
            same(1) otimes cross(1,1) otimes same(1) 
          ) feedback 2
        ;
      endfig;
    \end{mpgraphics*}\right]
    =
    \left[ \begin{mpgraphics*}{44}
      beginfig(44);
        PROPdiagram(0,0)
          (
            same(1) otimes cross(1,1) otimes same(1) circ
            box(2,2)(btex \strut$a$ etex) otimes same(2) circ
            same(2) otimes cross(1,1) circ
            same(2) otimes box(2,2)(btex \strut$b$ etex) circ
            same(2) otimes cross(1,1) circ
            same(1) otimes cross(2,1)
          ) feedback 2
        ;
      endfig;
    \end{mpgraphics*}\right]
    = \penalty 10
    \left[ \begin{mpgraphics*}{45}
      beginfig(45);
        PROPdiagram(0,0)
          (
            same(1) otimes cross(2,1) circ
            box(2,2)(btex \strut$a$ etex) otimes 
              box(2,2)(btex \strut$b$ etex) circ
            same(1) otimes cross(1,2) circ
            same(2) otimes cross(1,1) 
          ) feedback 2
        ;
      endfig;
    \end{mpgraphics*}\right]
    = \penalty-10
    \left[ \begin{mpgraphics*}{46}
      beginfig(46);
        PROPdiagram(0,0)
          (
            same(2) otimes cross(1,1) circ
            same(1) otimes cross(2,1) circ
            box(2,2)(btex \strut$a$ etex) otimes 
              box(2,2)(btex \strut$b$ etex) circ
            same(1) otimes cross(1,2) 
          ) feedback 2
        ;
      endfig;
    \end{mpgraphics*}\right]
    = \penalty 10
    \left[ \begin{mpgraphics*}{47}
      beginfig(47);
        PROPdiagram(0,0)
          (
            same(1) otimes cross(1,2) circ
            same(2) otimes cross(1,1) circ
            same(2) otimes box(2,2)(btex \strut$b$ etex) circ
            same(2) otimes cross(1,1) circ
            box(2,2)(btex \strut$a$ etex) otimes same(2) circ
            same(1) otimes cross(1,1) otimes same(1)
          ) feedback 2
        ;
      endfig;
    \end{mpgraphics*}\right]
    =
    \left[ \begin{mpgraphics*}{48}
      beginfig(48);
        PROPdiagram(0,0)
          (
            same(1) otimes cross(1,1) otimes same(1) circ
            same(1) otimes box(2,2)(btex \strut$b$ etex) otimes same(1) circ
            same(1) otimes cross(1,1) otimes same(1) circ
            same(2) otimes cross(1,1) circ
            same(1) otimes cross(1,1) otimes same(1) circ
            box(2,2)(btex \strut$a$ etex) otimes same(2) circ
            same(1) otimes cross(1,1) otimes same(1)
          ) feedback 2
        ;
      endfig;
    \end{mpgraphics*}\right]
    =
    \left[ \begin{mpgraphics*}{49}
      beginfig(49);
        PROPdiagram(0,0)
          (
            same(1) otimes cross(1,1) circ
            same(1) otimes box(2,2)(btex \strut$b$ etex) circ
            same(1) otimes cross(1,1) circ
            same(2) otimes cross(1,1) feedback 1 circ
            same(1) otimes cross(1,1) circ
            box(2,2)(btex \strut$a$ etex) otimes same(1) circ
            same(1) otimes cross(1,1)
          ) feedback 1
        ;
      endfig;
    \end{mpgraphics*}\right]
    =
    \left[ \begin{mpgraphics*}{50}
      beginfig(50);
        PROPdiagram(0,0)
          (
            same(1) otimes cross(1,1) circ
            same(1) otimes box(2,2)(btex \strut$b$ etex) circ
            same(1) otimes cross(1,1) circ
            same(1) otimes cross(1,1) circ
            box(2,2)(btex \strut$a$ etex) otimes same(1) circ
            same(1) otimes cross(1,1)
          ) feedback 1
        ;
      endfig;
    \end{mpgraphics*}\right]
    =
    \left[ \begin{mpgraphics*}{51}
      beginfig(51);
        PROPdiagram(0,0)
          (
            same(1) otimes cross(1,1) circ
            same(1) otimes box(2,2)(btex \strut$b$ etex) circ
            box(2,2)(btex \strut$a$ etex) otimes same(1) circ
            same(1) otimes cross(1,1)
          ) feedback 1
        ;
      endfig;
    \end{mpgraphics*}\right]
    \hspace{-\rightskip}
    \),
  \end{center}
  and in formulae (with a separate step for each axiom application) 
  that reads
  \begin{multline*}
    \bigl(
      \phi(\same{k} \star \cross{r}{m}) \circ
      a \otimes \phi(\same{m}) \circ
      \phi(\same{l}) \otimes b \circ
      \phi(\same{l} \star \cross{n}{r})
    \bigr) \feedback{r}
    = \\ =
    \bigl(
      \phi(\same{k} \star \cross{r}{m}) \circ
      a \otimes \phi(\same{m}) \circ
      \phi(\same{l} \star \cross{m}{q}) \circ
      \phi(\same{l} \star \cross{q}{m}) \circ
      \phi(\same{l}) \otimes b \circ
      \phi(\same{l} \star \cross{n}{r})
    \bigr) \feedback{r}
    = \displaybreak[0]\\ =
    \begin{aligned}[t] \bigl( &
      \phi(\same{k} \star \cross{r}{m}) \circ
      a \otimes \phi(\same{m}) \circ
      \phi(\same{l} \star \cross{m}{q}) \circ
      \phi(\same{l} \star \same{m}) \otimes 
        \phi(\cross{q}{q})\feedback{q} \circ 
      \\ & \quad
      \phi(\same{l} \star \cross{q}{m}) \circ
      \phi(\same{l}) \otimes b \circ
      \phi(\same{l} \star \cross{n}{r})
    \bigr) \feedback{r} = \end{aligned}
    \displaybreak[0]\\ =
    \begin{aligned}[t] \bigl( &
      \phi(\same{k} \star \cross{r}{m}) \circ
      a \otimes \phi(\same{m}) \circ
      \phi(\same{l} \star \cross{m}{q}) \circ
      \phi(\same{l} \star \same{m} \star \cross{q}{q})\feedback{q} \circ 
      \\ & \quad
      \phi(\same{l} \star \cross{q}{m}) \circ
      \phi(\same{l}) \otimes b \circ
      \phi(\same{l} \star \cross{n}{r})
    \bigr) \feedback{r} = \end{aligned}
    \displaybreak[0]\\ =
    \begin{aligned}[t] \bigl( &
      \phi(\same{k} \star \cross{r}{m} \star \same{q}) \circ
      a \otimes \phi(\same{m} \star \same{q}) \circ
      \phi(\same{l} \star \cross{m}{q} \star \same{q}) \circ
      \phi(\same{l} \star \same{m} \star \cross{q}{q}) \circ
      \\ & \quad
      \phi(\same{l} \star \cross{q}{m} \star \same{q}) \circ
      \phi(\same{l}) \otimes b \otimes \phi(\same{q}) \circ
      \phi(\same{l} \star \cross{n}{r} \star \same{q})
    \bigr) \feedback{q}\feedback{r} = \end{aligned}
    \displaybreak[0]\\ =
    \bigl(
      \phi(\same{k} \star \cross{r+q}{m}) \circ
      a \otimes b \circ
      \phi(\same{l} \star \cross{n}{q+r}) \circ
      \phi(\same{l} \star \same{n} \star \cross{r}{q})
    \bigr) \feedback{q}\feedback{r}
    = \displaybreak[0]\\ =
    \bigl(
      \phi(\same{k} \star \same{m} \star \cross{r}{q}) \circ
      \phi(\same{k} \star \cross{r+q}{m}) \circ
      a \otimes b \circ
      \phi(\same{l} \star \cross{n}{q+r})
    \bigr) \feedback{r}\feedback{q}
    = \displaybreak[0]\\ =
    \begin{aligned}[t] \bigl( &
      \phi(\same{k} \star \cross{q}{m} \star \same{r}) \circ
      \phi(\same{k}) \otimes b \otimes \phi(\same{r}) \circ
      \phi(\same{k} \star \cross{n}{r} \star \same{r}) \circ
      \phi(\same{k} \star \same{n} \star \cross{r}{r}) \circ
      \\ & \quad
      \phi(\same{k} \star \cross{r}{n} \star \same{r}) \circ
      a \otimes \phi(\same{n} \star \same{r}) \circ
      \phi(\same{l} \star \cross{n}{q} \star \same{r})
    \bigr) \feedback{r}\feedback{q} = \end{aligned}
    \displaybreak[0]\\ =
    \begin{aligned}[t] \bigl( &
      \phi(\same{k} \star \cross{q}{m}) \circ
      \phi(\same{k}) \otimes b \circ
      \phi(\same{k} \star \cross{n}{r}) \circ
      \phi(\same{k} \star \same{n} \star \cross{r}{r})\feedback{r} \circ
      \\ & \quad
      \phi(\same{k} \star \cross{r}{n}) \circ
      a \otimes \phi(\same{n}) \circ
      \phi(\same{l} \star \cross{n}{q})
    \bigr) \feedback{q} = \end{aligned}
    \displaybreak[0]\\ =
    \begin{aligned}[t] \bigl( &
      \phi(\same{k} \star \cross{q}{m}) \circ
      \phi(\same{k}) \otimes b \circ
      \phi(\same{k} \star \cross{n}{r}) \circ
      \phi(\same{k} \star \same{n}) \otimes 
        \phi(\cross{r}{r})\feedback{r} \circ
      \\ & \quad
      \phi(\same{k} \star \cross{r}{n}) \circ
      a \otimes \phi(\same{n}) \circ
      \phi(\same{l} \star \cross{n}{q})
    \bigr) \feedback{q} = \end{aligned}
    \displaybreak[0]\\ =
    \bigl(
      \phi(\same{k} \star \cross{q}{m}) \circ
      \phi(\same{k}) \otimes b \circ
      \phi(\same{k} \star \cross{n}{r}) \circ
      \phi(\same{k} \star \cross{r}{n}) \circ
      a \otimes \phi(\same{n}) \circ
      \phi(\same{l} \star \cross{n}{q})
    \bigr) \feedback{q}
    = \\ =
    \bigl(
      \phi(\same{k} \star \cross{q}{m}) \circ
      \phi(\same{k}) \otimes b \circ
      a \otimes \phi(\same{n}) \circ
      \phi(\same{l} \star \cross{n}{q})
    \bigr) \feedback{q}
    \text{.}
  \end{multline*}
\end{proof}

It may be observed that although this is in one sense a stronger form 
of sliding (the subexpression being slided can have external 
connections), in proving it the sliding axiom was only applied for 
$\phi(\cross{r}{q})$. Hence, in the presence of the other feedback 
axioms, the sliding axiom could (as is well known) be weakened to 
sliding of permutations.

\subsection{The free \PROP}

What holds for networks typically has a counterpart for the free 
\PROP.

\begin{corollary}[to Theorem~\ref{S:Feedback-join}]
  Let $\bigl( \mc{P}, \{F_q\}_{q \in \B^{\bullet\times\bullet}}, 
  \{\feedback{n}\}_{n\in\N} \bigr)$ be a \PROP\ with formal 
  feedbacks. Let $\Omega$ be an $\N^2$-graded set and \(f\colon 
  \Nwt(\Omega) \Fpil \mc{P}\) be a \PROP\ homomorphism. Let \(A = 
  \left[ \begin{smallmatrix} A_{11} & A_{12} \\ A_{21} & A_{22} 
  \end{smallmatrix} \right] \in \B^{\bullet\times\bullet}\) and \(B = 
  \left[ \begin{smallmatrix} B_{11} & B_{12} \\ B_{21} & B_{22} 
  \end{smallmatrix} \right] \in \B^{\bullet\times\bullet}\) be such 
  that \(A_{11} \in \B^{k \times l}\), \(A_{22} \in \B^{r \times 
  q}\), \(B_{22} \in \B^{q \times r}\), and \(B_{33} \in \B^{m \times 
  n}\) for some \(k,l,m,n,q,r \in \N\).
  
  If $A_{22} B_{22}$ is nilpotent, then 
  \begin{equation} \label{Eq:Kor:Feedback-join}
    f( a \join^r_q b ) =
    \bigl( 
      \phi(\same{k} \star \cross{r}{m}) \circ
      f(a) \otimes \phi(\same{m}) \circ
      \phi(\same{l}) \otimes f(b) \circ
      \phi(\same{l} \star \cross{n}{r}) 
    \bigr) \feedback{r}
  \end{equation}
  for all \(a \in \mc{Y}_\Omega(A)\) and \(b \in \mc{Y}_\Omega(B)\).
\end{corollary}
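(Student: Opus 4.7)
The strategy is to lift the corollary to Theorem~\ref{S:Feedback-join} via the universal property of the free \PROP. Since $\Nwt(\Omega)$ is the free \PROP\ on $\Omega$ (Theorem~\ref{S:NwtFriPROP}), the homomorphism $f$ agrees with $\eval_g$ where $g := f \circ i \colon \Omega \Fpil \mc{P}$ and $i$ is the canonical inclusion from \eqref{Eq:Nwt-injektion}. To be able to invoke Theorem~\ref{S:Feedback-join}, which requires a signature containing a join-vertex annotation, I would extend the signature to $\Omega' := \Omega \cup \{\natural\}$ and extend $g$ to $g'\colon \Omega' \Fpil \mc{P}$ by declaring $g'(\natural) := \phi_\mc{P}(\same{1})$. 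Pick representative networks $K \in a$ and $H \in b$ in $\Nw(\Omega) \subseteq \Nw(\Omega')$; since these networks have no $\natural$ vertices, $\eval_{g'}(K) = \eval_g(K) = f(a)$ and likewise $\eval_{g'}(H) = f(b)$.

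By the definition of $\join$ on isomorphism classes, $a \join^r_q b = [\mathrm{smoothen}(K \join^\natural_r_q H)]_\simeq$, and $K \join^\natural_r_q H$ is a $\natural$-subdivision of this smoothened form. Because $g'(\natural) = \phi_\mc{P}(\same{1})$, Lemma~\ref{L:evalNatural} gives
\[
   \eval_{g'}(K \join^\natural_r_q H) =
   \eval_{g'}\bigl(\mathrm{smoothen}(K \join^\natural_r_q H)\bigr) =
   f(a \join^r_q b) \text{.}
\]
Next one must verify the hypotheses of Theorem~\ref{S:Feedback-join}. Writing $\Tr(K) = \left[\begin{smallmatrix} a_{11} & a_{12} \\ a_{21} & a_{22} \end{smallmatrix}\right]$ and $\Tr(H) = \left[\begin{smallmatrix} b_{22} & b_{23} \\ b_{32} & b_{33} \end{smallmatrix}\right]$ in the block structure dictated by $k,l,m,n,q,r$, the assumptions $a \in \mc{Y}_\Omega(A)$ and $b \in \mc{Y}_\Omega(B)$ force $\Tr(K) \leqslant A$ and $\Tr(H) \leqslant B$ entrywise. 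In particular $a_{22} b_{22} \leqslant A_{22} B_{22}$, so nilpotence of $A_{22} B_{22}$ forces that of $a_{22} b_{22}$. Theorem~\ref{S:Feedback-join} therefore applies to $K$, $H$ under $g'$ and yields
\[
   \eval_{g'}(K \join^\natural_r_q H) =
   \bigl(
     \phi(\same{k} \star \cross{r}{m}) \circ
     \eval_{g'}(K) \otimes \phi(\same{m}) \circ
     \phi(\same{l}) \otimes \eval_{g'}(H) \circ
     \phi(\same{l} \star \cross{n}{r})
   \bigr) \feedback{r} \text{.}
\]
Substituting the identifications $\eval_{g'}(K) = f(a)$, $\eval_{g'}(H) = f(b)$, and $\eval_{g'}(K \join^\natural_r_q H) = f(a \join^r_q b)$ produces exactly \eqref{Eq:Kor:Feedback-join}.

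The proof is essentially a one-line reduction, and I expect no serious technical difficulty; the bookkeeping step that deserves care is the transition between the two conventions for the symmetric join\Dash the network-level one that introduces explicit $\natural$ join-vertices, and the free-\PROP-level one that smoothens them away\Dash which is precisely what Lemma~\ref{L:evalNatural} was set up to handle. The nilpotency condition is phrased in terms of $A_{22} B_{22}$ in the corollary but in terms of $a_{22} b_{22}$ in the theorem, and these match because entrywise comparison of boolean matrices is preserved under multiplication.
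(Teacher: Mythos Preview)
Your proof is correct and follows the same route as the paper's. The paper's own proof is a two-line sketch that identifies $f$ with $\eval_g$ via the universal property and then invokes \eqref{Eq:Feedback-join} directly; you have supplied the details that the paper elides, namely the $\natural$-extension of the signature needed to meet the hypothesis $y\in\Omega(1,1)$ of Theorem~\ref{S:Feedback-join}, the appeal to Lemma~\ref{L:evalNatural} to pass between the subdivided and smoothened joins, and the observation that $a_{22}b_{22}\leqslant A_{22}B_{22}$ transfers nilpotence.
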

\begin{proof}
  By Theorem~\ref{S:NwtFriPROP}, every \PROP\ homomorphism $f$ from 
  $\Nwt(\Omega)$ is of the form $\eval_g$ for \(g = 
  \restr{f}{\Omega}\). Hence \eqref{Eq:Kor:Feedback-join} follows 
  from \eqref{Eq:Feedback-join} for some \(K \in a\) and \(H \in b\).
\end{proof}

Somewhat notable here is that it suffices for $f$ to be a \PROP\ 
homomorphism; one need not assume that it respects the feedbacks, as 
those are sufficiently determined by the axioms anyway. This holds 
even though the free \PROP\ is also a \PROP\ with formal feedbacks.

\begin{lemma}
  For any $\N^2$-graded set, $\bigl( \Nwt(\Omega), \mc{Y}_\Omega, 
  \{ \rtimes \phi(\same{n})\}_{n \in \N} \bigr)$ is a \PROP\ with 
  formal feedbacks.
\end{lemma}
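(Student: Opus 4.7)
The plan is to verify the two clauses of the formal-feedback definition for the stated triple. For the domain-and-trace clause, I would apply Construction~\ref{Kons:symjoin} with $H = \phi(\same{n})$, whose transference is $\Tr(\phi(\same{n})) = I_n$. The nilpotency condition $q_{22} \cdot I_n = q_{22}$ nilpotent is then exactly the formal-feedback hypothesis, so $a \rtimes \phi(\same{n}) = a \join^n_n \phi(\same{n})$ is defined on the required set. The trace formula \eqref{Eq:TrSymJoin} specialises to $\Tr(a\feedback{n}) = q_{11} + q_{12} \cdot I_n \cdot (q_{22} I_n)^* \cdot q_{21} = q_{11} + q_{12} q_{22}^* q_{21}$, giving $a\feedback{n} \in \mc{Y}_\Omega(q_{11} + q_{12} q_{22}^* q_{21})$ as required.

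For the five axioms, the cleanest approach is to translate into abstract index notation via Theorem~\ref{S:AIN-sammanbindning}, under which $a \rtimes \phi(\same{n})$ becomes simply the identification of a designated list of output labels with a corresponding list of input labels: if $a = \fuse{\vek{k}\vek{r}}{A^{\vek{k}\vek{r}}_{\vek{l}\vek{q}}}{\vek{l}\vek{q}}$ with $\Norm{\vek{r}}=\Norm{\vek{q}}=n$, then $a\feedback{n} = a \join^n_n \fuse{\vek{r}}{1}{\vek{q}} = \fuse{\vek{k}}{A^{\vek{k}\vek{r}}_{\vek{l}\vek{r}}}{\vek{l}}$, with the abstract-index acyclicity condition corresponding exactly to nilpotency of $q_{22}$. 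Under this translation, yanking is immediate since $\phi(\cross{n}{n}) = \fuse{\vek{r}\vek{i}}{1}{\vek{i}\vek{r}}$ collapses to $\fuse{\vek{i}}{1}{\vek{i}} = \phi(\same{n})$. Tightening and superposing reduce to reading off the same abstract index expression from either side after applying Theorems~\ref{S:AIN,snitt} and~\ref{S:AIN,klyva}. For sliding, writing $a = \fuse{\vek{s}}{A^\vek{s}_\vek{u}}{\vek{u}}$ and $b = \fuse{\vek{k}\vek{v}}{B^{\vek{k}\vek{v}}_{\vek{l}\vek{w}}}{\vek{l}\vek{w}}$, both sides evaluate to $\fuse{\vek{k}}{A^\vek{s}_\vek{u} B^{\vek{k}\vek{u}}_{\vek{l}\vek{s}}}{\vek{l}}$ after the appropriate identifications, where Theorem~\ref{S:fuse-factor-reorder} justifies that factor order is irrelevant. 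Vanishing follows similarly by noting that identifying $n$ label pairs and then $m$ more yields the same expression as identifying all $n+m$ at once.

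The main obstacle is bookkeeping: at each step I must check that the acyclicity conditions implicit in both sides of an axiom are indeed equivalent, so that the identity holds on the full declared domain. For vanishing, the relevant equivalence is between nilpotency of the combined feedback block and the iterated nilpotency of the two sub-blocks, which is precisely the content of Lemma~\ref{L:Matrisnilpotens}. For sliding, I need to track how the nilpotency condition on the composite $\phi(\same{k}) \otimes a \circ b$ (feedback width $m$) matches that on $b \circ \phi(\same{l}) \otimes a$ (feedback width $n$); again, this amounts to verifying that the appropriate submatrix products of the traces are simultaneously nilpotent, which is a standard application of Lemma~\ref{L:Semiringstjarna} combined with the identity $(AB)^* A = A (BA)^*$. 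Once this bookkeeping is in place, the abstract-index computations outlined above verify all axioms more or less mechanically, and the lemma follows.
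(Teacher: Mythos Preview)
Your treatment of the domain-and-range clause is fine, and the paper in fact glosses over it entirely, remarking only that $a\feedback{n} = a \rtimes \phi(\same{n})$ served as the template for the definition.

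For the axioms, however, there is a genuine technical gap in your write-up. The abstract index notation of Definition~\ref{D:fuse} does \emph{not} permit a label to appear simultaneously as superscript and subscript of a single factor: if $A$ is one factor with $r_j$ in both $\vek{b}_1$ and $\vek{a}_1$, the acyclicity condition forces $r_j < r_j$. Hence your expression $\fuse{\vek{k}}{A^{\vek{k}\vek{r}}_{\vek{l}\vek{r}}}{\vek{l}}$ is ill-defined as written, and likewise your sliding target $\fuse{\vek{k}}{A^\vek{s}_\vek{u} B^{\vek{k}\vek{u}}_{\vek{l}\vek{s}}}{\vek{l}}$ forces $\vek{s} < \vek{u} < \vek{s}$. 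Theorem~\ref{S:AIN-sammanbindning} only applies when its left-hand combined expression already satisfies the acyclicity condition, so you cannot invoke it to produce such contracted single-factor expressions. (The side expression $\fuse{\vek{r}}{1}{\vek{q}}$ is also ill-defined, since $\norm{\vek{r}}$ and $\norm{\vek{q}}$ are disjoint.) Your intuition that ``feedback is contraction'' is correct at the level of networks --- if you expand $a$ into the product of its inner-vertex annotations, then the contracted expression is valid precisely when $q_{22}$ is nilpotent --- but carrying the axiom verifications through at that level requires picking representatives, tracking edge labels, and arguing isomorphism of the resulting networks, which is considerably more than ``read off the same expression''.

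The paper avoids this issue by a different route: it keeps $a$ and $b$ as single factors only in \emph{pre-feedback} abstract index expressions (which are always acyclic), treats $\feedback{n}$ as the external operation ${}\join^n_n \phi(\same{n})$, and then uses Theorem~\ref{S:AIN-sammanbindning} together with the associativity of symmetric join (Lemma~\ref{L:join-associativitet}) and the identities of Corollary~\ref{Kor:Sammanbindingsformler} to shuffle the $\phi(\same{n})$ factor from one side to the other. For instance, superposing becomes the one-line computation $a \otimes b\feedback{n} = a \join^0_0 (b \rtimes \phi(\same{n})) = (a \join^0_0 b) \rtimes \phi(\same{n}) = (a \otimes b)\feedback{n}$, and sliding is handled by moving $a$ across the join via associativity. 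This approach never writes down an invalid abstract index expression, which is why it succeeds where the single-factor contraction shortcut does not.
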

\begin{proof}
  Since \(a\feedback{n} = a \rtimes \phi(\same{n})\) was used as 
  template for the conditions about domain and range of a formal 
  feedback, this is a straightforward matter of verifying the various 
  axioms, to which end Theorem~\ref{S:AIN-sammanbindning} and 
  Lemma~\ref{L:join-associativitet} are of great help. 
  
  For tightening where \(a \in \Nwt(\Omega)(i,j)\), \(b \in 
  \Nwt(\Omega)(j +\nobreak n, k +\nobreak n)\), and \(c \in 
  \Nwt(\Omega)(k,l)\), one has after defining \(\vek{i} = 
  \vek{N}_i(0,6)\), \(\vek{j} = \vek{N}_j(1,6)\), \(\vek{k} = 
  \vek{N}_k(2,6)\), \(\vek{l} = \vek{N}_l(3,6)\), \(\vek{n} = 
  \vek{N}_n(4,6)\), and \(\vek{n'} = \vek{N}_n(5,6)\),
  \begin{multline*}
    \bigl( a \otimes \phi(\same{n}) \circ b \circ
      c \otimes \phi(\same{n}) \bigr) \feedback{n}
    =
    \fuse{ \vek{in} }{
      a^\vek{i}_\vek{j} b^\vek{jn}_\vek{kn'} c^\vek{k}_\vek{l}
    }{ \vek{ln'} } \feedback{n}
    = \\ =
    \Bigl(
      \fuse{ \vek{ik} }{
        a^\vek{i}_\vek{j} c^\vek{k}_\vek{l}
      }{ \vek{lj} } 
      \join^k_j
      \fuse{ \vek{jn} }{ b^\vek{jn}_\vek{kn'} }{ \vek{kn'} } 
    \Bigr) \rtimes \phi(\same{n})
    =
    \fuse{ \vek{ik} }{
      a^\vek{i}_\vek{j} c^\vek{k}_\vek{l}
    }{ \vek{lj} } 
    \join^k_j \bigl(
      b \rtimes \phi(\same{n})
    \bigr)
    = \\ =
    \fuse{ \vek{ik} }{
      a^\vek{i}_\vek{j} c^\vek{k}_\vek{l}
    }{ \vek{lj} } 
    \join^k_j 
    \fuse{ \vek{j} }{
      (b\feedback{n})^\vek{j}_\vek{k}
    }{ \vek{k} }
    =
    \fuse{ \vek{i} }{
      a^\vek{i}_\vek{j} 
      (b\feedback{n})^\vek{j}_\vek{k}
      c^\vek{k}_\vek{l}
    }{ \vek{l} } 
    =
    a \circ b\feedback{n} \circ c
    \text{.}
  \end{multline*}
  For superposing where \(a \in \Nwt(\Omega)(i,j)\) and \(b \in 
  \Nwt(\Omega)(k +\nobreak n, l +\nobreak n)\), one by 
  Corollary~\ref{Kor:Sammanbindingsformler} and 
  after defining \(\vek{i} = \vek{N}_i(0,6)\), \(\vek{j} = 
  \vek{N}_j(1,6)\), \(\vek{k} = \vek{N}_k(2,6)\), \(\vek{l} = 
  \vek{N}_l(3,6)\), \(\vek{n} = \vek{N}_n(4,6)\), and \(\vek{n'} = 
  \vek{N}_n(5,6)\) similarly has 
  \begin{multline*}
    a \otimes b\feedback{n}
    =
    a \join^0_0 \bigl( b \rtimes \phi(\same{n}) \bigr)
    = \\ =
    (a \join^0_0 b) \rtimes \phi(\same{n})
    =
    \fuse{ \vek{ikn} }{
      a^\vek{i}_\vek{j}
      b^\vek{kn}_\vek{ln'}
    }{ \vek{jln'} } \rtimes \phi(\same{n})
    =
    (a \otimes b) \feedback{n} \text{.}
  \end{multline*}
  
  For sliding \(a \in \Nwt(\Omega)(m,n)\) past \(b \in \Nwt(\Omega)(k 
  +\nobreak n, l +\nobreak m)\), the trick is to move $a$ to the 
  right operand of the symmetric join and back. After defining 
  \(\vek{k} = \vek{N}_k(0,6)\), \(\vek{l} = 
  \vek{N}_l(1,6)\), \(\vek{m} = \vek{N}_m(2,6)\), \(\vek{m'} = 
  \vek{N}_m(3,6)\), \(\vek{n} = \vek{N}_n(4,6)\), and \(\vek{n'} = 
  \vek{N}_n(5,6)\), one has
  \begin{align*}
    \bigl( \phi(\same{k}) \otimes a \circ b \bigr) \feedback{m}
    = {}&
    \fuse{ \vek{km} }{
      a^\vek{m}_\vek{n} b^\vek{kn}_\vek{lm'}
    }{ \vek{lm'} } \feedback{m}
    = \\ ={}&
    \Bigl(
      \fuse{ \vek{kn} }{
        b^\vek{kn}_\vek{lm'}
      }{ \vek{lm'} } 
      \join^n_m
      \fuse{ \vek{m'm} }{
        a^\vek{m}_\vek{n}
      }{ \vek{nm'} }
    \Bigr) \rtimes \phi(\same{m})
    = \displaybreak[0]\\ ={}&
    \fuse{ \vek{kn} }{
      b^\vek{kn}_\vek{lm'}
    }{ \vek{lm'} } 
    \join^n_m
    \Bigl(
      \fuse{ \vek{m'm} }{
        a^\vek{m}_\vek{n}
      }{ \vek{nm'} }
      \rtimes 
      \fuse{ \vek{m'} }{ \phi(\same{m})^\vek{m'}_\vek{m} }{ \vek{m} }
    \Bigr) 
    = \displaybreak[0]\\ ={}&
    \fuse{ \vek{kn} }{
      b^\vek{kn}_\vek{lm'}
    }{ \vek{lm'} } 
    \join^n_m
    \fuse{ \vek{m'} }{
      \phi(\same{m})^\vek{m'}_\vek{m}
      a^\vek{m}_\vek{n}
    }{ \vek{n} }
    = \displaybreak[0]\\ ={}&
    \fuse{ \vek{kn'} }{
      b^\vek{kn'}_\vek{lm}
    }{ \vek{lm} } 
    \join^n_m
    \fuse{ \vek{m} }{
      a^\vek{m}_\vek{n}
      \phi(\same{n})^\vek{n}_\vek{n'}
    }{ \vek{n'} }
    = \displaybreak[0]\\ ={}&
    \fuse{ \vek{kn'} }{
      b^\vek{kn'}_\vek{lm}
    }{ \vek{lm} } 
    \join^n_m
    \Bigl(
      \fuse{ \vek{mn'} }{
        a^\vek{m}_\vek{n}
      }{ \vek{n'n} }
      \rtimes
      \fuse{ \vek{n} }{
        \phi(\same{n})^\vek{n}_\vek{n'}
      }{ \vek{n'} }
    \Bigr)
    = \displaybreak[0]\\ ={}&
    \Bigl(
      \fuse{ \vek{kn'} }{
        b^\vek{kn'}_\vek{lm}
      }{ \vek{lm} } 
      \join^n_m
      \fuse{ \vek{mn'} }{
        a^\vek{m}_\vek{n}
      }{ \vek{n'n} }
    \Bigr)
    \rtimes \phi(\same{n})
    = \\ ={}&
    \fuse{ \vek{kn'} }{
      b^\vek{kn'}_\vek{lm}
      a^\vek{m}_\vek{n}
    }{ \vek{ln} }
    \feedback{n}
    =
    \bigl( b \circ \phi(\same{l}) \otimes a \bigr) \feedback{n}
    \text{.}
  \end{align*}
  For vanishing of feedbacks on \(a \in \Nwt(\Omega)(k +\nobreak m 
  +\nobreak n, l +\nobreak m +\nobreak n)\), one similarly might want 
  to define \(\vek{m} = \vek{N}_m(0,4)\), \(\vek{m'} = \vek{N}_m(1,4)\), 
  \(\vek{n} = \vek{N}_n(2,4)\), and \(\vek{n'} = \vek{N}_n(3,4)\). By 
  Corollary~\ref{Kor:Sammanbindingsformler},
  \begin{align*}
    a \feedback{n} \feedback{m}
    ={}& 
    \Bigl(
      \bigl( a \join^{m+n}_{m+n} \phi(\cross{m+n}{m+n}) \bigr)
      \rtimes \phi(\same{n})
    \Bigr) \feedback{m}
    = \\ ={}& 
    \Bigl(
      a \join^{m+n}_{m+n} 
      \bigl(
        \fuse{ \vek{m'n'mn} }{ 1 }{ \vek{mnm'n'} }
        \join^n_n
        \fuse{ \vek{n'} }{ \phi(\same{n})^\vek{n'}_\vek{n} }{ \vek{n} }
      \bigr)
    \Bigr) \feedback{m}
    = \displaybreak[0]\\ ={}&
    \Bigl(
      a \join^{m+n}_{m+n} 
      \fuse{ \vek{m'n'm} }{ \phi(\same{n})^\vek{n'}_\vek{n} }{ \vek{mnm'} }
    \Bigr) \rtimes \phi(\same{m})
    = \displaybreak[0]\\ ={}&
    a \join^{m+n}_{m+n} \Bigl(
      \fuse{ \vek{m'n'm} }{ \phi(\same{n})^\vek{n'}_\vek{n} }{ \vek{mnm'} }
      \join^m_m
      \fuse{ \vek{m'} }{ \phi(\same{m})^\vek{m'}_\vek{m} }{ \vek{m} }
    \Bigr)
    = \displaybreak[0]\\ ={}&
    a \join^{m+n}_{m+n}
    \fuse{ \vek{m'n'} }{ 
      \phi(\same{n})^\vek{n'}_\vek{n} 
      \phi(\same{m})^\vek{m'}_\vek{m}
    }{ \vek{mn} }
    = \\ ={}&
    a \join^{m+n}_{m+n} \bigl(
      \phi(\same{m}) \otimes \phi(\same{n})
    \bigr)
    =
    a \join^{m+n}_{m+n} \phi(\same{m+n})
    =
    a \feedback{m+n}
    \text{.}
  \end{align*}
  Vanishing also requires \(a\feedback{0} = a \join^0_0 \phi(\same{0}) 
  = a\), which like the yanking identity 
  \(\phi(\cross{n}{n})\feedback{n} = \phi(\cross{n}{n}) \join^n_n 
  \phi(\same{n}) = \phi(\same{n})\) is an immediate consequence of 
  that same corollary.
\end{proof}

In order to show that a pullback $f^*Q$ to $\Nwt(\Omega)$ of some 
order $Q$ on a helper \PROP\ $\mc{P}$ is preserved under symmetric 
join, the obvious approach which suggests itself is that one equips 
$\mc{P}$ with a (possibly formal) feedback and checks that $Q$ is 
preserved under that. This works extremely well for \PROPs\ with a 
cap--cup pair as in Theorem~\ref{S:CupCapFeedback}, for example the 
connectivity \PROP\ and $\HomPROP_V$ for finite-dimensional $V$, 
since the feedback is in that case just a \PROP\ expression and any 
\PROP\ quasi-order will therefore be preserved by the feedback. For 
other \PROPs\ it may however get a bit roundabout, since setting up a 
formal feedback can be quite a bit of work. An alternative 
approach is to verify the following property, which may be more 
complicated to state but often is easier to prove.

\begin{definition}
  Let $\mc{P}$ be a \PROP\ and $Q$ be an $\N^2$-graded quasi-order on 
  $\mc{P}$. The relation $Q$ is said to have the \DefOrd{uncut 
  property} if for all \(l,m,m',n \in \N\), all \(a \in 
  \mc{P}(l,m +\nobreak 1)\), \(b \in \mc{P}( m +\nobreak 1, n)\), 
  \(c \in \mc{P}(l,m' +\nobreak 1)\), and \(d \in \mc{P}(m',n 
  +\nobreak 1)\) it holds that
  \begin{subequations} \label{Eq:Okapad}
    \begin{align} \label{Eq:a:Okapad}
      a \otimes \phi(\same{1}) \circ 
      \phi(\same{m} \star \cross{1}{1}) \circ
      b \otimes \phi(\same{1})
      \leqslant{}&
      c \otimes \phi(\same{1}) \circ 
      \phi(\same{m'} \star \cross{1}{1}) \circ
      d \otimes \phi(\same{1})
      \pin{Q}\\
    \intertext{implies}
      a \circ b \leqslant{}& c \circ d \pin{Q} \text{.}
    \end{align}
  \end{subequations}
  (In other words, the `un-' in this `uncut' has the same meaning as 
  in `undo'; one can go from the case where an edge has been cut to 
  the case where is hasn't been cut.)
  $Q$ is said to have the \DefOrd{strict uncut property} if the 
  implication \eqref{Eq:Okapad} holds also if the $\leqslant$ 
  are replaced by $<$.
\end{definition}

It is for example straightforward to verify that the standard order 
on matrices has the uncut property.

\begin{lemma}
  Let $(\mc{R},P)$ be an associative and unital partially ordered 
  semiring, and let $Q$ be the standard order on 
  $(\mc{R}_{\geqslant 0})^{\bullet\times\bullet}$. Then $Q$ has the 
  uncut property.
  Furthermore, if $P$ is cancellative then $Q$ has the strict uncut 
  property on the sub-\PROP\ of matrices which have at least one 
  positive element in each row and column.
\end{lemma}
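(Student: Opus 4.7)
The plan is to reduce both sides of the hypothesis to explicit block matrices and then use axiom (ii) of a partially ordered semiring (``\(x \leqslant y\) gives \(y = x + z\) with \(z \geqslant 0\)'') to produce the needed witnesses. A direct block-matrix computation, noting that $\phi(\same{m} \star \cross{1}{1})$ merely swaps the last two columns, shows that the left-hand side of \eqref{Eq:a:Okapad} has the form
\[
  M = \begin{bmatrix} M^\circ & a_{m+1} \\ b^{m+1} & 0 \end{bmatrix}
\]
where $a_{m+1}$ is the last column of $a$, $b^{m+1}$ is the last row of $b$, and \(M^\circ_{i,k} := \sum_{j=1}^{m} a_{i,j} b_{j,k}\) satisfies \(M^\circ_{i,k} + (a_{m+1})_i (b^{m+1})_k = (ab)_{i,k}\). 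The right-hand side $N$ has the analogous form in terms of $c_{m'+1}$, $d^{m'+1}$, and $N^\circ$.

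Given \(M \leqslant N\) componentwise, axiom (ii) yields nonnegative matrices $A'$, $C'$, $D'$ with \(N^\circ = M^\circ + A'\), \(c_{m'+1} = a_{m+1} + C'\), and \(d^{m'+1} = b^{m+1} + D'\). Substituting these into \((cd)_{i,k} = N^\circ_{i,k} + (c_{m'+1})_i (d^{m'+1})_k\) and expanding the product produces
\[
  (cd)_{i,k} = (ab)_{i,k} + A'_{i,k} + (a_{m+1})_i D'_k + C'_i (b^{m+1})_k + C'_i D'_k,
\]
where each of the four added summands is nonnegative by axiom (iii) together with the nonnegativity of all the factors involved. Hence \(ab \leqslant cd\) componentwise, which is the uncut property.

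For the strict version, \(M < N\) forces at least one entry of $A'$, $C'$, or $D'$ to be strictly positive. When \(A'_{i_0, k_0} > 0\), Lemma~\ref{L:Positivitet} (which needs cancellativity of $P$) makes strict inequalities translation-invariant, so the decomposition above directly gives \((ab)_{i_0, k_0} < (cd)_{i_0, k_0}\). When \(C'_{i_0} > 0\), I choose $k_0$ so that \((b^{m+1})_{k_0} \in \mc{R}_+\), which is possible because the last row of $M$ is \((b^{m+1}, 0)\) and the sub-\PROP\ hypothesis forces a positive entry there (and \(0 \notin \mc{R}_+\)); then \(C'_{i_0} (b^{m+1})_{k_0} > 0\) by the defining property of $\mc{R}_+$, and the same decomposition once more delivers strict inequality. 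The case \(D'_{k_0} > 0\) is symmetric, using the last column of $M$ to extract some \((a_{m+1})_{i_0} \in \mc{R}_+\).

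The only subtle point is that phrases such as \(ab - a_{m+1} b^{m+1}\) are purely shorthand for the partial sum $M^\circ$: semiring subtraction is unavailable, and the nonnegative witnesses $A'$, $C'$, $D'$ must be produced via axiom (ii). The sub-\PROP\ hypothesis enters precisely where it does above, making positive-cone elements in $a_{m+1}$ and $b^{m+1}$ available so that one can upgrade a nonnegative product into a strictly positive one; this is exactly why the restriction is needed for the strict version but not for the basic uncut property.
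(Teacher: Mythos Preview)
Your proof is correct and follows essentially the same route as the paper's. Both compute the block form of the left-hand side of \eqref{Eq:a:Okapad} as $\begin{bmatrix} M^\circ & a_{m+1} \\ b^{m+1} & 0 \end{bmatrix}$, extract the three componentwise inequalities, and reassemble them into \(ab \leqslant cd\); for the strict part both locate a positive entry in the last row (or column) of one of the factors to upgrade a nonnegative product to a strictly positive one. The only cosmetic differences are that the paper phrases the reassembly at the matrix-block level (invoking Lemma~\ref{L:Positiv} for the strict step) while you work entry-by-entry with explicit nonnegative witnesses via axiom~(ii), and that you draw the needed positive entry from $b^{m+1}$ whereas the paper uses $d^{m'+1}$; either choice works. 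Your detour through ``the last row of $M$'' to find a positive entry in $b^{m+1}$ is sound but unnecessarily indirect: since $b$ itself lies in the sub-\PROP, its last row already has a positive entry.
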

\begin{proof}
  The hypothesis \eqref{Eq:a:Okapad} can be written in block matrix 
  form as
  \begin{multline*}
    \begin{bmatrix}
      A_{11} & A_{12} & 0 \\
      A_{21} & A_{22} & 0 \\
      0 & 0 & 1
    \end{bmatrix}
    \begin{bmatrix}
      I & 0 & 0 \\
      0 & 0 & 1 \\
      0 & 1 & 0
    \end{bmatrix}
    \begin{bmatrix}
      B_{11} & B_{12} & 0 \\
      B_{21} & B_{22} & 0 \\
      0 & 0 & 1
    \end{bmatrix}
    \leqslant \\ \leqslant
    \begin{bmatrix}
      C_{11} & C_{12} & 0 \\
      C_{21} & C_{22} & 0 \\
      0 & 0 & 1
    \end{bmatrix}
    \begin{bmatrix}
      I & 0 & 0 \\
      0 & 0 & 1 \\
      0 & 1 & 0
    \end{bmatrix}
    \begin{bmatrix}
      D_{11} & D_{12} & 0 \\
      D_{21} & D_{22} & 0 \\
      0 & 0 & 1
    \end{bmatrix}
    \pin{Q}
  \end{multline*}
  where the last two rows and columns have side~$1$. Performing these 
  multiplications yields
  \begin{equation} \label{Eq:MatrisOkapad}
    \begin{bmatrix}
      A_{11}B_{11} & A_{11}B_{12} & A_{12} \\
      A_{21}B_{11} & A_{21}B_{12} & A_{22} \\
      B_{21} & B_{22} & 0
    \end{bmatrix}
    \leqslant
    \begin{bmatrix}
      C_{11}D_{11} & C_{11}B_{12} & C_{12} \\
      C_{21}D_{11} & C_{21}B_{12} & C_{22} \\
      D_{21} & D_{22} & 0
    \end{bmatrix}
    \pin{Q} \text{,}
  \end{equation}
  or equvalently
  \begin{align*}
    \begin{bmatrix}
      A_{11}B_{11} & A_{11}B_{12} \\
      A_{21}B_{11} & A_{21}B_{12}
    \end{bmatrix}
    \leqslant{}&
    \begin{bmatrix}
      C_{11}D_{11} & C_{11}D_{12} \\
      C_{21}D_{11} & C_{21}D_{12}
    \end{bmatrix}
    \pin{Q} \text{,}&
    \begin{bmatrix}
      A_{12} \\ A_{22}
    \end{bmatrix}
    \leqslant{}&
    \begin{bmatrix}
      C_{12} \\ C_{22}
    \end{bmatrix}
    \pin{Q} \text{,}\\
    \text{and}\qquad
    \begin{bmatrix}
      B_{21} & B_{22}
    \end{bmatrix}
    \leqslant{}&
    \begin{bmatrix}
      D_{21} & D_{22}
    \end{bmatrix}
    \pin{Q}
    \text{.}
  \end{align*}
  Hence
  \begin{align*}
    \begin{bmatrix}
      A_{11} & A_{12} \\ A_{21} & A_{22}
    \end{bmatrix}
    \begin{bmatrix}
      B_{11} & B_{12} \\ B_{21} & B_{22}
    \end{bmatrix}
    ={}&
    \begin{bmatrix}
      A_{11}B_{11} & A_{11}B_{12} \\
      A_{21}B_{11} & A_{21}B_{12}
    \end{bmatrix}
    +
    \begin{bmatrix}
      A_{12} \\ A_{22}
    \end{bmatrix}
    \begin{bmatrix}
      B_{21} & B_{22}
    \end{bmatrix}
    \leqslant \\ \leqslant{}&
    \begin{bmatrix}
      C_{11}D_{11} & C_{11}D_{12} \\
      C_{21}D_{11} & C_{21}D_{12}
    \end{bmatrix}
    +
    \begin{bmatrix}
      C_{12} \\ C_{22}
    \end{bmatrix}
    \begin{bmatrix}
      D_{21} & D_{22}
    \end{bmatrix}
    = \\ ={}&
    \begin{bmatrix}
      C_{11} & C_{12} \\ C_{21} & C_{22}
    \end{bmatrix}
    \begin{bmatrix}
      D_{11} & D_{12} \\ D_{21} & D_{22}
    \end{bmatrix}
    \pin{Q}
    \text{.}
  \end{align*}
  
  For preservation of strict inequalities, one may observe that this 
  is immediate from cancellativity if the strictness occurs within 
  the first two row and column blocks of \eqref{Eq:MatrisOkapad}. If 
  it occurs in the last column block ($A_{i2}$ versus $C_{i2}$) then
  \begin{equation*}
    \begin{bmatrix}
      A_{12} \\ A_{22}
    \end{bmatrix}
    \begin{bmatrix}
      B_{21} & B_{22}
    \end{bmatrix}
    \leqslant
    \begin{bmatrix}
      A_{12} \\ A_{22}
    \end{bmatrix}
    \begin{bmatrix}
      D_{21} & D_{22}
    \end{bmatrix}
    <
    \begin{bmatrix}
      C_{12} \\ C_{22}
    \end{bmatrix}
    \begin{bmatrix}
      D_{21} & D_{22}
    \end{bmatrix}
    \pin{Q}
  \end{equation*}
  because there is at least one positive element in the row 
  $\begin{bmatrix} D_{21} & D_{22} \end{bmatrix}$. Strictness in the 
  last row block ($B_{2j}$ versus $D_{2j}$) is similar.
\end{proof}

\begin{corollary} \label{Kor:BaffOkapad}
  The standard matrix order on $\Baff(\mc{R}_{\geqslant 0})$ also has 
  the uncut property, and if the partial order on $\mc{R}$ is 
  cancellative then then the sub-\PROP\ of matrices with at least one 
  positive element in each row and column has the strict uncut 
  property.
\end{corollary}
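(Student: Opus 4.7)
The plan is to reduce this corollary to the preceding lemma by exploiting the concrete realisation of $\Baff(\mc{R}_{\geqslant 0})$ as a set of matrices in $(\mc{R}_{\geqslant 0})^{\bullet \times \bullet}$. The three key observations that make this reduction work are: (i)~the standard order on $\Baff(\mc{R}_{\geqslant 0})$ is literally the restriction of the standard matrix order, (ii)~composition in $\Baff(\mc{R}_{\geqslant 0})$ is the same operation as matrix multiplication, and (iii)~although the tensor product in $\Baff$ differs in general from the direct-sum tensor product in $\mc{R}^{\bullet\times\bullet}$, the specific tensor $a \otimes_{\Baff} \phi_{\Baff}(\same{1})$ reduces to $\tilde{a} \otimes_{\mc{R}^{\bullet\times\bullet}} \phi_{\mc{R}^{\bullet\times\bullet}}(\same{1})$, where $\tilde{a}$ denotes the matrix underlying $a$; this is because tensoring with $\phi_{\Baff}(\same{1}) = I_3$ via the biaffine formula \eqref{Eq:Biaffin-tensor} simply adjoins a new row and column consisting of zeros apart from a $1$ in the new diagonal entry, which is exactly the direct-sum action.

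First I will verify item~(iii) by direct inspection of \eqref{Eq:Biaffin-tensor} in the case that the second factor has $\vek{b}_2,\vek{c}_2,d_2$ all zero and $A_2$ equal to the $1\times 1$ identity. Next I will note that $\phi_{\Baff}(\same{m} \star \cross{1}{1}) = \phi_{\mc{R}^{\bullet\times\bullet}}(\same{m+2} \star \cross{1}{1})$ by the defining formula \(\phi_{\Baff(\mc{R})}(\sigma) = \phi_{\mc{R}^{\bullet\times\bullet}}(\same{2} \star \sigma)\). Combining these two observations, the hypothesis of the uncut property in $\Baff$, when all entries are rewritten as matrices, becomes precisely an instance of the hypothesis of the uncut property in $\mc{R}^{\bullet\times\bullet}$, but with the parameters $m$ and $m'$ in the previous lemma replaced by $m+2$ and $m'+2$ respectively.

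Applying the preceding lemma then yields $\tilde{a}\tilde{b} \leqslant \tilde{c}\tilde{d} \pin{Q}$ as matrices, and since the composition in $\Baff$ coincides with matrix multiplication and the order on $\Baff$ is the restriction of the matrix order, this is exactly $a \circ b \leqslant c \circ d \pin{Q(k,n)}$ inside $\Baff(\mc{R}_{\geqslant 0})$, establishing the uncut property. For the strict version under the cancellativity assumption, I will simply invoke the strict part of the previous lemma, which requires the matrices appearing in the composition to have at least one positive entry in each row and each column. Here the restriction to the sub-\PROP\ described in the corollary statement guarantees this for $a,b,c,d$; since the matrix form of $a \otimes_{\Baff} \phi(\same{1})$ is obtained by adjoining a single $1$ in a fresh row and column, the positivity-per-row-and-column condition is inherited, so the hypothesis of the strict version of the previous lemma is met.

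The main (and essentially only) obstacle is the bookkeeping in item~(iii): one must be careful that the tensor product formula in $\Baff$, which is not the direct sum in general because it identifies the $d$-components and the $\vek{c}$-parts live in the same augmented row, still reduces to the direct sum precisely when one of the factors is $\phi_{\Baff}(\same{1})$. Once this is cleanly verified, the rest of the proof is a straightforward translation between notations, and no new analysis of the partial order $P$ on $\mc{R}$ is needed beyond what the previous lemma already supplies.
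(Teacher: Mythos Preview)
Your proposal is correct and follows essentially the same approach as the paper: both reduce to the preceding lemma by observing that $\Baff(\mc{R}_{\geqslant 0})$ sits inside $(\mc{R}_{\geqslant 0})^{\bullet\times\bullet}$ as a set of matrices, that composition coincides with matrix multiplication, that $a \otimes_{\Baff} \phi_{\Baff}(\same{1}) = a \otimes_{\mc{R}^{\bullet\times\bullet}} \phi_{\mc{R}^{\bullet\times\bullet}}(\same{1})$, and that $\phi_{\Baff}(\sigma) = \phi_{\mc{R}^{\bullet\times\bullet}}(\same{2}\star\sigma)$, so each instance of \eqref{Eq:Okapad} in $\Baff$ is literally an instance in the matrix \PROP. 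One small bookkeeping point: all four parameters $l,m,m',n$ shift by $2$ in the translation (not just $m,m'$), since $a \in \Baff(l,m+1)$ means the underlying matrix lies in $\mc{R}^{(l+2)\times(m+3)}$; your argument is unaffected, and your explicit treatment of the strict case (checking that tensoring with $\phi(\same{1})$ preserves the positive-element-per-row-and-column condition) is in fact more careful than the paper's, which leaves that verification implicit.
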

\begin{proof}
  Let \(\mc{P} = (\mc{R}_{\geqslant 0})^{\bullet\times\bullet}\) and 
  \(\mc{Q} = \Baff(\mc{R}_{\geqslant 0})\). Recall that $\mc{Q}$ is a 
  subset of $\mc{P}$. The tensor product and permutation operations 
  in $\mc{P}$ are not the same as in $\mc{Q}$, but \(a \otimes_\mc{Q} 
  \phi_\mc{Q}(\same{1}) = a \otimes_\mc{P} \phi_\mc{P}(\same{1})\) 
  for all \(a \in \mc{Q}\) and \(\phi_\mc{Q}(\sigma) = 
  \phi_\mc{P}(\same{2} \star\nobreak \sigma)\) for all permutations 
  $\sigma$, so any instance in $\mc{Q}$ of \eqref{Eq:Okapad} is also 
  an instance of if it in $\mc{P}$ (although with $l$, $m$, $m'$, and 
  $n$ all incremented by $2$).
\end{proof}

\begin{theorem} \label{S:UncutOrder}
  An $\N^2$-graded quasi-order $Q$ on the free \PROP\ $\Nwt(\Omega)$ 
  is preserved under symmetric join if and only if it is a \PROP\ 
  quasi-order which has the uncut property. Moreover it is strictly 
  preserved under symmetric join if and only if it is a strict \PROP\ 
  quasi-order which has the strict uncut property.
\end{theorem}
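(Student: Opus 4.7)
\textbf{Forward direction ($\Rightarrow$).} By Lemma~\ref{L:OrdningSammanbindingPROP}, $Q$ is already a \PROP\ quasi-order (strict in the strict case), so all that remains is to extract the uncut property. The plan is to set
\[
e := a \otimes \phi(\same{1}) \circ \phi(\same{m} \star \cross{1}{1}) \circ b \otimes \phi(\same{1})
\]
and $e'$ analogously for $c, d, m'$, and to compute $e \join^1_1 \phi(\same{1})$ directly using the formal feedback $\feedback{n} := \rtimes \phi(\same{n})$ on $\Nwt(\Omega)$. Tightening, superposing, and the yanking identity $\phi(\cross{1}{1})\feedback{1} = \phi(\same{1})$ give
\[
e \join^1_1 \phi(\same{1}) = e\feedback{1} = a \circ \phi(\same{m} \star \cross{1}{1})\feedback{1} \circ b = a \circ \phi(\same{m+1}) \circ b = a \circ b,
\]
and likewise $e' \join^1_1 \phi(\same{1}) = c \circ d$. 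The nilpotency prerequisite for the join is $\Tr(e)_{l+1,n+1} = 0$, which holds since the last input of $e$ enters $a$ via the crossing while the last output exits $b$, with composition forbidding the backward path. The uncut hypothesis $e \leqslant e' \pin{Q}$ together with the reflexive $\phi(\same{1}) \leqslant \phi(\same{1}) \pin{Q}$ and the assumed symmetric-join preservation then yields $a \circ b \leqslant c \circ d \pin{Q}$; the strict case is identical.

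\textbf{Backward direction ($\Leftarrow$).} Now assume $Q$ is a \PROP\ quasi-order with uncut. The plan is to apply Theorem~\ref{S:Feedback-join} to the \PROP\ with formal feedbacks $\bigl(\Nwt(\Omega), \mc{Y}_\Omega, \{\rtimes \phi(\same{n})\}_n \bigr)$, which gives
\[
a \join^r_q b = \bigl( \phi(\same{k} \star \cross{r}{m}) \circ a \otimes \phi(\same{m}) \circ \phi(\same{l}) \otimes b \circ \phi(\same{l} \star \cross{n}{r}) \bigr) \feedback{r}.
\]
The expression inside $\feedback{r}$ is built from $a$, $b$, and fixed permutations by $\circ$ and $\otimes$, so it is $Q$-preserved whenever $a \leqslant a' \pin{Q}$ and $b \leqslant b' \pin{Q}$. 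By vanishing, $\feedback{r}$ is the $r$-fold iterate of $\feedback{1}$, and the whole problem reduces to showing that $\feedback{1}$ preserves $Q$ on every $f \in \Nwt(\Omega)(l+1, n+1)$ for which $f\feedback{1}$ is defined, i.e., $\Tr(f)_{l+1,n+1} = 0$.

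For that reduction I rely on a \emph{decomposition lemma}: every such $f$ admits a factorisation $f = \alpha \otimes \phi(\same{1}) \circ \phi(\same{m} \star \cross{1}{1}) \circ \beta \otimes \phi(\same{1})$ in uncut form, for which $f\feedback{1} = \alpha \circ \beta$ by the same tightening--yanking computation as in the forward direction. Given such decompositions of $f$ and $f'$ (with possibly different intermediate widths $m, m'$), the uncut property immediately delivers $f\feedback{1} \leqslant f'\feedback{1} \pin{Q}$, and strict uncut handles the strict variant. To construct the decomposition I will pick a network representative $G$ of $f$; the vanishing of $\Tr(f)_{l+1,n+1}$ makes the descendants of $G$'s last input disjoint from the ancestors of $G$'s last output, so placing the former into an upper block $W_\alpha$ and the latter into the lower complement $W_\beta$ gives a cut, from which $\alpha$ and $\beta$ are read off after inserting pass-through identity edges into $\beta$ for each non-last input leg whose original target lies in $W_\alpha$, and analogously into $\alpha$ for the last output leg coming from $W_\beta$. \textbf{The only genuine obstacle} in the whole argument is exactly this decomposition lemma: the cut itself is forced by the transference hypothesis, but turning it into a clean factorisation inside the free \PROP\ requires careful bookkeeping of the rerouted pass-through wires. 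Every other step is immediate from Lemma~\ref{L:OrdningSammanbindingPROP}, Theorem~\ref{S:Feedback-join}, the feedback axioms on $\Nwt(\Omega)$, and the \PROP\ quasi-order property.
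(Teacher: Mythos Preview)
Your proposal is correct and follows essentially the same route as the paper. Both directions match: the forward direction extracts the uncut property by computing $e\feedback{1}=a\circ b$ via tightening and yanking exactly as you do, and the backward direction expresses $a\join^r_q b$ by the feedback formula, uses the \PROP\ quasi-order on the inside, reduces to $\feedback{1}$ via vanishing, and then invokes the decomposition lemma you describe. The paper's construction of that decomposition is precisely your cut: $W_0$ is the set of inner vertices reachable from the last input leg $e_1$, $W_1$ is the complement, the transference hypothesis forces the last output leg $e_0$ to be a cut edge as well, and ordering the cut so that $e_0$ and $e_1$ come last (in positions $m+1$, $m+2$) yields the factorisation $\eval(H_0)\otimes\phi(\same{1})\circ\phi(\same{m}\star\cross{1}{1})\circ\eval(H_1)\otimes\phi(\same{1})$ directly from two nested cut decompositions --- no ad~hoc insertion of pass-through edges is needed, since the cut machinery produces them automatically.
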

\begin{proof}
  That a quasi-order (strictly) preserved under symmetric join is a 
  (strict) \PROP\ quasi-order was shown in 
  Lemma~\ref{L:OrdningSammanbindingPROP}. The uncut property is a 
  immediate consequence of being preserved under the symmetric join 
  $\join^1_1 \phi(\same{1})$, as
  \begin{multline*}
    a \circ b
    =
    a \circ \phi(\same{m} \star \cross{1}{1})\feedback{1} \circ b
    =
    \bigl( 
      a \otimes \phi(\same{1}) \circ 
      \phi(\same{m} \star \cross{1}{1}) \circ
      b \otimes \phi(\same{1})
    \bigr) \feedback{1}
    = \\ =
    \bigl( 
      a \otimes \phi(\same{1}) \circ 
      \phi(\same{m} \star \cross{1}{1}) \circ
      b \otimes \phi(\same{1})
    \bigr) \join^1_1 \phi(\same{1})
    \leqslant \\ \leqslant
    \bigl( 
      c \otimes \phi(\same{1}) \circ 
      \phi(\same{m'} \star \cross{1}{1}) \circ
      d \otimes \phi(\same{1})
    \bigr) \join^1_1 \phi(\same{1})
    = \\ =
    \bigl( 
      c \otimes \phi(\same{1}) \circ 
      \phi(\same{m'} \star \cross{1}{1}) \circ
      d \otimes \phi(\same{1})
    \bigr) \feedback{1}
    = \\ =
    c \circ \phi(\same{m'} \star \cross{1}{1})\feedback{1} \circ d
    =
    c \circ d 
    \pin{Q}
  \end{multline*}
  for all \(a \in \Nwt(\Omega)(l,m)\), \(b \in \Nwt(\Omega)(m,n)\), 
  \(c \in \Nwt(\Omega)(l,m')\), and \(d \in \Nwt(\Omega)(m',n)\).
  
  That conversely any \PROP\ quasi-order $Q$ with the uncut property 
  will be preserved under symmetric join can be shown using the 
  formula \eqref{Eq:Kor:Feedback-join} for the symmetric join. Since 
  $\Nwt(\Omega)$ is a \PROP\ with formal feedback and 
  \(\mathrm{id}\colon \Nwt(\Omega) \Fpil \Nwt(\Omega)\) is a \PROP\ 
  homomorphism, it follows that
  \[
    a \join^r_q b =
    \bigl( 
      \phi(\same{k} \star \cross{r}{m}) \circ
      a \otimes \phi(\same{m}) \circ
      \phi(\same{l}) \otimes b \circ
      \phi(\same{l} \star \cross{n}{r}) 
    \bigr) \feedback{r}
  \]
  for \(a \in \Nwt(\Omega)(k +\nobreak r, l +\nobreak q)\) and \(b \in 
  \Nwt(\Omega)(q +\nobreak m, r +\nobreak n)\) such that the 
  symmetric join exists. Obviously, if \(a \leqslant a' \pin{Q}\) and 
  \(b \leqslant b' \pin{Q}\) then
  \begin{multline*}
    \phi(\same{k} \star \cross{r}{m}) \circ
    a \otimes \phi(\same{m}) \circ
    \phi(\same{l}) \otimes b \circ
    \phi(\same{l} \star \cross{n}{r}) 
    \leqslant \\ \leqslant
    \phi(\same{k} \star \cross{r}{m}) \circ
    a' \otimes \phi(\same{m}) \circ
    \phi(\same{l}) \otimes b' \circ
    \phi(\same{l} \star \cross{n}{r}) 
    \pin{Q}
  \end{multline*}
  since $Q$ is a \PROP\ quasi-order. Hence the wanted conclusion 
  follows once it has been established that \(c \leqslant c' 
  \pin{Q}\) implies \(c\feedback{r} \leqslant c'\feedback{r} 
  \pin{Q}\) for all \(c,c' \in \mc{Y}_\Omega(C)\), where \(C = \left[ 
  \begin{smallmatrix} C_{11} & C_{12} \\ C_{21} & C_{22} 
  \end{smallmatrix} \right] \in \B^{\bullet \times \bullet}\) has 
  \(C_{22} \in \B^{r \times r}\) nilpotent.
  
  By vanishing of feedbacks, this follows for all $r$ once it has 
  been shown for \(r=1\). Let \(G \in c\) and \(G' \in c'\) be 
  arbitrary. Let \(l = \omega(G)-1 = \omega(G')-1\) and \(n = 
  \alpha(G)-1 = \alpha(G') - 1\). Let $e_1$ be the edge in $G$ with tail 
  $1$ and tail index $n+1$, and let $e_0$ be the edge in $G$ with head 
  $0$ and head index $l+1$. Let $W_0$ be the set of all inner vertices 
  of $G$ which can be the headwards endpoint of a path whose first 
  edge is $e_1$, and let $W_1$ be the set of all other inner vertices 
  of $G$. 
  Then $(W_0,W_1)$ is a cut in $G$ and $e_1$ is by construction a cut 
  edge. $e_0$ is also a cut edge, because the head of $e_0$ is $0$, 
  and had the tail of $e_0$ been in $W_0$ then the $(l +\nobreak 1, n 
  +\nobreak 1)$ position of \(\Tr(G) = C\) would have been $1$, which 
  is impossible since $C_{22}$ is nilpotent. Let $m$ be the number of 
  cut edges other than $e_0$ and $e_1$. There is an ordering $p_1$ of 
  the cut such that \(p_1(e_0) = m+1\) and \(p_1(e_1) = m+2\); let 
  $(G_0,G_1)$ be the corresponding decomposition of $G$. For \(p_0 = 
  (\same{m} \star\nobreak \cross{1}{1}) \circ p_1\), one has that 
  $(W_0,\varnothing,p_0)$ is an ordered cut in $G_0$, and hence $G_0$ 
  has a cut decomposition $(G_{00},G_{01})$. From the choice of 
  ordering of these cuts, one sees
  \begin{align*}
    \eval(G_1) ={}& \eval(H_1) \otimes \phi(\same{1}) 
      && \text{for some \(H_1 \in \Nw(\Omega)(m+1,n)\),}\\
    \eval(G_{01}) ={}& \phi(\same{m} \star \cross{1}{1}) 
      \text{,}\\
    \eval(G_{00}) ={}& \eval(H_0) \otimes \phi(\same{1}) 
      && \text{for some \(H_1 \in \Nw(\Omega)(l,m+1)\).}
  \end{align*}
  By applying the same argument to $G'$, one gets some $m'$ (which 
  need not be equal to $m$), \(H_0' \in \Nw(\Omega)(l,m' +\nobreak 1)\) 
  and \(H_1' \in \Nw(\Omega)(m' +\nobreak 1, n)\) such that
  \[
    c' = 
    \eval(H_0') \otimes \phi(\same{1}) \circ
    \phi(\same{m} \star \cross{1}{1}) \circ
    \eval(H_1') \otimes \phi(\same{1})
    \text{.}
  \]
  Thus the uncut property applies to \(c \leqslant c' \pin{Q}\), and 
  it follows that \(c\feedback{1} = \eval(H_0) \circ \eval(H_1) 
  \leqslant \eval(H_0') \circ \eval(H_1') = c'\feedback{1} \pin{Q}\).
  
  Replacing some $\leqslant$ by $<$ yields the similar argument for $Q$ 
  also being strictly preserved under symmetric join.
\end{proof}

One could in $\mc{R}^{\bullet\times\bullet}$ similarly argue that the 
uncut property implies preservation under feedback, but this 
implication need not hold in general \PROPs; the tricky part is whether 
the fact that $c$ is in the domain of $\feedback{1}$ also gives rise 
to a decomposition of $c$ as $a \otimes \phi(\same{1}) \circ 
\phi(\same{m} \star\nobreak \cross{1}{1}) \circ b \otimes 
\phi(\same{1})$. For our purposes here, it is however sufficient that 
it holds in $\Nwt(\Omega)$, since that is what we primarily wish to 
order. For that end, a few results from Section~\ref{Sec:Ordning} 
need to be extended to the uncut property.

\begin{lemma}
  Let $\mc{P}$ and $\mc{Q}$ be \PROPs, and let \(f\colon \mc{P} \Fpil 
  \mc{Q}\) be a \PROP\ homomorphism. If $Q$ is a quasi-order on 
  $\mc{Q}$ which has the (strict) uncut property then the pullback 
  $f^*Q$ also has the (strict) uncut property.
  
  If $P$ and $Q$ are quasi-orders on $\mc{Q}$ which have the strict 
  uncut property, then $P \diamond Q$ has the strict uncut property 
  as well.
\end{lemma}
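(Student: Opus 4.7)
The plan is to handle the two parts in parallel, since both are verifications that transport the uncut hypothesis across a construction and then transport the conclusion back. For the pullback, the key observation is that a \PROP{} homomorphism $f$ commutes with every syntactic ingredient appearing in \eqref{Eq:Okapad}: $\otimes$, $\circ$, and the permutation maps $\phi(\same{1})$, $\phi(\same{m} \star \cross{1}{1})$, etc. So if the hypothesis \eqref{Eq:a:Okapad} for $f^*Q$ holds with elements $a,b,c,d \in \mc{P}$, applying $f$ to both sides yields the exact same hypothesis for $Q$ with the images $f(a), f(b), f(c), f(d) \in \mc{Q}$. The (strict) uncut property of $Q$ then delivers $f(a)\circ f(b) \leqslant f(c)\circ f(d) \pin{Q}$ (respectively $<$), and since $f(a)\circ f(b) = f(a \circ b)$ and likewise on the right, this reads as $a \circ b \leqslant c \circ d \pin{f^*Q}$ (respectively $<$). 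This is essentially the same template already used in Lemma~\ref{L:Pullback} for the other order-theoretic properties.

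For the lexicographic composition, write $L$ and $R$ for the two sides of \eqref{Eq:a:Okapad}, and suppose $L < R \pin{P \diamond Q}$. Unfolding the definition of $\diamond$ splits this into two cases: either (A) $L < R \pin{P}$, or (B) $L \sim R \pin{P}$ together with $L < R \pin{Q}$. In case (A) the strict uncut property of $P$ gives $a \circ b < c \circ d \pin{P}$, hence $a \circ b < c \circ d \pin{P \diamond Q}$ by the first clause of $\diamond$. In case (B) the strict uncut property of $Q$ gives $a \circ b < c \circ d \pin{Q}$, while the two non-strict inequalities $L \leqslant R \pin{P}$ and $R \leqslant L \pin{P}$ combine (via the non-strict uncut property of $P$, applied in each direction) to yield $a \circ b \sim c \circ d \pin{P}$; the second clause of $\diamond$ then again gives $a \circ b < c \circ d \pin{P \diamond Q}$.

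The one step that needs care is case (B), since it invokes the non-strict uncut property of $P$, which is not literally what the hypothesis names. In the conventions of the paper this reading is natural: the non-strict and strict versions of uncut travel together (they are established in tandem in the preceding lemma and in Corollary~\ref{Kor:BaffOkapad}), and in the intended applications $P$ is a strict \PROP{} quasi-order for which both hold. I would therefore either absorb the non-strict uncut property into the hypothesis on $P$, or verify once and for all in the preceding discussion that the strict uncut property together with the \PROP{} quasi-order axioms already yields the non-strict uncut property on the relevant \PROPs. Either way, once that point is settled the rest of the argument is a purely mechanical case analysis on the definition of $\diamond$.
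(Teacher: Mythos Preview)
Your argument for the pullback part is correct and is exactly what the paper does.

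For the lexicographic composition, your case analysis matches the paper's, but there is one point to sort out and one small gap to close. First, your worry about case~(B) is a misreading of the definition: the phrase ``holds \emph{also} if the $\leqslant$ are replaced by $<$'' means the strict uncut property comprises \emph{both} the non-strict implication and the strict one. So the hypothesis on $P$ already gives you the non-strict uncut property you need in case~(B), and no extra work is required there. Second, precisely because ``strict uncut'' includes the non-strict implication, you must also verify the non-strict uncut property for $P \diamond Q$. Your argument only treats $L < R \pin{P \diamond Q}$; you still need the case $L \sim R \pin{P \diamond Q}$, which unpacks to $L \sim R$ in both $P$ and $Q$, whence (by the non-strict uncut property of each, applied in both directions) $a \circ b \sim c \circ d$ in both $P$ and $Q$, and hence in $P \diamond Q$. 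The paper includes exactly this third case; with it added, your proof coincides with the paper's.
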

\begin{proof}
  Introduce the shorthand
  \[
    U(a,m,b) = 
    a \otimes \phi(\same{1}) \circ 
    \phi(\same{m} \star\nobreak \cross{1}{1}) \circ 
    b \otimes \phi(\same{1}) \text{.}
  \]
  For the first part, let \(l,m,m',n \in \N\), \(a \in \mc{P}(l, m 
  +\nobreak 1)\), \(b \in \mc{P}(m +\nobreak 1, n)\), \(c \in 
  \mc{P}(l, m' +\nobreak 1)\), and \(d \in \mc{P}(m' +\nobreak 1, n)\) 
  such that \(U(a,m,b) \leqslant U(c,m',d) \pin{f^*Q}\) be given. 
  Then by definition of pullback
  \[
    U\bigl( f(a), m, f(b) \bigr)
    =
    f\bigl( U(a,m,b) \bigr)
    \leqslant 
    f\bigl( U(c,m',d) \bigr)
    =
    U\bigl( f(c), m', f(d) \bigr)
    \pin{Q}
  \]
  and hence \(f(a \circ\nobreak b) = f(a) \circ f(b) \leqslant 
  f(c) \circ f(d) = f(c \circ\nobreak d) \pin{Q}\), meaning \(a \circ 
  b \leqslant c \circ d \pin{f^*Q}\). Replacing $\leqslant$ in this 
  argument by $<$ then yields the strict uncut property.
  
  For the lexicographic composition, one would instead let
  \(l,m,m',n \in \N\), \(a \in \mc{Q}(l, m +\nobreak 1)\), \(b \in 
  \mc{Q}(m +\nobreak 1, n)\), \(c \in \mc{Q}(l, m' +\nobreak 1)\), 
  and \(d \in \mc{Q}(m' +\nobreak 1, n)\) such that 
  \(U(a,m,b) < U(c,m',d) \pin{P \diamond Q}\) be given. If this is 
  because \(U(a,m,b) < U(c,m',d) \pin{P}\) then \(a \circ b < c \circ 
  d \pin{P}\) by the strict uncut property of $P$, and thus \(a \circ 
  b < c \circ d \pin{P \diamond Q}\). If this instead is because 
  \(U(a,m,b) \sim U(c,m',d) \pin{P}\) and \(U(a,m,b) < U(c,m',d) 
  \pin{Q}\), then \(a \circ b \leqslant c \circ d \leqslant a \circ b 
  \pin{P}\) by twice applying the uncut property of $P$ and \(a \circ 
  b < c \circ d \pin{Q}\) by the strict uncut property of $Q$, from 
  which similarly follows \(a \circ b < c \circ d \pin{P \diamond 
  Q}\). Finally if $a$, $b$, $c$, and $d$ instead satisfy \(U(a,m,b) 
  \sim U(c,m',d) \pin{P \diamond Q}\) then this holds separately in 
  both $P$ and $Q$, whence \(a \circ b \sim c \circ d\) in both $P$ 
  and $Q$ as well, and thus it also holds in $P \diamond Q$.
\end{proof}

\section{Rewriting}
\label{Sec:Omskrivning}

This section formally sets up a rewriting theory for free linear 
\PROPs\ $\mc{R}\{\Omega\}$, finishing off with some Diamond Lemmas. 
It is convenient to let the coefficient 
ring $\mc{R}$  and signature $\Omega$ of $\mc{R}\{\Omega\}$ be fixed 
throughout the section, so let $\mc{R}$ be an arbitrary associative 
and commutative ring with unit, and let $\Omega$ be an arbitrary 
$\N^2$-graded set.

The rewriting formalism used here will be that of~\cite{rpaper}. For 
the reader's convenience, elementary definitions of most concepts we 
shall employ are given below, whereas results are typically used 
without a precise statement being included here; instead there are 
detailed references. One thing to bear in mind is also that several 
of the more advanced ``features'' of the formalism in~\cite{rpaper}, 
in particular the topological structure used to handle rewriting of 
power series and the like, are ``deactivated'' here since they are 
fairly orthogonal to the issues at hand and would unnecessarily 
clutter the presentation if included; ``reactivating'' these features 
should present no great difficulty for the interested reader, since 
most intermediate results below apply as they are written also in 
that more general setting. 

One feature of the rewriting formalism that we will most definitly 
have to ``keep on'' is that it is many-sorted, meaning it works with 
expressions of several different sorts. An expression of one sort 
always rewrites to an expression of the same sort, but a rewrite step 
may employ a rule that was primarily stated for some other sort, since 
the expression it is applied to has a subexpression of that other 
sort. Since what matters for the basic \PROP\ operations are arity 
and coarity, one might expect that the sorts should be pairs $(m,n)$ 
of natural numbers, but in fact a much finer separation will be made: 
the (index) set for possible sorts is the \PROP\ of all boolean matrices 
$\B^{\bullet\times\bullet}$. The set of expressions of sort \(q \in 
\B^{\bullet\times\bullet}\) is then the module $\mc{M}(q)$ of 
Definition~\ref{Def:Y&M}. (It is not a problem that many elements of 
$\mc{R}\{\Omega\}$ belong to several of these modules, because when 
something is to be rewritten, it is formally being rewritten as an 
element of some specific $\mc{M}(q)$.)

\subsection{Reductions}

Formally, the idea that some $\mu$ of sort $r$ appears as a 
subexpression of some $\nu$ of sort $q$ is taken to mean that 
\(\nu = v(\mu)\) for some map $v$ in a specific family $\mc{V}(q,r)$.

\begin{definition} \label{D:KategorinV}
  For any \(q,r \in \B^{\bullet\times\bullet}\), let $\mc{V}(q,r)$ be the 
  set of all maps \(v\colon \mc{M}(r) \Fpil \mc{M}(q)\) of the form 
  \(v(b) = \mu \rtimes b\) for some \(\mu \in \mc{Y}(p)\), 
  where \(p = \left[ \begin{smallmatrix} p_{11} & p_{12} \\ p_{21} & 
  p_{22} \end{smallmatrix} \right]\) is such that $p_{22} r$ is 
  nilpotent and \(p_{11} + p_{12} r (p_{22} r)^* p_{21} \leqslant 
  q\). Putting it differently,
  \begin{equation}
    \mc{V}(q,r) = \bigcup_{\substack{ 
      \left[ \begin{smallmatrix} p_{11} & p_{12} \\ p_{21} & p_{22}
      \end{smallmatrix} \right] 
      \in \B^{(\omega(q)+\alpha(r)) \times (\alpha(q)+\omega(r))}
      \\
      p_{11} + p_{12} r (p_{22} r)^* p_{21} \leqslant q\\
      \text{$p_{22} r$ is nilpotent}
    }}
    \setOf[\Big]{ b \mapsto \mu \rtimes b }{ \mu \in \mc{Y}\left(
      \left[ \begin{smallmatrix} p_{11} & p_{12} \\ p_{21} & p_{22}
      \end{smallmatrix} \right]
    \right) }
    \text{.}
  \end{equation}
\end{definition}

\begin{lemma} \label{L:KategorinV}
  The $\mc{V}(q,r)$ families of maps have the following properties:
  \begin{enumerate}
    \item \label{Item:omtypning}
      If \(q \geqslant r\) then the identity map $\mathrm{id}$ on 
      $\mc{M}(r)$ is in $\mc{V}(q,r)$. This can be used to change the 
      sort of an expression to something less specific.
    \item \label{Item:kategori}
      If \(v_1 \in \mc{V}(q,r)\) and \(v_2 \in \mc{V}(p,q)\), then 
      \(v_2 \circ v_1 \in \mc{V}(p,r)\). Hence, $\mc{V}$ is a category.
    \item \label{Item:permutationer}
      For any \(\sigma \in \Sigma_m\), \(\tau \in \Sigma_n\), and \(q 
      \in \B^{m \times n}\), the map \(a \mapsto 
      \phi_{\mc{R}\{\Omega\}}(\sigma) \circ a \circ 
      \phi_{\mc{R}\{\Omega\}}(\tau)\) is in $\mc{V}\bigl( 
      \phi_{\B^{\bullet\times\bullet}}(\sigma) \circ\nobreak q 
      \circ\nobreak \phi_{\B^{\bullet\times\bullet}}(\tau), q 
      \bigr)\).
    \item \label{Item:Sammansattning}
      For any \(\lambda \in \mc{Y}(p)\) and \(\nu \in \mc{Y}(r)\), 
      the map \(a \mapsto \lambda \circ a \circ \nu\) is in 
      $\mc{V}(pqr,q)$.
    \item \label{Item:Tensorprodukt}
      For any \(\lambda \in \mc{Y}(p)\) and \(\nu \in \mc{Y}(r)\), 
      the map \(a \mapsto \lambda \otimes a \otimes \nu\) is in 
      $\mc{V}(p \otimes\nobreak q \otimes\nobreak r, q)$.
  \end{enumerate}
\end{lemma}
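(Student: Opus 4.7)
My overall plan is to prove each of the five claims by exhibiting a specific $\mu \in \Nwt(\Omega)$ such that the map in question has the form $b \mapsto \mu \rtimes b$, and then to check that the transference $\Tr(\mu)$ has the block shape and satisfies the nilpotency and bound conditions demanded by Definition \ref{D:KategorinV}. Because $\Tr$ is a \PROP\ homomorphism (Lemma \ref{L:TrHomomorfi}), the transferences will always be computable from those of the ingredients via the matrix operations of $\B^{\bullet\times\bullet}$, and the action on $b$ will be verified by either the identities of Corollary \ref{Kor:Sammanbindingsformler} or the factorisation formula of Theorem \ref{S:AIN-sammanbindning}.

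For claim \ref{Item:omtypning} I would take $\mu = \phi(\cross{\alpha(r)}{\omega(r)})$; by item 3 of Corollary \ref{Kor:Sammanbindingsformler} one has $\mu \rtimes b = b$ for every $b \in \mc{M}(r)$. Direct inspection of the permutation matrix of $\cross{\alpha(r)}{\omega(r)}$, split into the blocks of Definition \ref{D:KategorinV}, gives $P_{11}=0$, $P_{12}=I_{\omega(r)}$, $P_{21}=I_{\alpha(r)}$, and $P_{22}=0$. Then $P_{22}r = 0$ is vacuously nilpotent, and $P_{11}+P_{12}r(P_{22}r)^*P_{21}=r\leqslant q$ by hypothesis, so $\mu$ witnesses that $\mathrm{id}\in\mc{V}(q,r)$.

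For claims \ref{Item:permutationer}--\ref{Item:Tensorprodukt} I would proceed uniformly. In each case the map has a naked abstract index expression: $\phi(\sigma)^{\vek{m}'}_{\vek{m}} a^{\vek{m}}_{\vek{n}}\phi(\tau)^{\vek{n}}_{\vek{n}'}$ for \ref{Item:permutationer}, $\lambda^{\vek{e}}_{\vek{m}} a^{\vek{m}}_{\vek{n}} \nu^{\vek{n}}_{\vek{f}}$ for \ref{Item:Sammansattning}, and a suitably labelled product of three separated factors for \ref{Item:Tensorprodukt}. Applying Theorem \ref{S:AIN-sammanbindning} to factor $a$ off as the right operand of a single symmetric join produces an explicit $\mu$: for \ref{Item:permutationer} it is (a relabelling of) $\phi(\sigma)\otimes\phi(\tau)$, for \ref{Item:Sammansattning} it is $\lambda\otimes\nu$, and for \ref{Item:Tensorprodukt} it is $\lambda\otimes\nu$ interleaved with a permutation that sends the internal ``$a$-legs'' to one side and the external ones to the other. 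In every one of these cases the resulting $\Tr(\mu)$ has $P_{22}=0$, so the nilpotency condition is trivial, and a short computation shows $P_{11}+P_{12}rP_{21}$ equals the claimed target sort $\phi(\sigma)q\phi(\tau)$, $pqr$, or $p\otimes q\otimes r$ respectively.

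Claim \ref{Item:kategori} is the real obstacle. Writing $v_i(x)=\mu_i\rtimes x$ with $\mu_i\in\mc{Y}(P^{(i)})$ subject to the conditions of Definition \ref{D:KategorinV} for $(q,r)$ and $(p,q)$, I would rearrange $(v_2\circ v_1)(b)=\mu_2\rtimes(\mu_1\rtimes b)$ using Lemma \ref{L:join-associativitet} to exhibit $\nu=\mu_2\join^{\alpha(q)}_{\omega(q)}\mu_1$ such that $(v_2\circ v_1)(b)=\nu\rtimes b$. The first two nilpotency hypotheses of that lemma follow directly from $\mu_1\in\mc{V}(q,r)$ and $\mu_2\in\mc{V}(p,q)$, while the third, on a mixed product of Kleene stars, reduces via Lemma \ref{L:Matrisnilpotens} to nilpotency of the full combined block matrix. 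Then $\Tr(\nu)$ is computed from \eqref{Eq:TrSymJoin}, and the key technical step is checking $\nu_{11}+\nu_{12}r(\nu_{22}r)^*\nu_{21}\leqslant p$: this is the most delicate algebraic point, and I would carry it out by applying the ``star of a sum'' and ``slide'' identities of Lemma \ref{L:Semiringstjarna} (together with Lemma \ref{L:Matrisstjarna} to expand the double Kleene star produced by the nested joins) so as to express $\nu_{11}+\nu_{12}r(\nu_{22}r)^*\nu_{21}$ as $P^{(2)}_{11}+P^{(2)}_{12}\,\eta\,(P^{(2)}_{22}\,\eta)^*P^{(2)}_{21}$ for $\eta:=P^{(1)}_{11}+P^{(1)}_{12}r(P^{(1)}_{22}r)^*P^{(1)}_{21}$, whereupon monotonicity of the Boolean matrix operations and the bounds $\eta\leqslant q$ and the $\mu_2$-bound for $p$ deliver the conclusion.
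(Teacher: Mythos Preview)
Your approach is the paper's: item~\ref{Item:omtypning} via Corollary~\ref{Kor:Sammanbindingsformler}, items~\ref{Item:permutationer}--\ref{Item:Tensorprodukt} via Theorem~\ref{S:AIN-sammanbindning} (with \ref{Item:permutationer} treated as a special case of~\ref{Item:Sammansattning}), and item~\ref{Item:kategori} via associativity of $\join$ followed by the Kleene-star manipulations you indicate --- the paper's long inequality chain is exactly your ``equality then monotonicity'' argument, with the two uses of $\eta\leqslant q$ interleaved with the star identities rather than deferred to the end. One small correction: for item~\ref{Item:Sammansattning} the witness is not bare $\lambda\otimes\nu$ but $\lambda\otimes\nu\circ\phi(\cross{\alpha(r)}{\omega(q)})$, since the $\omega(q)$ input legs of $\mu$ that are to be joined to $a$ must occupy the trailing block of $\mu$'s arity; the block transference you state ($P_{11}=P_{22}=0$, with $P_{12}\leqslant p$ and $P_{21}\leqslant r$) is correct for this permuted~$\mu$.
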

\begin{proof}
  For item~\ref{Item:omtypning}, let \(m = \omega(r)\) and 
  \(n=\alpha(r)\). Then \(a = \phi_{\mc{R}\{\Omega\}}(\cross{n}{m}) 
  \join^n_m a\) by Corollary~\ref{Kor:Sammanbindingsformler}, so the 
  wanted $\mu$ is $\phi_{\mc{R}\{\Omega\}}(\cross{n}{m})$ having 
  \(p_{11} = 0\), \(p_{12} = \phi(\same{m})\), \(p_{21} = 
  \phi(\same{n})\), and \(p_{22} = 0\).
  
  For item~\ref{Item:kategori}, let \(m = \omega(r)\), \(n = 
  \alpha(r)\), \(k = \omega(q)\), and \(l = \alpha(q)\). Then 
  \(v_1(c) = \mu_1 \join^n_m c\) for some \(\mu_1 \in \Nwt(\Omega)(k 
  +\nobreak m, l +\nobreak n)\) and \(v_2(c') = \mu_2 \join^l_k c'\) 
  for some \(\mu_2 \in \Nwt(\Omega)\bigl( \omega(p) +\nobreak k, 
  \alpha(p) +\nobreak l \bigr)\). More concretely,
  \[
    \mu_1 \in \mc{Y}\left( 
      \begin{bmatrix} b_{22} & b_{24} \\ b_{42} & b_{44} \end{bmatrix}
    \right)
    \qquad\text{and}\qquad
    \mu_2 \in \mc{Y}\left( 
      \begin{bmatrix} a_{11} & a_{12} \\ a_{21} & a_{22} \end{bmatrix}
    \right)
  \]
  for some boolean matrices satisfying that $a_{22}q$ and $b_{44}r$ are 
  nilpotent, \(a_{11} + a_{12} q (a_{22}q)^* a_{21} \leqslant p\), and 
  \(b_{22} + b_{24} r (b_{44}r)^* b_{42} \leqslant q\). Nilpotence of 
  $a_{22}q$ implies nilpotence of $a_{22}b_{22} + a_{22} b_{24} r 
  (b_{44}r)^* b_{42}$ and hence by Lemma~\ref{L:Matrisnilpotens} the 
  nilpotence of $a_{22}b_{22}$ and 
  $(a_{22}b_{22})^* a_{22}b_{24}r (b_{44}r)^* b_{42}$. This fulfils 
  the conditions for associativity of symmetric join 
  (cf.~Lemma~\ref{L:join-associativitet}) and hence
  \[
    v_2\bigl( v_1(c) \bigr) =
    \mu_2 \join^l_k (\mu_1 \join^n_m c) =
    (\mu_2 \join^l_k \mu_1) \join^n_m c =
    (\mu_2 \join^l_k \mu_1) \rtimes c
    \text{.}
  \]
  By Construction~\ref{Kons:symjoin},
  \begin{multline*}
    \mu_2 \join^l_k \mu_1 \in \mc{Y}\left( \begin{bmatrix}
      s_{11} & s_{14} \\ s_{41} & s_{44}
    \end{bmatrix} \right)
    \quad\text{where}\\
    \left\{\begin{aligned}
      s_{11} :={}& a_{11} + a_{12} b_{22} (a_{22}b_{22})^* a_{21} &
        s_{14} :={}& a_{12} (b_{22}a_{22})^* b_{24} \\
      s_{41} :={}& b_{42} (a_{22}b_{22})^* a_{21} &
        s_{44} :={}& b_{44} + b_{42} a_{22} (b_{22}a_{22})^* b_{24}
     \end{aligned} \right.
  \end{multline*}
  and it is convenient to also have the shorthand \(s_{44}' = 
  b_{42} a_{22} (b_{22}a_{22})^* b_{24}\). Then
  \begin{align*}
    p \geqslant{}&
    a_{11} + a_{12} q (a_{22}q)^* a_{21}
    \geqslant \\ \geqslant{}&
    a_{11} + a_{12} q \Bigl( a_{22}
      \bigl( b_{22} + b_{24} r (b_{44}r)^* b_{42} \bigr)
    \Bigr)^* a_{21}
    = \displaybreak[0]\\ ={}&
    a_{11} + a_{12} q \bigl( 
      a_{22} b_{22} + 
      a_{22} b_{24} r (b_{44}r)^* b_{42}
    \bigr)^* a_{21}
    = \displaybreak[0]\\ ={}&
    a_{11} + a_{12} q (a_{22} b_{22})^* \bigl( 
      a_{22} b_{24} r (b_{44}r)^* b_{42}
      (a_{22} b_{22})^* 
    \bigr)^* a_{21}
    = \displaybreak[0]\\ ={}&
    a_{11} + 
    a_{12} q (a_{22} b_{22})^* a_{21} + 
    a_{12} q (a_{22} b_{22})^* \bigl( 
      a_{22} b_{24} r (b_{44}r)^* b_{42}
      (a_{22} b_{22})^* 
    \bigr)^+ a_{21}
    \geqslant \displaybreak[0]\\ \geqslant{}&
    a_{11} + 
    a_{12} \bigl(
      b_{22} + b_{24} r (b_{44}r)^* b_{42}
    \bigr) (a_{22} b_{22})^* a_{21} + 
      {}\\ & \quad {}+
    a_{12} q (a_{22} b_{22})^* \bigl( 
      a_{22} b_{24} r (b_{44}r)^* b_{42}
      (a_{22} b_{22})^* 
    \bigr)^+ a_{21}
    = \displaybreak[0]\\ ={}&
    s_{11} + 
    a_{12} b_{24} r (b_{44}r)^* b_{42} (a_{22} b_{22})^* a_{21} + 
      {}\\ & \quad {}+
    a_{12} q (a_{22} b_{22})^* \bigl( 
      a_{22} b_{24} r (b_{44}r)^* b_{42}
      (a_{22} b_{22})^* 
    \bigr)^*
    a_{22} b_{24} r (b_{44}r)^* b_{42}
    (a_{22} b_{22})^* 
    a_{21}
    = \displaybreak[0]\\ ={}&
    s_{11} + 
    a_{12} b_{24} r (b_{44}r)^* s_{41} + 
      {}\\ & \quad {}+
    a_{12} q (a_{22} b_{22})^* a_{22} b_{24} r \bigl( 
      (b_{44}r)^* b_{42} (a_{22} b_{22})^* a_{22} b_{24} r 
    \bigr)^* (b_{44}r)^* s_{41}
    = \displaybreak[0]\\ ={}&
    s_{11} + 
    a_{12} b_{24} r (b_{44}r)^* s_{41} + 
    a_{12} q (a_{22} b_{22})^* a_{22} b_{24} r 
    ( b_{44}r + s_{44}' r )^* s_{41}
    = \displaybreak[0]\\ ={}&
    s_{11} + 
    a_{12} b_{24} r (b_{44}r)^* s_{41} + 
    a_{12} q (a_{22} b_{22})^* a_{22} b_{24} r (s_{44} r)^* s_{41}
    \geqslant \displaybreak[0]\\ \geqslant{}&
    s_{11} + 
    a_{12} b_{24} r (b_{44}r)^* s_{41} + 
      {}\\ & \quad {}+
    a_{12} b_{24} r (b_{44}r)^* b_{42} (a_{22} b_{22})^* a_{22} 
    b_{24} r (s_{44} r)^* s_{41} +
      {}\\ & \quad {}+
    a_{12} b_{22} (a_{22} b_{22})^* a_{22} b_{24} r (s_{44} r)^* s_{41}
    = \displaybreak[0]\\ ={}&
    s_{11} + 
    a_{12} b_{24} r \bigl( 
       (b_{44}r)^*  + 
       (b_{44}r)^* s_{44}' r (b_{44} r + s_{44}' r)^* 
    \bigr) s_{41} +
      {}\\ & \quad {}+
    a_{12} (b_{22} a_{22})^+ b_{24} r (s_{44} r)^* s_{41}
    = \displaybreak[0]\\ ={}&
    s_{11} + 
    a_{12} b_{24} r \bigl( 
       b_{44}r + s_{44}' r 
    \bigr)^* s_{41} +
    a_{12} (b_{22} a_{22})^+ b_{24} r (s_{44} r)^* s_{41}
    = \displaybreak[0]\\ ={}&
    s_{11} + 
    a_{12} (b_{22} a_{22})^* b_{24} r (s_{44} r)^* s_{41}
    = \\ ={}&
    s_{11} + s_{14} r (s_{44} r)^* s_{41}
  \end{align*}
  and hence \(c \mapsto (\mu_2 \join^l_k\nobreak \mu_1) \rtimes c\) 
  meets all conditions for being in $\mc{V}(p,r)$. The full family 
  $\bigl\{ \mc{V}(p,r) \bigr\}_{p,r \in \B^{\bullet\times\bullet}}$ then 
  fulfils the conditions for being a category: it contains 
  identities by the previous point, and it is closed under 
  composition (when domain and codomain matches).
  
  For item~\ref{Item:Sammansattning}, one may let \(\vek{k} = 
  \vek{N}_{\omega(p)}(0,4)\), \(\vek{l} = \vek{N}_{\omega(q)}(1,4)\), 
  \(\vek{m} = \vek{N}_{\alpha(q)}(2,4)\), and \(\vek{n} = 
  \vek{N}_{\alpha(r)}(3,4)\). Then
  \[
    \lambda \circ a \circ \nu 
    =
    \fuse{ \vek{k} }{ 
      \lambda^\vek{k}_\vek{l} a^\vek{l}_\vek{m} \nu^\vek{m}_\vek{n}
    }{ \vek{n} }
    =
    \fuse{ \vek{km} }{ 
      \lambda^\vek{k}_\vek{l} \nu^\vek{m}_\vek{n}
    }{ \vek{nl} }
    \rtimes a
    =
    \bigl( \lambda \otimes \nu \circ 
    \phi(\cross{\alpha(r)}{\alpha(p)}) \bigr) \rtimes a
  \]
  by Theorem~\ref{S:AIN-sammanbindning}, and \(\lambda \otimes 
  \nu \circ \phi(\cross{\alpha(r)}{\alpha(p)}) \in 
  \mc{Y}\left( \left[ \begin{smallmatrix} 0 & p \\ r & 0 
  \end{smallmatrix} \right] \right)\), which has \( 0 + p q (0q)^* r 
  = pqr \leqslant pqr\) as the definition of $\mc{V}$ requires. 
  Item~\ref{Item:permutationer} is a special case of 
  item~\ref{Item:Sammansattning}.
  
  For item~\ref{Item:Tensorprodukt}, one may similarly let \(\vek{i} = 
  \vek{N}_{\omega(p)}(0,6)\), \(\vek{j} = \vek{N}_{\alpha(p)}(1,6)\), 
  \(\vek{k} = \vek{N}_{\omega(q)}(2,6)\), \(\vek{l} = 
  \vek{N}_{\alpha(q)}(3,6)\), \(\vek{m} = \vek{N}_{\omega(r)}(4,6)\), 
  and \(\vek{n} = \vek{N}_{\alpha(r)}(5,6)\). Then
  \[
    \lambda \otimes a \otimes \nu
    =
    \fuse{ \vek{ikm} }{
      \lambda^\vek{i}_\vek{j} a^\vek{k}_\vek{l} \nu^\vek{m}_\vek{n}
    }{ \vek{jln} }
    =
    \fuse{ \vek{ikml} }{
      \lambda^\vek{i}_\vek{j} \nu^\vek{m}_\vek{n}
    }{ \vek{jlnk} } \rtimes a
  \]
  and
  \[
    \fuse{ \vek{ikml} }{
      \lambda^\vek{i}_\vek{j} \nu^\vek{m}_\vek{n}
    }{ \vek{jlnk} }
    \in
    \mc{Y}\Bigl( \fuse{ \vek{ikml} }{
      p^\vek{i}_\vek{j} r^\vek{m}_\vek{n}
    }{ \vek{jlnk} }_{\B^{\bullet\times\bullet}} \Bigr)
    =
    \mc{Y} \left( \begin{bmatrix}
      p & 0 & 0 & 0 \\
      0 & 0 & 0 & 
        \phi
        ( \same{\omega(q)} ) \\
      0 & 0 & r & 0 \\
      0 & \phi
        ( \same{\alpha(q)} ) 
        & 0 & 0 
    \end{bmatrix} \right)
  \]
  where again
  \[
    \begin{bmatrix}
      p & 0 & 0 \\
      0 & 0 & 0 \\
      0 & 0 & r
    \end{bmatrix}
    +
    \begin{bmatrix}
      0 \\
      \phi
        ( \same{\omega(q)} ) \\
      0 
    \end{bmatrix}
    q ( 0 q )^*
    \begin{bmatrix}
      0 & \phi
         ( \same{\alpha(q)} ) & 0
    \end{bmatrix}
    =
    \begin{bmatrix}
      p & 0 & 0 \\
      0 & q & 0 \\
      0 & 0 & r
    \end{bmatrix}
    \leqslant
    p \otimes q \otimes r
  \]
  as required.
\end{proof}

\begin{definition}
  A \DefOrd{rewrite rule} $s$ for $\mc{R}\{\Omega\}$ is a triplet 
  $(q_s, \mu_s, a_s)$, where \(q_s \in \B^{\bullet\times\bullet}\) is 
  the \DefOrd{transference type} of the rule, \(\mu_s \in \mc{Y}(q_s)\) 
  is the \DefOrd{left hand side} of the rule, and \(a_s \in \mc{M}(q_s)\) 
  is the \DefOrd{right hand side} of the rule. A \DefOrd{rewriting 
  system} $S$ for $\mc{R}\{\Omega\}$ is a set of rewrite rules.
  
  A rewrite rule $s$ is said to be \DefOrd{sharp} if \(q_s = 
  \Tr(\mu_s)\), and a rewriting system is sharp if all its rules are 
  sharp.
\end{definition}

A practical and convenient style of writing rewrite rules is that used 
in \eqref{Eq:SporadiskaRegler} and \eqref{Eq:Regelfamiljer}, but it may 
be best to explain in detail how it is meant to be interpreted. First, 
the left and right hand sides of a rule $s$ are given in naked 
abstract index notation, roughly as a formula on the form
\[ \label{Sid:AIN-regel}
  M \longmapsto \sum_{i=1}^k r_i A_i
\]
where \(\{r_i\}_{i=1}^k \subseteq \mc{R}\) whereas $M$ and the $A_i$ 
are notational products on the form $\prod_{j=1}^{l_i} 
(x_{i,j})^{\vek{u}_{i,j}}_{\vek{v}_{i,j}}$ and each \(x_{i,j} \in 
\Omega\bigl( \Norm{\vek{u}_{i,j}}, \Norm{\vek{v}_{i,j}} \bigr) \cup 
\{\delta\}\) (where the Kronecker $\delta$ is taken to have arity and 
coarity $1$; it is essentially the same as the $\natural$ used in 
Section~\ref{Sec:FriPROP}). For this formula to be well-formed, each 
$A_i$ must have the same unmatched indices as $M$; let \(\vek{b} = 
b_1 \dotsb b_m\) and \(\vek{c} = c_1 \dotsb c_n\) be lists such that 
$\norm{\vek{b}}$ is the set of unmatched superscripts in $M$ and 
similarly $\norm{\vek{c}}$ is the set of unmatched subscripts. Then 
in closed abstract index notation,
\begin{align*}
  \mu_s ={}& \fuse{ \vek{b} }{ M }{ \vek{c} } \text{,} &
  a_s ={}& \sum_{i=1}^k r_u \fuse{ \vek{b} }{ A_i }{ \vek{c} } \text{.}
\end{align*}
The meaning of a `where \(b_i \canmake c_j\)' clause is that $q_s$ 
has a $0$ in the $(i,j)$ position; in the typical case most outputs 
have a dependence on most of the inputs, so it tends to be more compact 
to list those entries that are $0$ than those that are $1$.

One thing that this style of writing a rule does not specify is the 
order in $\vek{b}$ and $\vek{c}$ of the unmatched indices, but that 
turns out to be irrelevant for the reduction steps that the rule can 
give rise to, mostly because of item~\ref{Item:permutationer} in 
Lemma~\ref{L:KategorinV}.

\begin{definition}
  A \DefOrd{simple reduction} of type \(p \in 
  \B^{\bullet\times\bullet}\), with respect to the system $S$, is an 
  $\mc{R}$-linear map \(t_{v,s}\colon \mc{M}(p) \Fpil \mc{M}(p)\) 
  ($\mc{R}$-module homomorphism) satisfying
  \begin{equation}
    t_{v,s}(\lambda) = \begin{cases}
      v(a_s)& \text{if \(\lambda = v(\mu_s)\),}\\
      \lambda& \text{otherwise}
    \end{cases}
    \qquad\text{for all \(\lambda \in \mc{Y}(p)\),}
  \end{equation}
  where \(s \in S\) is some rule and \(v \in \mc{V}(p,q_s)\). Denote by 
  $T_1(S)(p)$ the set of all simple reductions of type $p$ with 
  respect to $S$. A \DefOrd{reduction} (of type $p$, with respect to 
  $S$) in general is a finite composition of zero or more simple 
  reductions (of type $p$, with respect to $S$). Denote by $T(S)(p)$ 
  the set of all reductions of type $p$ with respect to $S$.
\end{definition}

By~\cite[Lemma~7.3]{rpaper}, all the $\mc{V}$-maps satisfy the 
technical condition of being \emDefOrd{advanceable} 
\cite[Def.~6.1]{rpaper} with respect to $T_1(S)$: for all \(p,q \in 
\B^{\bullet\times\bullet}\), \(t \in T_1(S)(q)\), \(\mu \in 
\mc{Y}(q)\), and \(v \in \mc{V}(p,q)\) there exists some \(t' \in 
T(S)(p)\) such that \(t'\bigl( v(\mu) \bigr) = v\bigl( t(\mu) 
\bigr)\). They are in general not absolutely advanceable however, 
since the symmetric join with one factor fixed need not be injective: 
if \(\eta \in \Omega(1,0)\) and \(\ve \in \Omega(0,1)\) then for 
\(\mu = \eta \otimes \eta \circ \ve \otimes \ve\) one finds \(\mu 
\rtimes \phi(\cross{1}{1}) = (\ve \circ\nobreak \eta) \otimes 
(\ve \circ\nobreak \eta) = \mu \rtimes \phi(\same{2})\).

\begin{definition}
  Let $S$ be a rewriting system for $\mc{R}\{\Omega\}$.
  A reduction \(t \in T(S)(q)\) is said to \DefOrd{act trivially} on 
  some \(a \in \mc{M}(q)\) if \(t(a)=a\). An element \(a \in \mc{M}(q)\) 
  is said to be \DefOrd{irreducible} (with respect to $S$ and $q$) if 
  all \(t \in T(S)(q)\) act trivially on it. The set of all irreducible 
  elements in $\mc{M}(q)$ is denoted \index{Irr@$\Irr$}\(\Irr(S)(q)\).
  Also let\index{I(S)@$\mc{I}(S)$}
  \begin{equation} \label{Eq:Def.I'(S)}
    \mc{I}(S)(q) = \sum_{t \in T(S)(q)} 
      \setOf[\big]{ a - t(a) }{ a \in \mc{M}(q) }
  \end{equation}
  and write \(a \equiv b \pmod{S}\)\index{= mod S@$\equiv\pmod{S}$} 
  for \(a - b \in \mc{I}(S)(J_{\omega(a)\times\alpha(a)})\), where 
  $J_{m \times n}$ denotes the $m \times n$ matrix of ones. An \(a \in 
  \Irr(S)(q)\) is said to be a \DefOrd[*{normal form}]{normal form of 
  \(b \in \mc{M}(q)\)} if \(a - b \in \mc{I}(S)(q)\).
\end{definition}

By Lemma~3.5 of~\cite{rpaper}, $\Irr(S)(q)$ is an $\mc{R}$-module and 
an alternative definition of it is
\[
  \Irr(S)(q) = \setOf[\big]{ b \in \mc{M}(q) }{ 
    \text{\(t(b)=b\) for all \(t \in T_1(S)(q)\)}
  }
  \text{,}
\]
i.e., it is sufficient to consider simple reductions. Under (a subset 
of) the conditions in the Diamond Lemma, it furthermore holds that
\begin{equation}
  \Irr(S)(q) = \Span\Bigl( \mc{Y}(q) \setminus 
    \bigl\{ v(\mu_s) \bigr\}_{v \in \mc{V}(q,q_s), s \in S}
  \Bigr)
\end{equation}
by Theorem~5.6 of~\cite{rpaper}. Therefore it in general becomes 
interesting to seek positive descriptions of the complement of the 
set of those networks which are right annexations of one of the rule 
left hand sides (in the terminology of formal language theory, one 
might wish for a generative description of the \emph{complement} of 
the language of networks which have one of the left hand sides as a 
subnetwork).

Note that it does not necessarily follow from \(a \equiv b \pmod{S}\) 
for some \(a,b \in \mc{M}(q)\) that \(a-b \in \mc{I}(S)(q)\) if $q$ is 
not a matrix of ones. The typical situation in which this happens is 
that there is a proof that $a$ is congruent to $b$ modulo $S$, but 
some intermediate step in this proof requires a transference strictly 
larger than $q$, even though $a$ and $b$ by themselves do not.

\begin{example} \label{Ex:ZigZag}
  Suppose \(\mathsf{u} \in \Omega(0,2)\) and \(\mathsf{n} \in 
  \Omega(2,0)\). The zig--zag identities for these may then be 
  expressed as the rules
  \begin{align*}
    s_1 ={}& \Bigl( J_{1 \times 1}, 
      \fuse{3}{ \mathsf{u}_{12} \mathsf{n}^{23} }{1}, 
      \phi(\same{1}) \Bigr)
      \text{,}\\
    s_2 ={}& \Bigl( J_{1 \times 1}, 
      \fuse{3}{ \mathsf{n}^{32} \mathsf{u}_{21} }{1},
      \phi(\same{1}) \Bigr)
      \text{.}
  \end{align*}
  These make \(\fuse{3}{ \mathsf{u}_{12} \mathsf{n}^{23} }{1} \equiv 
  \fuse{3}{ \mathsf{n}^{32} \mathsf{u}_{21} }{1} \pmod{\{s_1,s_2\}}\), 
  since
  \begin{multline*}
    \fuse{3}{ \mathsf{u}_{12} \mathsf{n}^{23} }{1} -
    \fuse{3}{ \mathsf{n}^{32} \mathsf{u}_{21} }{1}
    =
    \fuse{3}{ \mathsf{u}_{12} \mathsf{n}^{23} }{1} - \phi(\same{1}) 
    + \phi(\same{1}) - \fuse{3}{ \mathsf{n}^{32} \mathsf{u}_{21} }{1}
    = \\ =
    (\mathrm{id} - t_{\mathrm{id},s_1})\Bigl(
      \fuse{3}{ \mathsf{u}_{12} \mathsf{n}^{23} }{1}
    \Bigr) - (\mathrm{id} - t_{\mathrm{id},s_2})\Bigl(
      \fuse{3}{ \mathsf{n}^{32} \mathsf{u}_{21} }{1}
    \Bigr)
    \in
    \mc{I}\bigl( \{s_1,s_2\} \bigr)(J_{1 \times 1})
    \text{,}
  \end{multline*}
  but \(\fuse{3}{ \mathsf{u}_{12} \mathsf{n}^{23} }{1} -
  \fuse{3}{ \mathsf{n}^{32} \mathsf{u}_{21} }{1} \notin \mc{I}\bigl( 
  \{s_1,s_2\} \bigr)(0)\). Demonstrating this latter point is not 
  quite trivial\Ldash one would like to make use of the Diamond Lemma 
  for that\Rdash but it is straightforward at this point in the 
  exposition to show that \(\fuse{3}{ \mathsf{u}_{12} \mathsf{n}^{23} 
  }{1} - \fuse{3}{ \mathsf{n}^{32} \mathsf{u}_{21} }{1} \in \Irr\bigl( 
  \{s_1,s_2\} \bigr)(0)\), which together with (c)~of 
  Lemma~\ref{L:entyps-DL} yields the \(\notin \mc{I}\bigl( \{s_1,s_2\} 
  \bigr)(0)\).
  
  Considering first the problem of whether there exists some simple 
  reduction \(t_{v,s} \in T_1\bigl( \{s_1,s_2\} \bigr)(0)\) which acts 
  nontrivially on $\fuse{3}{ \mathsf{u}_{12} \mathsf{n}^{23} }{1}$, one 
  may look for an embedding of some \(H \in \mu_s\) into \(G = 
  \Nwfuse{3}{ \mathsf{u}_{12} \mathsf{n}^{23} }{1}\), and the vertex 
  part of such an embedding is fixed by the decorations. There is no 
  way to embed \(H_2 = \Nwfuse{3}{ \mathsf{n}^{32} \mathsf{u}_{21} }{1}\) 
  into $G$ however, as in $H_2$ the second out-edge of the $\mathsf{n}$ 
  vertex and the first in-edge of the $\mathsf{u}$ vertex are the same, 
  whereas in $G$ they are distinct. Hence the possibility that 
  remains is $s=s_1$, and since \(G \in \mu_{s_1}\) it is pretty 
  obvious that there is such an embedding\Ldash the identity 
  $(\mathrm{id},\mathrm{id})$\Rdash and since the mapping of the 
  vertices is fixed and there are no isolated edges in these 
  networks, that is the only embedding there is. The lack of isolated 
  edges also imply that there is only one strong embedding, and thus 
  the only way of writing $\fuse{3}{ \mathsf{u}_{12} \mathsf{n}^{23} 
  }{1}$ as a right annexation of some $\mu_{s_i}$ is \(\fuse{3}{ 
  \mathsf{u}_{12} \mathsf{n}^{23} }{1} = \phi(\cross{1}{1}) \rtimes 
  \mu_{s_1}\). There is however also the matter of the 
  transferences to consider. \(q_{s_1} = (1)\) and \(\Tr\bigl( 
  \phi(\cross{1}{1}) \bigr) = \left( \begin{smallmatrix} 0 & 1 \\ 1 & 
  0 \end{smallmatrix} \right)\), which means \(v \in 
  \mc{V}(p,q_{s_1})\) such that \(v(b) = \phi(\cross{1}{1}) \rtimes 
  b\) only exist for \(p \geqslant 1 \cdot q_{s_1} \cdot 1 = J_{1 
  \times 1}\). Hence that \(t_{v,s_1} \notin T_1\bigl( \{s_1,s_2\} 
  \bigr)(0)\), and \(\fuse{3}{ \mathsf{u}_{12} \mathsf{n}^{23} }{1} 
  \in \Irr\bigl( \{s_1,s_2\} \bigr)(0)\). The situation for the other 
  term $\fuse{3}{ \mathsf{n}^{32} \mathsf{u}_{21} }{1}$ is completely 
  analogous.
\end{example}

The following lemma corresponds closely to Lemma~3.7 and 
Corollary~3.9 of~\cite{rpaper}, but the differences in context warrant 
an explicit proof here.

\begin{lemma} \label{L:I(S)}
  Let $S$ be a rewriting system for $\mc{R}\{\Omega\}$.
  The $\equiv \!\! \pmod{S}$ relation is an $\mc{R}$-linear \PROP\ 
  congruence relation on $\mc{R}\{\Omega\}$.
  
  $\mc{I}(S)(q)$ is an $\mc{R}$-module for every \(q \in 
  \B^{\bullet\times\bullet}\), and \(\ker t \subseteq \mc{I}(S)(q)\) 
  for every \(t \in T(S)(q)\). For every \(v \in \mc{V}(q,p)\) it 
  holds that \(\setmap{v}\bigl( \mc{I}(S)(p) \bigr) \subseteq 
  \mc{I}(S)(q)\). Furthermore any \(b \in \mc{I}(S)(p)\) can be 
  written as
  \begin{equation} \label{Eq1:I(S)}
    b = \sum_{i=1}^n r_i v_i(\mu_{s_i} - a_{s_i})
  \end{equation}
  for some \(n \in \N\), \(\{r_i\}_{i=1}^n \subseteq \mc{R}\), 
  \(\{s_i\}_{i=1}^n \subseteq S\), and \(v_i \in \mc{V}(p,q_{s_i})\) for 
  \(i=1,\dotsc,n\).
\end{lemma}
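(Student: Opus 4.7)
The plan is to first establish the four structural claims about $\mc{I}(S)(q)$ and then deduce the congruence-relation statement from them. I would take them in the order: module structure, kernel inclusion, explicit form \eqref{Eq1:I(S)}, and closure under $\mc{V}$-maps.

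Each simple reduction $t_{v,s}$ is $\mc{R}$-linear by definition, so every $t \in T(S)(q)$ is $\mc{R}$-linear, and the set $\{a - t(a) : a \in \mc{M}(q)\}$ coincides with the image of $\mathrm{id} - t$, an $\mc{R}$-submodule of $\mc{M}(q)$. Since $\mc{I}(S)(q)$ is by definition a sum of these submodules, it is itself an $\mc{R}$-module. The inclusion $\ker t \subseteq \mc{I}(S)(q)$ is immediate, since $t(a)=0$ gives $a = a - t(a)$. For the explicit form, I would handle simple reductions first: expanding $a \in \mc{M}(p)$ in the basis $\mc{Y}(p)$ as $a = \sum_\lambda r_\lambda \lambda$ and using that $v$ (a symmetric join with fixed left factor) is $\mc{R}$-linear, linearity of $t_{v,s}$ collapses $a - t_{v,s}(a)$ to the single term $r_{v(\mu_s)} \cdot v(\mu_s - a_s)$, because $t_{v,s}$ fixes every basis element other than $v(\mu_s)$. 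For a composite reduction $t = t_k \circ \dotsb \circ t_1$, the telescoping identity $a - t(a) = \sum_{i=1}^k \bigl( t_{i-1}\dotsb t_1(a) - t_i t_{i-1}\dotsb t_1(a) \bigr)$ reduces each summand to the simple case, yielding \eqref{Eq1:I(S)}.

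For the closure property, given $v \in \mc{V}(q,p)$ and $b = \sum_i r_i v_i(\mu_{s_i} - a_{s_i}) \in \mc{I}(S)(p)$ written in standard form, apply $v$ and invoke the category property (item~\ref{Item:kategori} of Lemma~\ref{L:KategorinV}) to see that each $v \circ v_i$ lies in $\mc{V}(q, q_{s_i})$; hence $v(b) \in \mc{I}(S)(q)$. Finally, the $\mc{R}$-linear \PROP\ congruence claim reduces to showing that $\mc{I}(S)(J_{m \times n})$ is preserved by addition, scalar multiplication, and by multiplying either side of an arbitrary element under $\circ$ and $\otimes$. The first two are automatic from the module structure; for the third, bilinearity of $\circ$ and $\otimes$ in $\mc{R}\{\Omega\}$ reduces the problem to multiplication by monomial factors $c_i, d_j \in \Nwt(\Omega)$, whereupon items~\ref{Item:Sammansattning} and~\ref{Item:Tensorprodukt} of Lemma~\ref{L:KategorinV} exhibit such multiplication as a $\mc{V}$-map, and item~\ref{Item:omtypning} coarsens its target sort up to the all-ones matrix $J_{k \times l}$ so that the closure property applies. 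The main book-keeping care is ensuring at each step that the target all-ones matrix dominates the transference matrix produced by items~\ref{Item:Sammansattning} and~\ref{Item:Tensorprodukt}, which is trivial because $J_{k \times l}$ is the top element of its component but warrants a brief verification.
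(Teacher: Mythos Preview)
Your proposal is correct and follows essentially the same route as the paper: module structure via linearity of $t$, kernel inclusion trivially, the telescoping identity to reduce composite reductions to simple ones, the single-term collapse of $a-t_{v,s}(a)$ for simple reductions, closure under $\mc{V}$-maps via the category property, and the congruence via items~\ref{Item:Sammansattning} and~\ref{Item:Tensorprodukt} of Lemma~\ref{L:KategorinV}. Your explicit reduction to monomial factors through bilinearity is a detail the paper leaves implicit, and your coarsening via item~\ref{Item:omtypning} is in fact unnecessary since one may take the $p,r$ in items~\ref{Item:Sammansattning} and~\ref{Item:Tensorprodukt} to be all-ones matrices outright, making $pqr$ and $p\otimes q\otimes r$ already equal to the appropriate $J_{k\times l}$.
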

\begin{proof}
  Let \(t \in T(S)(q)\) be arbitrary. For any \(b,c \in \mc{M}(q)\) 
  and \(r \in \mc{R}\) one finds that
  \begin{align*}
    \bigl( b - t(b) \bigr) - \bigl( c - t(c) \bigr) =
    b - c - t(b) + t(c) ={}& \\ =
    (b-c) - t(b-c) \in{}&
    \setOf[\big]{ a - t(a) }{ a \in \mc{M}(q) } 
    \text{,}\\
    r \bigl( b - t(b) \bigr) =
    rb - t(rb) \in{}&
    \setOf[\big]{ a - t(a) }{ a \in \mc{M}(q) }
  \end{align*}
  and hence \(N_t := \setOf[\big]{ a - t(a) }{ a \in \mc{M}(q) }\) is 
  an $\mc{R}$-module. Obviously the sum of a family of $\mc{R}$-module 
  is an $\mc{R}$-module, so $\mc{I}(S)(q)$ is an $\mc{R}$-module. From 
  the similar observation that any \(a \in \ker t\) satisfies \(a = 
  a - t(a) \in N_t \subseteq \mc{I}(S)(q)\), it follows that \(\ker t 
  \subseteq \mc{I}(S)(q)\) for \(t \in T(S)(q)\).
  
  Next, it will be shown that
  \begin{equation} \label{Eq2:I(S)}
    \mc{I}(S)(q) = 
    \sum_{t \in T_1(S)(q)} \setOf[\big]{ a - t(a) }{ a \in \mc{M}(q) }
    \text{,}
  \end{equation}
  i.e., it is sufficient to let $t$ range over the simple reductions. 
  Clearly $N_\mathrm{id}$ is just $\{0\}$, and thus does not contribute 
  anything unique to $\mc{I}(S)(q)$. Any other nonsimple reduction \(t 
  \in T(S)(q)\) is a finite composition \(t_n \circ \dotsb \circ t_1 
  = t\) of simple reductions \(t_1,\dotsc,t_n \in T_1(S)(q)\) and it 
  holds that \(N_t \subseteq \sum_{k=1}^n N_{t_k}\), because if \(u_k = 
  t_k \circ \dotsb \circ t_1\) for \(k=1,\dotsc,n\) then any \(a - t(a) 
  \in N_t\) can be written as \(a - t_1(a) + u_1(a) - 
  t_2\bigl(u_1(a)\bigr) + \dotsb + u_{n-1}(a) - t_n\bigl( u_{n-1}(a) 
  \bigr) \in N_{t_1} + N_{t_2} + \dotsb + N_{t_n}\).
  Hence \(\sum_{t \in T(S)(q)} N_t \subseteq \sum_{t \in T_1(S)(q)} N_t\); 
  the terms for simple reductions suffice for producing the total sum.
  
  By definition, the only element of $\mc{Y}(p)$ that a simple reduction 
  \(t_{w,s} \in T_1(S)(p)\) acts nontrivially on is $w(\mu_s)$, so 
  \begin{multline*}
    \setOf[\big]{ a - t_{w,s}(a) }{ a \in \mc{M}(p) }
    =
    \setOf[\Big]{ rw(\mu_s)  - t_{w,s}\bigl( rw(\mu_s) \bigr) }{ 
      r \in \mc{R} }
    = \\ =
    \setOf[\Big]{ rw(\mu_s - a_s) }{ r \in \mc{R} }
    \text{.}
  \end{multline*}
  Thus any \(b \in \mc{I}(S)(p)\), which by \eqref{Eq2:I(S)} can be 
  written as a sum of such $a - t_{w,s}(a)$, is therefore also as 
  claimed in \eqref{Eq1:I(S)}. Now let \(q \in 
  \B^{\bullet\times\bullet}\) and \(v \in \mc{V}(q,p)\) be given. \(v 
  \circ v_i \in \mc{V}(q,q_i)\) by Lemma~\ref{L:KategorinV}, so
  \begin{multline*}
    v(b) =
    v\biggl( \sum_{i=1}^n r_i v_i(\mu_{s_i} - a_{s_i}) \biggr)
    =
    \sum_{i=1}^n r_i (v \circ v_i)(\mu_{s_i} - a_{s_i}) 
    = \\ =
    \sum_{i=1}^n r_i \bigl( 
      (v \circ v_i)(\mu_{s_i}) - 
      t_{v \circ v_i,s_i}\bigl( (v \circ v_i)(\mu_{s_i}) 
    \bigr)
    \in \mc{I}(S)(q) \text{.}
  \end{multline*}
  
  Finally consider $\equiv \! \pmod{S}$. It is congruent under 
  addition by definition, and congruent under multiplication by a 
  scalar since $\mc{I}(S)(q)$ is an $\mc{R}$-module; the latter also 
  makes it reflexive, transitive, and symmetric. Being congruent 
  under composition and tensor product is by 
  items~\ref{Item:Sammansattning} and~\ref{Item:Tensorprodukt} of 
  Lemma~\ref{L:KategorinV} merely special cases of being congruent 
  under all \(v \in \mc{V}(J_{m \times n},J_{k \times l})\), and it 
  was shown above that any \(v \in \mc{V}(q,p)\) maps $\mc{I}(S)(p)$ 
  into $\mc{I}(S)(q)$.
\end{proof}

\begin{definition}
  An \(a \in \mc{M}(q)\) is said to be \DefOrd{uniquely reducible} 
  (with respect to $S$ and $q$) if for any \(t_1,t_2 \in T(S)(q)\) 
  such that \(t_1(a),t_2(a) \in \Irr(S)(q)\) it holds that \(t_1(a) = 
  t_2(a)\). An \(a \in \mc{M}(q)\) is said to be \DefOrd{persistently 
  reducible} (with respect to $S$ and $q$) if there for every \(t_1 
  \in T(S)(q)\) exists some \(t_2 \in T(S)(q)\) such that \(t_2\bigl( 
  t_1(a) \bigr) \in \Irr(S)(q)\). The set of all elements of 
  $\mc{M}(q)$ that are uniquely and persistently reducible is denoted 
  $\Red(S)(q)$. Denote by $t^S_q$\index{t S@$t^S$}, or just $t^S$ if 
  $q$ is known from context, the map \(\Red(S)(q) \Fpil \Irr(S)(q)\) 
  with the property that there for every \(b \in \Red(S)(q)\) is some 
  \(t_1 \in T(S)(q)\) such that \(t^S_q(b) = t_1(b)\), i.e., 
  $t^S_q(b)$ is the element of $\Irr(S)(q)$ to which $b$ can be 
  mapped by a reduction \(t_1 \in T(S)(q)\).
\end{definition}

By Lemma~4.7 of~\cite{rpaper}, $\Red(S)(q)$ is a submodule of 
$\mc{M}(q)$ and $t^S_q$ is an $\mc{R}$-linear projection of 
$\Red(S)(q)$ onto $\Irr(S)(q)$ with \(\ker t^S_q \subseteq 
\mc{I}(S)(q)\).

\subsection{Ambiguities}
\label{Ssec:Tvetydigheter}

Clearly, an obstruction to unique reducibility must take the form of 
something that can be reduced in two different ways. That reductions 
are defined to be maps of a specific form immediately allows the 
concept to be slightly specialised.

\begin{definition} \label{Def:Tvetydighet}
  Let \(q \in \B^{\bullet\times\bullet}\) and a rewriting system $S$ 
  for $\mc{R}\{\Omega\}$ be given. A type $q$ \DefOrd{ambiguity} of $S$ 
  is a triplet $(t_1,\mu,t_2)$, where \(t_1,t_2 \in T_1(S)(q)\) act 
  nontrivially on \(\mu \in \mc{Y}(q)\); the ambiguities $(t_1,\mu,t_2)$ 
  and $(t_2,\mu,t_1)$ are considered to be the same. The $\mu$ part 
  is called the \DefOrd{site} of the ambiguity. An ambiguity 
  $(t_1,\mu,t_2)$ is said to be 
  \DefOrd[*{ambiguity!resolvable}]{resolvable} if there exist reductions 
  \(t_3,t_4 \in T(S)(q)\) such that \(t_3\bigl( t_1(\mu) \bigr) = 
  t_4\bigl( t_2(\mu) \bigr)\).
\end{definition}

Every rule $s$ gives rise to a trivial ambiguity 
$(t_{\mathrm{id},s},\mu_s,t_{\mathrm{id},s})$ of type $q_s$. Since 
\(t_1 = t_2 = t_{\mathrm{id},s}\), these ambiguities are also trivially 
resolvable. Not all self-ambiguities are trivial, however.

\begin{example} \label{Ex:Associativitet}
  Suppose \(\mathsf{m} \in \Omega(1,2)\) and consider the rule
  \[
    s = \Bigl(
      J_{1 \times 3},
      \mathsf{m} \circ \phi(\same{1}) \otimes \mathsf{m},
      \mathsf{m} \circ \mathsf{m} \otimes \phi(\same{1})
    \Bigr)
    \text{;}
  \]
  this rule can be used to express associativity (if $\mathsf{m}$ is 
  the symbol for the multiplication operation, then it changes a 
  product $a(bc)$ to $(ab)c$). The rewriting system $\{s\}$ has the 
  type $J_{1 \times 4}$
  ambiguity
  \[
    \Bigl( t_{v_1,s}, 
    \fuse{ 7 }{ 
      \mathsf{m}^7_{16} \mathsf{m}^6_{25} \mathsf{m}^5_{34} 
    }{ 1234 },
    t_{v_2,s} \Bigr)
  \]
  where, keeping in mind that \(\fuse{ 7 }{ \mathsf{m}^7_{16} 
  \mathsf{m}^6_{25} }{ 125 } = \mu_s = \fuse{ 6 }{ \mathsf{m}^6_{25} 
  \mathsf{m}^5_{34} }{ 234 }\),
  \[
    v_1(b) = \fuse{ 7125 }{ \mathsf{m}^5_{34} }{ 12347 } \rtimes b
    \qquad\text{and}\qquad
    v_2(b) = \fuse{ 7234 }{ \mathsf{m}^7_{16} }{ 12346 } \rtimes b
    \text{.}
  \]
  This ambiguity is resolvable in $\{s\}$, but not trivially so; the 
  steps of the resolution make up the ``associativity pentagon''. 
  (Figuring them out is a good exercise; a tip is to employ network 
  notation for these calculations.)
\end{example}

That all ambiguities of a rewriting system are resolvable is sometimes 
called the \emDefOrd{diamond property} (hence `Diamond Lemma'), but 
more commonly \emDefOrd{local confluence}. Ordinary 
\emDefOrd{confluence} is rather that all elements of $\mc{R}\{\Omega\}$ 
are uniquely reducible. Not all ambiguities are resolvable, not even 
self-ambiguities.

\begin{example}
  Suppose \(\mathsf{x},\mathsf{y} \in \Omega(1,1)\) and consider the 
  rule
  \[
    s_1 = \bigl( J_{1 \times 1}, \mathsf{y}\circ\mathsf{y}, 
    \phi(\same{1}) - \mathsf{x}\circ\mathsf{x} \bigr)
    \text{;}
  \]
  this rule can be viewed as encoding the congruence \( 
  \mathsf{x}\circ\mathsf{x} + \mathsf{y}\circ\mathsf{y} \equiv 
  \phi(\same{1}) \pmod{\{s_1\}}\), which is the defining equation of 
  the basic noncommutative circle.\footnote{
    In noncommutative algebraic geometry, a `noncommutative circle' 
    is an associative algebra generated by two generators $x$ and $y$ 
    satisfying an equation whose commutative counterpart is 
    \(x^2 + y^2 = 1\). Clearly \(x^2 + y^2 = 1\) is such an equation, 
    but so is \(x^2 + rxy - ryx + y^2 = 1\) for any value of the 
    scalar $r$, and the resulting algebras are not the same.
  } The rewriting system $\{s_1\}$ has the type $J_{1 \times 1}$ 
  ambiguity
  \[
    (t_{v_1,s_1}, \mathsf{y}\circ\mathsf{y}\circ\mathsf{y}, 
    t_{v_2,s_1})
  \]
  where \(v_1(b) = b \circ \mathsf{y}\) and \(v_2(b) = \mathsf{y} 
  \circ b\). In this case, \(t_{v_1,s_1}( \mathsf{y} \circ\nobreak 
  \mathsf{y} \circ\nobreak \mathsf{y}) = \mathsf{y} - \mathsf{x} 
  \circ \mathsf{x} \circ \mathsf{y}\) and \(t_{v_2,s_1}( 
  \mathsf{y} \circ\nobreak \mathsf{y} \circ\nobreak \mathsf{y}) = 
  \mathsf{y} - \mathsf{y} \circ \mathsf{x} \circ \mathsf{x}\) are 
  both in $\Irr\bigl( \{s_1\} \bigr)(J_{1 \times 1})$, so the 
  ambiguity is not resolvable. The calculation does however 
  demonstrate that \(\mathsf{y} - \mathsf{x} \circ \mathsf{x} \circ 
  \mathsf{y} \equiv \mathsf{y} - \mathsf{y} \circ \mathsf{x} \circ 
  \mathsf{x} \pmod{\{s_1\}}\), or equivalently \(\mathsf{x} \circ 
  \mathsf{x} \circ \mathsf{y} \equiv \mathsf{y} \circ \mathsf{x} 
  \circ \mathsf{x} \pmod{\{s_1\}}\), which suggests it might be a 
  good idea to add the rule \(s_2 = (J_{1 \times 1}, \mathsf{y} 
  \circ\nobreak \mathsf{x} \circ\nobreak \mathsf{x}, \mathsf{x} 
  \circ\nobreak \mathsf{x} \circ\nobreak \mathsf{y})\) to one's 
  rewriting system, as doing so will render this ambiguity resolvable 
  without changing the congruence relation.
\end{example}

It is not too hard to see that every rule is formally involved in 
infinitely many ambiguities, but there are only a few that one 
actually need to consider.

\begin{definition} \label{D:Avskalad}
  Let $\bigl( t_{v_1,s_1}, [G]_\simeq, t_{v_2,s_2} \bigr)$ be an 
  ambiguity. Fix some \(H_1 \in [\mu_{s_1}]_\simeq\) and \(H_2 \in 
  [\mu_{s_2}]_\simeq\). For \(i=1,2\), let $(\chi_i,\psi_i)$ be the 
  embedding of $H_i$ in $G$ that corresponds to $v_i$. Write 
  \((V_G,E_G,h_G,g_G,t_G,s_G,D_G) = G\) and 
  \((V_i,E_i,h_i,g_i,t_i,s_i,D_i) = H_i\) for \(i=1,2\). The 
  ambiguity is said to be \DefOrd{terse} if all of the following 
  hold:
  \begin{enumerate}
    \item \label{I1:Kritisk}
      \(V_G = \{0,1\} \cup \setim\chi_1 \cup \setim\chi_2\).
    \item \label{I1.5:Kritisk}
      \(E_G = \setim\psi_1 \cup \setim\psi_2\).
    \item \label{I2:Kritisk}
      $\setmap{\psi_1}\bigl( \setinv{h_1}(\{0\}) \bigr)$ and 
      $\setmap{\psi_2}\bigl( \setinv{t_2}(\{1\}) \bigr)$ are 
      disjoint. (I.e., no edge of $G$ is simultaneously the image of 
      an output leg of $H_1$ and an input leg of $H_2$.)
    \item \label{I3:Kritisk}
      $\setmap{\psi_1}\bigl( \setinv{t_1}(\{1\}) \bigr)$ and 
      $\setmap{\psi_2}\bigl( \setinv{h_2}(\{0\}) \bigr)$ are 
      disjoint. (Ditto, but input leg of $H_1$ and output leg of 
      $H_2$.)
    \item \label{I4:Kritisk}
      For any \(e,e' \in E_1\) such that \(\psi_1(e) = \psi_1(e') 
      \notin \setim \psi_2\), \(h_1(e)=0\), and \(t_1(e')=1\) it 
      follows that \(e=e'\).
    \item \label{I5:Kritisk}
      For any \(e,e' \in E_2\) are such that \(\psi_2(e) = \psi_2(e') 
      \notin \setim \psi_1\), \(h_2(e)=0\), and \(t_2(e')=1\) it 
      follows that \(e=e'\).
  \end{enumerate}
  A type $q$ ambiguity $(t,\mu,u)$ is said to be a \DefOrd{shadow} of 
  the type $p$ ambiguity $(t',\mu',u')$ if there exists some \(v 
  \in \mc{V}(q,p)\) such that \(\mu = v(\mu')\), \(t(\mu) = v\bigl( 
  t'(\mu') \bigr)\), and \(u(\mu) = v\bigl( u'(\mu') \bigr)\).
\end{definition}

Conditions~\ref{I1:Kritisk} and~\ref{I1.5:Kritisk} alone imply that a 
given pair of rules are only involved in finitely many terse 
ambiguities, which is of great theoretical importance as it 
demonstrates various operations of importance for completion are 
effective. Conditions~\ref{I3:Kritisk}--\ref{I5:Kritisk} (and also 
conditions~\ref{Ia:Kritisk}--\ref{Ic:Kritisk} of Lemma~\ref{L:Kritisk}) 
are rather conditions that serve to eliminate branches from the 
search tree one would traverse when enumerating all relevant 
ambiguities; this is helpful for actual computations. What says 
non-terse ambiguities need not be considered 
is primarily the following lemma.

\begin{lemma} \label{L:Avskalad}
  Let $S$ be a sharp rewriting system for $\mc{R}\{\Omega\}$. Then 
  every ambiguity of $S$ is a shadow of a terse ambiguity of $S$.
\end{lemma}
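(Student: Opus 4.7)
The plan is to construct, from any given ambiguity of $S$, a terse ambiguity of which it is a shadow, essentially by ``peeling away'' the portions of the ambiguity's site that are not strictly required by either rule embedding.

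First I would fix representatives $G \in [G]_\simeq$, $H_1 \in [\mu_{s_1}]_\simeq$, and $H_2 \in [\mu_{s_2}]_\simeq$, and let $(\chi_i,\psi_i)$ denote the embedding of $H_i$ into $G$ corresponding to $v_i$ via Lemma~\ref{L:Inbaddning}, so that $v_i(b) = \mu_i \rtimes b$ for some $\mu_i$ expressing the ``complement'' of $H_i$ within $G$. Then I would construct a candidate terse site $G'$ whose inner vertex set is exactly $\setim\chi_1 \cup \setim\chi_2$ and whose edges come essentially from $\setim\psi_1 \cup \setim\psi_2$, with head and tail indices re-assigned so that endpoints outside $V_{G'}$ become the external vertices $0$ and $1$. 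Wherever a single edge of $G$ simultaneously serves as an output-leg image for one embedding and an input-leg image for another (or doubles as input and output leg under a single embedding in different roles), I would split that edge in $G'$ by a fresh $\natural$-decorated vertex, so that conditions~\ref{I2:Kritisk}--\ref{I5:Kritisk} of Definition~\ref{D:Avskalad} hold by construction; conditions~\ref{I1:Kritisk} and~\ref{I1.5:Kritisk} hold by the choice of vertex and edge sets.

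Next I would collect the leftover data of $G$ (vertices in $V_G \setminus (\{0,1\} \cup \setim\chi_1 \cup \setim\chi_2)$, edges outside $\setim\psi_1 \cup \setim\psi_2$, and the freshly introduced $\natural$-vertices) into a padding network $\mu$ so that $[G]_\simeq = [\mu]_\simeq \rtimes [G']_\simeq$ after smoothening; this reduces to routine bookkeeping with the splice map of Definition~\ref{Def:KonvextDeluttryck}. Each embedding $(\chi_i,\psi_i)$ restricts to an embedding of $H_i$ into $G'$, yielding simple reductions $t'_i = t_{v'_i, s_i}$ with $v'_i(b) = \mu'_i \rtimes b$ for the complement $\mu'_i$ of $H_i$ within $G'$. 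Applying Theorem~\ref{S:Inbaddningsuppdelning} once for each $i$ (with $K_1 = \mu_i$, $H_1 = H_i$, $K_2 = \mu$, $H_2 = G'$) then yields $[\mu_i]_\simeq = [\mu]_\simeq \join [\mu'_i]_\simeq$, from which associativity of symmetric join (Lemma~\ref{L:join-associativitet}) gives $v_i = v \circ v'_i$ for $v(b) := \mu \rtimes b$; this is exactly the shadow relation of Definition~\ref{D:Avskalad}.

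The main obstacle will be verifying that $v$ is a legitimate element of $\mc{V}(q, q')$, i.e., that the transference of $\mu$ combined with $q' := \Tr(\mu_{G'})$ via the Kleene-star formula of Definition~\ref{D:KategorinV} yields a boolean matrix bounded above by $q$. This is precisely where sharpness of $S$ enters: since each rule satisfies $q_{s_i} = \Tr(\mu_{s_i})$, the transferences of the embedded $H_i$ pieces within $G'$ are pinned down exactly, so $\Tr(\mu_{G'})$ can be computed from $q_{s_1}$ and $q_{s_2}$ using Construction~\ref{Kons:symjoin} and matches the contribution of the rule sites to $\Tr(G)$, with the residual contributions factoring cleanly through $\mu$. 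Without sharpness, an inflated $q_{s_i}$ could force $q'$ to carry ``phantom'' dependencies that are absent from $\Tr(G)$, breaking the required inequality.
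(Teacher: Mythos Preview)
Your overall strategy---peel away the parts of the site not covered by either embedding, then use Theorem~\ref{S:Inbaddningsuppdelning} to factor the $v_i$ through the smaller site---matches the paper's, and your identification of where sharpness enters (keeping the transference type of the smaller ambiguity no larger than $\Tr(G')$) is exactly right. However, there is a concrete error in how you propose to enforce conditions~\ref{I2:Kritisk}--\ref{I5:Kritisk}.

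You write that you would ``split that edge in $G'$ by a fresh $\natural$-decorated vertex.'' But $\natural \notin \Omega$ by convention, so a network containing a $\natural$-vertex lies in $\Nwt(\Omega')$, not $\Nwt(\Omega)$. The site of an ambiguity of $S$ must lie in some $\mc{Y}(q) \subseteq \Nwt(\Omega)$, so your $G'$ would not be a legitimate ambiguity site. The paper's fix is different: rather than \emph{subdividing} the offending edge, it \emph{cuts} it---replacing the single edge $e_1$ by two edges, one a new output leg and one a new input leg of $G'$, thereby increasing $\omega(G')$ and $\alpha(G')$ by one each. The smoothening that recovers $G$ from $G'$ is then absorbed into the annexing $\nu$ (which is a permutation-like element joining the new leg pair back together), and $G'$ stays inside $\Nw(\Omega)$.

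A secondary point: the paper carries this out \emph{iteratively}, removing one extraneous vertex, one extraneous edge, or cutting one offending edge at a time, and invoking Theorem~\ref{S:Inbaddningsuppdelning} at each step to keep the factorisations of $v_1$ and $v_2$ aligned. Your one-shot construction is not wrong in spirit, but it would need care to show that the combined ``leftover'' really assembles into a single $\nu \in \Nwt(\Omega)$ with the right annexation property; the iterative version sidesteps this by only ever peeling off pieces with an explicit, simple form. Your reference to the splice map of Definition~\ref{Def:KonvextDeluttryck} is also misplaced here, since the leftover is generally not a convex subnetwork.
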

\begin{proof}
  Let $\bigl( t_{v_1,s_1}, [G]_\simeq, t_{v_2,s_2} \bigr)$ be an 
  arbitrary ambiguity. Fix some \(H_1 \in [\mu_{s_1}]_\simeq\) and 
  \(H_2 \in [\mu_{s_2}]_\simeq\). For \(i=1,2\), let $(\chi_i,\psi_i)$ 
  be the embedding of $H_i$ in $G$ that corresponds to $v_i$. Write 
  \((V_G,E_G,h_G,g_G,t_G,s_G,D_G) = G\) and 
  \((V_i,E_i,h_i,g_i,t_i,s_i,D_i) = H_i\) for \(i=1,2\).
  The idea for this proof is to view each instance in which this 
  ambiguity violates a terseness condition as an opportunity to peel 
  away some part of the ambiguity\Dash explicitly 
  producing some suitable $G'$ between $G$ and the $H_i$, and then 
  using Theorem~\ref{S:Inbaddningsuppdelning} to factorise $v_1$ and 
  $v_2$ at $[G']_\simeq$. Through a finite number of such steps, each 
  producing a new ambiguity which has the previous as a shadow, one 
  arrives at an ambiguity which fulfils all of the conditions. Since 
  the rules are sharp, the type of the ambiguity at $[G']_\simeq$ can 
  always be taken as low as $\Tr(G')$.
  
  Beginning with condition~\ref{I1:Kritisk}, one may 
  notice that if \(u \in V_G \setminus \{0,1\} \setminus \setim\chi_1 
  \setminus \setim\chi_2\) then $(\chi_1,\psi_1)$ and 
  $(\chi_2,\psi_2)$ are embeddings also into
  \begin{align*}
    G' ={}& \bigl( V_G \setminus \{u\}, E_G, h', g', t', s', 
      \restr{D_G}{ V_G \setminus \{0,1,u\} } \bigr)\\
  \intertext{where}
    (h',g')(e) ={}& \begin{cases}
      (h_G,g_G)(e) & \text{if \(h_G(e) \neq u\),}\\
      \bigl( 0, \omega(G) + g_G(e) \bigr) & \text{if \(h_G(e) = u\),}
    \end{cases}\\
    (t',s')(e) ={}& \begin{cases}
      (t_G,s_G)(e) & \text{if \(t_G(e) \neq u\),}\\
      \bigl( 1, \alpha(G) + s_G(e) \bigr) & \text{if \(t_G(e) = u\).}
    \end{cases}
  \end{align*}
  Moreover \([G]_\simeq = \nu \rtimes [G']_\simeq\) for \(\nu = 
  \phi(\cross{\alpha(G)}{\omega(G)}) \otimes D_G(u)\) and the 
  corresponding embedding of $G'$ into $G$ is 
  $(\mathrm{id},\mathrm{id})$, so the conditions for two applications 
  of Theorem~\ref{S:Inbaddningsuppdelning} are fulfilled; it follows 
  that there are \(\nu_1,\nu_2 \in \Nwt(\Omega)\) such that on one 
  hand \([G']_\simeq = \nu_1 \rtimes \mu_{s_1} = \nu_2 \rtimes 
  \mu_{s_2}\), and on the other \(v_1(b) = \nu \rtimes (\nu_1 
  \rtimes\nobreak b)\) and \(v_2(b) = \nu \rtimes (\nu_2 
  \rtimes\nobreak b)\). Hence for \(v_1'(b) = \nu_1 \rtimes b\) and 
  \(v_2'(b) = \nu_2 \rtimes b\) one sees that the ambiguity $\bigl( 
  t_{v_1,s_1}, [G]_\simeq, t_{v_2,s_2} \bigr)$ is a shadow of $\bigl( 
  t_{v_1',s_1}, [G']_\simeq, t_{v_2',s_2} \bigr)$.
  Since $G$ only has finitely many vertices, this step can only be 
  repeated a finite number of times.
  
  Next, if condition~\ref{I1:Kritisk} holds but 
  condition~\ref{I1.5:Kritisk} is violated then any \(e_0 \in E_G 
  \setminus \setim\psi_1 \setminus \setim\psi_2\) must have 
  \(h_G(e_0)=0\) and \(t_G(e_0)=1\), since any other vertex is in 
  $\setim\chi_1$ or $\setim\chi_2$ and any edge incident to such a 
  vertex is therefore in $\setim\psi_1$ or $\setim\psi_2$ 
  respectively. Hence
  \begin{align*}
    G' ={}& \bigl( 
      V_G, E_G \setminus \{e_0\}, h_G, g', t_G, s', D_G \bigr)
  \intertext{is a network, where}
    g'(e) ={}& \begin{cases}
      g_G(e) & \text{if \(h_G(e) \neq 0\) or \(g_G(e) < g_G(e_0)\),}\\
      g_G(e)-1 & \text{if \(h_G(e) = 0\) and \(g_G(e) > g_G(e_0)\),}
    \end{cases}\\
    s'(e) ={}& \begin{cases}
      s_G(e) & \text{if \(t_G(e) \neq 1\) or \(s_G(e) < s_G(e_0)\),}\\
      s_G(e)-1 & \text{if \(t_G(e) = 1\) and \(s_G(e) > s_G(e_0)\)}
    \end{cases}
  \end{align*}
  for all \(e \in E_G \setminus \{e_0\}\). 
  $(\chi_i,\psi_i)$ is an embedding of $H_i$ into $G'$ for \(i=1,2\), 
  and $(\mathrm{id},\mathrm{id})$ is an embedding of $G'$ into $G$, 
  which goes with \([G]_\simeq = \phi(\sigma) \rtimes [G']_\simeq\), 
  where \(\sigma(j) = \omega(G) + j\) for \(j < s_G(e_0)\), 
  \(\sigma\bigl( s_G(e_0) \bigr) = g_G(e_0)\), \(\sigma(j) = 
  \omega(G) + j - 1\) for \(s_G(e_0) < j \leqslant \alpha(G)\), 
  \(\sigma(j) = j - \alpha(G)\) for \(\alpha(G) < j < \alpha(G) + 
  g_G(e_0)\), and \(\sigma(j) = j - \alpha(G) + 1\) for \(\alpha(G) + 
  g_G(e_0) \leqslant j\). Another two applications of 
  Theorem~\ref{S:Inbaddningsuppdelning} then yield \(\nu_1,\nu_2 \in 
  \Nwt(\Omega)\) such that on one hand \([G']_\simeq = \nu_1 \rtimes 
  \mu_{s_1} = \nu_2 \rtimes \mu_{s_2}\), and on the other \(v_1(b) = 
  \phi(\sigma) \rtimes (\nu_1 \rtimes\nobreak b)\) and \(v_2(b) = 
  \phi(\sigma) \rtimes (\nu_2 \rtimes\nobreak b)\). Hence for \(v_1'(b) 
  = \nu_1 \rtimes b\) and \(v_2'(b) = \nu_2 \rtimes b\) one sees that 
  the ambiguity $\bigl( t_{v_1,s_1}, [G]_\simeq, t_{v_2,s_2} \bigr)$ is 
  a shadow of $\bigl( t_{v_1',s_1}, [G']_\simeq, t_{v_2',s_2} 
  \bigr)$. Since $E_G \setminus \setim\psi_1 \setminus \setim\psi_2$ 
  is finite, this step can only be repeated a finite number of times, 
  and it preserves condition~\ref{I1:Kritisk}.
  
  In order to see that the remaining steps likewise can only be 
  repeated a finite number of times, one may observe that they 
  decrease the number of distinct quadruplets $(e,i,f,j)$ such that 
  \(i,j \in \{1,2\}\), \(e \in E_i\), \(f \in E_j\), and \(\psi_i(e) = 
  \psi_j(f)\), which obviously starts out finite. Moreover, these 
  steps preserve conditions~\ref{I1:Kritisk} and~\ref{I1.5:Kritisk}.
  
  When condition~\ref{I2:Kritisk} is violated by the ambiguity, 
  then a smaller ambiguity of which it is a shadow can be exhibited by 
  cutting an edge. Let \(e_1 \in \setmap{\psi_1}\bigl( 
  \setinv{h_1}(\{0\}) \bigr) \cap \setmap{\psi_2}\bigl( 
  \setinv{t_2}(\{1\}) \bigr)\) and let \(e_2 = 1 + \max E_G\); after 
  cutting, $e_1$ will be above the cut and $e_2$Êwill be below it. Let
  \begin{subequations} \label{Eq1:Kritisk}
    \begin{align}
      G' ={}& \bigl( V_G, E_G \cup \{e_2\}, h', g', t', s', D_G \bigr)
    \intertext{where}
      (t',s')(e) ={}& \begin{cases}
        (t_G,s_G)(e) & \text{if \(e \in E_G\),}\\
        \bigl( 1, \alpha(G)+1 \bigr) & \text{if \(e = e_2\),}
      \end{cases}\\
      (h',g')(e) ={}& \begin{cases}
        (h_G,g_G)(e) & \text{if \(e \in E_G \setminus \{e_1\}\),}\\
        \bigl( 0, \omega(G)+1 \bigr)& \text{if \(e=e_1\),}\\
        (h_G,g_G)(e_1) & \text{if \(e = e_2\).}
      \end{cases}
    \end{align}
  \end{subequations}
  Here \([G]_\simeq = \nu \rtimes [G']_\simeq\) for \(\nu = \phi( 
  \cross{\alpha(G)}{\omega(G)} \star\nobreak \same{1}) \), and the 
  corresponding embedding of $G'$ into $G$ maps $e_1$ and $e_2$ both to 
  $e_1$, whereas everything else is mapped injectively to itself. 
  $(\chi_1,\psi_1)$ and $(\chi_2,\psi_2)$ are not embeddings into 
  $G'$ in this case, but the slight modifications $(\chi_1,\psi_1')$ 
  and $(\chi_2,\psi_2')$ defined by
  \begin{align*}
    \psi_1'(e) ={}& \begin{cases}
      \psi_1(e) & \text{if \(\psi_1(e) \neq e_1\) or \(h_1(e) = 0\),}\\
      e_2& \text{otherwise}
    \end{cases}\\
    \psi_2'(e) ={}& \begin{cases}
      e_2& \text{if \(\psi_2(e) = e_1\) and \(t_2(e) = 1\),}\\
      \psi_2(e)& \text{otherwise,}
    \end{cases}
  \end{align*}
  are, and compose with the embedding of $G'$ into $G$ to yield their 
  unprimed counterparts; the idea is to map elements of 
  $\setinv{\psi_1}\bigl( \{e_1\} \bigr)$ to $e_1$ and elements of 
  $\setinv{\psi_2}\bigl( \{e_1\} \bigr)$ to $e_2$, unless forced to 
  do the opposite by an edge endpoint which is an inner vertex. Hence 
  there are by Theorem~\ref{S:Inbaddningsuppdelning} \(\nu_1,\nu_2 
  \in \Nwt(\Omega)\) such that on one hand \([G']_\simeq = \nu_1 
  \rtimes \mu_{s_1} = \nu_2 \rtimes \mu_{s_2}\), and on the other 
  \(v_1(b) = \nu \rtimes (\nu_1 \rtimes\nobreak b)\) and \(v_2(b) = 
  \nu \rtimes (\nu_2 \rtimes\nobreak b)\). Hence for \(v_1'(b) = 
  \nu_1 \rtimes b\) and \(v_2'(b) = \nu_2 \rtimes b\) one sees that 
  the ambiguity $\bigl( t_{v_1,s_1}, [G]_\simeq, t_{v_2,s_2} \bigr)$ 
  is a shadow of $\bigl( t_{v_1',s_1}, [G']_\simeq, t_{v_2',s_2} 
  \bigr)$. At least one output leg of $H_1$ and input leg of $H_2$ 
  form a quadruplet at $e_1$ in $G$ but do not do so in $G'$, 
  since $\psi_2'$ maps the input leg in question to $e_2$ whereas 
  $\psi_1'$ map the output leg in question to $e_1$. 
  For condition~\ref{I3:Kritisk} the argument is the same, 
  only with the roles of $H_1$ and $H_2$ reversed.
  
  If condition~\ref{I4:Kritisk} is violated then the situation is 
  basically the same\Ldash a cut can be made between two segments of 
  an edge\Rdash but one has to proceed a bit differently since the 
  embedding $\psi_1$ does not contain enough information to tell which 
  of $e$ and $e'$ is above the cut. Let $\theta_1$ be the subdivision 
  index map for edges associated with the homeomorphism to $G$ from 
  some $K_1 \rtimes H_1$ such that \(v_1(b) = [K_1]_\simeq \rtimes 
  b\). It may without loss of generality be assumed that 
  \(\theta_1(2e +\nobreak 1) < \theta_1(2e' +\nobreak 1)\), or in 
  words that $e$ is above $e'$, since if 
  it's the other way around then it must also be the case that 
  \(h_1(e') = 0\) and \(t_1(e) = 1\). Therefore one may let \(e_1 = 
  \psi_1(e)\), \(e_2 = 1 + \max E_G\), and again define $G'$ using 
  \eqref{Eq1:Kritisk}. No edge of $H_2$ is mapped to $e_1$, so 
  $(\chi_2,\psi_2)$ is also an embedding of $H_2$ into $G'$. An 
  embedding of $H_1$ into $G'$ is $(\chi_1,\psi_1')$, where
  \[
    \psi_1'(f) = \begin{cases}
      e_2 & \text{if \(\psi_1(f)=e_1\) and 
        \(\theta_1(2f + 1) > \theta_1(2e + 1)\),}\\
      \psi_1(f) & \text{otherwise.}
    \end{cases}
  \]
  Both compose with the usual embedding of $G'$ into $G$ to yield the 
  original embeddings of $H_2$ and $H_1$ respectively into $G$, so 
  by Theorem~\ref{S:Inbaddningsuppdelning} there are \(\nu,\nu_1,\nu_2 
  \in \Nwt(\Omega)\) such that on one hand \([G']_\simeq = \nu_1 
  \rtimes \mu_{s_1} = \nu_2 \rtimes \mu_{s_2}\), and on the other 
  \([G]_\simeq = \nu \rtimes [G']_\simeq\), \(v_1(b) = \nu \rtimes 
  (\nu_1 \rtimes\nobreak b)\) and \(v_2(b) = \nu \rtimes (\nu_2 
  \rtimes\nobreak b)\). Hence for \(v_1'(b) = \nu_1 \rtimes b\) and 
  \(v_2'(b) = \nu_2 \rtimes b\) one sees that the ambiguity $\bigl( 
  t_{v_1,s_1}, [G]_\simeq, t_{v_2,s_2} \bigr)$ is a shadow of $\bigl( 
  t_{v_1',s_1}, [G']_\simeq, t_{v_2',s_2} \bigr)$. $(e,1,e',1)$ was a 
  quadruplet with respect to $G$ but not with respect to $G'$. For 
  condition~\ref{I5:Kritisk} the argument is again the same, only with 
  the roles of $H_1$ and $H_2$ reversed.
\end{proof}

That condition~\ref{I4:Kritisk} above completely excludes edges in 
the image of $\psi_2$ is in part because not having to worry about 
$\psi_2$ simplified the argument. The other reason for stating it that 
way is that instead having \(\psi_1(e) = \psi_1(e') \in \setim\psi_2\) 
would lead to a situation being covered by at least one of 
conditions~\ref{I2:Kritisk}, \ref{I3:Kritisk}, and 
\ref{Ib:Kritisk}~(of Lemma~\ref{L:Kritisk}). The situation for 
condition~\ref{I5:Kritisk} is similar.

Terseness is a condition that is practical to look for, but also one 
whose definition relies entirely on the way that $\Nwt(\Omega)$ is 
constructed. The generic machinery of~\cite{rpaper} is rather stated in 
terms of the shadow relation alone, making use of the fact that this 
is a quasi-order.

\begin{definition} \label{Def:Kritisk}
  Let $A$ and $B$ be ambiguities, and write \(A \geqslant B \pin{Q}\) 
  if $A$ is a shadow of $B$. Then $A$ is a \DefOrd{proper shadow} of 
  $B$ if \(A > B \pin{Q}\), and $B$ is \DefOrd{shadow-minimal} if it 
  is minimal with respect to $Q$. An ambiguity $A$ is said to be 
  \DefOrd{shadow-critical} if there is no shadow-minimal ambiguity 
  $B$ of which $A$ is a proper shadow. Two ambiguities $A$ and $B$ are 
  said to be \DefOrd{shadow-equivalent} if \(A \sim B \pin{Q}\).
  
%
  
  A type $q$ ambiguity $(t_{v_1,s_1},\mu,t_{v_2,s_2})$ of $S$ is said 
  to be a \DefOrd[*{montage ambiguity}]{montage} of the rules 
  \(s_1,s_2 \in S\) if there is some \(v \in \mc{V}(q, q_{s_1} 
  \otimes\nobreak q_{s_2})\) 
  such that \(v_1(b) = v( b \otimes\nobreak \mu_{s_2})\) 
  for all \(b \in \mc{M}(q_{s_1})\) and \(v_2(b) = v( \mu_{s_1} 
  \otimes\nobreak b)\) for all \(b \in \mc{M}(q_{s_2})\).
  
  An ambiguity that is shadow-critical and not a montage is said to 
  be \DefOrd{critical}. An ambiguity that is terse and not a montage is 
  said to be \DefOrd{decisive}.
\end{definition}

The `critical' here is the same as in `critical pair', and `critical 
ambiguity' is pretty much the same as what in rewriting in general is 
captured by the concept of critical pair. Note, however, that even if 
the essence is generally the same, what is formally defined to be a 
`critical pair' can vary considerably between different branches of 
rewriting. The `decisive' concept is another approach to more 
concretely formalise the same essential idea, but there are some 
subtle differences; see the technical discussion in the next 
subsection.

\begin{lemma} \label{L:Kritisk}
  Let $\bigl( t_{v_1,s_1}, [G]_\simeq, t_{v_2,s_2} \bigr)$ be a 
  terse ambiguity. Fix some \(H_1 \in [\mu_{s_1}]_\simeq\) and 
  \(H_2 \in [\mu_{s_2}]_\simeq\). For \(i=1,2\), let $(\chi_i,\psi_i)$ 
  be the embedding of $H_i$ in $G$ that corresponds to $v_i$. Write 
  \((V_G,E_G,h_G,g_G,t_G,s_G,D_G) = G\) and 
  \((V_i,E_i,h_i,g_i,t_i,s_i,D_i) = H_i\) for \(i=1,2\). If the 
  ambiguity is decisive then at least one of the following must hold:
  \begin{enumerate}
    \renewcommand{\theenumi}{\alph{enumi}}
    \renewcommand{\labelenumi}{(\theenumi)}
    \item \label{Ia:Kritisk}
      There is some \(u \in \setim\chi_1 \cap \setim\chi_2\).
    \item \label{Ib:Kritisk}
      There is some \(e_1 \in E_1\) with \(h_1(e_1) = 0\), \(t_1(e_1) 
      = 1\) and some \(e_2 \in E_2\) with \(h_1(e_2) \neq 0\), 
      \(t_2(e_2) \neq 1\) such that \(\psi_1(e_1) = \psi_2(e_2)\).
    \item \label{Ic:Kritisk}
      There is some \(e_1 \in E_1\) with \(h_1(e_1) \neq 0\), 
      \(t_1(e_1) \neq 1\) and some \(e_2 \in E_2\) with \(h_1(e_2) = 
      0\), \(t_2(e_2) = 1\) such that \(\psi_1(e_1) = \psi_2(e_2)\).
  \end{enumerate}
\end{lemma}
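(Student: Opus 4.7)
I would prove the contrapositive: assume the ambiguity $\bigl(t_{v_1,s_1},[G]_\simeq,t_{v_2,s_2}\bigr)$ is terse and that none of conditions (\ref{Ia:Kritisk}), (\ref{Ib:Kritisk}), (\ref{Ic:Kritisk}) hold, and exhibit a montage structure. The general shape of the argument is: first establish that under these hypotheses the two embeddings of $H_1$ and $H_2$ into $G$ are essentially independent (disjoint vertex images and almost disjoint edge images), then splice them together into a single embedding of the tensor product $L=H_1\otimes H_2$ into $G$, and finally invoke the subexpression theorems from Sections~\ref{Sec:Deluttryck} and~\ref{Sec:FriPROP} to obtain the common padding that witnesses the montage.

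The heart of the proof, and what I expect to be the main obstacle, is the case analysis for a hypothetical shared edge $e=\psi_1(f_1)=\psi_2(f_2)\in\setim\psi_1\cap\setim\psi_2$. Failure of (\ref{Ia:Kritisk}) gives $\setim\chi_1\cap\setim\chi_2=\varnothing$, so the head of $e$ in $G$ cannot be forced into $\setim\chi_1$ by $f_1$ and simultaneously into $\setim\chi_2$ by $f_2$; similarly for the tail. This yields a four-way dichotomy on which of $f_1,f_2$ is a leg of its respective rule on each side. Working through the cases using terseness conditions~\ref{I2:Kritisk} and~\ref{I3:Kritisk} (forbidding output-leg/input-leg collisions between the two rules), and the failures of (\ref{Ib:Kritisk}) and (\ref{Ic:Kritisk}) (forbidding a stray edge of one rule to coincide with an inner edge of the other), each combination leads to a contradiction. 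The conclusion is that $\setim\psi_1\cap\setim\psi_2=\varnothing$.

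Once this has been established, I would define a combined map $(\chi,\psi)$ from $L=H_1\otimes H_2$ to $G$ by $\chi=\chi_1\sqcup\chi_2$ on inner vertices and $\psi=\psi_1\sqcup\psi_2$ on edges. Injectivity of $\chi$ comes from failure of (\ref{Ia:Kritisk}); head/tail compatibility for non-leg edges of $L$ is inherited from the individual embeddings; and condition~5 of Definition~\ref{D:KategorinV} for repeated edges is automatic (no cross-collisions by the previous step, and within a single $H_i$ the original embedding already enforces the joined-leg condition, which translates directly to the corresponding legs of $L$ since outputs of $H_1$ remain outputs of $L$, etc.). Thus $(\chi,\psi)$ is an embedding of $L$ into $G$. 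By Theorem~\ref{S:InbaddningHomeomorfi} there exists $K\in\Nw(\Omega)$ with $[G]_\simeq=[K]_\simeq\rtimes[L]_\simeq=[K]_\simeq\rtimes(\mu_{s_1}\otimes\mu_{s_2})$.

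It remains to show that the map $v(b):=[K]_\simeq\rtimes b$ lies in $\mc{V}(q,q_{s_1}\otimes q_{s_2})$ and satisfies $v_1(b)=v(b\otimes\mu_{s_2})$ and $v_2(b)=v(\mu_{s_1}\otimes b)$. The factorisation of $v_1$ and $v_2$ is obtained from Theorem~\ref{S:Inbaddningsuppdelning} applied to the chains $H_1\hookrightarrow L\hookrightarrow G$ and $H_2\hookrightarrow L\hookrightarrow G$: this produces networks $K_0^{(1)}$ and $K_0^{(2)}$ with $[L]_\simeq=[K_0^{(i)}]_\simeq\rtimes\mu_{s_i}$, which by the associativity of symmetric join (Lemma~\ref{L:join-associativitet}) combined with Corollary~\ref{Kor:Sammanbindingsformler} are recognised as the tensor-product paddings giving $K_0^{(1)}\rtimes b=b\otimes\mu_{s_2}$ and $K_0^{(2)}\rtimes b=\mu_{s_1}\otimes b$. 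The transference bound $v\in\mc{V}(q,q_{s_1}\otimes q_{s_2})$ follows because the transferences of $v_1$ and $v_2$ already bound the transference of $[K]_\simeq$ appropriately when composed with the block-diagonal $q_{s_1}\otimes q_{s_2}$; I expect this last estimate to require a direct computation mimicking the style of item~\ref{Item:kategori} in the proof of Lemma~\ref{L:KategorinV}, but no conceptual difficulty beyond that.
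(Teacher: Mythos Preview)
Your case analysis establishing $\setim\psi_1\cap\setim\psi_2=\varnothing$ is correct and matches the paper's argument. After that point, however, you take a substantially longer route than needed, and the detour creates a real difficulty at the end.

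The paper's observation is that once $\setim\chi_1\cap\setim\chi_2=\varnothing$ and $\setim\psi_1\cap\setim\psi_2=\varnothing$, the \emph{terseness conditions~\ref{I1:Kritisk} and~\ref{I1.5:Kritisk}} (which you never invoke) say that every inner vertex and every edge of $G$ lies in exactly one of the two images. In particular, every output leg of $H_i$ maps to an output leg of $G$ and likewise for inputs. Hence after a permutation $\sigma$ of outputs and $\tau$ of inputs, $G$ literally has the split $(\setim\psi_1,\setim\psi_2,\setim\chi_1,\setim\chi_2)$, and $[G]_\simeq=\phi(\sigma^{-1})\circ\mu_{s_1}\otimes\mu_{s_2}\circ\phi(\tau^{-1})$. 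The montage map $v$ is then $b\mapsto\phi(\sigma^{-1})\circ b\circ\phi(\tau^{-1})$, which lies in $\mc{V}(q,q_{s_1}\otimes q_{s_2})$ immediately by Lemma~\ref{L:KategorinV}, items~\ref{Item:permutationer} and~\ref{Item:Tensorprodukt}. No appeal to Theorems~\ref{S:InbaddningHomeomorfi} or~\ref{S:Inbaddningsuppdelning} is needed.

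Your route produces an abstract padding $K$ via Theorem~\ref{S:InbaddningHomeomorfi}, and then the transference check ``$v\in\mc{V}(q,q_{s_1}\otimes q_{s_2})$'' is not as automatic as you suggest: knowing that the compositions $v\circ(b\mapsto b\otimes\mu_{s_2})$ and $v\circ(b\mapsto\mu_{s_1}\otimes b)$ lie in $\mc{V}(q,q_{s_i})$ does not by itself bound $\Tr(K)$ against $q_{s_1}\otimes q_{s_2}$ (composition in $\mc{V}$ goes the wrong direction for this inference, and the lemma carries no sharpness hypothesis that would let you substitute $\Tr(\mu_{s_i})$ for $q_{s_i}$). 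The fix is precisely the observation above: terseness forces $K$ to have no inner vertices, so it is a permutation---but once you see that, the machinery of Theorems~\ref{S:InbaddningHomeomorfi} and~\ref{S:Inbaddningsuppdelning} was never needed.
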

\begin{proof}
  For the contrapositive, assume that none of 
  conditions~\ref{Ia:Kritisk}--\ref{Ic:Kritisk} are fulfilled, and 
  seek to show that the ambiguity is a montage. It then follows that 
  not only $\setim\chi_1$ and $\setim\chi_2$ are disjoint (directly by 
  the negation of~\ref{Ia:Kritisk}), but also that $\setim\psi_1$ and 
  $\setim\psi_2$ are disjoint. The main reason for this is that an 
  endpoint of an edge $\psi_1(e_1)$ in $G$ can be covered by at most 
  one of $(\chi_1,\psi_1)$ and $(\chi_2,\psi_2)$ since $\setim\chi_1$ 
  and $\setim\chi_2$ are disjoint. Suppose first \(h_1(e_1) \neq 0\)\Dash 
  then any \(e_2 \in E_2\) with \(\psi_2(e_2) = \psi_1(e_1)\) must have 
  \(h_2(e_2) = 0\) and thus violate terseness condition~\ref{I3:Kritisk} 
  unless \(t_1(e_1) \neq 1\), but that would force \(t_2(e_2) = 1\) and 
  thus fulfil condition~\ref{Ic:Kritisk}. Supposing instead that 
  \(h_1(e_1) = 0\), any \(e_2 \in E_2\) with \(\psi_2(e_2) = \psi_1(e_1)\)
  not violating terseness condition~\ref{I2:Kritisk} must have 
  \(t_2(e_2) \neq 1\) and hence \(t_1(e_1) = 1\)\Dash then \(h_2(e_2) = 
  0\) will violate terseness condition~\ref{I3:Kritisk} and \(h_2(e_2) 
  \neq 0\) will fulfil condition~\ref{Ib:Kritisk}. Thus $\setim\psi_1$ 
  and $\setim\psi_2$ are indeed disjoint. 
  
  This means \(h_G\bigl( \psi_1(e_1) \bigr) = 0\) for all \(e_1 \in 
  \setinv{h_1}\bigl(\{0\}\bigr)\) and \(h_G\bigl( \psi_2(e_2) \bigr) = 
  0\) for all \(e_2 \in \setinv{h_2}\bigl(\{0\}\bigr)\). Hence there is 
  a permutation \(\sigma \in \Sigma_{\omega(G)}\) 
  such that \((\sigma \circ\nobreak g_G \circ\nobreak \psi_1)(e_1) = 
  g_1(e_1)\) for all \(e_1 \in \setinv{h_1}\bigl(\{0\}\bigr)\) and 
  \((\sigma \circ\nobreak g_G \circ\nobreak \psi_2)(e_2) = 
  \omega(H_1) + g_2(e_2)\) for all \(e_2 \in 
  \setinv{h_2}\bigl(\{0\}\bigr)\). Similarly there is a permutation 
  \(\tau \in \Sigma_{\alpha(G)}\) such that \((\tau^{-1} \circ\nobreak 
  s_G \circ\nobreak \psi_1)(e_1) = s_1(e_1)\) for all \(e_1 \in 
  \setinv{t_1}\bigl(\{1\}\bigr)\) and \((\tau^{-1} \circ\nobreak s_G 
  \circ\nobreak \psi_2)(e_2) = \alpha(H_1) + s_2(e_2)\) for all \(e_2 \in 
  \setinv{t_2}\bigl(\{1\}\bigr)\). Hence $\sigma \cdot G \cdot \tau$ has 
  the split $(\setim\psi_1, \setim\psi_2, \setim\chi_1, 
  \setim\chi_2)$, and \([G]_\simeq = \phi(\sigma^{-1}) \circ 
  [H_1]_\simeq \otimes [H_2]_\simeq \circ \phi(\tau^{-1})\). Thus 
  \(v_1(b) = \phi(\sigma^{-1}) \circ b \otimes \mu_{s_2} \circ 
  \phi(\tau^{-1})\), \(v_2(b) = \phi(\sigma^{-1}) \circ \mu_{s_1} 
  \otimes b \circ \phi(\tau^{-1})\), and the ambiguity is a montage.
\end{proof}

The various conditions (\ref{I1:Kritisk}--\ref{I5:Kritisk} and 
\ref{Ia:Kritisk}--\ref{Ic:Kritisk}) introduced above make a good job 
of restricting the set of ambiguities that are considered, but it 
should be pointed out that there is one more source of multiplicity 
that are addressed by neither criticality nor decisiveness: 
permutation of inputs and outputs. It is immediately clear that two 
ambiguities which differ only in the order of their legs are 
shadow-equivalent (by item~\ref{Item:permutationer} of 
Lemma~\ref{L:KategorinV}), but it is very hard to provide a general 
condition that would select just one representative ambiguity for 
each equivalence class. On the other hand, it is in a concrete 
situation typically straightforward to select representatives, so 
that matter is no big deal.

\begin{example}
  Continuing with the reduction system $\{s\}$ considered in  
  Example~\ref{Ex:Associativitet}, one may conclude from 
  condition~\ref{I1:Kritisk} that a terse ambiguity can have at most 
  four inner vertices in its site, and in combination with 
  condition~\ref{Ia:Kritisk} (neither~\ref{Ib:Kritisk} 
  nor~\ref{Ic:Kritisk} can be fulfilled since $\mu_s$ has no stray 
  edges) that bound can be lowered to three inner vertices for sites 
  of decisive ambiguities.
  
  A practical way of finding all decisive ambiguities of $\{s\}$ is 
  then to start with \(H_1 = \Nwfuse{ 7 }{ (2\colon 
  \mathsf{m})^7_{16} (3\colon \mathsf{m})^6_{25} }{ 125 } \in \mu_s\) 
  and consider all the ways this $H_1$ could be extended to some $G$ 
  (i.e., the embedding $(\chi_1,\psi_1)$ has \(\chi_1 = 
  \mathrm{id}\)) such that $(t_{v_1,s}, [G]_\simeq, t_{v_2,s})$ is a 
  decisive ambiguity. Taking also \(H_2 = H_1\), an initial tripartition 
  of the search space is as follows: (i)~it may be that \(2 \notin 
  \setim \chi_2\) (the bottom vertex of $H_1$ is not identified with any 
  vertex of $H_2$), (ii)~it may be that \(2 = \chi_2(2)\) (the bottom 
  vertex of $H_1$ is identified with the bottom vertex of $H_2$), or 
  (iii)~it may be that \(2 = \chi(3)\) (the bottom vertex of $H_1$ is 
  identified with the top vertex of $H_2$). In case~(i) it follows 
  that \(3 \in \setim\chi_2\), but \(\chi_2(3)=3\) would force 
  \(\chi_2(2)=2\), so it must be the case that \(\chi_2(2)=3\) and 
  \(G \simeq \Nwfuse{ 7 }{ (2\colon \mathsf{m})^7_{16} (3\colon 
  \mathsf{m})^6_{25} (4\colon \mathsf{m})^5_{34} }{ \vek{a} }\) for 
  some list $\vek{a}$ with \(\norm{\vek{a}} = \{1,2,3,4\}\). Up to 
  the order of the input legs, this is the ambiguity that was 
  considered in Example~\ref{Ex:Associativitet}, so that has already 
  been dealt with.
  
  In case~(ii), it follows that \(\chi(3)=3\) and \(G \simeq H_1 = H_2\), so 
  this is the trivial ambiguity. In case~(iii), one arrives at \(G 
  \simeq \Nwfuse{ 8 }{ (2\colon \mathsf{m})^7_{16} (3\colon 
  \mathsf{m})^6_{25} (4\colon \mathsf{m})^8_{97} }{ \vek{a} }\) for 
  some list $\vek{a}$ with \(\norm{\vek{a}} = \{9,1,2,5\}\). This is 
  the same ambiguity as in case~(i), with the roles of $H_1$ and 
  $H_2$ exchanged. Hence the ambiguity of 
  Example~\ref{Ex:Associativitet} was (up to symmetry) the only 
  nontrivial decisive ambiguity of $\{s\}$, and thus the only one 
  that requires an explicit resolution.
\end{example}

\subsection{The Diamond Lemma}

The last order of business before the diamond lemma can be stated is 
to link rewrite rules and ambiguities to quasi-orders on 
$\Nwt(\Omega)$.

\begin{definition}
  Let $P$ be a quasi-order on $\Nwt(\Omega)$, \(q \in 
  \B^{\bullet\times\bullet}\), and \(\mu \in \mc{Y}(q)\). Then the 
  type $q$ \DefOrd{down-set module} of $\mu$ with respect to $P$ is 
  the set
  \[
    \DSM(\mu,q,P) = \Span\Bigl( 
    \setOf[\big]{ \nu \in \mc{Y}(q) }{ \nu < \mu \pin{P} }
    \Bigr) \text{.}
  \]
  A rule \(s = (q_s,\mu_s,a_s)\) is said to be \DefOrd{compatible} 
  with $P$ if \(a_s \in \DSM(\mu_s,q_s,P)\). A rewriting system is 
  compatible with $P$ if all rules in it are compatible with $P$.
\end{definition}

\begin{example}
  Suppose \(\Omega = \{\mathsf{m}\}\), where \(\omega(\mathsf{m})=1\) 
  and \(\alpha(\mathsf{m})=2\). Consider the $\N^2$-graded set 
  morphisms \(f_1,f_2\colon \Omega \Fpil \Baff(\N)\) which have 
  \[
    f_1(\mathsf{m}) = \begin{pmatrix}
      1 & 0 & 0 & 0 \\
      0 & 1 & 0 & 0 \\
      0 & 0 & 1 & 2
    \end{pmatrix}
    \quad\text{and}\quad
    f_2(\mathsf{m}) = \begin{pmatrix}
      1 & 0 & 0 & 1 \\
      0 & 1 & 0 & 0 \\
      0 & 0 & 1 & 1
    \end{pmatrix}
  \]
  respectively. Both of these have at least one positive element in 
  each row and column, so the images of $\eval_{f_1}$ and 
  $\eval_{f_2}$ resides in a sub-\PROP\ of $\Baff(\N)$ on which, by 
  Corollaries~\ref{Kor:BiaffinOrdning} and~\ref{Kor:BaffOkapad}, the 
  standard order $Q$ on these matrices is a strict \PROP\ partial 
  order which also has the strict uncut property. Hence the pullbacks 
  to $\Nwt(\Omega)$ of these standard orders are strict \PROP\ 
  quasi-orders with the strict uncut property. They are furthermore 
  well-founded, since $Q$ is well-founded.
  
  Considering the rewrite rule $s$ of 
  Example~\ref{Ex:Associativitet}, which has \(\mu_s = \mathsf{m} 
  \circ \phi(\same{1}) \otimes \mathsf{m}\) and \(a_s = \mathsf{m} 
  \circ \mathsf{m} \otimes \phi(\same{1})\), one finds
  \begin{align*}
    \eval_{f_1}(\mu_s) = \begin{pmatrix}
      1 & 0 & 0 & 0 & 0 \\
      0 & 1 & 0 & 0 & 0 \\
      0 & 0 & 1 & 2 & 4
    \end{pmatrix}
    >{}&
    \begin{pmatrix}
      1 & 0 & 0 & 0 & 0 \\
      0 & 1 & 0 & 0 & 0 \\
      0 & 0 & 1 & 2 & 2
    \end{pmatrix} = \eval_{f_1}(a_s)
    \text{,}\\
    \eval_{f_2}(\mu_s) = \begin{pmatrix}
      1 & 0 & 0 & 1 & 2 \\
      0 & 1 & 0 & 0 & 0 \\
      0 & 0 & 1 & 1 & 1
    \end{pmatrix}
    >{}&
    \begin{pmatrix}
      1 & 0 & 0 & 1 & 1 \\
      0 & 1 & 0 & 0 & 0 \\
      0 & 0 & 1 & 1 & 1
    \end{pmatrix} = \eval_{f_2}(a_s)
    \text{,}
  \end{align*}
  which means $s$ is compatible with both $(\eval_{f_1})^* Q$ and 
  $(\eval_{f_2})^* Q$.
\end{example}

\begin{definition}
   Let $P$ be a quasi-order on $\Nwt(\Omega)$, \(q \in 
  \B^{\bullet\times\bullet}\), and \(\mu \in \mc{Y}(q)\). 
  Also let $S$ be a rewriting system for $\mc{R}\{\Omega\}$. Then the 
  type $q$ \DefOrd{down-set $\mc{I}(S)$ section} of \(\mu \in 
  \mc{Y}(q)\) with respect to $P$ and $S$ is the set
  \begin{multline*}
    \DIS(\mu,q,P,S) = \\ =
    \Span\Bigl( \setOf[\big]{ v(\mu_s - a_s) }{ 
      \text{\(v \in \mc{V}(q,q_s)\), \(s \in S\), and 
      \(v(\mu_s) < \mu \pin{P}\)}
    } \Bigr)
    \text{.}
  \end{multline*}
  A type $q$ ambiguity $(t_1,\mu,t_2)$ is said to be 
  \DefOrd[*{ambiguity!resolvable relative to}]{resolvable relative to} 
  $P$ if \(t_1(\mu) - t_2(\mu) \in \DIS(\mu,q,P,S)\).
\end{definition}

Relative resolvability is a technical condition which simplifies 
various parts of the proof of the diamond lemma, but it is not so 
common that it is easier to verify than ordinary resolvability.

\begin{lemma} \label{L:Korrelation}
  Let $P$ be a well-founded strict \PROP\ quasi-order on $\Nwt(\Omega)$ 
  which has the strict uncut property. Then for all \(q,r \in 
  \B^{\bullet\times\bullet}\), \(v \in \mc{V}(q,r)\), and \(\mu \in 
  \mc{Y}(r)\),
  \begin{equation} \label{Eq:Korrelation}
    \setmap{v}\bigl( \DSM(\mu,r,P) \bigr) \subseteq 
    \DSM\bigl( v(\mu), q, P \bigr) \text{.}
  \end{equation}
\end{lemma}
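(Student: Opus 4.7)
The plan is to unpack the definitions and reduce the claim to the fact that $P$ is strictly preserved under symmetric join, which is exactly Theorem~\ref{S:UncutOrder} applied to the hypothesis.

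First I would spell out $v$. By definition of $\mc{V}(q,r)$, there exists some $\lambda \in \mc{Y}(p)$ (with $p$ satisfying the nilpotency and dominance conditions from Definition~\ref{D:KategorinV}) such that $v(b) = \lambda \rtimes b$ for all $b \in \mc{M}(r)$. In particular, $v$ is $\mc{R}$-linear, so it suffices to verify the containment \eqref{Eq:Korrelation} on the generators of $\DSM(\mu,r,P)$; that is, it is enough to show that for every $\nu \in \mc{Y}(r)$ with $\nu < \mu \pin{P}$ one has $v(\nu) \in \DSM\bigl(v(\mu),q,P\bigr)$.

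Next I would verify that $v(\nu) \in \mc{Y}(q)$, so that $v(\nu)$ is a legitimate generator of $\DSM(v(\mu),q,P)$ once the inequality is in place. Since $\Tr(\nu) \leqslant r$ and $p_{22} r$ is nilpotent, $p_{22}\Tr(\nu)$ is nilpotent too; hence the symmetric join $\lambda \rtimes \nu$ is defined, and Construction~\ref{Kons:symjoin} together with the dominance condition on $p$ gives $\Tr\bigl(\lambda \rtimes \nu\bigr) \leqslant p_{11} + p_{12} r (p_{22} r)^* p_{21} \leqslant q$, so $v(\nu) \in \mc{Y}(q)$; the same check applied to $\mu$ gives $v(\mu) \in \mc{Y}(q)$.

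The core step is the strict comparison. By Theorem~\ref{S:UncutOrder}, the hypothesis that $P$ is a well-founded strict \PROP\ quasi-order with the strict uncut property implies that $P$ is strictly preserved under symmetric join. Applied with $\lambda \leqslant \lambda \pin{P}$ on the left and $\nu < \mu \pin{P}$ on the right, this yields $v(\nu) = \lambda \rtimes \nu < \lambda \rtimes \mu = v(\mu) \pin{P}$. Hence $v(\nu)$ lies in the generating set of $\DSM\bigl(v(\mu),q,P\bigr)$, and by $\mc{R}$-linearity of $v$ the whole image $\setmap{v}\bigl(\DSM(\mu,r,P)\bigr)$ lies in $\DSM\bigl(v(\mu),q,P\bigr)$. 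The only conceptual obstacle is making sure that the symmetric join interpretation of $v$ really matches the strict-preservation conclusion of Theorem~\ref{S:UncutOrder}, but this is handled precisely by the observation that fixing $\lambda$ as the left operand gives a \emph{strict} inequality whenever the right operand strictly decreases, which is exactly the content of strict preservation under symmetric join.
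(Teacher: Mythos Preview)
Your proposal is correct and follows essentially the same approach as the paper: write $v(b)=\lambda\rtimes b$, invoke Theorem~\ref{S:UncutOrder} to get strict preservation under symmetric join, deduce $v(\nu)<v(\mu)$ from $\nu<\mu$, and extend by linearity. Your explicit verification that $v(\nu)\in\mc{Y}(q)$ is a detail the paper leaves implicit, but otherwise the arguments coincide.
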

\begin{proof}
  Any given $v$ must be of the form \(v(b) = \lambda \rtimes b\) for 
  some \(\lambda \in \Nwt(\Omega)\). By Theorem~\ref{S:UncutOrder}, 
  $P$ is strictly preserved under symmetric join. Hence \(\nu < \mu 
  \pin{P}\) implies \(v(\nu) = \lambda \rtimes \nu < \lambda \rtimes 
  \mu = v(\mu) \pin{P}\), so \(v(\nu) \in \DSM\bigl( v(\mu), q, P 
  \bigr)\). \eqref{Eq:Korrelation} now follows from the linearity of 
  $v$ and the fact that a down-set module is a module.
\end{proof}

The above is sufficient for stating the single-sort generic diamond 
lemma~\cite[Theorem~5.11]{rpaper} in the present setting.

\begin{lemma} \label{L:entyps-DL}
  Let $S$ be a rewriting system for $\mc{R}\{\Omega\}$ and $P$ be a 
  well-founded strict \PROP\ quasi-order on $\Nwt(\Omega)$ which has 
  the strict uncut property and with which $S$ is compatible. Then 
  for each \(q \in \B^{\bullet\times\bullet}\), the following are 
  equivalent:
  \begin{enumerate}
    \item[\parenthetic{a}] 
      Every type $q$ ambiguity of $S$ is resolvable.
    \item[\parenthetic{a\textprime}] 
      Every type $q$ ambiguity of $S$ is resolvable relative to $P$.
    \item[\parenthetic{b}]
      Every element of $\mc{M}(q)$ is persistently and uniquely 
      reducible, i.e., \(\Red(S)(q) = \mc{M}(q)\).
    \item[\parenthetic{c}]
      \(\mc{M}(q) = \Irr(S)(q) \oplus \mc{I}(S)(q)\), i.e., every 
      element of $\mc{M}(q)$ has a unique normal form.
  \end{enumerate}
\end{lemma}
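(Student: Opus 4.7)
The plan is to apply the generic single-sort Diamond Lemma, stated as Theorem~5.11 of~\cite{rpaper}, directly to the present setting. What I need to do is to verify that all the hypotheses of that theorem are satisfied by the data $(\mc{M}(q), T_1(S)(q), \mc{V}, P)$ at hand, and then the four equivalences $(a) \Leftrightarrow (a') \Leftrightarrow (b) \Leftrightarrow (c)$ will transfer verbatim.

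First I would check the structural hypotheses on $\mc{V}$. Lemma~\ref{L:KategorinV} shows that $\mc{V}$ is a category in which the identity map on $\mc{M}(q)$ is always available (via item~\ref{Item:omtypning}), so $\mc{V}$ meets the requirements on the ``context'' families of maps. Next, the advanceability of each $v \in \mc{V}(p,q)$ with respect to $T_1(S)$ is exactly the cited~\cite[Lemma~7.3]{rpaper}, which the excerpt has already noted. The simple reductions $t_{v,s}$ are by construction $\mc{R}$-linear and fix every $\lambda \in \mc{Y}(p)$ other than $v(\mu_s)$, so the defining properties of a family of simple reductions are immediate.

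Second I would check the order-theoretic hypotheses. The quasi-order $P$ is assumed well-founded, strict, and to have the strict uncut property; by Theorem~\ref{S:UncutOrder} this makes $P$ strictly preserved under symmetric join, and Lemma~\ref{L:Korrelation} then supplies the crucial ``correlation'' condition of~\cite{rpaper}, namely that every $v \in \mc{V}(q,r)$ sends the down-set module $\DSM(\mu,r,P)$ into $\DSM(v(\mu),q,P)$. Compatibility of $S$ with $P$ is assumed directly. Together with the fact that $v(\mu_s) - v(a_s) = v(\mu_s - a_s)$ lies in $\DSM(v(\mu_s),q,P)$ whenever $a_s \in \DSM(\mu_s,q_s,P)$ and $v \in \mc{V}(q,q_s)$, this gives both the ``ordered'' structure required by~\cite[Theorem~5.11]{rpaper} and the fact that every simple reduction strictly decreases a $P$-maximal leading term in the expansion of its argument over $\mc{Y}(q)$.

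The main obstacle I expect is not conceptual but bookkeeping: making sure that the translation between the generic notation of~\cite{rpaper} and the concrete $\mc{V}$, $T_1(S)$, $\mc{Y}$, $\mc{M}$ of this paper is faithful, especially regarding how the sort index~$q$ enters advanceability (the sort in~\cite{rpaper} is carried along by the $\mc{V}$-map, and here it is the domain transference pattern). Once that translation is pinned down, $(a) \Leftrightarrow (a')$ follows from the correlation lemma by replacing each resolution step by one whose left hand side is strictly $P$-below the site; $(a') \Rightarrow (b)$ is the inductive argument on $P$-height, using well-foundedness to build a reduction to normal form and relative resolvability to collapse any two such normal forms; $(b) \Rightarrow (c)$ is Lemma~\ref{L:I(S)} together with the observation that $t^S_q$ is an $\mc{R}$-linear projection with kernel in $\mc{I}(S)(q)$; and $(c) \Rightarrow (a)$ is trivial since both $t_1(\mu)$ and $t_2(\mu)$ are congruent to $\mu$ modulo $\mc{I}(S)(q)$ and thus share a common normal form, whose preimages under further reductions resolve the ambiguity.
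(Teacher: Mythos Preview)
Your approach is essentially the paper's: invoke Theorem~5.11 of~\cite{rpaper} and use Lemma~\ref{L:Korrelation} to lift compatibility of the rules with $P$ to the condition actually required there, namely that every simple reduction $t_{v,s}$ satisfies $t_{v,s}(\nu) \in \{\nu\} \cup \DSM(\nu,q,P)$. The paper's proof is just this, plus two small technicalities you do not mention: the topological hypotheses of~\cite[Theorem~5.11]{rpaper} are dismissed as trivially satisfied in the discrete setting, and---more to the point---that theorem is stated for a partial order whereas $P$ here is only a quasi-order, which the paper resolves by applying the theorem to $P \diamond E$ (lexicographic composition with equality). You should add that second remark; otherwise your invocation of Theorem~5.11 has a formal mismatch in its hypotheses.
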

\begin{proof}
  The most noticeable difference between this claim and Theorem~5.11 
  of~\cite{rpaper} is probably the topological conditions in the 
  latter, but those are trivially fulfilled in the present setting of 
  discrete topology. A difference of slight technical concern is that 
  this claim allows $P$ to be a quasi-order, whereas the theorem 
  requires it to be a partial order, but since compatibility is only 
  concerned with strict inequalities, one may simply apply that 
  theorem to the partial order $P \diamond E$, where $E$ is the 
  equality relation on $\Nwt(\Omega)$.
  
  The only nontrivial difference is that the theorem requires that
  \(t(\nu) \in \{\nu\} \cup \DSM(\nu,q,P)\) for all \(\nu \in 
  \Nwt(\Omega)(q)\) and \(t \in T_1(S)(q)\), i.e., that the \emph{simple 
  reductions} are compatible with the order $P$, rather than the 
  \emph{rules} as in the present claim. The former is however a direct 
  consequence of Lemma~\ref{L:Korrelation} and the way the simple 
  reductions were constructed: any \(t \in T_1(S)(q)\) is of the form 
  $t_{v,s}$ for some \(s \in S\) and \(v \in \mc{V}(q,q_s)\). Hence 
  if \(\nu \neq v(\mu_s)\) then \(t_{v,s}(\nu) = \nu\), whereas if 
  \(\nu = v(\mu_s)\) then \(t_{v,s}(\nu) = v(a_s) \in 
  \setmap{v}\bigl( \DSM(\mu_s,q_s,P) \bigr) \subseteq 
  \DSM(\nu,q,P)\); either way the condition is fulfilled.
\end{proof}

The reason for this switch to a statement in terms of quasi-orders is 
primarily a build-up of experience. In almost any application of the 
diamond lemma one constructs the order as a lexicographic composition 
of quasi-orders, which does usually not end up automatically being a 
partial order. Hence most applications of the diamond lemma 
of~\cite{rpaper} will contain one extra 
step of composing the constructed quasi-order with the 
equality relation to make it a partial order, and that step might as 
well be inlined into the theorem.

Lemma~\ref{L:entyps-DL} is however not entirely satisfying, for two 
reasons. One is that it only speaks about one transference type at a 
time, whereas arguments involving \PROP\ elements rarely restrict 
themselves to just one type. The second reason, which is the more 
important one, is that conditions \parenthetic{a} and 
\parenthetic{a\textprime} in the lemma ask for (explicit) resolutions 
of \emph{all} ambiguities, rather than just the ones of which all 
others are shadows. Combining the above with Theorem~6.9 
of~\cite{rpaper} does however suffice for producing the following 
theorem.

\begin{theorem}
  [Diamond Lemma for $\mc{R}$-linear \PROPs, criticality version]
  \label{S:PROP-DL}
  Let $S$ be a rewriting system for $\mc{R}\{\Omega\}$ and $P$ be a 
  well-founded strict \PROP\ quasi-order on $\Nwt(\Omega)$ which has 
  the strict uncut property and with which $S$ is compatible. Then the 
  following are equivalent:
  \begin{enumerate}
    \item[\parenthetic{a}]
      All critical ambiguities of $S$ are resolvable.
    \item[\parenthetic{a\textprime}]
      All critical ambiguities of $S$ are resolvable relative to $P$.
    \item[\parenthetic{b}]
      \(\Red(S)(q) = \mc{M}(q)\) for all \(q \in 
      \B^{\bullet\times\bullet}\), i.e., the rewriting system $S$ is 
      terminating and confluent.
    \item[\parenthetic{c}]
      \(\mc{M}(q) = \Irr(S)(q) \oplus \mc{I}(S)(q)\) for all \(q \in 
      \B^{\bullet\times\bullet}\). In particular, 
      \[\mc{R}\{\Omega\}(m,n) = \Irr(S)(J_{m \times n}) \oplus 
      \mc{I}(S)(J_{m \times n})\] for all \(m,n \in \N\).
  \end{enumerate}
\end{theorem}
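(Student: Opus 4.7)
The plan is to deduce the theorem by combining Lemma~\ref{L:entyps-DL}, which gives the type-wise equivalence between resolvability of \emph{all} ambiguities of a fixed type $q$ and the $\Red$/$\Irr$/$\mc{I}$ decomposition, with the multi-sorted generic framework of~\cite{rpaper} (in particular the result cited as Theorem~6.9 in the preceding text), which reduces the set of ambiguities that must be resolved from all of them to just the shadow-critical ones. On top of this, two further pieces are required: (i)~the passage from shadow-critical to \emph{critical} ambiguities, i.e., disposing of the montage ambiguities, and (ii)~tying the statements together over all $q \in \B^{\bullet\times\bullet}$ simultaneously. Since (b)~and~(c) are already quantified over all $q$, applying Lemma~\ref{L:entyps-DL} separately in each type shows (b)~$\Leftrightarrow$~(c) and reduces the rest of the work to tracing the cycle (a)~$\Leftrightarrow$~(a$'$)~$\Rightarrow$~(b).

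For point~(i), I will show that every montage ambiguity is automatically resolvable relative to $P$. If $(t_{v_1,s_1},\mu,t_{v_2,s_2})$ is a montage witnessed by $v \in \mc{V}(q,q_{s_1} \otimes q_{s_2})$ with $v_1(b) = v(b \otimes \mu_{s_2})$ and $v_2(b) = v(\mu_{s_1} \otimes b)$, then
\[
  t_{v_1,s_1}(\mu) = v(a_{s_1} \otimes \mu_{s_2})
  \qquad\text{and}\qquad
  t_{v_2,s_2}(\mu) = v(\mu_{s_1} \otimes a_{s_2})\text{,}
\]
and both can be reduced further to $v(a_{s_1} \otimes a_{s_2})$ by applying $t_{v',s_2}$ with $v'(b) = v(a_{s_1} \otimes b)$ on one side, and symmetrically on the other. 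Compatibility of $S$ with $P$ together with Lemma~\ref{L:Korrelation} (strict preservation under symmetric join, which is available because $P$ has the strict uncut property) guarantees that each step lies strictly $P$-below $\mu$, so the difference lies in $\DIS(\mu,q,P,S)$. Combined with Lemma~\ref{L:Avskalad} (every ambiguity is a shadow of a terse ambiguity) and the fact that terse ambiguities are by definition either decisive or montage, this reduces the content of (a$'$) to resolving the shadow-critical non-montage ambiguities, which is exactly~(a$'$).

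For point~(ii), the multi-sortedness is handled by the advanceability of $\mc{V}$-maps (from~\cite[Lemma~7.3]{rpaper}, recalled just after the definition of simple reduction): a shadow $A$ of a critical ambiguity $B$ via some $v \in \mc{V}$ can be resolved by applying $v$ to a resolution of $B$ and using advanceability to push the resolving reductions down to the type of $A$, and Lemma~\ref{L:Korrelation} ensures that the image of the $\DIS$ section of $B$ sits inside the $\DIS$ section of $A$. The remaining trivial directions round out the cycle: (b)~$\Rightarrow$~(a) is immediate since in a confluent, terminating system both sides of any ambiguity reduce to a common normal form, and the telescoping difference decomposes into rule instances strictly $P$-below $\mu$ by compatibility, yielding (a$'$) as well.

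The main obstacle is not any one hard step but the bookkeeping required to verify that the generic framework of~\cite{rpaper} applies in the present setup: one must check that the sort set $\B^{\bullet\times\bullet}$ together with the family $\mc{V}$ of ``context maps'' from Definition~\ref{D:KategorinV} satisfies the categorical/advanceability hypotheses that Theorem~6.9 of~\cite{rpaper} needs, that the $\DSM$ and $\DIS$ gadgets built from $P$ here coincide with the corresponding objects in that framework, and that the shadow quasi-order $Q$ on ambiguities defined in Definition~\ref{Def:Kritisk} matches what the generic diamond lemma uses. Once this translation is carried out, Lemma~\ref{L:entyps-DL} provides the type-wise backbone, the montage argument above removes the discrepancy between ``shadow-critical'' and ``critical'', and the equivalence drops out.
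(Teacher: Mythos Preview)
Your overall strategy --- combine Lemma~\ref{L:entyps-DL} with Theorem~6.9 of~\cite{rpaper}, then observe that montage ambiguities are automatically resolvable relative to $P$ so that ``shadow-critical'' may be replaced by ``critical'' --- is exactly what the paper does (the paper gives no detailed proof, only the sentence preceding the theorem). However, your write-up contains a genuine error that muddles this.

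You invoke Lemma~\ref{L:Avskalad} and the terse/decisive dichotomy as part of the reduction. This is wrong on two counts. First, Lemma~\ref{L:Avskalad} has \emph{sharpness} of $S$ as a hypothesis, and Theorem~\ref{S:PROP-DL} does \emph{not} assume $S$ is sharp; that assumption only enters in the next theorem, Theorem~\ref{S:PROP-skarp-DL}, whose proof is precisely where Lemma~\ref{L:Avskalad} is used. Second, terse and decisive ambiguities play no role in the present argument: what Theorem~6.9 of~\cite{rpaper} provides is that relative resolvability of all \emph{shadow-critical} ambiguities suffices, and then the montage observation (your point~(i), or equivalently~\cite[Lemma~6.7]{rpaper}) trims this to the critical ones. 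No passage through terseness is needed or available here; you have conflated the proof of this theorem with that of its sharp successor.

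A minor technical point in your montage sketch: the map $v'(b) = v(a_{s_1} \otimes b)$ is not in general an element of $\mc{V}(q,q_{s_2})$, because $a_{s_1}$ lies in $\mc{M}(q_{s_1})$ rather than $\mc{Y}(q_{s_1})$. One must expand $a_{s_1}$ as a linear combination of monomials and apply a simple reduction for each term separately; this is routine (and is what~\cite[Lemma~6.7]{rpaper} does), but as written your $t_{v',s_2}$ is not a simple reduction.
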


Finishing at this point would however not be satisfactory either, 
because the typical application of that theorem requires the user to 
enumerate all critical ambiguities, and that turns out to be a matter 
which runs into some unexpected technical subtleties. These 
subtleties are ultimately the reasons that 
Subsection~\ref{Ssec:Tvetydigheter} also defines the analogous 
concepts of `terse' and `decisive', but the matter warrants a 
discussion.

In~\cite{rpaper}, a distinction had to be made between 
`shadow-minimal' and `shadow-critical', because the generality of the 
setting there (topological spaces, no explicit expression structure, 
and not even an explicit concept of rewrite rule) made possible all 
sorts of infinite descending 
sequences. A reasonable expectation is however that once one's set of 
``monomials'' $\mc{Y}$ has been given an explicit discrete structure, 
then any infinite descending chain $\bigl\{ (t_n,\mu_n,u_n) 
\bigr\}_{n=0}^\infty$ of ambiguities (i.e., each $(t_n,\mu_n,u_n)$ is a 
shadow of $(t_{n+1},\mu_{n+1},u_{n+1})$) will eventually stabilise 
(for some $N$, all $(t_n,\mu_n,u_n)$ for \(n \geqslant N\) are 
shadow-equivalent) simply because there cannot be infinitely many 
proper refinements in the corresponding factorisations of the site 
$\mu_n$; this happens for example in the case of a free associative 
algebra, even if~\cite[Example~6.10]{rpaper} takes a more direct 
approach. In such cases, shadow-critical becomes equivalent to 
shadow-minimal, and the shadow-minimal ambiguities are something 
that one may expect to be able to enumerate explicitly.

Unfortunately, that line of reasoning does not apply for \PROPs\ 
(when acted upon by the class of reductions considered here); it 
is perfectly possible to keep factorising forever, even if it is 
probably a bit counter-intuitive to describe what the elements of the 
sequence in the following example do as ``getting smaller''.

\begin{example} \label{Ex:Girth}
  For all \(n \in \N\), let \(\sigma_n \in \Sigma_n\) be the ``shift 
  one step right'' permutation, i.e., \(\sigma_n(i) = i+1\) if \(i < 
  n\) and \(\sigma_n(n) = 1\). For \(n \geqslant 2\), define \(q_n 
  \in \B^{n \times n}\) by \(q_n = \phi(\sigma_n) + \phi(\same{n})\) 
  (note: boolean matrix addition). Also define \(\tau_n \in 
  \Sigma_{2n+1}\) to be the permutation satisfying
  \[
    \phi_{\B^{\bullet\times\bullet}}(\tau_n) = \begin{bmatrix}
      0 & \phi(\same{n-1}) & 0 & 0 \\
      0 & 0 & 0 & \phi(\same{1}) \\
      \phi(\same{n}) & 0 & 0 & 0 \\
      0 & 0 & \phi(\same{1}) & 0
    \end{bmatrix}
  \]
  and define \(v_n(b) = \phi(\tau_n) \rtimes b\) for all \(b \in 
  \mc{M}(q_{n+1})\). 
  Then the following hold for all \(n \geqslant 2\), exhibiting what 
  could be the $\mu$, $q$, and $v$ for an infinite descending chain 
  of ambiguities:
  \begin{itemize}
    \item
      \(\phi(\same{n}) \in \mc{Y}(q_n)\).
    \item
      \(\phi(\same{n}) = \phi(\tau_n) \rtimes \phi(\same{n+1}) = 
      v_n\bigl( \phi(\same{n+1}) \bigr)\).
    \item
      \(v_n \in \mc{V}(q_n,q_{n+1})\).
  \end{itemize}
  However, if \(2 \leqslant n < m\) then there is no \(v \in 
  \mc{V}(q_m,q_n)\) such that \(v\bigl( \phi(\same{n}) \bigr) = 
  \phi(\same{m})\)! This may seem surprising, since \(\lambda = 
  \phi(\same{m-n} \star\nobreak \cross{n}{n})\) obviously satisfies 
  \(\phi(\same{m}) = \lambda \rtimes \phi(\same{n})\), but the catch 
  is that \(b \mapsto \lambda \rtimes b\) is not in 
  $\mc{V}(q_m,q_n)$, and in fact this family of maps turns out to be 
  empty.
  
  In order to see why, it is convenient to think of the boolean 
  matrices $q_n$ as biadjacency matrices of bipartite graphs (say, 
  with vertex sets $\{0,1\} \times [n]$) and view a map \(v \in 
  \mc{V}(q,r)\) as making the graph for $q$ from the graph from $r$. 
  Which are then the operations that such a map can perform? There 
  are four: it can permute vertices within a side of the graph (so 
  the graph is effectively unlabelled bipartite), it can add new 
  vertices (adding isolated vertices requires having suitable elements 
  in the signature $\Omega$, but even if \(\Omega=\varnothing\) one 
  can always add a pair of vertices joined by an edge), it can add new 
  edges (freely, since there are no restrictions on changing a $0$ to a $1$ 
  in the transference type), and it can identify pairs of non-adjacent 
  vertices from opposite partitions (by means of a feedback, combined 
  with suitable permutations). In this last operation, identifying 
  vertices $u_0$ and $u_1$ actually removes these vertices from the 
  graph (since they no longer correspond to legs of the underlying 
  network), but instead every neighbour of $u_1$ gets an edge to 
  every neighbour of $u_0$ to reflect the increased connectivity. 
  Hence the number of vertices can grow as well as shrink, new vertices 
  can be as isolated as one wishes, and edges can be added at will, so 
  why is there no way to make $q_m$ out of $q_n$ when \(m>n\)? 
  Because the graph corresponding to $q_n$ is a cycle of length $2n$, 
  and there is no way for these operations to increase the length of 
  a cycle, although the identification operation can decrease it. 
  Considering in particular the girth (length of shortest cycle) of 
  these graphs, one finds that $q_m$ has girth $2m$, but any 
  graph that can be made from $q_n$ has girth at most \(2n < 2m\).
\end{example}

Of course, it may still turn out that a nice set of ambiguities exist, 
with the property that every other ambiguity is a shadow of one in 
the nice set, even if the shadow relation in general is not 
well-founded. An alternative approach to finding that set is then to 
start with a characterisation of what its elements should look like, 
and hope to establish that the resulting set is sufficient. This is 
the approach that lead to the definition of terse and decisive 
ambiguity, and it leads to the following diamond lemma.

\begin{theorem}
  [Diamond Lemma for $\mc{R}$-linear \PROPs\ and sharp rewriting 
  systems]
  \label{S:PROP-skarp-DL}
  Let $S$ be a sharp rewriting system for $\mc{R}\{\Omega\}$ and $P$ 
  be a well-founded strict \PROP\ quasi-order on $\Nwt(\Omega)$ which 
  has the strict uncut property and with which $S$ is compatible. 
  Then the following are equivalent:
  \begin{enumerate}
    \item[\parenthetic{a}]
      All decisive ambiguities of $S$ are resolvable.
    \item[\parenthetic{a\textprime}]
      All decisive ambiguities of $S$ are resolvable relative to $P$.
    \item[\parenthetic{b}]
      \(\Red(S)(q) = \mc{M}(q)\) for all \(q \in 
      \B^{\bullet\times\bullet}\), i.e., the rewriting system $S$ is 
      terminating and confluent.
    \item[\parenthetic{c}]
      \(\mc{M}(q) = \Irr(S)(q) \oplus \mc{I}(S)(q)\) for all \(q \in 
      \B^{\bullet\times\bullet}\). In particular,
      \[
        \mc{R}\{\Omega\}(m,n) = 
        \Irr(S)(J_{m \times n}) \oplus \mc{I}(S)(J_{m \times n})
      \]
      for all \(m,n \in \N\).
  \end{enumerate}
\end{theorem}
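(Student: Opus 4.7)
The strategy is to show that resolvability of just the decisive ambiguities already forces resolvability of \emph{all} ambiguities of $S$, after which Lemma~\ref{L:entyps-DL} applied componentwise in $q$ yields the remaining equivalences. The reverse implications (b) $\Rightarrow$ (a), (a\textprime), and the equivalence (b) $\Leftrightarrow$ (c), are immediate from that lemma. So the substance lies in (a) $\Rightarrow$ (b) and (a\textprime) $\Rightarrow$ (b).

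The plan rests on three ingredients. First, I would verify directly that montage ambiguities are resolvable relative to $P$: for a montage with intertwining $v \in \mc{V}(q,q_{s_1}\otimes q_{s_2})$, writing $a_{s_1} = \sum_i r_i \lambda_i$ with $\lambda_i < \mu_{s_1} \pin{P}$, each term $v(\lambda_i \otimes (\mu_{s_2} - a_{s_2}))$ lies in $\DIS(\mu,q,P,S)$, because the auxiliary map $b \mapsto v(\lambda_i \otimes b)$ belongs to $\mc{V}(q,q_{s_2})$ and sends $\mu_{s_2}$ to an element strictly below $\mu$ by strict preservation of $P$ under symmetric join (Theorem~\ref{S:UncutOrder}). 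Splitting the symmetric contribution from $a_{s_2}$ analogously, the difference $t_{v_1,s_1}(\mu) - t_{v_2,s_2}(\mu)$ decomposes inside $\DIS(\mu,q,P,S)$ by routing through the common descendant $v(a_{s_1}\otimes a_{s_2})$.

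Second, I would prove a shadow-lifting lemma: if $A$ is a shadow of $B$ via $v \in \mc{V}(q_A,q_B)$ and $B$ is resolvable (resp.\ resolvable relative to $P$), then so is $A$. Given resolving reductions $t_3',t_4'$ for $B$, advanceability of $\mc{V}$-maps with respect to $T_1(S)$, iterated along the decomposition of $t_3'$ and $t_4'$ into simple reductions, supplies reductions $t_3,t_4$ for $A$ with $t_3(t_1(\mu_A)) = v(t_3'(t_1'(\mu_B))) = v(t_4'(t_2'(\mu_B))) = t_4(t_2(\mu_A))$. For the relative version one further invokes Lemma~\ref{L:Korrelation} to obtain $\setmap{v}(\DIS(\mu_B,q_B,P,S)) \subseteq \DIS(\mu_A,q_A,P,S)$.

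Third, these combine as follows. For any ambiguity $(t_1,\mu,t_2)$ of $S$, sharpness of $S$ and Lemma~\ref{L:Avskalad} produce a terse ambiguity of which it is a shadow. A terse ambiguity is either a montage, resolvable relative to $P$ by the first ingredient, or decisive, resolvable (resp.\ resolvable relative to $P$) by hypothesis (a) or (a\textprime). The shadow-lifting lemma then transports resolvability back to the original ambiguity. Hence every ambiguity of every type is resolvable, and Lemma~\ref{L:entyps-DL} closes the argument. The main obstacle I anticipate is the inductive bookkeeping for the shadow-lifting lemma: advanceability as stated applies one simple reduction at a time to a single monomial, so iterating it over composite reductions requires carefully tracking the intermediate partial-reductions, which may fail to be monomials, and relying on the $\mc{R}$-linear extension of $\mc{V}$-maps together with the closure properties of $\DIS$ and $\mc{I}(S)$ (Lemma~\ref{L:I(S)}) to control each step.
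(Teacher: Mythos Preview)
Your plan is essentially the same as the paper's, with the difference that you unfold and sketch proofs for results the paper imports from~\cite{rpaper}: your first ingredient is the content of \cite[Lemma~6.7]{rpaper}, and the relative-resolvability half of your second ingredient is \cite[Lemma~6.3]{rpaper}. Both of your sketches for these are correct.

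The one place where your plan diverges from the paper is in the handling of \parenthetic{a}. The paper does \emph{not} attempt shadow-lifting for ordinary resolvability; instead it first passes from \parenthetic{a} to \parenthetic{a\textprime} via \cite[Lemma~5.10]{rpaper} (ordinary resolvability implies relative resolvability, since $t_1(\mu)-t_3(t_1(\mu))$ and $t_2(\mu)-t_4(t_2(\mu))$ both lie in $\DIS(\mu,q,P,S)$ by compatibility of $S$ with $P$), and then runs only the relative-resolvability machinery. This sidesteps exactly the obstacle you flag: advanceability is stated for a single monomial, and when you try to push a composite reduction $t_3'$ through $v$ term by term, the advanced reduction for one term may act nontrivially on the $v$-image of another. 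That difficulty is genuine and not just bookkeeping; there is no clean reason why the advanced reductions should leave the other terms alone. So your parallel treatment of \parenthetic{a} via ordinary shadow-lifting is the wrong route---but it is also unnecessary. If you insert the easy step \parenthetic{a} $\Rightarrow$ \parenthetic{a\textprime} at the start (noting that under \parenthetic{a} decisive ambiguities become relatively resolvable), then montages and decisives are both relatively resolvable, only the relative shadow-lifting is needed, and your argument goes through exactly as the paper's does.
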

\begin{proof}
  By letting $q$ range over all of $\B^{\bullet\times\bullet}$ in 
  Lemma~\ref{L:entyps-DL}, \parenthetic{b} implies \parenthetic{c}, 
  and \parenthetic{c} implies that all ambiguities of $S$ are 
  resolvable, which of course includes the decisive ones. Hence 
  \parenthetic{c} implies \parenthetic{a}. Moreover, ordinary 
  resolvability implies relative resolvability 
  by~\cite[Lemma~5.10]{rpaper}, so \parenthetic{a} implies 
  \parenthetic{a\textprime}.
  
  The tricky part is, as always, whether relative resolvability of 
  ambiguities implies~\parenthetic{b}. First, it follows 
  from~\cite[Lemma~6.7]{rpaper} that every montage ambiguity is 
  resolvable relative to $P$, and in combination with 
  \parenthetic{a\textprime} this means all terse ambiguities of $S$ 
  are resolvable relative to $P$. By Lemma~\ref{L:Avskalad}, every 
  ambiguity of $S$ is a shadow of a terse ambiguity of $S$, and hence 
  by~\cite[Lemma~6.3]{rpaper}, all ambiguities of $S$ are resolvable 
  relative to $P$. Hence \parenthetic{a\textprime} of 
  Lemma~\ref{L:entyps-DL} holds for every \(q \in 
  \B^{\bullet\times\bullet}\), and thus \parenthetic{b} holds too.
\end{proof}

After verifying resolvability of the ambiguity in 
Example~\ref{Ex:Associativitet}, one may thus use the above theorem 
to exhibit the unique normal forms of arbitrary associative products. 
Indeed, this argument works also for associative products in operads, 
and that special case is common enough to justify an explicit 
corollary. It is however convenient to first make a few definitions.

\begin{definition}
  An $\N^2$-graded set $\mc{P}$ is said to be \DefOrd{operadic} if 
  \(\omega(x)=1\) for all \(x \in \mc{P}\). A 
  \emDefOrd[*{operadic}]{rewrite rule} $s$ is said to be operadic if 
  \(\omega(\mu_s)=1\), and a rewriting system is operadic if all rules 
  in it are operadic. An \emDefOrd[*{operadic}]{ambiguity} is 
  operadic if it has type $J_{1 \times n}$ for some $n$.
\end{definition}

A direct specialisation to the operadic setting is then the following.

\begin{corollary}
  \label{Kor:operad-DL}
  Let $\Omega$ be operadic. Let $S$ be an operadic rewriting system for 
  $\mc{R}\{\Omega\}$ and $P$ be a well-founded strict \PROP\ quasi-order 
  on $\Nwt(\Omega)$ which has the strict uncut property and with which 
  $S$ is compatible. Then the following are equivalent:
  \begin{enumerate}
    \item[\parenthetic{a}]
      All decisive operadic ambiguities of $S$ are resolvable.
    \item[\parenthetic{a\textprime}]
      All decisive operadic ambiguities of $S$ are resolvable 
      relative to $P$.
    \item[\parenthetic{a\textprime\textprime}]
      All decisive ambiguities of $S$ are resolvable relative to $P$.
    \item[\parenthetic{b}]
      \(\Red(S)(J_{1 \times n}) = \mc{R}\{\Omega\}(1,n)\) for all \(n 
      \in \N\).
    \item[\parenthetic{b\textprime}]
      \(\Red(S)(q) = \mc{M}(q)\) for all \(q \in 
      \B^{\bullet\times\bullet}\).
    \item[\parenthetic{c}]
      \(\mc{R}\{\Omega\}(1,n) = \Irr(S)(J_{1 \times n}) \oplus 
      \mc{I}(S)(J_{1 \times n})\) for all \(n \in \N\).
    \item[\parenthetic{c\textprime}]
      \(\mc{M}(q) = \Irr(S)(q) \oplus \mc{I}(S)(q)\) for all \(q \in 
      \B^{\bullet\times\bullet}\).
  \end{enumerate}
\end{corollary}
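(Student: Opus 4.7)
The plan is to deduce the corollary from Theorem \ref{S:PROP-skarp-DL} via a single structural observation: if $S$ and $\Omega$ are operadic, then every decisive ambiguity of $S$ has site with coarity one. Once this is in hand, the conditions (a), (a'), (a'') of the corollary can be linked directly; (a) $\Rightarrow$ (a') holds by \cite[Lemma~5.10]{rpaper}, and the gap between (a)/(a') and (a'') is bridged by the structural claim together with the shadow machinery of Lemmas~\ref{L:Avskalad} and~\ref{L:Kritisk}. After that, Theorem \ref{S:PROP-skarp-DL} delivers the equivalences with (b') and (c'). The descents (b') $\Rightarrow$ (b) and (c') $\Rightarrow$ (c) are trivial restrictions, and the returns (b) $\Rightarrow$ (a) and (c) $\Rightarrow$ (b) are straightforward: any decisive operadic ambiguity has site $\mu \in \mc{R}\{\Omega\}(1,n) = \Red(S)(J_{1 \times n})$, so the common normal form of $\mu$ witnesses resolvability.

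To prove the structural claim I would fix a decisive ambiguity $(t_{v_1,s_1}, [G]_\simeq, t_{v_2,s_2})$ with operadic $H_i \in [\mu_{s_i}]_\simeq$, and assume $\omega(G) > 1$ for contradiction. Since $\Omega$ is operadic, every inner vertex of $G$ has out-valency exactly one, so acyclicity organises the inner vertices into disjoint trees rooted at output legs, with any remaining output legs supplied by stray edges. Each $H_i$ is itself operadic, hence either a single stray edge (when $\mu_{s_i} = \phi(\same{1})$) or a single tree of inner vertices; in the latter case its embedding lands entirely inside one tree of $G$ and carries no stray edges. Now Lemma~\ref{L:Kritisk} guarantees one of the conditions \ref{Ia:Kritisk}--\ref{Ic:Kritisk}: in case~\ref{Ia:Kritisk} the two embeddings share a vertex and hence lie in a common tree of $G$, while cases~\ref{Ib:Kritisk} and~\ref{Ic:Kritisk} degenerate one of the $H_i$ to a single stray edge. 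In every case, terseness conditions~\ref{I1:Kritisk} and~\ref{I1.5:Kritisk} show that any additional output leg of $G$ would have to be a stray edge not coverable by $\setim\psi_1 \cup \setim\psi_2$, yielding the required contradiction.

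Combining the structural claim with Lemma~\ref{L:entyps-DL} applied at $q = J_{1 \times n}$ (whose module of sites is exactly $\mc{R}\{\Omega\}(1,n)$) closes the remaining loops, so all seven conditions are equivalent. The main obstacle is the structural claim itself, in particular managing the degenerate cases where $\mu_{s_i} = \phi(\same{1})$ causes the corresponding tree of $H_i$ to collapse and the roles of the two rules in the terseness bookkeeping become asymmetric.
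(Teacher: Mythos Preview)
Your approach is essentially the paper's, and the structural claim (every decisive ambiguity has operadic site) together with the logical cycle you outline is exactly how the corollary is derived from Theorem~\ref{S:PROP-skarp-DL}. There is, however, one genuine gap: you invoke Theorem~\ref{S:PROP-skarp-DL} without checking its hypothesis that $S$ is \emph{sharp}. The definition of ``operadic rule'' only constrains $\omega(\mu_s)=1$, not $q_s$, so sharpness does not come for free from the rule alone. What makes it hold is the additional hypothesis that $\Omega$ is operadic: every inner vertex of any $G\in\Nw(\Omega)$ then has out-valency~$1$, so each input leg of $\mu_s$ lies on a (unique) directed path to the single output leg, forcing $\Tr(\mu_s)=J_{1\times\alpha(\mu_s)}$. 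Since $\mu_s\in\mc{Y}(q_s)$ already gives $\Tr(\mu_s)\leqslant q_s\in\B^{1\times\alpha(\mu_s)}$, equality follows and $s$ is sharp. This observation is short but indispensable --- without it neither Theorem~\ref{S:PROP-skarp-DL} nor Lemma~\ref{L:Avskalad} applies, and the same computation is what pins the type of your coarity-one site to exactly $J_{1\times n}$, i.e., makes the ambiguity operadic in the sense of the definition rather than merely having $\omega(\mu)=1$.

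A minor stylistic point: your case analysis for the structural claim is more elaborate than necessary. The paper's version simply notes that each $H_i$ is a tree (hence connected), that by Lemma~\ref{L:Kritisk} the two images share an edge or an inner vertex in $G$, and that terseness conditions~\ref{I1:Kritisk}--\ref{I1.5:Kritisk} then force all of $G$ to be connected; operadic $\Omega$ then gives $\omega(G)=1$ directly. Your worry about the degenerate case $\mu_{s_i}=\phi(\same{1})$ is handled automatically by this connectedness argument, since in cases~\ref{Ib:Kritisk}/\ref{Ic:Kritisk} the stray $H_i$ maps into $\setim\psi_j$ anyway.
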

\begin{remark}
  If taking the quotient $\mc{R}\{\Omega\}\big/ \equiv\! \pmod{S}$ 
  here, one gets not only the (freest) operad generated by $\Omega$ and 
  satisfying the relations expressed by the rules of $S$, but also 
  the \PROP\ that it generates.
\end{remark}
\begin{proof}
  An observation that should be made is that if $\Omega$ is operadic, 
  then every input leg of every \(G \in \Nw(\Omega)\) is in a unique 
  path from $1$ to $0$, so $\Tr(G)$ has a $1$ in each column. For an 
  operadic rule $s$ this implies that \(\Tr(\mu_s) = J_{1 \times 
  \alpha(\mu_s)}\), and thus the rewriting system $S$ is sharp. This 
  means the conditions of Theorem~\ref{S:PROP-skarp-DL} are 
  fulfilled, and thus \parenthetic{a\textprime\textprime}, 
  \parenthetic{b\textprime}, and \parenthetic{c\textprime} above are 
  known to be equivalent. Moreover \parenthetic{b\textprime} implies 
  \parenthetic{b} and \parenthetic{c\textprime} implies 
  \parenthetic{c}. By letting $q$ in Lemma~\ref{L:entyps-DL} range over 
  $J_{1 \times n}$ for \(n \in \N\), one also gets that 
  \parenthetic{b} implies \parenthetic{c}, and \parenthetic{c} 
  implies that all operadic ambiguities of $S$ are resolvable, which 
  in particular implies the above \parenthetic{a}. Hence everything 
  again boils down to the ambiguities.
  
  As in the proof of Theorem~\ref{S:PROP-skarp-DL}, it follows 
  from~\cite[Lemma~5.10]{rpaper} that ordinary resolvability implies 
  relative resolvability, so \parenthetic{a} implies 
  \parenthetic{a\textprime}. Furthermore \parenthetic{a\textprime} 
  and \parenthetic{a\textprime\textprime} are the same, because every 
  decisive ambiguity of $S$ must be operadic: By definition every 
  edge and inner vertex of the site of a terse ambiguity must 
  correspond to an edge and inner vertex respectively of at least one 
  of the left hand sides of the two rules. These rules are operadic, 
  so their left 
  hand sides are trees and thus connected. By Lemma~\ref{L:Kritisk} 
  they must have at least one edge or inner vertex in common, so the 
  ambiguity site is connected. Taking into account that $\Omega$ is 
  operadic, it thus follows that the site has exactly one output leg, 
  and thus the ambiguity is operadic.
\end{proof}

It is possible to continue along this line of specialisation, and 
restrict to arity $0$ (i.e., consider only $\mc{R}\{\Omega\}(1,0)$) to 
get a diamond lemma for $\mc{R}$-linear $\Omega$-algebras, but both 
that and the operad case may be better served by something shown 
directly using the machinery in~\cite{rpaper}. In particular, it may be 
awkward to have to construct the order $P$ for all components of the 
\PROP, if one only wishes to use it for elements of coarity $1$. The 
entire construction of $\Nwt(\Omega)$ is also something that can be 
simplified if only the $(1,n)$ components are of interest. 
Theorem~\ref{S:PROP-skarp-DL} is primarily intended for problems that 
actually exercise the generality of the network concept. One example of 
that would be the rewriting system of \eqref{Eq:SporadiskaRegler} and 
\eqref{Eq:Regelfamiljer}, but that is a matter for another paper.

Still, this theorem is not quite satisfactory as ``\emph{the} Diamond 
Lemma for $\mc{R}$-linear \PROPs,'' and the reason for this is the 
little word `sharp' that appears in it. This is needed because of 
Lemma~\ref{L:Avskalad}, but why is it needed there? The arguments 
involving Theorem~\ref{S:Inbaddningsuppdelning} only make use of the 
networks themselves, so why would the transference type of the 
underlying rules make a difference? Well, Example~\ref{Ex:Girth} 
should have made clear that the transference type can be 
\emph{precisely} what prevents something from being a shadow. 
Analysing exactly how the method of Lemma~\ref{L:Avskalad} can fail 
is however instructive.

\begin{example} \label{Ex:Brygga}
  Let \(\eta \in \Omega(1,0)\), \(\ve \in \Omega(0,1)\), and consider 
  the rule \(s = \bigl( J_{1 \times 1}, \eta \circ \ve, 
  \phi(\same{1}) \bigr)\). This rule is rather artificial, because 
  its consequences are draconic:
  \begin{multline*}
    \kappa \circ \lambda
    \equiv
    \kappa \circ (\eta \circ \ve) \circ \lambda
    =
    (\kappa \circ \eta) \otimes \phi(\same{0}) \circ
    \phi(\same{0}) \otimes (\ve \circ \lambda)
    = \\ =
    \phi(\same{1}) \otimes \ve \circ
    \kappa \otimes \lambda \circ
    \eta \otimes \phi(\same{1})
    \equiv
    (\eta \circ \ve) \otimes \ve \circ
    \kappa \otimes \lambda \circ
    \eta \otimes (\eta \circ \ve)
    = \displaybreak[0]\\ =
    \ve \otimes (\eta \circ \ve) \circ
    \kappa \otimes \lambda \circ
    (\eta \circ \ve) \otimes \eta
    \equiv
    \ve \otimes \phi(\same{1}) \circ
    \kappa \otimes \lambda \circ
    \phi(\same{1}) \otimes \eta
    = \\ =
    \bigl( \ve \circ \kappa \circ \phi(\same{1}) \bigr) \otimes 
    \bigl( \phi(\same{1}) \circ \lambda \circ \eta \bigr)
    =
    \lambda \circ (\eta \circ \ve) \circ \kappa
    \equiv
    \lambda \circ \kappa
    \pmod{\{s\}}
  \end{multline*}
  for all \(\kappa,\lambda \in \Nwt(\Omega)(1,1)\). On the other 
  hand this rule is at least simple, and will thus to some degree 
  avoid cluttering the example with irrelevant details.
  
  The transference type of $s$ cannot be $0$ since \(a_s = 
  \phi(\same{1}) \notin \mc{M}(0)\), but \(\Tr(\mu_s) = 0\) so it 
  is not sharp, and hence the conclusion of Lemma~\ref{L:Avskalad} 
  need not hold for the rewrite system $\{s\}$. But how does the 
  proof fail?
  Let \(\kappa \in \Nwt(\Omega)(1,1) \setminus \mc{Y}(0)\) be arbitrary 
  and consider the type $0$ ambiguity $(t_{v_1,s}, \mu, t_{v_2,s} )$, 
  where \(\mu = \eta \circ \ve \circ \kappa \circ \eta \circ \ve\), 
  \(v_1(b) = b \circ \kappa \circ \eta \circ \ve\), and \(v_2(b) = 
  \eta \circ \ve \circ \kappa \circ b\). The site is mapped to 
  $\kappa \circ \eta \circ \ve$ by the first reduction and to $\eta 
  \circ \ve \circ \kappa$ by the second.
  
  The method Lemma~\ref{L:Avskalad} would use to simplify this ambiguity 
  is to cut out the middle $\kappa$ part, resulting in the network 
  \(\mu' = \eta \otimes \eta \circ \ve \otimes \ve\) from which the 
  previous is recovered as \(\mu = \bigl( \phi(\cross{1}{1}) 
  \otimes\nobreak \kappa \bigr) \rtimes \mu'\). Perhaps somewhat 
  unexpectedly, the new ambiguity $(t_{v_1',s}, \mu', t_{v_2',s})$ 
  has \(v_1'(b) = \ve \otimes b \otimes \eta\) and \(v_2'(b) = \eta 
  \otimes b \otimes \ve\)\Ldash the first reduction acts on the first 
  $\eta$ and second $\ve$ of $\mu'$, whereas the second acts on the 
  second $\eta$ and first $\ve$\Rdash but a moment of thought makes 
  it clear that one cannot have one of them act simultaneously on the 
  second $\eta$ and second $\ve$, since that would produce a cycle. 
  Another way of arguing this point is that the type of the $\mu'$ 
  ambiguity needs to have a $0$ in position $(2,2)$ for the 
  annexation \(\mu = \bigl( \phi(\cross{1}{1}) \otimes\nobreak \kappa 
  \bigr) \rtimes \mu'\) to be allowed, and that prevents rule $s$ 
  from simultaneously applying to those two vertices since its 
  transference type is too high. But what \emph{is} the type of 
  the $\mu'$ ambiguity? This turns out to be the crux of the matter.
  
  For the rule $s$ to apply in both cases, the type has to be at 
  least \(\left( \begin{smallmatrix} 0 & 1 \\ 1 & 0 \end{smallmatrix} 
  \right) = \phi(\cross{1}{1})\). Unfortunately this contradicts the 
  claim that it should have the $\mu$ ambiguity as a shadow, because 
  \[
    \Tr\bigl( \phi(\cross{1}{1}) \otimes\nobreak \kappa \bigr)
    =
    \begin{pmatrix}
      0 & 1 & 0 \\
      1 & 0 & 0 \\
      0 & 0 & 1
    \end{pmatrix}
    =
    \begin{bmatrix}
      \begin{pmatrix} 0 \end{pmatrix} & 
      \begin{pmatrix} 1 & 0 \end{pmatrix} \\
      \begin{pmatrix} 1 \\ 0 \end{pmatrix} & 
      \begin{pmatrix} 0 & 0 \\ 0 & 1 \end{pmatrix}
    \end{bmatrix}
    \text{,}
  \]
  so
  \[
    (0) + 
    \begin{pmatrix} 1 & 0 \end{pmatrix} 
    \begin{pmatrix} 0 & 1 \\ 1 & 0 \end{pmatrix}
    \biggl( 
      \begin{pmatrix} 0 & 0 \\ 0 & 1 \end{pmatrix}
      \begin{pmatrix} 0 & 1 \\ 1 & 0 \end{pmatrix}
    \biggr)^*
    \begin{pmatrix} 1 \\ 0 \end{pmatrix}
    =
    (1)
  \]
  is the minimal type of a $\mu$ ambiguity that is a shadow of the
  $\mu'$ one, whereas it was stated above that the $\mu$ ambiguity 
  under consideration here has type $0$. This is possible since 
  \(\Tr(\eta \circ\nobreak \ve) = 0\) and thus \(\Tr(\kappa \circ\nobreak 
  \eta \circ\nobreak \ve) = 0 = \Tr(\eta \circ\nobreak \ve \circ\nobreak 
  \kappa)\), even though \(q_s \neq 0\). Thus the ambiguity at $\mu'$ 
  indeed has an ambiguity at $\mu$ as shadow\Dash only it was not the 
  one for which a terser form was sought, because it has the wrong 
  transference type.
\end{example}
  
Another way of looking at this is that when determining the minimal 
type of an ambiguity $(t_{v_1,s_1},\mu,t_{v_2,s_2})$, there are three 
kinds of dependencies that must be considered: vertices in $\mu$, 
entries that are $1$ in $q_{s_1}$, and entries that are $1$ in 
$q_{s_2}$; these may be viewed as 
three different kinds of edges in an abstract dependency graph (the 
vertices of which are roughly the edges of $\mu$), and each directed 
path between two legs of this dependency graph requires a position in 
the type of the ambiguity to be $1$. However, not all paths need to 
be considered; for both rules to apply, it is necessary that all 
paths of $\mu$ and $q_{s_1}$ edges are cared for, and also necessary 
that all paths of $\mu$ and $q_{s_2}$ edges are cared for, whereas it 
is not necessary to consider paths mixing $q_{s_1}$ and $q_{s_2}$ 
edges. If, on the other hand, one works out the conditions for 
$(t_{v_1,s_1},\mu,t_{v_2,s_2})$ to be a shadow of some 
$(t_{v_1',s_1},\mu',t_{v_2',s_2})$ then it is necessary to also 
consider paths made up of all three types of edge; a position that is 
$1$ in the type of the $\mu'$ ambiguity retains no memory of whether 
it is there because of a $q_{s_1}$ or $q_{s_2}$ path, so when $\mu'$ 
is annexed by another network to form $\mu$ then there can arise 
dependency paths mixing $q_{s_1}$ and $q_{s_2}$ dependencies. This is 
precisely what happens in the example above. It does not happen in 
sharp rewriting systems, because in that case the $q_{s_1}$ and 
$q_{s_2}$ dependencies only duplicate what is already in $\mu$ and 
thus common anyway.

So is this the fault of the symmetric joins? In part, but not in the 
sense that one would be better off with convex subexpressions; the 
rule considered in Example~\ref{Ex:Brygga} would have just as 
dramatic consequences in a more traditional formalism with convex 
subexpressions, and if there is any difference it would be that even 
more ambiguities are required to capture them all. This example is 
rather a sign that the transference types are too coarse; keeping 
track of enough of the dependencies to cut out the $\kappa$ above 
requires finer control of the internal structures of the networks. 
Refining the rewriting machinery to provide such control is however 
a problem that will have to be addressed in future research. Also, it 
would be a mistake to expect that these problems only turn up in 
artificial examples such as the above example.

\begin{example}
  Assume \(\mOp \in \Omega(1,2)\) and \(\Delta \in \Omega(2,1)\), and 
  consider the two rules
  \begin{align*}
    s_1 ={}& \bigl( J_{2 \times 2}, 
      \phi(\same{1}) \otimes \mOp \circ \Delta \otimes \phi(\same{1}),
      \Delta \circ \mOp \bigr) \text{,}\\
    s_2 ={}& \bigl( J_{2 \times 2}, 
      \mOp \otimes \phi(\same{1}) \circ \phi(\same{1}) \otimes \Delta,
      \Delta \circ \mOp \bigr) \text{.}
  \end{align*}
  These encode the primary identities satisfied by the product $\mOp$ 
  and coproduct $\Delta$ of a Frobenius algebra (see 
  e.g.~\cite{Lauda}), namely that
  \[
    \phi(\same{1}) \otimes \mOp \circ \Delta \otimes \phi(\same{1})
    \equiv
    \Delta \circ \mOp
    \equiv
    \mOp \otimes \phi(\same{1}) \circ \phi(\same{1}) \otimes \Delta
    \text{;}
  \]
  reducing to the middle expression here seems natural, since picking 
  any of the others as the thing to reduce to would introduce an 
  unintuitive left--right asymmetry. It does however render the 
  rewriting system non-sharp.
  
  An interesting non-decisive ambiguity is that at $\mOp \otimes \mOp 
  \circ \phi(\same{1} \star\nobreak \cross{1}{1} \star\nobreak 
  \same{1}) \circ \Delta \otimes \Delta$, which in network notation 
  can be depicted as follows:
  \[
    \left[ \begin{mpgraphics*}{-102}
      PROPdiagramnoarrow(0,0)
        same(2) circ
        box(1,2)(btex $\mOp$\strut etex) otimes same(1) circ
        same(1) otimes cross(1,1) circ
        ( 
          box(2,1)(btex $\Delta$\strut etex) circ
          same(1) circ
          box(1,2)(btex $\mOp$\strut etex)
        ) otimes same(1) circ
        same(1) otimes cross(1,1) circ
        same(1) otimes box(2,1)(btex $\Delta$\strut etex) 
        circ same(2)
      ;
    \end{mpgraphics*} \right]
    \leftarrow
    \left[ \begin{mpgraphics*}{-101}
      PROPdiagramnoarrow(0,0)
        same(2) circ
        box(1,2)(btex $\mOp$\strut etex) otimes same(1) circ
        same(1) otimes cross(1,1) circ
        frame( 
          same(1) otimes box(1,2)(btex $\mOp$\strut etex) circ
          same(3) circ
          box(2,1)(btex $\Delta$\strut etex) otimes same(1)
        ) otimes same(1) circ
        same(1) otimes cross(1,1) circ
        same(1) otimes box(2,1)(btex $\Delta$\strut etex) 
        circ same(2)
      ;
    \end{mpgraphics*} \right]
    =
    \left[ \begin{mpgraphics*}{-100}
      PROPdiagramnoarrow(0,0)
        same(2) circ
        box(1,2)(btex $\mOp$\strut etex) otimes 
          box(1,2)(btex $\mOp$\strut etex) 
        circ
        same(1) otimes cross(1,1) otimes same(1)
        circ
        box(2,1)(btex $\Delta$\strut etex) otimes
          box(2,1)(btex $\Delta$\strut etex) 
        circ same(2)
      ;
    \end{mpgraphics*} \right]
    =
    \left[ \begin{mpgraphics*}{-103}
      PROPdiagramnoarrow(0,0)
        same(2) circ
        same(1) otimes box(1,2)(btex $\mOp$\strut etex) circ
        same(1) otimes cross(1,1) circ
        frame(
          box(1,2)(btex $\mOp$\strut etex) otimes same(1) circ
          same(1) otimes box(2,1)(btex $\Delta$\strut etex) 
        ) otimes same(1) circ
        same(1) otimes cross(1,1) circ
        box(2,1)(btex $\Delta$\strut etex) otimes same(1)
        circ same(2)
      ;
    \end{mpgraphics*} \right]
    \rightarrow
    \left[ \begin{mpgraphics*}{-104}
      PROPdiagramnoarrow(0,0)
        same(2) circ
        same(1) otimes box(1,2)(btex $\mOp$\strut etex) circ
        same(1) otimes cross(1,1) circ
        (
          box(2,1)(btex $\Delta$\strut etex) circ
          box(1,2)(btex $\mOp$\strut etex) 
        ) otimes same(1) circ
        same(1) otimes cross(1,1) circ
        box(2,1)(btex $\Delta$\strut etex) otimes same(1)
        circ same(2)
      ;
    \end{mpgraphics*} \right]
    \text{.}
  \]
  Clearly, the two reductions produce different results, but both are 
  irreducible, so this is a non-trivial congruence that follows from 
  the rewriting system. On the other hand, there is no overlap of the 
  rules in the site of the ambiguity, so it is easy to believe that 
  this ambiguity should be a mere montage. That it evidently is not 
  (a montage should be resolvable) signals that \PROPs\ may exhibit 
  ambiguities that do not belong to either of the traditional classes 
  overlap and inclusion ambiguity. A suitable name for this kind of 
  ambiguity might be \emph{wrap ambiguity}, since it is the way 
  that the left hand sides wrap over each other that makes it 
  necessary to take this ambiguity into account.
\end{example}

\section*{Acknowledgements}

This text represents a milestone in a line of research that started 
in the spring of 2004, during my postdoc stay at the Mittag-Leffler 
institute as part of the NOG (Noncommutative Geometry) programme, 
funded by the European Science Foundation and the Royal Swedish 
Academy of Sciences. Some important suggestions were given by 
professors Jean-Louis Loday and Michel Van den Berg.
Many of the ideas presented herein were already clear back then but 
have taken this long to be properly worked out, whereas others have 
emerged at a later date. 

The most important later break-through is probably that of viewing 
rules as being equipped with a transference type: this occurred in 
the autumn of 2006, while the author attended the second Algebra, 
Geometry, and Mathematical Physics (AGMF) conference and stayed for a 
couple of weeks at Lund University. Some indirect financial support 
for this came from the STINT Foundation, the Crafoord Foundation, and 
the Swedish Royal Academy of Sciences by way of grants from these to 
Professor Sergei Silvestrov. Subsequent visits to Lund in more recent 
years have also helped further this work.

Another event in 2006 for which the author received some financial 
support was for a month at the Special Semester on Groebner Bases 
organised by RICAM, Austrian Academy of Sciences, and RISC, Johannes 
Kepler University, Linz, Austria; this also saw the first external 
presentation (in which \PROPs\ and networks were the focus) of this 
theory. Some work on \PROP\ orders were carried out at that time (but 
not much of that has made it into this text, as a simpler solution of 
the problems studied in terms of the biaffine \PROP\ were obtained in 
late 2008).

In 2010 and 2011, the author has received support from the G.~S.\@ 
Magnuson Foundation, managed by the Royal Swedish Academy of Sciences.

\expandafter\def \expandafter\theindex \expandafter{\theindex
   \addcontentsline{toc}{section}{Index}%
}

\printindex

\end{document}